%%Note: the following reference styles support Namedate and Numbered referencing. By default the style follows the most common style. To switch between the options you can add or remove ìNumberedî in the optional parenthesis. 
%%The option is available for: sn-basic.bst, sn-vancouver.bst, sn-chicago.bst%  
 
%%\documentclass[sn-nature]{sn-jnl}% Style for submissions to Nature Portfolio journals
%%\documentclass[sn-basic]{sn-jnl}% Basic Springer Nature Reference Style/Chemistry Reference Style
%\documentclass[sn-mathphys-num]{sn-jnl}% Math and Physical Sciences Numbered Reference Style 
%%\documentclass[sn-mathphys-ay]{sn-jnl}% Math and Physical Sciences Author Year Reference Style
%%\documentclass[sn-aps]{sn-jnl}% American Physical Society (APS) Reference Style
%%\documentclass[sn-vancouver,Numbered]{sn-jnl}% Vancouver Reference Style
%%\documentclass[sn-apa]{sn-jnl}% APA Reference Style 
%%\documentclass[sn-chicago]{sn-jnl}% Chicago-based Humanities Reference Style

%%%% Standard Packages
%%<additional latex packages if required can be included here>

%%\documentclass[spmpsci]{sn-jnl}
\documentclass[phd]{cornell}

\usepackage{amsmath,amssymb}
\usepackage{xcolor}
\usepackage{mathabx}
\usepackage[margin=3.5cm]{geometry}

\usepackage[english]{babel}
\usepackage{amsthm}
\usepackage{tikz-cd}
\usepackage[]{mathrsfs}
\usepackage{mathtools}
\usepackage{todonotes}
\usepackage{calc}
\usepackage{graphicx}
\usepackage{faktor}
\usepackage{amsfonts}
\usepackage{hyperref}
\hypersetup{colorlinks=true, linkcolor=black}
\usepackage{verbatim}
\usepackage{pinlabel}
\usepackage[T1]{fontenc}
\usepackage{graphicx}
\usepackage{graphbox}
\usepackage{caption}
\usepackage{subcaption}
\usepackage{enumerate}
\usepackage{csquotes}
\usepackage{float}

\usepackage[backend=biber, style=numeric, sorting=none]{biblatex}
\addbibresource{reference.bib}

\usepackage{graphicx}%
\usepackage{multirow}%
\usepackage{amsmath,amssymb,amsfonts}%
\usepackage{amsthm}%
\usepackage{mathrsfs}%
\usepackage[title]{appendix}%
\usepackage{xcolor}%
\usepackage{textcomp}%
\usepackage{manyfoot}%
\usepackage{booktabs}%
\usepackage{algorithm}%
\usepackage{algorithmicx}%
\usepackage{algpseudocode}%
\usepackage{listings}%

\usepackage{faktor}
\usepackage{hyperref}
\hypersetup{colorlinks=true, linkcolor=black}
\usepackage{verbatim}
\usepackage{pinlabel}
\usepackage{graphbox}
\usepackage{mathtools}
\usepackage{enumerate}

\usepackage{anyfontsize}

\usepackage{url}

\usepackage[backend=biber, style=numeric]{biblatex}
\addbibresource{reference.bib}

% Customize title format for specified entry types to be in quotes and regular font
%\DeclareFieldFormat[article, inbook, incollection, inproceedings, misc, online]{title}{\mkbibquote{#1\isdot}}

%%\renewcommand\qedsymbol{Quod erat demonstrandum}

\theoremstyle{plain}
\newtheorem{theorem}{Theorem}[section]
\newtheorem{corollary}[theorem]{Corollary}
\newtheorem{lemma}[theorem]{Lemma}
\newtheorem{proposition}[theorem]{Proposition}

% Letter‑labelled theorems for the Introduction only
\newtheorem{introthm}{Theorem}          % creates egiegin{introthm} … \end{introthm}
  % prints A,B,C instead of 1,2,3
\newtheorem{introcor}[introthm]{Corollary}

\newtheorem*{claim}{Claim}
\newtheorem*{lemma*}{Lemma}
\newtheorem*{convention}{Convention}

\theoremstyle{definition}
\newtheorem{definition}[theorem]{Definition}
\newtheorem{example}[theorem]{Example}
\newtheorem*{remark}{Remark}

\theoremstyle{remark}

\newtheorem*{question*}{Question}

\newtheorem*{defn*}{Definition}

\theoremstyle{remark}

\newcommand{\N}{\mathbb{N}}

\newcommand{\Z}{\mathbb{Z}}

\newcommand{\R}{\mathbb{R}}

\renewcommand{\H}{\mathbb{H}}

\newcommand{\Homeo}{\mathrm{Homeo}}

\newcommand{\id}{\mathrm{id}}

\newcommand{\wF}{\widetilde{\mathcal{F}}}

\newcommand{\F}{\mathcal{F}}
\newcommand{\Q}{\mathbb{Q}}

\newcommand{\G}{\mathcal{G}}

\renewcommand{\wF}{\widetilde{\mathcal{F}}}
\newcommand{\wG}{\widetilde{\mathcal{G}}}
\newcommand{\walpha}{\widetilde{\alpha}} 
 
\newcommand{\wz}{\widetilde{z}}
\renewcommand{\wr}{\widetilde{r}}
\newcommand{\wLambda}{\widetilde{\Lambda}}
\newcommand{\hF}{\widehat{\F}}
\newcommand{\hz}{\widehat{z}}
\newcommand{\hx}{\widehat{x}}

\newcommand{\Ends}{\mathrm{Ends}}

\newcommand{\overbar}[1]{\mkern 1.5mu\overline{\mkern-1.5mu#1\mkern-1.5mu}\mkern 1.5mu}

\raggedbottom

\begin{document}

\title{On bifoliated planes, their structure and group actions}

\author{Mauro Camargo}
\conferraldate{August}{2025}  % e.g., \conferraldate{May}{2024}
\maketitle

\makecopyright

\begin{abstract}
    Bifoliated planes arise naturally in the study of Anosov flows on $3$-manifolds. To any Anosov flow on a $3$-manifold $M$, one can associate a bifoliated plane equipped with an action of the fundamental group of $M$ which encodes the topology of the flow. 

This thesis begins by showing left-orderability of any group acting faithfully on a bifoliated plane. We then describe the bifoliated planes associated with two families of Anosov flows which are constructed from algebraic and combinatorial data via gluing procedures. For one of these families, we show that all the resulting bifoliated planes are isomorphic. In contrast, for the other family, we show that the defining data can be recovered as a topological invariant of the bifoliated plane.
\end{abstract}

% Biographical sketch (for PhD)
%\begin{biosketch}

%Mauro Camargo was born on June 12, 1995, in Montevideo, Uruguay.

%He earned his undergraduate degree in Mathematics from Universidad de la República (UdelaR) in Montevideo from 2014 to 2018. During the later years of his studies at UdelaR, his main mathematical interests were topology and dynamical systems. His undergraduate thesis \emph{Topological obstructions to the existence of Anosov diffeomorphisms} was written under the direction of Prof. Rafael Potrie.

%In 2019, he moved to Ithaca, New York, to pursue doctoral studies at Cornell University, advised by Prof. Kathryn Mann. His research focused on the study of bifoliated planes, which are topological objects arising from the study of Anosov flows on 3-dimensional manifolds.

%\end{biosketch}

% Dedication (optional)
% \begin{dedication}
% To someone special...
% \end{dedication}

% Acknowledgements
\begin{acknowledgements}

I would like to thank everyone from whom I have ever learned a piece of mathematics. This rather large list of people includes professors, classmates, teachers, TAs, colleagues, and students I have met (either in person or through their writing) not only during my graduate studies, but throughout all of my education. Among them, I would like to name some people or groups of people who I feel have been particularly influential.

First, I want to thank my advisor, Katie Mann, for her guidance and teaching during my PhD, as well as her encouragement and patience. Throughout these years, I have always found inspiration in the clarity and enthusiasm with which she discusses mathematics, and I have always left a meeting with her more motivated than when I went in, even at times when motivation was not at its highest.

Next, I would like to thank the topology faculty and students at Cornell, who made possible many courses and seminars which were fundamental to my mathematical education in this area. This group includes, in particular, the other members of my committee, Ben Dozier and Jason Manning, whom I additionally thank for their time and effort reviewing my work and providing feedback.

In October 2024, I visited Queen's University in Kingston, Canada, where I was invited by Thomas Barthelmé to speak at the Dynamics, Geometry, and Groups seminar. During my stay in Kingston, some questions regarding the structure of bifoliated planes associated with Franks-Williams Anosov flows were raised during discussions between Katie, Thomas, Lingfeng Lu (Thomas' student, and my coauthor for the work on the first chapter of this thesis), Isaac Broudy (also a student of Katie at Cornell) and myself, and these questions became the starting point for the third chapter of this thesis. Therefore, I would like to thank the math department at Queen's University, Thomas Barthelmé, and Lingfeng Lu for their hospitality.

Before coming to Cornell, I completed my undergraduate studies at Universidad de la República in Uruguay. I believe that the mathematical education I received during that time was an excellent preparation for my PhD. I am very thankful to the Uruguayan mathematical community for my years spent there, and for their continued support afterwards. I would especially like to thank professors and students in the dynamical systems research group for introducing me to the study of this subject. Special thanks to my undergraduate thesis advisor Rafael Potrie for suggesting a very formative undergraduate thesis topic, and for his encouragement and guidance in my graduate school applications.

Last but most definitely not least, I would like to thank everyone who has contributed non-mathematically to my life during these years. The list of such people dwarfs the list of mathematical contributors, since it must include not only those closest to me, but also people I have never met and may never meet, such as screenwriter Jesse Armstrong or the cooks at the popular Ithaca restaurant ``New Delhi Diamond's.'' For practical reasons it is not possible to list them all here. 

Therefore, I will thank the most important of them: my friends, both in Ithaca and in Uruguay, and my family (humans and non-humans), for being there to share the good times and to support me in the less-good times. I will also thank my girlfriend, Nathalia, for all our shared moments and for her love and support in the final stretch of my PhD. Finally, I will thank my mother, without whom none of this would have been possible (both in the tautological and in a deeper sense), for the past three decades of wanting the best for me, and for being an example of hard work and perseverance.

\end{acknowledgements}
\contentspage

\pagenumbering{arabic}
\setcounter{page}{1}

\chapter{Introduction}

\section{Introduction}

The main objects of interest in this thesis are \emph{bifoliated planes}.

\begin{definition}[\cite{barthelme2022orbit}]\label{definition:def_bifoliated_plane}
A \emph{bifoliated plane} $(P, \F_1, \F_2)$ consists of a topological plane $P$ equipped with $C^0$ foliations by lines $\F_1, \F_2$ such that:
    \begin{enumerate}
        \item $\F_1, \F_2$ are proper, that is, each leaf is properly embedded in $P$.
        \item $\F_1, \F_2$ are topologically transverse, i.e. around each point in $P$ there is a coordinate chart which maps leaves of $\F_1$ and $\F_2$ to horizontal and vertical straight lines in $\R^2$, respectively.
    \end{enumerate}    
\end{definition}

Here we will be assuming that the foliations $\F_1$ and $\F_2$ are non-singular, although it is also natural to allow (as is done in \cite{barthelme2022orbit}) certain types of singularities. Additionally, since $\F_1$ and $\F_2$ are foliations of a topological plane they must be orientable, and we will always assume that they are oriented. 

The classical examples of bifoliated planes arise from the study of Anosov flows, which are an important class of hyperbolic dynamical systems, deeply connected to $3$-dimensional geometry and topology.

The idea of understanding Anosov flows on $3$-manifolds via the study of orbit spaces was introduced by Barbot and Fenley (\cite{barbot_1995}, \cite{fenley1994anosov}), who showed that given an Anosov flow $\varphi$ on a $3$-manifold $M$, the orbit space $P_\varphi$ of its lift $\widetilde{\varphi}$ to the universal cover $\Tilde{M}$ of $M$ can be shown to be homeomorphic to $\R^2$ \cite{fenley1994anosov}.  A pair of transverse foliations of $P_\varphi$ by lines, $\F^-$ and $\F^+$, is obtained from the weak stable and weak unstable foliations of the flow $\widetilde{\varphi}$. Thus, $(P_\varphi, \F^+, \F^-)$ is a bifoliated plane, which is additionally equipped with an action of $\pi_1(M)$ coming from its action on $\widetilde{M}$ by deck transformations. It was shown in \cite{barthelme2022orbit}, \cite{barthelme2023anosov}, \cite{barthelme2024nontransitivepseudoanosovflows} that, by studying aspects of the action of $\pi_1(M)$ on $(P_\varphi, \F^+, \F^-)$, one can classify Anosov flows up to orbit equivalence. This motivates the study of bifoliated planes as well as groups acting on them.

In Chapter \ref{chapter:LO}, we study the left orderability of groups acting on bifoliated planes. All results in this chapter are joint work with Lingfeng Lu and appeared in \cite{camargo2025leftorderability}, except for those in Section \ref{subsection:LO_Anosov_like}.

A group is \emph{left-orderable} if it admits a linear order that is invariant under left multiplication. The definition is purely algebraic, but left-orderability appears frequently in topological and dynamical contexts. For example, one part of the \emph{L-space conjecture} by Boyer--Gordon--Watson (\cite{Boyer2013}) and Juhzász (\cite{juhasz2014surveyheegaardfloerhomology}) assert the equivalence between admitting a taut foliation on an irreducible 3-manifold and the fundamental group of the manifold being left-orderable. A link to dynamics is given by the classical result stating that a countable group is left-orderable if and only if it acts faithfully on $\R$ by orientation-preserving homeomorphisms; see \cite{Holland63} or \cite{ghys_1984}. 

We prove the following result regarding left-orderability of groups acting on bifoliated planes:
 
\begin{introthm}[\cite{camargo2025leftorderability}]\label{introthm:main}

    Let $P = (P, \F_1, \F_2)$ be a bifoliated plane. Let $\mathrm{Aut^+}(P, \F_1, \F_2) $ be the group of homeomorphisms of $P$ that preserves foliations $\F_1$ and $\F_2$ as well as their orientations. Then, $\mathrm{Aut}^+(P, \F_1, \F_2)$ is left-orderable.
    \end{introthm}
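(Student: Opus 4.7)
The plan is to construct a positive cone in $G := \mathrm{Aut}^+(P, \F_1, \F_2)$ directly from the foliation structure, using a fixed countable dense subset of $P$. Since $P$ is homeomorphic to $\R^2$, pick once and for all a countable dense set $D = \{p_1, p_2, \ldots\} \subset P$. Because $G$ acts faithfully on $P$ by homeomorphisms, any non-identity $g \in G$ moves some point of $D$, so we may define $n(g) := \min\{n : g(p_n) \neq p_n\}$.

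To each non-identity $g \in G$ I would assign a sign $\sigma(g) \in \{+, -\}$ as follows. Write $p := p_{n(g)}$ and $q := g(p)$, and let $\ell$ be the oriented leaf of $\F_1$ through $p$. Since $\ell$ is a properly embedded line, it separates $P$ into two open half-planes, and the orientation of $\F_2$ designates one of them as the \emph{positive side} of $\ell$. Set $\sigma(g) = +$ if $q$ lies on the positive side of $\ell$, $\sigma(g) = -$ if $q$ lies on the negative side; if $q$ happens to lie on $\ell$, use the orientation of $\ell$ itself to decide. Define $G^+ := \{g \ne e : \sigma(g) = +\}$. Trichotomy $G = G^+ \sqcup \{e\} \sqcup (G^+)^{-1}$ is automatic since $\sigma(g^{-1}) = -\sigma(g)$.

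The nontrivial step is verifying multiplicativity $G^+ \cdot G^+ \subseteq G^+$. If $n(g) \ne n(h)$, the argument is straightforward: say $n(g) < n(h)$, so $h$ fixes $p_{n(g)}$ and all earlier points of $D$, and a short computation gives $n(gh) = n(g)$ and $\sigma(gh) = \sigma(g) = +$; the symmetric case is similar, using additionally that $g$ preserves each $\F_1$-leaf it fixes a point of, along with its positive side (by preservation of $\F_2$-orientation). The key case is $n(g) = n(h) =: m$, where both $g$ and $h$ send $p_m$ to the positive side of $\ell$. Here $g(\ell) \ne \ell$ is a distinct $\F_1$-leaf passing through $g(p_m)$ in the positive half-plane of $\ell$; since distinct leaves of $\F_1$ are disjoint properly embedded lines, $g(\ell)$ lies entirely in the open positive half-plane of $\ell$, and consequently the positive side of $g(\ell)$ is contained in the positive side of $\ell$. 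Then $h(p_m)$, being on the positive side of $\ell$, is sent by the orientation-preserving map $g$ to the positive side of $g(\ell)$, hence to the positive side of $\ell$, so $\sigma(gh) = +$.

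The main obstacle is this final case: it requires the nested inclusion of positive half-planes (which depends on the proper embedding of leaves preventing them from crossing) and handling boundary subcases such as $g(h(p_m)) = p_m$, which forces $n(gh) > m$. These should be resolved by passing inductively to the next moved point of $gh$, using continuity of $g, h$ and density of $D$. Additional care is needed when $q \in \ell$ in the definition of $\sigma$, where $g$ may preserve $\ell$ setwise and the comparison of positions must occur along a single leaf rather than across two distinct leaves.
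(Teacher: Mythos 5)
Your construction has a genuine gap at exactly the point you flag as ``the nontrivial step,'' and the difficulty is not what you think it is. You assert that if $g(\ell)\neq\ell$ lies in the positive open half-plane $\ell^+$, then the positive side of $g(\ell)$ is contained in $\ell^+$, attributing this to the fact that disjoint properly embedded lines cannot cross. That is not sufficient. The problem is \emph{non-separated} leaves: if $\ell$ and $\ell'=g(\ell)$ are non-separated in the leaf space $\Lambda_1$ (a sequence of leaves $\ell_n$ accumulates on both), then the $\F_2$-transverse orientations at $\ell$ and $\ell'$ both point \emph{into} the band between them, or both point \emph{out} of it --- see Lemma~\ref{non_sep} and the discussion around Figure~\ref{fig: lemma_nonsep_figure} in the paper. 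Concretely it can happen that $\ell'\subset\ell^+$ \emph{and} $\ell\subset(\ell')^+$ simultaneously, in which case $g(\ell^+)=(\ell')^+\not\subset\ell^+$ because it contains $\ell$. The same phenomenon breaks your claim that ``$\sigma(g^{-1})=-\sigma(g)$ is automatic'': knowing $g(p_m)\in\ell^+$ tells you $g(\ell)\subset\ell^+$, but it does not tell you that $\ell\subset g(\ell^-)$, i.e.\ it does not tell you $g^{-1}(p_m)\in\ell^-$, precisely when $\ell$ and $g(\ell)$ are non-separated. So trichotomy of your putative positive cone is not verified, and neither is closure under multiplication.

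This is not a repairable detail inside your framework; it is the central obstruction, and it is exactly what the paper's argument is built to handle. If every leaf space were Hausdorff (hence $\cong\R$) your direct positive-cone construction would go through, but for general bifoliated planes the leaf spaces are non-Hausdorff, and any proof must impose an order on cataclysms (collections of pairwise non-separated leaves) that is consistent under the $G$-action. The paper does this by using the \emph{other} foliation to order a cataclysm (Lemma~\ref{non_sep}, Definition~\ref{order}, Lemma~\ref{orderable_cataclysms}), then invokes Zhao's theorem (Proposition~\ref{order_zhao}) to get a $G$-invariant linear order on $\Ends_+(\Lambda_i)$, and finally runs a two-step left-orderability argument: $G/H$ is left-orderable because it acts faithfully order-preservingly on the ordered set of ends, and the kernel $H$ (the subgroup fixing all ends) embeds into a countable product of $\Homeo^+(\R)$'s. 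Your approach skips the cataclysm analysis entirely, and that is precisely where it fails; without additional input your sign function $\sigma$ is not even a well-defined signature on $G\setminus\{e\}$, let alone the indicator of a positive cone.
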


    Since left-orderability of a group is inherited by its subgroups, one can see as a consequence that any group acting faithfully on a bifoliated plane preserving the foliations and their orientations is left-orderable. Additionally, by the classical result in the theory of left orderable groups mentioned above, one can show that if the group is countable, then it can be realized as a subgroup of $\mathrm{Homeo^+}(\R)$.

    For bifoliated planes $(P_\varphi, \F^+, \F^-)$ associated with an Anosov flow $\varphi$ on a $3$-manifold $M$, the action of $\pi_1(M)$ on $(P_\varphi, \F^+, \F^-)$ has certain properties that constrain the topology of the bifoliated plane. These properties have been axiomatized in \cite{barthelme2024nontransitivepseudoanosovflows}, and group actions satisfying these axioms on bifoliated planes $(P, \F^+, \F^-)$ are known as \emph{Anosov-like} actions. In this setting, we show:
    
    \begin{introcor}\label{introcor:Anosov_like}
        Let $(P, \F^+, \F^-)$ be a bifoliated plane equipped with an Anosov-like action by a group $G$. Then, $G$ can be realized as a subgroup of $\mathrm{Homeo}^+(\R)$.
    \end{introcor}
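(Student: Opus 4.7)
The plan is to deduce Corollary \ref{introcor:Anosov_like} by combining Theorem \ref{introthm:main} with the classical theorem of Holland--Ghys that every countable left-orderable group embeds in $\mathrm{Homeo}^+(\R)$, already recalled in the excerpt. The only content beyond these two inputs will be a verification that the Anosov-like hypotheses guarantee the three ingredients needed to apply them: a faithful action, preservation of each foliation together with its orientation, and countability of $G$.

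First, I would write down explicitly the tautological homomorphism
\[
\rho \colon G \longrightarrow \mathrm{Homeo}(P)
\]
given by the $G$-action on $P$. The Anosov-like axioms (as formulated in \cite{barthelme2024nontransitivepseudoanosovflows}) require that $G$ act by homeomorphisms preserving each of $\F^+$ and $\F^-$ along with their orientations, and that the action be faithful; this is precisely the statement that $\rho$ factors through an injection
\[
\rho \colon G \hookrightarrow \mathrm{Aut}^+(P,\F^+,\F^-).
\]

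Next, I would invoke Theorem \ref{introthm:main}, which asserts that the target $\mathrm{Aut}^+(P,\F^+,\F^-)$ is left-orderable. Since left-orderability is inherited by subgroups, the image of $\rho$, and hence $G$ itself, is left-orderable. To finish, I would verify countability of $G$: this follows from the Anosov-like axioms, which impose discreteness conditions on the fixed-point set and on the periodic orbit structure (modeling the behavior of $\pi_1(M)$ acting on $P_\varphi$); alternatively, since the main examples of Anosov-like actions come from fundamental groups of compact $3$-manifolds, countability is built in. Then the Holland--Ghys theorem converts the left order on $G$ into a faithful action by orientation-preserving homeomorphisms of $\R$, yielding the desired embedding $G \hookrightarrow \mathrm{Homeo}^+(\R)$.

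The only nontrivial step is the bookkeeping in the first paragraph, and the main potential obstacle is confirming that the precise formulation of Anosov-like action one uses indeed includes (or implies) faithfulness, orientation preservation on both foliations, and countability. In the setting of \cite{barthelme2024nontransitivepseudoanosovflows} all three are either immediate from the axioms or a routine consequence; if one works with a broader notion where, say, the foliations need not come pre-oriented, then I would first pass to an index-at-most-four subgroup of $G$ preserving the orientations before applying Theorem \ref{introthm:main}, noting that left-orderability and realizability in $\mathrm{Homeo}^+(\R)$ can then be transferred back to $G$ through a standard argument (since one also has a $\Z/2 \times \Z/2$ quotient acting by flipping orientations, which can be realized inside $\mathrm{Homeo}^+(\R)$ by conjugations).
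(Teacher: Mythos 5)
Your proposal has a genuine gap at the countability step. You assert that countability of $G$ "follows from the Anosov-like axioms, which impose discreteness conditions on the fixed-point set and on the periodic orbit structure," but none of axioms (A1)--(A5) places any cardinality restriction on $G$. The fallback you offer ("the main examples come from fundamental groups of compact $3$-manifolds, countability is built in") restricts to examples and is not a proof for an arbitrary Anosov-like action. Indeed, the paper explicitly flags this: "It is not true in general that the group $\mathrm{Aut}^+(P, \F_1, \F_2)$ is a countable group," and Section \ref{subsection:LO_Anosov_like} is devoted precisely to handling the uncountable case.

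The paper's proof splits into two cases. When $G$ is countable, your argument (left-orderability of $\mathrm{Aut}^+(P, \F^+, \F^-)$ from Theorem \ref{theorem:LO_main}, then Theorem \ref{theorem:LO_countable_ord_implies_homeo}) works exactly as you describe. When $G$ is uncountable, the missing ingredient is Corollary \ref{corollary:LO_Anosov_like_uncount_skew_or_triv}: via Lemmas \ref{lemma:uncountable_stabilizer} and \ref{lemma:countable_nonsep} and Proposition \ref{prop:countable_or_non_H}, an uncountable Anosov-like $G$ forces both leaf spaces to be Hausdorff, and hence (by \cite{barthelme2022orbit}, Theorem 2.16) the plane is trivial or skew. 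In that case $\Lambda^+, \Lambda^- \cong \R$, and one realizes $G$ in $\Homeo^+(\R)$ directly by embedding $\Lambda^+ \sqcup \Lambda^-$ as disjoint open intervals in $\R$ and using faithfulness of the leaf-space action. To repair your proof you would need to supply this second case; the route via "Anosov-like implies countable" does not close.
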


    We also discuss how one can identify the ends of the leaf spaces $\Lambda_i = P/\F_i$ of the foliations $\F_1$ and $\F_2$ with certain subsets of the \emph{boundary at infinity} of the bifoliated plane $(P, \F_1, \F_2)$.

 In \cite{fenley2012ideal}, Fenley defined a boundary at infinity for the bifoliated plane associated to an Anosov flow. In the setting of general bifoliated planes (not necessarily coming from an Anosov flow), Bonatti \cite{bonatti} defined canonically the \emph{circle at infinity} via ideas similar to the work of Mather \cite{mather}. The boundary circle at infinity is a topological circle which compactifies the plane into a closed disk. Given an action by a group $G$ on a bifoliated plane, there is also a natural induced action on its circle at infinity --- this was a key ingredient in the study of Anosov-like actions on general bifoliated planes, including the particular case of actions on orbit spaces of Anosov flows, in \cite{barthelme2022orbit}. \par

Since rays of leaves limit onto points of the boundary circle, it is natural to ask whether rays in the leaf spaces of the foliations limit to points on the boundary circle in a meaningful way. We answer this question, describing a natural correspondence between the set of ends of leaf spaces of a bifoliated plane and a subset of the associated boundary circle at infinity --- we call this correspondence the \emph{realization} of ends.

We show that there are two \emph{types} of realizations: the point-type and the interval-type. The point-type realization comes from a sequence of leaves with their ideal points in $\partial P$ converging to the same limit, while the interval-type realization comes from a local bifoliation structure called an \emph{infinite product region}. 

As an application of this, we show that in the case of a general bifoliated plane $(P, \F_1, \F_2)$, if the leaf spaces $\Lambda_1$ and $\Lambda_2$ are particularly simple, one can directly prove that $\mathrm{Aut}^+(P, \F_1, \F_2)$ (and therefore any group acting faithfully and preserving orientations on $(P, \F_1, \F_2)$) is realizable as a subgroup of $\mathrm{Homeo}^+(\R)$.

\begin{introcor}\label{introcor_finite_ends}
If $\Ends_+(\Lambda_i)$ or $\Ends_-(\Lambda_i)$ is finite for some $i$, then $\mathrm{Aut}^+(P, \F_1, \F_2)$ has a global fixed point on $\partial P$. In particular, $\mathrm{Aut}^+(P, \F_1, \F_2)$ is isomorphic to a subgroup of $\Homeo^+(\R)$.
\end{introcor}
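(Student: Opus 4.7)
The plan is to use the realization of ends described above to produce a finite $\mathrm{Aut}^+(P, \F_1, \F_2)$-invariant closed set in $\partial P$, and then combine this with the left-orderability from Theorem \ref{introthm:main} to extract a global fixed point. Assume without loss of generality that $\Ends_+(\Lambda_1)$ is finite; the remaining cases are symmetric (by swapping foliations or reversing orientations). Any $g \in \mathrm{Aut}^+(P, \F_1, \F_2)$ preserves $\F_1$ together with its orientation, and so descends to an orientation-preserving homeomorphism of the leaf space $\Lambda_1$; in particular it permutes the finite set $\Ends_+(\Lambda_1)$. By the equivariance of the realization, $g$ correspondingly permutes the disjoint collection of realizations $\{R(e) : e \in \Ends_+(\Lambda_1)\} \subset \partial P$, each of which is either a single point (point-type) or a closed interval (interval-type).

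These realizations sit in a natural cyclic order on $\partial P$, respected by the induced orientation-preserving action of $\mathrm{Aut}^+(P, \F_1, \F_2)$ on the circle $\partial P$. The main step --- and, I expect, the main technical obstacle --- is to show that each $R(e)$ is individually invariant, equivalently that the induced permutation action on $\Ends_+(\Lambda_1)$ is trivial. A non-trivial cyclic permutation of order $k > 1$ would realize $\mathrm{Aut}^+(P, \F_1, \F_2)$ with a non-trivial cyclic quotient acting by rotation on $\partial P$; since left-orderability on its own does not exclude finite cyclic quotients, ruling this out should require additional structure coming from the bifoliation. Natural candidates are either exploiting a linear (rather than merely cyclic) order on $\Ends_+(\Lambda_1)$ coming from the transverse orientation of $\F_1$, or leveraging the simultaneous action on the second foliation $\F_2$ to obstruct the symmetry that such a rotation would demand.

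Once each $R(e)$ is individually invariant, a global fixed point on $\partial P$ is immediate: a point-type realization is itself a fixed point, while for an interval-type realization the two endpoints must be fixed, since an orientation-preserving homeomorphism of $\partial P$ sends an invariant closed arc to itself preserving its orientation and therefore cannot swap its endpoints. For the final assertion, a global fixed point $p \in \partial P$ identifies $\partial P \setminus \{p\}$ with $\R$, and restriction of the action yields a homomorphism $\mathrm{Aut}^+(P, \F_1, \F_2) \to \Homeo^+(\R)$. Injectivity follows from the property that leaves of $\F_1$ and $\F_2$ are determined by their pairs of ideal endpoints in $\partial P$: any element acting trivially on $\partial P$ must fix every leaf of both foliations, hence fix every point of $P$ as the intersection of the corresponding two leaves, and so must be the identity.
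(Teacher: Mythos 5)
Your overall architecture matches the paper's: pass to the finite set $\Ends_+(\Lambda_1)$, realize its ends on $\partial P$, conclude the realization of some end is globally invariant, then deal separately with the point-type and interval-type cases. However, you explicitly flag the crucial step — showing the permutation action on $\Ends_+(\Lambda_1)$ is trivial — as ``the main technical obstacle'' and leave it with a hint rather than an argument. That is a genuine gap. You correctly observe that left-orderability alone does not exclude a finite cyclic quotient rotating $\partial P$, and your first proposed fix (use a \emph{linear} rather than cyclic order on $\Ends_+(\Lambda_1)$) is exactly what the paper does, but you don't carry it out or recognize that the needed linear order has already been constructed. Proposition \ref{proposition: order_on_ends}, built on Zhao's order from Proposition \ref{order_zhao} via the orderable-cataclysm count, supplies an $\mathrm{Aut}^+(P)$-invariant linear order on $\Ends(\Lambda_1)\cup\Ends(\Lambda_2)$ and in particular on $\Ends_+(\Lambda_1)$; this order is not ``coming from the transverse orientation'' in any direct sense, but from Zhao's counting of cusps along broken paths in the non-Hausdorff $1$-manifold $\Lambda_1$. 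Once you have it, you don't even need to argue triviality of the whole permutation: the minimal element of the finite linearly ordered set is automatically a global fixed end, which is how the paper proceeds and what makes the step short rather than an obstacle.

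Your handling of the interval-type realization is correct and in fact cleaner than the paper's: since $\mathrm{Aut}^+(P,\F_1,\F_2)$ preserves the orientations of both foliations and hence the orientation of $P$ and of $\partial P$, an orientation-preserving homeomorphism of the circle preserving a proper closed arc restricts to an orientation-preserving homeomorphism of that arc, so it fixes both endpoints. The paper instead argues by contradiction through a fixed point $\xi$ in the interior of the interval and the unique $\F_2$-leaf landing at $\xi$, then derives a contradiction with preservation of foliation orientations; both routes are sound, but yours is more economical given that orientation-preservation of the induced $\partial P$ action has already been noted. Your final step (injectivity of the action on $\partial P\setminus\{\xi\}\cong\R$, via leaves being determined by their ideal endpoints) also agrees with the paper's, which cites faithfulness of the $\partial P$ action and a result of Ghys. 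So: the gap is entirely in the middle step, and it closes immediately by invoking Proposition \ref{proposition: order_on_ends}.
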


In Chapters \ref{chapter:BF} and \ref{chapter:FW}, we explicitly describe the bifoliated planes associated with two families of Anosov flows constructed via a surgery and gluing procedure. In general, there is no procedure that allows one to easily describe the bifoliated plane corresponding to a given Anosov flow, other than in the very simple cases of geodesic and suspension flows (see Sections \ref{subsection:background_Anosov} and \ref{subsection:prelim_bifoliated} below). Partial results by Bonatti and Iakovoglou in \cite{bonatti2023anosov}, unrelated to our work in Chapters \ref{chapter:BF} and \ref{chapter:FW}, show how some bifoliated planes change when doing certain surgeries on the corresponding Anosov flow. 

The two families of Anosov flows discussed in these chapters are parametrized by certain combinatorial and algebraic data. We show that the associated families of bifoliated planes differ fundamentally in how they depend on the choice of this data. In the case of the family studied in Chapter \ref{chapter:BF} we show that, up to isomorphism, the bifoliated plane does not depend on this choice. On the other hand, for the family considered in Chapter \ref{chapter:FW} we show that, generically, two different choices of initial data yield non-isomorphic bifoliated planes.

In Chapter \ref{chapter:BF}, we discuss in detail the bifoliated planes $(P, \F^+, \F^-)$ corresponding to a family of \emph{totally periodic} Anosov flows on graph $3$-manifolds constructed by Barbot and Fenley in \cite{barbot2013pseudo}. An Anosov flow $\varphi$ on a graph manifold $M$ is said to be totally periodic if for every Seifert piece of the JSJ decomposition of $M$, there exists a periodic orbit of the flow which (seen as a loop in $M$) has a power that is freely homotopic to a regular fiber of the Seifert piece.

The examples we work with in Chapter \ref{chapter:BF} are constructed by a gluing procedure due to Barbot and Fenley. 

First, one defines a \emph{building block}, which is a $3$-manifold with boundary together with a partial flow which is tangent to two boundary components and transverse to two boundary components of the block, entering the block in one transverse boundary component and exiting from the other one. 

Then, one chooses some finite number of \emph{fat graphs} $X_i \subset \Sigma_i$ which satisfy a certain admissibility property. A fat graph is a finite graph $X$ embedded in a surface with boundary $\Sigma$, such that $\Sigma$ deformation retracts onto the graph. For each edge on the graph $X_i$, one takes a copy of the building block described above. The blocks are glued along their transverse boundary components according to combinatorial information given by the graph, obtaining a number of circle bundles $N(X_i)$ over $\Sigma_i$, each equipped with a flow transverse to its boundary components, which are tori.

Finally, the manifolds $N(X_i)$ are glued together along their boundary components. For a choice of gluing maps satisfying a generic condition $(*)$, it was shown in \cite{barbot2013pseudo} that the flow obtained on the resulting manifold $M$ is an Anosov flow.

We show in this chapter the following result:

\begin{introthm}\label{introthm:totally_periodic_isomorphic}
    For any two choices of fat graphs $\{X_i \subset \Sigma_i\}$ and gluing maps $\{A_j : \partial N(X_{i_j}) \to \partial N(X_{i_j'})\}$ satisfying condition $(*)$, the bifoliated planes of the resulting Anosov flows are isomorphic.
\end{introthm}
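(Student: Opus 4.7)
The plan is to show that any bifoliated plane arising from the Barbot--Fenley construction decomposes canonically as a tree of standard pieces whose local and combinatorial form is independent of the chosen data. Since every instance of the construction uses the same building block $B$, the lift of any block to $\widetilde{M}$ projects under the orbit-space map to an elementary bifoliated region of $P_\varphi$ of a fixed combinatorial type, determined only by the dynamics of the partial flow inside $B$. Assembling these pieces according to the edges of a fat graph $X_i$ produces, inside a single lifted Seifert piece $\widetilde{N(X_i)}$, what I will call a \emph{standard Seifert region}: a bifoliated region built by plugging copies of the elementary region around the universal cover of $X_i$, which is an infinite tree because $\pi_1(\Sigma_i)$ is a surface group. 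Admissibility of $X_i$ should ensure that this plugging is uniform enough that the resulting standard Seifert region is isomorphic, as a bifoliated region, to a single universal model independent of the specific graph $X_i$.

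Next I would globalize: the lifts of the JSJ tori give a $\pi_1(M)$-invariant collection of separating structures in $P_\varphi$ whose dual graph is the Bass--Serre tree of the graph-of-groups decomposition associated with the JSJ. I would argue that condition $(*)$ forces this tree to be the infinite-valence tree and, more importantly, forces the attaching of two adjacent standard Seifert regions across a separating structure to follow a canonical local model. Heuristically, the Anosov property together with $(*)$ pin down the relative position of $\F^+$ and $\F^-$ on the two sides of a lifted gluing torus up to the symmetries of the standard Seifert region; the combinatorics therefore do not see the individual gluing matrix $A_j$, only the fact that it satisfies $(*)$. Thus both bifoliated planes are tiled by isomorphic standard Seifert regions glued along isomorphic separating structures in the same tree pattern.

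The isomorphism itself is then built inductively along the tree. Fix a standard Seifert region in $P_\varphi$ and one in $P_{\varphi'}$, and identify them via the canonical model isomorphism; then extend across each neighboring separating structure, one level at a time, using the uniform form of each adjacent region. Each such extension is uniquely determined once one fixes a matching of boundary leaves, and the finite symmetry group of the standard region provides exactly the flexibility needed to absorb any ambiguity.

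The main obstacle, and the technical heart of the argument, will be verifying that the extensions across a lifted gluing torus are well-defined independently of the choice of $A_j$. Different admissible gluings produce a priori different relative positions of adjacent standard Seifert regions, and one must show that any two such positions differ by a symmetry of the standard region which can be absorbed into the inductive identification rather than an honest obstruction. I expect this to reduce to a careful analysis of how the foliations $\F^\pm$ meet the periodic orbits lifted from the gluing tori, using that condition $(*)$ has been designed precisely to control this local behaviour.
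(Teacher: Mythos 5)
Your overall strategy matches the paper's: decompose $P_\varphi$ into a tree of standard pieces (the paper's \emph{trees of scalloped regions}) whose combinatorial type is independent of the data, and construct the isomorphism inductively along the tree, extending across separating structures one level at a time. That part of the outline is sound.

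There is, however, a genuine gap at what you yourself flag as the technical heart. You never identify the data needed to actually define the isomorphism on a single piece and propagate it. The paper's key lemma (Proposition~\ref{proposition:BF_X_infty_properties}) is that the set of non-separated leaves of $\F^\pm$ crossing the interior of any lozenge is a countable dense linearly ordered set of order type $\mathbb{Q}$, organized into \emph{levels} measuring the distance (in the tree of trees) from the ambient tree of scalloped regions; this leveled order structure is the same for every lozenge and every flow in the family. Lemma~\ref{lemma:order_preserving_nonsep} then provides level-preserving order-preserving bijections, and these are what determine the foliation-preserving homeomorphism on the interior of a lozenge and make the inductive extension consistent. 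Your proposal speaks of ``a matching of boundary leaves'' and a ``finite symmetry group of the standard region'' absorbing ambiguity, but neither captures this: the set of leaves that must be matched inside a lozenge is dense and countably infinite, not a finite boundary datum, and there is no finite symmetry group in play. Without the leveled $\mathbb{Q}$-structure you have no way to build the map or to show compatibility across adjacent lozenges.

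Relatedly, your anticipated ``main obstacle'' --- that different gluing matrices $A_j$ produce different relative positions of adjacent Seifert regions which must be absorbed by symmetries --- does not actually arise in the paper's argument. Once $(*)$ guarantees the flow is Anosov, the gluing matrix is invisible: the collection of trees of scalloped regions, the way any two intersect in a scalloped region (Proposition~\ref{proposition:BF_tree_intersection}), and the leveled order structure on non-separated leaves are already canonical. The work is in proving that this structure is universal, not in reconciling a priori different local models across gluing tori.
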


That is, the combinatorial information of the fat graphs and the choice of gluing maps of the torus boundary components of the pieces $N(X_i)$ does not affect the topology of the resulting bifoliated planes.

In order to prove Theorem $\ref{introthm:totally_periodic_isomorphic}$, we will describe in detail the structure of these bifoliated planes. The main structural feature of these planes which we will need to understand are \emph{trees of scalloped regions}, which are the projection to the orbit space of a connected component of the lift of a JSJ piece of $M$ to $\widetilde{M}$. Trees of scalloped regions in the orbit spaces of Anosov flows were first defined by Barthelmé-Fenley-Mann in \cite{barthelme2023anosov}. The union of these projections is all of the orbit space, and therefore understanding each tree of scalloped regions and how they fit together is key to understanding the topology of the bifoliated plane. 

Once we understand how the different trees of scalloped regions fit together, we show that any two bifoliated planes obtained by this construction are isomorphic. We do this via an iterative procedure.

Additionally, understanding the structure of the plane allows us to explicitly describe the Anosov-like action of $\pi_1(M)$ on $(P, \F^+, \F^-)$. In principle, this is doable for any manifold $M$ obtained in this way. A presentation for the fundamental group of $M$ can be calculated from the combinatorial data of the fat graphs and the boundary gluing maps. Then, one can explicitly describe how each generator of the fundamental group acts on the plane.

Here, we do it for the case of the Bonatti-Langevin flow, initially defined in \cite{bonatti1994exemple} via an explicit construction. We define it in terms of the Barbot-Fenley construction described above, as a flow with a single Seifert piece $N(X)$, where the fat graph $X$ is a figure-8 graph embedded in a punctured Möbius band. The ideas used to understand the action in the case of the Bonatti-Langevin flow generalize to other choices of fat graphs and gluing maps, but computations are simplified in this case due to the low complexity of the manifold supporting the flow.

In Chapter \ref{chapter:FW}, we describe the bifoliated planes $\left(P_A, \F^+_A, \F^-_A\right)$ associated to a class of Anosov flows, the Franks-Williams flows $\varphi_A$ on a manifold $M_A$, obtained via a surgery technique.

The construction of the flow $\varphi_A$, described in Section \ref{section:FW_construction}, involves a choice of a hyperbolic matrix $A \in \mathrm{SL}(2,\Z)$. This construction was first introduced by Franks and Williams in \cite{FW1980anomalous} for a specific choice of $A$. This flow is commonly referred to as \emph{the} Franks-Williams flow. It was the first example of a non-transitive Anosov flow. 

The same arguments used by Franks and Williams work for any choice of matrix $A$ in $\mathrm{SL}(2,\Z)$. In \cite{YangYu2022classifying}, Yang and Yu studied the flows obtained when considering an arbitrary choice of hyperbolic matrix $A$. We refer to the flows obtained in this way as Franks-Williams flows.

An important feature of these flows is the existence of a torus in the manifold $M_A$ which is transverse to the flow. This torus is (up to isotopy) the only essential torus in the JSJ decomposition of $M_A$, splitting the manifold into two connected components, which are atoroidal pieces. Each of these components contains a basic set for the flow, with one basic set being an attractor and the other one being a repeller. These are compact sets with empty interior, and contain all the orbits that do not intersect the transverse torus. Therefore, an open and dense subset of the orbit space of the lift $\widetilde{\varphi_A}$ consists of orbits which intersect some lift of this torus.

Then, in order to understand these bifoliated planes, one (as in Chapter \ref{chapter:BF}) needs to understand certain chains of lozenges. In this case, these are the projections to the orbit space of lifts of the transverse torus. It is also important to understand how the different chains are organized in the plane, and this is what leads us to Theorem \ref{introthm:FW_main_theorem} below. This theorem answers a question of Barthelmé, who asked if all the bifoliated planes associated to these flows are isomorphic.

We show that many of the bifoliated planes associated to the flows $\varphi_A$ can be distinguished from each other via a topological invariant, closely related to the continued fraction expansion of the slopes of the eigenspaces of $A$. 

We prove: 
\begin{introthm}\label{introthm:FW_main_theorem}
     Let $A, B \in \mathrm{SL}(2,\Z)$, and let $\left(P_A, \F^+_A, \F^-_A\right)$, $\left(P_B, \F^+_B, \F^-_B\right)$ be the bifolated planes corresponding to the flows $\varphi_A$ and $\varphi_B$. Let the slopes $u_A, u_B$ of the expanding eigenspaces of $A$ and $B$ be given by
   \[
\pm u_A = 
a_0+ \cfrac{1}{\displaystyle a_1 +
  \cfrac{1}{\displaystyle a_2 + 
   \cfrac{1}{\ddots}}},
\,\, \pm u_B = 
b_0+ \cfrac{1}{\displaystyle b_1 +
  \cfrac{1}{\displaystyle b_2 + 
   \cfrac{1}{\ddots}}}, \]
   where $a_i, b_i \in \N$.

    If $\left(P_A, \F^+_A, \F^-_A\right)$ and $\left(P_B, \F^+_B, \F^-_B\right)$ are isomorphic, then there exist $m,n \geq 0$ such that $a_{m+i} = b_{n + i}$ for all $i\geq 0$.
\end{introthm}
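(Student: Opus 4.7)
The plan is to extract from each bifoliated plane $(P_A, \F^+_A, \F^-_A)$ a topologically canonical combinatorial invariant that records the tail of the continued fraction expansion of $\pm u_A$, and then show that any isomorphism of bifoliated planes must preserve this invariant.

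First I would set up the structural description of $P_A$. The transverse torus $T \subset M_A$ is essential in the JSJ decomposition, and its lifts to $\widetilde{M}_A$ project to maximal chains of lozenges in $P_A$; the complement of the union of these chains decomposes into two types of regions corresponding to the attractor and repeller atoroidal pieces. Before building any invariant I would verify that this decomposition is intrinsic to the bifoliated plane: a maximal chain of lozenges is topologically definable from $(\F^+_A, \F^-_A)$, and the attractor versus repeller regions can be told apart by the asymptotic behavior of their leaves (e.g.\ by which side accumulates limit leaves coming from the basic set).

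Next I would define the invariant. Along a given chain $C$ of lozenges, adjacent chains attach at corners, and these attachments come in runs whose lengths assemble into a bi-infinite sequence of positive integers $(n_i(C))_{i \in \Z}$. The core assertion is a dictionary: up to a finite initial segment and a choice of orientation, the run-length sequence recorded along any sufficiently deep chain realizes the continued fraction expansion of $\pm u_A$. Such a dictionary should exist because the holonomy around a chain of lozenges is controlled by iterates of the gluing matrix $A$ acting on the boundary circle, and the continued fraction expansion of the eigendirection slope is precisely the combinatorial data that governs how these iterates cut out successive levels of lozenges along the chain.

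Finally, suppose $\Phi : (P_A, \F^+_A, \F^-_A) \to (P_B, \F^+_B, \F^-_B)$ is an isomorphism of bifoliated planes. Then $\Phi$ preserves lozenges, chains of lozenges, corners, and the incidence combinatorics between adjacent chains, so it maps the sequence $(n_i(C))_{i \in \Z}$ for any chain $C$ in $P_A$ to the run-length sequence of some chain $\Phi(C)$ in $P_B$. By the dictionary of the previous step, these two sequences realize the tails of the continued fraction expansions of $\pm u_A$ and $\pm u_B$, so the tails must eventually agree, yielding the desired $m, n \geq 0$ with $a_{m+i} = b_{n+i}$ for all $i \geq 0$; the shifts $m$ and $n$ reflect that $\Phi$ need not respect a chosen basepoint along the chain and that the dictionary only controls sufficiently deep entries.

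The main obstacle I expect is establishing the dictionary in the middle step: one has to prove rigorously that the purely topological arrangement of chains of lozenges in $P_A$ records the continued fraction expansion of $\pm u_A$. This will require a careful analysis of how $A$ acts on the invariant foliations of the underlying suspension flow, how this action survives the DA surgery and cutting along the transverse torus, and how it reappears as the incidence combinatorics of adjacent chains of lozenges in $P_A$. A secondary technical issue is proving that the combinatorial invariant is truly intrinsic — independent of any auxiliary choices of reference chain or orientation — likely by exploiting the circle at infinity and the Anosov-like action of $\pi_1(M_A)$ on $(P_A, \F^+_A, \F^-_A)$.
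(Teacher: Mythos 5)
Your high-level strategy --- decompose $P_A$ into chains of lozenges coming from lifts of the transverse torus, extract a bi-infinite combinatorial sequence, and show an isomorphism must preserve it --- matches the shape of the paper's argument. You also correctly identify the bottleneck: proving that the combinatorial data records the continued fraction of the eigenslope. However, the invariant you propose is not well defined. You write that ``adjacent chains attach at corners'' and that these attachments assemble into run lengths; but the chains (the infinite perfect fits) in $P_A$ are pairwise \emph{disjoint} (this is shown in the paper, and is forced by the fact that an orbit meets the transverse torus at most once). They do not share corners, so there is no ``attachment'' combinatorics to read off from a single chain. The paper's invariant is instead extracted from a different structure: one takes a corner $\walpha$ of a lozenge in a chain (a lift of the saddle periodic orbit $\alpha_1$), follows the half-leaf $\wr_2$ of $\F^-(\walpha)$ that exits the chain, and records how \emph{other} chains intersect $\wr_2$.

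Even with that repair, your proposal is missing the key discrimination that makes the dictionary work: not every chain meeting $\wr_2$ contributes to the invariant. The paper distinguishes \emph{crossing} infinite perfect fits (roughly, those whose bounding $\F^-$-leaf actually reaches $\F^+(\walpha)$) from non-crossing ones, and records only the former, further sorted into left and right crossings. Via the semiconjugacy from the DA map to the linear map $f_A$, a crossing chain corresponds to an integer point $(m,n)$ such that the parallelogram $R_{(m,n)}$ spanned by lines of the two eigenslopes through $(0,0)$ and $(m,n)$ contains no other lattice points --- precisely the condition that $n/m$ is a good (lower or upper) approximation of the second kind to the eigenslope $\alpha$. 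It is the run lengths of left versus right crossings that reproduce the partial quotients $a_i$ of the continued fraction expansion, via the classical description of best approximations in terms of convergents. Your proposed mechanism (``holonomy around a chain controlled by iterates of $A$ on the boundary circle'') does not engage with lattice points or best approximations at all, and as stated would record all intersections rather than only the crossing ones; that data does not directly yield the partial quotients. The action of $A$ does enter the paper's argument, but only at the end, to show that the crossing pattern is periodic with period equal to the period of the continued fraction, giving the shift freedom $m,n\geq 0$ in the conclusion.
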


As a consequence, one can show the following using classical results relating matrices in $\mathrm{GL}(2,\Z)$ to the theory of continued fractions, and the structure of $\mathrm{SL}(2,\Z)$.
\begin{introcor}\label{introcor:FW_cor}
    Let $A, B \in \mathrm{SL}(2,\Z)$, and let $\left(P_A, \F^+_A, \F^-_A\right)$, $\left(P_B, \F^+_B, \F^-_B\right)$ be the bifolated planes corresponding to the flows $\varphi_A$ and $\varphi_B$.

    If $\left(P_A, \F^+_A, \F^-_A\right)$ and $\left(P_B, \F^+_B, \F^-_B\right)$ are isomorphic, then there exist $k,l\in \Z$ such that $A^k$ and $\pm B^l$ are conjugate in $\mathrm{GL}(2,\Z)$.
\end{introcor}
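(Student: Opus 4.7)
The plan is to combine Theorem \ref{introthm:FW_main_theorem} with classical results relating continued fractions to the Möbius action of $\mathrm{GL}(2,\Z)$ on $\R \cup \{\infty\}$, together with the structure of stabilizers in $\mathrm{PSL}(2,\Z)$ of axes of hyperbolic elements in $\H^2$.

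The first step is to apply Theorem \ref{introthm:FW_main_theorem} to obtain that the continued fraction expansions of $\pm u_A$ and $\pm u_B$ share a common tail. By Serret's classical theorem, two irrational real numbers share a common tail in their continued fraction expansions if and only if they lie in the same $\mathrm{GL}(2,\Z)$-orbit under the Möbius action. Since the sign ambiguity is absorbed by $\mathrm{diag}(-1,1) \in \mathrm{GL}(2,\Z)$, which acts on $\R \cup \{\infty\}$ by $x \mapsto -x$, we obtain some $M \in \mathrm{GL}(2,\Z)$ with $M \cdot u_A = u_B$.

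The next step — the key point of the proof — is to upgrade this fixed-point equivalence to an equivalence of full axes. Let $s_A, s_B$ denote the slopes of the contracting eigenspaces of $A$ and $B$. These are the Galois conjugates of $u_A, u_B$ inside the common real quadratic field $\Q(u_A) = \Q(u_B)$. Because $M$ has rational entries, its Möbius action commutes with Galois conjugation over $\Q$, so applying conjugation to $M \cdot u_A = u_B$ yields $M \cdot s_A = s_B$. Consequently $M$ sends the unordered pair of fixed points of $A$ on $\partial \H^2$ to the pair of fixed points of $B$, so that $MAM^{-1} \in \mathrm{SL}(2,\Z)$ has the same axis in $\H^2$ as $B$.

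To finish, I would use that both $MAM^{-1}$ and $B$ lie in the orientation-preserving stabilizer of this common axis in $\mathrm{PSL}(2,\Z)$, which is infinite cyclic as a discrete subgroup of the one-parameter translation subgroup along the axis. Hence there exist nonzero integers $k, l$ with $(MAM^{-1})^k = B^l$ in $\mathrm{PSL}(2,\Z)$, i.e.\ $MA^kM^{-1} = \pm B^l$ in $\mathrm{SL}(2,\Z)$, which is the desired conclusion. The main subtlety I expect is bookkeeping of sign conventions — passing from $\pm u_A$ in the hypothesis of Theorem \ref{introthm:FW_main_theorem} to the explicit $\pm B^l$ in the conclusion — but each sign ambiguity is cleanly accounted for by a determinant-$(-1)$ matrix in $\mathrm{GL}(2,\Z)$ or by the central element $-I \in \mathrm{SL}(2,\Z)$, both of which act trivially at the level of axes in $\H^2$.
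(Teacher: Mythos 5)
Your proof is correct and takes essentially the same route as the paper's. Both arguments proceed in three steps: apply Theorem~\ref{introthm:FW_main_theorem} to obtain a common tail in the continued fraction expansions; invoke Serret's theorem (the paper cites \cite{hardy75}, §10.11) to produce a conjugating matrix in $\mathrm{GL}(2,\Z)$ carrying one eigenslope to the other; and conclude from the infinite cyclicity of the relevant stabilizer in $\mathrm{PSL}(2,\Z)$ that the conjugated matrices share a common power up to sign. The only difference is your Galois-conjugation step: you first upgrade the single boundary-point equivalence $M\cdot u_A = u_B$ to an equivalence of full axes (using that a rational Möbius transformation commutes with Galois conjugation over $\Q$), so that $MAM^{-1}$ and $B$ share an axis in $\H^2$. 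The paper skips this and directly observes that $A$ and $C^{-1}BC$ both fix a single point $\alpha(A) \in \partial\H^2$, and that the stabilizer of any one boundary point in a discrete subgroup of orientation-preserving isometries of $\H^2$ is already cyclic. Your extra step is valid and clean bookkeeping, but it is not needed for the conclusion — discreteness plus a common fixed point already forces a shared axis. So while the details are arranged slightly differently, there is no substantive divergence; both proofs rely on the same two classical facts.
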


This shows in particular that there are infinitely many non-isomorphic bifoliated planes of this form.

\section{Preliminaries}

\subsection{Anosov flows}\label{subsection:background_Anosov}

In this section, we present standard definitions and results regarding Anosov flows and their orbit spaces. In this thesis, $M$ will always denote a $3$-dimensional manifold.

\begin{definition}
    Let $M$ be a compact manifold. A flow $\varphi:\R\times M \to M$generated by a $C^1$ vector field $X$ is said to be Anosov if there exists a continuous splitting $TM = \R X\oplus E^u \oplus E^s $ and constants $C, \lambda >0$ such that:
    \begin{enumerate}
        \item For all $t\in \R$, the splitting is $d\varphi_t$-invariant.
        \item For all $v\in E^s$ and all $t\in \R$, we have $\|d\varphi_t(v) \|\leq C e^{-\lambda t}\|
        v \|$.
        \item For all $v \in E^u$ and all $t\in \R$, we have $\|d\varphi_{-t}(v) \|\leq C e^{-\lambda t}\| v\|$.
    \end{enumerate}
\end{definition}

\begin{remark}
    Compactness of $M$ ensures that the choice of norm in the definition above does not matter.
\end{remark}

\begin{example}
    \begin{enumerate}
        \item Let $S$ be a compact surface equipped with a hyperbolic metric and let $T^1S$ be the unit tangent bundle of $S$. Then, the geodesic flow $\varphi:\R\times M \to M$ is an Anosov flow.

        \item Let $f_A: T^2 \to T^2$ be induced by the linear map $A:\R^2\to\R^2$ given by multiplication by a hyperbolic matrix $A\in \mathrm{SL}(2,\Z)$. Let $M_A$ be the suspension of the map $f_A$, and let $\varphi$ be the flow induced in $M_A$ by the unit norm vertical vector field. Then, $\varphi:\R\times M_A \to M_A$ is an Anosov flow.
        \end{enumerate}
\end{example}

The examples given above are the so-called \emph{algebraic} Anosov flows. By performing surgeries and gluing constructions, one can construct other examples of Anosov flows on $3-$manifolds (see e.g. Chapters \ref{chapter:BF} and \ref{chapter:FW}).

The bundles $E^s \oplus \R X$ and $E^u \oplus \R X$ are known as the \emph{stable} and \emph{unstable} bundle respectively. It is a classical result (see for instance \cite{fisher2019hyperbolic}) that these bundles are uniquely integrable, yielding transverse $2$-dimensional foliations $\F^s$ and $\F^u$, known as the stable and unstable foliations of the flow $\varphi$. 

Given an Anosov flow $\varphi$ on a $3$-manifold $M$, one can lift $\varphi$ to the universal cover $\widetilde{M}$ of $M$, obtaining a flow $\widetilde{\varphi}$. The foliations $\F^s$ and $\F^u$ also lift to foliations $\wF^s, \wF^u$ of $\widetilde{M}$. 

Now, we describe the connection between Anosov flows and bifoliated planes.

\begin{definition}
Given an Anosov flow $\varphi$ on a $3$-manifold $M$, the \emph{orbit space} $\mathcal{O}_\varphi$ of $\varphi $ is defined to be the quotient space of $\widetilde{M}$ by the equivalence relation whose equivalence classes are the orbits of $\widetilde{\varphi}$.  
\end{definition}

It follows from results of Verjovsky and Palmeira that the universal cover $\widetilde{M}$ of a $3$-manifold supporting an Anosov flow is homeomorphic to $\R^3$.

\begin{theorem}[\cite{barbot_1995}, \cite{fenley1994anosov}]
    The orbit space $\mathcal{O}_\varphi$ of an Anosov flow is homeomorphic to a topological plane. The foliations $\wF^s, \wF^u$ descend to a pair of transverse $1$-dimensional foliations $\F^-, \F^+$ of $\mathcal{O}_\varphi$, whose leaves are properly embedded in $\mathcal{O}_\varphi$.
\end{theorem}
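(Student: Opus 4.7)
The plan is to combine a structural result about the universal cover of $M$ with a careful analysis of how the weak foliations $\wF^s, \wF^u$ interact with the lifted flow $\widetilde{\varphi}$. As a first step, I would invoke the theorem of Palmeira together with Verjovsky's arguments to conclude that $\widetilde{M}$ is homeomorphic to $\R^3$, using that the leaves of $\wF^s, \wF^u$ are 2-planes (being simply connected leaves of 2-dimensional foliations that are unions of orbits of $\widetilde{\varphi}$, with exponential expansion/contraction precluding other topologies). This gives us a workable model for $\widetilde{M}$ in which to analyze the orbit space.

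Next I would endow $\mathcal{O}_\varphi$ with the structure of a topological 2-manifold. The local product structure of an Anosov flow provides, around every point $\wz \in \widetilde{M}$, a flow box $U \cong D^2 \times (-\varepsilon,\varepsilon)$ where $D^2$ is a local transversal meeting each orbit through $U$ in exactly one point. Projecting these transversals down yields a chart on $\mathcal{O}_\varphi$, and the transition maps are continuous because the flow is continuous. The foliations $\wF^s$ and $\wF^u$, each containing the flow direction $\R X$, intersect the transversal $D^2$ in transverse 1-dimensional foliations, which therefore descend to 1-dimensional foliations $\F^-, \F^+$ on $\mathcal{O}_\varphi$ that are topologically transverse.

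The main obstacle is establishing that $\mathcal{O}_\varphi$ is Hausdorff. The standard argument uses that $\widetilde{M} \cong \R^3$ together with the global product structure on each weak leaf: if two distinct orbits $\wr_1, \wr_2$ of $\widetilde{\varphi}$ could not be separated by open saturated sets, one builds sequences $\wz_n \to \wz_\infty \in \wr_1$ and $\widetilde{\varphi}_{t_n}(\wz_n) \to \widetilde{w}_\infty \in \wr_2$, and uses the expansion/contraction of the foliations plus the topology of $\R^3$ (for instance, that a leaf of $\wF^s$ is an embedded plane separating $\widetilde{M}$) to derive a contradiction. This is the technically delicate step and is carried out in detail in \cite{fenley1994anosov} and \cite{barbot_1995}. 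Once Hausdorffness is in place, simple connectivity of $\mathcal{O}_\varphi$ follows by lifting any loop to $\widetilde{M}$, thickening it to a disk in a tubular neighborhood of its flow-saturation, and projecting the null-homotopy down; as a simply connected, second countable, Hausdorff topological 2-manifold, $\mathcal{O}_\varphi$ is then homeomorphic to $\R^2$ by the classification of surfaces.

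Finally, for proper embeddedness of the leaves of $\F^\pm$, I would argue that since each leaf of $\wF^s$ is a properly embedded plane in $\widetilde{M}$ foliated by orbits of $\widetilde{\varphi}$ (equivalent to $\R^2$ foliated by vertical lines, via the global product structure on the weak leaf), its image in $\mathcal{O}_\varphi$ is homeomorphic to $\R$ and is properly embedded: a sequence exiting every compact set in the leaf corresponds to a sequence of orbits exiting every compact set of orbits in $\mathcal{O}_\varphi$, which is preserved under the quotient. The same applies to $\wF^u$, completing the proof.
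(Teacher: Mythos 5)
The paper does not prove this theorem; it is stated as background and attributed entirely to \cite{barbot_1995} and \cite{fenley1994anosov}, so there is no ``paper's own proof'' to compare against. Your sketch follows the standard route in those references: Novikov gives plane leaves for $\wF^s$, Palmeira gives $\widetilde{M}\cong\R^3$, flow boxes give the orbit space a (possibly non-Hausdorff) surface structure, and the substance is the Hausdorffness argument, which you correctly flag as the delicate step and defer to Barbot--Fenley.

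One small gap: a simply connected, second countable, Hausdorff 2-manifold is either $\R^2$ \emph{or} $S^2$, so ``classification of surfaces'' does not finish the argument by itself. You need to rule out $S^2$, e.g.\ because the orbit space carries the non-singular foliation $\F^-$ and $S^2$ admits no non-singular line field, or because $\mathcal{O}_\varphi$ is non-compact (the projection $\widetilde{M}\to\mathcal{O}_\varphi$ is an open map from a non-compact Hausdorff space with non-compact fibers that each meet a compact set in a compact set). Similarly, proper embeddedness of the image of a weak leaf in $\mathcal{O}_\varphi$ deserves slightly more care than ``proper in $\widetilde{M}$ implies proper in the quotient'': what is actually used is the global product structure on each weak leaf, namely that a leaf $L$ of $\wF^s$ meets each orbit in at most one component and its space of orbits is $\R$, together with Hausdorffness of $\mathcal{O}_\varphi$, to see that the projection $L\to\mathcal{O}_\varphi$ is a closed embedding of $\R$. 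These are exactly the points handled carefully in \cite{fenley1994anosov}.
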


That is, the orbit space of an Anosov flow together with the foliations $\F^+, \F^-$ is a bifoliated plane as defined in Definition \ref{definition:def_bifoliated_plane}. From here on, we will use the notation $P_\varphi$ for the orbit space $\mathcal{O}_\varphi$ of $\varphi$.

\subsection{Bifoliated planes}\label{subsection:prelim_bifoliated}

Here we state important results and definitions relating to bifoliated planes. 

First, we give two examples of bifoliated planes which arise naturally from the study of Anosov flows.

\begin{figure}[h]
  \centering
  \begin{minipage}[b]{0.30\textwidth}
    \includegraphics[width=\textwidth]{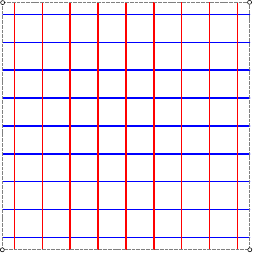}
    \caption{Trivial bifoliated plane.}
    \label{fig:image1}
  \end{minipage}
  \hspace{2cm}
  \begin{minipage}[b]{0.30\textwidth}
    \includegraphics[width=\textwidth]{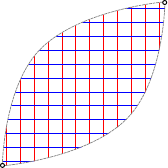}
    \caption{Skew bifoliated plane.}
    \label{fig:image2}
  \end{minipage}
\end{figure}

\begin{example}\label{example:background_skew_trivial}
    \begin{enumerate}
        \item Let $P = \R^2$, let $\F^+$ be the foliation of $P$ by vertical straight lines, and let $\F^-$ be the foliation of $P$ by horizontal straight lines. We call $(P, \F^+, \F^-)$ the \emph{trivial} bifoliated plane.

        \item Let $P = \{ (x,y) \in \R^2: x < y < x +1 \}$, and let $\F^+$ and $\F^-$ be the foliations of $P$ by vertical and horizontal straight lines. We call $(P, \F^+, \F^-)$ the \emph{skew} bifoliated plane.
    \end{enumerate}    
\end{example}

It will be important to have a notion of equivalence, or isomorphism, for bifoliated planes.

\begin{definition}
    We say that bifoliated planes $(P_1, \F^+_1, \F^-_1)$ and $(P_2, \F^+_2, \F^-_2)$ are isomorphic if there exists a homeomorphism $h: P_1 \to P_2$ that maps leaves of $\F^+_1$ to leaves of $\F^+_2$ and leaves of $\F^-_1$ to leaves of $\F^-_2$.
\end{definition}

\begin{remark}
    It's easy to see that the skew bifoliated plane and the trivial bifoliated plane are not isomorphic. For instance, it suffices to see that in the latter, each leaf in $\F^+$ intersects all leaves of $\F^-$. This is not the case in a skew bifoliated plane.
    
    One can show that the bifoliated plane coming from the geodesic flow on the unit tangent bundle of a hyperbolic surface is isomorphic to the skew plane, and that the bifoliated plane coming from the suspension flow of a hyperbolic linear map of the torus is isomorphic to a trivial plane.
\end{remark}

More examples can be explicitly constructed by generalizing the examples discussed above.

\begin{example}
    Let $U\subset \R^2$ be a connected open set, and let $\F^+, \F^-$ be the foliations on $U$ by straight vertical and horizontal lines. Then, if we let $P = \widetilde{U}$ be the universal cover of $U$ equipped with the lifted foliations $\wF^+, \wF^-$, we have that $(P, \wF^+, \wF^-)$ is a bifoliated plane.
\end{example}
\begin{figure}[h]
  \centering
  \includegraphics[width = 0.4\textwidth]{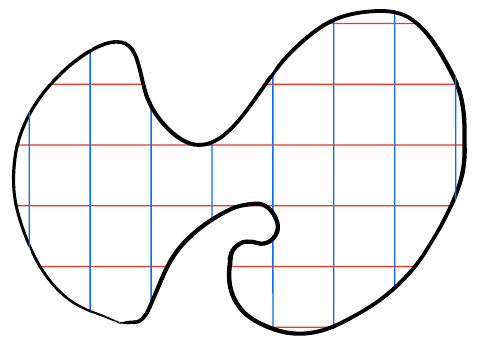}
  \caption{A simply connected open set with transverse foliations.}
  \label{fig:weird_open}
\end{figure}

It will be useful in the study of bifoliated planes to work with their associated leaf spaces. 

\begin{definition}
    Let the leaf spaces $\Lambda^+, \Lambda^-$ of $\F^+$ and $\F^-$ be defined as $\Lambda^+ = P /\F^+$ and $\Lambda^- = P/ \F^-$. That is, they are the spaces obtained by identifying each leaf of the foliations $\F^+, \F^-$ respectively to a point, equipped with the quotient topology.

\end{definition}

\begin{proposition}[\cite{candel2000foliations}]\label{proposition : leaf_spaces}
    If $(P, \F_1, \F_2)$ is a bifoliated plane, then the leaf spaces $\Lambda_1, \Lambda_2$ of $\F_1, \F_2$ respectively, are simply connected and second countable 1-manifolds. Additionally, once one has chosen orientations for $\F_1$ and $\F_2$, these orientations induce orientations on $\Lambda_2$ and $\Lambda_1$, respectively. 
\end{proposition}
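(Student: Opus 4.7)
My plan is to establish each property locally via the product structure of foliation charts, then globalize. For the $1$-manifold structure, the topological transversality condition gives, for each $p \in P$, a neighborhood $U$ and a chart $\phi : U \to (-1,1)^2$ sending leaves of $\F_1$ to horizontal segments and leaves of $\F_2$ to vertical segments. The restriction of the projection $\pi_i : P \to \Lambda_i$ to $U$ factors through projection to the transverse coordinate, so $\pi_i(U)$ inherits a homeomorphism with $(-1,1)$ and serves as a chart around $\pi_i(p) \in \Lambda_i$. Thus each $\Lambda_i$ is locally Euclidean of dimension one (though possibly non-Hausdorff). Second countability is inherited from $P \cong \R^2$: because $\pi_i$ is an open map, the image of any countable base of $P$ is a countable base of $\Lambda_i$.

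For simple connectedness, I plan to ``almost lift'' any loop in $\Lambda_i$ to a loop in $P$. Given $\gamma : S^1 \to \Lambda_i$, compactness allows me to cover $\gamma(S^1)$ by finitely many chart neighborhoods $V_1, \ldots, V_n$ of the form above and refine $S^1$ into closed arcs $I_1, \ldots, I_n$ with $\gamma(I_j) \subset V_j$. Each chart carries a continuous section $s_j : V_j \to P$ coming from the local product structure. I would then build a continuous loop $\tilde\gamma : S^1 \to P$ by taking $s_j \circ \gamma$ on the interior of each $I_j$, and at each transition point $t$ inserting (after an appropriate reparametrization) a path contained in the leaf corresponding to $\gamma(t)$ joining $s_j(\gamma(t))$ to $s_{j+1}(\gamma(t))$; this is possible because each leaf is a line, hence arcwise connected. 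The projection $\pi_i \circ \tilde\gamma$ then coincides with $\gamma$ on the $I_j$ and is constant along the inserted segments, so it is homotopic to $\gamma$ as loops in $\Lambda_i$. Since $P \cong \R^2$ is simply connected, $\tilde\gamma$ bounds a disk in $P$, and projecting this null-homotopy via $\pi_i$ shows $\gamma$ is null-homotopic in $\Lambda_i$.

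For the orientation claim, note that in a chart $\phi : U \to (-1,1)^2$ as above the local leaf space $\pi_2(U)$ is identified with the horizontal factor $(-1,1)$. An orientation of $\F_1$ assigns a direction along each horizontal line in the chart; by continuity and connectedness all such horizontals acquire the same direction, and this direction pulls back to an orientation of the horizontal factor, i.e.\ of $\pi_2(U) \subset \Lambda_2$. Coherence on overlaps is automatic from the global orientation of $\F_1$, so these local orientations patch to a global orientation of $\Lambda_2$. The construction of the orientation on $\Lambda_1$ induced by $\F_2$ is symmetric.

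The main subtlety I foresee is verifying that the constructed loop $\tilde\gamma$ is genuinely continuous at each transition point; this reduces to checking that the endpoints of the inserted leaf-path agree with the values of the adjacent sections $s_j$ and $s_{j+1}$ at the transition, which holds by construction once the reparametrization is set up correctly. Beyond this bookkeeping, the argument uses only the local product structure provided by the bifoliated chart and the simple connectedness of the ambient plane $P$.
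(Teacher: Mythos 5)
The paper does not actually write out a proof here: it cites Candel--Conlon and offers only the one-sentence remark that ``one only needs existence of local transversals to the foliation in order to show that these leaf spaces are locally Euclidean.'' Your plan follows exactly that chart-based strategy and fills in the remaining pieces, so it is in the spirit of the reference rather than in conflict with it, and the argument is sound in outline. The place to be more careful is the claim that $\pi_i|_U$ ``factors through projection to the transverse coordinate, so $\pi_i(U)$ inherits a homeomorphism with $(-1,1)$,'' and likewise the existence of the continuous sections $s_j$: both silently assume that the flow-box transversal injects into $\Lambda_i$, i.e.\ that no leaf of $\F_i$ meets a sufficiently small flow box in more than one plaque. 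This is precisely where the hypothesis that the $\F_i$ are \emph{proper} (leaves properly embedded lines) enters. The standard way to close the gap: if a leaf $L$ hit a transversal $\tau$ at two points $q_1,q_2$, the sub-arc of $L$ and the sub-arc of $\tau$ between them would form a Jordan curve bounding a disk; leaves entering through $\tau$ cannot cross $L$, so a nesting/compactness argument produces a leaf trapped in the disk, contradicting properness. Adding a sentence or two along these lines would make the ``homeomorphism with $(-1,1)$'' claim, and hence the rest of the proof, airtight. The remaining pieces --- second countability from openness of $\pi_i$, simple connectedness by approximately lifting a loop to $P\cong\R^2$ and projecting a null-homotopy, and the orientation transfer via the local product charts --- are correct as written.
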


This holds in fact for the leaf space of any proper foliation by lines in $\R^2$, since one only needs existence of local transversals to the foliation in order to show that these leaf spaces are locally Euclidean. Note that the leaf spaces are Hausdorff if and only if they are homeomorphic to $\R$.\par

In this thesis, 1-manifolds are always understood to be second countable (but not necessarily Hausdorff), connected, and simply connected. We also assume that they are oriented whenever it is required. \par

In the skew and trivial examples defined in Example \ref{example:background_skew_trivial}, we can see that both leaf spaces are homeomorphic to the real line. However, this is not the case in general, as seen in the example shown in Figure \ref{fig:weird_open}. In general, leaf spaces are not Hausdorff, and we will see below that for bifoliated planes coming from Anosov flows, this is true of all interesting examples.

The following definition describes how leaves of a foliation look in the bifoliated plane when its leaf space is non-Hausdorff.

\begin{definition}
    Two leaves of a foliation $\mathcal{F}$ are said to be \emph{non-separated} if they do not admit disjoint neighborhoods saturated by leaves of $\mathcal{F}$.
\end{definition}

A saturated neighborhood of a leaf $l \in \F_i$ is given by leaves in $\F_i$ that intersect a transversal to $l$. This implies that two leaves of $\mathcal{F}$ are non-separated if and only if there exists a sequence of leaves of $\mathcal{F}$ that accumulates on both of them. Given $l_1, l_2, l_3 \in \F_i$, if $l_1$ is non-separated from $l_2$ and $l_2$ is non-separated from $l_3$, it is not necessary for $l_1$ and $l_3$ to be non-separated (see Figure \ref{fig: non-separated leaves}).
\vspace{20 pt}
\begin{figure}[h]
    \labellist
    \small\hair 2pt
    \pinlabel $l_1$ at 80 60
    \pinlabel $l_2$ at 143 78
    \pinlabel $l_3$ at 214 86
    \pinlabel $l_1$ at 393 86
    \pinlabel $l_2$ at 456 -9
    \pinlabel $l_3$ at 478 106
     \endlabellist
    \centering
    \vspace{-2mm}\includegraphics[scale = 0.50]{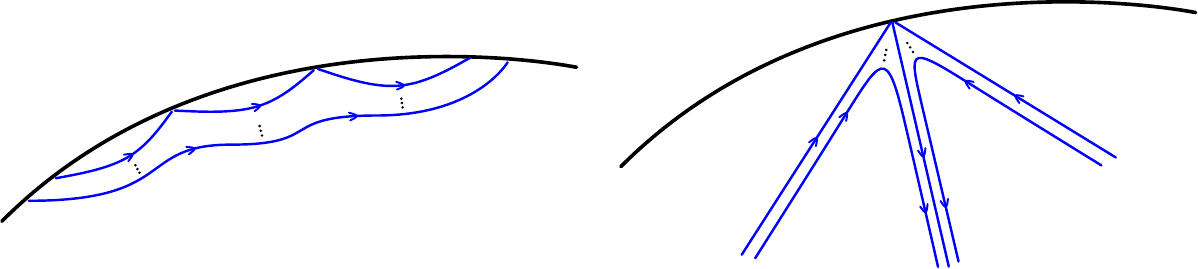}
    \caption{On the left, $l_1$, $l_2$, and $l_3$ are pairwise non-separated; on the right, $l_1$ and $l_2$ are non-separated, $l_2$ and $l_3$ are non-separated, while $l_1$ and $l_3$ are separated by $l_2$.}
    \label{fig: non-separated leaves}
\end{figure}   

\begin{comment}
    
\begin{definition}
    Let $l \in \F_1$ and $f \in \F_2$. We say that $l$ and $f$ make a \emph{perfect fit} if there exists a transversal $\tau_1$ to $\F_1$ starting at a point on $l$ and a transversal $\tau_2$ to $\F_2$ starting at a point on $f$ such that
    \begin{itemize}
        \item every $s \in \F_1$ that intersects $\tau_1$ also intersects $f$, and
        \item every $t \in \F_2$ that intersects $\tau_2$ also intersects $l$.
    \end{itemize}
\end{definition}

\begin{figure}[h]
    \centering
    \labellist
    \small\hair 2pt
    \pinlabel $l$ at 44 90
    \pinlabel $f$ at 206 23
    \pinlabel $\tau_1$ at 128 74
    \pinlabel $\tau_2$ at 156 68
    \pinlabel $s$ at 27 56
    \pinlabel $t$ at 173 17
    \endlabellist
    \includegraphics[scale = 0.60]{perfect-fit}
    \caption{A perfect fit made by $l$ and $f$}
    \label{fig: perfect fit}
\end{figure}

\end{comment}

\begin{remark}
For any bifoliated plane $(P_\varphi, \F^s, \F^u)$ coming from an Anosov flow $\varphi_t$ on a 3-manifold as described in the introduction, the leaf spaces $\Lambda^s, \Lambda^u$ of $\F^s, \F^u$ are either both Hausdorff (and hence homeomorphic to $\R$) or both non-Hausdorff. This was shown by Barbot and Fenley in \cite{barbot_1995, fenley1995sided}. There exist examples of both behaviors. In a general bifoliated plane this is not necessarily the case, and we will not assume it in Chapter \ref{chapter:LO}, where we study bifoliated planes that do not necessarily arise from Anosov flows.
\end{remark}

As we said in the introduction, the most important class of bifoliated plane is that of bifoliated planes $(P_\varphi, \F^+, \F^-)$ arising from an Anosov flow $\varphi$. In this case, the $\pi_1(M)$ action on $P_\varphi$ has special properties which come from the hyperbolicity properties of the flow. In \cite{barthelme2022orbit} and \cite{barthelme2024nontransitivepseudoanosovflows}, these properties have been axiomatized first by Barthelmé-Frankel-Mann (in the transitive case, generalizing $\pi_1(M)$ actions on $P_\varphi$ when the flow $\varphi$ is topologically transitive) and later by Barthelmé-Bonatti-Mann (in the general case) into what is known as \emph{Anosov-like} actions on bifoliated planes.

\begin{definition}[\cite{barthelme2024nontransitivepseudoanosovflows}]
    
An action of a group $G$ on a bifoliated plane $(P, \F^+, \F^-)$ which preserves both foliations is called Anosov-like if it satisfies the following:

\begin{enumerate}
\item[(A1)] If a nontrivial element of $G$ fixes a leaf $l \in \mathcal{F}^+$, then it has a fixed point $x \in l$ and is topologically expanding on one leaf through $x$ and topologically contracting on the other.

\item[(A2)] The union of leaves of $\mathcal{F}^+$ that are fixed by some element of $G$ is dense in $P$, as is the union of leaves of $\mathcal{F}^-$ that are fixed by some element of $G$.

\item[(A3)] Each singular point is fixed by some nontrivial element of $G$.

\item[(A4)] If $l$ is a leaf of $\mathcal{F}^+$ or $\mathcal{F}^-$ that is non-separated with some leaf $l'$ in the corresponding leaf space, then some nontrivial element $g \in G$ fixes $l$.

\item[(A5)] There are no totally ideal quadrilaterals in $P$. A totally ideal quadrilateral is a trivially foliated region bounded by four leaves $l_1, l_2\in \F^+,f_3,f_4\in\F^-$ such that $l_i$ makes a perfect fit with $f_j$, for $i, j = 1,2$.
\end{enumerate}
    
\end{definition}

\begin{proposition}[\cite{barthelme2022orbit}]
    The $\pi_1(M)$-action on the bifoliated plane $(P_\varphi, \F^+, \F^-)$ associated with an Anosov flow $\varphi$ on a $3$-manifold $M$ is an Anosov-like action.
\end{proposition}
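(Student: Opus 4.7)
The plan is to verify the five axioms (A1)--(A5) one by one for the deck-transformation action of $G = \pi_1(M)$ on $P_\varphi$. The action of $G$ on $\widetilde{M}$ commutes with the lifted flow $\widetilde{\varphi}$ and preserves the weak foliations $\wF^s, \wF^u$, so it descends to a foliation-preserving, orientation-preserving action on $(P_\varphi, \F^+, \F^-)$; this is the action to which the axioms apply. Since the flow is non-singular, $P_\varphi$ has no singular points and (A3) is automatically satisfied.

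For (A1), suppose $g \in G \setminus \{e\}$ fixes a leaf $l \in \F^+$, corresponding to a weak stable leaf $L \subset \widetilde{M}$. A classical theorem for Anosov flows in dimension three (see \cite{fenley1994anosov, barbot_1995}) states that the $\pi_1(M)$-stabilizer of such an $L$ is trivial unless $L$ contains a lift $\widetilde{\gamma}$ of a periodic orbit $\gamma$, in which case the stabilizer is infinite cyclic, generated by the deck transformation associated with $\gamma$. Hence $g$ is a nontrivial power of this generator, $\widetilde{\gamma}$ is the unique $g$-invariant orbit on $L$, and it projects to a unique fixed point $x \in l$. The uniform contraction of $L$ along $\widetilde{\gamma}$ in the strong stable direction inside $L$, which is built into the definition of Anosov flow, then passes to the orbit space and gives a topological contraction of $g$ on $l$ toward $x$; symmetrically, on the $\F^-$-leaf through $x$ the action is a topological expansion away from $x$, establishing (A1).

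For (A2), I would combine density of periodic orbits in the non-wandering set of $\varphi$, which follows from the Anosov closing lemma, with the fact that the union of weak stable (resp.\ unstable) manifolds of periodic orbits is dense in $M$: in the transitive case this is immediate, and in the non-transitive case it follows from the decomposition of the non-wandering set into basic sets together with the standard description of the behaviour of weak leaves in a neighbourhood of each basic set. Pulling back to $\widetilde{M}$ and projecting to $P_\varphi$ then yields density of the required leaves in the plane.

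The heart of the matter is (A4) and (A5). For (A4), I would appeal to the Barbot--Fenley theorem describing non-separated leaves in the orbit space of an Anosov flow: any two non-separated leaves are organized by a chain of lozenges whose corners are periodic orbits, so any leaf non-separated from another is invariant under a nontrivial element of $\pi_1(M)$ represented by a periodic orbit \cite{fenley1995sided, barbot_1995}. For (A5), the nonexistence of totally ideal quadrilaterals in $P_\varphi$ is another structure result of Fenley: four leaves meeting pairwise in perfect fits would produce incompatible limit behaviour for the stable and unstable foliations and contradict the hyperbolicity of the flow \cite{fenley1994anosov}. The main obstacle in the argument is therefore not any self-contained calculation but the reliance on these substantial prior structure theorems about Anosov flow orbit spaces; indeed it is precisely these properties that motivate the axiomatization into Anosov-like actions in \cite{barthelme2022orbit, barthelme2024nontransitivepseudoanosovflows}.
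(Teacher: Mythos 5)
The paper states this proposition as a citation to \cite{barthelme2022orbit} without reproducing a proof, so there is nothing internal to compare against; your axiom-by-axiom verification is the standard argument and is essentially what the cited reference establishes. The structure is correct: (A3) is vacuous for a non-singular flow, (A1) comes from the Barbot--Fenley classification of leaf stabilizers as trivial or infinite cyclic along lifts of periodic orbits plus hyperbolicity of the return map, (A2) from density of (un)stable manifolds of periodic orbits, (A4) from the chain-of-lozenges description of non-separated leaves, and (A5) from Fenley's rigidity for product regions (a totally ideal quadrilateral would force the flow to be a suspension, whose orbit space has no perfect fits at all). One small slip worth flagging: in your treatment of (A1) you write that $l \in \F^+$ corresponds to a \emph{weak stable} leaf $L$, but in the paper's conventions $\F^+$ is the projection of the weak \emph{unstable} foliation and $\F^-$ of the weak stable one; accordingly the strong direction you contract or expand inside $L$ is the strong unstable one, not the strong stable. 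This is purely a labeling issue and does not affect the logic of the verification.
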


There are certain structures on bifoliated planes associated with Anosov flows, and more generally for bifoliated planes which admit an Anosov-like action of a group $G$, which are important when studying these planes. We now define some of these, which will appear in Chapters \ref{chapter:BF} and \ref{chapter:FW}. 

\begin{definition}[\cite{barthelme2022orbit}]

Let $l^+ \in \F^+$ and $l^- \in \F^-$ be non-intersecting half-leaves. We say they make a perfect fit if there is an arc $\tau^+$ with an endpoint on $l^+$ and contained in a leaf of $\F^-$, and an arc $\tau^-$ with an endpoint on $l^-$ and contained in a leaf of $\F^+$, such that:
\begin{enumerate}
    \item Every leaf $s^+ \in \F^+$ that intersects the interior of $\tau^+$ intersects $l^-$.
    \item Every leaf $s^-\in \F^-  $ that intersects the interior of $\tau^-$ intersects $l^+$. 
\end{enumerate}  

If two leaves in $\F^+$ and $\F^-$ contain half-leaves making a perfect fit, we also say that the leaves make a perfect fit.
\end{definition}

\begin{definition}
Let $x \in P$, and let $r^{\pm}_x \in \F^{\pm}(x), r^{\pm}_y \in \F^{\pm}(y)$ be half-leaves with endpoints $x, y$ respectively. Suppose that the rays $r^{\pm}_x$ and $r^{\mp }_y$ make perfect fits. Then, the lozenge $L \subset P$ bounded by $r^{\pm}_x, r^{\pm}_y$ is the open region of $P$ whose boundary is the union of these half-leaves. That is,
\[
L = \{  z\in P: \F^{+}(z) \cap r^-_{x} \neq \emptyset \text{ and }  \F^{-}(z) \cap r^{+}_x \neq \emptyset  \}.
\]
We say that $x$ and $y$ are the \emph{corners} of $L$, and the half leaves $r^{\pm}_x, r^{\pm}_y$ are its sides. If $L$ and $L'$ are lozenges that share a side, we say that they are \emph{adjacent}.
\end{definition}

\begin{definition}\label{definition:chain_adjacent_lozenges}
    A chain of lozenges is the union $\mathcal{C}$ of a set $\{ L_\alpha\}_{\alpha \in I}$ of lozenges such that for any pair of lozenges $L, L'\in \{ L_\alpha\}_{\alpha \in I}$, there exist lozenges $L_{\alpha_0} = L, L_{\alpha_1}, \dots, L_{\alpha_n} = L' $ where $\alpha_i \in I$ such that for all $i = 0\dots, n-1$, $L_{\alpha_i}$ and $L_{\alpha_{i+1}}$ share a corner. 
    
    If given any two $L, L'$ in the chain we can choose the lozenges $L_{\alpha_0}, L_{\alpha_1}, \dots, L_{\alpha_n} $ so that $L_{\alpha_i}, L_{\alpha_{i+1}}$ additionally share a side for all $i=0,\dots,n-1$, we say that $\mathcal{C}$ is a chain of adjacent lozenges.
\end{definition}

For a bifoliated plane coming from an Anosov flow, there is a relationship between lozenges and free homotopies of periodic orbits of the flow. In the setting of Anosov-like actions, this generalizes to a relationship between lozenges and points on the plane which are fixed by the same element. One aspect of this relationship is described in Proposition 2.24 of \cite{barthelme2022orbit}:

\begin{proposition}[\cite{barthelme2022orbit}]\label{proposition:intro_fixed_points_joined_lozenges}

    Let $(P, \F^+, \F^-)$ be a bifoliated plane with an Anosov-like action of $G$. If two points $x, y\in P$ are fixed by the same element of $G$, then there exists a chain of lozenges $L_0, L_1, \dots, L_n$ such that $x$ is a corner of $L_0$ and $y$ is a corner of $L_n$.

\end{proposition}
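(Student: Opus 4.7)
The plan is to use axioms (A1), (A4), and (A5) of the Anosov-like action, together with the dynamics of $g$ near its fixed points, to produce the chain inductively.

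First, I would establish that $x$ and $y$ lie in general position relative to the foliations. By (A1), $g$ acts as a topological expansion on one of its fixed leaves and as a contraction on the transverse fixed leaf, so each $g$-fixed leaf contains a unique $g$-fixed point. Consequently, if $x \neq y$, they cannot share a leaf of either foliation. Furthermore, the leaves $l^+(x)$ and $l^-(y)$ cannot intersect: since transverse leaves meet in at most one point, any such intersection would be a $g$-fixed point on $l^+(x)$, hence equal to $x$, forcing $x \in l^-(y)$, a contradiction. Symmetrically $l^-(x) \cap l^+(y) = \emptyset$. Thus $y$ lies in one of the four open quadrants around $x$ determined by $l^+(x) \cup l^-(x)$, bounded by rays $r_x \subset l^+(x)$ and $s_x \subset l^-(x)$; denote the analogous opposing rays at $y$ by $r_y \subset l^+(y)$ and $s_y \subset l^-(y)$.

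The core step is the following dichotomy. If both $(r_x, s_y)$ and $(s_x, r_y)$ make perfect fits, then by definition $x$ and $y$ are the two corners of a single lozenge and we are done with $n = 0$. Otherwise, I would use the dynamics of $g$ to extract an intermediate fixed point. Assuming $g$ is a topological expansion along $l^+(x)$, iterates $g^k(l^-(p))$ of an $\F^-$-leaf crossing $r_x$ sweep monotonically along $r_x$ toward its ideal end. The failure of a perfect fit forces this sweep to accumulate on a pair of non-separated leaves bounding the region of $\F^-$-leaves joining $r_x$ to $l^+(y)$. Axiom (A4) then makes these leaves $G$-fixed, and a standard argument, passing to a common power of $g$ and the new element and using the uniqueness of transverse intersections, produces an intermediate point $z$ fixed by some $g^k$ and lying strictly between $x$ and $y$ in the quadrant under consideration. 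Axiom (A5) prevents the region bounded between $x$ and $z$ from being a totally ideal quadrilateral, so it is an honest lozenge $L_0$ with $x$ and $z$ as its two corners.

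Iterating this construction between $z$ and $y$ builds the required chain $L_0, L_1, \ldots, L_n$. The main obstacle I anticipate is proving termination in finitely many steps: I need to rule out an infinite accumulation of intermediate fixed points along the arc in $\Lambda^+$ between $l^+(x)$ and $l^+(y)$. I expect this to follow by showing that any such accumulation would yield a limiting ideal configuration violating (A5), or an infinite product region incompatible with the hyperbolic expansion/contraction dynamics of $g$ and its powers — effectively a compactness argument for the collection of $g^k$-fixed leaves lying between $l^+(x)$ and $l^+(y)$ in the leaf space.
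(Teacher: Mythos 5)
This proposition is cited from \cite{barthelme2022orbit} (it is their Proposition~2.24) and the thesis does not supply a proof, so there is no ``paper's proof'' to compare against line by line. Evaluating your blind attempt on its own merits: the opening general-position reduction is correct and clean --- using (A1) to get uniqueness of the $g$-fixed point on each $g$-fixed leaf, and then the ``transverse leaves meet at most once'' fact to show $l^+(x)\cap l^-(y)=\emptyset=l^-(x)\cap l^+(y)$, so that $y$ sits in an open quadrant of $x$. That part I would accept.

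The rest has genuine gaps, and the most serious one is the one you flag yourself: termination. Without it you have, at best, an infinite sequence of lozenges emanating from $x$, not a finite chain landing at $y$. Your proposed fix (``a limiting ideal configuration violating (A5), or an infinite product region incompatible with the hyperbolic dynamics'') is speculation, not an argument; in fact scalloped regions show that a single $g$-invariant chain of lozenges \emph{can} be infinite, so finiteness of the $x$-to-$y$ subchain is not a local compactness statement and needs a real proof. Separately, the core inductive step is not actually carried out. You assert that when the perfect fit fails, the iterated sweep $g^k(l^-(p))$ must accumulate on a \emph{pair} of non-separated leaves; but the accumulation set of that sweep is only a priori a closed $g$-invariant set of leaves, and you have not ruled out larger or differently-structured accumulation. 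Then (A4) only produces \emph{some} nontrivial $h\in G$ fixing a non-separated leaf $l$; it does not say that any power of $g$ fixes $l$, so ``passing to a common power of $g$ and the new element'' is not automatic --- you first need that $g$ permutes a finite set of non-separated leaves (which is plausible via the scalloped-region structure, but must be argued) before you can extract a power fixing each. And even granting a fixed point $z$ on $l$ for some $g^k$, you have not established the two perfect fits that are \emph{required} by the definition of a lozenge with corners $x$ and $z$; the appeal to (A5) is misplaced, since (A5) excludes totally ideal quadrilaterals (regions with no corners), which is a different failure mode from ``the rays at $x$ and $z$ simply do not make perfect fits.'' So as written the attempt is a plausible sketch in the right spirit, but the two load-bearing claims --- that each inductive step produces an honest lozenge adjacent to the previous one, and that the induction terminates --- are both unproved.
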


Now, we describe a particular type of chain of lozenges which will be especially relevant in Chapter \ref{chapter:BF}.

\begin{definition}[\cite{barthelme2022orbit}]
     A scalloped region is an open, unbounded set $U \subset P$ with the following properties:

\begin{enumerate}
\item The boundary $\partial U$ consists of the union of four families of leaves $l_k^{1,+}, l_k^{2,+}$ in $\mathcal{F}^+$ and $l_k^{1,-}, l_k^{2,-}$ in $\mathcal{F}^-$, indexed by $k \in \mathbb{Z}$.

\item The leaves of each family $l_k^{i,\pm}$, $k \in \mathbb{Z}$ are pairwise nonseparated.

\item The boundary leaves are ordered so that there exists a (unique) leaf $l_k^{1,-}$ that makes a perfect fit with $l_k^{1,+}$ and $l_{k+1}^{1,+}$. Moreover, $l_k^{1,-}$ accumulates on the leaves $\bigcup_{i \in \mathbb{Z}} l_i^{1,-}$ as $k \to \infty$, and on $\bigcup_{i \in \mathbb{Z}} l_i^{2,+}$ as $k \to -\infty$. The analogous statement holds for leaves making perfect fits with the other families $l_k^{i,\pm}$.

\item The bifoliation is trivial inside $U$, i.e., for all $x \neq y \in U$, $\mathcal{F}^+(x) \cap \mathcal{F}^-(y) \neq \emptyset$ and $\mathcal{F}^+(y) \cap \mathcal{F}^-(x) \neq \emptyset$ and $U$ contains no singular points.
\end{enumerate}

\end{definition}

\begin{figure}[h]
  \centering
  \includegraphics[width=0.9\linewidth]{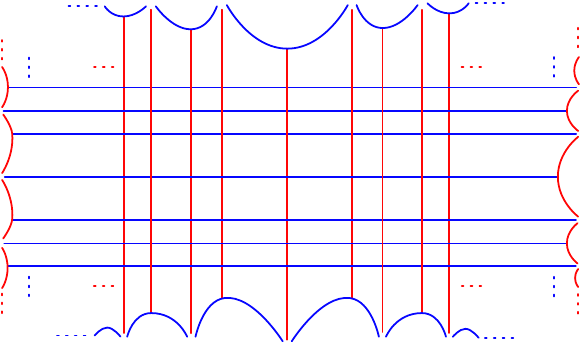}
  \caption{A scalloped region.}
  \label{fig:intro_scalloped}
\end{figure}

In particular, we see from the definition that each set of boundary leaves $\{ l^{i, \pm}_k : k\in \Z\}$ consist of infinitely many pairwise non-separated leaves. If the plane admits an Anosov-like action by some group, then this correspondence goes both ways.

\begin{proposition}[\cite{barthelme2022orbit}]\label{proposition:infinite_nonsep_implies_scalloped_boundary}
    Let $(P, \F^+, \F^-)$ be a bifoliated plane with an Anosov-like action of a group $G$. Suppose $P$ contains an infinite set of pairwise non-separated leaves. Then, this set is contained in the boundary of a single scalloped region in $P$.
\end{proposition}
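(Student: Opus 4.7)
The plan is to reduce to the case where the infinite family $\{l_k\}$ of pairwise non-separated leaves lies in $\F^+$ (the $\F^-$ case being symmetric). Since each $l_k$ is non-separated from some other leaf in the set, axiom (A4) supplies a nontrivial $g_k \in G$ fixing $l_k$, and axiom (A1) yields a fixed point $x_k \in l_k$ along with a well-defined expanding/contracting pair of $\F^-$-half-leaves at $x_k$. This gives us a fixed point on each $l_k$ to anchor the lozenge structure we will build.

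The heart of the proof is a lemma asserting that any two non-separated leaves $l, l'$ in $\F^+$ are opposite $\F^+$-sides of a lozenge. I would prove this in two steps. First, I would establish a \emph{perfect fit} between $l$ and $l'$ and some $\F^-$-leaf: pick a sequence $s_n \in \F^+$ accumulating on both $l$ and $l'$, choose short $\F^-$-transversals $\tau, \tau'$ based at $x$, $x'$ on $l$, $l'$ near the limit, and track the intersections $s_n \cap \tau$, $s_n \cap \tau'$. As $n \to \infty$ these must escape to an end of $\tau$ (respectively $\tau'$), and a limiting argument on the $\F^-$-leaves through these intersection points yields a leaf $f \in \F^-$ making perfect fits with $l$ and $l'$ on the accumulation side; axiom (A5) is essential here to prevent a totally ideal quadrilateral (which would rule out the lozenge structure). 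Second, I would argue symmetrically on the opposite accumulation side to produce a second $\F^-$-leaf $f'$, so that $l$, $l'$, $f$, $f'$ bound a lozenge $L$ with corners on $l$ and $l'$.

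Iterating this lemma over the infinite family $\{l_k\}$ with the natural linear ordering induced by an accumulating sequence yields a bi-infinite chain of adjacent lozenges $\{L_k\}_{k \in \Z}$ sharing $\F^-$-sides. The outer $\F^-$-boundary leaves split into two families $\{f_k^{1,-}\}$ and $\{f_k^{2,-}\}$, each of pairwise non-separated leaves, which together with the families $\{l_k^{1,+}\}, \{l_k^{2,+}\}$ coming from the two $\F^+$-sides of the chain satisfy the perfect-fit relations in the definition of a scalloped region. The open region $U$ thus bounded is trivially bifoliated inside, since any two leaves in its interior must intersect: non-trivial bifoliation inside would produce either a new perfect fit contradicting (A5), or an interior singularity contradicting (A3)–(A4) applied to the finite family of $G$-fixed leaves built from the $l_k$. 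All leaves $l_k$ sit on $\partial U$ by construction, completing the proof.

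The main obstacle will be the perfect fit lemma in the second paragraph: extracting, from the merely qualitative non-separated condition, a precise leaf of $\F^-$ that closes up the chain into genuine lozenges. This is where the interplay of the Anosov-like axioms (especially (A5) ruling out ideal quadrilaterals) with the non-Hausdorffness of the leaf spaces is delicate, and where I expect the argument to require the most care.
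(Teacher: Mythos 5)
The proposition you are proving is cited in the paper from \cite{barthelme2022orbit} and is not reproved there, so there is no in-paper proof to compare against; I will instead assess your proposal directly.

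The central lemma you rely on --- ``any two non-separated leaves $l, l' \in \F^+$ are opposite $\F^+$-sides of a lozenge'' --- is false precisely in the situation the proposition concerns. Take two \emph{adjacent} leaves $l_k^{1,+}, l_{k+1}^{1,+}$ in the boundary family of a scalloped region. By item~(3) of the definition of a scalloped region there is a single $\F^-$-leaf $l_k^{1,-}$ making a perfect fit with both. If a lozenge $L$ had half-leaves of $l_k^{1,+}$ and $l_{k+1}^{1,+}$ as opposite $\F^+$-sides with corners $p \in l_k^{1,+}$, $q \in l_{k+1}^{1,+}$, then $\F^-(q)$ would have to make a perfect fit with $l_k^{1,+}$; but each end of $l_k^{1,+}$ makes a perfect fit with at most one $\F^-$-leaf, and those are $l_{k-1}^{1,-}$ and $l_k^{1,-}$, neither of which \emph{intersects} $l_{k+1}^{1,+}$ (they make perfect fits with it), so neither can contain $q$. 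So no such lozenge exists. The correct picture is that $p_k \in l_k^{1,+}$ and $p_{k+1} \in l_{k+1}^{1,+}$ are joined by a chain of two lozenges passing through a corner on a leaf of the \emph{other} $\F^+$-family $\{l_j^{2,+}\}$, which is exactly why the chain one obtains closes up into a scalloped region rather than a simple bi-infinite strip. Your own construction also betrays this confusion: in your second paragraph you produce an $\F^-$-leaf $f$ making perfect fits with \emph{both} $l$ and $l'$ and then a symmetric $f'$, but then $l, l', f, f'$ is a totally ideal quadrilateral forbidden by (A5), not a lozenge --- a lozenge requires its $\F^-$-sides to \emph{intersect} $l$ and $l'$ at the corners, not to fit with them.

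Two subsidiary problems. First, the ``escape to an end of $\tau$'' step cannot work as written: if $s_n$ accumulates on $l$ and $\tau$ is a small $\F^-$-transversal based at $x \in l$, then $s_n \cap \tau \to x$, so the $\F^-$-leaves through these intersections converge to $\F^-(x)$, not to a new perfect-fit leaf; the correct argument tracks $\F^-$-leaves through points of $s_n$ escaping toward the shared ideal point, not through intersections with a fixed transversal. Second, your paragraph three claims the chain of lozenges you build shares $\F^-$-sides and produces \emph{two} $\F^+$-boundary families $\{l_k^{1,+}\}, \{l_k^{2,+}\}$, but under your lemma every $\F^+$-side in the chain lies on the single starting family $\{l_k\}$ and consecutive lozenges would meet along $\F^+$, not $\F^-$; the second $\F^+$-family and the two $\F^-$-families do not appear in your construction, yet all four are needed to verify the definition of a scalloped region. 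These are the parts of the argument where essentially all the real work lies, and they are the ones your proposal does not yet address.
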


For a bifoliated plane $(P_\varphi, \F^+, \F^-)$ associated to an Anosov flow $\varphi$ on a $3$-manifold $M$, there is a correspondence between chains of lozenges in the plane and \emph{Birkhoff annuli} in $M$. This will be of great importance in Chapters \ref{chapter:BF} and \ref{chapter:FW}, since it is by understanding certain chains of lozenges that we will be able to understand the planes studied in these chapters.

\begin{definition}[\cite{barbot1995mise}]
    A Birkhoff annulus $A\subset M$ for a flow $\varphi$ on $M$ is an immersed annulus which is transverse to $\varphi$, except on its boundary components which coincide with periodic orbits of the flow.
\end{definition}

Since the flow is transverse to the interior of a Birkhoff annulus $A$, the same is true of the stable and unstable foliations $\F^s, \F^u$. This induces two one-dimensional foliations $f^s, f^u$ on the interior of the annulus.

\begin{definition}
    A Birkhoff annulus $A\subset M$ is elementary if the induced foliations $f^s, f^u$ on its interior have no closed leaves.
\end{definition}

\begin{definition}
    A component of a Birkhoff annulus $A\subset M$ is a connected component of $\mathrm{int} (A) \setminus\{ \text{closed leaves of } f^s, f^u\}$. A component of a lift $\widetilde{A} \subset \widetilde{M}$ of a Birkhoff annulus is the lift of a component of $A$.
\end{definition}

In the proof of Proposition 5.1 of \cite{barbot1995mise}, Barbot shows that there exist finitely many such components for the lift Birkhoff annulus (and for the annulus itself), and that the interior of each of them projects to the interior of a lozenge. More precisely, he shows:

\begin{proposition}[\cite{barbot1995mise}]\label{proposition:birkhoff_projects_lozenges}

Let $A\subset M$ be a Birkhoff annulus, and let $\widetilde{A}\subset \widetilde{M}$ be a lift of $A $ to the universal cover $\widetilde{M}$. Then, the projection of $\widetilde{A}$ to $\widetilde{M}$ is a chain of lozenges. 

The interior of each lozenge in the chain is the projection of a component of $\widetilde{A}$, and two components $\widetilde{A}_1, \widetilde{A}_2 \subset \widetilde{A}$ are adjacent along a leaf of $\widetilde{f}^s$ (or $\widetilde{f}^u$) if and only if their projections are interiors of lozenges adjacent along leaves of $\F^-$ (resp. $\F^+$).    
\end{proposition}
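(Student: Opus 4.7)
The plan is to analyze a single component $K$ of $\widetilde{A}$, show that its image under the orbit projection $\pi: \widetilde{M} \to P_\varphi$ is the interior of a lozenge, and then chain together the finitely many components via their adjacencies. First I would describe the structure of $\widetilde{A}$: it is a properly embedded strip whose two boundary components are lifts $\widetilde{\alpha}, \widetilde{\beta}$ of the two boundary periodic orbits of $A$, hence full flow lines of $\widetilde{\varphi}$ projecting to points $p_\alpha, p_\beta \in P_\varphi$. The induced foliations $\widetilde{f}^s, \widetilde{f}^u$ on $\mathrm{int}(\widetilde{A})$ are transverse 1-dimensional foliations obtained from the intersections with $\widetilde{\F}^s, \widetilde{\F}^u$, and removing the lifts of closed leaves of $f^s, f^u$ cuts $\mathrm{int}(\widetilde{A})$ into "rectangle-like" pieces whose boundaries consist of arcs of $\widetilde{\alpha}, \widetilde{\beta}$ and arcs in leaves of $\widetilde{\F}^s, \widetilde{\F}^u$.

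Next I would show that $\pi|_K$ is injective: if two distinct points of $K$ lay on the same orbit, the flow arc joining them together with any arc in $K$ would bound a topological disk in a single stable or unstable leaf; a Poincaré–Bendixson-type argument on the 1-dimensional foliations induced on that disk forces a closed leaf of $\widetilde{f}^s$ or $\widetilde{f}^u$ to sit inside $K$, contradicting the definition of a component. Combined with the trivial transverse structure of $(\widetilde{f}^s, \widetilde{f}^u)$ on $K$, this identifies $\pi(K)$ with an open, trivially bifoliated region in $P_\varphi$ whose boundary in the closure contains the two corner points $p_\alpha, p_\beta$ and four rays of $\F^\pm$ emanating from them (the images of the closed-leaf arcs together with the boundary segments asymptoting to $\widetilde{\alpha}, \widetilde{\beta}$).

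The main step, and the one I expect to be the principal obstacle, is verifying the perfect fit condition. Let $r^+ \subset \F^+(p_\alpha)$ and $r^- \subset \F^-(p_\beta)$ be opposite boundary rays of $\pi(K)$; I would take a small transversal $\tau^+$ to $\F^+$ meeting $r^+$ and show that every $\F^+$-leaf crossing $\tau^+$ also crosses $r^-$. Pulling back to $K$ via $\pi^{-1}$, such a leaf corresponds to an $\widetilde{f}^u$-arc in $K$; the delicate part is controlling the asymptotic behavior of these arcs as they approach the far boundary of $K$ near $\widetilde{\beta}$ to make sure they actually reach the $\widetilde{\F}^u$ boundary component whose image is $r^-$, rather than escaping to infinity within $\widetilde{A}$. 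Transversality of $A$ to the flow in its interior, together with the finiteness of the set of closed leaves of $f^s, f^u$ in $A$, should pin this down. The symmetric argument along $r^-$ completes the perfect fit on one pair; the other pair follows analogously, so $\pi(K)$ is the interior of a lozenge with corners $p_\alpha, p_\beta$.

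Finally, for the chain structure and adjacency claim, if two components $\widetilde{A}_1, \widetilde{A}_2$ share a boundary arc contained in a leaf of $\widetilde{f}^s$ (resp.\ $\widetilde{f}^u$), then that shared arc lies in a common leaf of $\widetilde{\F}^s$ (resp.\ $\widetilde{\F}^u$), so its projection is a side of both lozenges $\pi(\widetilde{A}_1), \pi(\widetilde{A}_2)$ in $\F^-$ (resp.\ $\F^+$), showing adjacency in the sense of Definition \ref{definition:chain_adjacent_lozenges}. Since $\widetilde{A}$ is connected and cut by finitely many lifted closed leaves of $\widetilde{f}^s, \widetilde{f}^u$, the components are linked into a single chain by such adjacencies, and their images assemble into a chain of lozenges, as claimed.
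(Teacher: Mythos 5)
The paper does not give a proof of Proposition \ref{proposition:birkhoff_projects_lozenges}: it is quoted from Barbot's paper \cite{barbot1995mise} (specifically, from the proof of his Proposition 5.1), so there is no in-paper argument to compare against. Evaluating your sketch on its own merits, the overall architecture — cut $\widetilde{A}$ along lifted closed leaves of $f^s,f^u$, show each component projects injectively to the interior of a lozenge with corners $p_\alpha, p_\beta$, then chain components together via shared $\widetilde{f}^s$/$\widetilde{f}^u$ boundary arcs — is the right outline and the last step (adjacency) is handled correctly.

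There are, however, two genuine gaps. First, the injectivity argument does not work as written. If $x,y\in K$ are distinct points on a common orbit and $\gamma\subset K$ is an arbitrary arc from $x$ to $y$, the resulting disk bounded by $\gamma$ and the flow segment is a disk in $\widetilde{M}$, not a disk in any single weak stable or unstable leaf — so there is no 1-dimensional foliation on it to which one can apply Poincar\'e--Bendixson. What one would need is to show that $x$ and $y$ lie on a common $\widetilde{f}^s$-leaf (or $\widetilde{f}^u$-leaf) of $K$, which is not automatic: both points have the same weak stable leaf $\widetilde{\F}^s(x)=\widetilde{\F}^s(y)$, but that weak stable leaf may a priori meet $K$ in several $\widetilde{f}^s$-arcs. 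Ruling that out requires using the product structure of $K$ (both $\widetilde{f}^s|_K$ and $\widetilde{f}^u|_K$ have closed leaf space after removing closed leaves) together with the fact that the leaf space of orbits within a single weak stable leaf is Hausdorff — this is not a casual observation, and you have not supplied it.

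Second, you explicitly flag the perfect-fit verification as the main obstacle, and what you write there is not yet a proof. The assertion that ``transversality of $A$ to the flow in its interior, together with the finiteness of the set of closed leaves of $f^s,f^u$ in $A$, should pin this down'' does not control the asymptotics of the $\widetilde{f}^u$-arcs near $\widetilde{\beta}$. The substance of the perfect-fit argument is dynamical: as one approaches the corner orbit, the unstable length of $\widetilde{f}^u$-arcs grows without bound, forcing those arcs to accumulate on the boundary stable ray rather than escaping within $\widetilde{A}$. This uses the expansion of $E^u$ under the flow (or equivalently contraction of $E^s$ in backward time) and is the real content of the lemma; as written the step is only named, not proved.
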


\chapter{Left orderability of groups acting on bifoliated planes}\label{chapter:LO}

All original results in this chapter are joint work with Lingfeng Lu and appeared in \cite{camargo2025leftorderability}, with the exception of those in Section \ref{subsection:LO_Anosov_like}.

\section{Introduction}\label{section:LO_intro}

In this chapter, we study the left orderability of groups acting on bifoliated planes.

We begin by discussing background results in Sections \ref{subsection:zhao} and \ref{subsection:LO_prelim_circle}. In \ref{subsection:zhao}, we discuss ends of $1$-dimensional manifolds as well as \emph{orderable cataclysms}. These objects are crucial to the proof of Theorem \ref{introthm:main}. In \ref{subsection:LO_prelim_circle}, we briefly outline the definition of the circle at infinity of a bifoliated plane.

Our first goal is to prove Theorem \ref{introthm:main}, which we restate here as Theorem \ref{theorem:LO_main}.

\begin{theorem}\label{theorem:LO_main}
    Let $P = (P, \F_1, \F_2)$ be a bifoliated plane. Let $\mathrm{Aut^+}(P, \F_1, \F_2) $ be the group of homeomorphisms of $P$ that preserves foliations $\F_1$ and $\F_2$ as well as their orientations. Then, $\mathrm{Aut}^+(P, \F_1, \F_2)$ is left-orderable.
\end{theorem}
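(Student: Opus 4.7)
The plan is to embed $G := \mathrm{Aut}^+(P, \F_1, \F_2)$ into a product of left-orderable groups by using its natural actions on the two leaf spaces $\Lambda_1, \Lambda_2$, linearized via the orderable-cataclysm machinery developed in Section \ref{subsection:zhao}. Since subgroups of left-orderable groups are left-orderable and finite products of left-orderable groups are left-orderable under the lexicographic order, producing such an embedding suffices.

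First, since every $g \in G$ preserves $\F_i$ together with its orientation, $G$ acts on the oriented simply connected 1-manifold $\Lambda_i = P/\F_i$ (Proposition \ref{proposition : leaf_spaces}) by orientation-preserving homeomorphisms. I would invoke orderable cataclysms to produce, for each $i$, a canonical $G$-equivariant embedding of $\Lambda_i$ into a totally ordered set $\mathbb{L}_i$, yielding a homomorphism $\rho_i : G \to H_i$, where $H_i$ denotes the group of order-preserving self-bijections of $\mathbb{L}_i$. The group $H_i$ is left-orderable by the standard construction (e.g., choose a cofinal chain in $\mathbb{L}_i$ and compare images lexicographically, breaking ties along the chain).

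Next, I would verify that the combined map $\rho := \rho_1 \times \rho_2 : G \to H_1 \times H_2$ is injective. If $g \in \ker \rho$, then $g$ acts trivially on both leaf spaces and hence fixes every leaf of $\F_1$ and every leaf of $\F_2$ setwise; for any $x \in P$ this forces $g(x) \in \F_1(x) \cap \F_2(x)$. The key sub-claim is that any transverse pair $l_1 \in \F_1$, $l_2 \in \F_2$ meets in at most one point: if they met at $p \neq q$, sub-arcs $\alpha \subset l_1$ and $\beta \subset l_2$ from $p$ to $q$ would bound a disk $D \subset P$, and following leaves of $\F_2$ across $D$ starting from the interior of $\alpha$ would define an orientation-reversing involution of the open arc $(p,q) \subset l_1$; by the intermediate value theorem this involution has an interior fixed point, and the corresponding leaf of $\F_2$ would form a closed loop, contradicting that leaves are properly embedded lines. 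This forces $g = \id_P$, so $\rho$ is injective and $G$ embeds into the left-orderable group $H_1 \times H_2$.

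I expect the main obstacle to be the linearization step: carrying out the passage $\Lambda_i \hookrightarrow \mathbb{L}_i$ in a canonical, fully $G$-equivariant way despite the typical non-Hausdorffness of $\Lambda_i$. The mutually non-separated leaves create branching in $\Lambda_i$ that must be resolved into a linear order compatibly with the entire group action, and this is precisely what orderable cataclysms developed in Section \ref{subsection:zhao} are designed to accomplish. Once that tool is in hand, the remaining steps (injectivity of $\rho$ via the single-intersection sub-claim, and left-orderability of a finite product of order-preserving-bijection groups) are essentially formal.
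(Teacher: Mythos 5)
Your plan hinges on the claim that the orderable-cataclysm machinery produces ``a canonical $G$-equivariant embedding of $\Lambda_i$ into a totally ordered set $\mathbb{L}_i$,'' and this is where the argument has a genuine gap. What Zhao's result (Proposition \ref{order_zhao}) actually provides, given orderable cataclysms, is a $G$-invariant linear order on $\mathrm{Ends}_+(\Lambda_i)$, the set of \emph{ends} of the leaf space. An end is an equivalence class of rays escaping to infinity and is a completely different object from a point of $\Lambda_i$ (a leaf). Passing from a $G$-invariant order on cataclysms to a $G$-invariant linear order on all of $\Lambda_i$ itself is not a formal consequence of the cited tool: you would have to merge the cataclysm orders with the orientation partial order into a single linear order, handle nested branching coherently, and verify $G$-equivariance throughout. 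You acknowledge this linearization as ``the main obstacle'' but then assert it is ``precisely what orderable cataclysms \ldots are designed to accomplish''; they are not, and nothing in Section \ref{subsection:zhao} supplies such an order on $\Lambda_i$.

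If you instead do what the tools genuinely support and pass to ends, the injectivity step breaks. The homomorphism $G \to \mathrm{OrdBij}(\mathrm{Ends}(\Lambda_1)\cup \mathrm{Ends}(\Lambda_2))$ is in general not injective: when $\Lambda_i \cong \R$ (as in the trivial or skew plane), each $\mathrm{Ends}_\pm(\Lambda_i)$ is a single point, so every $g$ fixes all ends even though the action on $P$ is highly nontrivial. Your single-intersection sub-claim is correct (and the paper uses the same fact), but it shows $G$ acts faithfully on $\Lambda_1 \sqcup \Lambda_2$, not that it acts faithfully on the ends, which is what your map $\rho$ actually records. This is exactly why the paper's proof has a second layer: it sets $H$ to be the kernel of the $G$-action on $\mathrm{Ends}(\Lambda_1)\cup\mathrm{Ends}(\Lambda_2)$, deduces left-orderability of $G/H$ from the invariant order on ends, proves $H$ is left-orderable separately (since $H$ fixes all ends, $\Lambda_1\sqcup\Lambda_2$ admits a countable cover by $H$-invariant copies of $\R$, embedding $H$ into a countable product of $\mathrm{Homeo}^+(\R)$), and then assembles the order on $G$ from the short exact sequence $1 \to H \to G \to G/H \to 1$ via the Conrad extension lemma. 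To make your one-step shortcut work you would have to construct the $G$-invariant linear order on $\Lambda_i$ itself, which is a nontrivial claim that you have not proved and that the cited references do not provide.
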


The proof of Theorem \ref{theorem:LO_main} in Section \ref{section:LO_proof_main_thm} involves introducing an order on the set of ends of the leaf spaces of the bifoliation. This was inspired by Zhao's work in \cite{zhao}, where he showed that for a $3$-manifold $M$ admitting a taut foliation with \emph{orderable cataclysms}, the fundamental group $\pi_1(M)$ is left-orderable. In general, given an action of a group $G$ on a non-Hausdorff $1$-manifold $\Lambda$, it is not always the case that $\Lambda$ has orderable cataclysms with respect to this action (see for instance Example 3.7 in \cite{calegari2003laminations}). However, with the extra structure coming from a bifoliation, there is a natural way of defining an order on every cataclysm in each leaf space that is preserved by actions of $G$. \par

The following is a well-known fact from the theory of left-orderable groups, which we will apply in several occasions.

\begin{theorem}[\cite{Holland63}]\label{theorem:LO_countable_ord_implies_homeo}
    A countable and left orderable group is isomorphic to a subgroup of $\mathrm{Homeo}^+(\R)$.
\end{theorem}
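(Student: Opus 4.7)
The plan is to use the classical dynamical realization of a left order due to Holland. Let $G$ be countable and left-orderable with order $<$, and fix an enumeration $G = \{g_0, g_1, g_2, \ldots\}$ with $g_0 = e$.

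First, I would construct an order-preserving injection $t : G \to \R$ inductively. Set $t(g_0) = 0$. Having placed $t(g_0), \ldots, t(g_n)$, define $t(g_{n+1})$ as $\max_{i \leq n} t(g_i) + 1$ if $g_{n+1}$ exceeds all of $g_0, \ldots, g_n$ under $<$, as $\min_{i \leq n} t(g_i) - 1$ if it is less than all of them, and otherwise as the midpoint $\tfrac{1}{2}(t(g_i) + t(g_j))$ where $g_i < g_{n+1} < g_j$ are the two neighbors of $g_{n+1}$ among $\{g_0, \ldots, g_n\}$. By construction, $h < h'$ implies $t(h) < t(h')$, so left multiplication by any $g \in G$ induces an order-preserving bijection of $t(G)$ via $g \cdot t(h) := t(gh)$.

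Second, I would extend this action to all of $\R$. Each such order-preserving bijection extends uniquely to an order-preserving bijection $\hat{g}$ of the closure $\overline{t(G)}$ by sending suprema to suprema. The complement $\R \setminus \overline{t(G)}$ is a countable disjoint union of open intervals, and $\hat{g}$ permutes these intervals via endpoint correspondence; on each such interval I extend $\hat{g}$ by the unique orientation-preserving affine map onto its image. This yields $\hat{g} \in \Homeo^+(\R)$, and the assignment $g \mapsto \hat{g}$ is a group homomorphism since the extension is canonical. Injectivity is immediate: if $g \neq e$, then $\hat{g}(0) = \hat{g}(t(e)) = t(g) \neq 0$, so $\hat{g} \neq \id_\R$.

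The only delicate point is the extension across the gaps of $\overline{t(G)}$. Because the $G$-action on $t(G)$ preserves $<$, it preserves the combinatorial structure of the complementary intervals (matching endpoints to endpoints), so the affine choice is consistent under composition and the resulting map $G \to \Homeo^+(\R)$ is a well-defined injective homomorphism. Everything else is bookkeeping once the embedding $t$ is in hand.
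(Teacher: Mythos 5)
Your proof follows the classical dynamical realization argument, which is indeed the content of Holland's theorem (the paper cites \cite{Holland63} and \cite{ghys_1984} without reproducing the argument, so that is the right comparison). The structure is right, and your use of the midpoint rule is exactly the correct choice --- but there is a genuine gap at the step you did \emph{not} flag as delicate, namely the assertion that the order-preserving bijection of $t(G)$ ``extends uniquely to an order-preserving bijection of $\overline{t(G)}$ by sending suprema to suprema.'' This is false for a general order embedding $t\colon G \to \R$. For example, with $G = \Q$ and $t(q) = q$ for $q < \pi$, $t(q) = q+1$ for $q > \pi$, the points $\pi$ and $\pi+1$ both lie in $\overline{t(\Q)}\setminus t(\Q)$, and the supremum-extension of the translation $q \mapsto q+1$ sends them both to $\pi+2$, so the extended map collapses two points and is not a bijection. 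The phenomenon is that the Dedekind completion of $(G,<)$ need not coincide with $\overline{t(G)}$: a cut of $G$ can open up a gap in $\overline{t(G)}$ neither of whose endpoints belongs to $t(G)$.

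What saves your argument is a property forced by the midpoint rule that you should state and prove as a lemma: \emph{every bounded connected component of $\R \setminus \overline{t(G)}$ has both of its endpoints in $t(G)$.} The proof is a halving argument. If $(\alpha,\beta)$ were such a gap with $\beta - \alpha > 0$, then each time a new $t(g_{n+1})$ is inserted into the ambient partition interval $I_n$ enclosing $(\alpha,\beta)$, that interval is cut at its midpoint and its length halves, while it must always remain $\geq \beta-\alpha$; hence only finitely many insertions occur inside it, and the terminal partition interval is $(t(h),t(h'))$ with $h,h'$ consecutive in $(G,<)$, so it is itself a gap and must equal $(\alpha,\beta)$. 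With this lemma in hand, every $x \in \overline{t(G)}\setminus t(G)$ is a two-sided limit of $t(G)$, and if $\sup\{t(gh): t(h)<x\} < \inf\{t(gh): t(h)>x\}$ then these two numbers would be the endpoints of a gap of $\overline{t(G)}$ lying outside $t(G)$, contradicting the lemma --- so the two agree, and your extension to $\overline{t(G)}$ is a well-defined order-preserving bijection. By contrast, the affine extension across gaps that you singled out as the delicate point is routine once the lemma is in place.
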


 It is not true in general that the group $\mathrm{Aut}^+(P, \F_1, \F_2)$ is a countable group. In Section \ref{subsection:LO_Anosov_like}, we prove Corollary \ref{introcor:Anosov_like}, which states that if the action of a group $G$ on a bifoliated plane $(P, \F^+, \F^-)$ is Anosov-like, then $G$ can be realized as a subgroup of $\mathrm{Homeo}^+(\R)$. We do this by showing in Corollary \ref{corollary:LO_Anosov_like_uncount_skew_or_triv} that if a bifoliated plane $(P, \F^+, \F^-)$ admits an Anosov-like action by an uncountable group $G$, then $(P, \F^+, \F^-)$ is either a trivial or a skew bifoliated plane.

Next, in section \ref{section:LO_realizing_ends} we address the question of how the ends of the leaf spaces $\Lambda_i = P/\F_i$ of the foliations $\F_i$ of $P$ relate to points in the circle at infinity $\partial P$ defined by Fenley (\cite{fenley2012ideal}) and Bonatti (\cite{bonatti}). The main result in this direction is Proposition \ref{proposition: mapping to ends}, which characterizes the possible \emph{realizations} of an end of a leaf space $\Lambda_i$ as a subset of the circle at infinity.

Finally, we prove some consequences of the correspondence between ends of leaf spaces and subsets of the circle at infinity. These include Corollary \ref{introcor_finite_ends} stated in the introduction to this thesis, and restated below as Corollary \ref{corollary: lift to left-orderability}, which shows that if the space of ends of at least one of the spaces $\mathrm{Ends}_\pm(\Lambda_i)$ is finite, then we can realize $\mathrm{Aut}^+(P, \F_1, \F_2)$ as a subgroup of $\mathrm{Homeo}^+(\R)$.

\section{Background}\label{section:LO_prelim}

\subsection{Ends of 1-manifolds and orderable cataclysms}\label{subsection:zhao}

Let $\Lambda$ be a non-Hausdorff and simply connected $1$-manifold, equipped with an action by a group $G$. Since simply connected manifolds are orientable, we will also assume $\Lambda$ to be oriented. We start by briefly stating some results concerning $\Lambda$. Most importantly, we will later make use of the fact that if $\Lambda$ has \emph{orderable cataclysms} with respect to the action of $G$, there exists a $G$-invariant linear order on $\mathrm{Ends}(\Lambda)$, the space of ends of $\Lambda$. This is a result of Zhao in \cite{zhao}.

We begin by stating the definition of (orderable) cataclysms. Recall that two points in a topological space are said to be \emph{non-separated} if any neighborhood of one of them intersects every neighborhood of the other. The following notion was originally definied by Calegari and Dunfield in \cite{calegari2003laminations} in the more general context of laminations.

\begin{definition}\label{def: cataclysm}
   A \emph{cataclysm} in $\Lambda$ is a maximal collection of pairwise non-separated points. Given an action of a group $G$ on $\Lambda$ by homeomorphisms, we say that $\Lambda$ has \emph{orderable cataclysms with respect to the action of} $G$ if there exists a $G$-invariant linear order in each cataclysm $\mu$ of $\Lambda$.
\end{definition}

It is a well-known fact that a group acting faithfully on a linearly ordered set in an order-preserving way must be left-orderable. In \cite{zhao}, Zhao showed that given a simply connected and orientable 1-manifold $\Lambda$ having orderable cataclysms with respect to an action by a group $G$, there exists a faithful order-preserving action of $G$ on a linearly ordered set. This linearly ordered set is the set of \emph{positive ends} of $\mathrm{\Lambda}$. The distinction between positive and negative ends of $\Lambda$ is possible since $\Lambda$ is assumed to be oriented. \par

\begin{definition}\label{def: rays and ends}
    A \emph{ray} in $\Lambda$ is a proper embedding $r: [0, +\infty) \to \Lambda$. Let $\mathcal{E}$ be the set of rays in $\Lambda$, and let $\sim$ be the equivalence relation on $\mathcal{E}$ such that two rays are equivalent if and only if they can be reparametrized so that they agree after a certain point. Then
    $$\Ends(\Lambda) \coloneqq \mathcal{E}/\sim$$
    is the set of \emph{ends} of $\Lambda$. An end is said to be \emph{positive} if a ray (hence all rays) representing it is orientation preserving, where we give $[0, +\infty)$ the standard orientation. Otherwise, the end is said to be \emph{negative}. Sets of positive and negative ends are denoted by $\Ends_+(\Lambda)$ and $\Ends_-(\Lambda)$, respectively.
\end{definition}
It is clear from the definition of $\mathrm{Ends}(\Lambda)$ that an action of $G$ on $\Lambda$ by homeomorphisms induces an action of $G$ on $\mathrm{Ends}(\Lambda)$. Moreover, if the original action is orientation preserving, then the set of positive ends is invariant under this action.

In the proof of our main theorem, we will make use of the following result.

\begin{proposition}[\cite{zhao}]\label{order_zhao}
    If $\Lambda$ has orderable cataclysms with respect to the action of a group $G$, then there exists a left-order on the set $\Ends_+(\Lambda)$ that is preserved by the induced action of $G$.
\end{proposition}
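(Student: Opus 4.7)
The plan is to construct, for each pair of distinct positive ends $e_1, e_2 \in \Ends_+(\Lambda)$, a canonical \emph{divergence cataclysm} $C(e_1, e_2)$ together with two distinguished points in it, and then declare $e_1 < e_2$ according to the hypothesized $G$-invariant order on $C(e_1, e_2)$. To construct $C(e_1, e_2)$, represent $e_1, e_2$ by proper rays $r_1, r_2 : [0, \infty) \to \Lambda$. I would first show that any two such rays representing distinct ends are eventually disjoint --- if their images crossed infinitely often, simple connectedness of $\Lambda$ would be violated by an embedded loop --- and then, tracking where $r_1$ and $r_2$ are forced to part ways, I would produce the cataclysm $C(e_1, e_2)$ together with points $p_1, p_2 \in C(e_1, e_2)$ such that $r_i$ eventually lies on the positive side of $p_i$. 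The key local fact is that two proper rays in a simply connected $1$-manifold which are eventually disjoint cannot separate at a single point (since $\Lambda$ is locally Euclidean), and must therefore diverge through a pair of non-separated points, i.e. through points of a common cataclysm. Well-definedness --- that $\{p_1, p_2\} \subset C(e_1, e_2)$ depends only on the ends and not on the chosen rays --- would follow because any two rays representing the same positive end have coincident tails beyond the relevant branching.

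Having assigned to each ordered pair of distinct ends a pair of points in a single cataclysm, I would set $e_1 < e_2$ iff $p_1 < p_2$ in the order on $C(e_1, e_2)$. Linearity, antisymmetry, and well-definedness on distinct ends are then immediate from linearity of the cataclysm orders. The central technical step is transitivity: given three distinct positive ends $e_1, e_2, e_3$, the three pairwise divergence cataclysms $C_{12}, C_{13}, C_{23}$ are constrained by the tree-like structure of $\Lambda$ --- either all three coincide (in which case the three ends are represented by three distinct points of a common cataclysm and transitivity follows from linearity of its order), or exactly two of them coincide (say $C_{12} = C_{23}$, with the third pair $(e_1, e_3)$ diverging at a deeper branching), in which case $e_1$ and $e_3$ must be represented by the same point at the shared cataclysm $C_{12} = C_{23}$, and transitivity again reduces to the linear order on a single cataclysm. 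This case analysis, requiring careful bookkeeping of which end lies on which side of each cataclysm, is the main obstacle I anticipate; ruling out a ``three distinct cataclysms'' configuration is really where simple connectedness of $\Lambda$ is used a second time.

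Finally, $G$-invariance of the order is essentially immediate from the hypotheses: every $g \in G$ maps rays to rays, preserves orientation and hence sends positive ends to positive ends, maps cataclysms to cataclysms, and by hypothesis preserves the linear order on each cataclysm. It follows that $g$ sends the divergence data of $(e_1, e_2)$ to that of $(g \cdot e_1, g \cdot e_2)$, so the resulting order on $\Ends_+(\Lambda)$ is preserved by the induced action of $G$.
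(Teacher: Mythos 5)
Your approach --- assigning to each pair of distinct positive ends a single ``divergence cataclysm'' and comparing the two ends there --- is genuinely different from what Zhao does, but it has a substantive gap: the divergence cataclysm $C(e_1,e_2)$ need not exist. Your construction presumes that representative rays $r_1, r_2$ of distinct positive ends share an initial segment (or at least meet), and ``part ways'' at a single cataclysm. But two positive ends of a simply connected, oriented, non-Hausdorff $1$-manifold can be separated by \emph{several} cataclysms, in which case every pair of embedded rays representing them is disjoint and no parting point exists. Concretely, take copies $A,B,C,D$ of $\R$ and glue $A$ to $B$ along $(-\infty,0)$, $B$ to $C$ along $(5,\infty)$, and $C$ to $D$ along $(-\infty,-3)$. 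The result is a simply connected (tree-like) non-Hausdorff $1$-manifold with positive ends $e_A$ and $e_D$ (among others), and any embedded ray into $A$ is disjoint from any embedded ray into $D$: to pass from $A$ to $D$ one must traverse three cataclysms $\{0_A,0_B\}$, $\{5_B,5_C\}$, $\{-3_C,-3_D\}$. Your proof never defines $e_A$ vs.~$e_D$, and your proposed fix in the transitivity step (that the ``three distinct cataclysms'' configuration can be excluded by simple connectedness) does not help, since the missing ingredient is not an exclusion argument but a way to compare ends whose rays never meet at all.

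What Zhao's argument (summarized in the paper right after the statement) does instead is work with the unique \emph{broken path} from $e_1$ to $e_2$, which is a concatenation of embedded intervals joined by cusps at cataclysms. Between two positive ends this broken path can have several cusps, each positive or negative according to the cataclysm order, and the order is defined by the signed count $n(e_1,e_2)$ of cusps. Your single-cataclysm comparison is precisely the special case where the broken path has exactly one cusp; to make your idea rigorous you would be forced to pass to broken paths and account for all cusps, at which point you would have rederived Zhao's definition. So this is a real gap, not a matter of exposition: you need the broken-path machinery (or an equivalent device) before the order can even be defined on all pairs of positive ends.
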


Using the above proposition together with 3-manifold topology, Zhao deduced left-orderability of certain 3-manifold groups:

\begin{theorem}[\cite{zhao}]
     Let $M$ be a closed, connected, orientable, irreducible $3$-manifold. If $M$ admits a taut foliation $\F$ with orderable cataclysm, then $\pi_1(M)$ is left-orderable.
\end{theorem}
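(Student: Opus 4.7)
The plan is to pass to the leaf space of the lifted foliation, apply Proposition \ref{order_zhao} to obtain a $\pi_1(M)$-invariant linear order on its positive ends, and then show the resulting order-preserving action of $\pi_1(M)$ is faithful. Left-orderability will then follow from Theorem \ref{theorem:LO_countable_ord_implies_homeo}.

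First I would set up the universal cover. Since $\F$ is taut on a closed, orientable, irreducible $3$-manifold, Novikov's theorem gives that $\F$ has no Reeb components and its leaves are $\pi_1$-injective, so lifts of leaves to $\widetilde{M}$ are planes. By Palmeira's theorem, $\widetilde{M}$ is homeomorphic to $\Lambda \times \R$, where the leaf space $\Lambda$ is a simply connected (possibly non-Hausdorff) $1$-manifold. I would assume $\F$ is transversely orientable; otherwise one passes to a double cover of $M$ and must argue separately that left-orderability of the index-two subgroup extends to $\pi_1(M)$. Under transverse orientability, the deck action of $\pi_1(M)$ on $\widetilde{M}$ descends to an orientation-preserving action on $\Lambda$.

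Next, the hypothesis that $\F$ has orderable cataclysms translates directly to $\Lambda$ having orderable cataclysms with respect to this action. Proposition \ref{order_zhao} then produces a $\pi_1(M)$-invariant linear order on $\Ends_+(\Lambda)$, yielding an order-preserving action of $\pi_1(M)$ on a totally ordered set. Since $\pi_1(M)$ is countable, a faithful such action gives an embedding into $\Homeo^+(\R)$ via Theorem \ref{theorem:LO_countable_ord_implies_homeo}, hence a left order on $\pi_1(M)$.

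The hard part will be verifying faithfulness. An element $g$ in the kernel of the action on $\Ends_+(\Lambda)$ need not act trivially on $\Lambda$ itself: it could permute leaves while fixing each end class, and even if it acts trivially on $\Lambda$, it may act nontrivially along the transverse $\R$-factor in $\widetilde{M} \cong \Lambda \times \R$. To overcome this I would enlarge the $\pi_1(M)$-set on which the invariant order lives, combining the cataclysm orders with the natural order on non-cataclysm arcs to order a richer $\pi_1(M)$-invariant subset of $\Lambda$, and then lexicographically refining with the order on transverse $\R$-fibers coming from transverse orientability of $\F$. Irreducibility and tautness of $M$, together with $\pi_1$-injectivity of leaves, should then force any element that preserves the entire refined order structure to act trivially on $\widetilde{M}$ and hence be the identity in $\pi_1(M)$, giving faithfulness and completing the proof.
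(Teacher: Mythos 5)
The paper does not actually prove this theorem: it is cited verbatim from Zhao's paper, and the thesis only sketches the \emph{construction} underlying Proposition~\ref{order_zhao} (broken paths, cusps, the quantity $n(x_1,x_2)$), then reuses the overall strategy in the proof of Theorem~\ref{theorem:LO_main}. That strategy — explicitly attributed to ``an idea of Zhao in section 3.3 of \cite{zhao}'' — is \emph{not} to make the action on $\Ends_+(\Lambda)$ faithful. It is a short-exact-sequence argument: set $H$ to be the kernel of the action on ends, left-order $G/H$ using the invariant order on $\Ends_+(\Lambda)$, left-order $H$ by a separate argument (a faithful embedding of $H$ into a countable product of $\Homeo_+(\R)$ coming from a countable cover of the leaf space by embedded copies of $\R$), and then combine the two via Conrad's lemma that an extension of a left-orderable group by a left-orderable group is left-orderable.

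Your proposal takes a different route, and it has a genuine gap exactly where you flag ``the hard part.'' You try to restore faithfulness by lexicographically refining the order — first by ordering more of $\Lambda$ than just the ends, then by ordering ``transverse $\R$-fibers.'' But the Palmeira homeomorphism $\widetilde{M} \cong \Lambda \times \R$ is not $\pi_1(M)$-equivariant with respect to the product structure, so there is no $\pi_1(M)$-invariant notion of ``the order on the transverse $\R$-fiber'' to lexicographically append. Even granting some replacement, the step ``irreducibility and tautness \dots should then force any element preserving the refined order to act trivially on $\widetilde{M}$'' is an assertion, not an argument; nothing in the proposal actually rules out an element fixing every end and every leaf while sliding nontrivially within a leaf's transversal. (Zhao avoids this entirely by not needing faithfulness on ends — the kernel is handled on its own terms.) Finally, the reduction to the transversely orientable case by passing to a double cover is not innocuous: left-orderability of an index-two subgroup does not in general imply left-orderability of the whole group, so ``argue separately that it extends'' is itself a nontrivial piece of work you are waving through. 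The approach you should pattern yourself on is the one the paper itself uses for Theorem~\ref{theorem:LO_main}: split off the kernel $H$ of the action on ends, left-order $G/H$ from Proposition~\ref{order_zhao}, left-order $H$ by a product-of-$\Homeo_+(\R)$ argument, and conclude by Conrad's extension lemma.
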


We briefly explain how Zhao defines the left-order on $\mathrm{Ends}_+(\Lambda)$ mentioned in Proposition \ref{order_zhao}.

Given two distinct points $u, v$ in $\Lambda$, since $\Lambda$ may not be Hausdorff, it is possible that there does not exist an embedded path joining $u$ and $v$. However, for any $u,v$ in $\Lambda$ there exists a unique \emph{broken path} from $u$ to $v$, which consists of a sequence of embedded closed intervals $[a_0, a_1], [a_2, a_3], \dots, [a_{2n}, a_{2n+1}]$ such that $a_0 = u, a_{2n+1} = v$, and where $a_{2i - 1}, a_{2i }$ are contained in the same cataclysm (see \cite{calegari2007foliations}, Subsection 4.3 for details). Each pair of points $(a_{2i - 1}, a_{2i })$ is called a \emph{cusp} and can be either \emph{positive} or \emph{negative}, depending on how points in the pair are ordered within the cataclysm. \par

A broken path between two points of $\Lambda$ can be extended to a broken path between distinct ends. For any distinct positive ends $x_1$, $x_2$ in $\mathrm{Ends}_+(\Lambda)$, Zhao then defined the quantity $n(x_1, x_2)$ to be the difference between the number of positive cusps and the number of negative cusps on the broken path from $x_1$ to $x_2$. It is then shown that $n(x_1, x_2)$ possesses all necessary properties to define a linear order $ \overset{n}{<}$ on $\mathrm{Ends}_+(\Lambda)$ as follows: given $x_1, x_2 $ in $\mathrm{Ends}_+(\Lambda)$,  $x_1 \overset{n}{<} x_2$ if and only if $n(x_1, x_2) < 0$. Finally, it is easy to see from the definition of $n(x_1, x_2)$ that, since $G$ acts on $\Lambda$ by orientation preserving homeomorphisms and $\Lambda$ has orderable cataclysms with respect to this action, this order is $G$-invariant.

\begin{comment}
    
\begin{definition}
    Let $l \in \F_1$ and $f \in \F_2$. We say that $l$ and $f$ make a \emph{perfect fit} if there exists a transversal $\tau_1$ to $\F_1$ starting at a point on $l$ and a transversal $\tau_2$ to $\F_2$ starting at a point on $f$ such that
    \begin{itemize}
        \item every $s \in \F_1$ that intersects $\tau_1$ also intersects $f$, and
        \item every $t \in \F_2$ that intersects $\tau_2$ also intersects $l$.
    \end{itemize}
\end{definition}

\begin{figure}[h]
    \centering
    \labellist
    \small\hair 2pt
    \pinlabel $l$ at 44 90
    \pinlabel $f$ at 206 23
    \pinlabel $\tau_1$ at 128 74
    \pinlabel $\tau_2$ at 156 68
    \pinlabel $s$ at 27 56
    \pinlabel $t$ at 173 17
    \endlabellist
    \includegraphics[scale = 0.60]{perfect-fit}
    \caption{A perfect fit made by $l$ and $f$}
    \label{fig: perfect fit}
\end{figure}

\end{comment}

\subsection{The circle at infinity of a bifoliated plane}\label{subsection:LO_prelim_circle}

Given a bifoliated plane $P = (P, \F_1, \F_2)$, the circle at infinity $\partial P$ compactifies $P$ to a closed disk $P\cup \partial P$, where the set of endpoints of leaves of the foliations $\F_1, \F_2$ form a dense subset of the \emph{circle at infinity} $\partial P$.

The circle at infinity associated to a bifoliated plane was defined initially by Fenley (\cite{fenley2012ideal}) for the case of the bifoliated plane $(P_\varphi, \F^+, \F^-)$ associated to an Anosov flow. Later, Bonatti in \cite{bonatti} gave a definition of the circle at infinity for a general bifoliated plane (in fact, for a plane with a countable number of transverse foliations or potentially singular foliations whose singular points are $k$-prongs for $k > 2$, but we will not need this here). 

In Section \ref{subsection:LO_prelim_circle} we state the definition and the main properties of the circle at infinity for a bifoliated plane, following Bonatti's work in \cite{bonatti}. To streamline the exposition, we use the notation due to Mather in \cite{mather}, where ideal boundaries are defined for certain foliations of surfaces via an essentially identical procedure. For details and proofs of the results we will state, see \cite{bonatti}.

The main idea in the construction of the circle at infinity  associated to a bifoliated plane $(P, \F_1, \F_2)$ is the following: given a foliation by lines $\mathcal{F}$ of the plane, let $\mathrm{Ends}(\mathcal{F})$ denote the set of all ends of leaves of $\mathcal{F}$, that is
\[
\mathrm{Ends}(\mathcal{F}) = \bigcup_{l \text{ leaf of } \mathcal{F}} \mathrm{Ends}(l).
\]

One can then give a canonical \emph{circular order} $\mathcal{O}$ to the set $\mathrm{Ends}(\F_1)\cup \mathrm{Ends}(\F_2)$ when leaves are properly embedded lines in $P$ and the foliations are transverse to each other. This is possible since in this case any two leaves (belonging to the same or different foliations) can intersect at most once, and they must leave any compact subset of $P$. Given any three ends $\{x_1, x_2, x_3\}$ of leaves of $\mathcal{F}_1$ or $\mathcal{F}_2$, one can find a simple closed curve that intersects the rays corresponding to these ends exactly once. Giving this curve the boundary orientation corresponding to the compact region it bounds allows one to declare the triple $(x_1, x_2, x_3)$ as either positively or negatively oriented, and this can be shown to be independent of the choice of simple closed curve.

\begin{remark}
    Given a set $S$ equipped with a circular order $\mathcal{O}$, there are two operations one can perform on $(S, \mathcal{O})$ which yield circularly ordered sets related to $(S, \mathcal{O})$.
    \begin{enumerate}
        \item \emph{Identification of equivalent points}: we say two points $x, y\in S$ are equivalent if there are only finitely many points of $S$ between them on either side. The circularly ordered set $(\hat{S}, \hat{\mathcal{O}})$ is obtained by identifying all equivalent points. 

        \item \emph{Completion}: analogously to the Dedekind completion of a linearly ordered set, a circularly ordered set $(S, \mathcal{O})$ can be completed in a natural way in order to yield a complete circularly ordered set $(\Tilde{S}, \tilde{\mathcal{O}})$ that contains $(S,\mathcal{O})$. More precisely, Calegari showed in \cite{calegari_groups_order} that a circular order on a set $S$ consists of a collection of linear orders on all subsets of the form $S\setminus \{ x\}$, subject to some compatibility conditions. It can be seen that the usual Dedekind completions of these linear orders also satisfy the compatibility conditions, and yield a complete circular order on $S$.
    \end{enumerate}
\end{remark}

%---------------------------- OLD SETUP FOR FIGURES
%\begin{figure}[h!]
%  \centering
%  \begin{minipage}[b]{0.45\textwidth}
%    \includegraphics[width=\textwidth]{reeb_plane.pdf}
 %   \caption{Part of a bifoliated plane consisting of two transverse Reeb foliations.}
 %   \label{fig:image8}
 % \end{minipage}
 % \hspace{2cm}
 % \begin{minipage}[b]{0.30\textwidth}
 %   \includegraphics[width=\textwidth]{reeb_circle.pdf}
 %   \caption{The compactification of the region shown on the left.}
 %   \label{fig:image9}
 % \end{minipage}
%\end{figure}
%----------------------------------
\begin{figure}[h!]
  \centering
\begin{minipage}[b]{0.9\textwidth}
  \includegraphics[width=\textwidth]{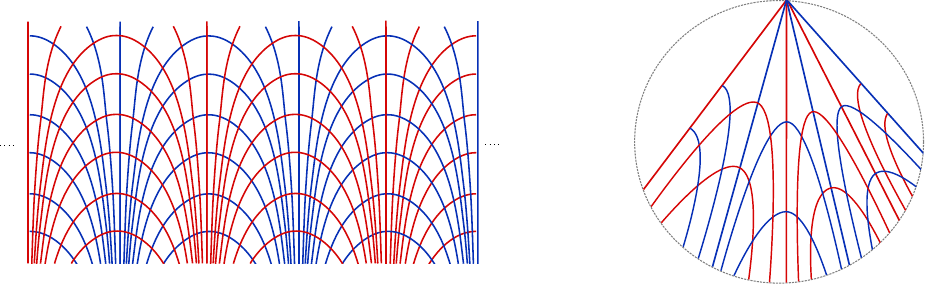}
    \caption{Part of a bifoliated plane consisting of two transverse Reeb foliations (left), and its compactification (right).}
    \label{fig:image8}
  \end{minipage}
 \end{figure}

This allows one to define the circle at infinity associated to a bifoliated plane:

\begin{definition}
    Let $(P, \F_1, \F_2)$ be a bifoliated plane, let $E =  \mathrm{Ends}(\F_1)\cup \mathrm{Ends}(\F_2) $ be the set of ends of leaves of $\F_1$ and $\F_2$, and let $\mathcal{O}$ be the natural circular order on $E$. Then, the \emph{circle at infinity} associated to $(P, \F_1, \F_2) $ is the circularly ordered set $\partial P =  \Tilde{\hat{E }  } $ with order $\Tilde{ \hat{ \mathcal{O} } }$. 
\end{definition}

Elements of $\partial P$ are called \emph{ideal points}. By construction, each end of a leaf corresponds to an ideal point. We adopt the following convention to simplify notation when we treat endpoints of leaves as elements on the circle at infinity.

\begin{convention} 
If $l$ is a leaf of $\F_i$, then $\partial l$ denotes the set of (two) ideal points of $l$.
\end{convention}

\begin{theorem}[\cite{bonatti}]
      The set $P \cup \partial P$ admits a natural topology that makes it homeomorphic to a closed disk which is the compactification of the plane $P$ by the circle $\partial P \cong S^1$. This topology is such that any homeomorphism of the plane preserving the foliations $\F_1, \F_2$ induces a homeomorphism of $P\cup \partial P$. 
      
\end{theorem}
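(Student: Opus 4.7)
The plan is to define a topology on $P \cup \partial P$ via an explicit neighborhood basis coming from the bifoliation, verify the separation, countability, and compactness properties that force the resulting space to be a closed disk, and finally observe that the entire construction is functorial under foliation-preserving maps.

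First I would confirm that $(\partial P, \widetilde{\widehat{\mathcal{O}}})$ is order-isomorphic to the oriented circle. After identifying equivalent points, the cyclically ordered set $\widehat{E}$ built from $E = \mathrm{Ends}(\F_1) \cup \mathrm{Ends}(\F_2)$ is countable, dense, and has no cyclic neighbors: density follows from the fact that any leaf of $\F_2$ meeting a leaf of $\F_1$ whose ends bracket an interval of $\partial P$ produces a new leaf end in that interval, using transversality and properness. Its Dedekind-style completion is therefore a complete dense circular order, classically isomorphic to $S^1$. Next I would take as a basis for the topology on $P \cup \partial P$ the open subsets of $P$ together with sets of the form $U(l_1, l_2, \sigma) = C(l_1, l_2, \sigma) \cup I(l_1, l_2, \sigma)$, where $l_1, l_2 \in \F_1 \cup \F_2$ are disjoint leaves whose four ideal points split $\partial P$ into two arcs, $\sigma$ designates one of them, $I(l_1, l_2, \sigma) = \sigma$, and $C(l_1, l_2, \sigma)$ is the component of $P \setminus (l_1 \cup l_2)$ whose ideal boundary is $\sigma$. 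A neighborhood basis at an ideal point $\xi$ consists of those $U(l_1, l_2, \sigma)$ with $\xi \in I(l_1, l_2, \sigma)$.

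The main obstacle is Hausdorffness at pairs of ideal points, together with compactness. Given distinct $\xi_1, \xi_2 \in \partial P$, I would use density of leaf endpoints on $\partial P$ to find four leaf ends that cyclically separate $\xi_1$ from $\xi_2$; two of the leaves realizing these ends bound disjoint basis neighborhoods of the two points. For second countability, one restricts to basis sets cut out by a countable family of leaves, using second countability of $P$ and density of the leaf endpoints in $\partial P$. Compactness follows from a nested intersection argument: any open cover can be refined so that each of its ``infinite'' members is of the form $U(l_1, l_2, \sigma)$ for some arc $\sigma \subset \partial P$, and compactness of $S^1$ together with properness of leaves gives a finite subcover. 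With these properties, $P \cup \partial P$ is a compact, connected, Hausdorff, second countable space whose interior $P$ is homeomorphic to $\R^2$ and whose complement is a circle; this is enough to identify it with the closed disk, either by the classification of surfaces with boundary or by building a concrete homeomorphism leaf by leaf.

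Finally, for the statement about homeomorphisms, any $h : P \to P$ preserving $\F_1$ and $\F_2$ sends leaves to leaves and hence induces a bijection $h_* : E \to E$ preserving the circular order $\mathcal{O}$. Since both the identification of equivalent points and the Dedekind-style completion are defined purely from the circular order, $h_*$ descends and extends canonically to an order-preserving bijection $\bar{h} : \partial P \to \partial P$. The topology on $P \cup \partial P$ was described entirely in terms of leaves of $\F_1, \F_2$ and the circular order on $\partial P$, both of which are preserved by the pair $(h, \bar{h})$. Consequently the combined map $h \cup \bar{h}$ sends basis neighborhoods to basis neighborhoods, and the same argument applied to $h^{-1}$ shows it is a homeomorphism of $P \cup \partial P$.
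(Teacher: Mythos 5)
The paper does not prove this result but cites it from Bonatti, so I can only assess your argument on its own terms; in that spirit there are a few real gaps.

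The most serious is your claim that, after identifying equivalent points, $\widehat{E}$ is \emph{countable}. It is not. Already in the trivial bifoliated plane $(\R^2, \text{horizontals}, \text{verticals})$ the set $E$ of leaf ends consists of four uncountable families (the right and left ends of the horizontal leaves, the top and bottom ends of the vertical leaves), and no two right ends of horizontal leaves are equivalent in the sense of the paper's remark, because between any two there are uncountably many others. So $\widehat{E}$ is uncountable, and the classical fact you want to invoke (``a countable dense circular order without neighbors completes to $S^1$'') does not apply directly. What you actually need is that $\widehat{E}$ is \emph{separable} as a circularly ordered set, i.e.\ admits a countable order-dense subset; this does follow (for instance from second countability of $P$ and the leaf spaces), and then the Dedekind-style completion is a complete, dense, separable circular order with no extremes, which is $S^1$. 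As written, however, the countability claim is simply false and the argument as stated breaks at that point.

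Two further steps are underspecified. Your compactness argument (``refine the cover so that each of its infinite members is $U(l_1,l_2,\sigma)$, then use compactness of $S^1$ and properness of leaves'') is too vague to check; you have not explained why a cover by your basis neighborhoods of the ideal points can be reduced to finitely many, nor why adding finitely many open subsets of $P$ then covers the rest. More importantly, the final identification with the closed disk is not automatic from the listed properties. A compact, Hausdorff, second countable space whose interior is $\R^2$ and whose residual set is a circle need not be a closed disk; one can glue a circle onto $\R^2$ in topologically pathological ways. To invoke the classification of surfaces with boundary you must first show $P \cup \partial P$ is a $2$-manifold with boundary, i.e.\ that every point of $\partial P$ has a neighborhood homeomorphic to a half-disk, and that is precisely what has to be extracted from the structure of your basis sets $U(l_1,l_2,\sigma)$ (or, alternatively, one argues via Carathéodory prime-end theory). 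That verification is the substance of the result and should not be elided.

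The functoriality paragraph at the end is fine: the circular order on $E$, the equivalence relation, and the completion are all defined intrinsically from the foliations, so any $(\F_1,\F_2)$-preserving homeomorphism of $P$ canonically extends. That part of your argument is sound.
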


\section{Proof of Theorem \ref{introthm:main}}\label{section:LO_proof_main_thm}

In this section, we prove Theorem \ref{theorem:LO_main}, which tells us that for a bifoliated plane $(P, \F_1, \F_2)$ we have that $\mathrm{Aut}^+(P, \F_1, \F_2)$ is left orderable.

\begin{remark}
    For the sake of brevity, when the foliations $\F_1$ and $\F_2$ on the plane $P$ are clear from the context, we will refer to $\mathrm{Aut}^+(P, \F_1, \F_2)$ as $\mathrm{Aut}^+(P)$.
\end{remark}

Given a bifoliated plane $P = (P, \F_1, \F_2)$ and a group $G \leq \mathrm{Aut}^+(P)$, there is an induced action of $G$ on the leaf spaces $\Lambda_1, \Lambda_2$. The first goal of this section is to show that each $\Lambda_i$ has orderable cataclysms with respect to this action, and then to use this to define a $G$-invariant linear order on the set $\mathrm{Ends}(\Lambda_1) \cup \mathrm{Ends}(\Lambda_2)$. We begin by explicitly defining the order on each cataclysm, and then we show that it is indeed a linear order. \par

We begin by defining a partial order on the leaf spaces $\Lambda_i$. By Proposition \ref{proposition : leaf_spaces}, fixing a continuous orientation on leaves of each of $\F_1$ and $\F_2$ induces an orientation on $\Lambda_2$ and $\Lambda_1$, respectively. Orientability of each $\Lambda_i$ allows us to define a partial order on $\Lambda_i$ as follows:

\begin{definition}
    Let $x,y $ in $\Lambda_i$. We say that $y > x$ if there exists an orientation-preserving embedding $\gamma\colon [0,1] \to \Lambda_i$ such that $\gamma(0) = x, \gamma(1) = y$.
\end{definition}

It is easy to check that this relation is a partial order on $\Lambda_i$. Note that the complement of any leaf of each $\F_i$ has exactly two connected components, we will use this fact to define an order on each cataclysm.

\begin{definition}
    Let $l$ be a leaf of $\F_i$. We define the \emph{front of} $l$ to be the connected component of $P\setminus l$ that consists of all the leaves $l'$ of $\F_i$ such that $q_i(l') > q_i(l)$ in $\Lambda_i$. Likewise, we define the \emph{back of} $l$ to be the connected component of $P\setminus l$ containing some leaf $l''$ of $\F_i$ such that $q_i(l'') < q_i(l)$.
\end{definition}

\begin{remark}
     The front and the back of $l$ are disjoint, and therefore their union is $P\setminus l$. Thus, given two leaves $l, l'$ of $\F_i$, one has that $l'$ is either in the back or in the front of $l$. However, note that (perhaps counterintuitively) it is possible to have $l$ and $l'$ be in the front (or the back) of each other. That is, it is not true that the fact that $l'$ is in front (back) of $l$ implies that $l$ is in the back (front) of $l'$. For an example, see Figure \ref{fig: lemma_nonsep_figure}: the leaves $l_1, l_2$ are in the back of each other. However, the next lemma shows that this does not happen when the leaves intersect a pair of non-separated leaves (a condition satisfied by leaves $s_1, s_2$ in Figure \ref{fig: lemma_nonsep_figure}).
\end{remark}

%\begin{definition}
%    Let $l$ be a leaf of $\F_1$ (resp. $\F_2$), and let $s$ be a leaf of $\F_2$ (resp. $\F_1$) that intersects it. 
%    Then, given an orientation preserving parametrization $\alpha\colon \R \to s$ such that $\alpha(0) = s\cap l$, the $\emph{front}$ of $l$ is the connected component of $P\setminus l$ that contains $\alpha(\R^+)$, and we call the other component of $P\setminus l$ the \emph{back} of $l$.
%\end{definition}

\begin{lemma}\label{non_sep}
Let $l_1, l_2$ be non-separated leaves of $\F_1$. Then, there exists leaves $s_1, s_2 $ of $ \F_2$ such that $s_i \cap l_{i} \neq \varnothing$ and such that either $s_1$ is in the back of $s_2$ and $s_2$ is in the front of $s_1$, or $s_1$ is in the front of $s_2$ and $s_2$ is in the back of $s_1$. Moreover, if $s_1$ is in the back of $s_2$ (or vice-versa), then this also holds for any other choice of such leaves $s_1', s_2'$.
\end{lemma}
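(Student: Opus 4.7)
The plan is to pick $p_i \in l_i$ arbitrarily, set $s_i := \F_2(p_i)$, and verify that these leaves satisfy the claim by exploiting a sequence of leaves of $\F_1$ that accumulates on both $l_1$ and $l_2$.

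First I would establish that $s_1 \neq s_2$, and moreover that $s_1 \cap l_2 = s_2 \cap l_1 = \emptyset$. Since two transverse properly embedded lines in $P$ intersect at most once, the quotient $q_1|_{s_1}\colon s_1 \to \Lambda_1$ is a continuous injection whose image is an embedded arc in $\Lambda_1$, homeomorphic to $\R$ and hence a Hausdorff subspace. If $s_1$ met both $l_1$ and $l_2$, then $[l_1]$ and $[l_2]$ would both lie in this Hausdorff arc and would admit disjoint neighborhoods there; pulling these back via $q_1$ would yield disjoint saturated neighborhoods of $l_1$ and $l_2$ in $P$, contradicting non-separation. In particular $s_1 = s_2$ is impossible.

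Next I would use the defining sequence of non-separation, namely leaves $l_n \in \F_1$ with $[l_n] \to [l_1]$ and $[l_n] \to [l_2]$ in $\Lambda_1$. By local product structure, for all large $n$ the leaf $l_n$ meets $s_1$ in a point $a_n \to p_1$ and meets $s_2$ in a point $b_n \to p_2$. Since $l_n$ is transverse to $\F_2$, the map $q_2|_{l_n}\colon l_n \to \Lambda_2$ is a continuous injection, and $q_2(l_n)$ is an embedded arc in $\Lambda_2$ containing the two distinct points $q_2(s_1), q_2(s_2)$. These are therefore comparable along $q_2(l_n)$, and after possibly interchanging the roles of $l_1$ and $l_2$, which is exactly what exchanges the two cases in the conclusion, I may assume $q_2(s_1) < q_2(s_2)$. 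The orientation on $l_n$ inherited from $\F_1$ matches the orientation on $\Lambda_2$ used to define the partial order, so traversing $l_n$ positively from $a_n$ to $b_n$ strictly increases $q_2$; hence every point of the open sub-arc of $l_n$ strictly between $a_n$ and $b_n$ sits on an $\F_2$-leaf with $q_2$-value strictly between $q_2(s_1)$ and $q_2(s_2)$. This places the open sub-arc in the front of $s_1$ and in the back of $s_2$. Because $s_1$ and $s_2$ are disjoint, $s_2$ is a connected subset of $P \setminus s_1$ containing $b_n$, which is a limit point of the open sub-arc and is itself not on $s_1$; a small neighborhood of $b_n$ avoiding $s_1$ therefore meets the front of $s_1$, forcing $s_2 \subset$ front of $s_1$. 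Symmetrically $s_1 \subset$ back of $s_2$.

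For the moreover statement, I would apply the same argument with the same sequence $l_n$ to any alternative pair $s_1' := \F_2(p_1')$, $s_2' := \F_2(p_2')$: for $n$ large enough, $l_n$ meets all four leaves $s_1, s_1', s_2, s_2'$, with the intersections lying near $l_1$ (for $s_1, s_1'$) or near $l_2$ (for $s_2, s_2'$). The positive direction of $l_n$, fixed once and for all by the orientation of $\F_1$, still goes from ``near $l_1$'' to ``near $l_2$'', so $q_2(s_1') < q_2(s_2')$, and the same conclusion follows. The main obstacle I anticipate is making the front/back identification rigorous in light of the preceding remark's warning that these notions can be counterintuitive, which requires carefully combining the transversality of $\F_1$ and $\F_2$, the order on $\Lambda_2$, and the disjointness of $s_1$ from $s_2$ to pass from the order statement $q_2(s_1) < q_2(s_2)$ to a genuine side-of-a-leaf statement.
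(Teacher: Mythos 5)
Your proof is correct, and it takes a genuinely different route from the paper's. The paper first proves a sub-lemma (via the accumulating sequence) that $l_1$ and $l_2$ carry orientations compatible with the boundary orientation of the region between them, and then performs a case analysis on whether the positively or negatively oriented $\F_2$-rays of $s_1, s_2$ based at $s_i \cap l_i$ point into that region, reading off the front/back relation from the two resulting pictures; the ``moreover'' clause is then handled by continuous variation. You instead extract everything from the arc of the accumulating leaf $l_n$ between $a_n = l_n \cap s_1$ and $b_n = l_n \cap s_2$: because $q_2|_{l_n}$ is an orientation-preserving embedding into $\Lambda_2$, the arc visits $q_2$-values monotonically, the front/back relation can be read off its endpoints, and the same fixed sequence handles the ``moreover'' uniformly. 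This is essentially the device the paper deploys in the very next lemma (transitivity of the cataclysm order) applied a step earlier; it buys you a shorter argument with no case split. Your preliminary observation that no single $\F_2$-leaf can meet both $l_1$ and $l_2$ is a useful fact the paper does not isolate. One step to tighten: to pull disjoint neighborhoods of $[l_1]$ and $[l_2]$ in the arc $q_1(s_1)$ back to disjoint saturated \emph{open} sets in $P$, you need those neighborhoods to be open in $\Lambda_1$ and not merely in the subspace topology of the arc. This does hold because $q_1(s_1)$ is open in $\Lambda_1$ (the $q_1$-image of any product chart through a point of $s_1$ is open and contained in $q_1(s_1)$), but it should be said; alternatively, just saturate disjoint open sub-intervals of $s_1$ around the two intersection points.
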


\begin{proof}[Proof of Lemma \ref{non_sep}]

Let $(l_n')_{n\in \mathbb{N}}$ be a sequence of leaves of $\F_1$ that converges to a set of leaves containing $l_1$ and $l_2$, and let $U$ denote the connected region bounded by $l_1$ and $l_2$. First, we show the following:

\begin{lemma*}
The leaves $l_1$ and $l_2$ are oriented so that their orientation coincides with the boundary orientation induced by an orientation of $U$.
\end{lemma*}

\begin{proof}
Consider trivially foliated (and trivially oriented), disjoint rectangular neighborhoods $R_1$ and $R_2$ intersecting $l_1$ and $l_2$ respectively. Taking $N >0$ large enough, we have that $l_n'$ intersects both $R_1$ and $R_2$ for all $n \geq N$, and taking a subsequence we may assume that the $l_n'$ for $n\geq N$ are in the same connected component of $U \setminus l_N'$. Note that since the foliations are assumed to be nonsingular, any leaf can intersect $R_i$ at most once, and all leaves must have their first intersection with $R_i$ on the same side of $R_i$, for $i=1,2$. The notion of ``first intersection'' of a leaf with $R_i$ is well defined since the foliation is oriented.

Now, suppose that $l_1$ and $l_2$ are not oriented in such a way that their orientation coincides with the boundary orientation corresponding to an orientation of $U$ (see Figure \ref{fig: lemma_nonsep_impossible}, on the left). Then, $l_1$ and $l_2$ are in different connected components of $U \setminus l_N'$. This implies that the leaves $l_n'$ for $n>N$ cannot accumulate on one of $l_1$ and $l_2$, which is a contradiction. This proves the lemma.
\end{proof}

\begin{figure}[h]
  \centering
    \includegraphics[width=0.80\textwidth]{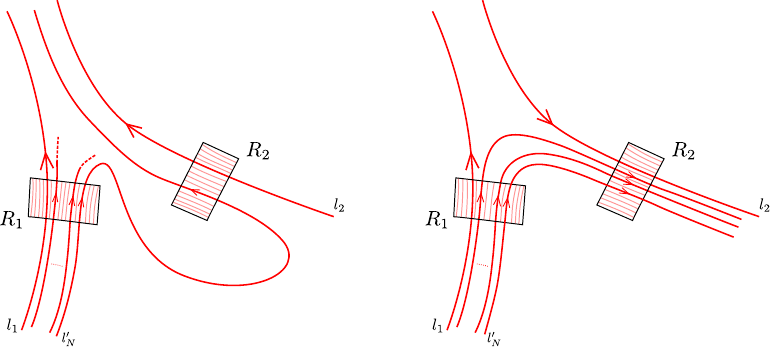}
    \caption{Two ways of orienting $l_1$ and $l_2$. The orientations in the picture on the left are not compatible with $l_1$ and $l_2$ being non-separated.}
    \label{fig: lemma_nonsep_impossible}
\end{figure}

Due to the lemma shown above we may then assume without loss of generality that $l_1, l_2$ are oriented as in Figure \ref{fig: lemma_nonsep_figure}.

\begin{figure}[h]
  \centering
    \includegraphics[width=0.40\textwidth]{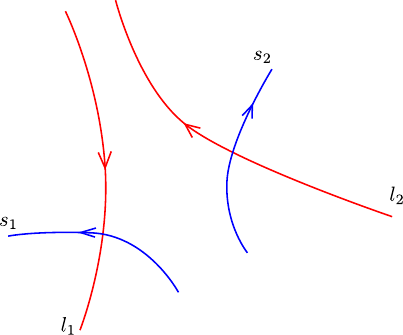}
    \caption{Two non-separated leaves $l_1, l_2$.}
    \label{fig: lemma_nonsep_figure}
\end{figure}

Let $s_1$, $s_2$ be leaves of $\F_2$ such that $s_i \cap l_i \neq \varnothing$. These leaves have orientations coming from the orientation on the foliation $\F_2$. There are two possibilities: either both of the positively oriented rays contained in the $s_i$ and based at $s_i \cap l_i$ are contained in the region between $l_1$ and $l_2$, or the same is true of the analogous negatively oriented rays (no other cases are possible, since the foliations $\F_1, \F_2$ must have the same local product orientation at $s_1 \cap l_1$ and $s_2 \cap l_2$). 

In Figure \ref{fig: lemma_nonsep_figure} we depict the second case. In this case, we can see that $s_2$ is in the back of $s_1$, and $s_1$ is not in the back of $s_2$. The same is true in the other case: recall that it is the orientation of $l_i$ that determines which component of $P \setminus s_1$ is the front of $s_i$. Therefore, reversing the orientations of both of the $s_i$ while leaving those of the $l_i$ unchanged does not change what the front (or back) of each leaf $s_i$ is.

%this is determined by the orientation of $\F_1$ (which is the transverse orientation of $\F_2$).

Moreover, any other choice of $s_1', s_2'$ such that $s_i' \cap l_i \neq \varnothing$ can be obtained by continuously varying $s_1$ and $s_2$, so that $s_i$ is in the back of $s_j$ if and only if $s_i'$ is in the back of $s_j'$, for $i,j \in \{ 1,2\}, i\neq j$.
\end{proof}

\begin{definition}\label{order}
    Given two non-separated leaves $l_1, l_2$ of $\F_1$, let $s_1, s_2$ be leaves of $\F_2$ intersecting them. Then, we say that $l_1 < l_2$ (\emph{resp.} $l_1 > l_2$) if $s_1$ is in the back (\emph{resp.} front) of $s_2$.
\end{definition}

%The definition above allows us to order any collection of pairwise non-separated leaves, which in turn allows us to order corresponding cataclysms in the leaf spaces $\Lambda_i, i= 1,2$. Moreover, orders defined in this way are preserved under any group action that preserves the orientations of the foliations.

We now show that this gives a total order on each cataclysm, and these orders are preserved by the action of $\mathrm{Aut}^+(P)$.

%\begin{definition} already defined earlier
%    Let $(P, \F_1, \F_2)$ be a bifoliated plane, where $\F_1$ and $\F_2$ are oriented. We define $\mathrm{Aut}^+(P, \F_1, \F_2)$ to be the group of homeomorphisms of $P$ that preserves the foliations $\F_1$ and $\F_2$, together with their orientations.

%    For brevity, we write $\mathrm{Aut}^+(P)$ instead of $\mathrm{Aut}^+(P,\F_1, \F_2)$ whenever the specific foliations $\F_1, \F_2$ are implied within the given context.  
%\end{definition}

\begin{lemma}\label{orderable_cataclysms}
    Each leaf space $\Lambda_i, i=1,2$ has orderable cataclysms with respect to the action of $\mathrm{Aut}^+(P)$. In particular, this is also true for any subgroup of $\mathrm{Aut}^+(P)$.
\end{lemma}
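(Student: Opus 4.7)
The plan is to verify that the relation $<$ on cataclysms introduced in Definition \ref{order} is a well-defined, total, $G$-invariant linear order, where $G = \mathrm{Aut}^+(P)$. By symmetry it suffices to treat $\Lambda_1$.

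First I would establish that the relation is well-defined. Given two non-separated leaves $l_1, l_2$ of $\F_1$ and any two choices of leaves $s_1, s_2, s_1', s_2' \in \F_2$ with $s_i \cap l_i \neq \varnothing$ and $s_i' \cap l_i \neq \varnothing$, the last sentence of Lemma \ref{non_sep} says that $s_1$ is in the back of $s_2$ iff $s_1'$ is in the back of $s_2'$. Hence the definition does not depend on the choice of transverse leaves. Antisymmetry (equivalently, trichotomy on distinct leaves of the cataclysm) is then immediate from the first part of Lemma \ref{non_sep}: for $l_1 \neq l_2$ non-separated, exactly one of ``$s_1$ in back of $s_2$'' or ``$s_1$ in front of $s_2$'' occurs, so exactly one of $l_1 < l_2$ or $l_1 > l_2$ holds.

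The main step will be transitivity. Suppose $l_1, l_2, l_3$ are pairwise non-separated leaves of $\F_1$ with $l_1 < l_2$ and $l_2 < l_3$, and pick $s_i \in \F_2$ with $s_i \cap l_i \neq \varnothing$. Since the relation is well-defined, we may use the \emph{same} $s_2$ for both comparisons, so $s_1$ lies in the back of $s_2$ and $s_2$ lies in the back of $s_3$. The leaves $s_2, s_3$ are disjoint (both are leaves of $\F_2$), so $s_3$ is contained in a single connected component of $P \setminus s_2$; by antisymmetry applied to $s_2, s_3$, this component is the front of $s_2$. Therefore the closed set $\overbar{B_2} := s_2 \cup (\text{back of } s_2)$ is connected and disjoint from $s_3$, so it lies in one component of $P\setminus s_3$; since $s_2 \in \overbar{B_2}$ lies in the back of $s_3$, the whole of $\overbar{B_2}$ lies in the back of $s_3$. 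In particular $s_1$ lies in the back of $s_3$, giving $l_1 < l_3$. This is the geometric core of the argument.

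Finally I would check $G$-invariance. Any $\varphi \in \mathrm{Aut}^+(P)$ sends $\F_i$ to $\F_i$ preserving the induced orientations, hence the induced partial orders on each leaf space. Consequently $\varphi$ maps the back of a leaf $s \in \F_2$ to the back of $\varphi(s)$. If $l_1, l_2 \in \F_1$ are non-separated with $l_1 < l_2$, witnessed by $s_1, s_2 \in \F_2$, then $\varphi(s_i) \cap \varphi(l_i) \neq \varnothing$ and $\varphi(s_1)$ lies in the back of $\varphi(s_2)$, so $\varphi(l_1) < \varphi(l_2)$. Thus the order on every cataclysm is preserved by $\mathrm{Aut}^+(P)$, and the same statement for any subgroup is automatic. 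The anticipated main obstacle is the transitivity step, since it requires ruling out the geometric possibility that the back of $s_2$ straddles $s_3$; this is handled by the connectedness/disjointness argument above, made possible by the fact that leaves of a (nonsingular) foliation are proper and pairwise disjoint.
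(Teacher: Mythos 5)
Your proof is correct, and the transitivity step takes a genuinely different route from the paper. The paper picks possibly different leaves $s_2$ and $s_2'$ through $l_2$ for the two comparisons and then introduces a sequence $(t_n)$ of $\F_1$-leaves converging simultaneously to $l_1, l_2, l_3$, examining where $s_1, s_2, s_2', s_3$ cross a fixed transversal $t_N$ for $N$ large (with a figure carrying much of the argument). You instead exploit the well-definedness clause of Lemma \ref{non_sep} to fix a \emph{single} $s_2$, and then run a direct set-containment argument: $s_3$ lies in the front of $s_2$ (by the dichotomy in Lemma \ref{non_sep}), so the closed connected region $s_2 \cup (\mathrm{back\ of\ } s_2)$ is disjoint from $s_3$, hence contained in one component of $P\setminus s_3$, which must be the back of $s_3$ because it contains $s_2$; since $s_1$ lies in this region, $s_1$ is in the back of $s_3$. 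Your version is cleaner in two respects: it avoids invoking the existence of a single sequence of leaves accumulating on all three elements of the cataclysm at once (a fact the paper uses without comment), and it replaces a picture-driven step with a transparent connectedness argument that relies only on the disjointness of $\F_2$-leaves. One cosmetic remark: you write $s_2 \in \overbar{B_2}$ where you mean $s_2 \subset \overbar{B_2}$, since $s_2$ is a leaf rather than a point.
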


\begin{proof}
   Let $\mu$ be a cataclysm in $\Lambda_1$. Then $\mu$ is a collection of non-separated $\F_1$-leaves. For any two leaves $l_1, l_2 \in \mu$, because $\mathrm{Aut}^+(P)$ preserves orientations of both foliations, the order on $l_1$ and $l_2$ as described in Definition \ref{order} is well defined by Lemma \ref{non_sep} and is preserved by the action of any element of $\mathrm{Aut}^+(P)$. We need to check that this relation is transitive in order to show that it defines a linear order on $\mu$. Suppose $l_1 < l_2 $ and $l_2 < l_3$, and we want to show that $l_1 < l_3$. 
\begin{figure}[h!]
  \centering
    \includegraphics[width=0.99\textwidth]{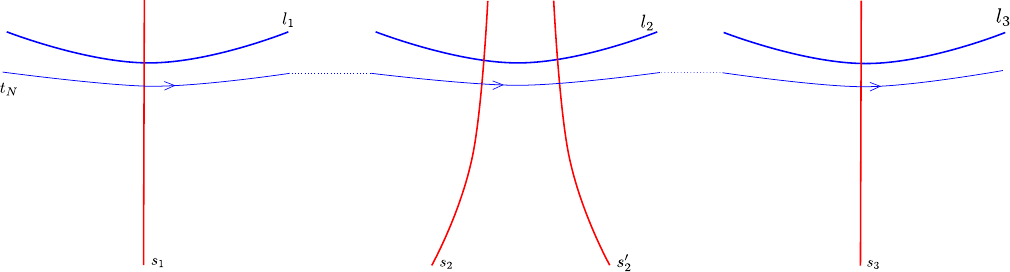}
    \caption{Transitivity of the order on a cataclysm}
    \label{lemma_cataclysms}
\end{figure}
   Since $l_1 < l_2$, there exist $s_1, s_2$ leaves of $\F_2$ intersecting $l_1, l_2$ respectively, such that $s_1$ is in the back of $s_2$. Analogously, there exist $s_2', s_3$ leaves of $\F_2$ intersecting $l_2, l_3$ respectively, such that $s_2'$ is in the back of $s_3$. 
   Let $(t_n)_{n\geq 1}$ be a sequence of $\F_1$-leaves that converges to (possibly among other leaves) $l_1, l_2, l_3$ in $\Lambda_1$. In the plane $P$, for $N$ large enough we must then have that $t_N$ intersects $s_1, s_2, s_2', s_3$. Additionally, the leaves $l_1, l_2, l_3$ must be on the same side of the leaf $t_N$, since otherwise there could not exist a sequence converging to all of them. Therefore, the situation is as shown in Figure \ref{lemma_cataclysms}.
      
   The fact that $s_1, s_2, s_2'$ and $s_3$ intersect $t_N$ together with the fact that $s_1$ is in the back of $s_2$ and $s_2'$ is in the back of $s_3$ implies that (regardless of whether $s_2$ is in the back of $s_2'$ or viceversa) $s_1$ is in the back of $s_3$, meaning that $l_1 < l_3$. This shows that the relation $<$ is transitive, and therefore defines a linear order on $\mu$.\end{proof}

Finally, we can define the desired $\mathrm{Aut}^+(P)$-invariant order:

\begin{proposition}\label{proposition: order_on_ends}
    Let $(P, \F_1, \F_2)$ be a bifoliated plane. Then, there exists an $\mathrm{Aut}^+(P)$-invariant linear order on $\mathrm{Ends}(\Lambda_1) \cup \mathrm{Ends}(\Lambda_2)$.

\end{proposition}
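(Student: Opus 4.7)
The plan is to apply Zhao's result (Proposition \ref{order_zhao}) to each of the leaf spaces $\Lambda_1$ and $\Lambda_2$ independently, and then concatenate the resulting invariant orders. By Lemma \ref{orderable_cataclysms}, each leaf space $\Lambda_i$ has orderable cataclysms with respect to the action of $\mathrm{Aut}^+(P)$, so Zhao's proposition immediately yields an $\mathrm{Aut}^+(P)$-invariant linear order on $\mathrm{Ends}_+(\Lambda_i)$ for each $i \in \{1,2\}$.

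To also obtain invariant linear orders on the sets $\mathrm{Ends}_-(\Lambda_i)$, I would reverse the chosen orientation of $\Lambda_i$ and apply Zhao's result to the resulting oriented $1$-manifold, in which the old negative ends have become the new positive ends. Reversing the orientation of $\Lambda_i$ corresponds to reversing the orientation of $\F_{3-i}$; inspecting Definition \ref{order} shows that this change does not affect the cataclysm orders defined on $\Lambda_i$, since those orders are defined via the back/front notion in $\Lambda_{3-i}$, which depends on the orientation of $\F_i$ rather than of $\F_{3-i}$. Therefore the cataclysms of $\Lambda_i$, with the reversed orientation of $\Lambda_i$, are still orderable with respect to the action of $\mathrm{Aut}^+(P)$, and Zhao's proposition delivers an $\mathrm{Aut}^+(P)$-invariant linear order on $\mathrm{Ends}_-(\Lambda_i)$ as well.

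Since every element of $\mathrm{Aut}^+(P)$ preserves the orientations of both $\F_1$ and $\F_2$, and hence the induced orientations on $\Lambda_1$ and $\Lambda_2$, each of the four subsets $\mathrm{Ends}_-(\Lambda_1)$, $\mathrm{Ends}_+(\Lambda_1)$, $\mathrm{Ends}_-(\Lambda_2)$, $\mathrm{Ends}_+(\Lambda_2)$ is invariant under the induced action on ends. I would then define the desired linear order on $\mathrm{Ends}(\Lambda_1)\cup\mathrm{Ends}(\Lambda_2)$ by concatenating the four invariant orders above, declaring for instance
$$\mathrm{Ends}_-(\Lambda_1)\;<\;\mathrm{Ends}_+(\Lambda_1)\;<\;\mathrm{Ends}_-(\Lambda_2)\;<\;\mathrm{Ends}_+(\Lambda_2),$$
with the Zhao orders used within each piece. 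The resulting relation is a linear order because each piece is linearly ordered and the four pieces are pairwise disjoint, and it is $\mathrm{Aut}^+(P)$-invariant because the four pieces are individually invariant and the orders within each of them are.

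I do not expect a serious obstacle here: the substantive work was already carried out in Lemma \ref{orderable_cataclysms}, and Zhao's proposition does the rest. The only point requiring careful verification is that reversing the orientation of $\Lambda_i$ preserves the orderable-cataclysm property with respect to the same group $\mathrm{Aut}^+(P)$, which, as noted, reduces to reading Definition \ref{order} and observing that the cataclysm order depends only on the orientation of $\F_i$, not of $\F_{3-i}$.
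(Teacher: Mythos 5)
Your proof is correct and follows essentially the same route as the paper: apply Zhao's Proposition~\ref{order_zhao} via Lemma~\ref{orderable_cataclysms} with the given orientation to order the positive ends, apply it again with the orientation of $\Lambda_i$ reversed to order the negative ends, and then concatenate the four invariant linear orders. Your explicit check that reversing the orientation of $\Lambda_i$ (equivalently, of $\F_{3-i}$) leaves the cataclysm order on $\Lambda_i$ unchanged — since that order is defined via front/back in $\Lambda_{3-i}$, which depends only on the orientation of $\F_i$ — is a useful unpacking of the paper's terser remark that one simply applies Proposition~\ref{order_zhao} ``with orientations reversed.''
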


\begin{proof}

First, let $<_1^+$ be an $\mathrm{Aut}^+(P)-$invariant linear order on $\mathrm{Ends}_+(\Lambda_1)$, provided by Proposition \ref{order_zhao} and Lemma \ref{orderable_cataclysms}. Using Proposition \ref{order_zhao} again but with orientations reversed yields an $\mathrm{Aut}^+(P)$-invariant linear order $<_1^-$ on $\mathrm{Ends}_-(\Lambda_1)$. 

Since $\mathrm{Ends}(\Lambda_1) = \mathrm{Ends}_+(\Lambda_1) \sqcup \mathrm{Ends}_-(\Lambda_1)$ and these sets are $\mathrm{Aut}^+(P)$-invariant (the action of $ \mathrm{Aut}^+(P)$ preserves the orientation of both leaf spaces), we can define an order $<_1$ on $\mathrm{Ends}(\Lambda_1)$ by declaring $x <_1 y$ for all $x$ in $\mathrm{Ends}_+(\Lambda_1)$ and all $y $ in $\mathrm{Ends}_-(\Lambda_1)$. When $x,y$ are both in $\mathrm{Ends}_+(\Lambda_1)$ (resp. $\mathrm{Ends}_-(\Lambda_1)$), we can let $x<_1 y$ if and only if $x <_1^+ y$ (resp. $x <_1^- y$).

This is by definition an $\mathrm{Aut}^+(P)$-invariant linear order on $\mathrm{Ends}(\Lambda_1)$. The same procedure shows the existence of an analogous invariant linear order $<_2$ on $\mathrm{Ends}(\Lambda_2)$. Finally, the $\mathrm{Aut}^+(P)$-invariant linear orders on $\mathrm{Ends}(\Lambda_1)$ and $\mathrm{Ends}(\Lambda_2)$ can be combined to yield an $\mathrm{Aut}^+(P)$-invariant order $<$ on $\mathrm{Ends}(\Lambda_1) \cup \mathrm{Ends}(\Lambda_2)$, as we wanted: let  $x < y $ for all $x \in \mathrm{Ends}(\Lambda_1), y \in \mathrm{Ends}(\Lambda_2)$. For $x, y \in \mathrm{Ends}(\Lambda_i)$ and $i = 1,2$, let $x < y$ if and only if $x <_i y$.
\end{proof}

We are now in a position to prove Theorem \ref{theorem:LO_main}.

\begin{proof}[Proof of Theorem \ref{theorem:LO_main}]

We will use an idea of Zhao in section 3.3 of \cite{zhao}, with some modifications. The main step of this idea is to show that the subgroup of $\mathrm{Aut}^+(P)$ acting trivially on the ends of both leaf spaces is left-orderable.

Let $P = (P, \F_1, \F_2)$ be a bifoliated plane, and let $G = \mathrm{Aut}^+(P)$.
%Given a bifoliated plane $(P, \F_1, \F_2)$, by Lemma \ref{orderable_cataclysms} we know that every cataclysm can be assigned a linear order preserved by the action of $\mathrm{Aut}^+(P)$. Therefore, we are in a position to apply Proposition \ref{proposition: order_on_ends}.

By Proposition \ref{proposition: order_on_ends}, there exists a $G$-invariant linear order on $\mathrm{Ends}(\Lambda_1) \cup \mathrm{Ends}(\Lambda_2) $. Let
\begin{align*}
    H&= \{ g\in G : g\cdot x = x \text{ for all } x\in \mathrm{Ends}(\Lambda_1) \cup \mathrm{Ends}(\Lambda_2) \}.
\end{align*}
Then $G/H$ by definition acts faithfully on $\mathrm{Ends}(\Lambda_1) \cup \mathrm{Ends}(\Lambda_2)$ and preserves the linear order on it. This gives a left-order on $G/H$: by fixing a basepoint $x_0 \in \mathrm{Ends}(\Lambda_1) \cup \mathrm{Ends}(\Lambda_2)$, one can define $gH <_{G/H} g'H$ if and only if $g\cdot x_0 < g'\cdot x_0$ (for a complete proof, see Example III of Section 5 in \cite{conrad}).

Having shown the existence of a left-order on $G/H$, the following classical result reduces the proof of Theorem \ref{theorem:LO_main} to showing the existence of a left-order on $H$.

\begin{lemma*}[\cite{conrad}, Section 3.7]
    If $<_{G/H}$ is a left-order on $G/H$ and $<_H$ is a left-order on $H$, then a left-order can be defined on $G$ as follows: given $g,g' $ in $G$, we say $g'<g$ if  one of two conditions holds: either $g'H <_{G/H} gH$, or $g'H = gH$ and $g^{-1}g' <_H e$.
\end{lemma*}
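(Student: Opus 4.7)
The plan is to verify directly that the prescribed relation is a strict linear order on $G$ and that it is invariant under left multiplication. The construction is essentially lexicographic: the comparison between $g'$ and $g$ is determined by $<_{G/H}$ on cosets when $g'H \neq gH$, and by $<_H$ within $H$ as a tie-breaker when $g'H = gH$.

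First, I would check trichotomy. Given $g, g' \in G$, the trichotomy of $<_{G/H}$ handles the case $gH \neq g'H$. When $gH = g'H$, the element $g^{-1}g'$ lies in $H$, so trichotomy of $<_H$ yields exactly one of $g^{-1}g' <_H e$, $g^{-1}g' = e$, or $g^{-1}g' >_H e$. To match the defining conditions of $<$ on $G$, I would invoke the standard fact that for any left-order, $h <_H e$ if and only if $e <_H h^{-1}$, so the third case is equivalent to $(g')^{-1}g <_H e$, which corresponds to $g < g'$. Next, for transitivity, I would split into cases according to whether the cosets $g_1H, g_2H, g_3H$ coincide; any case in which some consecutive pair is strictly related reduces immediately to transitivity of $<_{G/H}$. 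The delicate case is when all three cosets are equal, where we have $g_2^{-1}g_1 <_H e$ and $g_3^{-1}g_2 <_H e$ and need $g_3^{-1}g_1 <_H e$. Writing $g_3^{-1}g_1 = (g_3^{-1}g_2)(g_2^{-1}g_1)$, left-invariance of $<_H$ gives $(g_3^{-1}g_2)(g_2^{-1}g_1) <_H (g_3^{-1}g_2) <_H e$, as desired.

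Finally, left-invariance of $<$ on $G$ is the cleanest step. For any $g \in G$, left multiplication sends $g'H$ to $gg'H$ and preserves $<_{G/H}$ by hypothesis; moreover, if $g_1H = g_2H$, then also $gg_1H = gg_2H$, and the tie-breaker is unchanged since $(gg_2)^{-1}(gg_1) = g_2^{-1}g_1$. The main obstacle, which is really a matter of bookkeeping rather than mathematical difficulty, is the case analysis in transitivity, where one must be careful to use left-invariance of $<_H$ (not any non-existent bi-invariance) when multiplying the tie-breaker elements.
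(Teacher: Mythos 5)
Your proof is correct and complete. The paper states this lemma as a citation to Conrad's work without supplying a proof, so there is no internal argument to compare against; your verification — trichotomy via the involution $h \mapsto h^{-1}$ in $(H, <_H)$, transitivity by case analysis on coset equality with left-invariance of $<_H$ handling the delicate all-equal case, and left-invariance of $<_G$ read off from $G$-invariance of $<_{G/H}$ and the observation that $(gg_2)^{-1}(gg_1) = g_2^{-1}g_1$ — is exactly the standard argument one would write. One cosmetic note: the lemma's $G/H$ is a $G$-set of left cosets, not necessarily a quotient group, so "left-order on $G/H$" must be read as a $G$-invariant linear order on that coset space; your argument uses precisely this and nothing more, which is the right reading.
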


%, defined roughly in the same way as the lexicographic order on a product of ordered sets: we say that $ g' < g $ if . 
Now, we show that there indeed exists a  left-order on $H$.

By second-countability of the leaf spaces, there exists a countable dense subset $X = \{x_i\}_{i \in \mathcal{I}} \subset \Lambda_1$. For each $i \in \mathcal{I}$, let $r^+_i$ and $r^-_i$ be a positive ray and a negative ray based at $x_i$, respectively. Let $R_i = r^+_i \cup r^-_i$. Then $R_i$ is homeomorphic to $\R$ for all $i \in \mathcal{I}$, so the collection $\{R_i\}_{i \in \mathcal{I}}$ is a countable covering of $\Lambda_1$ by homeomorphic copies of $\R$, and each $R_i$ contains rays representing a positive end of $\Lambda_1$ and a negative end of $\Lambda_1$. Note that a priori, we could have $R_{i_1} = R_{i_2}$ for some $i_1 \neq i_2$, but we may take a subcover with only distinct copies of $\R$. With a slight abuse of notation, we will also use $\{R_i\}_{i \in \mathcal{I}}$ to denote this refined covering by distinct copies of $\R$. Similarly, we can perform an analogous covering by a countable collection of distinct copies of $\R$, $\left\{R_j\right\}_{j \in \mathcal{J}}$, for $\Lambda_2$. \par

%\item To get a left-order on $H$: cover $\Lambda_1$ with countably many copies of $\R$, where each copy $R_i$ has one end in $\Ends^+$ and one in $\Ends^-$. Since $H$ fixes ends, get action on each $R_i$ and a homomorphism $\varphi: H \to \prod_{i} \mathrm{Homeo}^+(R_i) \cong \prod_i \mathrm{Homeo}^+(\R)$. 
%Since the product is left-orderable, it's enough to show that $\varphi$ is injective. 

%an element of $\mathrm{Ends}_+(\Lambda_1)$ and another ray, also contained in $R_i$, corresponding to an element of $\mathrm{Ends}_-(\Lambda_1)$.

Now, let $\mathcal{K} = \mathcal{I} \sqcup \mathcal{J}$. For each $k \in \mathcal{K}$, both the positive end and the negative end represented by rays contained in $R_k$ are fixed by the action of the group $H$. With the choice of our subcovers, simply-connectedness of $\Lambda_1, \Lambda_2$ implies that given any pair consisting of a positive end $x_1$ and a negative end $x_2$ of each $\Lambda_i$, there exists at most one $k_0$ for which $R_{k_0}$ contains rays representing $x_1$ and $x_2$. This means that each $R_k$ must be preserved by the action of $H$. Then for each $k \in \mathcal{K}$, we have a homomorphism $\varphi_k\colon H \to \mathrm{Homeo}_+(R_k)$ associated to the induced action of $H$ on $R_k$. Together, these homomorphisms give rise to a homomorphism $\varphi \colon H \to \prod_{k \in \mathcal{K}} \mathrm{Homeo}_+(R_k)$ by defining the $k$-th factor of $\varphi(h)$ to be $\varphi_k(h)$. The group $\prod_{k \in \mathcal{K}} \mathrm{Homeo}_+(R_k)  \cong  \prod_{k \in \mathcal{K}} \mathrm{Homeo}_+(\R)$ is left-orderable since it is a countable product of left-orderable groups; see \cite{deroin2016groupsordersdynamics}. Thus, to show the left-orderability of $H$ it is enough to show that $\varphi$ is injective.

%\item To show that $\varphi$ is injective: if $g \in \mathrm{Ker}(\varphi)$, then $g$ preserves each leaf in the bifoliated plane. This implies it fixes each end of a leaf in $\F_1$ in the boundary circle $\partial P$. If no infinite product regions, then $\mathrm{Ends}(\F_1)$ is dense in $\partial P$, so $g$ acts as the identity in the boundary circle. But then it must fix every point in $P$, but we were assuming the action of $G$ on $P$ is faithful, contradiction. So $\varphi$ is injective and then $H$ is orderable.
Let $h \in \mathrm{Ker}(\varphi)$. Then by definition, $h$ acts as the identity on $R_k$ for all $k$. Since $\{R_k\}_{k \in \mathcal{K}}$ covers $\Lambda_1 \cup \Lambda_2$, every point in each $\Lambda_i$ is fixed by $h$. This means that the leaves of the foliations $\F_1, \F_2$ must be preserved by the action of $h$ on $P$. Given any point $x \in P$, since both $\F_1(x)$ and $\F_2(x)$ are preserved by $h$, we must have their unique intersection fixed by $h$ as well, so $h(x) = x$ and we conclude that $h$ is the identity map on $P$. 

%Then $h$ acts as the identity on $P$, which means that $h$ must be the identity element of $H$ since the action of $G $ on $P$ was assumed to be faithful \todo{do we just say $h$ must be the identity since $G = \mathrm{Aut}^+(P)$?}. 

Therefore, $\mathrm{Ker}(\varphi) = \left\{\mathrm{id} \right\}$, so $\varphi$ is injective, hence $H$ is left-orderable. From the argument above it then follows that $G$ must be left-orderable.
\end{proof}

\subsection{The case of $G$ acting Anosov-like}\label{subsection:LO_Anosov_like}

Here we prove Corollary \ref{introcor:Anosov_like}, which states that a group $G$ which admits an Anosov-like action on a bifoliated plane $(P, \F_1, \F_2)$ must be realizable as a subgroup of $\mathrm{Homeo}^+(\R)$.

We begin by stating the following lemma. The proof is elementary.
\begin{lemma}\label{lemma:uncountable_stabilizer}
    Let $H$ be an uncountable group acting on a set $C$. Then, if the orbit $H\cdot x$ is countable or finite for some $x\in C$, the stabilizer $\mathrm{Stab}_H(x) < H$ must be uncountable.
\end{lemma}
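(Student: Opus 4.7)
The plan is to apply the orbit-stabilizer correspondence together with an elementary cardinality argument. First I would recall that the map $H \to H \cdot x$ sending $h \mapsto h \cdot x$ descends to a bijection between the set of left cosets $H/\mathrm{Stab}_H(x)$ and the orbit $H \cdot x$. Thus, if $H \cdot x$ is countable (which by convention we take to include the finite case), then $H$ has at most countably many cosets of $\mathrm{Stab}_H(x)$.

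Next, I would write $H$ as the disjoint union of these cosets. Since left multiplication by any $h \in H$ gives a bijection between $\mathrm{Stab}_H(x)$ and $h \cdot \mathrm{Stab}_H(x)$, every coset has the same cardinality as the stabilizer itself. Hence $H$ is a union of at most countably many sets, each in bijection with $\mathrm{Stab}_H(x)$.

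Finally, I would conclude by contradiction: if $\mathrm{Stab}_H(x)$ were countable (or finite), then $H$ would be a countable union of countable sets, and therefore countable, contradicting the hypothesis that $H$ is uncountable. This forces $\mathrm{Stab}_H(x)$ to be uncountable. There is no real obstacle here — the only subtlety is to remember that a countable union of countable sets is countable, which is standard (and requires only a mild choice principle, which is implicit in the ambient set-theoretic framework).
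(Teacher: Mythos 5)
Your proposal is correct and is exactly the standard argument: orbit-stabilizer gives a bijection $H/\mathrm{Stab}_H(x) \to H\cdot x$, all cosets of the stabilizer have the same cardinality, and a countable union of countable sets is countable, so $H$ countable would follow if the stabilizer were countable — contradiction. The paper itself does not supply a proof (it simply notes that ``the proof is elementary''), so there is nothing to compare against; your write-up is the natural argument the authors had in mind, and it is complete.
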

Now, we state our second lemma before stating and proving the main result in this section. This lemma applies more generally to any foliation of the plane, but we state it for the case of a bifoliated plane.

\begin{lemma}\label{lemma:countable_nonsep}
    Let $(P, \F_1, \F_2)$ be a bifoliated plane with leaf spaces $\Lambda_1, \Lambda_2$. Then, the sets
    \begin{align*}
        \{ x \in \Lambda_i : \text{there exists } x' \in \Lambda_i \text{ s.t. } x, x' \text{ are non-separated}  \}
    \end{align*}
    are at most countable, for $i = 1,2$.
\end{lemma}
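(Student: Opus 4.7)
The plan is to exploit second countability of $P$ together with the transversality structure of the bifoliation to reduce the problem to a countable bookkeeping.

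First, I would extract from a countable basis of $P$ a countable family $\{b_n\}_{n\in\N}$ of compact arcs contained in leaves of $\F_j$ (where $j\neq i$), chosen so that their open product neighborhoods cover $P$. The images $U_n\subset\Lambda_i$ of these arcs under the quotient projection are then Hausdorff open intervals, because each leaf of $\F_i$ meets a sufficiently short $\F_j$-arc in at most one point by the local product structure, so no two points of $U_n$ can be non-separated. Moreover $\bigcup_n U_n = \Lambda_i$. Consequently, any point $x$ in the set of the lemma with a non-separated partner $x' \neq x$ satisfies $x' \notin U_n$ whenever $x \in U_n$, and symmetrically.

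Second, I would establish the following key structural fact: if $l, l' \in \F_i$ are non-separated, then no leaf $f \in \F_j$ meets both. Suppose $f$ met $l$ at $q$ and $l'$ at $q'$. Any sequence of $\F_i$-leaves $l_k$ converging to both $l$ and $l'$ in $\Lambda_i$ would, for large $k$, enter saturated neighborhoods of $l$ near $q$ and of $l'$ near $q'$; by the local product structure at $q$ and $q'$, each such $l_k$ would intersect $f$ at two distinct points, contradicting that transverse leaves in the simply-connected plane $P$ cross at most once.

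Third, for each fixed pair $(n, m)$, I would show that at most countably many non-separated pairs $(x, x')$ can satisfy $x \in U_n$ and $x' \in U_m$. Each such $x$ corresponds via $b_n$ to a boundary point (within $b_n$) of the open set $A_{n,m} := \{p \in b_n : \F_i(p) \cap b_m \neq \emptyset\}$, since an approximating sequence $l_k \to l_x, l_{x'}$ must eventually meet both transversals. The hard part is controlling this boundary: a priori an open subset of $\R$ can have uncountable Cantor-type boundary, so I would need to use the planar structure of $P$ --- specifically that arcs of distinct $\F_i$-leaves from $b_n$ to $b_m$ are pairwise disjoint and that distinct non-separated pairs localize in disjoint open regions of $P$ --- together with second countability of $P$ to rule out such accumulation. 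Taking the union over all pairs $(n, m) \in \N^2$ then exhibits the set in the lemma as a countable union of at most countable sets, hence at most countable.
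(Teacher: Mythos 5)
Your approach is genuinely different from the paper's, but, as you yourself flag in the third step, it is incomplete, and the gap is not routine to close. Reducing the count to boundary points of the open set $A_{n,m} = \{p \in b_n : \F_i(p) \cap b_m \neq \emptyset\}$ is a correct observation (your second step shows that such a $p$ lies in $\partial A_{n,m}$ and not in $A_{n,m}$), and the second step itself is a sound use of the fact that transverse leaves in $P$ cross at most once. But the boundary of an open subset of an interval really can be a Cantor set, and the planar structure does not obviously forbid this: pairwise disjointness of the $\F_i$-arcs from $b_n$ to $b_m$ does not prevent $A_{n,m}$ from being a countable union of intervals whose closures accumulate in a Cantor-like way, and a priori a non-separated partner in $U_m$ could sit over each accumulation point. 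Trying to count partners on the $b_m$ side instead does not immediately help either, since the assignment $p \mapsto l_{x'}\cap b_m$ need not be injective (a single leaf can be non-separated from several leaves of the same foliation). So the crucial counting step is still missing an idea.

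The paper avoids this difficulty entirely by working inside the leaf space $\Lambda_i$ rather than inside $P$, using that $\Lambda_i$ is a second countable, simply connected, non-Hausdorff $1$-manifold (Proposition \ref{proposition : leaf_spaces}). It first restricts to \emph{positively} non-separated points (which form an equivalence relation) and supposes the set $S$ of such points is uncountable. Second countability of $\Lambda_i$ produces a condensation point $x_0 \in S$, so an interval chart $U_0 \ni x_0$ contains an uncountable $S_0 \subset S$ whose elements are pairwise separated. Each $x_\alpha \in S_0$ has a partner $x'_\alpha \notin U_0$, and the equivalence relation forces the $x'_\alpha$ to be pairwise distinct. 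Simple connectedness of $\Lambda_i$ then gives pairwise disjoint properly embedded rays $R_\alpha \cong [0,+\infty)$ based at the $x'_\alpha$ and avoiding $U_0$; the interiors $\mathrm{int}(R_\alpha)$ form an uncountable family of pairwise disjoint open sets, contradicting second countability of $\Lambda_i$. What that route buys is exactly the point where your argument stalls: instead of trying to bound a potentially Cantor-like boundary in $b_n$, one replaces the planar bookkeeping by a disjointness-of-rays argument in a simply connected $1$-manifold, where second countability has immediate teeth.
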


\begin{proof}
    Let $\Lambda = \Lambda_1$, and suppose that the statement does not hold, i.e. that there exist uncountably many points in $\Lambda$ that are non-separated from at least one other point of $\Lambda$. In fact, we can assume without loss of generality that there exist uncountably many points in $\Lambda$ that are \emph{positively} non-separated from at least one other point of $\Lambda$. Recall that this means that there exists a positively oriented embedding $\gamma:[0, +\infty) \to \Lambda$ such that as $t\to +\infty$, $\gamma(t)$ converges to (at least) both points. Therefore, ``being positively non-separated'' is an equivalence relation on $\Lambda$.
    
    Let $S\subseteq \Lambda$ be the set of such points.

    Since $\Lambda$ is second-countable and $S$ is uncountable, there exists a point $x_0 \in S$ such that every neighborhood $U \subset \Lambda$ of $x_0$ contains uncountably many points of $S$. Let $U_0$ be an open neighborhood of $x_0$ which is homeomorphic to an open interval and contains an uncountable subset $S_0= \{ x_\alpha \in S : \alpha \in I_0\}$ of $S$. Note that since $U \cong \R$, no two points in $S_0$ can be non-separated from each other (positively or otherwise).
    
    For each $x_\alpha \in S_0$, there exists by definition of $S$ at least one element $x_\alpha' \in S\setminus S_0$ such that $x_\alpha, x_\alpha'$ are positively non-separated. 

    Since being positively non-separated is an equivalence relation, the previous two paragraphs tell us that for all $\alpha, \beta \in I_0$ with $\alpha \neq \beta$, we have that $x_\alpha'$ and $x_{\beta}'$ are not positively non-separated. In particular, for all $\alpha \neq \beta$ we have that $x_\alpha '\neq x_{\beta}'$, implying that the set $\{ x_\alpha': \alpha \in I_0 \}$ is uncountable.

    Since $\Lambda$ is simply connected, for each $\alpha \in I_0$ there exists $R_\alpha \cong [0, +\infty) $ a properly embedded ray in $\Lambda$ based at $x_{\alpha}'$ and not intersecting $U_0$. Again using that $\Lambda$ is simply connected, we can see that for $\alpha \neq \beta$ we must have $R_\alpha \cap R_\beta = \emptyset$. But then $\{\mathrm{int}(R_\alpha) : \alpha \in I_0\} $ is an uncountable family of pairwise disjoint open subsets of $\Lambda$, which is impossible by second countability of $\Lambda$.

\end{proof}

\begin{proposition}\label{prop:countable_or_non_H}
    Let $G$ be a group acting faithfully on a bifoliated plane $(P, \F_1, \F_2)$, such that stabilizers of leaves are countable. Then, at least one of the following must be true.
    \begin{enumerate}
        \item $G$ is countable.
        \item The leaf spaces $\Lambda_1, \Lambda_2$ are Hausdorff.
    \end{enumerate}

    In particular, an uncountable group can never act faithfully with countable leaf stabilizers on a bifoliated plane with non-Hausdorff leaf spaces.
        
\end{proposition}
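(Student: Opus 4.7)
The plan is to argue by contraposition: assume $G$ is uncountable and that at least one of the leaf spaces, say $\Lambda_1$, is non-Hausdorff, and derive a contradiction from the hypothesis that every leaf stabilizer in $G$ is countable.

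First I would isolate the $G$-invariant subset
\[
N = \{ x \in \Lambda_1 : \text{there exists } x' \in \Lambda_1 \text{ with } x, x' \text{ non-separated}\} \subset \Lambda_1.
\]
This set is $G$-invariant because $G$ acts on $P$ by foliation-preserving homeomorphisms, and hence the induced action on $\Lambda_1$ sends pairs of non-separated points to pairs of non-separated points. Since $\Lambda_1$ is assumed to be non-Hausdorff, $N$ is nonempty. By Lemma \ref{lemma:countable_nonsep}, $N$ is at most countable.

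Next I would pick any $x_0 \in N$ and look at its orbit $G \cdot x_0 \subseteq N$. This orbit is contained in a countable set, so it is itself at most countable. Since $G$ is assumed to be uncountable, Lemma \ref{lemma:uncountable_stabilizer} applied to the action of $G$ on $N$ yields that $\mathrm{Stab}_G(x_0)$ is uncountable. But $x_0$, viewed as a point of $\Lambda_1$, is a leaf of $\F_1$, and so $\mathrm{Stab}_G(x_0)$ is precisely the stabilizer in $G$ of this leaf. This contradicts the standing hypothesis that all leaf stabilizers are countable.

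Thus if $G$ is uncountable, $\Lambda_1$ must be Hausdorff; the same argument with $\F_1$ replaced by $\F_2$ shows that $\Lambda_2$ must also be Hausdorff, giving alternative (2). Otherwise alternative (1) holds. The final ``In particular'' assertion is the immediate contrapositive. I do not anticipate any real obstacle here — essentially the entire content lies in the two preparatory lemmas, and the proposition is a one-line combination of them. The only mildly delicate point to state carefully is the $G$-invariance of $N$, which follows immediately from the fact that $G$ acts by homeomorphisms of $P$ preserving $\F_1$, hence by homeomorphisms of $\Lambda_1$.
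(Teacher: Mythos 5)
Your proof is correct, and it is a slight streamlining of the paper's argument. The paper applies Lemma~\ref{lemma:uncountable_stabilizer} twice: first to the countable $G$-set $\mathcal{C}$ of nontrivial cataclysms to produce a cataclysm $C_0$ with uncountable stabilizer $H_0$, and then again to the countable $H_0$-set $C_0$ to produce a leaf with uncountable stabilizer. You collapse this into a single application by observing that the set $N$ of all leaves that are non-separated from some other leaf is itself a countable $G$-invariant set, so any $x_0 \in N$ already has countable orbit and hence uncountable stabilizer in $G$. Both routes rest on exactly the same two lemmas; yours simply avoids the intermediate passage through cataclysms. Your care in noting the $G$-invariance of $N$ is the one point where the paper's two-step version implicitly does a bit more bookkeeping (invariance of the set of cataclysms, then invariance of a chosen cataclysm under its stabilizer), and you handle it cleanly in one shot.
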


\begin{proof}
    Suppose that $G$ is uncountable, and we show that $\Lambda_1, \Lambda_2$ are non-Hausdorff. 
    
    Let $\Lambda = \Lambda_1$ (the proof for $\Lambda_2$ is analogous), and let $\mathcal{C}$ be the set of all cataclysms on $\Lambda$ that consist of at least two non-separated points. We will show that $\mathcal{C}$ is empty. By Lemma \ref{lemma:countable_nonsep}, this is a countable (or finite) set. 
    
    Since $G$ acts on $\mathcal{C}$, if this set is nonempty then Lemma \ref{lemma:uncountable_stabilizer} tells us that there exists a cataclysm $C_0 \in \mathcal{C}$ with uncountable stabilizer $H_0 < G$ (in fact, the Lemma tells us that this holds for all cataclysms in $\mathcal{C}$).

    Now, again by Lemma \ref{lemma:countable_nonsep}, we know that $C_0$ must be countable (or finite). But then, since $H_0 = \mathrm{Stab}_H(C_0)$ acts on $C_0$ and is uncountable, Lemma \ref{lemma:uncountable_stabilizer} again tells us that there exists a point $x \in C_0$ with uncountable stabilizer $H_x < H_0$. That is, there exists a leaf of $\F_1$ with uncountable stabilizer, contradicting our hypothesis. We conclude that $\mathcal{C}$ must be empty, i.e. $\Lambda$ is Hausdorff.
    
\end{proof}

\begin{corollary}\label{corollary:LO_countable_stab_realized}
    Let $G$ be a group acting faithfully on a bifoliated plane, with countable leaf stabilizers. Then, $G$ can be realized as a subgroup of $\mathrm{Homeo}^+(\R)$. 
\end{corollary}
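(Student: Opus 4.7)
The plan is to apply Proposition~\ref{prop:countable_or_non_H} to split the problem into two cases: either $G$ is countable, or both leaf spaces $\Lambda_1, \Lambda_2$ are Hausdorff. I will handle these cases separately.

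In the countable case, $G$ sits inside $\mathrm{Aut}^+(P, \F_1, \F_2)$ and is therefore left-orderable by Theorem~\ref{theorem:LO_main}. Being countable and left-orderable, it embeds into $\mathrm{Homeo}^+(\R)$ by Theorem~\ref{theorem:LO_countable_ord_implies_homeo}, which concludes this case.

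In the Hausdorff case, each $\Lambda_i$ is homeomorphic to $\R$, and since $G \leq \mathrm{Aut}^+(P, \F_1, \F_2)$, the induced actions on both $\Lambda_1$ and $\Lambda_2$ are by orientation-preserving homeomorphisms. I would first observe that the combined action of $G$ on the disjoint union $\Lambda_1 \sqcup \Lambda_2$ is faithful: if $g \in G$ fixes every leaf of $\F_1$ and $\F_2$ setwise, then for any $x \in P$ we have $g(x) \in g(\F_1(x)) \cap g(\F_2(x)) = \F_1(x) \cap \F_2(x) = \{x\}$, so $g$ fixes every point of $P$, whence $g = \mathrm{id}$ by the faithfulness hypothesis. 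I would then identify $\Lambda_1$ with $(-\infty, 0)$ and $\Lambda_2$ with $(0, +\infty)$ via orientation-preserving homeomorphisms, and extend the $G$-action to $\R$ by declaring $g(0) = 0$. Because each $g$ acts on $\Lambda_i \cong \R$ in an orientation-preserving way, the induced map on the corresponding interval sends the end approaching $0$ back to itself, which gives continuity of the extension at $0$. The resulting element of $\mathrm{Homeo}^+(\R)$ gives an injective homomorphism $G \hookrightarrow \mathrm{Homeo}^+(\R)$ by the faithfulness observation above.

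The main obstacle is the second case, and specifically the gluing step. The key technical point I would need to verify is the continuity of the extended map at the gluing point $0$, which reduces to the elementary fact that any orientation-preserving self-homeomorphism of $\R$ fixes both ends of $\R$. The countable case, by contrast, follows essentially immediately from Proposition~\ref{prop:countable_or_non_H} together with Theorems~\ref{theorem:LO_main} and \ref{theorem:LO_countable_ord_implies_homeo}.
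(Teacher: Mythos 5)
Your proof is correct and follows essentially the same approach as the paper: split via Proposition~\ref{prop:countable_or_non_H}, handle the countable case via Theorems~\ref{theorem:LO_main} and \ref{theorem:LO_countable_ord_implies_homeo}, and in the Hausdorff case use the faithful orientation-preserving action on $\Lambda_1 \sqcup \Lambda_2 \cong \R \sqcup \R$ glued into a single copy of $\R$. The paper states this more tersely (embedding the two leaf spaces as disjoint open intervals and noting the action leaves them invariant), but your explicit check that the extension is continuous at the gluing point is exactly the detail that justifies the paper's parenthetical remark.
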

\begin{proof}
    By Proposition \ref{prop:countable_or_non_H}, we know that either $G$ is countable or the leaf spaces $\Lambda_1, \Lambda_2$ are Hausdorff. If $G$ is countable, then as we have mentioned earlier, left orderability (implied by Theorem \ref{theorem:LO_main}) together with an application of Theorem \ref{theorem:LO_countable_ord_implies_homeo} imply that $G$ can be realized as a subgroup of $\mathrm{Homeo}^+(\R)$. 
    
    If the leaf spaces $\Lambda_1, \Lambda_2$ are Hausdorff, then they are homeomorphic to $\R$. The action of $G$ on $\Lambda_1 \cup \Lambda_2$ must be faithful since $G$ acts faithfully on $P$. We can embed $\Lambda_1$ and $\Lambda_2$ as disjoint open intervals in the real line, yielding a faithful action of $G$ on $\mathbb{R}$ by orientation preserving homeomorphisms (the action leaves these intervals invariant). This realizes $G$ as a subgroup of $\mathrm{Homeo}^+(\R)$.
\end{proof}

\begin{corollary}\label{corollary:LO_Anosov_like_uncount_skew_or_triv}
    Let $(P, \F^+, \F^-)$ be a bifoliated plane equipped with an Anosov-like action by an uncountable group $G$. Then, $(P, \F^+, \F^-)$ is either skew or trivial.
\end{corollary}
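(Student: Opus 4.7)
The strategy is to apply Proposition \ref{prop:countable_or_non_H} in combination with the classification of Anosov-like bifoliated planes whose leaf spaces are both Hausdorff. For the proposition to apply I first need faithfulness of the action together with countability of leaf stabilizers, so the bulk of the work is verifying these hypotheses.

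Faithfulness comes for free from axiom (A1): a nontrivial $g \in G$ acting as the identity on $P$ would stabilize every leaf while acting as neither a topological contraction nor expansion on any transverse leaf, contradicting (A1). For countability of leaf stabilizers, let $l \in \F^+$ and $g \in \mathrm{Stab}(l) \setminus \{e\}$. By (A1), $g$ fixes some $x_g \in l$ with topologically hyperbolic dynamics there. Using Proposition \ref{proposition:intro_fixed_points_joined_lozenges}, any two elements of $\mathrm{Stab}(l)$ have their respective fixed points on $l$ linked by a chain of lozenges; combined with the structural results for Anosov-like actions in \cite{barthelme2022orbit, barthelme2024nontransitivepseudoanosovflows} (which show, in particular, that point stabilizers are infinite cyclic), one concludes that $\mathrm{Stab}(l)$ is at most countable (in fact cyclic).

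With the hypotheses verified, Proposition \ref{prop:countable_or_non_H} applied to the uncountable group $G$ forces both leaf spaces $\Lambda^+$ and $\Lambda^-$ to be Hausdorff. Being connected, simply connected, second countable, Hausdorff 1-manifolds, each is then homeomorphic to $\R$. The bifoliated plane therefore embeds into $\R^2$ via $(q^+, q^-) \colon P \to \R^2$ as an open connected region whose intersection with each horizontal or vertical line is connected.

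To conclude, I would invoke the classification of Anosov-like bifoliated planes with both leaf spaces homeomorphic to $\R$: any such plane is either trivial or skew. The idea is that density of fixed leaves (axiom (A2)) together with the absence of totally ideal quadrilaterals (axiom (A5)) restricts the image of $(q^+, q^-)$ to be either all of $\R^2$ (trivial case) or a diagonal strip of bounded width (skew case), any other possibility producing a totally ideal quadrilateral or violating density. This is essentially Barbot's theorem on $\R$-covered Anosov flows from \cite{barbot_1995}, adapted to the Anosov-like setting. The main obstacle is the countability step: this does not follow directly from axioms (A1)--(A5) alone and requires invoking structural results about point stabilizers; a self-contained alternative would analyze $\mathrm{Stab}(l)$ acting on $l$ via (A1), showing its fixed-point set is discrete and that stabilizers of fixed points are cyclic by a local dynamical argument exploiting topological hyperbolicity.
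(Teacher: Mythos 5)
Your overall strategy matches the paper's: verify the hypotheses of Proposition~\ref{prop:countable_or_non_H}, apply it to force both leaf spaces Hausdorff, then cite the classification of Anosov-like planes with Hausdorff leaf spaces. But the verification of countable leaf stabilizers has a genuine gap. You invoke Proposition~\ref{proposition:intro_fixed_points_joined_lozenges} to link the fixed points $x_g, x_{g'}$ of two \emph{different} elements $g, g' \in \mathrm{Stab}(l)$ by a chain of lozenges; that proposition joins two points fixed by the \emph{same} element and does not apply here. More importantly, your assertion that "point stabilizers are infinite cyclic" as an unconditional structural fact is wrong: the result you are implicitly invoking (Theorem~5.1 of \cite{barthelme2024nontransitivepseudoanosovflows}) is stated under the hypothesis that the leaf spaces are non-Hausdorff. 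In the trivial and skew planes --- exactly the cases the corollary concludes we are in --- point stabilizers of an Anosov-like action can be uncountable, so your proposed unconditional countability of $\mathrm{Stab}(l)$ is false in general.

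The paper sidesteps this by using the contrapositive logic already built into Proposition~\ref{prop:countable_or_non_H}: one only needs countability of leaf stabilizers \emph{under the assumption that a leaf space is non-Hausdorff}, since the Hausdorff case is the desired conclusion. In that conditional setting the argument is also simpler than your chain-of-lozenges attempt: Lemma~2.4 of \cite{barthelme2022orbit} gives a single common fixed point for all of $\mathrm{Stab}(l)$, so $\mathrm{Stab}(l)$ embeds in a point stabilizer, and Theorem~5.1 of \cite{barthelme2024nontransitivepseudoanosovflows} shows that point stabilizer is virtually cyclic when the leaf spaces are non-Hausdorff. Your final paragraph sketches a rederivation of the classification of Anosov-like planes with Hausdorff leaf spaces via axioms (A2) and (A5); this is unnecessary since Theorem~2.16 of \cite{barthelme2022orbit} states it directly, and as written the sketch is far from a proof.
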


\begin{proof}
    The proof uses Proposition \ref{prop:countable_or_non_H} above combined with the following two results: by Lemma 2.4 in \cite{barthelme2022orbit}, the stabilizer of any leaf must act with a common fixed point, and by Theorem 5.1 in \cite{barthelme2024nontransitivepseudoanosovflows}, point stabilizers are virtually cyclic in bifoliated planes with non-Hausdorff leaf spaces.
    
    This means that if one of the leaf spaces is non-Hausdorff (in the Anosov-like case it is known that this implies that both must be, but we do not need this) then stabilizers of leaves must be countable.

    Therefore, we can apply Proposition \ref{prop:countable_or_non_H} above to conclude that $\Lambda^+$ and $\Lambda^-$ must be Hausdorff. Theorem 2.16 in \cite{barthelme2022orbit} shows that the only bifoliated planes (with non-singular foliations) with Hausdorff leaf spaces that support an Anosov-like action are the trivial and the skew planes. 
\end{proof}

\begin{proof}[Proof of Corollary \ref{introcor:Anosov_like}]
    If $G$ is countable, then again the left orderability of $G$ shown in Theorem \ref{theorem:LO_main} and an application of Theorem \ref{theorem:LO_countable_ord_implies_homeo} tell us that $G$ can be realized as a subgroup of $\mathrm{Homeo}^+(\R)$.

    If $G$ is uncountable, Corollary \ref{corollary:LO_Anosov_like_uncount_skew_or_triv} tells us that $(P, \F^+, \F^-)$ is skew or trivial. Therefore, the same argument as in the proof of Corollary \ref{corollary:LO_countable_stab_realized} shows that $G$ can be realized as a subgroup of $\Homeo^+(\R)$.
\end{proof}

\section{Realizing ends on the circle at infinity}\label{section:LO_realizing_ends}

In this section, we study the correspondence between ends of the leaf spaces $\Lambda_1$, $\Lambda_2$ and points in the circle at infinity $\partial P$. Before doing this, we briefly discuss the sets $\mathrm{Ends}(\F_i) \subset \partial P$, the points of the circle at infinity which are endpoints of some leaf of $\F_i$.

There is a natural map $E \to \partial P$, assigning an ideal point to each end of a leaf. In general, the image of this map will be dense in, but not equal to, $\partial P$. However, we can say more: except in the case of certain bifoliated planes containing \emph{infinite product regions}, the image of each of the sets $\mathrm{Ends}(\F_1), \mathrm{Ends}(\F_2) \subset E$ is dense in $\partial P$.

\begin{definition}
An \emph{infinite product region} in $(P, \F_1, \F_2)$ is an open subset $U \subset P$ such that for $i, j = 1, 2$ and $i \neq j$,
\begin{enumerate}
    \item the boundary of $U$ consists of a compact segment of an $\F_i$-leaf $l$ and two $\F_j$-leaves, $j = 3 - i$, $f_1$, $f_2$ that intersect $l$;
    \item for each $x \in U$, $\F_i(x)$ intersects both $f_1$ and $f_2$.
\end{enumerate}If $U$ is an infinite product region, we say that \emph{$U$ is based in $\F_i$}, $i = 1, 2$, if the compact segment of $\partial U$ inside $P$ is on an $\F_i$-leaf.
\end{definition}

\begin{proposition}
    Let $P = (P, \F_1, \F_2)$ be a bifoliated plane. For $i = 1, 2$, $P$ contains no infinite product region based in $\F_i$ if and only if $\mathrm{Ends}(\F_i)$ is dense in $\partial P$.
\end{proposition}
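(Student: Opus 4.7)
The plan is to prove both directions by contrapositive, using the fact (implicit in the construction of the circle at infinity) that two leaves of the transverse nonsingular foliations $\F_1, \F_2$ can intersect in at most one point; two intersections would enclose a topological disk carrying two transverse nonsingular foliations, contradicting a standard index argument.

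For the direction $(\Leftarrow)$, I would suppose $U$ is an infinite product region based in $\F_i$, bounded by a compact segment $\sigma$ of an $\F_i$-leaf $l$ together with portions of two $\F_j$-leaves $f_1, f_2$ (with $j = 3-i$) meeting the endpoints of $\sigma$. Let $b_1, b_2 \in \partial P$ be the ideal endpoints of the rays of $f_1, f_2$ bordering $U$, and let $I_U \subset \partial P$ be the open arc between $b_1$ and $b_2$ on the side of the strip containing $U$. I would show $I_U \cap \mathrm{Ends}(\F_i) = \emptyset$, contradicting density of $\mathrm{Ends}(\F_i)$. If some $l' \in \F_i$ had an ideal endpoint $p \in I_U$, then a ray of $l'$ converges to $p$ in $P \cup \partial P$; by the topology of the compactification, sufficiently small neighborhoods of $p$ in the disk intersect $P$ inside $U$, so this ray eventually enters $U$. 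By condition (2) for $U$ and the at-most-one-intersection property, the ray intersects each of $f_1, f_2$ at most once, hence exits $U$ permanently, contradicting convergence to $p \in \partial P$.

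For the direction $(\Rightarrow)$, I would take the contrapositive and suppose there is a nonempty open arc $J \subset \partial P$ disjoint from $\mathrm{Ends}(\F_i)$. Since $\mathrm{Ends}(\F_1) \cup \mathrm{Ends}(\F_2)$ is dense in $\partial P$, the set $\mathrm{Ends}(\F_j) \cap J$ is dense in $J$. I would choose two distinct disjoint $\F_j$-leaves $f_1, f_2$ with endpoints $a_1, a_2 \in J$, selected so that the arc of $\partial P$ between $a_1$ and $a_2$ on one side of the strip between $f_1, f_2$ is contained in $J$. The strip $R$ between $f_1, f_2$ then has one ideal-boundary arc inside $J$. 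For any $\F_i$-leaf $l$ meeting $R$, the at-most-one-intersection property and the absence of $\F_i$-endpoints in $J$ force $l$ to exit $R$ through both $f_1$ and $f_2$, exactly once each. Picking any such $l$ and letting $\sigma \subset l$ be the compact segment in $\overline{R}$ between its two crossings, the component $U$ of $R \setminus \sigma$ on the $J$-side is the desired infinite product region based in $\F_i$.

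The main obstacle is selecting the pair $f_1, f_2$ so that the strip-side arc between $a_1, a_2$ lies in $J$, i.e., so that the other endpoints of $f_1, f_2$ do not invade this arc. I would handle this by a case analysis using continuity of endpoints on the compactification: if many $\F_j$-leaves with an endpoint close to some fixed $a \in J \cap \mathrm{Ends}(\F_j)$ have their other endpoint outside $\overline{J}$, two such leaves immediately yield the required pair. Otherwise, nearly all nearby $\F_j$-leaves have both endpoints in $J$, and by the no-crossing property of leaves of a single foliation these form a family of nested chords in the disk whose endpoints lie in $J$; choosing two appropriately nested chords gives $f_1, f_2$ with the desired ideal-boundary behavior. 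In both cases, the argument reduces to the strip construction above.
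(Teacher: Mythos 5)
Your overall strategy matches the paper's: treat both implications as contrapositives, and for the existence direction construct a product region from two $\F_j$-leaves landing in the ``bad'' arc. Your treatment of the $(\Leftarrow)$ direction is correct and usefully fills in the reason why the ideal-boundary arc of an infinite product region based in $\F_i$ can contain no $\F_i$-endpoints (the paper asserts this ``by definition'').

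The $(\Rightarrow)$ direction, however, has a genuine gap. You claim that \emph{every} $\F_i$-leaf $l$ meeting the strip $R$ between $f_1$ and $f_2$ must exit $R$ through both $f_1$ and $f_2$. That does not follow from what you have arranged: you have only ensured that \emph{one} ideal-boundary arc of $R$, namely the arc between $a_1$ and $a_2$, lies in $J$. The component of $l \cap R$ containing a point of $R$ could have a ray limiting to the \emph{other} ideal arc of $R$ (the one bounded by the far endpoints $a_1', a_2'$), and that arc may well contain $\F_i$-endpoints. In the case of your case analysis where $a_1', a_2' \notin \overline{J}$ this failure mode is not excluded. The distinction matters because you subsequently ``pick any such $l$'' to define the segment $\sigma$; if the $l$ you grabbed crosses only one of $f_1, f_2$, there is no compact segment to take, and the construction does not get off the ground. (Once a valid $\sigma$ exists, the weaker statement that every $\F_i$-leaf meeting the $J$-side component $U$ crosses both $f_1$ and $f_2$ \emph{does} hold, because such a leaf cannot cross $\sigma$ --- leaves of $\F_i$ are disjoint --- so it is trapped in $U$ and its only available ideal arc is inside $J$; but that is a different claim from the one you make.) A minor related inaccuracy: disjoint chords with all four endpoints in $J$ need not be nested --- they can lie side by side --- so your case (b) as stated does not immediately force nesting, though it is easy to arrange nesting by taking one chord inside the disk cut off by the other.

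To repair the argument you need to exhibit one $\F_i$-leaf that actually crosses both $f_1$ and $f_2$ before defining $\sigma$, rather than deriving it from a universal statement that fails. One way: by density of $\Ends(\F_j)$ in $\partial P$ there is an $\F_j$-leaf $g$ with an ideal endpoint $c$ in the interior of $J$; pick $q \in g$ and take the $\F_i$-leaf $\F_i(q)$ as a local transversal to $\F_j$. The $\F_j$-leaves through points of $\F_i(q)$ near $q$ have one ideal endpoint near $c$ (hence in $J$, using continuity of the endpoint assignment on the compactified disk), and $\F_i(q)$ crosses each of them. Choosing two of these for $f_1, f_2$ gives you, simultaneously, the required pair, the required arc in $J$, and the required transversal $\sigma = \F_i(q)\cap\overline{R}$; then condition (2) for the $J$-side region $U$ follows by the trapping argument above.
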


\begin{proof}
    We start with the \emph{if} direction. Suppose that $P$ contains an infinite product region $U$ based in $\F_i$. Let $I = \partial \overbar{U} \cap \partial P$, then by definition, $I \cap \Ends(\F_i) = \varnothing$. Thus $\Ends(\F_i)$ is not dense in $\partial P$. \par
    
    For the \emph{only if} direction, suppose that $P$ does not contain any infinite product region based in $\F_i$. We show that given any interval $I \subset \partial P$, there exists $x \in P$ such that $\partial \F_i(x) \cap I \neq \varnothing$. Suppose such $x$ does not exist. Pick two distinct $\F_j$-leaves $l_1, l_2$ such that each has an ideal point in $I$. Then there exists $x^\prime \in P$ such that $\F_i(x^\prime)$ either bounds an infinite product region together with $l_1$ and $l_2$ or intersects one of them twice. Neither is allowed here, so by contradiction we get the density of $\Ends (\F_i)$.
\end{proof}

So far in this section, we have made no mention of the orientations on the foliations $\F_1$ and $\F_2$ of a bifoliated plane. In the presence of such orientations, it will be useful to further divide elements of each $\mathrm{Ends}(\F_i)$ into positive and negative ideal points.
\begin{definition}
    Given a leaf $l$ of $\F_i$ equipped with the orientation induced from the foliation, we say an ideal point $x \in \mathrm{Ends}(\F_i)$ of $l$ is a \emph{positive ideal point of} $l$ if $x = \lim_{t \to +\infty} \gamma(t)$, where $\gamma \colon [0, +\infty) \to l$ is an orientation preserving and proper continuous map. 
    
    Otherwise (i.e.~if the same holds with $\gamma$ orientation reversing), we say $x$ is a \emph{negative ideal point} of $l$.

\end{definition}

We will also use the notion of \emph{quadrant} to describe relative positions of points.

\begin{definition}
    Let $x \in P = (P, \F_1, \F_2)$. We call each connected component of $P \setminus (\F_1(x) \cup \F_2(x))$ a \emph{quadrant} given by $x$. Let $\psi: P \to \R^2$ be an orientation-preserving local homeomorphism that sends $x$ to the origin, and sends $\F_1(x)$, $\F_2(x)$ to the oriented $x$-axis and $y$-axis, respectively. The pre-images of the first, second, third, and fourth quadrants in $\R^2$ under $\psi$ are said to be the \emph{top-left}, \emph{top-right}, \emph{bottom-left}, and \emph{bottom-right} quadrants of $x$, respectively.
\end{definition}

Now, we build a natural correspondence between the set of ends of the leaf spaces of the bifoliation and certain subset of the boundary circle at infinity. We present the result here in the form of Proposition \ref{proposition: mapping to ends}. \par

Recall that the leaf space of a foliation $\F_i$ is defined to be $\Lambda_i = P / \F_i$, equipped with the quotient topology. 

We use $q_i \colon P \to \Lambda_i$ to denote the projection map from $P$ to the leaf space $\Lambda_i$ of the foliation $\F_i$.

To provide some intuition, recall that each point in the leaf space $\Lambda_i$ represents a leaf in the plane under the projection map $q_i$. Then the pre-image of each ray $r_i \in \Lambda_i$ under $q_i$ is an unbounded monotone sequence of $\F_i$-leaves $\{l_\alpha\}_{\alpha \in \mathcal{A}}$. We can then represent $\{l_\alpha\}_{\alpha \in \mathcal{A}}$ with a transverse ray $\tau \subset P$ that intersects every leaf in this sequence. Note that $\tau$ is not necessarily a half-leaf of $\F_j$, $j = 3 - i$. Moreover, $\tau$ cannot be contained in any compact subset of the plane, and we will show that its unbounded side must accumulate to a single point on the boundary circle at infinity. \par

We start the construction by a lemma that characterizes the limit set of a ray which has no accumulation inside a topological plane .
     
\begin{lemma}\label{lemma: curve connected limit set}
    Let $P$ be a topological plane. If $\tau \subset P$ is a ray with no accumulation inside $P$, then $ \partial^+ \tau = \lim_{t \to +\infty}\tau[t, +\infty)$ is a connected subset of $\partial P \cong \mathbb{S}^1$.
\end{lemma}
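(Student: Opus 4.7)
The plan is to work inside the compactification $\overbar{P} = P \cup \partial P$, which by the results of the previous section is homeomorphic to a closed disk and hence compact and Hausdorff. I would take as the working definition of the limit set
\[
\partial^+\tau = \bigcap_{t \geq 0} \overbar{\tau([t,+\infty))},
\]
where the closures are taken in $\overbar{P}$. The first step is to verify that $\partial^+\tau \subset \partial P$: if some $x \in P$ lay in this intersection, then every tail of $\tau$ would accumulate at $x$, contradicting the hypothesis that $\tau$ has no accumulation point inside $P$. So all limit points are ideal points, as required.

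Next, I would observe that each set $K_t := \overbar{\tau([t,+\infty))}$ is compact, being a closed subset of the compact space $\overbar{P}$, and connected, being the closure of the continuous image of the connected set $[t,+\infty)$ under $\tau$. The family $\{K_t\}_{t \geq 0}$ is moreover nested and decreasing: $K_{t'} \subset K_t$ whenever $t' \geq t$.

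Finally, I would invoke the standard topological fact that in a compact Hausdorff space, a nested decreasing intersection of non-empty compact connected subsets is itself non-empty, compact, and connected. Applying this to $\{K_t\}_{t \geq 0}$ shows that $\partial^+\tau$ is a (non-empty) connected subset of $\partial P$, completing the proof.

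The argument is essentially a routine compactness-connectedness exercise, and there is no delicate step; the only thing to be careful about is that the hypothesis ``no accumulation inside $P$'' is used exactly once, to push the intersection off of $P$ and onto $\partial P$. Everything else follows formally from the facts that $\overbar{P}$ is a compact Hausdorff space and that continuous images of connected sets are connected.
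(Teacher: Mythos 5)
Your proof is correct and rests on the same underlying topological fact as the paper's: the limit set of a tail of $\tau$ is a nested decreasing intersection of nonempty compact connected subsets of the closed disk $P\cup\partial P$, hence is itself connected, and the no-accumulation hypothesis then pushes it onto $\partial P$. The paper packages this as a contradiction argument while you argue directly and make the compactness--connectedness lemma explicit, but the substance is the same.
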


\begin{proof}
    Suppose that $\partial^+ \tau$ is disconnected in $\partial P$. Since $\tau$ is a connected ray, $\partial^+ \tau$ is connected in $P \cup \partial P$. Then $\partial^+ \tau \cap P \neq \varnothing$. But this contradicts that $\tau$ has no accumulation inside $P$.
\end{proof}

For each foliation $\F_i$, $i = 1, 2$, any interval $I$ in the leaf spaces $\Lambda_i$ is the image under the projection $q_i$ of some curve $\tau$ transverse to $\F_i$: given any point $x$ on a transverse curve $\tau$ to $\F_i$, $q_i(\F_i(x))$ a point in $\Lambda_i$, so $q_i(\tau)$ is connected in $\Lambda_i$ and contains more than one point, hence an interval. Since ends in each leaf space are equivalent classes of rays, it is only natural to study \emph{transverse rays} to a given foliation. \par

Unless otherwise mentioned, for the rest of this chapter we assume transverse rays to be parameterized by $[t, +\infty)$, i.e.~embeddings of intervals of the form $[t, +\infty)$. Since we will be dealing with limits and accumulation sets frequently, to avoid excessive repeating of the term, we make the following convention by slightly abusing the notation $\partial$:

\begin{convention}
If $\tau \subset P$ is a transverse ray to $\F_i$, then $\partial^+ \tau$ denotes the accumulation of $\tau$ in its unbounded direction.
\end{convention}

%In addition, for each $i = 1, 2$, if $\xi \in \partial P$ is an ideal point of some $\F_i$-leaf $l$, then we can extend the projection map to $q_i \colon P \cup \Ends(\F_i) \to \Lambda_i$ by setting $q_i(\xi)$

We describe some rules of how transverse curves in $P$ must behave. 

\begin{lemma}\label{lemma: transversal no double intersection}
    If $\tau$ is a curve in $P$ transverse to $\F_i$, then $\tau$ can only intersect an $\F_i$-leaf at most once. Moreover, if $\tau$ has an ideal point on $\partial P$ and intersects an $\F_i$-leaf $l$ in $P$, then $\tau$ and $l$ cannot share a common ideal point on $\partial P$.
\end{lemma}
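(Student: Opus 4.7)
My plan is to prove each of the two statements by contradiction, using the oriented structure of $\F_i$ for the first and the compactification $\overline{P} = P \cup \partial P$ for the second.

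For the first statement, suppose $\tau$ meets $l$ at distinct points $p \neq q$. After passing to suitable sub-arcs, assume the sub-arcs $\tau' \subseteq \tau$ and $l' \subseteq l$ joining $p$ and $q$ satisfy $\tau' \cap l' = \{p, q\}$, so that $\tau' \cup l'$ is a simple closed curve in $P$ bounding a topological disk $D$ by the Jordan curve theorem. Since $\tau$ is transverse to the oriented foliation $\F_i$ at every point, each intersection of $\tau$ with an $\F_i$-leaf carries a well-defined local sign in $\{\pm 1\}$, coming from the orientation of the wedge of the tangent vectors of $\tau$ and of the leaf in their positive directions. This sign is locally constant by transversality, hence globally constant on the connected curve $\tau$. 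Examining the two corners of $D$, however, the geometry forces the tangent vectors of $\tau$ and $l$ at $p$ to both point into $D$ along the adjacent boundary arcs, while at $q$ (where the parameterizations of $\tau$ and $l$ continue past $q$ out of $D$) they both point out of $D$, which reverses the handedness of their wedge. Thus the signs at $p$ and $q$ are opposite, contradicting constancy.

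For the second statement, suppose by contradiction that $\tau \cap l \neq \emptyset$ and $\tau$ shares an ideal point $z_\infty \in \partial P$ with $l$. By the first statement, $\tau \cap l = \{p\}$ is a single point. Let $\tau^+ \subseteq \tau$ and $l^+ \subseteq l$ be the sub-arcs from $p$ accumulating at $z_\infty$; their closures in $\overline{P}$ together form a simple closed curve bounding a topological disk $D \subseteq \overline{P}$ whose only boundary point on $\partial P$ is $z_\infty$. Any $\F_i$-leaf through a point of $\tau^+ \setminus \{p\}$ enters $D$ transversally; by the first statement it cannot return to $\tau^+$, and since distinct $\F_i$-leaves are disjoint it cannot meet $l^+$ either. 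Being properly embedded in $P$, the portion inside $D$ is therefore trapped and must accumulate at the unique ideal boundary point $z_\infty$. To extract a contradiction I would use the local structure of $\overline{P}$ near $z_\infty$ from Bonatti's construction: a basic neighborhood of $z_\infty$ is a sector bounded in $P$ by arcs of leaves whose ends also accumulate at $z_\infty$, so $\tau^+$ (which accumulates at $z_\infty$ without re-crossing $l^+$) must enter every such neighborhood by crossing such a leaf. Iterating with nested neighborhoods forces $\tau^+$ to cross infinitely many pairwise distinct leaves whose ends all map to $z_\infty$ in $\partial P$; together with the uncountable family of intermediate $\F_i$-leaves through $\tau^+$, also trapped in $D$ and accumulating at $z_\infty$, this produces uncountably many leaf-ends in $E$ that must all be identified with $z_\infty$, contradicting the finite-separation condition defining the equivalence relation used to build $\partial P$.

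The main obstacle is the second statement. Unlike the first, the orientation-sign argument does not extend, because the transverse crossing sign in fact stays constant across the whole bigon: the ``other corner'' at $z_\infty$ lies on $\partial P$ rather than in $P$, and leaves inside $D$ close to $l$ inherit (by continuity of the orientation of $\F_i$) the same sign of crossing with $\tau$ as $l$ has at $p$. The contradiction must therefore be drawn from the combinatorial structure of $\partial P$, which requires careful bookkeeping of ends, their equivalence classes under the relation identifying pairs of ends separated by only finitely many others, and how basic neighborhoods of the ideal point $z_\infty$ are built up from sectors bounded by leaves in the construction of the circle at infinity.
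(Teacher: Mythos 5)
Your treatment of the first statement is essentially the paper's argument: both rely on the fact that the transverse crossing sign of $\tau$ against the leaves of $\F_i$ is locally constant along $\tau$ (hence globally constant), while the geometry of the bigon bounded by the two arcs of $\tau$ and $l$ forces opposite signs at the two corners. The paper phrases this without explicitly invoking the Jordan curve theorem, but your more careful formulation is making the same point.

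For the second statement, you correctly isolate the crucial observation: every leaf $\F_i(x)$ through a point $x$ of $\tau$ past $p$ is trapped inside the bigon $D$ whose only boundary ideal point is $z_\infty$, so $z_\infty$ is an ideal point of an uncountable family of $\F_i$-leaves. The paper closes the argument here in one line by citing Lemma~3.2 of Bonatti's paper, which says precisely that no uncountable collection of leaves can share an ideal point. You instead attempt to rederive that fact from the construction of $\partial P$, and this is where the gap is. The claim that ``uncountably many leaf-ends identified with $z_\infty$'' contradicts ``the finite-separation condition defining the equivalence relation'' does not follow directly: the identification collapses $x$ and $y$ when only finitely many points lie between them on one side, but that condition does not immediately bound the size of an equivalence class, and the subsequent Dedekind completion can also collapse accumulation phenomena. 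Ruling out an uncountable fiber over a single ideal point is genuinely the content of Bonatti's Lemma~3.2, and your sketch amounts to an incomplete reproof of it; the ``nested neighborhoods bounded by leaves accumulating at $z_\infty$'' scaffolding is also unnecessary once you notice the uncountable family directly, as you do in the final sentence. The clean fix is to drop the local-structure analysis and simply cite the relevant lemma from \cite{bonatti}.
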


\begin{proof}
    Suppose that $\tau$ intersects some $\F_i$-leaf $l$ at two distinct points $x_1$ and $x_2$. Then the transverse orientations on any sufficiently small neighbourhoods of $x_1$ and $x_2$ must be opposite. But this implies that there exists $x_3 \in \tau$ between $x_1$ and $x_2$ at which $\tau$ is tangent to $\F_i$, contradicting that $\tau$ is transverse to $\F_i$. \par

    Now suppose that $\tau \cap l = \tau(t_0)$ and $\partial^+ \tau \cap \partial l = \xi$. Since any $\F_i$-leaf can only intersect $\tau$ at most once and two distinct $\F_i$-leaves cannot intersect each other, for all $x \in \tau(t_0, +\infty)$, we have $\xi \in \partial \F_i(x)$. But $\{F_i(x)\}_{x \in \tau(t_0, +\infty)}$ is an uncountable collection of leaves, this is impossible by Lemma 3.2 in \cite{bonatti}.
\end{proof}

The next lemma shows that any transverse ray must land at some point on the circle at infinity.

\begin{lemma}\label{lemma: transverse ray lands on the boundary}
    If $\tau \subset P$ is a transverse ray to $\F_i$, then $\tau$ cannot accumulate inside $P$, meaning that there does not exist any $x \in P \setminus \tau$ such that $\tau$ intersects every neighbourhood of $x$. Furthermore, $\partial^+ \tau$ must be a point on $\partial P$.
\end{lemma}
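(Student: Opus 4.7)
The plan is to prove the lemma in two steps.

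For the first statement, I would use that $\tau$, being a (proper) embedding of $[0,+\infty)$ into $P$ as per the convention on rays in this chapter (cf.\ Definition \ref{def: rays and ends}), has closed image in $P$. Therefore any accumulation point of $\tau$ already lies in $\tau$, so no point $x\in P\setminus\tau$ is an accumulation point.

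For the second statement, combining Part 1 with the compactness of the closed disk $P\cup\partial P$ gives that $\partial^+\tau$ is a nonempty subset of $\partial P$, which by Lemma \ref{lemma: curve connected limit set} is connected. To show it reduces to a single point, I would argue by contradiction: suppose $\partial^+\tau$ is a non-trivial arc $\alpha\subset\partial P$. Using density of $\mathrm{Ends}(\F_i)$ in $\partial P$ (which holds whenever the part of $\partial P$ containing $\alpha$ is not disjoint from endpoints of $\F_i$-leaves, cf.\ the proposition earlier in this section), I would locate an interior point $\eta$ of $\alpha$ that is the ideal endpoint of some $\F_i$-leaf $l$. The closure $l\cup\partial l$ is a topological arc in $P\cup\partial P$ separating the closed disk into two half-disks $D_1,D_2$. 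Because $\eta$ lies in the interior of $\alpha$, the arc $\alpha$ extends on both sides of $\eta$ along $\partial P$, so $\partial^+\tau$ has points in the interiors of both $D_1$ and $D_2$. Thus $\tau$ takes values in both open half-planes of $P\setminus l$ for arbitrarily large parameter, and by continuity $\tau$ must cross $l$ infinitely often, contradicting Lemma \ref{lemma: transversal no double intersection}.

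The main obstacle is the degenerate case in which the interior of $\alpha$ contains no endpoints of $\F_i$-leaves, which by the earlier proposition only occurs if $\alpha$ sits on the ideal boundary of an infinite product region $U$ based in $\F_i$. In that case one exploits the explicit product structure of $U$: it is bounded by a compact $\F_i$-segment and two $\F_j$-leaves $f_1,f_2$, and any $\F_i$-leaf crossing $U$ has two ideal endpoints that lie on the ideal arc of $U$ on opposite sides of that arc. Choosing such an $\F_i$-leaf whose endpoints straddle $\alpha$ (possible since the ideal arc of $U$ is swept out by endpoints of such crossing leaves), the separation argument of the previous paragraph applies verbatim and yields the same contradiction with Lemma \ref{lemma: transversal no double intersection}.
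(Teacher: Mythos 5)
Two steps of your argument are problematic. First, for the statement that $\tau$ has no accumulation inside $P$, you invoke properness of the embedding, but the convention in this section only declares transverse rays to be \emph{embeddings} of $[t,+\infty)$; properness is not assumed, and having closed image is in fact equivalent to the first assertion of the lemma, so this part of your argument is circular. (Definition \ref{def: rays and ends}, which you cite, concerns rays in the leaf space $\Lambda$, not transverse rays in $P$.) The paper instead deduces the first statement from transversality alone: a transverse ray accumulating at $x\in P\setminus\tau$ would be forced, working inside a foliated flow box around $x$, to intersect $\F_i(x)$ more than once, contradicting Lemma \ref{lemma: transversal no double intersection}.

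Second, and more seriously, your treatment of the degenerate case rests on a false claim: an $\F_i$-leaf crossing an infinite product region $U$ based in $\F_i$ does \emph{not} have its ideal endpoints on the ideal arc $\partial\overbar{U}\cap\partial P$. By definition such a leaf exits $U$ through the two $\F_j$-boundary leaves $f_1,f_2$, and its two ideal endpoints lie outside $\overbar{U}$ --- indeed, the fact that the ideal arc of $U$ contains no ends of $\F_i$-leaves is precisely what characterizes the degenerate case. Such a leaf therefore does \emph{not} separate the ideal arc of $U$: in the model $U\cong(0,1)\times(0,\infty)$ with $\F_i$ horizontal, every horizontal leaf crossing $U$ leaves the entire ideal arc on a single side of it. The separation argument you propose does not go through. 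You have, to your credit, noticed a genuine soft spot in the paper's own proof, which simply ``picks $l'\in\F_i$ with an ideal point in $\mathring{I}$'' without verifying existence --- and the density proposition earlier in the section only supplies such an $l'$ when $P$ has no infinite product region based in $\F_i$ --- but the repair you propose does not close that gap. Your handling of the non-degenerate case (locate such an $l'$ and force $\tau$ to cross it more than once, contradicting Lemma \ref{lemma: transversal no double intersection}) is essentially the same as the paper's, which phrases it slightly more economically by observing that once $\tau$ crosses $l'$ it must stay on one side, so $\partial^+\tau$ cannot straddle $\partial l'$.
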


\begin{proof}
    Suppose that there exists $x \in P \setminus \tau$ that is accumulated by $\tau$. Take a small neighborhood $U$ of $x$, which is foliated by leaves of $\F_i$. Then $\tau$ necessarily intersects $\F_i(x)$ infinity many times as it is a transverse ray, which contradicts Lemma \ref{lemma: transversal no double intersection}. \par
    By Lemma \ref{lemma: curve connected limit set}, $\partial^+ \tau$ is a connected subset of $\partial P \cong \mathbb{S}^1$ --- so it is either the entire $\mathbb{S}^1$, an interval or a single point. As a first case, suppose that $\partial^+ \tau = \mathbb{S}^1$. Pick a leaf $l \in \F_i$, which separates $P$ into two half-planes $H_1$ and $H_2$, and assume that $\tau(0) \in H_1$. Then there exists $t_1, t_2, t_3 \in \R$ such that $t_1 < t_2 < t_3$ and $\tau[0, t_1) \subset H_1, \tau(t_1, t_2) \subset H_2, \tau(t_2, t_3) \subset H_1$. But this implies that $\tau(t_1) \in l$ and $\tau(t_2) \in l$, contradicting Lemma \ref{lemma: transversal no double intersection}. For the second case, suppose that $\partial^+ \tau$ is some interval $I = [\gamma_1, \gamma_2] \subset \partial P$. Pick a leaf $l^\prime \in \F_i$ with at least one ideal point in $\mathring{I}$ and let $\xi$ be such ideal point. Then $\tau \cap l^\prime \neq \varnothing$. Let $z = \tau(t_0) \in \tau \cap l$. By Lemma \ref{lemma: transversal no double intersection}, $\tau(t_0, +\infty)$ is contained in exactly one of the half-planes separated by $l^\prime$, so we have either $\partial^+ \tau \subset [\gamma_1, \xi]$ or $\partial^+ \tau \subset [\xi, \gamma_2]$. But neither satisfies that $\partial^+ \tau = I$, so we have a contradiction. Therefore, we can only be in the third case, that is, $\partial^+ \tau$ must be a point on $\partial P$.
\end{proof}

It follows that every transverse ray has an ideal point. For this, we say that a transverse ray \emph{lands at} some point on the boundary at infinity. \par

Before stating and proving our main proposition, we build the subset on the boundary circle at infinity whose elements represent ends of leaf spaces. Intuitively, given a transverse ray $\tau$ to $\F_i$, $\tau$ landing at some point on $\partial P$ is a necessary condition for $q_i(\tau)$ to be a ray in $\Lambda_i$. But it is not sufficient. We first describe configurations on $\partial P$ that prevents us from getting rays in $\Lambda_i$.

\begin{lemma}\label{lemma: R_i cannot be rays}
    Let $\xi \in \partial P$ be an ideal point of some $\F_i$-leaf or an accumulation point of ideal points of non-separated $\F_i$-leaves, and let $\tau \subset P$ be a transverse ray to $\F_i$ landing at $\xi$. Then $q_i(\tau)$ is bounded in $\Lambda_i$. In particular, since $q_i(\tau)$ is connected, it must be an interval and thus cannot be a ray. 
\end{lemma}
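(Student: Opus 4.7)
I would proceed by contradiction. Suppose $q_i(\tau)$ is unbounded in $\Lambda_i$. Since $\tau$ is transverse to $\F_i$, Lemma~\ref{lemma: transversal no double intersection} gives that $q_i \circ \tau \colon [0,+\infty) \to \Lambda_i$ is a continuous injection, so its image is an embedded arc. Unboundedness is equivalent to $q_i \circ \tau$ being a proper map, i.e.~the one-parameter family of leaves $l_t := \F_i(\tau(t))$ has the property that $q_i(l_t)$ eventually leaves every compact subset of $\Lambda_i$ as $t \to +\infty$. The goal is to show this is incompatible with $\tau(t) \to \xi$ under either hypothesis on $\xi$.

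\textbf{Step 1 (Hausdorff limit in the disk).} Fix $t_n \to +\infty$ and consider the leaves $l_n := l_{t_n}$ as closed subsets of the compact disk $\overbar{P} = P \cup \partial P$. By compactness of the hyperspace of closed subsets, after passing to a subsequence the $l_n$ converge in Hausdorff distance to some compact set $K \subset \overbar{P}$. If some $p \in K \cap P$ exists, then $p$ is a limit of points $p_n \in l_n$, and a standard local argument using a bifoliation chart at $p$ shows $q_i(l_n) \to q_i(\F_i(p))$ in $\Lambda_i$, contradicting the fact that $q_i(l_n)$ leaves every compact set. Hence $K \subset \partial P$. Since $\tau(t_n) \in l_n$ and $\tau(t_n) \to \xi$, we have $\xi \in K$.

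\textbf{Step 2 (Endpoints collapse to $\xi$).} Each closed leaf $\overbar{l_n}$ in $\overbar{P}$ is a simple arc joining its two ideal points $\partial l_n = \{\zeta_n^-, \zeta_n^+\}$ and passing through $\tau(t_n)$. The leaves are pairwise disjoint, so the arcs $\overbar{l_n}$ are nested chords in the disk. A nested sequence of chords whose Hausdorff limit lies entirely in $\partial P$ must collapse to a single ideal point; combined with $\tau(t_n) \to \xi$, this forces $\zeta_n^- \to \xi$ and $\zeta_n^+ \to \xi$.

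\textbf{Step 3 (Contradicting each hypothesis on $\xi$).} Now I split into the two cases of the lemma. In Case~1, where $\xi \in \partial l$ for some $l \in \F_i$, Lemma~\ref{lemma: transversal no double intersection} guarantees $\tau \cap l = \emptyset$, so all $l_n$ lie in the component $H$ of $P \setminus l$ containing $\tau$. I then fix a short $\F_j$-transversal $\sigma$ crossing $l$ at a point $y \in l$ chosen far along the $\xi$-side of $l$, so that $\sigma_H := \sigma \cap \overbar{H}$ projects to a one-sided neighborhood $V$ of $q_i(l)$ in $\Lambda_i$. The key local claim, unpacked from Bonatti's construction of $\partial P$, is that the collection of $\F_i$-leaves with an endpoint in a sufficiently small arc of $\partial P$ around $\xi$ on the $H$-side is precisely the $\F_i$-saturation of $\sigma_H$. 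Since $\zeta_n^\pm \to \xi$, for $n$ large the leaf $l_n$ must cross $\sigma_H$, giving $q_i(l_n) \in V$ and contradicting that $q_i(l_n)$ escapes every compact set. Case~2 is handled analogously, replacing the single leaf $l$ by the cataclysm $\{l^{(k)}\}$ of non-separated leaves: since their endpoints accumulate at $\xi$, one can cover a neighborhood of $\xi$ in $\overbar{P}$ on the appropriate side by finitely many transversals whose $\F_i$-saturations together form a bounded region in $\Lambda_i$, and conclude that $q_i(l_n)$ is eventually trapped in this bounded region.

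\textbf{Main obstacle.} The conceptual heart of the argument is Steps~1--2, which are mostly compactness and chord topology in the disk. The technical obstacle is the local claim in Step~3: that sequences of $\F_i$-leaves whose ideal points converge to $\xi$ have their projections to $\Lambda_i$ trapped in a neighborhood of $q_i(l)$ (resp.~of the cataclysm $\{q_i(l^{(k)})\}$). This requires unpacking Bonatti's construction of $\partial P$ as the completion of the circularly ordered set of leaf endpoints, to see that proximity in $\partial P$ to the endpoint of a leaf is detected, on one side, by crossing a transversal near the leaf. All other ingredients are formal consequences of Hausdorff convergence.
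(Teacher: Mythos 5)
Your proof takes a different and more roundabout route than the paper's. The paper argues directly: by the second part of Lemma~\ref{lemma: transversal no double intersection}, $\tau$ cannot cross $l$, so $q_i(l)$ (respectively, a point of the cataclysm $q_i(L)$) bounds $q_i(\F_i(x))$ for every $x\in\tau$, whence $q_i(\tau)$ is a bounded interval. Your contradiction argument via Hausdorff limits is a sensible idea, but Step~2 contains a genuine error. The claim that a nested sequence of chords in $\overbar{P}$ whose Hausdorff limit lies in $\partial P$ must collapse to a single ideal point is false: the $\F_i$-leaves inside an infinite product region based in $\F_i$ form a nested family of arcs in $\overbar{P}$ whose Hausdorff limit is the entire cap of the region, a non-degenerate arc of $\partial P$. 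This is exactly the phenomenon that Lemma~\ref{lemma: ray cannot land in a gap} and Lemma~\ref{lemma: transversals landing at the same point represent an end} in the paper are built to handle. Your Step~2 uses only nesting and compactness and never appeals to the hypothesis on $\xi$, so it cannot rule this case out.

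The hypothesis on $\xi$ is precisely what is needed at this stage, not later. For instance, if the Hausdorff limit $K$ were a non-degenerate arc containing $\xi$ and $\xi\in\partial l$, then for all large $n$ the disjointness of $l$ from $l_n$ would force both ideal points of $l$ to lie in $K$; this traps $\overbar{l}$ inside $\bigcap_n \overbar{D_n}\subset\partial P$, which is absurd since $l$ has points in $P$. A similar argument using a leaf of the cataclysm handles the second case. Without such an argument, the collapse to $\{\xi\}$ is unjustified and the remainder of your proof does not get started. You flag the local saturation claim of Step~3 as the remaining obstacle, and I agree it is delicate in the presence of perfect fits and non-separated leaves; but notice also that a leaf in the $\F_i$-saturation of $\sigma_H$ has one ideal point near $\xi$ and the other near the opposite ideal point $\xi'$ of $l$, which sits uneasily alongside Step~2's conclusion that both $\zeta_n^\pm\to\xi$. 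This suggests the decomposition of the argument into Steps~2 and~3 is not the natural one: the hypothesis on $\xi$ should be used to show that the nested leaves cannot escape to $\partial P$ at all, rather than that they collapse to $\{\xi\}$.
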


\begin{proof}
    Let $x \in \tau$. If there exists some $l \in \F_i$ such that $l$ has an ideal point at $\xi$, then $q_i(\F_i(x))$ is bounded from above by $q_i(l)$. If there exists a collection of non-separated leaves $L \subset \F_i$ such that its elements have ideal points accumulating to $\xi$, then $q_i(\F_i(x))$ is bounded from above by any point contained in the cataclysm $q_i(L)$. Since this is true for any $x$, we conclude that $q_i(\tau)$ is bounded.
\end{proof}

%\begin{definition}
%    Let $l_1, l_2 \in \F_i$ be two non-separated leaves with $l_1 < l_2$. If they do not share any common ideal point, and if there is no $l_3 \in \F_i$ non-separated from both $l_1$ and $l_2$ such that $l_1 < l_3 < l_2$, then we call the interval bounded by the positive ideal point of $l_1$ and the negative ideal point of $l_2$ the \emph{gap} between $l_1$ and $l_2$. We also say that it is a gap in $\F_i$.
%\end{definition}

\begin{definition}
   A \emph{gap} in $\F_i$ is a connected component of $\lim\limits_{n \to \infty} l_n \cap \partial P$ that has non-empty interior in $\partial P$, where $\{l_n\}_{n \in \N} \subset \F_i$ is a sequence of leaves such that $\lim\limits_{n \to \infty} l_n$ contains at least one $\F_i$-leaf. Here the limit of $\{ l_n \}_{n\in \N}$ is taken in $P \cup \partial P$.
\end{definition}

\begin{remark}
    If $\mathcal{G}$ is a gap in $\F_i$, then by definition, $\mathring{\mathcal{G}}$ does not contain any ideal point of $\F_i$-leaves. Moreover, at least one endpoint of $\partial \mathcal{G}$ is an ideal point of some $\F_i$-leaf or an accumulation point of ideal points of non-separated $\F_i$-leaves, otherwise $\lim_{n \to \infty} l_n$ cannot contain any $\F_i$-leaf.
\end{remark}

Note that with this definition, for each $\F_i$, there are two types of infinite product regions: ones whose boundary component in $\partial P$ is contained in a gap in $\F_i$, and ones whose boundary component in $\partial P$ is not part of any gap. Since these types will be crucial to our construction, we make the definition formal here:

\begin{definition}
     Let $U$ be an infinite product region based in $\F_i$, and let $I = \partial \overbar{U} \cap \partial P$. If $I$ is contained in some gap in $\F_i$, then $U$ is said to be a \emph{gap product region based in $\F_i$}. Otherwise, $U$ is said to be a \emph{standard product region based in $\F_i$}.
\end{definition}

\begin{figure}[h]
    \centering
    \includegraphics[scale = 0.85]{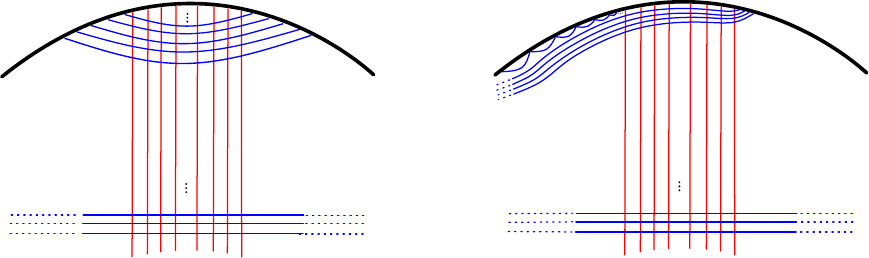}
    \caption{On the left is a standard product region based in the \emph{horizontal} foliation, and on the right is an example of a gap product region based in the \emph{horizontal} foliation with accumulating non-separated \emph{horizontal} leaves on one side.}
    \label{fig: types of product regions}
\end{figure}

\begin{lemma}\label{lemma: ray cannot land in a gap}
    The projection of any transverse ray to $\F_i$ landing in a gap in $\F_i$ is bounded in $\Lambda_i$.
\end{lemma}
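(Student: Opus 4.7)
\emph{Proof plan.} Let $\tau$ be a transverse ray to $\F_i$ landing at $\xi \in \mathcal{G}$, and let $\{l_n\}_{n \in \N} \subset \F_i$ be a defining sequence of the gap: $\mathcal{G}$ is a connected component of $\lim_{n \to \infty} l_n \cap \partial P$ and $\lim_{n \to \infty} l_n$ contains an $\F_i$-leaf $l^*$. The plan is to use the leaves $l_n$, which accumulate to a subset of $P \cup \partial P$ containing both $l^*$ and $\overline{\mathcal{G}}$, as transverse barriers for $\tau$; the crossings $\tau \cap l_n$ will produce a sequence of points in $q_i(\tau)$ that converges to $q_i(l^*)$ in $\Lambda_i$, forcing $q_i(\tau)$ to have compact closure.

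First I would locate $l^*$ relative to $\mathcal{G}$. Since $\partial l^* \subset \lim_{n \to \infty} l_n \cap \partial P$ while $\mathring{\mathcal{G}}$ contains no $\F_i$-ideal point by the remark following the gap definition, the endpoints of $l^*$ lie in $\overline{\mathcal{G}} \setminus \mathring{\mathcal{G}} = \partial \mathcal{G}$ in the main case (if one endpoint of $l^*$ belongs to a different component of $\lim l_n \cap \partial P$, a parallel argument applies). Then $l^* \cup \overline{\mathcal{G}}$ is a Jordan curve in the disk $P \cup \partial P$ bounding an open region $R \subset P$ with $\overline{R} \cap \partial P = \overline{\mathcal{G}}$ and $\partial R \cap P = l^*$; in particular $\xi \in \partial R$, and for $n$ large the leaves $l_n$ lie in $R$.

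Next I would show that for $n$ large, $\tau$ meets $l_n$ at a unique time $t_n$, with $t_n \to \infty$. Since $\tau(t) \to \xi$ and $\partial R \cap P = l^*$, the ray $\tau$ is eventually contained in $R$, entering it (if necessary) by a single crossing of $l^*$ allowed by Lemma \ref{lemma: transversal no double intersection}. For $n$ large, $l_n$ accumulates on $\partial R$ in Hausdorff sense and therefore lies outside any fixed compact subset of $R$, while simultaneously (because $\xi \in \lim l_n$) passing arbitrarily close to $\xi$; this forces $l_n$ to separate any fixed piece $\tau([0,T])$ from $\xi$ in $P \cup \partial P$, so $\tau$ must cross $l_n$ in exactly one point $\tau(t_n)$ (uniqueness by Lemma \ref{lemma: transversal no double intersection}). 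Properness of $\tau$ combined with $\tau(t_n) \in l_n$ lying in a vanishing neighborhood of $\partial R$ then yields $t_n \to \infty$.

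Finally, the Hausdorff convergence of $l_n$ to a limit containing $l^*$ gives $q_i(l_n) \to q_i(l^*)$ (or to a point non-separated from it) in $\Lambda_i$ along a subsequence. Since $q_i(\tau(t_n)) = q_i(l_n)$ with $t_n \to \infty$, and $q_i \circ \tau : [0, \infty) \to \Lambda_i$ is an injective continuous path (locally injective because $\tau$ is transverse to $\F_i$, and globally injective by simple connectedness of $\Lambda_i$), this subsequential accumulation at $q_i(l^*)$ forces the full image $q_i(\tau)$ to be contained in a compact subset of $\Lambda_i$. The principal difficulty is controlling $t_n$: the single intersection $\tau \cap l_n$ can a priori sit anywhere on the potentially long leaf $l_n$, so one must combine properness of $\tau$ with the geometry of $l_n$ accumulating on $\partial R$, rather than any naive closeness-to-$\xi$ argument for the crossing point itself.
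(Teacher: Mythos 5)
Your proposal takes a genuinely different route from the paper's. The paper's proof is a short reduction: it first observes that any two transverse rays to $\F_i$ landing in $\mathcal{G}$ eventually cross the same $\F_i$-leaves, so their projections to $\Lambda_i$ are simultaneously bounded or unbounded; it then produces one bounded instance by applying Lemma \ref{lemma: R_i cannot be rays} to a ray landing at an endpoint of $\mathcal{G}$, which (by the remark following the gap definition) is an ideal point of an $\F_i$-leaf or an accumulation of ideal points of non-separated $\F_i$-leaves. Your argument is direct: you use the defining sequence $\{l_n\}$ as barriers for $\tau$ and push the accumulation of $l_n$ at $l^*$ through $q_i$. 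If carried through it would give slightly more (the projection accumulates specifically at $q_i(l^*)$), at the cost of length.

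As written, however, the plan has a real gap. The inference ``$\partial l^* \subset \lim l_n \cap \partial P$ and $\mathring{\mathcal{G}}$ contains no $\F_i$-ideal point, therefore $\partial l^* \subset \partial\mathcal{G}$'' is a non sequitur: it excludes ideal points of $l^*$ from $\mathring{\mathcal{G}}$ but does not place them in $\overline{\mathcal{G}}$ at all. In fact, for a gap of the type shown in the second picture of Figure \ref{fig: types of product regions} (gap product region with non-separated $\F_i$-leaves accumulating on one side), the $\F_i$-leaf $l^*$ one finds in $\lim l_n$ is one of those non-separated leaves, and generically its ideal points are not the endpoints of $\mathcal{G}$. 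Then $l^* \cup \overline{\mathcal{G}}$ is not a Jordan curve, the region $R$ is undefined, and your ``main case'' may not occur at all; the case you set aside as ``parallel'' is the one that must carry the argument, and it requires building a different separating curve out of several leaves and boundary arcs, not transplanting the same construction. A secondary, repairable point: even in the Jordan-curve case, ``$l_n$ is Hausdorff-close to $\partial R$ and passes close to $\xi$'' does not by itself yield separation of $\tau([0,T])$ from $\xi$ — you also need $\partial l_n$ to straddle $\xi$, which does hold here because $\F_i$-leaves trapped in $R$ can only have ideal points on $\partial\mathcal{G}$, but this should be stated rather than inferred from Hausdorff proximity alone.
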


\begin{proof}
    Let $\mathcal{G}$ be a gap in $\F_i$. For any two transverse rays $\tau_1, \tau_2$ to $\F_i$ that land in $\mathcal{G}$, there exists $x_1 \in \tau_1$ and $x_2 \in \tau_2$ such that $\F_i(x_1) = \F_i(x_2)$. Then with reparametrization, there exists $t_0 \in (0, +\infty)$ such that $\tau_1[t_0, +\infty) = \tau_2[t_0, +\infty)$, so $q_i(\tau_1)$ and $q_i(\tau_2)$ are either both unbounded or both bounded. \par
    
    To prove the lemma, first, let $\tau \subset P$ be a transverse ray to $\F_i$ landing on some point in $\partial \mathcal{G}$ that is an ideal point of some $\F_i$-leaf or an accumulation point of ideal points of non-separated $\F_i$-leaves. Then $q_i(\tau)$ is bounded by Lemma \ref{lemma: R_i cannot be rays}. Now, if $\tau^\prime \subset P$ is a transverse ray to $\F_i$ that lands at any point in $\mathcal{G}$, then by the first paragraph of this proof, $q_i(\tau^\prime)$ must also be bounded.
\end{proof}

For $i = 1, 2$, let
\begin{align*}
    E_i = \{\xi \in \partial P \mid \xi \textrm{ is an ideal point of some transverse ray to } \F_i\}, 
\end{align*}
let
\begin{align*}
    R_i = \{\eta \in \partial P \mid \eta &\textrm{ is neither an ideal point of any } \F_i \textrm{ leaf} \\ &\textrm{ nor accumulation of ideal points of non-separated } \F_i \textrm{-leaves}\},
\end{align*}
and let
\begin{align*}
    N_i = \partial P \backslash \{\textrm{gaps in $\F_i$}\}.
\end{align*}
 
Let $S_i = E_i \cap R_i \cap N_i$, then we have the following results:

\begin{lemma}\label{lemma: transversals are rays}
    The projection of any transverse ray landing at some point in $S_i$ is a ray in $\Lambda_i$.
\end{lemma}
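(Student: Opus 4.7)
The plan is to verify that $q_i \circ \tau \colon [0,+\infty) \to \Lambda_i$ is a proper embedding, which is exactly what is needed for its image to be a ray in $\Lambda_i$. First I would observe that Lemma \ref{lemma: transversal no double intersection} implies that $\tau$ meets each $\F_i$-leaf at most once, so $q_i \circ \tau$ is injective; and since $\tau$ is transverse to $\F_i$, the induced parametrization of $q_i(\tau)$ is monotone with respect to the orientation $\Lambda_i$ inherits from the transverse orientation of $\F_i$. Thus $q_i(\tau)$ is automatically an oriented embedded arc in $\Lambda_i$. The real content of the lemma is that this arc is unbounded as $t \to +\infty$.

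I would prove unboundedness by contradiction: assume $q_i(\tau)$ is bounded. Then monotonicity gives a sequence $t_n \to +\infty$ such that the leaves $l_n := \F_i(\tau(t_n))$ accumulate in $P$ on a nonempty set $\mathcal{L} \subset \F_i$ of pairwise non-separated leaves (the accumulation set of a monotone sequence in the leaf space is always a set of non-separated leaves). Let $\Xi \subset \partial P$ denote the accumulation set of $\{l_n\}$ inside the compactification $P \cup \partial P$. Because $\tau(t_n) \in l_n$ and $\tau(t_n) \to \xi$ in $P \cup \partial P$, necessarily $\xi \in \Xi$.

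The central step, and what I expect to be the main technical obstacle, is a careful analysis of $\Xi$ in terms of the structure on $\partial P$ recalled in Section \ref{subsection:LO_prelim_circle}. Specifically, I would show that every point of $\Xi$ belongs to one of three categories: (a) the ideal endpoints of leaves in $\mathcal{L}$; (b) accumulation points of ideal endpoints of leaves in $\mathcal{L}$, which are pairwise non-separated; or (c) points lying in a connected component of $\Xi$ with nonempty interior in $\partial P$, which is, by definition, a gap in $\F_i$. Handling multiple non-separated limit leaves correctly, and ruling out spurious interior accumulation on the circle, is the delicate part here --- it amounts to reading off the boundary behavior of a Chabauty-convergent sequence of leaves.

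Given this classification, the contradiction is immediate. Cases (a) and (b) each contradict $\xi \in R_i$, while case (c) contradicts $\xi \in N_i$. Since $\xi \in S_i = E_i \cap R_i \cap N_i$ by hypothesis, $q_i(\tau)$ must be unbounded; combined with the injectivity and monotonicity from the first paragraph, this shows that $q_i \circ \tau$ is a proper embedding of $[0,+\infty)$ into $\Lambda_i$, so $q_i(\tau)$ is a ray in $\Lambda_i$.
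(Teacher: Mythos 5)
Your proposal follows the same strategy as the paper's proof: assume $q_i(\tau)$ is bounded, extract the limit leaf (or non-separated collection of leaves), and derive a contradiction from the membership $\xi \in S_i$. The embedding/injectivity discussion in your opening paragraph is a bit more explicit than the paper, which simply notes that $q_i(\tau)$ is connected and appeals to unboundedness. Both arguments then reduce to analyzing the boundary accumulation of the leaves $\F_i(\tau(t))$.

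One small but genuine difference: the paper's proof asserts outright that the preimage leaf $l_b := q_i^{-1}(b)$ ``either has an ideal point at $\xi$ or is contained in some non-separated collection of $\F_i$-leaves whose ideal points accumulate to $\xi$'' and derives a contradiction solely from $\xi \in R_i$. That dichotomy is not automatic --- a priori $\xi$ could lie in the interior of a gap bounded by ideal points of the limit leaves --- so $\xi \in N_i$ is implicitly doing work to exclude that case. Your trichotomy into (a) ideal points, (b) accumulations of ideal points, and (c) gap interior makes this explicit and uses both $R_i$ and $N_i$ directly, which is arguably a more careful rendering of the same argument. You correctly flag the accumulation analysis on $\partial P$ as the delicate step; that is precisely where the gap case enters, and it is the part the paper compresses into a single sentence.
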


\begin{proof}
    Let $\xi \in S_i$ and let $\tau \subset P$ be a transverse ray to $\F_i$ that lands at $\xi$ with an initial point $x_0 \in P$. We show that $q_i(\tau)$ is unbounded in $\Lambda_i$, then the statement follows since $q_i(\tau)$ is connected. Suppose otherwise, and let $a = q_i(x_0) \in \Lambda_i$. Then there exists $b \in \Lambda_i$ such that $q_i(\tau) = [a, b)$. Then the pre-image of $b$ under $q_i$ is an $\F_i$-leaf that either has an ideal point at $\xi$ or is contained in some non-separated collection of $F_i$-leaves whose ideal points accumulate to $\xi$. But either of these means that $\xi \notin R_i$, and we have a contradiction.
\end{proof}

\begin{figure}[h]
    \labellist
    \small \hair 2pt
    \pinlabel $U$ at 122 56
    \pinlabel $\partial P$ at -13 81
    \endlabellist
    \centering
    \includegraphics[scale = 0.75]{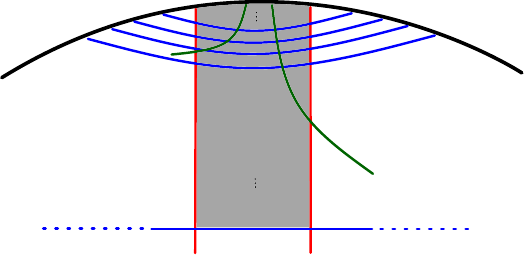}
    \caption{An end realized by the boundary of an infinite product region on $\partial P$}
    \label{fig: ends on boundary}
\end{figure}

\begin{lemma}\label{lemma: transversals landing at the same point represent an end}     
    If $\tau_1$ and $\tau_2$ are two distinct transverse rays to $\F_i$ that represent ends in $\Ends(\Lambda_i)$ and land at the same ideal point $\xi \in S_i$, then their projections are rays in the same equivalence class in $\Lambda_i$, i.e.~they represent the same end in $\Lambda_i$. Moreover, if $P$ contains a standard product region $U$ based in $\F_i$, then all transverse rays landing in $I = \partial \overbar{U} \cap \partial P$ represent the same end in $\Lambda_i$.
\end{lemma}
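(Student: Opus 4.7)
My plan for the first claim is to prove that for all sufficiently large $t$, the leaf $l_t := \F_i(\tau_1(t))$ intersects $\tau_2$; by symmetry the analogous statement holds with the roles of $\tau_1, \tau_2$ swapped, and together these imply that $q_i(\tau_1)$ and $q_i(\tau_2)$ share a cofinal sub-ray inside $\Lambda_i$, hence represent the same end in $\Lambda_i$.

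The leaf $l_t$ separates $P$ into two open half-planes; in $P \cup \partial P$, the closure of the half-plane $H_t$ containing $\tau_1(s)$ for $s > t$ is bounded by $l_t$ together with an arc $I_t \subset \partial P$ whose endpoints are the ideal points of $l_t$ and whose interior contains $\xi$ (using $\xi \in R_i$, so that $\xi$ is not itself an ideal point of $l_t$). Because $\tau_2(s) \to \xi$, the point $\tau_2(s)$ eventually lies in $H_t$, so to see that $\tau_2$ crosses $l_t$ it suffices to show $\tau_2(0) \notin H_t$ for all large $t$. The sets $H_t$ are nested and decreasing, so the goal reduces to proving $\bigcap_t H_t = \varnothing$. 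Assuming for contradiction that $x \in \bigcap_t H_t$, the leaf $m := \F_i(x)$ is disjoint from every $l_t$ by transversality, hence contained in every $H_t$, and its two ideal points lie in $J := \bigcap_t I_t$. If $J = \{\xi\}$, this forces an ideal point of $m$ to equal $\xi$, contradicting $\xi \in R_i$. If instead $J$ has non-empty interior, I plan to argue that the leaves of $\F_i$ lying between $m$ and the $l_t$ accumulate at $\partial P$ on a set whose interior contains a neighbourhood of $\xi$ in $J$; this will yield either an arc component of $\lim l_n \cap \partial P$ containing $\xi$ (i.e.~a gap in $\F_i$ containing $\xi$, contradicting $\xi \in N_i$) or an accumulation of ideal points of non-separated $\F_i$-leaves at $\xi$ (contradicting $\xi \in R_i$). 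I expect this non-degenerate $J$ case to be the main technical step of the proof, since it requires organising the sequence $l_t$ together with the intermediate $\F_i$-leaves into a coherent accumulating family inside the disk $P \cup \partial P$.

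For the second statement, the plan is to exploit the product structure of $U$. Since $U$ is an infinite product region based in $\F_i$, it is homeomorphic to $[0,1] \times [0,\infty)$ in such a way that the $\F_i$-segments lying in $U$ correspond to the horizontal slices and the boundary arc $I = \partial \overbar{U} \cap \partial P$ corresponds to the end at $s \to \infty$. Because $U$ is standard, no interior point of $I$ is contained in a gap, and any transverse ray $\tau$ to $\F_i$ landing at a point of $I$ must eventually enter $U$ and remain there on its final approach to its ideal point, since the only way to reach $I$ from inside $P$ is through $U$. By Lemma \ref{lemma: transversal no double intersection}, once inside $U$ the ray $\tau$ crosses each horizontal segment at most once, and since it escapes to $s = \infty$ the set of $s$-coordinates at which it meets these segments is cofinal in $[0, \infty)$. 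Consequently, any two transverse rays landing in $I$ eventually cross the same cofinal family of $\F_i$-segments of $U$, so their projections to $\Lambda_i$ share a cofinal sub-ray and represent the same end.
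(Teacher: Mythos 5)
Your reduction for the first claim is sound and is essentially a clean repackaging of the paper's own Case~1/Case~2 dichotomy: showing $\bigcap_t H_t = \varnothing$ is exactly ruling out the situation in which no leaf $\F_i(\tau_1(t))$ ever meets $\tau_2$. Once the intersection is empty, the nesting of the $H_t$ together with the fact that $\xi$ lies in the interior of each arc $I_t$ gives the crossing for all large $t$, and the symmetric argument finishes the claim. The trouble is that the central step --- deriving a contradiction from $\bigcap_t H_t \neq \varnothing$ --- is left explicitly as a plan. You correctly observe that a point $x$ in the intersection supplies a leaf $m$ with $\partial m \subset J = \bigcap_t I_t$, forcing $J$ to be a non-degenerate arc, and you then say you ``plan to argue'' that this contradicts $\xi \in R_i \cap N_i$. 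That is precisely the crux of the lemma, and it is not automatic: a priori the $l_t$ accumulate in $P$ on the $\F_i$-leaves bounding $\bigcap_t H_t$, and their ideal points could cluster only at the two endpoints of $J$, so it is not immediate that $\lim_t l_t \cap \partial P$ has a component with non-empty interior. You do know $\tau_1(t) \in l_t$ with $\tau_1(t) \to \xi$, so $\xi \in \lim_t l_t$; turning that into either a gap at $\xi$ or an accumulation of ideal points of non-separated boundary leaves at $\xi$ is the work you have deferred. The paper reaches the same contradiction through a slightly different packaging (the region $V$ bounded by $\tau_1$, $\tau_2$, two half-leaves, and $\partial P$), but there too the step where the leaves $\F_i(\tau_1(t))$, $\F_i(\tau_2(s))$ are shown to converge to an $\F_i$-leaf with ideal point at $\xi$ is the technical heart; you have identified the right place but not filled it.

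Your treatment of the second claim is a genuinely different route from the paper's. You argue directly that any transverse ray landing in $I$ eventually enters $U$ and then crosses a cofinal family of $\F_i$-segments; the paper instead shows that every point of $I$ is the ideal point of an $\F_j$-leaf meeting $U$, that all such $\F_j$-leaves project to equivalent rays in $\Lambda_i$, and then invokes the first claim via a shared ideal point. Your version is more unified, but the assertion that a transverse ray landing in $I$ ``must eventually enter $U$'' fails for rays landing at an endpoint of $I$ (i.e.\ at an ideal point of $f_1$ or $f_2$): such a ray can approach from the far side of $f_1$ or $f_2$ without ever entering $U$. The conclusion still holds in that case, either because those rays cross the same $\F_i$-leaves that pass through $U$, or --- as the paper does --- because they share an ideal point with $f_1$ or $f_2$ and one can appeal to the first claim; but as stated your step needs that patch.
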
 

\begin{proof}
    Let $\tau_1$ and $\tau_2$ be two transverse rays to $\F_i$, both landing at $\xi \in S_i$, with different initial points $x_1$ and $x_2$, and let $r_1 = q_i(\tau_1)$ and $r_2 = q_i(\tau_2)$ be their corresponding rays in $\Lambda_i$. We breakdown the proof of the first statement into 2 cases. \par
    
    \noindent\textbf{Case 1:} If either $\F_i(x_1) \cap \tau_2 \neq \varnothing$ or $\F_i(x_2) \cap \tau_1 \neq \varnothing$, let $s$ be a point of such intersection, then $r_1$ and $r_2$ agree after passing $q_i(s)$, so $r_1$ and $r_2$ represent the same end. \par

    \noindent\textbf{Case 2:} Suppose that $\F_i(x_1) \cap \tau_2 = \varnothing$ and $\F_i(x_2) \cap \tau_1 = \varnothing$. Without loss of generality, let $V$ be the region bounded by $\tau_1, \tau_2$, the positive half-leaf of $\F_i(x_1)$, the negative half-leaf of $\F_i(x_2)$ and $\partial P$ (Figure \ref{fig: transversals landing at the same point}). Then either there exists $x_0 \in \tau_1$ such that the positive half-leaf of $\F_i(x_0)$ intersects $\tau_2$, or for all $x \in \tau_1$ the positive half-leaf of $\F_i(x)$ has an ideal point in $\partial \overbar{V} \cap \partial P$. Suppose we have the first case. With a slight abuse of notation, by considering $\tau_1$ with the initial point $x_0$ instead of $x_1$, we are back in \textbf{Case 1}. Now suppose that we have the second case. Then for all $y \in \tau_2$, the negative half-leaf of $\F_i(y)$ cannot intersect $\tau_1$ and thus must have an ideal point in $\partial \overbar{V} \cap \partial P$. Then as $x$ and $y$ converge to $\xi$, $\F_i(x)$ and $\F_i(y)$ must converge to a single $\F_i$-leaf that has an ideal point at $\xi$, contradicting $\xi \in S_i$. Since the second case cannot happen, we have that $[r_1] = [r_2]$. \par

    Now, suppose that $P$ contains a standard product region $U$ based in $\F_i$. Let $I = \partial \overbar{U} \cap \partial P$. By definition, $I$ is not a gap in $\F_i$, and it does not contain any point that is an ideal point of a $\F_i$-leaf. Moreover, for any $\xi \in I$, there exists a unique $\F_j$-leaf, $j = 3 - i$, $f$ such that $f \cap U \neq \varnothing$ and $\xi \in \partial f$. Hence, $I \subset S_i$, and every point in $I$ represents an end in $\Lambda_i$ by Lemma \ref{lemma: transversals are rays}. Since any two $\F_j$-leaves landing in $I$ intersect the same set of $\F_i$-leaves after entering $U$, all $\F_j$-leaves landing in $I$ represent the same equivalent class of rays in $\Lambda_i$. Since any transverse ray landing in $I$ shares an ideal point with some $\F_j$-leaf, by the first statement of this lemma, which we have proved, all transverse rays landing in $I$ represent the same end in $\Lambda_i$.
\end{proof}

\begin{figure}[h]
    \labellist
    \small \hair 2pt
    \pinlabel $V$ at 152 76
    \pinlabel $x_1$ at 75 78
    \pinlabel $x_2$ at 228 87
    \pinlabel $\tau_1$ at 118 114
    \pinlabel $\tau_2$ at 199 114
    \pinlabel $\xi$ at 157 176
    \endlabellist
    \centering
    \vspace{3mm}\includegraphics[scale = 0.65]{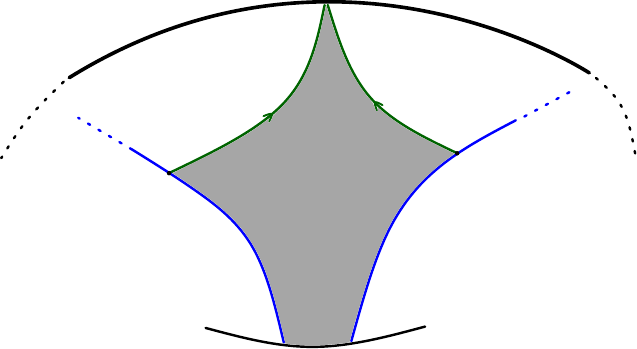}
    \caption{Two transversals landing at the same point}
    \label{fig: transversals landing at the same point}
\end{figure}

\begin{corollary}\label{corollary: increasing product regions}
    If $\{I_\alpha\}_{\alpha \in \mathcal{A}} \subset \partial P$ is an increasing (for inclusion) sequence of intervals such that each $I_\alpha$ is the boundary component of a standard product region $U_\alpha$ based in $\F_i$, then $I = \bigcup_{\alpha \in \mathcal{A}} I_\alpha$ represents the same end in $\Lambda_i$ as any $I_\alpha$.
\end{corollary}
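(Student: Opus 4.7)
The plan is to derive this corollary directly from Lemma \ref{lemma: transversals landing at the same point represent an end}, which already establishes that for each fixed $\alpha \in \mathcal{A}$, all transverse rays to $\F_i$ landing in $I_\alpha$ represent a common end $e_\alpha \in \Ends(\Lambda_i)$. With this in hand, the corollary reduces to verifying two facts: (a) the ends $e_\alpha$ do not depend on $\alpha$, and (b) every transverse ray landing in the union $I = \bigcup_{\alpha \in \mathcal{A}} I_\alpha$ represents this common end.

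For (a), the key input is the nestedness of the family $\{I_\alpha\}_{\alpha \in \mathcal{A}}$. Given any two indices $\alpha, \beta$, we may assume without loss of generality that $I_\alpha \subseteq I_\beta$. Choose any ideal point $\xi \in I_\alpha$ and fix a transverse ray $\tau$ to $\F_i$ landing at $\xi$; such a ray exists because $I_\alpha \subseteq S_i$ by the proof of Lemma \ref{lemma: transversals landing at the same point represent an end}. Since $\xi$ lies in both $I_\alpha$ and $I_\beta$, applying the lemma to each of the standard product regions $U_\alpha$ and $U_\beta$ forces $e_\alpha = [q_i(\tau)] = e_\beta$. Hence there is a well-defined common end $e := e_\alpha$, independent of $\alpha$.

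For (b), let $\tau'$ be any transverse ray to $\F_i$ landing at an ideal point $\xi' \in I$. By the very definition of $I$ as a union, there exists some $\alpha \in \mathcal{A}$ with $\xi' \in I_\alpha$. A second application of Lemma \ref{lemma: transversals landing at the same point represent an end} then gives $[q_i(\tau')] = e_\alpha = e$, completing the verification.

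The main structural difficulty here was already overcome in Lemma \ref{lemma: transversals landing at the same point represent an end}, where one had to analyze the geometry of a single standard product region in detail. Once that result is available, the only subtlety is using nestedness to glue the associated ends together; there is no obstacle involving boundary behavior of the union, because a point belongs to $I$ precisely when it belongs to some $I_\alpha$, so we never need to deal with limit endpoints of $I$ that escape every $I_\alpha$.
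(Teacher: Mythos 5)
There is a genuine gap: your proof treats the corollary at face value, as a statement about the set-theoretic union $\bigcup_\alpha I_\alpha$, and explicitly dismisses the case of limit endpoints that escape every $I_\alpha$. But that is exactly the case the corollary is designed to handle, and the paper's own proof confirms this. The paper opens its argument with ``Let $\xi, \xi' \in \partial P$ so that $I = [\xi, \xi']$,'' treating $I$ as the \emph{closed} interval; when no $I_\alpha$ is maximal (the second case the paper considers), the set-theoretic union is only $(\xi, \xi')$ (or half-open), and the endpoints $\xi, \xi'$ lie in no $I_\alpha$. The substance of the corollary is precisely that these endpoints also belong to $S_i$ and map under $\Phi_i$ to the same end. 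Your argument never touches them, so it proves strictly less than what is needed.

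That this matters downstream is visible in the proof of Proposition \ref{proposition: mapping to ends}. There, $\tau$ and $\tau'$ may not be $\F_j$-leaves, so the approximating intervals $J_n$ have endpoints at ideal points of $\F_j$-leaves $f_n, f'_n$ that converge to $\xi, \xi'$ strictly from inside $J$; the union $\bigcup_n J_n$ is therefore an open interval, yet the proposition invokes the corollary to conclude that all of $[\xi, \xi']$ sits inside $\Phi_i^{-1}([r])$. Without the endpoint case, that deduction fails.

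The missing work, carried out in the paper's proof, is: construct a transverse ray to $\F_i$ landing at $\xi$ by picking a sequence $x_n \in U = \bigcup_\alpha U_\alpha$ converging to $\xi$ with each $x_{n+1}$ in the appropriate quadrant of $x_n$, thereby showing $\xi \in E_i$; observe that $\xi \notin R_i$ or $\xi \notin N_i$ would force $I$ to be a gap in $\F_i$, contradicting that $I$ is a union of boundary components of \emph{standard} product regions, so $\xi \in S_i$; and finally argue by continuity (together with Lemma \ref{lemma: transversals landing at the same point represent an end}) that the end represented at $\xi$ coincides with the common end $e$ of the interior points. Your write-up correctly disposes of the easy interior case, but you should not discard the endpoint analysis---it is where the actual content lies.
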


\begin{proof}
    Let $\xi, \xi^\prime \in \partial P$ so that $I = [\xi, \xi^\prime]$. If $I_\alpha \supset I_\beta$, then $I_\alpha$ and $I_\beta$ represent the same end by Lemma \ref{lemma: transversals landing at the same point represent an end}. So if $I$ is a boundary component of a standard product region, then $\xi$ and $\xi^\prime$ are ideal points of $\F_j$-leaves, $j = 3 - i$, and the statement follows since $I$ contains all $I_\alpha$, $\alpha \in \mathcal{A}$. \par
    
    Now suppose that $I$ does not bound any standard product region. Then $\xi$ and $\xi^\prime$ are accumulated by ideal points of $\F_j$-leaves that bound standard product regions in $\{U_\alpha\}_{\alpha \in \mathcal{A}}$. We need to show that both $\xi$ and $\xi^\prime$ are elements of $S_i$. Then we get that $I \subset S_i$, and the proof will be completed by continuity and Lemma \ref{lemma: transversals landing at the same point represent an end}. \par
    
    In fact, by symmetry, we will only need to show that $\xi$ is a point in $S_i$. Let $U = \bigcup_{\alpha \in \mathcal{A}} U_\alpha$. Let $x \in U$. Moreover, without loss of generality, fix a transverse orientation so that $\xi$ is in the top-left quadrant of $x$. Pick a sequence of points $\{x_n\}_{n \in \N} \subset U$ that converge to $\xi$ such that $x_1 = x$ and $x_{n+1}$ is in the top-left quadrant of $x_n$ for all $n \in \N$. Then we get a transverse ray to $\F_i$ that lands at $\xi$ by connecting $x_n$ and $x_{n+1}$ with a straight line for all $n \in \N$. Hence, $\xi \in E_i$. By construction, $\xi$ must be in $R_i$ because otherwise it implies that $I$ is a gap in $\F_i$, but $I$ is the union of boundary components of standard product regions. For the same reason, $\xi$ must be in $N_i$. Therefore, $\xi \in S_i$, and we are done.
    %Observe that given the way $\xi$ was constructed, it cannot be accumulated by ideal points of non-separated $\F_i$-leaves since there is no non-separated $\F_i$-leaves in a small neighborhood of $\xi$, so we only need to show that $\xi$ is not an ideal point of any $\F_i$-leaf. Suppose that $\xi$ is an ideal point of some $\F_i$-leaf $l$. Then $l \cap U \neq \varnothing$. Take $x \in U$ that also lies in the positive half-plane defined by $l$. Then $\partial \F_i(x) \cap (\xi, \xi^\prime) \neq \varnothing$, but this contradicts that any two distinct $\F_j$-leaves landing in $(\xi, \xi^\prime)$ define an standard product region based in $\F_i$, and we are done.
\end{proof}

We are now ready to complete the construction. Since a group action on $P$, preserving foliations and their orientations, induces an action on $\partial P$, with the following proposition we will have the tool to associate group actions on $P$ with group actions on the set of ends of the leaf spaces --- this is the crucial step in proving Theorem \ref{theorem: faithful}.

\begin{proposition}\label{proposition: mapping to ends}
    Let $\xi \in S_i$. For $i = 1, 2$, let $\tau_\xi$ be any transverse ray to $\F_i$ that has an ideal point at $\xi$, and let $r_\xi = q_i(\tau_\xi) \in \Lambda_i$. Then the map $\Phi_i \colon S_i \to \Ends(\Lambda_i)$ such that $\Phi_i(\xi) = [r_\xi]$ is well-defined and surjective, and the pre-image of any end in $\Ends(\Lambda_i)$ is either a single point, or a closed interval $I \in \partial P$ such that any two $\F_j$-leaves, $j = 3 - i$, ending in $\mathring{I}$ bound a standard product region based in $\F_i$.
    
    %$\partial I$ consists of either ideal points of $\F_j$-leaves that bound a standard infinite product region based in $\F_i$ or accumulation of such ideal points.
\end{proposition}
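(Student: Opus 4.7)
The plan is to establish well-definedness, surjectivity, and the fiber structure in turn, largely by assembling the preparatory lemmas of this section.

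Well-definedness is immediate: if $\tau, \tau'$ are transverse rays to $\F_i$ landing at the same $\xi \in S_i$, Lemma \ref{lemma: transversals landing at the same point represent an end} gives $[q_i(\tau)] = [q_i(\tau')]$, so $\Phi_i(\xi)$ is unambiguous. For surjectivity, I would start from an end $[r] \in \Ends(\Lambda_i)$ represented by a ray $r \colon [0,\infty) \to \Lambda_i$, pull back to a diverging sequence of leaves $l_n = q_i^{-1}(r(n)) \subset P$, pick $x_n \in l_n$, and connect consecutive $x_n$'s by short transverse arcs, producing a transverse ray $\tau$ to $\F_i$ that crosses every $l_n$. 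By Lemma \ref{lemma: transverse ray lands on the boundary}, $\tau$ lands at some $\xi \in \partial P$ with $[q_i(\tau)] = [r]$; to conclude $\xi \in S_i$ I would rule out $\xi \notin R_i$ (Lemma \ref{lemma: R_i cannot be rays} would force $q_i(\tau)$ bounded, contradicting that $r$ is a ray) and $\xi \notin N_i$ (same contradiction via Lemma \ref{lemma: ray cannot land in a gap}).

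For the fiber structure, let $I = \Phi_i^{-1}([r])$ and assume $|I| \geq 2$. Take distinct $\xi_1, \xi_2 \in I$ with transverse rays $\tau_1, \tau_2$ landing at them representing $[r]$; since both project to rays eventually equal to $r$, both eventually cross every $l_n$. For large $n$, define the open region $V_n \subset P$ bounded by the arc of $l_n$ between $\tau_1 \cap l_n$ and $\tau_2 \cap l_n$, the tails of $\tau_1$ and $\tau_2$ beyond these intersections, and the arc of $\partial P$ between $\xi_1$ and $\xi_2$ on the corresponding side. The key geometric fact, which follows from the single-crossing property (Lemma \ref{lemma: transversal no double intersection}) and a counting argument on crossings of $\partial V_n$, is that for all $m$ large enough, $l_m \cap V_n$ is a single arc from $\tau_1 \cap l_m$ to $\tau_2 \cap l_m$ that separates the $l_n$-side of $V_n$ from the $\partial P$-side. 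Consequently, for any $\xi$ in the $\partial P$-arc, any transverse ray $\tau_\xi \subset V_n$ landing at $\xi$ must cross $l_m$ for all sufficiently large $m$, so $[q_i(\tau_\xi)] = [r]$; repeating the $S_i$-argument from surjectivity then gives $\xi \in S_i$, so the whole arc lies in $I$. Thus $I$ is a (closed) interval containing $\xi_1, \xi_2$.

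Finally, for the product region statement, take $f_1, f_2 \in \F_j$ with ideal points $\eta_1, \eta_2 \in \mathring{I}$: both project to $[r]$, so both eventually cross every $l_m$, and for large $m_0$ they cobound, together with the arc of $l_{m_0}$ between their intersections and the arc $(\eta_1, \eta_2)$ of $\partial P$, a simply connected region $U$. Any $\F_i$-leaf through $U$ must meet both $f_1$ and $f_2$: the only alternative is an ideal point in $(\eta_1, \eta_2)$, which is excluded by $\mathring{I} \subset R_i$. Hence $U$ is an infinite product region based in $\F_i$; it is standard rather than gap-type because $(\eta_1, \eta_2) \subset N_i$ is not contained in any gap. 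The main obstacle will be the single-arc claim for $l_m \cap V_n$ in the fiber step, which requires carefully tracking how $l_m$ can cross $\partial V_n$ and ruling out (for large $m$) the possibility that $l_m$ has an ideal point in $(\xi_1, \xi_2)$; a secondary subtlety is verifying closedness of $I$ at its endpoints by approximation with interior points.
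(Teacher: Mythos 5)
Your well-definedness and surjectivity steps match the paper's: well-definedness follows from Lemmas \ref{lemma: transversals are rays} and \ref{lemma: transversals landing at the same point represent an end}, and surjectivity from building a transverse ray over a ray in $\Lambda_i$ and then applying Lemmas \ref{lemma: transverse ray lands on the boundary}, \ref{lemma: R_i cannot be rays}, and \ref{lemma: ray cannot land in a gap} to place its ideal point in $S_i$. The genuine divergence is in the fiber argument. The paper replaces the transverse rays $\tau, \tau'$ by nearby $\F_j$-leaves $f_n, f'_n$ (using density of ideal points of leaves in $\partial P$), so that the regions bounded by $f_n$, $f'_n$, and an $\F_i$-leaf $l_n$ are \emph{standard product regions}; Corollary \ref{corollary: increasing product regions}, whose proof shows the limit boundary arc lies in $S_i$, then gives $[\xi,\xi'] \subset \Phi_i^{-1}([r])$ directly. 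You instead keep $\tau_1, \tau_2$ themselves and the region $V_n$ they cut out with $l_n$, and try to reach the same conclusion by a crossing argument.

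The gap is that $V_n$ is not a product region, since $\tau_1$ and $\tau_2$ are arbitrary transverse rays rather than $\F_j$-leaves. Your single-arc claim for $l_m\cap V_n$ is fine for large $m$ (the two rays of $l_m$ emanating from $l_m\cap\tau_1$ and $l_m\cap\tau_2$ must leave $V_n$ and cannot re-enter, by Lemma \ref{lemma: transversal no double intersection}), but it only concerns the leaves $l_m$ that $\tau_1, \tau_2$ actually cross. It does not exclude some other $\F_i$-leaf $l'$ lying entirely in $V_n$ without meeting $\tau_1$ or $\tau_2$, with both ideal points in $[\xi_1,\xi_2]$; such an $l'$ would make an interior point of the arc fail the $R_i$ condition, so that point would not be in $S_i$ and the arc would not lie in $I$. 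Relatedly, your assertion that every $\xi$ in the arc admits a transverse ray $\tau_\xi \subset V_n$ crossing all $l_m$ needs the arcs $l_m\cap V_n$ to accumulate on all of $(\xi_1,\xi_2)$, which again does not follow from the single-arc claim alone. (Your final ``moreover'' step is correct, but it already presupposes $\mathring I\subset R_i\cap N_i$, exactly the missing piece.) The paper's detour through $\F_j$-leaves supplies the constraint you are missing: inside a product region every $\F_i$-leaf is forced to cross both $\F_j$-boundary leaves, and the second statement of Lemma \ref{lemma: transversals landing at the same point represent an end} together with Corollary \ref{corollary: increasing product regions} is what lands the whole boundary arc in $S_i$.
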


\begin{proof}
    The map $\Phi_i$ is well-defined by Lemma \ref{lemma: transversals are rays} and Lemma \ref{lemma: transversals landing at the same point represent an end}. \par
    
    We first check the surjectivity of $\Phi_i$. Let $[r] \in \Ends(\Lambda_i)$. Then in $\Lambda_i$, $r$ is a ray with some initial point $x_0$. The pre-image of $r$ under $q_i$ in $P$ is saturated by a collection of $\F_i$-leaves, $\{l_\alpha\}_{\alpha \in \mathcal{A}}$, with an ``initial'' leaf $l_{\alpha_0}$ whose projection in $\Lambda_i$ is $x_0$. We can find some transverse ray $\tau \subset P$ to $\F_i$ that starts at some point on $l_{\alpha_0}$ and intersects every leaf of $\{l_\alpha\}_{\alpha \in \mathcal{A}}$ so that $q_i(\tau) = r$. By Lemma \ref{lemma: transverse ray lands on the boundary}, it lands at some point $\eta \in \partial P$. By Lemma \ref{lemma: ray cannot land in a gap}, we know that $\eta$ is not contained in any gap. We are left to check that $\eta \in R_i$, and this is true by Lemma \ref{lemma: R_i cannot be rays}. \par

    To prove the last part of the proposition, we show that if $[r]$ is an end of $\Lambda_i$ such that $\Phi_i^{-1}([r])$ contains distinct points $\xi, \xi^\prime$, then there exists an interval $I \in \partial P$ such that $\xi, \xi^\prime \in I$ and $\Phi_i^{-1}([r]) = I$. Let $\tau$ and $\tau^\prime$ be transverse rays to $\F_i$ that lands at $\xi$ and $\xi^\prime$, respectively. Then by Lemma \ref{lemma: ray cannot land in a gap}, $\xi$ and $\xi^\prime$ are not in the boundary of any gap product region based in $\F_i$. Let $r_\xi = q_i(\tau_\xi)$ and $r_{\xi^\prime} = q_i(\tau_{\xi^\prime})$. Since $r_\xi \sim r_{\xi^\prime}$, then up to switching $r_\xi$ and $r_{\xi^\prime}$ there exists $t_0 \in \mathbb{R}$ such that $r_\xi[t_0, +\infty) = r_{\xi^\prime}[0, +\infty)$. Let $l_0 \subset P$ be the pre-image of $r_\xi(t_0)$ under $q_i$. Then $l_0$, as well as every $\F_i$-leaf whose projection under $q_i$ is contained in $r_\xi(t_0, +\infty)$, intersect both $\tau$ and $\tau^\prime$. If both $\tau$ and $\tau^\prime$ are $\F_j$-leaves, then $l_0, \tau$, and $\tau^\prime$ form a standard product region whose boundary component in $\partial P$ is $J = [\xi, \xi^\prime]$, and $J \subset \Phi_i^{-1}([r])$ by Lemma \ref{lemma: transversals landing at the same point represent an end}. Otherwise, since ideal points of leaves are dense in $\partial P$, we can find sequences of $\F_j$-leaves, $\{f_n\}_{n\in \N}$ and $\{f^\prime_n\}_{n\in \N}$, landing in $J$ whose ideal points converge to $\xi$ and $\xi^\prime$, respectively. Since $r_\xi[t_0, +\infty) = r_{\xi^\prime}[0, +\infty)$, for each $n$, we can find some $l_n \in \F_i$ with $q_i(l_n) \in r_\xi(t_0, +\infty)$ such that $f_n$, $f^\prime_n$, and $l_n$ form a standard product region whose boundary component in $\partial P$ is the closed interval $J_n$. By construction, $\{J_n\}_{n \in \N}$ is an increasing (for inclusion) sequence of intervals, thus $J = [\xi, \xi^\prime] = \bigcup_{n \in \N} J_n \subset \Phi^{-1}_i([r])$ by Corollary \ref{corollary: increasing product regions}. Following from this, if there exists $\xi^\star$ outside of $J$ such that $\xi^\star \in \Phi_i^{-1}([r])$, then up to switching $\xi$ and $\xi^\prime$, we have $[\xi^\star, \xi^\prime] \subset \Phi_i^{-1}([r])$. In particular, if $\Phi_i^{-1}([r])$ contains any interval $J$, then $\Phi_i^{-1}([r])$ is equal to some maximal interval $I$, in the sense of inclusion, that contains $J$. 
\end{proof}

\begin{comment}
    Since transversals landing at a common ideal point correspond to the same equivalence class of rays in the leaf space, it is enough to check the following: if $\xi \neq \xi^\prime \in \partial P$, $\tau$ and $\tau^\prime$ are transversals to $\F_i$ that land at $\xi$ and $\xi^\prime$, respectively, then $\pi(\tau) \nsim \pi(\tau^\prime) \subset \Lambda_i$. Let $r = \pi(\tau)$ and $r^\prime = \pi(\tau^\prime)$. Suppose that we have $r \sim r^\prime$, then up to switching $r$ and $r^\prime$, there exists $t_0 \in \mathbb{R}$ such that $r[t_0, +\infty) = r^\prime[0, +\infty)$. Let $l_0 \subset P$ be the image of $r(t_0)$, then every $\F_i$ leaf on the positive side of $l_0$ intersects both $\tau$ and $\tau^\prime$, so $l_0, \tau$, and $\tau^\prime$ form an infinite product region, which contradicts our assumption.
\end{comment}

Thus, we make the following definition:

\begin{definition}
    Let $S = S_1 \cup S_2$. We call elements of $S$ the \emph{realizations} of ends of leaf spaces on $\partial P$.
\end{definition}

It follows from Proposition \ref{proposition: mapping to ends} that there are two \emph{types} of realizations of ends: a single point or an interval. For example, the leaf spaces of $\R^2$ foliated by horizontal and vertical lines --- both homeomorphic to $\R$ --- have a total of four ends, which are realized by four intervals each consisting of endpoints of leaves along with their accumulation points. \par

One application of this correspondence is that under certain conditions, we may obtain a faithful action on the bifoliated plane from a faithful action on the set of ends, which a priori asks for less information:

\begin{theorem}\label{theorem: faithful}
    Let $P = (P, \F_1, \F_2)$ be a bifoliated plane and $\partial P$ be the boundary circle at infinity of $P$. Let $G$ be a group that acts on $P$ by homeomorphisms, preserving foliations and their orientations. The action by $G$ on $P$ is faithful if and only if the induced action on $\partial P$ is faithful. Moreover, if the set of point-type realizations of ends is dense in $\partial P$, then the action on $P$ is faithful if and only if the induced action on ends in leaf spaces is faithful.
\end{theorem}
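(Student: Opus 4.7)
I will prove the two equivalences in turn. The first equivalence is the main point; the second is a straightforward consequence combined with Proposition \ref{proposition: mapping to ends}.

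\textbf{First equivalence.} The direction ``faithful on $\partial P$ implies faithful on $P$'' is an exercise in density: the $G$-action on $P$ extends by continuity to an action on the closed disk $P \cup \partial P$, and since $P$ is dense in $P\cup\partial P$, any $g$ acting trivially on $P$ also acts trivially on $\partial P$, hence is trivial in $G$ by hypothesis. For the harder converse direction, suppose $g \in G$ satisfies $g|_{\partial P}=\id$. Because $g$ preserves both foliations along with their orientations, each leaf $l \in \F_i$ is sent to a leaf $g(l) \in \F_i$ with the \emph{same} pair of endpoints in $\partial P$. The core of the argument is the intermediate claim that distinct leaves of a single foliation have distinct endpoint pairs in $\partial P$. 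Granting this, $g$ fixes every leaf of $\F_1$ and every leaf of $\F_2$ setwise, so for every $x \in P$,
\[
g(x) \in g(\F_1(x)) \cap g(\F_2(x)) = \F_1(x) \cap \F_2(x) = \{x\},
\]
and $g|_P = \id$. To establish the intermediate claim, I will go back to Bonatti's construction: given distinct $l_1, l_2 \in \F_1$ with positive endpoints $\xi_1^+, \xi_2^+ \in E$, the open region of $P$ lying ``between'' $l_1$ and $l_2$ (even when $l_1, l_2$ are non-separated in $\Lambda_1$) is filled by $\F_2$-leaves by transversality, which contribute infinitely many points of $E$ lying strictly between $\xi_1^+$ and $\xi_2^+$ in the canonical circular order. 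Therefore $\xi_1^+$ and $\xi_2^+$ are not identified under Bonatti's equivalence relation and remain distinct in $\partial P$, and the analogous argument applies to negative endpoints.

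\textbf{Second equivalence (under the density hypothesis).} I reduce this to the first equivalence by translating between trivial action on ends and trivial action on $\partial P$. By Proposition \ref{proposition: mapping to ends}, the map $\Phi_i \colon S_i \to \Ends(\Lambda_i)$ is surjective, and the preimage of each end is either a single point (the point-type realizations) or a closed interval (interval-type realizations). If $g$ acts trivially on $\Ends(\Lambda_1) \cup \Ends(\Lambda_2)$, then $g$ must fix every point-type realization pointwise, since each such realization is the \emph{unique} preimage of its end. Under the density hypothesis these point-type realizations form a dense subset of $\partial P$, and by continuity of $g|_{\partial P}$, we conclude $g|_{\partial P} = \id$. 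The converse is immediate: $g|_{\partial P}=\id$ forces $g$ to fix each realization in $S_i$ and hence, by surjectivity of $\Phi_i$, every end. Combining these with the first equivalence gives the stated result.

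\textbf{Main obstacle.} The main difficulty is the intermediate claim that distinct leaves of a single foliation correspond to distinct endpoint pairs in $\partial P$. This is delicate in bifoliated planes containing infinite product regions or cataclysms, where many leaves of one foliation may have endpoints accumulating to the same points on $\partial P$ before the identification step in Bonatti's construction. The key technical input is the transversality of $\F_2$ to $\F_1$ (and vice versa), which guarantees that the interpolating region between any two distinct $\F_1$-leaves is populated by enough $\F_2$-leaves to produce infinitely many intermediate endpoints in the circular order on $E$, thereby preventing identification. Carefully justifying this --- in particular handling the case where $l_1$ and $l_2$ are non-separated and no $\F_1$-leaf lies strictly between them --- is the step that requires the most care.
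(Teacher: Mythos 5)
Your proposal follows the same overall strategy as the paper's proof: for the first equivalence, reduce to showing that fixing $\partial P$ pointwise forces fixing every leaf, and for the second, use Proposition~\ref{proposition: mapping to ends} together with density of point-type realizations. Two remarks. First, for the ``easy'' direction of the first equivalence you use the continuity/density argument, whereas the paper's proof argues that a moved ideal point must come from a moved leaf; your route is slightly shorter and equally valid. Second, and more substantively, the crux of the hard direction in both your argument and the paper's is the claim that distinct leaves of a single foliation $\F_i$ have distinct endpoint pairs in $\partial P$ --- the paper asserts this implicitly without proof, and you correctly identify it as the delicate step, but your sketch of it is not quite right. You argue that the strip $U$ between $l_1, l_2 \in \F_1$ is filled with $\F_2$-leaves whose ends separate $\xi_1^+$ from $\xi_2^+$ in the circular order on $E$. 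However, an $\F_2$-leaf crossing $U$ may exit through both $l_1$ and $l_2$ (as happens in a trivially foliated strip), in which case neither of its ends lies between $\xi_1^+$ and $\xi_2^+$. The cleaner argument uses the $\F_1$-leaves inside $U$: every $\F_1$-leaf through a point of $U$ is entirely contained in $U$, these leaves are uncountable in number, and each of their positive ends lies between $\xi_1^+$ and $\xi_2^+$ in $E$. If $\xi_1^+$ and $\xi_2^+$ were identified in $\partial P$, all these $\F_1$-leaves would share a common ideal point, contradicting the fact (Lemma~3.2 of \cite{bonatti}, cited in Lemma~\ref{lemma: transversal no double intersection}) that only countably many leaves can share an ideal point. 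With this repair your proof is complete, and in fact is more explicit than the paper's on this point.
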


\begin{proof}
    If $G$ acts faithfully on $P$, then for every non-identity element $g \in G$, there exists $x \in P$ such that $g(x) \neq x$. Then at least one of $\F_1(x)$ and $\F_2(x)$ is not fixed by $g$. The leaf that is not fixed by $g$ gives at least one ideal point on $\partial P$ that is not fixed by $g$. So $g$ does not act like the identity on $\partial P$, hence $G$ acts faithfully on $\partial P$. Conversely, if $G$ acts faithfully on $\partial P$, then for every non-identity element $h \in G$, there exists $\xi \in \partial P$ such that $h(\xi) \neq \xi$. Since any point on $\partial P$ is either an ideal point of some leaf or accumulated by ideal points of leaves, by continuity of the action, there exists some leaf that is not preserved by $h$, hence the action is faithful in $P$. \par

    The second \emph{if and only if} statement follows from the first plus the fact that point-type realization is dense in $\partial P$ implies that there is no infinite product region and hence there is a bijection between a dense subset of $\partial P$ and $\Ends(\Lambda_1) \cup \Ends(\Lambda_2)$ by Proposition \ref{proposition: mapping to ends}.
\end{proof}

Note that the induced action on $\partial P$ by $G$ preserves the induced orientation on $\partial P$.

\begin{corollary}
    Let $G$ be a group acting faithfully on $P$, preserving foliations and their orientations. If $G$ acts minimally on $\partial P$, then $G$ is left-orderable.
\end{corollary}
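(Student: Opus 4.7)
The plan is to deduce this as an immediate consequence of Theorem \ref{theorem:LO_main}. Since $G$ acts faithfully on $P$ preserving both foliations $\F_1, \F_2$ and their orientations, the action produces an injective homomorphism $G \hookrightarrow \mathrm{Aut}^+(P, \F_1, \F_2)$. By Theorem \ref{theorem:LO_main}, the target group is left-orderable. Left-orderability is inherited by subgroups (one simply restricts a left-invariant order on the ambient group to the subgroup), so $G$ is left-orderable.

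The minimality hypothesis on $\partial P$ is not strictly needed for the conclusion; the argument above works for any faithful, orientation- and foliation-preserving action on $P$. The reason minimality is interesting to single out is that it rules out the most elementary path to left-orderability: if the induced action of $G$ on $\partial P \cong S^1$ admitted a global fixed point $\xi_0$, one could cut $\partial P$ at $\xi_0$ to obtain a faithful action of $G$ on $\R$ by orientation-preserving homeomorphisms (using Theorem \ref{theorem: faithful} to transfer faithfulness from $P$ to $\partial P$), and hence a left-order pulled back from the canonical one on $\R$. Under the minimality assumption, however, every orbit is dense in $\partial P$ and so no global fixed point exists (a minimal orientation-preserving action on $S^1$ is fixed-point free unless $S^1$ is trivial, which is not our case). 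Thus the easy reduction to $\Homeo^+(\R)$ is unavailable, and the genuine content of Theorem \ref{theorem:LO_main} — namely, the construction of an invariant linear order on $\mathrm{Ends}_+(\Lambda_1) \cup \mathrm{Ends}_+(\Lambda_2)$ via orderable cataclysms, together with the Conrad-type extension to all of $G$ — is what delivers the result.

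Since this is essentially a one-line deduction from Theorem \ref{theorem:LO_main}, there is no substantive obstacle to overcome in the proof of the corollary itself; the technical work is entirely encapsulated in the main theorem. I would therefore expect the written proof to consist of a single sentence invoking Theorem \ref{theorem:LO_main} and the inheritance of left-orderability by subgroups.
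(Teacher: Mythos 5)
Your one-line deduction from Theorem \ref{theorem:LO_main} is correct: a faithful foliation- and orientation-preserving action embeds $G$ into $\mathrm{Aut}^+(P,\F_1,\F_2)$, which is left-orderable, and subgroups inherit left-orderability. You are also right that, as a pure statement about left-orderability, the minimality hypothesis is logically redundant.

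The paper, however, takes a different route, and your speculation about \emph{why} minimality is singled out runs in the opposite direction from what the paper intends. You read minimality as a hypothesis that \emph{blocks} the easy global-fixed-point reduction to $\Homeo^+(\R)$, forcing one to invoke the full machinery of Theorem \ref{theorem:LO_main}. The paper instead uses minimality as a hypothesis that \emph{enables} a second, more direct argument, bypassing the Conrad-type extension in the proof of Theorem \ref{theorem:LO_main} entirely. Concretely: minimality of the $G$-action on $\partial P$ forces the set of point-type realizations of ends to be dense in $\partial P$ (since the orbit of any single point-type realization is already dense, and the image of a point-type realization under an automorphism is again a point-type realization). By Theorem \ref{theorem: faithful}, density of point-type realizations upgrades faithfulness on $P$ to faithfulness of the induced action on $\mathrm{Ends}(\Lambda_1)\cup\mathrm{Ends}(\Lambda_2)$. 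This set carries a $G$-invariant linear order by Proposition \ref{proposition: order_on_ends}, and a faithful order-preserving action on a linearly ordered set gives a left-order on $G$ directly --- no quotient $G/H$, no countable covering of the leaf spaces, no product of $\Homeo_+(\R)$'s. The corollary is thus intended as an application of the ends-to-boundary correspondence developed in Section \ref{section:LO_realizing_ends}, sitting alongside Corollary \ref{corollary: lift to left-orderability} (the finite-ends, global-fixed-point case) as the complementary regime. Your proof is shorter and perfectly valid, but it does not illustrate the machinery the corollary was designed to showcase, and it misattributes the role of the minimality assumption.
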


\begin{proof}
    Since $G$ acts minimally on $\partial P$, the orbit of any point-type realization of an end is dense in $\partial P$, implying that the set of point-type realizations of ends is dense in $\partial P$. Then by Theorem \ref{theorem: faithful}, we get a faithful action on ends and thus obtain left-orderability of $G$. 
\end{proof}

An example of such minimal action is the Anosov-like action defined in \cite{barthelme2022orbit} on any non-trivial or skewed bifoliated plane. \par

The last result of this chapter provides a method of realizing a group acting on $(P, \F_1, \F_2)$ as a subgroup of $\Homeo^+(\R)$ when the structure of leaf spaces is simpler, namely when there are only finitely many ends in some leaf space. In this case, it also gives a more straightforward way of obtaining left-orderability of the group.

This is a restatement of Corollary \ref{introcor_finite_ends}, stated in the introduction to this thesis.
\begin{corollary}\label{corollary: lift to left-orderability}
If $G$ acts faithfully on $P$ preserving foliations and their orientations, and if $\Ends_+(\Lambda_i)$ or $\Ends_-(\Lambda_i)$ is finite for some $i$, then $G$ has global fixed point(s) on $\partial P$. In particular, $G$ is isomorphic to a subgroup of $\Homeo^+(\R)$.
\end{corollary}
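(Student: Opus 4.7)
The plan is to combine the $G$-invariant linear order on $\Ends_+(\Lambda_i)$ provided by Proposition \ref{proposition: order_on_ends} with the realization map from Proposition \ref{proposition: mapping to ends}. Without loss of generality, assume $\Ends_+(\Lambda_i)$ is finite; the case of $\Ends_-(\Lambda_i)$ is entirely symmetric. The first step is to observe that the induced $G$-action on $\Ends_+(\Lambda_i)$ preserves the linear order $<_i^+$ constructed inside the proof of Proposition \ref{proposition: order_on_ends}, because $\Ends_+(\Lambda_i)$ is a $G$-invariant subset (the action preserves leaf-space orientations) and the order on it is already $\mathrm{Aut}^+(P)$-invariant. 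Any order-preserving self-bijection of a finite linearly ordered set is necessarily the identity, so $G$ must fix every positive end of $\Lambda_i$ pointwise.

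Next, I would invoke Proposition \ref{proposition: mapping to ends} to see that every positive end $[r]$ has a realization $\Phi_i^{-1}([r]) \subset \partial P$ which is either a single point or a closed interval. The realization map is $G$-equivariant: for $g \in G$, the image $g(\tau_\xi)$ of a transverse ray landing at $\xi$ is a transverse ray landing at $g(\xi)$, and $q_i \circ g = g \circ q_i$, so $\Phi_i(g \cdot \xi) = g \cdot \Phi_i(\xi)$. Since $G$ fixes each positive end, it fixes each corresponding realization setwise.

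From each fixed realization I extract a fixed point on $\partial P$. If the realization is a single point, we are immediately done. If the realization is a closed interval $I \subset \partial P$, then since $G$ preserves the orientations of $\F_1$ and $\F_2$ it induces an orientation-preserving action on $\partial P$, and consequently cannot swap the two endpoints of $I$, so each endpoint is a global fixed point. Either way, this produces some $\xi \in \partial P$ fixed by all of $G$. Given such a $\xi$, the complement $\partial P \setminus \{\xi\} \cong \R$ inherits an orientation-preserving $G$-action. Faithfulness of this action follows from Theorem \ref{theorem: faithful}: the faithful action on $P$ induces a faithful action on $\partial P$, whose restriction to the complement of one fixed point is still faithful. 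This yields the desired embedding $G \hookrightarrow \Homeo^+(\R)$.

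The only point requiring care is verifying that endpoints of interval realizations cannot be interchanged, which reduces cleanly to the orientation-preservation hypothesis together with the circular-order construction of $\partial P$ from ends of leaves; beyond that, the argument is essentially a packaging of Propositions \ref{proposition: order_on_ends}, \ref{proposition: mapping to ends}, and Theorem \ref{theorem: faithful}, with the finiteness hypothesis doing the work of forcing pointwise fixation.
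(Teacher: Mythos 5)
Your proposal is correct and follows essentially the same path as the paper: finiteness of $\Ends_\pm(\Lambda_i)$ forces $G$ to fix a positive (or negative) end, its realization under $\Phi_i$ is either a point or a closed interval in $\partial P$, and in either case one extracts a global fixed point on $\partial P$, after which faithfulness on $\partial P \setminus \{\xi\} \cong \R$ via Theorem \ref{theorem: faithful} finishes the job. The one place you diverge is the interval case: you dispatch it with the general topological fact that an orientation-preserving homeomorphism of $S^1$ preserving a proper closed arc setwise cannot interchange its endpoints, whereas the paper argues more concretely from the bifoliation structure --- a $g$ that swapped the endpoints would have a fixed point $\xi$ in the interior of the interval, and since the realization interval lies over a standard product region, $\xi$ is the ideal point of a unique leaf $l \in \F_2$, which $g$ would then have to preserve while swapping the sides of $\overline{l}$, contradicting orientation-preservation of the foliations. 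Your version is shorter and relies only on the circular-order structure of $\partial P$ (both arguments already depend on the remark preceding the corollary that $G$ acts on $\partial P$ preserving orientation), while the paper's version keeps the argument self-contained within the machinery of realizations and product regions. Neither choice changes the shape of the proof; both are valid.
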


\begin{comment}  
Before proving this corollary, we make the following observation:
\begin{lemma}\label{lemma: no rotation}
    Let $G \leq \mathrm{Aut}^+(P)$. For any non-identity $g \in G$, the action on $\partial P$ \color{red}cannot be conjugate to a non-trivial rotation\color{black}\todo{replace by "cannot have periodic orbit of period > 1"}.
\end{lemma}
\begin{proof}
    Suppose that some non-identity $g \in G$ acts by a non-trivial rotation on $\partial P$. Brouwer's Fixed Point Theorem says that $g$ has a fixed point $x \in P \cup \partial P$, but we know from our assumption that $x \notin \partial P$, so $x \in P$. Since foliations are non-singular and the group action preserves foliations, either $g$ preserves all 4 half-leaves of both $\F_1(x)$ and $\F_2(x)$, or $g$ switches positive and negative half-leaves of $\F_1(x)$ and does the same for $\F_2(x)$. In the former case, the action must be by the identity, and the latter reverses transverse orientation, so neither is possible.
\end{proof}
%A classical criterion for left-orderability which we will make use of is the following: If a group acts faithfully on $\R$ by orientation preserving homeomorphisms, then it is left-orderable. A proof for this can be found in \cite{ghys_1984}. \par
\end{comment}

\begin{proof}
Recall that the fact that $G$ preserves foliations and their orientations implies that the induced action of $G$ on $\Ends_+(\Lambda_i)$ is order preserving, for $i=1,2$. Suppose without loss of generality that $\Ends_+(\Lambda_1)$ is finite. Then, there exists a minimal element $x\in \Ends_+(\Lambda_1)$ with respect to the order on $\Ends_+(\Lambda_1)$. Therefore, this must be a global fixed point of the action of $G$ on $\Ends_+(\Lambda_1)$.

Now, let $J = \Phi^{-1}_1(x) \subset \partial P$ be the realization of $x$ in $\partial P$. Then, $J$ is either a point or a closed interval. If it is a point, then this point must be a global fixed point for the action of $G$ on $\partial P$. 

If $J$ is an interval, then this interval must be preserved by the action of $G$. Suppose that some element $g\in G$ permutes the boundary points of $J$. Then, the action of $g$ on $J$ has some fixed point $\xi$ in the interior of $J$, which must be the endpoint of a unique leaf $l\in \F_2$ since it is in the closure of a standard product region. Therefore, $l$ is preserved by $g$, and $g$ must permute the connected components of $(P\cup\partial P) \setminus \overline{l}$ since it permutes the endpoints of $J$ and preserves $J$. This contradicts that $g$ preserves orientations of both foliations. We conclude that if $J$ is an interval, both boundary points of $J$ must be preserved by the action of $G$. 

We have thus shown that there exists a global fixed point $\xi\in \partial P$ for the action of $G$.  We have $\partial P \setminus \{\xi\} \cong \R$, so the faithful action by $G$ on $\partial P$ induces a faithful action on $\R$ --- this in particular implies that $G$ is left-orderable; see Theorem 6.8 in \cite{ghys_1984}.

\end{proof}   

\begin{comment}
    Suppose that  If the realization is a single point $\xi \in \partial P$, remove $\xi$ from $\partial P \cong \mathbb{S}^1$ -- this preserves the faithfulness of the action, then lift the faithful action by $G$ on $\partial P \backslash \{\xi\}$ to a faithful action on $\mathbb{R}$.  If the realization is a closed interval, left-orderability is obtained by removing one of its endpoints instead and lifting the faithful action to $\mathbb{R}$. \par

    Suppose that there are two ends in $\Lambda_i$. If their realizations are of different types, then both are globally fixed by $G$. From here, we can choose either realization and perform steps in the single-end case to get left-orderability of $G$. So suppose further that these ends have the same type of realization on $\partial P$. Then either they are both globally fixed by $G$, or the action by some elements in $G$ on $\partial P$ has a period-2 orbit and hence is conjugate to a rotation by $\pi$ on $\mathbb{S}^1$. The latter is impossible by the preceding lemma. So both are globally fixed by $G$. Then we choose either one and follow the steps in single-end case to get left-orderability of $G$. \par

    Finally, suppose that there are more than two ends in $\Lambda_i$. Then we must have at least two ends with the same type of realization, which are all globally fixed, and we obtain left-orderability as before.
\end{comment}

\chapter{Bifoliated planes of some totally periodic Anosov flows}\label{chapter:BF}

\section{Introduction}\label{section:BF_intro}

In this chapter, we describe the bifoliated planes $(P_\varphi, \F^+, \F^-)$ associated with totally periodic flows $\varphi:\R\times M \to M$ obtained by gluing building blocks as in \cite{barbot2013pseudo}. Then, we prove that these bifoliated planes are in fact all isomorphic to each other. We also describe the action of the fundamental group $\pi_1(M)$ on $(P_\varphi, \F^+, \F^-)$ for a particular example, the Bonatti-Langevin flow first defined in \cite{bonatti1994exemple}. The ideas used to understand this particular case generalize to the general case. 

In Section \ref{section:BF_background} we briefly recall the construction of totally periodic Anosov flows by Barbot and Fenley in \cite{barbot2013pseudo} by gluing circle bundles equipped with semiflows along their boundary components. In said article, the authors mention the possibility of performing certain Dehn surgeries on these circle bundles prior to the gluing step, thus obtaining a larger family of examples which is shown in \cite{barbot2015classification} to include all totally periodic Anosov flows on graph manifolds. Here, we will not consider this broader class of examples, and will restrict ourselves to the case where no such surgeries are performed.

In Section \ref{section:BF_description_planes}, we give our description of the bifoliated planes corresponding to these flows. We begin by discussing \emph{trees of scalloped regions} in \ref{subsection:BF_trees_of_scalloped}. These are a type of chain of lozenges, which in these case cover the whole plane. Therefore, understanding both individual scalloped regions and how different scalloped regions fit together is fundamental in order to understand these bifoliated planes. In Proposition \ref{proposition:BF_periodic_projection_trees} and Corollary \ref{corollary:BF_all_trees_all_scalloped}, we identify the regions of the universal cover $\widetilde{M}$ that project to trees of scalloped regions on the plane.

In \ref{subsection:BF_fitting_trees}, we describe how different trees of scalloped regions intersect. Specifically, Proposition \ref{proposition:BF_tree_intersection} shows that given any scalloped region $S\subset T$ contained in a tree of scalloped region, we can find a different tree of scalloped regions $T'$ such that $T\cap T'= S$. Then, we define the notion of \emph{levels} of trees of scalloped regions, and show that any two points in the plane can be connected by a finite sequence of intersecting trees of scalloped regions, where two adjacent trees in the sequence have levels differing by at most one. This will be an important property in the proof of Theorem \ref{theorem:BF_equivalent_planes} in Section \ref{section:BF_equivalence_totally_periodic_planes}.

In Section \ref{section:BF_equivalence_totally_periodic_planes}, we prove Theorem \ref{theorem:BF_equivalent_planes}, defining via an iterative procedure an isomorphism between the bifoliated planes associated with different totally periodic flows. First, in Proposition \ref{proposition:BF_X_infty_properties} we make explicit some facts about how non-separated leaves corresponding to scalloped regions of different levels intersect lozenges in the plane. These are used in the proof, together with results from the previous section.

In Section \ref{section:BF_action_pi1}, we discuss the action of the fundamental group of a manifold supporting a totally periodic Anosov flow on the bifoliated plane associated with the flow. In \ref{subsection:BF_BL_example} we explicitly describe this action in the case of the Bonatti-Langevin flow, by studying how the elements in a natural generating set of the group act on the bifoliated plane.

\section{Background: the Barbot-Fenley construction of totally periodic Anosov flows}\label{section:BF_background}

Here we outline the construction of some totally periodic flows given in \cite{barbot2013pseudo}.

\begin{definition}
    A graph manifold is a $3$-manifold such that every piece of the JSJ decomposition is Seifert fibered. 
\end{definition}

\begin{definition}
    An Anosov flow $\varphi$ on a graph manifold $M$ is said to be totally periodic if for every Seifert piece in the JSJ decomposition of $M$, a regular fiber has a power which is freely homotopic to a periodic orbit of the flow.
\end{definition}

\begin{definition}
    Let $I = [-\frac{\pi}{2}, \frac{\pi}{2}]$, and let $N = I \times S^1 \times I$, where $S^1 = \frac{[0,1]}{0 \sim 1}$.
    Given $\lambda > 0$, define a vector field $X_\lambda$ on $N$ as

    \[
    X_\lambda(x,y,z) = (0, \, \lambda \sin(x) \cos^2(z), \, \cos^2(x) + \sin^2(z) \sin^2(x))
    \]
and define $\psi_\lambda: U \subset \R \times N \to N$ to be the (local) flow on $N$ generated by $X_\lambda$. Note that $\psi_\lambda$ is not defined globally, that is, $U \subsetneq \R \times N$.

\end{definition}
\begin{figure}[h]
  \centering
  \includegraphics[width=0.5\linewidth]{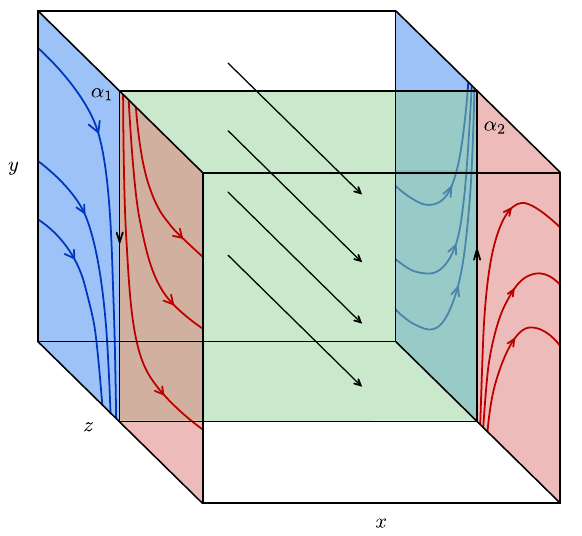}
  \caption{A building block $I\times S^1\times I$. Birkhoff annulus shown in green. Colorized version of Figure 4 in \cite{barbot2013pseudo}.}
  \label{fig:BF_block}
\end{figure}

\begin{proposition}[\cite{barbot2013pseudo}]\label{proposition:BF_background_block}   
For the flow $\psi_\lambda$ on $N$, the following hold:
\begin{enumerate}

    \item There are exactly two closed orbits of $\psi_\lambda$, given by $\alpha_1 = \{ -\pi/2\} \times S^1 \times \{ 0\}, \, \alpha_2 = \{ \pi/2\} \times S^1 \times \{ 0\} $.

    Notice that in orbit $\alpha_1$, the $y$ coordinate decreases in the future, while the opposite happens for $\alpha_2$. Therefore, as oriented orbits, $\alpha_1$ and $(\alpha_2)^{-1}$ are freely homotopic.

    \item 

    There exists, up to isotopy, a unique embedded elementary Birkhoff annulus for the flow $\psi_\lambda$, given by $A = I \times S^1 \times \{ 0\}$. Its boundary components are the orbits $\alpha_1$ and $\alpha_2$.

    \item 

The flow is tangent to the boundary components given by $x = \pm \frac{\pi}{2}$, which contain the periodic orbits $\alpha_1$ and $\alpha_2$.

It is transverse to the boundary components given by $z = \pm \frac{\pi}{2}$. It is incoming for $z = - \frac{\pi}{2} $ and outgoing for $z = \frac{\pi}{2}$.

\item The tangential boundary component given by $x = - \frac{\pi}{2}$ is divided by the orbit $\alpha_1$ into two connected components. These are the stable and unstable leaves of $\alpha_1$: orbits in $\{ -\frac{\pi}{2}\} \times S^1 \times [-\frac{\pi}{2}, 0) $ approach $\alpha_1$ in the future, hence $W^s(\alpha_1) = \{ -\frac{\pi}{2}\} \times S^1 \times [-\frac{\pi}{2}, 0) $. Similarly, $W^u(\alpha_1) = \{ -\frac{\pi}{2}\} \times S^1 \times (0, \frac{\pi}{2}]$. See Figure \ref{fig:BF_block}.

An analogous situation occurs for the tangential boundary component given by $x = \frac{\pi}{2} $ which contains $\alpha_2$. Here we have $W^s(\alpha_2) =  \{ \frac{\pi}{2}\} \times S^1 \times [-\frac{\pi}{2}, 0) $ and $W^u(\alpha_2) =  \{ \frac{\pi}{2}\} \times S^1 \times (0, \frac{\pi}{2}] $.

\item Every orbit in the interior of $N$ enters $N$ through the incoming boundary component and exits through the incoming boundary component.

\end{enumerate}

\end{proposition}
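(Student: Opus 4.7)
My plan exploits two structural features of $X_\lambda$ that can be read off immediately from its formula. First, $\dot x = 0$, so the $x$-coordinate is conserved along orbits. Second, $\dot z = \cos^2 x + \sin^2 z \sin^2 x \geq 0$ is non-negative and vanishes exactly on $\{x = \pm \pi/2,\, z = 0\} = \alpha_1 \cup \alpha_2$. These two facts drive essentially all of the proposition except for one subtle point in (2).

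For (1), any closed orbit must have $z$ constant (by monotonicity plus boundedness), so it must lie in $\alpha_1 \cup \alpha_2$; direct substitution confirms that these are indeed orbits of $\psi_\lambda$. Computing $\dot y = \pm\lambda$ on $\alpha_1, \alpha_2$ shows they wind oppositely in $S^1$, so a path in the annulus $A = I \times S^1 \times \{0\}$ realizes the free homotopy $\alpha_1 \sim \alpha_2^{-1}$. For (3), the boundary checks are immediate: $\dot x = 0$ on $\{x = \pm \pi/2\}$ gives tangency, while $\dot z = \cos^2 x + \sin^2 x = 1$ on $\{z = \pm \pi/2\}$ gives positive transversality, so $z = -\pi/2$ is incoming and $z = \pi/2$ is outgoing. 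For (5), interior orbits satisfy $\dot z \geq \cos^2 x > 0$, which is bounded below on compact $x$-intervals, so $z$ crosses $[-\pi/2, \pi/2]$ in finite time. For (4), restrict $X_\lambda$ to $\{x = -\pi/2\}$, obtaining $(0, -\lambda\cos^2 z, \sin^2 z)$; since $\dot z = \sin^2 z$ vanishes only at $z = 0 = \alpha_1$, points with $z > 0$ flow backward onto $\alpha_1$ and forward to $\{z = \pi/2\}$, giving $W^u(\alpha_1) = \{-\pi/2\} \times S^1 \times (0, \pi/2]$; the case $z < 0$ is symmetric, and the analogous computation on $\{x = \pi/2\}$ handles $\alpha_2$.

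The main work is in (2). The easy half is showing $A$ is an embedded Birkhoff annulus: its boundary is $\alpha_1 \cup \alpha_2$ by (1), and transversality on $A^\circ$ follows from $\dot z = \cos^2 x > 0$ there. The genuine obstacle is elementariness, which requires analyzing the induced foliations $f^s, f^u$ on $A^\circ$ (arising from intersections of $A$ with the weak stable and weak unstable foliations once the block is glued) and ruling out closed leaves. My approach would be to leverage the description of $W^s(\alpha_i), W^u(\alpha_i)$ from (4): since the stable and unstable manifolds of the boundary orbits lie entirely on the tangential faces, leaves of $f^s, f^u$ in $A^\circ$ must accumulate on $\partial A = \alpha_1 \cup \alpha_2$ under the flow-induced holonomy, which is incompatible with a closed leaf. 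For uniqueness of $A$ up to isotopy, I would use the monotonicity of $z$ to construct a canonical flow-preserving retraction of any candidate Birkhoff annulus with boundary $\alpha_1 \cup \alpha_2$ onto $\{z = 0\}$; the retracted image is then forced to coincide with $A$. I expect elementariness to be the hardest step, since it is the only one not reducible to a direct substitution in the formula for $X_\lambda$; everything else amounts to elementary ODE analysis of a system in which $x$ is conserved and $z$ is monotone.
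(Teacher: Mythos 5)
The paper does not prove this proposition --- it is cited wholesale from Barbot--Fenley \cite{barbot2013pseudo}, so there is no internal argument to compare against. Evaluating your proposal on its own merits: your treatment of items (1), (3), (4), and (5) is correct and clean. The two structural observations you isolate --- $\dot x \equiv 0$ and $\dot z \geq 0$ with $\dot z = 0$ exactly on $\alpha_1 \cup \alpha_2$ --- do indeed drive all of those items, and the individual calculations (tangency/transversality on the boundary faces, monotonicity of $z$ forcing orbits to traverse the block in finite time, the restriction of $X_\lambda$ to the tangential faces identifying $W^s(\alpha_i)$ and $W^u(\alpha_i)$) check out. You also correctly establish the free-homotopy claim by computing $\dot y = \mp\lambda$ on $\alpha_1, \alpha_2$.

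Item (2) is where the proposal has a genuine gap, and the gap is precisely where you flag it. Two issues. First, the foliations $f^s, f^u$ on $A^\circ$ are defined via intersection with the weak stable and unstable foliations of the resulting Anosov flow on the closed glued manifold; they are not intrinsic objects of the local vector field $X_\lambda$ on the block, and the calculations in (1)--(5) only pin down the \emph{strong} stable and unstable manifolds of $\alpha_1, \alpha_2$ (one-dimensional curves on the tangential faces). Knowing where $W^s(\alpha_i), W^u(\alpha_i)$ sit does not by itself tell you that the two-dimensional weak foliations, restricted to $A^\circ$, have no closed leaves: that would require control over the weak foliations in a full neighborhood of $A$, which in turn depends on the cone-field/hyperbolicity estimates that Barbot and Fenley actually carry out (and which require $\lambda$ large). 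Your phrase ``leaves must accumulate on $\partial A$ under the flow-induced holonomy, which is incompatible with a closed leaf'' is a plausible picture, but it is not an argument --- a priori a closed leaf could coexist with other leaves spiraling onto it. Second, the uniqueness-up-to-isotopy claim via a ``canonical flow-preserving retraction onto $\{z=0\}$'' is also only a sketch: a competing Birkhoff annulus with boundary $\alpha_1 \cup \alpha_2$ need not be a graph over $\{z=0\}$, and you would need to use transversality to the flow together with the monotonicity of $z$ to produce the isotopy, not merely assert it. Since you explicitly mark item (2) as the hard step, you have correctly diagnosed where the content lies; but as written the proposal does not close it.
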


The manifold $N$ together with the flow $\psi_\lambda$ is referred to as a \emph{building block}.

The next step in the construction is to glue together a number of building blocks along their tangential boundary components. In order to do this, we need the combinatorial information from a fat graph.

\begin{definition}\label{definition:fatgraph}
    A fat graph $X$ is a graph embedded in a surface with boundary $\Sigma$, in such a way that $X$ is a retract of $\Sigma$.
    
    Moreover, we will say a fat graph $X \subset \Sigma$ is admissible if it satisfies the following:
    \begin{enumerate}
        \item The valence of every vertex of $X$ is four.
        \item The set $\mathcal{B}$ of boundary components of $\Sigma$ can be partitioned into two subsets $\mathcal{B}_+$ and $\mathcal{B}_-$, such that for any retraction $r: \Sigma \to X$ and any edge $e$ of $X$, the image of exactly one boundary component from each subset under $r$ intersects $e$.

        We call boundary components in $\mathcal{B}_+$ \emph{outgoing}, and those in $\mathcal{B}_-$ \emph{incoming}.
    \end{enumerate}
    
\end{definition}
%\todo{say explicitly where all this is from, add some text between props...}

Now, let $X$ be an admissible fat graph as above. For every edge $e$ of $X$, let $N_e$ be a building block. We construct a $3$-manifold with boundary $N(X)$ from the building blocks $N_e$.

Every incoming boundary component $B\in \mathcal{B}_-$ of $ \Sigma$ corresponds to a cyclic sequence of edges $(e_1, e_2, \dots, e_k)$. These are the edges $e_i$ such that for a retraction $r:\Sigma \to X$, $r(B) \cap e_i \neq \emptyset$. For each $i=1,\dots, k$, we glue $N_{e_i} $ to $N_{e_{i+1}}$ along the stable manifolds $ \{ \frac{\pi}{2}\} \times S^1\times [-\frac{\pi}{2}, 0) \subset N_{e_i}$ and $ \{- \frac{\pi}{2} \}\times S^1 \times [-\frac{\pi}{2}, 0) \subset N_{e_{i+1}}$ via the map $(\frac{\pi}{2}, y, z) \mapsto (- \frac{\pi}{2}, -y, z)$. Here we consider $e_{k+1} = e_1$.

We perform the gluings described above for each incoming boundary component $B\in \mathcal{B}_-$. Then, the procedure is repeated for the outgoing boundary components $B \in \mathcal{B}_+$. In this way, we obtain a $3$-manifold with boundary $N(X)$ which is a circle bundle over the surface with boundary $\Sigma$, with the fibers in each building block being the sets of the form $
\{ x\}\times S^1\times \{z\}$. The flow is transverse to the boundary of $N(X)$, and there are incoming and outgoing boundary components. Each boundary component is a torus or a Klein bottle. Note that the only periodic orbits in this flow are those coming from the different copies of the orbits $\alpha_1$ and $\alpha_2$ in the building blocks $N_e$.

Now, we choose a finite collection $X_1, \dots ,X_k$ of fat graphs. For each of these, we construct a manifold with boundary $N(X_i)$ as above, equipped with a flow transverse to the boundary. These are chosen so that two conditions are satisfied. First, all the boundary components must be tori. Second, the total number of outgoing boundary components in the $N(X_i)$ must equal the total number of incoming boundary components.

Finally, one chooses a corresponding incoming boundary component $T'$ for every outgoing boundary component $T$, as well as a gluing map $A: T\to T'$. These gluing maps must satisfy the following condition $(*)$:

\begin{align*}
    (*)& \, \, \,\, \text{for periodic orbits } \alpha, \alpha' \subset N(X), \text{ the curve (if non-empty)} \\  & \, \,\,\,A(W^u(\alpha)\cap T) \text{ is not isotopic to } W^s(\alpha')\cap T'
\end{align*}

In this way, we obtain a manifold $M = M(\mathcal{X}, \mathcal{A})$, where $\mathcal{X}= \{ X_1, \dots, X_k \}$ is the collection of fat graphs used for the construction of the $N(X_i)$ above, and $\mathcal{A} = \{ A_i : T_i\to T_j, \,i\neq j\}$ is the collection of gluing maps. Observe that the JSJ pieces of $M$ are the $N(X_i)$. We also have a flow $\varphi_\lambda$ on $M$.

\begin{theorem}[\cite{barbot2013pseudo}]

For $\lambda > 0$ large enough, the flow $\varphi_\lambda$ on $M(\mathcal{X}, \mathcal{A})$ is Anosov.

\end{theorem}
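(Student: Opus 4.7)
The plan is to establish the Anosov property by constructing continuous invariant stable and unstable cone fields on $M(\mathcal{X}, \mathcal{A})$ which are strictly preserved by $d\varphi_\lambda^t$ for $\lambda$ large. I would first carry out the analysis on a single building block $N$, then check that the cone fields extend consistently across the tangential gluings forming each piece $N(X_i)$ and across the transverse torus gluings prescribed by $\mathcal{A}$. Condition $(*)$ plays its decisive role at the transverse gluings, where it prevents configurations that would destroy hyperbolicity.

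First I would compute $d\psi_\lambda^t$ on a block $N=I\times S^1\times I$ in the coordinates $(x,y,z)$. A direct linearization along the closed orbits $\alpha_1, \alpha_2$ shows that the Poincaré return map has eigenvalues of the form $e^{\pm c\lambda}$ for some $c>0$, so these orbits are strongly hyperbolic for large $\lambda$. Using the explicit form of $X_\lambda$, one defines cones $C^u$ around the $\partial_z$-like direction and $C^s$ transverse to it within the kernel of $dy$, so that $d\psi_\lambda^t$ maps $C^u$ strictly inside itself in forward time and $C^s$ strictly inside itself in backward time, with rates of contraction and expansion tending to infinity as $\lambda\to\infty$. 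Because every orbit in the interior of $N$ exits through the transverse boundary after bounded time, these local estimates turn into quantitative expansion and contraction per block crossing.

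Next I would verify that the cone field extends continuously across the tangential gluings assembling the blocks into $N(X_i)$. The gluing map $(\pi/2,y,z)\mapsto(-\pi/2,-y,z)$ identifies stable manifolds of adjacent orbits in a way that preserves the flow direction and sends $C^s$, $C^u$ to themselves, so the cones paste together continuously. At a transverse torus gluing $A\colon T\to T'$, the flow simply crosses from one block into the next, and the danger is that $dA$ could map the forward image of $C^u$ into the complement of $C^u$ on the target side, or align an asymptotic unstable ray with a stable one. This is precisely what condition $(*)$ rules out: if $A\bigl(W^u(\alpha)\cap T\bigr)$ were isotopic to $W^s(\alpha')\cap T'$ then iterating the return map would produce a curve tangent to both evolving unstable and stable manifolds, giving a non-transverse intersection incompatible with any invariant cone structure.

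The main obstacle will be upgrading this piecewise, block-by-block cone invariance into a uniform hyperbolic splitting over all of $M$. An orbit may traverse arbitrarily many blocks and torus crossings in finite time, and at each crossing $dA$ can rotate vectors within the cones, so one must show that the angle between the iterated unstable cone and the stable cone stays bounded below globally. I would prove this by a compactness argument: there are finitely many gluing maps $A_j$, condition $(*)$ supplies a positive minimum angle between $A(W^u\cap T)$ and $W^s\cap T'$ across all relevant pairs of periodic orbits, and this minimum angle controls the cone separation after each crossing. Combined with the strong interior contraction for large $\lambda$, this yields uniform exponential rates, and the standard invariant cone criterion (as in Fisher--Hasselblatt) then produces a continuous splitting $TM=E^s\oplus\mathbb{R}X\oplus E^u$ with Anosov estimates, establishing that $\varphi_\lambda$ is Anosov for $\lambda$ large enough.
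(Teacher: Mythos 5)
This theorem is cited from \cite{barbot2013pseudo}, so the present paper does not supply its own proof; your cone-field strategy is the right general framework (Barbot and Fenley also establish hyperbolicity via a cone/Ma\~n\'e-type criterion), but two steps in your outline do not hold up as written.

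First, the claim that ``every orbit in the interior of $N$ exits through the transverse boundary after bounded time'' is false, and it is the pivot on which you convert local cone estimates into per-crossing expansion. From $X_\lambda(x,y,z)=(0,\lambda\sin x\cos^2 z,\cos^2 x+\sin^2 z\sin^2 x)$ one sees $\dot z = 1-\sin^2 x\cos^2 z$, which vanishes at $(x,z)=(\pm\pi/2,0)$; orbits entering the block with $x$ near $\pm\pi/2$ and $z$ near $0$ spend arbitrarily long near the periodic orbits $\alpha_1,\alpha_2$ before reaching $\{z=\pi/2\}$. The per-crossing expansion estimate therefore cannot come from ``uniform rate $\times$ bounded time''; you need a separate argument for the regime of long passages, where expansion is instead supplied by the hyperbolicity of the $\alpha_i$ accumulated over the long time spent nearby. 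These two regimes (short traversal with $\lambda$-large expansion, long traversal near a hyperbolic closed orbit) have to be treated and matched, and your sketch does not do this.

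Second, your use of condition $(*)$ jumps from a purely topological statement to a quantitative one. Condition $(*)$ says $A(W^u(\alpha)\cap T)$ is not \emph{isotopic} to $W^s(\alpha')\cap T'$; it does not by itself give a ``positive minimum angle'' between these curves, nor does it say the induced Reeb foliations $A_*(f^u)$ and $f^s$ on $T'$ are transverse. Non-isotopy of the closed leaves is what allows one to \emph{choose} a representative of $A$ within its isotopy class making the foliations transverse (and the paper's subsequent remark that the resulting flow depends only on the isotopy class of $A$ is what makes this choice legitimate); only after that choice does one get the angular separation your compactness argument needs. You should make the choice of good representative explicit, and then prove (rather than assert) the uniform lower bound on the angle between the pushed-forward unstable cone and the stable cone across all finitely many torus gluings.
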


In \cite{barbot2015classification} it was shown that, as long as the constant $\lambda>0$ is large enough so that the flow $\varphi_\lambda$ on $M(\mathcal{X}, \mathcal{A})$ is Anosov, this flow does not depend on $\lambda$ (up to orbit equivalence). Moreover, it only depends on the isotopy classes of gluing maps in $\mathcal{A}$, and not in the specific representatives chosen. 

\begin{definition}
    We refer to a flow $\varphi_\lambda$ constructed via the procedure described in this section as a \emph{totally periodic Anosov flow with no surgeries}. 
    
    Since up to orbit equivalence there is no dependence on $\lambda$, we write $\varphi = \varphi_\lambda$.
\end{definition}

\section{Description of the bifoliated planes}\label{section:BF_description_planes}
In this section, we describe the bifoliated plane $(P_\varphi, \F^+, \F^-)$ corresponding to a totally periodic Anosov flow with no surgeries $\varphi$ on a manifold $M = M(\mathcal{X}, \mathcal{A})$, as defined in Section \ref{section:BF_background}.

\subsection{Trees of scalloped regions}\label{subsection:BF_trees_of_scalloped}

Let $\varphi$ be a totally periodic Anosov flow with no surgeries, defined on the manifold $M = M(\mathcal{X}, \mathcal{A})$.

Let $N(X_1), N(X_2), \dots, N(X_k)$, where $X_i \in \mathcal{X}, \, X_i \subset \Sigma_i$ are fat graphs embedded in the surfaces $\Sigma_i$. Here the $3$-manifolds with boundary $N(X_i)$ are using the combinatorial data from $X_i$, via the construction explained in Section \ref{section:BF_background}. 

We know that each $N(X_i)$ is a circle bundle over the surface-with-boundary $\Sigma_i$. Therefore, its universal cover is $\widetilde{N(X_i) } \cong S_i \times \R$, where $S_i$ is the universal cover of $\Sigma_i$. For each $i$, $S_i$ is a surface with boundary which is topologically a plane with infinitely many open half planes removed, and where the boundary components do not accumulate on each other. Explicitly, we can see that $S_i$ is a thickening of an infinite tree $\widetilde{X}$ where each node has valence $4$. That is, the universal cover $S_i$ of all of the surfaces $\Sigma_i$ is the same (of course, the deck group is not in general the same). We denote this space by $S$.

The infinite tree $\widetilde{X}$ is the universal cover of each of the fat graphs $X_i$. The surface with boundary $S$ retracts onto $\widetilde{X}$.

We will now discuss what the connected components of a lift of each $N(X_i)$ to the universal cover $\widetilde{M}$ are. Since these lifts are the same up to homeomorphism for all $i$ (the map $\pi_1(N(X_i)) \to \pi_1(M)$ induced by inclusion is injective for all $i$, and the $N(X_i)$ have the same universal cover), it is enough to study a connected component of one of these lifts. We denote it by $\widetilde{N(X)}$.

The boundary $\partial N(X) = \bigcup_i T_i$ is a union of tori, which lift in the universal cover to vertical planes $P_1, P_2, \dots, $ of the form $P_i = C_i\times \R \subset S \times \R $, where $C_i \subset \partial S$ is a boundary component of $S$. The planes $P_i$ bound $\widetilde{N(X)}$. 

The universal cover $\widetilde{M}$ of $M$ can then be obtained by taking infinitely many copies of $\widetilde{N(X)}$ and gluing them to each other along the vertical planes that bound them.

We note the following: in the same way that $N(X)$ is obtained by gluing some number of building blocks $I\times S^1 \times I$ together, the same is true of each connected component of $\widetilde{N(X)}$ (except with infinitely many building blocks). Here the blocks that make up a connected component of $\widetilde{N(X)}$ are simply the universal cover $I \times \R \times I$ of $N = I \times S^1 \times I$, where each block shares each stable or unstable half-leaf with another block. 

As stated in item $2$ of Proposition \ref{proposition:BF_background_block}, each building block $N$ of $N(X)$ is a neighborhood of a Birkhoff annulus $A$ for the flow $\varphi$, with the boundary components of $A$ being the periodic orbits contained in the building block $N$. Therefore, the lift of such a building block $N$ to the universal cover $\widetilde{M}$ is a neighborhood of a lift $D\cong I\times \R $ of the elementary Birkhoff annulus $A$. The boundary components of $D$ are lifts of periodic orbits of $\varphi$.

We will focus now on understanding the projection of a single copy of $\widetilde{N(X)}$ to the bifoliated plane $P_\varphi$.

We can understand this projection by translating the structure of $\widetilde{N(X)}$ as a union of blocks which are neighborhoods of lifts of Birkhoff annuli to the bifoliated plane. First, we give a definition. Recall that (see Definition  \ref{definition:chain_adjacent_lozenges}) a chain of lozenges is said to be a chain of adjacent lozenges if any two lozenges in the chain can be joined by a finite number of lozenges in the chain, so that two consecutive lozenges share a side (i.e. are adjacent). 

\begin{definition}
    A \emph{tree of scalloped regions} is an infinite chain of adjacent lozenges such that each lozenge is adjacent to four others in the chain, one on each of its sides.
\end{definition}

\begin{remark}
    It follows from the definition that a tree of scalloped regions is a maximal chain of adjacent lozenges.
\end{remark}

\begin{proposition}\label{proposition:BF_periodic_projection_trees} If $\pi : \widetilde{M} \to M$ is the universal covering of $M$ and $p: \widetilde{M} \to P_\varphi$ is the projection to the orbit space of $\varphi$, then for any connected component $U$ of $\pi^{-1}(N(X)))$, the subset $p(U) \subset P_\varphi$ is a tree of scalloped regions in the bifoliated plane of $\varphi$.
\end{proposition}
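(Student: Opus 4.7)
My approach is to exploit the decomposition of $N(X)$ into building blocks $\{N_e\}_e$ and apply Proposition \ref{proposition:birkhoff_projects_lozenges} to the elementary Birkhoff annulus inside each block. The component $U = \widetilde{N(X)}$ decomposes as a union of lifted blocks $\{\widetilde{N}_e\}$ indexed by edges of the universal cover $\widetilde{X}$ of the fat graph $X$; since every vertex of $X$ has valence $4$ by admissibility, $\widetilde{X}$ is the infinite regular valence-$4$ tree. The cyclic gluings of building blocks along tangential half-leaves of periodic orbits in $N(X)$ lift, inside $U$, to pairwise gluings of adjacent lifted blocks along lifts of stable or unstable half-leaves of lifts of periodic orbits.

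For each block, the Birkhoff annulus $A_e \subset N_e$ is elementary by item $2$ of Proposition \ref{proposition:BF_background_block}, so applying Proposition \ref{proposition:birkhoff_projects_lozenges} to a lift $\widetilde{A}_e \subset \widetilde{N}_e$ produces a single lozenge $L_e$ as its projection, with the two lifted boundary periodic orbits projecting to the two corners of $L_e$. A direct analysis then shows $p(\widetilde{N}_e) = \overline{L_e}$: orbits through the interior and through the transverse boundary of $\widetilde{N}_e$ project onto the interior of $L_e$, while the four tangential half-leaves project onto the four sides of $L_e$. The lozenges $\{L_e\}$ are pairwise distinct because distinct lifted blocks have disjoint interiors, each projecting onto a full $2$-dimensional open lozenge.

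I would then verify adjacency. When two lifted blocks $\widetilde{N}_e, \widetilde{N}_{e'}$ in $U$ are glued along a tangential half-leaf $\ell$ of a shared lifted periodic orbit $\widetilde{\alpha}$, the projection of $\widetilde{\alpha}$ is a common corner of $L_e$ and $L_{e'}$, and the projection of $\ell$ is a common side, stable or unstable according to the type of $\ell$. Conversely, every one of the four sides of $L_e$ is the projection of one of the four tangential half-leaves of $\widetilde{N}_e$, and by the Barbot--Fenley construction each such half-leaf is glued in $U$ to a half-leaf of some other lifted block. Hence each lozenge $L_e$ in $p(U)$ is adjacent to exactly four other lozenges, one along each of its four sides.

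Finally, I would conclude: the collection $\{L_e\}$ is infinite because $\widetilde{X}$ is infinite; any two of its elements can be joined by a finite chain of adjacencies because the adjacency graph of the $L_e$ inherits connectedness from $\widetilde{X}$; and the union of the $\overline{L_e}$ exhausts $p(U)$ because the lifted blocks cover $U$. This is precisely the definition of a tree of scalloped regions. The main technical obstacle I anticipate is the bookkeeping around periodic orbits at which many blocks meet (corresponding to several vertices of $\widetilde{X}$ lying on a single lift of a boundary component of $\Sigma$): one must check that along each individual tangential half-leaf of a block exactly one other block is glued, so that the four sides of $L_e$ yield exactly four adjacent lozenges, even though additional lozenges may share the corner $p(\widetilde{\alpha})$.
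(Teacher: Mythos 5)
Your proposal follows the same strategy as the paper's proof: decompose $U$ into lifted building blocks, apply Proposition \ref{proposition:birkhoff_projects_lozenges} to see that each block projects to a single lozenge (using that the Birkhoff annulus in each block is elementary), and use the valence-$4$ structure of the fat graph to conclude that each lozenge is adjacent to exactly four others. The bookkeeping concern you flag at the end is resolved exactly as you anticipate — in the Barbot--Fenley gluing the four blocks around each lifted periodic orbit alternate stable and unstable identifications cyclically, so each tangential half-leaf is shared by exactly two blocks — a point the paper's proof treats somewhat implicitly via the valence-$4$ observation, and which your version spells out slightly more carefully, along with the chain-connectivity inherited from $\widetilde{X}$.
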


\begin{figure}[h!]
  \centering
  \includegraphics[width=.8\linewidth]{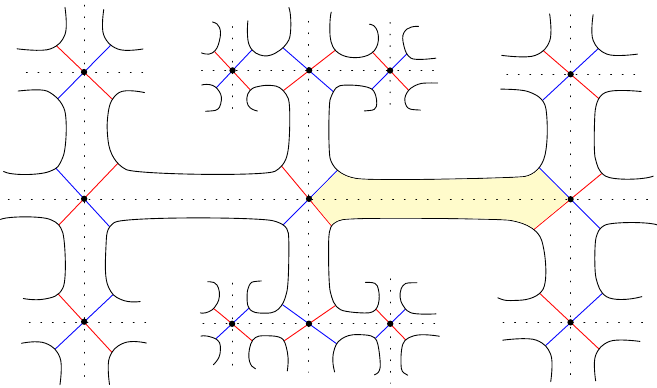}
  \caption{Horizontal cross section of $\widetilde{N(X_i))}$}
  \label{fig:your_label}
\end{figure}

\begin{proof}

Let $U$ be a connected component of $\widetilde{N(X)}$. We want to show that $p(U) \subset P_\varphi$ is a tree of scalloped regions. 
%The proof is a generalization of Proposition \ref{prop: first_scalloped_region} above. 

Let $\{ A_i : i =1,\dots, l\}$ be a collection of Birkhoff annuli such that each building block $N_i\cong I\times S^1\times I$ of $N(X)$ is a neighborhood of exactly one Birkhoff annulus $A_i$ in the collection. Recall from Proposition \ref{proposition:BF_background_block} that each orbit in the block $N_i$ which is not contained in a transverse boundary component of $N_i$ intersects the Birkhoff annulus $A_i$.

Let $\{ D_j: j\in \N \}$ denote the collection of all lifts of the Birkhoff annuli $A_i$, such that the $D_j$ are contained in the connected component $U \subset \widetilde{M}$ of $\pi^{-1}(N(X))$. Then, each $D_j$ is bounded by two orbits of $\widetilde{\varphi}$, which are lifts of the periodic orbits of $\varphi$ bounding the Birkhoff annulus $\pi(D_j) = A_{i_j}$. Note that these periodic orbits do not intersect the boundary tori of $N(X)$, by construction of the flow. 

Each $D_j$ is contained in some lift $W_j \cong I\times \R \times I$ of the building block $N_{i_j}$, and therefore we know that each orbit of $\widetilde\varphi$ in $W_j$ which is not contained in a transverse boundary component must intersect $D_j$.

\begin{figure}[h!]
  \centering
  \includegraphics[width=.7\linewidth]{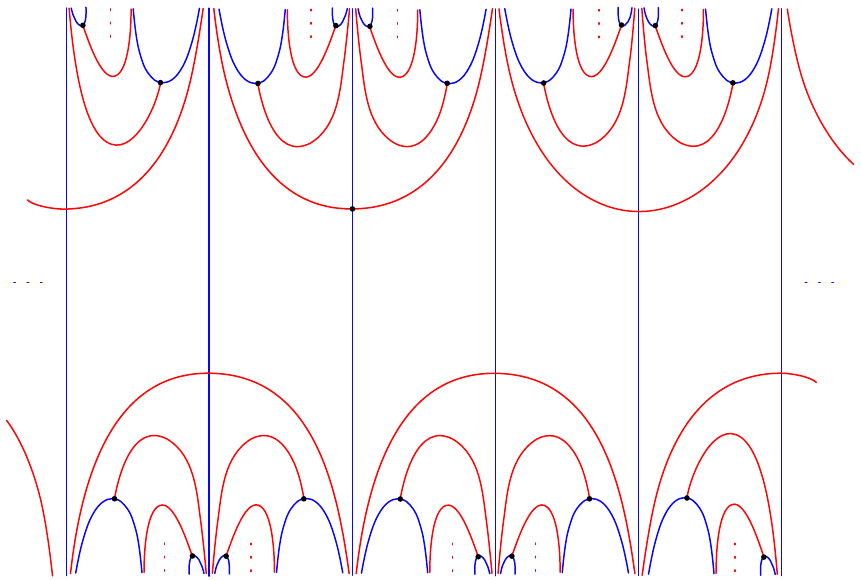}
  \caption{(part of) a tree of scalloped regions.}
  \label{fig:part_tree_scalloped}
\end{figure}

By Proposition \ref{proposition:birkhoff_projects_lozenges}, each $D_j \subset W_j$ projects to a lozenge $L_j \subset p(U)$, and by the paragraph above, all the orbits in the interior of $W_j$ intersect $D_j$. Therefore, the orbits in $W_j$ all project to $L_j$, with the orbits contained in the tranverse boundary components of $W_j$ projecting to the stable or unstable leaves of the corners of $L_j$, which are the projections of the orbits bounding $D_j$.

Moreover, we can see that two adjacent (glued along a stable or unstable half-leaf) blocks $W_j, W_{j'}$ in $U$ project to lozenges that share a side in the bifoliated plane. If the blocks share a stable (resp. unstable) half-leaf, then their projections will be adjacent along a stable (resp.) unstable boundary half-leaf. Since each periodic orbit of the flow $\varphi$ which is in the boundary of a building block must be contained in four such building blocks (recall that the valence of the fat graph $X$ is $4$), this means that each lozenge $L_k$ is adjacent to four lozenges in $p(U)$. Therefore, $p(U)$ is a tree of scalloped regions.

\end{proof}

%that the projection of a connected component of $\widetilde{N(X)}$ consists of infinitely many lozenges, with each $I\times \R \times I$ that makes up $\widetilde{N(X)}$ projecting to a lozenge. Lozenges corresponding to adjacent blocks are adjacent in the bifoliated plane. If the blocks share a stable (resp. unstable) half-leaf, then their projections are adjacent along a stable (resp. unstable) boundary half-leaf.

%%%%%%%%%%%%%%%%%%%%%%%%%%%%%%%%%%%%%%%%%%%%%%%%%%%
% Proof in more detail/formal style
%\begin{comment}
%%    
%\end{comment}
%%%%%%%%%%%%%%%%%%%%%%%%%%%%%%%%%%%%%55
Two corollaries of the above:

\begin{corollary}
    The plane $P_\varphi$ is a union of trees of scalloped regions.
\end{corollary}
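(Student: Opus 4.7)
The plan is to directly reduce the corollary to the preceding Proposition \ref{proposition:BF_periodic_projection_trees}, essentially by noting that the whole manifold $M$ is exhausted by its JSJ pieces $N(X_i)$, and this decomposition lifts to a covering of $\widetilde{M}$ by connected components of the preimages of these pieces.

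First, I would recall that by the construction described in Section \ref{section:BF_background}, the manifold $M = M(\mathcal{X},\mathcal{A})$ is obtained by gluing together the pieces $N(X_1),\dots,N(X_k)$ along their boundary tori, so
\[
M = \bigcup_{i=1}^{k} N(X_i).
\]
Pulling back to the universal cover via $\pi : \widetilde{M} \to M$, this gives
\[
\widetilde{M} = \bigcup_{i=1}^{k} \pi^{-1}(N(X_i)) = \bigcup_{i=1}^{k}\,\bigcup_{U \in \mathcal{C}_i} U,
\]
where $\mathcal{C}_i$ denotes the (countable) set of connected components of $\pi^{-1}(N(X_i))$.

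Next, I would apply the projection $p : \widetilde{M} \to P_\varphi$ to the orbit space. Since every orbit of $\widetilde{\varphi}$ passes through some point of $\widetilde{M}$, the map $p$ is surjective, and therefore
\[
P_\varphi = p(\widetilde{M}) = \bigcup_{i=1}^{k}\,\bigcup_{U \in \mathcal{C}_i} p(U).
\]
By Proposition \ref{proposition:BF_periodic_projection_trees}, each $p(U)$ is a tree of scalloped regions in $P_\varphi$, which finishes the argument.

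There is no serious obstacle here: the entire content of the corollary is the surjectivity of $p$ combined with the fact that $\{N(X_i)\}$ cover $M$, and the structural claim is already done by the proposition. I would keep the proof to just a few lines. The only minor point worth flagging is that different connected components $U$, possibly from different pieces $N(X_i)$, may project to trees of scalloped regions that overlap nontrivially on their boundary lozenges (corresponding to lifts of boundary tori shared between pieces); this is not needed for the corollary itself but motivates the analysis of how trees of scalloped regions intersect carried out in Section \ref{subsection:BF_fitting_trees}.
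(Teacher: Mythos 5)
Your proof is correct and follows the same route as the paper: the one-line argument in the text is exactly that every orbit of $\varphi$ meets some $N(X_i)$, so $P_\varphi$ is covered by the projections $p(U)$, each of which is a tree of scalloped regions by Proposition \ref{proposition:BF_periodic_projection_trees}. Your write-up simply makes the covering of $\widetilde{M}$ and the surjectivity of $p$ explicit.
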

This follows immediately from Proposition \ref{proposition:BF_periodic_projection_trees} above, since any orbit of the flow on $M$ must intersect at least one of the $N(X_i)$.

\begin{corollary}\label{corollary:BF_lifted_torus_projects_scalloped}
Each lift $\widetilde T$ of a torus bounding $N(X)$ projects to a scalloped region in $P_\varphi$. 
\end{corollary}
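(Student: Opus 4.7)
The plan is to show that $\widetilde T$ projects injectively onto a doubly-infinite chain of adjacent lozenges in $P_\varphi$ which constitutes a scalloped region, building on the tree-of-scalloped-regions structure established for $p(U)$ in Proposition \ref{proposition:BF_periodic_projection_trees}.

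First I would verify that $p|_{\widetilde T}$ is injective. The lift $\widetilde T$ is a properly embedded plane in $\widetilde M\cong\R^3$ and therefore separates $\widetilde M$ into two half-spaces, and $\widetilde T$ is transverse to $\widetilde\varphi$. Since the flow has a well-defined normal direction at $\widetilde T$ (pointing from the ``incoming'' half-space to the ``outgoing'' half-space), any orbit of $\widetilde\varphi$ can cross $\widetilde T$ at most once. Thus $p|_{\widetilde T}$ is injective, and by invariance of domain $p(\widetilde T)$ is an open subset of $P_\varphi$ homeomorphic to $\R^2$.

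Next I would identify $p(\widetilde T)$ as a chain of lozenges. Writing $\widetilde{N(X)}\cong S\times\R$, the lift $\widetilde T=C\times\R$ for a boundary component $C\subset\partial S$, which determines a doubly-infinite sequence of edges $(\widetilde e_k)_{k\in\Z}$ of $\widetilde X$ meeting $C$ under retraction. Correspondingly, $\widetilde T$ traverses a chain of lifted building blocks $\widetilde N_k$, and consecutive $\widetilde N_k,\widetilde N_{k+1}$ are glued along a stable half-leaf of their shared tangential boundary (if $T$ is incoming; unstable otherwise), bordering a lifted periodic orbit $\widetilde\alpha_k$. By Proposition \ref{proposition:BF_background_block}(5), every non-periodic orbit in $\widetilde N_k$ crosses the transverse boundary $\widetilde T\cap\widetilde N_k$, so its projection fills the lozenge $L_k$ associated with $\widetilde N_k$ via Proposition \ref{proposition:BF_periodic_projection_trees}. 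Consecutive lozenges $L_k,L_{k+1}$ share the corner $c_k=p(\widetilde\alpha_k)$ and a side in $\F^-$ (resp.\ $\F^+$); so $p(\widetilde T)$ is the union of the $L_k$ together with their shared sides (excluding the corners themselves, which project from periodic orbits not contained in $\widetilde T$).

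Finally, to show this chain is a scalloped region, I would use the element $g\in\pi_1(M)$ represented by the regular Seifert fiber of $N(X)$ restricted to $T$. This $g$ preserves $\widetilde T$ as a translation in the fiber direction, hence preserves each $\widetilde\alpha_k$ and fixes every corner $c_k$ in $P_\varphi$; axiom (A1) then implies $g$ acts hyperbolically on each $\F^\pm(c_k)$. Since $g$ has an infinite fixed-point set arranged in a chain of adjacent lozenges all sharing sides in the same foliation, this is a maximal single-direction chain within the tree of scalloped regions $p(U)$. One then checks that the $\F^+$-leaves through the $c_k$ accumulate as $k\to\pm\infty$ onto pairwise non-separated families of leaves, so that Proposition \ref{proposition:infinite_nonsep_implies_scalloped_boundary} produces a scalloped region containing $p(\widetilde T)$; equality follows from maximality of the chain. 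The main obstacle is the explicit identification of the four non-separated boundary families and the verification of the perfect-fit relations between them; a cleaner alternative is to invoke the general correspondence, implicit in the proof of Proposition \ref{proposition:BF_periodic_projection_trees}, between the maximal single-foliation chains of lozenges in a tree of scalloped regions and scalloped regions themselves, under which $p(\widetilde T)$ is directly identified with the scalloped region associated to $T$.
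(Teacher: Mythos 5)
Your proof is essentially correct and follows the same underlying structure as the paper's: identify $p(\widetilde T)$ as a bi-infinite chain of lozenges, observe that the incoming/outgoing nature of $T$ forces all adjacencies to be along a single fixed foliation direction, and then conclude via Proposition~\ref{proposition:infinite_nonsep_implies_scalloped_boundary}. The key observation --- that consecutive blocks $\widetilde N_k$, $\widetilde N_{k+1}$ are always glued along a stable half-leaf (if $T$ is incoming) or always along an unstable half-leaf (if outgoing) --- is exactly what the paper uses, phrased slightly differently. Your preliminary step on injectivity of $p|_{\widetilde T}$ is correct but unnecessary (the paper bypasses it), and the detour through the Seifert fiber element $g$ and axiom (A1) to argue hyperbolicity at corners is also more machinery than the paper deploys; once the chain structure and the fixed-direction adjacency are established, the passage to a scalloped region goes straight through Proposition~\ref{proposition:infinite_nonsep_implies_scalloped_boundary} without reference to the group action. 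Your closing ``cleaner alternative'' --- reading off the scalloped region from the structure established in the proof of Proposition~\ref{proposition:BF_periodic_projection_trees} --- is in fact the route the paper takes.
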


\begin{proof}
Let $T$ be a torus bounding $N(X)$, and let $\widetilde{T}$ be a lift of $T$ to $\widetilde{M}$. Let $\widetilde{T} \subset \partial U$, where $U$ is a connected component of $\pi^{-1}(N(X)) \subset \widetilde{M}$. We know by Proposition \ref{proposition:BF_periodic_projection_trees} that the projection $p(U)$ to $P_\varphi$ is a tree of scalloped regions. 

The proof of this proposition shows in fact that the interior of each lozenge $L$ in this tree is the projection to $P_\varphi$ of the interior of a block $\widetilde{N} \cong I\times \R\times I$, such that $\pi(\widetilde{N}) = N$ for a building block $N\subset N(X)$. Moreover, it shows that two such lozenges $L, L'$ are adjacent along a stable (or unstable) half-leaf if and only if the blocks $N, N'$ are adjacent along a stable (resp. unstable) half-leaf which is contained in a transverse boundary component of both blocks. 

Since the torus $T$ is either incoming or outgoing for the flow restricted to $N(X)$, the transverse boundary components that intersect $T$ do so only along stable leaves (if incoming) or only along unstable leaves (if outgoing). Therefore, the blocks $\widetilde{N}$ intersecting $\widetilde{T}$ are adjacent along half-leaves that are either always unstable or always stable. This means that the same is true for the lozenges to which these blocks project.

We have shown that $p(\widetilde{T})$ is a bi-infinite chain of lozenges such that two adjacent lozenges are always adjacent along leaves of a fixed foliation. By Proposition \ref{proposition:infinite_nonsep_implies_scalloped_boundary}, $p(\widetilde{T})$ is a scalloped region.
    
\end{proof}

Recall that in the proof of Proposition \ref{proposition:BF_periodic_projection_trees} we denoted by $D_k$ the lifts of Birkhoff annuli that join vertical periodic orbits in a connected component of $\widetilde{N(X_i)}$. Abusing notation and relabeling, let $\{ D_k : k\in \N\}$ denote the set of all such lifts of Birkhoff annuli, for all connected components of the different $\widetilde{N(X_i)}$. 

\begin{definition}
    We denote by $\mathcal{L}$ the set $\mathcal{L} = \{ p(D_k) : k\in \N\}$, and $\mathcal{T} = \{ p(U) : U \text{ connected component of } \widetilde{N(X_i)} \text{ for some } i\}$.
\end{definition}

\begin{proposition}\label{proposition:BF_all_lozenges_mathcal}
    All lozenges in $P_\varphi$ are elements of $\mathcal{L}$.
\end{proposition}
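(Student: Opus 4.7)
The plan is to show that every lozenge $L\subset P_\varphi$ must equal $p(D_k)$ for some lift $D_k$ of a Birkhoff annulus. I will argue this by picking a generic interior point of $L$, showing it belongs to the interior of some lozenge in $\mathcal{L}$, and then invoking the uniqueness of a lozenge containing a given interior point.

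Fix any $x\in\mathrm{int}(L)$. By the corollary to Proposition \ref{proposition:BF_periodic_projection_trees}, $P_\varphi$ is covered by the trees of scalloped regions, so $x$ belongs to some tree $T=p(U)$, where $U$ is a connected component of $\pi^{-1}(N(X_j))$ for some $j$. The analysis in the proof of Proposition \ref{proposition:BF_periodic_projection_trees} shows that $T$ decomposes as the union of the lozenges $\{p(D_k) : D_k\subset U\}\subset\mathcal{L}$, whose interiors are the projections of the interiors of the building-block lifts making up $U$. These interiors are pairwise disjoint, and glued together along the projections of tangential boundaries of the building-block lifts, which form a countable $1$-dimensional skeleton of half-leaves and corners inside $T$. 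In particular, the interiors of these lozenges form an open dense subset of $T$. Since $\mathrm{int}(L)\cap T$ is a nonempty open subset of $P_\varphi$ and therefore not contained in this $1$-skeleton, we may choose our point $x$ so that $x\in\mathrm{int}(L)\cap\mathrm{int}(L')$ for some $L'\in\mathcal{L}$ with $L'\subset T$.

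It remains to establish that $L=L'$, which reduces to the general fact that in $(P_\varphi,\F^+,\F^-)$, two lozenges whose interiors share a point must coincide. Letting $\{p_L,q_L\}$ and $\{p_{L'},q_{L'}\}$ denote the corner pairs, both $\F^+(x)$ and $\F^-(x)$ cross each lozenge through its two opposite sides, and hence meet the four $\F^-$-leaves $\F^-(p_L),\F^-(q_L),\F^-(p_{L'}),\F^-(q_{L'})$ and the four corresponding $\F^+$-leaves. If the corner pairs were distinct, the orderings of these intersections along $\F^\pm(x)$ would place a corner of one lozenge strictly between the two opposite sides of the other; this would put a perfect-fit configuration in the trivially bifoliated interior of a lozenge, a contradiction. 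Once this uniqueness is established, we conclude $L=L'\in\mathcal{L}$, finishing the proof.

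The main technical obstacle I anticipate is making the final uniqueness argument fully rigorous: in principle the four pairs of $\F^\pm$-leaves involved could be ordered along $\F^+(x)$ and $\F^-(x)$ in several ways, so the contradiction needs to be extracted case by case from the perfect-fit characterization of corners. All other steps reduce to invoking structural results already established in Proposition \ref{proposition:BF_periodic_projection_trees}.
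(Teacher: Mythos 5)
The key step in your proof---that two lozenges whose interiors share a point must coincide---is false in these planes, and this is a genuine gap. A scalloped region $S$ decomposes as a chain of adjacent lozenges in two different ways: once as lozenges in $\mathcal{L}(T)$ adjacent along their $\F^-$-sides, and once as lozenges in $\mathcal{L}(T')$ adjacent along their $\F^+$-sides, where $S = T \cap T'$ for two trees $T, T'$. (The paper states this explicitly when motivating the definition of $\mathcal{L}(T)$: \emph{``a scalloped region is a chain of adjacent lozenges in two different ways.''}) Consequently a generic point of $S$ lies in the interior of two \emph{distinct} lozenges of $\mathcal{L}$, one from each decomposition. Your argument for uniqueness does not rule this out: the corners of both overlapping lozenges sit on $\partial S$, not in the interior of the other lozenge, so the perfect-fit-inside-a-product-region contradiction you invoke never materialises. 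In fact, both decompositions consist of lozenges in $\mathcal{L}$, so the conclusion $L' \in \mathcal{L}$ for the lozenge you found is fine---but nothing forces your arbitrary $L$ to equal any particular $L'$ on the basis of a shared interior point alone.

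The paper's proof avoids this entirely by working at the corners rather than at interior points. Given a corner $z$ of the arbitrary lozenge $L$, either $z$ lies in the interior of some $L_0 \in \mathcal{L}$---ruled out by Lemma \ref{lemma:BF_interior_scalloped_no_corner}---or $z$ lies on a boundary leaf $l$ of some $L' \in \mathcal{L}$, and then the structure of the tree of scalloped regions around $L'$ forces the second leaf through $z$ to fail to make a perfect fit unless $z$ is the corner of $L'$; at that point $L = L'$ follows because a lozenge is uniquely determined by one corner together with the two bounding half-leaves based there. That is the legitimate uniqueness statement to use here; uniqueness from a shared interior point is not available. If you want to argue from interior points, you would need an extra step identifying which of the (up to two) lozenges of $\mathcal{L}$ through $x$ your $L$ must equal, which essentially brings you back to the corner analysis.
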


We will use the following result, which is Lemma 2.29 in \cite{barthelme2022orbit}:

\begin{lemma}[\cite{barthelme2022orbit}]\label{lemma:BF_interior_scalloped_no_corner}
    Let $S$ be a scalloped region, and let $L_0 \subset S$ be a lozenge. Then, no point in the interior of $L_0$ can be the corner of a lozenge. 
\end{lemma}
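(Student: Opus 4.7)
Let $L$ be a lozenge in $P_\varphi$ with corners $x$ and $y$. The plan is to show that $L$ coincides with some $L_0 \in \mathcal{L}$, by first locating an interior point of $L$ inside the interior of a distinguished lozenge from a tree of scalloped regions, and then arguing that these two lozenges must agree. Fix $z \in \mathrm{Int}(L)$. By Proposition \ref{proposition:BF_periodic_projection_trees} and the fact that the trees in $\mathcal{T}$ cover $P_\varphi$, there is some tree $T = p(U)$ containing $z$; since $T$ is the union of the open lozenges $p(\widetilde{N}) \in \mathcal{L}$ (for building blocks $\widetilde{N}$ in the corresponding component $U$) together with a countable collection of one-dimensional boundary half-leaves, the open lozenges are dense in $T$, and we can perturb $z$ if needed so that $z \in \mathrm{Int}(L_0)$ for some $L_0 \in \mathcal{L}$ sitting inside $T$.

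It then remains to show $L = L_0$. The combinatorics of the admissible fat graph $X_i$ together with Corollary \ref{corollary:BF_lifted_torus_projects_scalloped} places $L_0$ inside at least one scalloped region of $T$ (coming from a boundary torus of $N(X_i)$ adjacent to the edge of $\widetilde{X_i}$ corresponding to $\widetilde{N}$), so Lemma \ref{lemma:BF_interior_scalloped_no_corner} prevents any corner of any lozenge in $P_\varphi$ from lying in $\mathrm{Int}(L_0)$; in particular $x, y \notin \mathrm{Int}(L_0)$. To match the two lozenges I plan to compare their four bounding half-leaves: the leaves $\mathcal{F}^+(z)$ and $\mathcal{F}^-(z)$ each cross $\partial L$ in exactly two points and $\partial L_0$ in exactly two points, so $L = L_0$ will follow once one verifies that the crossings agree on each of the four sides. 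If the crossings differed on some side, say along $\mathcal{F}^+(z)$ in the direction of $x$, then the first half-leaf one exits through would be a boundary half-leaf of $L_0$ anchored at a corner $c$ of $L_0$ that lies strictly inside $L$; applying Lemma \ref{lemma:BF_interior_scalloped_no_corner} to a scalloped region of $\mathcal{T}$ containing a neighborhood of $c$ --- obtained by running Step 1 again starting from a point very close to $c$ --- then contradicts the fact that $c$ is a corner. Iterating over all four sides forces $\{x,y\} = \{x_0, y_0\}$, where $x_0, y_0$ are the corners of $L_0$, and hence $L = L_0 \in \mathcal{L}$.

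The main difficulty will be making this last step precise: extracting enough structural information from the perfect-fit conditions defining $L$ at the corners $x$ and $y$ to rule out all configurations in which $L$ properly overlaps two distinct lozenges of $\mathcal{L}$. Equivalently, the proof reduces to the local uniqueness statement that two lozenges in $P_\varphi$ whose interiors share a point must coincide, which in our setup follows from Lemma \ref{lemma:BF_interior_scalloped_no_corner} by tracing $\mathcal{F}^+(z)$ and $\mathcal{F}^-(z)$ through both lozenges in sequence and exploiting the four perfect fits bounding each lozenge. With this local uniqueness established, the Proposition follows immediately from the first paragraph.
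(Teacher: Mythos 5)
Your proposal does not prove the stated lemma at all: what you have sketched is an argument for Proposition \ref{proposition:BF_all_lozenges_mathcal} (that every lozenge in $P_\varphi$ belongs to $\mathcal{L}$), and in the course of it you invoke Lemma \ref{lemma:BF_interior_scalloped_no_corner} itself three times --- to exclude $x,y$ from $\mathrm{Int}(L_0)$, to rule out the putative corner $c$, and finally for the ``local uniqueness'' step. As a proof of the lemma this is circular, and as a piece of mathematics it has the implication backwards: in the paper the lemma is the input and the proposition is the consequence, whereas you are trying to derive the lemma-like conclusion from a classification of lozenges that you can only obtain by already knowing the lemma. Note also that the lemma is a statement about an arbitrary scalloped region in a bifoliated plane admitting an Anosov-like action (it is quoted from \cite{barthelme2022orbit}, Lemma 2.29, and the present paper does not reprove it), so its proof cannot rely on the sets $\mathcal{L}$, $\mathcal{T}$, or the building-block/fat-graph structure specific to the totally periodic construction.

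A genuine proof has to work directly with the definition of a scalloped region: inside $S$ the bifoliation is trivial (every $\F^+$-leaf through an interior point crosses every $\F^-$-leaf through an interior point), the leaves through a point of $\mathrm{Int}(L_0)\subset S$ accumulate only onto the families of non-separated boundary leaves $l_k^{i,\pm}$, and the perfect fits occurring along $\partial S$ are exactly those between consecutive boundary leaves. One then checks that a half-leaf emanating from an interior point of $S$ cannot make the perfect fit required of a lozenge corner, because every leaf crossing a transversal near it also crosses the full product region and escapes through the prescribed boundary families. None of this structure appears in your sketch; the step you defer (``making this last step precise'') is precisely where the real content lies, and deferring it while citing the target lemma leaves the statement unproved.
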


\begin{proof}[Proof of Proposition \ref{proposition:BF_all_lozenges_mathcal}]

Suppose that $L \subset P_\varphi$ is a lozenge. Let $z$ be a corner of $L$. If $z$ is in a boundary leaf $l$ of some lozenge $L'$ in $\mathcal{L}$, we can see from the fact that $L'$ is part of a tree of scalloped regions that it is impossible for the other boundary leaf $l'$ of $L$ containing $z$ to make a perfect fit unless $z$ coincides with the corner of the lozenge $L'$.

One can see that, by the definition of lozenge, given a point $x\in P_\varphi$ and two half-leaves based at $x$, there can exist at most one lozenge having the half-leaves as sides. Therefore, in this case we must have $L = L'$, so $L\in \mathcal{L}$.

On the other hand, if $z$ is not in any boundary leaf of a lozenge in $\mathcal{L}$, then the fact that $P_\varphi = \bigcup_{L\in \mathcal{L}} L $ implies that there exists some $L_0 \in \mathcal{L}$ such that $z$ is in the interior of $L_0$. By the Lemma above, this cannot happen, so we have shown what we wanted.

\end{proof}

\begin{corollary}\label{corollary:BF_all_trees_all_scalloped}
    All trees of scalloped regions in $P_\varphi$ are in $\mathcal{T}$, and all scalloped regions in $P_\varphi$ are projections of lifts of the transverse tori that bound the pieces $N(X_i)$.
\end{corollary}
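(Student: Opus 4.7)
The plan is to deduce both claims from Proposition~\ref{proposition:BF_all_lozenges_mathcal} together with the following uniqueness-of-adjacency principle: given a lozenge $L'$ and one of its sides $s$, there is at most one other lozenge sharing $s$ as a side. This is immediate from the uniqueness of perfect fits, since any such neighbour is determined by $s$ together with the two half-leaves of $L'$ emanating from the endpoints of $s$. Consequently, a maximal chain of adjacent lozenges is entirely determined by any single lozenge it contains, which will be the engine for both parts.

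For the first claim, I would let $T_0 \subset P_\varphi$ be an arbitrary tree of scalloped regions and pick any $L \in T_0$. By Proposition~\ref{proposition:BF_all_lozenges_mathcal}, $L = p(D_k)$ for some lift $D_k$ of a Birkhoff annulus contained in a connected component $U$ of $\pi^{-1}(N(X_i))$ for some $i$. Proposition~\ref{proposition:BF_periodic_projection_trees} then gives that $p(U) \in \mathcal{T}$ is a tree of scalloped regions containing $L$. Since both $T_0$ and $p(U)$ have, together with each of their lozenges, its four adjacent neighbours, the uniqueness principle forces the two trees to agree on every lozenge reachable from $L$ by an adjacency path, and hence to coincide.

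For the second claim, let $S \subset P_\varphi$ be a scalloped region and pick a lozenge $L \subset S$. As above, $L \subset p(U)$ for some component $U$. Because $S$ is a bi-infinite chain of adjacent lozenges and $p(U)$ is closed under passing to adjacent lozenges, induction yields $S \subset p(U)$. The consecutive lozenges in $S$ are adjacent along leaves of a single foliation, say $\F^-$; translating through Proposition~\ref{proposition:BF_periodic_projection_trees}, this chain lifts in $U$ to a bi-infinite sequence of lifted Birkhoff annuli, each sharing a lifted periodic orbit with the next along a common stable half-leaf. From the block description of $N(X_i)$ recalled in Section~\ref{section:BF_background}, such a sequence is precisely the unfolding to $\widetilde{M}$ of the cyclic sequence of building blocks associated with a single incoming boundary component of $\Sigma_i$, i.e.\ it traces out a lift $\widetilde{T}$ of an incoming transverse boundary torus $T \subset \partial N(X_i)$. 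Corollary~\ref{corollary:BF_lifted_torus_projects_scalloped} then tells us that $p(\widetilde{T})$ is a scalloped region containing $L$, and applying the uniqueness principle again within $p(U)$ forces $S = p(\widetilde{T})$.

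The main obstacle I anticipate is the translation step inside part 2: recovering from a chain of lozenges in $p(U)$ that is adjacent along leaves of a fixed foliation a chain of building blocks in $\widetilde{N(X_i)}_U$, and identifying that chain as the unfolding of exactly one boundary torus. One has to keep careful track of which foliation adjacency corresponds to which type of boundary torus (incoming versus outgoing), i.e.\ whether consecutive lifted Birkhoff annuli share a stable or an unstable half-leaf, and to check that the universal-cover unfolding of the cyclic sequence of building blocks genuinely descends to a single torus rather than splitting among several. This bookkeeping is controlled by Proposition~\ref{proposition:BF_background_block} and the fat-graph gluing recipe in Definition~\ref{definition:fatgraph}, but it is where the substantive content beyond the uniqueness argument of part 1 lies.
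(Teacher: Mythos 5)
Your argument is correct, and since the paper states this corollary without an explicit proof (presenting it as a direct consequence of Proposition~\ref{proposition:BF_all_lozenges_mathcal}), you have in effect supplied the reasoning the author leaves implicit. The uniqueness-of-adjacency principle you isolate is exactly the observation recorded in the proof of Proposition~\ref{proposition:BF_all_lozenges_mathcal} (a point $x$ together with two half-leaves through $x$ determines at most one lozenge), and combining it with Propositions~\ref{proposition:BF_periodic_projection_trees} and~\ref{proposition:BF_all_lozenges_mathcal} to conclude that any tree of scalloped regions coincides with the $p(U)$ containing one of its lozenges is the intended route for the first claim.

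For the second claim, the translation step you flag as the substantive obstacle is already carried out in the proof of Corollary~\ref{corollary:BF_lifted_torus_projects_scalloped}: that proof shows that the blocks of $U$ meeting a fixed lift $\widetilde{T}$ of a boundary torus form precisely the maximal bi-infinite family of blocks pairwise adjacent along stable half-leaves when $T$ is incoming and along unstable half-leaves when $T$ is outgoing, and hence that the maximal bi-infinite chains of lozenges in $\mathcal{L}(p(U))$ adjacent along a single foliation are exactly the corresponding decompositions of the $p(\widetilde{T})$. So rather than redoing the unfolding of the cyclic sequence of building blocks, you can simply cite that proof; the uniqueness principle then identifies the $\F^-$-adjacent chain of $S$ with the $\F^-$-adjacent chain of one $p(\widetilde{T})$, giving $S = p(\widetilde{T})$. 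The only small point worth making explicit in your write-up is which decomposition the chosen $L \subset S$ belongs to, since that fixes both the component $U$ and whether you are tracing an incoming or an outgoing torus; but once that choice is pinned down, nothing in your argument fails.
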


\subsection{Fitting together different trees of scalloped regions}\label{subsection:BF_fitting_trees}

In this section, we describe how the trees of scalloped regions described in the previous section fit together to form $P_\varphi$.

First, we prove some useful results.

\begin{definition}
    Let $T \in \mathcal{T}$ be a tree of scalloped regions. Let $L_0 \subset T$ be a lozenge such that $T$ coincides with the maximal chain of adjacent lozenges containing $L_0$. We define $\mathcal{L}(T) $ to be the set of lozenges contained in said maximal chain.
\end{definition}

It's easy to see that the definition of $\mathcal{L}(T)$ does not depend on the choice of $L_0$. The reason this definition is necessary is that in all trees of scalloped regions $T$ there exist lozenges which are \emph{contained} in $T$, but such that the maximal chain of adjacent lozenges containing them only intersects $T$ in a scalloped region (recall that a scalloped region is a chain of adjacent lozenges in two different ways). It will be necessary to differentiate between these and the lozenges in $\mathcal{L}(T)$. We will discuss this further below.

\begin{lemma}\label{lemma:BF_no_tree_loop}

Let $U_0, U_1, \dots, U_n \subset \widetilde{M}$ such that for all $i$, $U_i $ is a connected component of $\pi^{-1}(N(X_{j_i}))$, for $X_{j_i} \in \mathcal{X}$ (note that we allow $X_{j_i} = X_{j_k}$) for $i\neq k$), such that $\emptyset \neq U_i \cap U_{i+1}  \subset \partial U_i \cap \partial U_{i+1}$ for $i=1,2,\dots, n-1$. 

Then, $U_0 \neq U_n$.

\end{lemma}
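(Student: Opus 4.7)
The plan is to deduce the lemma from the fact that the decomposition of $\widetilde{M}$ into lifts of the JSJ pieces $N(X_i)$ carries a tree structure, so that any nontrivial closed walk in the dual graph is impossible.

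First, I would use that $\widetilde{M} \cong \R^3$, as noted in Section \ref{subsection:background_Anosov}, and that the tori in $\partial N(X_i)$ form (up to isotopy) the JSJ decomposition of $M$ and are therefore essential. Consequently each connected component $\widetilde{T}$ of the preimage in $\widetilde{M}$ of such a torus is a properly embedded copy of $\R^2$, and by the Jordan--Brouwer separation theorem $\widetilde{T}$ separates $\widetilde{M}$ into exactly two open half-spaces.

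Next, I would set up a graph $\mathcal{G}$ whose vertices are the connected components of $\pi^{-1}(\mathrm{int}(N(X_i)))$ as $X_i$ ranges over $\mathcal{X}$, with an edge between $V$ and $W$ for each connected component of $\overline{V} \cap \overline{W}$ that is a lift of a JSJ torus. Connectedness of $\widetilde{M}$ gives connectedness of $\mathcal{G}$, and I would then show that $\mathcal{G}$ is a tree: if it contained an embedded cycle $V_0, V_1, \ldots, V_k = V_0$ along pairwise distinct lift-tori $\widetilde{T}_1, \ldots, \widetilde{T}_k$, then $\widetilde{T}_1$ would separate $\widetilde{M}$ into two half-spaces with $V_0$ and $V_1$ on opposite sides, yet the union $V_1 \cup \widetilde{T}_2 \cup V_2 \cup \cdots \cup \widetilde{T}_k \cup V_0$ would be a connected subset of $\widetilde{M} \setminus \widetilde{T}_1$ meeting both half-spaces, a contradiction. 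Equivalently, $\mathcal{G}$ is the Bass--Serre tree of the graph-of-groups decomposition of $\pi_1(M)$ coming from the JSJ decomposition of $M$.

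Finally, the sequence $U_0, \ldots, U_n$ in the statement traces a walk in $\mathcal{G}$. Suppose toward a contradiction that $U_0 = U_n$; then, passing to a minimal counterexample, one may assume that the $U_i$ for $0 \le i < n$ are pairwise distinct. Combined with the observation that two distinct pieces share at most one lift of a JSJ torus (otherwise one would already have a length-$2$ embedded cycle in $\mathcal{G}$), this would produce an embedded closed loop in $\mathcal{G}$ of length $n \geq 2$, contradicting that $\mathcal{G}$ is a tree. The main obstacle in carrying this out cleanly is the passage from an arbitrary closed walk to an embedded cycle, since closed walks in trees can always backtrack; one must exploit minimality of the counterexample and the condition $U_i \cap U_{i+1} \subset \partial U_i \cap \partial U_{i+1}$ to rule out any repetition among the intermediate pieces.
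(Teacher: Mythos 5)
Your approach is, at its core, the same as the paper's: the paper constructs a loop in $\widetilde{M}$ from the sequence $U_0,\dots,U_n$, observes that it would intersect one of the boundary planes of $U_0$ exactly once, and concludes from simple connectedness of $\widetilde{M}$ that no such loop exists. Your dual-graph formulation packages the same separation fact (each lift of a JSJ torus is a properly embedded plane separating $\widetilde{M}\cong \R^3$) into the statement that the dual graph is a tree, and then reads the sequence as a closed walk in that tree. The key geometric input is identical; you have merely organized it through the Bass--Serre picture, which the paper leaves implicit. In that sense the two arguments are interchangeable, with yours being slightly more systematic and the paper's being more direct.

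The one substantive thing you add is the observation that a closed walk in a tree can backtrack, so one must rule out repetition among the intermediate $U_i$ (or equivalently, rule out the loop crossing the same boundary plane twice in a row). You flag this as the delicate step but do not actually resolve it; note that the paper's proof glosses over the very same point when it asserts without justification that the constructed loop meets a component of $\partial U_0$ \emph{exactly once}. As stated, with no non-backtracking hypothesis, the lemma has a trivial counterexample ($U_0 = U_2 \neq U_1$, $n=2$), so the hypothesis ``$\emptyset \neq U_i\cap U_{i+1} \subset \partial U_i \cap \partial U_{i+1}$'' must be understood to implicitly include that consecutive crossings are through distinct boundary planes (equivalently, $U_{i-1}\neq U_{i+1}$). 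In the two places the lemma is applied --- a sequence traced monotonically by a single flow orbit, and the analogous claim in the proof of Theorem \ref{theorem:BF_equivalent_planes} --- this non-backtracking is automatic, so the lemma is used correctly, but the statement itself is slightly too strong and both your proof and the paper's would need to cite this implicit hypothesis to be watertight.
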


\begin{proof}
    If $U_0 = U_n$, then we can construct a loop based at a point $x\in U_0$ which intersects each set $U_i$ in a segment. This loop intersects a connected component of $\partial U_0$ exactly once. This connected component is a properly embedded plane contained in $\widetilde{M}$, so the loop cannot be nullhomotopic, contradicting simple-connectedness of $\widetilde{M}$.
\end{proof}

\begin{proposition}\label{proposition:BF_tree_intersection}
    For any tree of scalloped regions $T$ and scalloped region $S \subset T$, there exists a tree of scalloped regions $T'$ such that $T' \cap T = S$. 
    
    Moreover, for any tree of scalloped regions $T \neq T_0$ such that $T \cap T_0 \neq \emptyset$, there exists a scalloped region $S_0 \subset T_0 $ such that $T \cap T_0 \subseteq S_0$.
\end{proposition}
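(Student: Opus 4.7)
The plan is to exploit the correspondence established in Proposition \ref{proposition:BF_periodic_projection_trees} and Corollaries \ref{corollary:BF_lifted_torus_projects_scalloped} and \ref{corollary:BF_all_trees_all_scalloped}: every tree of scalloped regions in $P_\varphi$ is of the form $p(U)$ for a connected component $U$ of $\pi^{-1}(N(X_i))$ for some $i$, and every scalloped region contained in such a tree is the projection $p(\widetilde{T})$ of a boundary plane $\widetilde{T} \subset \partial U$. The key extra ingredient will be Lemma \ref{lemma:BF_no_tree_loop}, which I interpret as saying that the abstract graph whose vertices are the pieces $\{U\}$ and whose edges are shared boundary planes is a tree. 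Once this is in place, any orbit of $\widetilde{\varphi}$ that visits two pieces must, in passing between them, trace out the unique path in this tree between the two pieces.

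For the first claim, I would let $T = p(U)$ and choose $\widetilde{T} \subset \partial U$ with $S = p(\widetilde{T})$. Let $U'$ be the unique other piece whose boundary contains $\widetilde{T}$, and put $T' = p(U')$, which is a tree of scalloped regions by Proposition \ref{proposition:BF_periodic_projection_trees}. The inclusion $S \subseteq T \cap T'$ is immediate. For the reverse, pick $[\gamma] \in T \cap T'$; the $\widetilde{\varphi}$-orbit $\gamma$ intersects both $U$ and $U'$, and by transversality of $\widetilde{\varphi}$ to every boundary plane and compactness of $M$, the arc of $\gamma$ connecting its visits to $U$ and $U'$ passes through a finite sequence of pieces $V_0 = U, V_1, \dots, V_m = U'$ with $V_i \cap V_{i+1} \subset \partial V_i \cap \partial V_{i+1}$ for each $i$. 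By Lemma \ref{lemma:BF_no_tree_loop} applied to every sub-sequence, the $V_i$'s are pairwise distinct, so they trace a path in the tree of pieces. Since $U$ and $U'$ are already adjacent via $\widetilde{T}$, uniqueness of paths in a tree forces $m = 1$ and $V_0 \cap V_1 = \widetilde{T}$; hence $\gamma$ crosses $\widetilde{T}$ and $[\gamma] \in p(\widetilde{T}) = S$.

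For the second claim, the same argument applies with $T = p(U)$ and $T_0 = p(U_0)$ for $U \neq U_0$. Every $[\gamma] \in T \cap T_0$ is represented by an orbit tracing a tree path $U_0 = V_0, V_1, \dots, V_m = U$, and in particular the first boundary plane of $U_0$ that $\gamma$ crosses is $\widetilde{T}_0 := V_0 \cap V_1$, which by uniqueness of tree paths must be the unique boundary plane of $U_0$ separating $U_0$ from $U$ in the tree of pieces. Since $\widetilde{T}_0$ is determined by the pair $(U_0, U)$ and does not depend on $\gamma$, we obtain $T \cap T_0 \subseteq p(\widetilde{T}_0) =: S_0$; and $S_0 \subset T_0$ is a scalloped region by Corollary \ref{corollary:BF_lifted_torus_projects_scalloped}.

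The main obstacle I anticipate is making the heuristic ``the orbit traces a path in the tree of pieces'' precise. Two small points need care: first, that the set of times during which $\gamma$ sits in a given piece is a connected interval (otherwise ``the sequence of pieces'' is ill-defined); this follows because re-entering a piece would yield a closed loop of pieces with shared-boundary adjacencies, contradicting Lemma \ref{lemma:BF_no_tree_loop}. Second, that only finitely many pieces are visited between two prescribed crossings, which follows by projecting $\gamma$ to $M$ and using compactness of the union of transverse tori. Once both points are justified, the tree-of-pieces picture becomes rigorous and both conclusions follow directly from the uniqueness of paths in a tree.
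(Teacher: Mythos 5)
Your proof is correct. For the second claim it follows essentially the same route as the paper: both use the existence and (via Lemma~\ref{lemma:BF_no_tree_loop}) uniqueness of the finite sequence of lifted pieces an orbit must traverse, identifying $S_0$ as the projection of the first boundary plane of $U_0$ crossed. For the first claim, however, your argument differs. The paper proves $T \cap T' \subseteq S$ entirely inside the bifoliated plane: given $x \in T \cap T'$, it connects $x$ to a base point $x_0 \in S$ by minimal-length chains of adjacent lozenges in $\mathcal{L}(T)$ and in $\mathcal{L}(T')$, and argues that if the $T$-chain ever changed from $s$-adjacency to $u$-adjacency, the separating $u$-leaf would disconnect $x$ from $x_0$ and so obstruct the existence of the $T'$-chain; hence the $T$-chain stays $s$-adjacent and lands in $S$. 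You instead reapply the tree-of-pieces machinery from the second claim: an orbit realizing a point of $T \cap T'$ traces a simple path in the piece-tree, and since $U$ and $U'$ are already adjacent along $\widetilde T$, uniqueness of simple paths forces the orbit to cross $\widetilde T$. Your route is more uniform --- one mechanism handles both conclusions --- and it stays in $\widetilde M$ where the tree structure is explicit; the paper's lozenge-chain argument is more intrinsic to the bifoliated plane and does not require passing back to the $3$-manifold. Your two ``small points'' (connectedness of the time spent in each piece, ruled out by Lemma~\ref{lemma:BF_no_tree_loop}, and finiteness of crossings, from transversality plus compactness of the torus family in $M$) are precisely what is needed to make the tree-of-pieces picture rigorous, and you have stated them correctly.
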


\begin{proof}

Let $T$ be a tree of scalloped regions, and let $S\subset T$ be a scalloped region. By Proposition \ref{corollary:BF_all_trees_all_scalloped}, $T$ is the projection to $P_\varphi$ of a lift $\widetilde{N(X_i)}$ of a totally periodic piece $N(X_i)$ of the flow $\varphi$ to the universal cover $\tilde{M}$, and $S$ is the projection of the lift $\tilde{T_j} \subset \tilde{M}$ of a transverse torus $T_j \subset \partial N(X_i)$. For convenience, denote $N(X_i)$ by $N_i$ from now on.

Suppose without loss of generality that $S$ consists of lozenges $L_k \subset T$ that are adjacent on their $s$-sides, that is, the flow is incoming on $T_j$. We know that there exists a torus $T_j' \subset \partial N_i'$ where $N_i'$ is a totally periodic piece of the flow (here possibly $N_i' = N_i$, but in any case $T_j'\neq T_j$) such that $T_j$ and $T_j'$ are identified in $M$ by a map $A: T_j \to T_j'$. Moreover, the flow is outgoing on $T_j'$. The projection of $T_j'$ to $P_\varphi$ is then a scalloped region $S'$ contained in the tree of scalloped regions $T'$ which is the projection of $\tilde{N_i'}$ (note that, even if $N_i = N_i'$, we must have $\tilde{N_i} \neq \tilde{N_i'}$). The scalloped region $S'$ is a union of lozenges $L_k' \subset T'$ which are adjacent on their $u$-sides, since the flow is outgoing on $T_j'$.

Since $\tilde{T_j'} $ and $\tilde{T_j}$ are identified in $\tilde{M}$, we must have $S = S'$. Now, we show that $T\cap T' = S$. Clearly $S \subseteq T\cap T'$, so we only need to show the other inclusion. Fix a point $x_0 \in S$. We must have $x_0 \in L_0\cap L_0'$, where $L_0 \subset T$, $L_0' \subset T'$ are lozenges. Given another point $x \in T\cap T'$, we have $x\in L\cap L'$ for lozenges $L\subset T, L'\subset T'$ and since $T, T'$ are trees of scalloped regions there must exist a finite sequence of adjacent lozenges $L_0, L_1, L_2, \dots, L_n = L $ in $T$ joining $L_0, L$ and a sequence $L_0', \dots, L_m' = L'$ of lozenges in $T'$ joining $L_0', L'$. We may assume that these sequences are of minimal length among all such sequences. 

By definition of an $s$-scalloped region, any lozenge adjacent to $L_0$ on its $s$-side must be contained in $S$. Suppose that there exist $L_k, L_{k+1}$ in the sequence joining $L_0$ to $L$ that are adjacent on their $u$-sides. Let $l$ be the $u$-leaf separating $L_k$ from $L_{k+1}$. Since the sequence is of minimal length, we must have that $x$ and $x_0$ are in different connected components of $P_\varphi - l$. But this contradicts the existence of the sequence $L_0', \dots, L_n' = L'$ joining $L_0'$ to $L'$, since any lozenge in such a sequence must be contained in the same connected component of $P_\varphi - l$ as $L_0'$. Therefore, we have shown that the lozenges $L_0, L_1, \dots, L_k$ must all be adjacent on their $s$-sides, and therefore $L_n = L$ must be contained in $S$, so $x\in S$. This shows $T\cap T' \subseteq S$, and we conclude that $T\cap T' = S$.

\begin{figure}[h]
  \centering
  \includegraphics[width=0.85\linewidth]{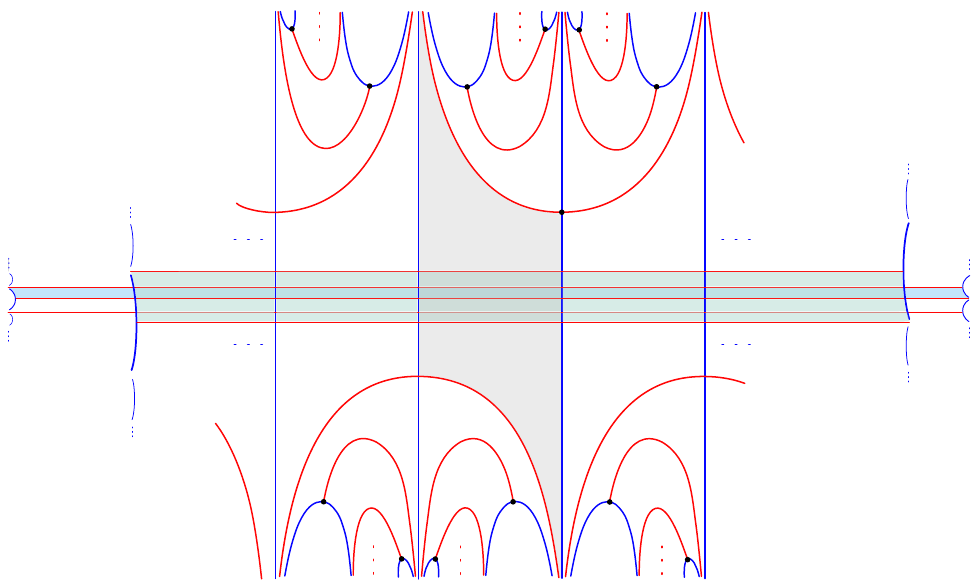}
  \caption{intersection between two different trees of scalloped regions}
  \label{fig:tree_intersection_1}
\end{figure}
%%%%%%%%%%%%%%%%
%different argument

Now, let $T \neq T_0$ be a tree of scalloped regions such that $T \cap T_0 \neq \emptyset$. Then, there exist $N_i, N_j$ and connected components of their lifts $U, U_0$ such that $T = p(U), T_0 = p(U_0)$ (note that here possibly $N_i = N_j$, but $U\neq U_0$). The fact that $T \cap T_0 \neq \emptyset$ means that there exists an orbit $\tilde\varphi_t(x)$ of $\tilde\varphi$ that intersect both $U$ and $U_0$. This is possible only if there exists a finite sequence $U_0, U_1, \dots, U_m$ such that:
\begin{itemize}
\item Each $U_k$ is a connected component of the lift of some piece $N(X_k)$ to $\tilde{M}$.
    \item $U_m = U$.
    \item The orbit $\tilde\varphi_t(x)$ intersects all the $U_k$.
\end{itemize}

Therefore, for each $k=0,1,\dots, n$ there exist lifts of transverse tori $\widetilde{T}_{k,1}, \widetilde{T}_{k,2} \subset \partial \widetilde{N(X_k)}$ such that the flow is incoming on $T_{k,1}$, outgoing on $T_{k,2}$, and $\tilde{T}_{k,2}$ is identified with $\tilde{T}_{k+1, 1}$ for $k=0,1,\dots,k-1$ in $\tilde{M}$.   

Note that there can exist at most one such sequence of lifts of tori, by the Lemma above. That is, every orbit of $\tilde\varphi$ that intersects both $U_0$ and $U_k$ must intersect the planes $\widetilde{T}_{k,i}$ defined above. This shows that $T\cap T_0 \subseteq p(\tilde{T}_{1,1})$, which is a scalloped region $S_0\subset T_0$, as we wanted.

\end{proof}

\subsection{Levels of trees of scalloped regions}\label{subsection:BF_levels_of_trees}

Here we define the notion of \emph{level} of a tree of scalloped regions. As we have seen before, each tree of scalloped regions is intersected by infinitely many others, with each intersection contained in some scalloped region. We use the way that they intersect to define their \emph{difference in levels} (and later extend this to non-intersecting trees), which is an invariant under the automorphism group of the bifoliated plane. The fact that the difference in level between any two trees is finite will be used later when showing that all bifoliated planes $P_\varphi$ coming from a totally periodic Anosov flow $\varphi$ are isomorphic.

\begin{definition}\label{definition:BF_difference_level_1}
    Let $T$ be a tree of scalloped regions, and let $S \subset T$ be a scalloped region such that the lozenges in $\mathcal{L}(T)$ that intersect $S$ are adjacent on their stable sides.

    Let $T'$ be the tree of scalloped regions such that $T\cap T'= S$. That is, $T'$ is such that the lozenges in $\mathcal{L}(T')$ that intersect $S$ are adjacent on their unstable sides, and their union is $S$.

    Define the \emph{difference in level between} $T$ and $T'$ to be $\Delta(T,T') = 1$. 

    If $\Delta(T,T') = 1$, we define $\Delta(T', T) =-1.$
\end{definition}

The following is a direct consequence of Corollary \ref{corollary:BF_lifted_torus_projects_scalloped}, which tells us that a lift of a boundary component of a piece $N(X_i)$ projects to a scalloped region in the orbit space.
\begin{lemma}
    Let $T_1 = p(U_1), \, T_2 = p(U_2)$ where for $i=1,2,$ $U_i$ is a connected component of $\pi^{-1}(N(X_{j_i}))$ for $X_{j_1}, X_{j_2} \in \mathcal{X}$ (possibly $X_{j_1} = X_{j_2}$) such that a boundary component of $N(X_{j_1}) $ is identified with a boundary component of $N(X_{j_2})$ in the construction of $M$.

    Then, $\Delta(T,T') = \pm 1$.
\end{lemma}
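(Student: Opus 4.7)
The plan is to identify the shared boundary plane between $U_1$ and $U_2$ in $\widetilde{M}$, recognize its projection as a common scalloped region $S \subset T_1 \cap T_2$, and then observe that the adjacency type of the lozenges constituting $S$ in $\mathcal{L}(T_1)$ versus $\mathcal{L}(T_2)$ must be different (stable vs.\ unstable), directly matching the configuration of Definition \ref{definition:BF_difference_level_1}.

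First, I would argue that under the hypothesis of the lemma, the lifts $U_1$ and $U_2$ share a boundary plane $\widetilde{T} \subset \partial U_1 \cap \partial U_2$ in $\widetilde{M}$, which projects to the torus $T \subset M$ obtained by identifying a boundary component of $N(X_{j_1})$ with one of $N(X_{j_2})$. Indeed, $\widetilde{M}$ is assembled from copies of each $\widetilde{N(X_i)}$ glued along vertical planes, so once $U_1$ is fixed, each torus boundary component of $N(X_{j_1})$ lifts to a plane in $\partial U_1$ and is glued to a unique neighboring copy of some $\widetilde{N(X_j)}$; the hypothesis selects $U_2$ as this neighbor. By Corollary \ref{corollary:BF_lifted_torus_projects_scalloped}, $S := p(\widetilde{T})$ is a scalloped region contained in both $T_1$ and $T_2$, and then the argument in the proof of Proposition \ref{proposition:BF_tree_intersection} (using Lemma \ref{lemma:BF_no_tree_loop} to forbid other shared boundary planes) gives $T_1 \cap T_2 = S$.

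Next, the key observation is that $T$ is outgoing on exactly one side: say outgoing for $N(X_{j_1})$ and incoming for $N(X_{j_2})$. As established in the proof of Corollary \ref{corollary:BF_lifted_torus_projects_scalloped}, the building blocks of $U_1$ whose lifts meet $\widetilde{T}$ are glued to each other along unstable half-leaves on $\widetilde{T}$, while the blocks of $U_2$ meeting $\widetilde{T}$ are glued along stable half-leaves. Translating through Proposition \ref{proposition:birkhoff_projects_lozenges}, this means that the lozenges in $\mathcal{L}(T_1)$ whose union is $S$ are adjacent along their unstable sides, whereas those in $\mathcal{L}(T_2)$ whose union is $S$ are adjacent along their stable sides. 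This is precisely the configuration of Definition \ref{definition:BF_difference_level_1} applied to the pair $(T_2, T_1)$, yielding $\Delta(T_2, T_1) = 1$ and hence $\Delta(T_1, T_2) = -1$. Reversing the roles of $j_1$ and $j_2$ gives the other sign, so in all cases $\Delta(T_1, T_2) = \pm 1$.

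The only real subtlety in this outline is the first step: carefully justifying that $U_1$ and $U_2$ share a common boundary plane (as opposed to meeting only through a chain of intermediate lifts). This is where Lemma \ref{lemma:BF_no_tree_loop} does the work — it rules out the possibility that a chain $U_1 = V_0, V_1, \ldots, V_n = U_2$ of pairwise adjacent lifts could close up inconsistently — together with the explicit combinatorial structure of $\widetilde{M}$ as a union of copies of the $\widetilde{N(X_i)}$. Once this is in place, the rest of the argument is an unpacking of Definition \ref{definition:BF_difference_level_1} against the local picture near $\widetilde{T}$.
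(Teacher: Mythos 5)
Your proof is correct and takes essentially the same route the paper intends: the paper simply asserts the lemma is ``a direct consequence of Corollary~\ref{corollary:BF_lifted_torus_projects_scalloped}'' and you have unpacked exactly that reasoning --- identify the shared boundary plane $\widetilde{T}$, project it to a scalloped region $S = T_1 \cap T_2$, observe that the outgoing/incoming dichotomy forces the lozenges of $\mathcal{L}(T_1)$ and $\mathcal{L}(T_2)$ inside $S$ to be adjacent along unstable vs.\ stable sides, and then read off $\Delta = \pm 1$ from Definition~\ref{definition:BF_difference_level_1}. One small caveat worth noting: the hypothesis as literally written only constrains the base pieces $N(X_{j_1})$, $N(X_{j_2})$ and does not by itself pin down $U_2$ as the lift adjacent to $U_1$ across a single shared plane (there are infinitely many components of $\pi^{-1}(N(X_{j_2}))$, most of them not adjacent to $U_1$), so your first step is really supplying the intended but unstated adjacency assumption rather than deriving it; in the application (Proposition~\ref{prop:level_pm1}) the $U_i$ come from a path and are consecutive, so this is harmless.
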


\begin{proposition}\label{prop:level_pm1}
    Let $L \in \mathcal{L}(T), L' \in \mathcal{T'}$ be two lozenges in the trees $T, T'$. Then, there exists a unique finite sequence of trees of scalloped regions $T_0, T_1, \dots, T_n$ such that:
    \begin{itemize}
        \item $T_0 = T, T_n = T'$.
        \item $T_i \cap T_{i+1} \neq \emptyset$ and $\Delta(T_i,T_{i+1}) = \pm 1$ for $i = 0, \dots, n-1$.
    \end{itemize}
\end{proposition}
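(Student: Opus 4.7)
The plan is to translate the question back to the universal cover $\widetilde{M}$, where the way pieces are glued along lifted transverse tori turns out to form a tree structure; existence and uniqueness of the sequence of trees of scalloped regions then correspond to existence and uniqueness of a path in that tree.

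First, I would establish a bijective correspondence between trees of scalloped regions in $P_\varphi$ and \emph{pieces} of $\widetilde{M}$, i.e.\ connected components of $\pi^{-1}(N(X_j))$ for the various $j$. Surjectivity of $U\mapsto p(U)$ is Corollary \ref{corollary:BF_all_trees_all_scalloped}. Injectivity follows from the observation that an orbit of $\widetilde{\varphi}$ passing through the interior of a block of a piece $U$ cannot pass through the interior of any block of any other piece, so a lozenge $L\in \mathcal{L}(p(U))$ determines its ``home'' piece $U$. Write $U, U'$ for the pieces with $T=p(U)$, $T'=p(U')$.

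Next, form a graph $G$ whose vertex set is the set of pieces of $\widetilde{M}$, with an edge between $U_a$ and $U_b$ whenever they share a lifted transverse torus $\widetilde{T}\subset \partial U_a\cap \partial U_b$ (each lifted transverse torus lies in the boundary of exactly two pieces, by construction of $M$). The key claim is that $G$ is a tree. Connectedness is immediate: $\widetilde{M}$ is connected and equals the union of the pieces glued along the lifted transverse tori. Absence of cycles is exactly what Lemma \ref{lemma:BF_no_tree_loop} provides: any simple cycle in $G$ would give a sequence $U_0,U_1,\dots,U_n=U_0$ of pieces with $U_i\cap U_{i+1}\subset \partial U_i\cap \partial U_{i+1}$ nonempty, contradicting the lemma.

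For existence, take the unique non-backtracking path $U_0=U, U_1,\dots, U_n=U'$ in $G$, and set $T_i=p(U_i)$. Consecutive pieces $U_i,U_{i+1}$ share a lifted transverse torus, which by Corollary \ref{corollary:BF_lifted_torus_projects_scalloped} projects to a scalloped region $S_i\subseteq T_i\cap T_{i+1}$. By Proposition \ref{proposition:BF_tree_intersection}, $T_i\cap T_{i+1}$ is itself a scalloped region, so the hypothesis of Definition \ref{definition:BF_difference_level_1} is met and $\Delta(T_i,T_{i+1})=\pm 1$. For uniqueness, any candidate sequence $T_0,\dots,T_n$ satisfying the hypotheses lifts to a walk $U_0,\dots,U_n$ in $G$: indeed $\Delta(T_i,T_{i+1})=\pm 1$ forces $T_i\cap T_{i+1}$ to be a scalloped region coming from a shared lifted torus (by Proposition \ref{proposition:BF_tree_intersection} and the bijection above), and the sign condition forces $T_i\neq T_{i+1}$, so the walk is non-backtracking. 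Since $G$ is a tree, non-backtracking paths between two vertices are unique.

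The main technical point to nail down carefully is the bijection between trees of scalloped regions and pieces, since this is what lets me pass back and forth between the combinatorics of $G$ and the combinatorics of trees of scalloped regions; once that is set up, absence of cycles in $G$ is handed to me by Lemma \ref{lemma:BF_no_tree_loop}, and the rest is formal tree theory.
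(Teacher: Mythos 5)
Your overall strategy is the same as the paper's: pass to $\widetilde{M}$, work with the adjacency structure of pieces (connected components of $\pi^{-1}(N(X_i))$), and rely on Lemma \ref{lemma:BF_no_tree_loop} to rule out cycles. The paper's own proof is considerably terser --- it takes a path $\eta$ from a lift of a point of $L$ to a lift of a point of $L'$, records the pieces it traverses, and invokes the preceding lemma, without explicitly addressing uniqueness. Your formalization of the dual graph $G$ and its tree structure, together with the bijection between pieces and trees of scalloped regions, is sound and makes the existence argument airtight; it is a welcome tightening of the paper's argument rather than a different route.

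There is, however, a gap in your uniqueness step. You write that ``$\Delta(T_i,T_{i+1})=\pm 1$ \dots\ and the sign condition forces $T_i\neq T_{i+1}$, so the walk is non-backtracking.'' This is a non-sequitur: $T_i\neq T_{i+1}$ only says the walk uses genuine edges of $G$, whereas non-backtracking means $U_{i-1}\neq U_{i+1}$, i.e.\ the walk never immediately retraces an edge. Nothing in the stated hypotheses excludes a sequence such as $T, T', T, T', \dots, T'$ when $T$ and $T'$ are adjacent: every consecutive pair has nonempty intersection and level difference $\pm 1$, yet this is a different valid sequence from the minimal one. In fact the proposition as written is slightly too strong --- uniqueness fails once backtracking is allowed --- and should be read with an implicit minimality or distinctness-of-the-$T_i$ hypothesis (as the paper's terse proof tacitly does). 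With that hypothesis imposed, your argument closes cleanly: a walk in a tree through distinct vertices is the unique simple path between its endpoints, so the lifted sequence $U_0,\dots,U_n$, and hence $T_0,\dots,T_n$, is uniquely determined.
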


\begin{proof}
    Take points $\widetilde{x}, \widetilde{y} \in \widetilde{M}$ which project to points in $L$ and $L'$. Construct a path $\eta:[0,1]\to \widetilde{M}$ such that $\eta(0) = \widetilde{x}, \eta(1) = \widetilde{y}$. The path $\eta$ sequentially intersects a finite number of connected components $U_0, U_1, \dots, U_n$ of lifts $\pi^{-1}(N(X_i))$ of pieces $N(X_i)$. Taking $T_i = p(U_i)$, we are done by the lemma above.
    
\end{proof}

\begin{definition}
    Given any two trees of scalloped regions $T, T'$, define
    \[\Delta(T,T')=  \sum_{i=0}^n \Delta(T_i, T_{i+1}) \]
    where $T_i $ are as in Proposition \ref{prop:level_pm1}.
\end{definition}

\begin{definition}\label{definition:leaf_levels}
    Let $x, y \in \Lambda^+$ be non-separated leaves such that a half-leaf of $x$ bounds a lozenge $L \in \mathcal{L}(T)$ and $y$ bounds a lozenge $L' \in \mathcal{L}(T')$, for trees $T, T'$. Then, define the difference in levels between $x$ and $y$ to be
    \[\Delta(x,y) =  \Delta(T,T').\]

Additionally, if we fix a reference leaf $x_0 \in \Lambda^+$, we can define the level of $x\in \Lambda^+$ to be $\Delta(x_0, x)$.
\end{definition}

\section{Proof of Theorem \ref{introthm:totally_periodic_isomorphic}}\label{section:BF_equivalence_totally_periodic_planes}
\begin{comment}
    
In this section, we show that any totally periodic bifoliated plane is equivalent to the model plane constructed in the previous section. As a consequence, any two totally periodic bifoliated planes are equivalent.

\end{comment}

In this section, we prove Theorem \ref{introthm:totally_periodic_isomorphic}, restated below as Theorem \ref{theorem:BF_equivalent_planes}. It tells us that the bifoliated planes corresponding to any two totally periodic Anosov flows with no surgeries are isomorphic.

We begin by stating some necessary definitions.
\begin{definition}
    Let $(P, \F^+, \F^-)$ and $  (P', \mathcal{G}^+, \mathcal{G}^-)$ be bifoliated planes, where we have fixed a choice of orientation for each of their foliations (the definition below will not depend on this choice).
    
    Let $L\subset P, L'\subset P'$ be lozenges. We say that they are of the same type if there exists a homeomorphism $f: L\to L'$ that preserves the foliations and their orientations.
\end{definition}

\begin{definition}
    Let $f: P \to P'$ be an isomorphism between bifoliated planes $(P, \F^+, \F^-)$ and $  (P', \mathcal{G}^+, \mathcal{G}^-)$, and let $T \subset P, T'\subset P'$ be trees of scalloped regions. Then, we denote by $\bar{f}$ the induced map $\bar{f}: \mathcal{L}(T) \to \mathcal{L}(T')$ sending a lozenge $L$ to $f(L)$.

\end{definition}

\begin{definition}
Given a bifoliated plane $(P, \F^+, \F^-)$, we define $\Lambda^\pm_{\mathrm{non-sep}} \subset \Lambda^{\pm}
$ to be the set of non-separated leaves in the leaf space of $\F^\pm$.

Given some subset $Z \subset P$, let $\Lambda^\pm_{\mathrm{non-sep}} (Z) \subset \Lambda^\pm_{\mathrm{non-sep}}$ be the set of non-separated leaves intersecting $Z$.
\end{definition}

Now, we make explicit some facts about non-separated leaves in these planes which have already been used in the previous section.

\begin{definition}
Let $X \subset [0,1]$ be the set $\bigcup_{i=3}^{+\infty}  \{ \frac{1}{i}, 1- \frac{1}{i} \}  $.    
\end{definition}
 It has the following properties:
\begin{enumerate}
    \item It has the order type of the integers. 
    \item Its only accumulation points are $0$ and $1$.
    
    %\item Among all intervals with endpoints in $X$ that do not contain elements of $X$ in their interior, the longest one is $[1/3, 2/3]$, with all the others being strictly shorter.
\end{enumerate}

Let $h: \Z \to X$ be the bijection given by
\[
h(n) = \begin{cases}
    1 - \frac{1}{2 +  n  } &\text{if } n > 0\\
    \frac{1}{3 + \left|n \right|} &\text{if } n \leq 0
\end{cases}
\]
\begin{comment}
    \begin{definition}
    Let $X_{\mathrm{even}} = \{ h(2k) : k\in \Z \}$, $X_{\mathrm{odd}} = \{ h(2k+1) : k \in \Z\}$.
\end{definition}
\end{comment}

We will enumerate the family of intervals with endpoints in $X$ which do not have elements of $X$ in their interior using this bijection.

Let $I_n = [h(n), h(n+1)] $ for $n\in \Z$, so that $I_0 = [1/3, 2/3]$. Let $\ell(n)$ denote the length of $I_n$. The following will be used later:

\begin{comment}    
\begin{proposition}\label{proposition:props_length}    
We have
\begin{enumerate}
    \item $\lim_{n\to \pm\infty} \ell(n) = 0 $.
    \item $\ell(n) = \ell(-n)$ for all $n$.
    \item $\ell$ has its unique global maximum at $n=0$.
    \item $\ell$ is injective when restricted to either $\Z_{\geq 0}$ or $\Z_{\leq 0}$.
\end{enumerate}

\end{proposition}

\end{comment}

For $n\in \Z$, let $g_n:[0,1] \to [0,1]$ be given by $g_n(x) = \ell(n)x + h(n) $. That is, $g_n([0,1]) = I_n$ and $g_n$ is affine.

\begin{definition}
    For $n\in \Z$, let $X^{(1)}_n \subset I_n$ be the image of $X$ under the unique bijective affine map $g_n :[0,1] \to I_n$. Define, for $m\in \Z$, the interval $I_{(n,m)}$ to be $I_{(n,m)} = g_n(I_m) \subset I_n$.

    Suppose that the intervals $I_{(n_1, \dots, n_k)} \subset I_{(n_1, \dots, n_{k-1})} $ (for $k = 0$, take $I = [0,1]$) have been defined, where $k \geq 1$ and $(n_1, \dots, n_k) \in \Z^k$. Define then the set $X^{(k)}_{(n_1, \dots, n_k) }$ to be the image of $X$ under the unique affine map $g_{(n_1, \dots, n_k)} : [0,1] \to I_{(n_1, \dots, n_k)}$, and for $(n_1, \dots, n_{k+1}) \in \Z^{k+1}$ define $I_{(n_1, \dots, n_{k+1})}$ to be $I_{(n_1, \dots, n_{k+1})} = g_{(n_1, \dots, n_{k})}(I_{n_{k+1}})$.

    The above defines, inductively, sets \[X^{(k)}_{(n_1, \dots, n_k)} \subset I_{(n_1, \dots, n_k)} \subset I_{(n_1, \dots, n_{k-1}) }\subset \cdots \subset I_{n_1} \subset [0,1]\]
    for all $k\geq 1$ and all $(n_1, \dots, n_k) \in \Z^k$.

    Let $X^{(0)} = X$, and for $k\geq 1$ define $X^{(k)} = \bigcup_{(n_1, \dots, n_k) \in \Z^k} X^{(k)}_{(n_1, \dots, n_k)} $. 
    
    Finally,  define $X^{\infty} = \bigcup_{k\geq 0} X^{(k)} $.
\end{definition}

    Now, we show some properties of $X^{(k)}, X^\infty$.
\begin{proposition}\label{proposition:BF_X_infty_properties}

    \begin{itemize}
        \item $X^\infty$ is dense in $[0,1]$.
        \item $X^\infty$ has the order type of $\mathbb{Q}$ with its usual order.
        \item  There exists an order preserving bijection between $X^{\infty}$ and the set of non-separated leaves (in either the stable or unstable foliation)
intersecting the interior of a lozenge $L\subset P_\varphi$.
        
        Moreover, if $L\in \mathcal{L}(T)$ for some tree of scalloped regions $T$, this bijection can be chosen so that $X^{(k)}$ is mapped to the set of non-separated leaves which have levels differing from the level of $T$ by at most $k+1$.

        %This is because $X^{(k)}$ has same order type as non-sep leaves of level $k+1$ less than the level of $T \supset L$ which intersect the lozenge $L$.
    \end{itemize}
\end{proposition}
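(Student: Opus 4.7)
The plan is the following. For the first bullet, I will prove by induction on $k$ that every interval $I_{(n_1,\ldots,n_k)}$ has length at most $(1/3)^k$. The base case follows from the direct observation that $\max_{n \in \Z}\ell(n) = \ell(0) = 1/3$; the inductive step uses the identity $I_{(n_1,\ldots,n_{k+1})} = g_{(n_1,\ldots,n_k)}(I_{n_{k+1}})$ together with the fact that $g_{(n_1,\ldots,n_k)}$ is affine with scaling factor equal to $\ell(I_{(n_1,\ldots,n_k)})$. Since every point of $[0,1]$ lies in the nested sequence of such intervals and these have length going to zero, the endpoints, which all lie in $X^\infty$, are dense.

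For the second bullet, $X^\infty$ is countable as a countable union of countable sets indexed by the countable set $\bigsqcup_{k\geq 0} \Z^k$. It is dense in $[0,1]$ by the first bullet, and neither $0$ nor $1$ belongs to $X^\infty$, since every stage of the construction only adds points strictly inside intervals whose endpoints are themselves strictly inside $[0,1]$. Thus $X^\infty$ is a countable dense linear order without endpoints, and is order-isomorphic to $\Q$ by Cantor's back-and-forth theorem.

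The third bullet is the substantive part. By the symmetry between $\F^+$ and $\F^-$ within a lozenge, it suffices to consider non-separated $\F^+$-leaves. Fix $L\in \mathcal{L}(T)$ and let $\mathcal{N}$ denote the set of non-separated $\F^+$-leaves intersecting the interior of $L$, with the linear order induced by a transversal arc in $\F^-$ crossing $L$. For $k\geq 0$, let $\mathcal{N}_k \subset \mathcal{N}$ be the subset of leaves of level differing from the level of $T$ by at most $k+1$, in the sense of Definition \ref{definition:leaf_levels}. The plan is to build, by induction on $k$, an order-preserving bijection $\varphi_k \colon X^{(0)}\cup\cdots\cup X^{(k)} \to \mathcal{N}_k$, compatible with the inclusions on both sides, and with the property that $\varphi_k$ sends $X^{(j)} \setminus X^{(j-1)}$ to leaves of level difference exactly $j+1$ (with the convention $X^{(-1)} = \emptyset$). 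The base case uses Proposition \ref{proposition:BF_tree_intersection}: every non-separated $\F^+$-leaf through the interior of $L$ at level $\pm 1$ arises as a boundary leaf of a scalloped region $S = T\cap T'$ with $\Delta(T,T')=\pm 1$, and these organize into two monotone sequences accumulating at the two corners of $L$, matching the two sequences in $X$ accumulating at $0$ and $1$.

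For the inductive step, I will use Propositions \ref{proposition:BF_tree_intersection} and \ref{prop:level_pm1} to show that the new leaves in $\mathcal{N}_k \setminus \mathcal{N}_{k-1}$ sitting between two consecutive leaves of $\mathcal{N}_{k-1}$ form a configuration order-isomorphic to $X$: each such pair of consecutive leaves bounds a region traversed by a unique chain of trees of scalloped regions of increasing level, and the non-separated leaves appearing at the next depth are exactly the boundary leaves of the scalloped regions attached there, accumulating at both ends of the region. This matches precisely how $X^{(k)}\setminus X^{(k-1)}$ sits inside each interval $I_{(n_1,\ldots,n_{k})}$ cut out by $X^{(k-1)}$, giving the extension of $\varphi_{k-1}$ to $\varphi_k$. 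Corollary \ref{corollary:BF_all_trees_all_scalloped} ensures that every non-separated $\F^+$-leaf through $L$ arises in some scalloped region of the tree-of-scalloped-regions decomposition, so that the union $\bigcup_k \mathcal{N}_k$ is all of $\mathcal{N}$. The main obstacle will be this inductive step: a careful combinatorial argument is needed to verify that the leaves of level difference exactly $k+1$ intersecting the interior of $L$ are indexed by countably many disjoint copies of $X$, one per gap of $\mathcal{N}_{k-1}$, without overlaps or omissions, which amounts to showing that the chain of trees of scalloped regions witnessing a given level difference is unique and that each tree in this chain contributes exactly one copy of $X$ to the new leaves.
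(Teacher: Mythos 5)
The paper's own proof of this proposition is left blank, so there is no reference argument to compare against. Your treatment of the first two bullets is correct and essentially complete: the bound $|I_{(n_1,\dots,n_k)}|\leq (1/3)^k$ holds since $\max_n\ell(n)=\ell(0)=1/3$ and $g_{(n_1,\dots,n_{k-1})}$ is affine with scaling factor $|I_{(n_1,\dots,n_{k-1})}|$, which gives density; and $X^\infty$ is countable, dense in $[0,1]$ (hence dense in itself), and contained in $(0,1)$ (hence without endpoints), so Cantor's back-and-forth argument gives the order type of $\Q$.

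For the third bullet the outline correctly identifies the intended structural correspondence, but the verification that the level-$(k+1)$ non-separated leaves fill each gap left by the lower-level ones with exactly one copy of $X$, accumulating only at the endpoints of that gap, is the entire content of the proposition and is not supplied. Three points deserve attention. First, for the $\F^+$ (unstable) foliation a non-separated leaf through $\mathrm{int}(L)$ is the projection of $\wF^u(\widetilde\gamma)$ for a lifted periodic orbit $\widetilde\gamma$; since orbits in $\wF^u(\widetilde\gamma)$ converge to $\widetilde\gamma$ backward in time, $\widetilde\gamma$ must lie in a piece reached from the block projecting to $L$ by flowing backward, so the level differences that actually occur are all of one sign, and the contributing tree at each stage is unique. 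You write "level $\pm 1$" without noticing this, and it is precisely what pins down the recursion. Second, already in the base case you need a geometric argument that the relevant corners' unstable leaves cross $\mathrm{int}(L)$ in a $\Z$-ordered family accumulating only at the two corners of $L$ (e.g.\ by following $\wF^u(\widetilde\gamma)$ through the incoming boundary plane into the adjacent piece and tracking which Birkhoff-annulus lifts project into $L$); asserting it does not make it so. Third, the inductive step must also rule out accumulation of the new leaves at interior points of a gap; otherwise the construction would not yield an order-preserving bijection with $X^{(0)}\cup\cdots\cup X^{(k)}$. Nothing in the outline is wrong, but it is a program whose hard step, which you flag yourself, is exactly where the proposition's substance lies.
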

\begin{proof}
   
\end{proof}

\begin{comment} 
\begin{theorem}\label{thm:equivalent_to_model}
    Let $\varphi: M\to M$ be a totally periodic Anosov flow, and let $(P_\varphi, \F^+, \F^-)$ be its associated bifoliated plane. Then, $(P_\varphi, \F^+, \F^-)$ is equivalent to a model bifoliated plane.
\end{theorem}

\begin{proof}[Proof of Theorem \ref{thm:equivalent__to_model}]

Let $(P_\varphi, \F^+, \F^-)$ be a totally periodic bifoliated plane, and let $(P, \F^h, \F^v)$ be a model plane with $\F^h, \F^v$ the horizontal and vertical foliations respectively.

Take $T \subset P$ a tree of scalloped regions and a lozenge $L \in \mathcal{L}(T)$. We will map $L$ into a lozenge $L_0 \subset P_0$.

First, we map $L$ to a model lozenge. Suppose without loss of generality that $L$ is an even lozenge. Identify the sets of non-separated leaves in each foliation which intersect $L$ with $X^{(\infty)}$ via a bijection $h$ as in Proposition \ref{proposition:BF_X_infty_properties}, and map the intersection $\{x\}= l^- \cap l^+$ of two such leaves (from different foliations) to the point $(h(l^-), h(l^+) ) \in X^{\infty} \times X^{\infty} \subset R_0 $. Then, we can extend this to a homeomorphism $L\to R_0$ which preserves the corresponding foliations. 

We can then rescale $R_0$ so that we get an equivalence $L \to L_0$.

\end{proof}

\end{comment}

\begin{lemma}\label{lemma:order_preserving_nonsep}
    Let $(P_\varphi, \F^+, \F^-)$ and $(P_\psi, \F^+, \F^-)$ be bifoliated planes corresponding to totally periodic bifoliated planes, and let $L\subset P_\varphi, L' \subset P_\psi$ be lozenges of the same type.

    Then, there exist order-preserving bijections $ h^\pm: \Lambda^\pm_{\mathrm{non-sep}} (L) \to \Lambda^\pm_{\mathrm{non-sep}}(L')$ that is \emph{level-preserving}, i.e. $\Delta(h^\pm(x),h^\pm(y)) =  \Delta(x,y)$ for all leaves $x,y\in \Lambda^\pm_{\mathrm{non-sep}} (L)$
\end{lemma}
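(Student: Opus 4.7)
The plan is to apply Proposition \ref{proposition:BF_X_infty_properties} separately to the lozenges $L \subset P_\varphi$ (contained in some tree of scalloped regions $T$) and $L' \subset P_\psi$ (contained in some tree $T'$). This yields order-preserving bijections
\[
\phi^\pm_L \colon X^\infty \to \Lambda^\pm_{\mathrm{non-sep}}(L), \qquad \phi^\pm_{L'} \colon X^\infty \to \Lambda^\pm_{\mathrm{non-sep}}(L'),
\]
both of which respect the filtration by level-distance: $X^{(k)}$ corresponds to the non-separated leaves meeting the lozenge whose level differs from that of the ambient tree by at most $k+1$. Since $L$ and $L'$ are of the same type, the two applications of Proposition \ref{proposition:BF_X_infty_properties} are of matching flavor. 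Setting $h^\pm := \phi^\pm_{L'} \circ (\phi^\pm_L)^{-1}$ then immediately produces an order-preserving bijection between the two sets of non-separated leaves.

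To upgrade $h^\pm$ to being level-preserving, I would observe that the combinatorial structure of $X^\infty$ records not just the absolute level-distance from the base tree but also its sign. Concretely, each $p \in X^{(k)} \setminus X^{(k-1)}$ lies in a unique interval $I_{(n_1, \dots, n_{k-1})}$, and its position within that interval (in particular, the parity of the depth and whether $p$ sits in the ``left half'' or ``right half'' of the refinement) determines whether passing from $X^{(k-1)}$ to $X^{(k)}$ at this point corresponds to attaching a scalloped region on the $s$-side (level $+1$ relative to the current tree) or on the $u$-side (level $-1$). Since this correspondence between the combinatorial data of $X^\infty$ and the $\pm 1$ level-jump dictionary is built into Proposition \ref{proposition:BF_X_infty_properties} in the same way on both sides, traversing the same sequence of refinements in $X^\infty$ produces the same sequence of signed level jumps in both $P_\varphi$ and $P_\psi$. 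Together with Proposition \ref{prop:level_pm1}, which lets us express any signed level difference as a telescoping sum of $\pm 1$ jumps, this yields
\[
\Delta\bigl(h^\pm(x), h^\pm(y)\bigr) \;=\; \Delta(x, y)
\]
for all $x, y \in \Lambda^\pm_{\mathrm{non-sep}}(L)$.

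The main obstacle I anticipate is the sign-tracking step. Proposition \ref{proposition:BF_X_infty_properties} as stated only records the \emph{absolute} level distance $|{\Delta(T, T_x)}|$, so to recover signs one must either strengthen the statement or appeal directly to the (inductive) construction of the bijection, where each refinement from $X^{(k-1)}$ to $X^{(k)}$ is matched with a specific attachment of a scalloped region whose level-orientation is determined combinatorially. Granting this refinement, the argument above goes through. If instead one prefers to avoid strengthening Proposition \ref{proposition:BF_X_infty_properties}, an alternative is to construct $h^\pm$ directly by an inductive scalloped-region-by-scalloped-region matching, using Corollary \ref{corollary:BF_all_trees_all_scalloped} and Proposition \ref{proposition:BF_tree_intersection} to guarantee at each stage the existence and uniqueness of the intersecting tree at the next level, and tracking signs explicitly throughout the induction.
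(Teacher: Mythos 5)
The paper leaves Lemma \ref{lemma:order_preserving_nonsep} without a proof, and the Proposition \ref{proposition:BF_X_infty_properties} on which your argument rests is also stated without a proof in the source, so there is no paper argument to compare against; I can only assess your proposal on its own terms.

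Your reduction — apply Proposition \ref{proposition:BF_X_infty_properties} to $L$ and $L'$ and set $h^\pm = \phi^\pm_{L'} \circ (\phi^\pm_L)^{-1}$ — correctly handles the order-preserving requirement, and you correctly identify the genuine gap: the proposition as stated only filters $X^{(k)}$ by the \emph{absolute} level distance $|\Delta(T,\cdot)| \le k+1$, so the composed $h^\pm$ a priori only preserves $|\Delta|$, not $\Delta$. Since $\Delta$ is additive along the (unique, by Lemma \ref{lemma:BF_no_tree_loop}) path in the tree of trees, preserving $\Delta(x,y)$ for all pairs is equivalent to preserving the signed level $\Delta(T_0,T_x)$ up to an additive constant, which the absolute filtration simply does not pin down. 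So yes, this step really does need more.

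Your first proposed fix, however, is not supported by anything in the paper. The set $X^\infty$ is built by iterated affine rescalings of a single fixed set $X$; there is no intrinsic ``left half / right half'' or depth-parity labeling of the refinement intervals that the paper claims corresponds to the $s$-side versus $u$-side attachment of scalloped regions. To make such a correspondence you would have to build it, at which point you are effectively just carrying out your second proposal. That second proposal — a direct inductive matching of scalloped regions, using Corollary \ref{corollary:BF_all_trees_all_scalloped} and Proposition \ref{proposition:BF_tree_intersection} to locate the next intersecting tree at each stage and recording whether the attachment gives $\Delta = +1$ or $-1$ — is the sound route. It is essentially a local version of the machinery the paper deploys in the proof of Theorem \ref{theorem:BF_equivalent_planes}, where $f_0$ is extended lozenge-by-lozenge and tree-by-tree, and at each step the type of scalloped region being crossed (hence the sign of the level jump) is tracked explicitly. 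Note also that for the downstream use in Theorem \ref{theorem:BF_equivalent_planes} you will additionally need the \emph{admissibility} condition defined immediately after the lemma (tracking on which side of $L$ the lozenge corner bounded by each non-separated leaf lies); the direct inductive construction gives you the flexibility to build that in at the same time, which is another reason to prefer it over the composition-plus-patch approach.
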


\begin{definition}
    Under the conditions of Lemma \ref{lemma:order_preserving_nonsep}, let $l_1^\pm, l_2^{\pm}$ be leaves containing the sides of $L$, and let $l_1'^\pm, l_2'^{\pm}$ be the leaves containing the corresponding sides of $L'$, so that any homeomorphism $L\to L'$ preserving the foliations and their orientations must send $l_i^\pm$ to $l'^\pm_i$. Let $U_i \subset P_\varphi$ be the connected component of $P_\varphi \setminus l_i^-$ that does not contain $L$, and let $U_i'\subset P_\psi$ be the corresponding connected components of $P_\psi \setminus l_i'^-$.

    We say that a level-preserving, order-preserving bijection $ h^+: \Lambda^+_{\mathrm{non-sep}} (L) \to \Lambda^+_{\mathrm{non-sep}}(L')$ is \emph{admissible} if for every leaf $l^+ \in \Lambda^+_{\mathrm{non-sep}} (L)$, the unique corner of a lozenge that belongs to $l^+$ is in $\overline{U_i} \subset P_\varphi$ if and only if the unique corner of a lozenge that belongs to $h^+(l^+)$ $\overline{U_i'} \subset P_\psi$.

\end{definition}

\begin{theorem}\label{theorem:BF_equivalent_planes}
    Let $\varphi:M\to M$, $\psi:N\to N$ be two totally periodic Anosov flows. Then, the bifoliated planes $(P_\varphi, \F^+, \F^-)$ and $(P_\psi, \mathcal{G}^+, \mathcal{G}^-)$ are equivalent.
\end{theorem}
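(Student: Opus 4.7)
The plan is to construct the isomorphism $F\colon P_\varphi\to P_\psi$ by an exhaustion, building it first on a single lozenge, then on one tree of scalloped regions, and finally by propagating across adjacent trees. Throughout, the bookkeeping needed to make the extensions compatible is controlled by Proposition \ref{proposition:BF_X_infty_properties} and the level structure of Section \ref{subsection:BF_levels_of_trees}.

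\emph{Step 1 (Base case on a lozenge).} Fix base lozenges $L_0\in\mathcal{L}(T_0)\subset P_\varphi$ and $L_0'\in\mathcal{L}(T_0')\subset P_\psi$ of the same type. Using the order-preserving identifications of $\Lambda^\pm_{\mathrm{non\text{-}sep}}(L_0)$ and $\Lambda^\pm_{\mathrm{non\text{-}sep}}(L_0')$ with $X^\infty$ provided by Proposition \ref{proposition:BF_X_infty_properties}, and noting that these identifications are in fact level-preserving and admissible, I produce a homeomorphism $F_0\colon L_0\to L_0'$ preserving foliations and their orientations together with the induced level data.

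\emph{Step 2 (Extension to the ambient tree).} I extend $F_0$ to an isomorphism $F_{T_0}\colon T_0\to T_0'$. Since $T_0$ is built by adjoining lozenges along stable/unstable half-leaves to $L_0$, I enumerate $\mathcal{L}(T_0)$ by combinatorial distance (in the adjacency tree) to $L_0$ and successively extend across one shared side at a time. At each step the existing map prescribes the restriction on a boundary leaf of the new lozenge, and Proposition \ref{proposition:BF_X_infty_properties} together with admissibility guarantees the identification of non-separated leaves can be completed order-, level-, and side-preservingly; the new lozenge, being of the same type as its counterpart in $T_0'$ (the types alternate in the obvious way along the tree), then admits a foliation- and orientation-preserving extension. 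Enumerating $\mathcal{L}(T_0)$ by adjacency distance gives $F_{T_0}$ on $\bigcup\mathcal{L}(T_0)$, and continuity at shared corners (together with the identification of the non-separated leaves at those corners) extends $F_{T_0}$ to all of $T_0$, including its boundary leaves.

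\emph{Step 3 (Propagation to adjacent trees).} By Proposition \ref{proposition:BF_tree_intersection}, any tree $T'$ adjacent to $T_0$ meets it in a single scalloped region $S=T_0\cap T'\subset T_0$, on which $F_{T_0}$ is already defined. I choose an adjacent tree $T''\subset P_\psi$ with $T_0'\cap T''=F_{T_0}(S)$ (this exists by Proposition \ref{proposition:BF_tree_intersection} in $P_\psi$ and is unique once one prescribes the level relation, which equals $\pm 1$ by construction). Since $S$ is a chain of lozenges of $\mathcal{L}(T')$ adjacent on the foliation sides \emph{opposite} to those along which they are adjacent in $\mathcal{L}(T_0)$, the admissible, level-preserving bijection on non-separated leaves supplied by the analogue of Lemma \ref{lemma:order_preserving_nonsep} for $T'$ lets me repeat Step 2 relative to the already-determined restriction on $S$, producing $F_{T'}\colon T'\to T''$ compatible with $F_{T_0}$ on $S$.

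\emph{Step 4 (Global assembly).} I enumerate all trees of scalloped regions in $P_\varphi$ by BFS on the \emph{tree of trees}: the graph $\Gamma$ whose vertices are elements of $\mathcal{T}$ and whose edges join trees with nonempty intersection. By Lemma \ref{lemma:BF_no_tree_loop}, $\Gamma$ has no cycles; by Proposition \ref{proposition:BF_tree_intersection}, adjacent vertices of $\Gamma$ meet in a single scalloped region and differ in level by exactly $1$. Hence Step 3, applied recursively starting from $T_0$, defines $F$ on $\bigcup_{T\in\mathcal{T}}T=P_\varphi$ with no consistency obstruction. The acyclicity of $\Gamma$ is crucial: any two trees meet in at most one scalloped region, so the construction never needs to reconcile two independently-made choices on the same set.

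\emph{The main obstacle.} The delicate point is Step 3: when one extends $F_{T_0}$ across $S$ into the new tree $T'$, the non-separated leaves of $T'$ meeting $S$ are already matched to those of $T''$ via $F_{T_0}|_S$, but one must show that this partial matching is the restriction to $S$ of an admissible, level-preserving bijection between \emph{all} of $\Lambda^\pm_{\mathrm{non\text{-}sep}}$ associated with a lozenge of $\mathcal{L}(T')$. This amounts to recognizing the partial matching as an admissible embedding of one copy of $X^{(k)}$ into another, and extending it to an order- and level-preserving bijection $X^\infty\to X^\infty$ by back-and-forth; the key check is that the level grading on $X^\infty$ viewed from $T_0$ and from $T'$ differs by a constant shift on $S$, which follows directly from Definition \ref{definition:BF_difference_level_1} and Definition \ref{definition:leaf_levels}. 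Once this local compatibility is established, the global isomorphism assembles automatically.
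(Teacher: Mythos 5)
Your proposal follows essentially the same strategy as the paper's proof: fix an initial lozenge and a matching lozenge in the target, extend lozenge-by-lozenge across a first tree of scalloped regions using admissible level-preserving bijections of non-separated leaves, and then propagate inductively to adjacent trees across their shared scalloped regions, with Lemma~\ref{lemma:BF_no_tree_loop} (equivalently, the acyclicity of your graph~$\Gamma$) guaranteeing no two extensions are ever forced to agree on overlapping domains. Your ``main obstacle'' paragraph in fact spells out more explicitly than the paper does the check that the partially-determined bijection on non-separated leaves over a shared scalloped region extends to an admissible level-preserving bijection on the new tree, but this is a matter of emphasis rather than a different route.
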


\begin{proof}
Let $(P_\varphi, \F^+, \F^-)$, $(P_\psi, \F^+, \F^-)$ be the bifoliated planes corresponding to $\varphi$ and $\psi$, respectively. 

We build an isomorphism $f:P_\varphi \to P_\psi$ by taking the union of inductively defined maps $f_n:P_n \to P_n
'$ for $n\geq 0$, where:

\begin{itemize}
\item $P_n \subset P_{n+1}$ for all $n$.
    \item $\bigcup P_n = P_\varphi, \, \,\bigcup P_n' = P_\psi$.
    \item $f_n|_{P_{n-1}} = f_{n-1}$ for all $n$.
    \item Each map $f_n$ sends the (un)stable foliation of $P_n$ to the (un)stable foliation of $P_n'$.
    \item Each $P_n$ is a connected union of trees of scalloped regions.
\end{itemize}

We begin by constructing $f_0 : P_0 \to P'_0$. Let $T_0\subset P_\varphi$ be a tree of scalloped regions. We will define $f_0$ on $P_0 = T_0$.

We construct $f_0$ on $T_0$ inductively by defining it lozenge by lozenge: we start by defining it on an initial lozenge $L_0\in \mathcal{L}(T_0)$. Then, at each subsequent step of the construction of $f_0$, if $f_0$ has been defined on a lozenge $L$, we extend it to all lozenges in $\mathcal{L}(T_0)$ which are adjacent to $L_0$. Since all lozenges in $\mathcal{L}(T_0)$ can be connected to $L_0$ by a chain of adjacent lozenges, this will define $f_0$ on all of $T_0 = P_0$.

Choose a lozenge $L_0 \in \mathcal{L}(T_0)$, and a lozenge $L_0' \in \mathcal{L}(T_0')$, where $T_0' \subset P_\psi$ is some tree of scalloped regions in $P_\psi$ and $L_0'$ is a lozenge of the same type as $L_0$. Our map $f_0$ will satisfy $f_0(L_0)=L_0'$, so our choices of $L_0$ and $L_0'$ completely determine the induced map $\bar{f_0}:\mathcal{L}(T_0)\to \mathcal{L}(T_0')$

Pick admissible level-preserving bijections $h_0^\pm: \Lambda^\pm_{\mathrm{non-sep}} (L_0) \to \Lambda^\pm_{\mathrm{non-sep}}(L_0')$.

Then, we define $f_0|_{L_0}:L_0 \to L_0'$ by first mapping each point $z\in L_0$ such that $\{ z\} = l^+\cap l^- $ (for $l^\pm \in \Lambda^\pm_{\mathrm{non-sep}} (L_0)$) to the point $h_0^+(l^+)\cap h_0^-(l^-)$. Since points of this form are dense in $L_0$ and $L_0'$ and the $h_0^\pm$ are order preserving, we can then define $f_0(z)$ for all points in $L_0$, so that $f_0|_{L_0}:L_0 \to L_0$ is a homeomorphism which preserves the bifoliations.

Now we describe the inductive step in the construction of $f_0$. Let $L, L_1 \in \mathcal{L}(T_0)$ be adjacent lozenges such that $f_0$ has already been defined in $L_1$. Without loss of generality, we assume that they are adjacent along a stable side, i.e. a half-leaf $l$ in $\F^-$. Let $L_1' = f_0(L_1) \in  \mathcal{L}(T_0')$ be the image of $L_0$ under $f_0$, and let $l' = f_0(l\cap L_1)$. Then, let $L' \in \mathcal{L}(T_0')$ be the lozenge adjacent to $L_1'$ along the side $l'$. We will extend $f_0$ to $L$ so that $f_0(L) = L'$. Note that once $f_0$ has been extended to $L$, we will have defined admissible level-preserving bijections $h^\pm: \Lambda^\pm_{\mathrm{non-sep}} (L) \to \Lambda^\pm_{\mathrm{non-sep}}(L')$.

Since $L$ and $L_1$ are adjacent along a stable side, we have that $\Lambda^+_{\mathrm{non-sep}} (L) = \Lambda^+_{\mathrm{non-sep}} (L_1)$. Analogously, $\Lambda^+_{\mathrm{non-sep}}(L') = \Lambda^+_{\mathrm{non-sep}}(L_1') $. Then, we define $h^+: \Lambda^+_{\mathrm{non-sep}} (L) \to \Lambda^+_{\mathrm{non-sep}}(L')$ by simply setting $h^+ = h_0^+$. 

There are now two possible cases. Either in a previous step we have defined an admissible level-preserving bijection $h^-: \Lambda^-_{\mathrm{non-sep}} (L) \to \Lambda^-_{\mathrm{non-sep}}(L')$, or we have not. The case where such a map has already been defined occurs exactly when, in a previous step, we have already defined $f_0$ on a lozenge adjacent to $L$ along an unstable side.

If the admissible level-preserving bijection $h^-$ has been defined, then both maps $h^\pm: \Lambda^\pm_{\mathrm{non-sep}} (L) \to \Lambda^\pm_{\mathrm{non-sep}}(L')$ have been defined, so we can define $f_0$ on all of $L$ as we did for $L_0$. 

If the map $h^-$ has not been defined, we pick an admissible level preserving bijection $\Lambda^-_{\mathrm{non-sep}} (L) \to \Lambda^-_{\mathrm{non-sep}}(L')$ and define $h^-$ as this map. Then, as we did above, we extend $f_0$ to $L$ using the admissible level-preserving bijections $h^\pm$.

This concludes the inductive step in the definition of $f_0$ on the tree of scalloped regions $T_0 = P_0$.

We have then described a procedure that allows us to inductively define, for trees of scalloped regions $T$ and $T'$, a homeomorphism $f:T\to T'$ which preserves the bifoliation. Note that in the base case above we chose how this map was defined on the initial lozenge, but the procedure would in fact allow us to extend any such choice of map on an initial lozenge to a map defined in the whole tree of scalloped regions.

Now, we describe how we extend $f_0$ to other trees of scalloped regions.

Let $\mathcal{T}_0 = \{ T_0 \}$. For $n\geq 1$, let $\mathcal{T}_n$ be the set of all trees of scalloped regions $T\subset P_\varphi$ such that there exists $T_1 \subset P_{n-1}$ such that $T_1 \cap T = S$, where $S$ is a scalloped region.

Then, define $P_n = P_{n-1} \bigcup \left( \bigcup_{T\in \mathcal{T}_n} T \right)$.

The collections of trees of scalloped regions $\mathcal{T}_n$ are always countably infinite for $n\geq 1$. For each $n\geq 1$, we pick some enumeration of the trees of scalloped regions in $\mathcal{T}_n$ and write $\mathcal{T}_n = \{ T^n_0, T^n_1, T^n_2,\dots \}$.

Now, we describe how to define $f_n:P_n \to P_n'$, assuming that $f_{n-1}:P_{n-1}\to P_{n-1}'$ has been defined.

We once again construct the extension $f_n$ of $f_{n-1}$ via an inductive procedure. First, we define $f_{n}$ on $T^n_0$. Then, assuming that $f_{n}$ has been defined on $T^n_0, T^n_1, \dots, T^n_{k-1}$, we extend it to $T^n_k$. This will define $f_n$ in $P_n$.

We will make use of the following:
\begin{claim}
    For each $k\geq 0$, there exists a unique tree of scalloped regions $T_k \subset P_{n-1}$ such that $T_k^n$ and $T_k$ intersect on a scalloped region $S_k$.

    Moreover, for all $k,j$ such that $k\neq j$ we have $T_k^n \cap T_j^n \neq \emptyset$ if and only if they intersect on a nonempty subset of $P_{n-1}$.
\end{claim}
This is a consequence of Lemma \ref{lemma:BF_no_tree_loop}, which we can see by projecting to the orbit space the sets $U_0, \dots,U_n $ in the statement of the lemma.

The second part of the claim above tells us that our extension of $f_{n-1}$ to $T_k^n$ will be independent of the extension to $T_j^n$ if $k\neq j$. For this reason, it's enough to only show how we define $f_n$ on $T^n_0$, extending $f_{n-1}$.

Let $T \subset P_{n-1}$ such that $T\cap T^n_0 $ is a scalloped region.

First, we need to determine the tree of scalloped regions $(T^n_0)' \subset P_\psi$ such that we will have $(T^n_0)' = f_{n}(T^n_0)$. Since $T\cap T^n_0$ is a scalloped region $S_0$, some boundary leaf of $T$ contains the side $l$ of a lozenge $L$ in $\mathcal{L}(T^n_0)$. 

Since $f_{n-1}$ is already defined on $P_{n-1} \supset T_1$, it is already defined on $l$. Therefore, we will choose $(T^n_0)'\subset P_\psi$ to be the tree of scalloped regions such that $L' \in \mathcal{L}((T^n_0)')$, where $L' \subset P_\psi$ is the lozenge with side $f_{n-1}(l)$. 

Now, the domain of the map $f_{n-1}$ contains the scalloped region $S_0 = T^n_0 \cap T$, so we do not need to define $f_n$ on $S_0$. In fact, this must be the only subset of $T$ where $f_{n-1}$ is defined, by our claim above.

We can then define $f_n$ on $T^n_0$ lozenge by lozenge, by the same inductive procedure by which we defined $f_0$ on all of $P_0$ after having defined it on a single lozenge. 

Given a lozenge $L\in\mathcal{L}(T)$ adjacent to some lozenge $L_1 \in \mathcal{L}(T)$ such that $L_1 \subset S$, exactly one of the maps $h^\pm: \Lambda^\pm_{\mathrm{non-sep}}(L) \to \Lambda^\pm_{\mathrm{non-sep}}(f_{n-1}(L))$ has been defined previously, so once we make a choice for the other map, we will be able to define $f_n$ on $L$, mapping $L$ into a lozenge $L' = f_n(L)$ adjacent to $f_{n-1}(L_1)$. 

One can then define $f_{n}$ on the remaining lozenges in $\mathcal{L}(T^n_0)$, as we did for the definition of $f_0$ on $T_0$.

\end{proof}

\section{Action of $\pi_1(M)$ on $P_\varphi$}\label{section:BF_action_pi1}

From the construction of the manifold $M$ as a gluing of pieces $N(X_i)$, one can get an explicit presentation for the fundamental group $\pi_1(M)$ of $M$. Then, by using this presentation and the construction of $M$ it is possible to understand the action of the generators of $\pi_1(M)$ on $P_\varphi$. In this section, we will show how to do this in a simple example where there is only one piece $N(X)$, and the fat graph $X$ has only one vertex and two edges. The same techniques apply for the general case, although the presentation of the group becomes more unwieldy as the complexity of the manifold grows.

\subsection{The Bonatti-Langevin example}\label{subsection:BF_BL_example}

First, we construct the Bonatti-Langevin flow, originally defined in \cite{bonatti1994exemple}, in terms of the Barbot-Fenley construction of totally periodic Anosov flows.

Let $\Sigma$ be the surface with boundary obtained by removing two open disks from a projective plane. One can see that $\Sigma$ is homeomorphic to a Moebius band (with boundary) minus an open disk.
\begin{figure}[h]
  \centering
  \includegraphics[width=0.6\textwidth]{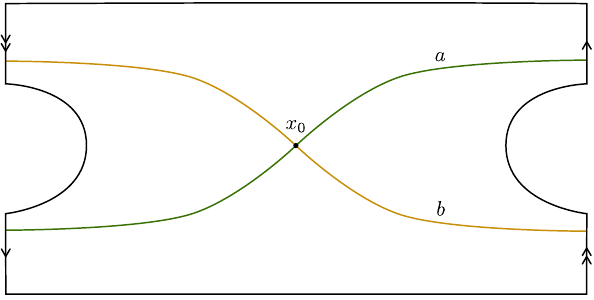}
  \caption{The figure-8 fatgraph $X \subset \Sigma$.}
  \label{fig:fatgraph}
\end{figure}

Let $X \subset \Sigma$ be a figure-8 fat graph embedded in $\Sigma$, with its two edges labeled $a$ and $b$ as in Figure \ref{fig:fatgraph}, and its vertex labeled $x_0$. We can see that this is an admissible fat graph as in Definition \ref{definition:fatgraph}. Since $x_0$ is the unique vertex in $X$, in this case we see that each edge of $X$ defines an element of the fundamental group of $\Sigma$. In fact, since $\Sigma$ deformation retracts onto $X$, we have
\[
\pi_1(\Sigma,x_0)  \cong F_2 = \langle a,b\rangle
\]

Now, take blocks $N_a, N_b$ equipped with a local flow as in Section \ref{section:BF_background}, corresponding to $a$ and $b$. After gluing their boundary components which are tangent to the local flows according to the Barbot-Fenley construction (see Figure \ref{fig:N(X)} for a vertical view of $N(X)$), we obtain the $3$-manifold with boundary $N(X)$ which is the nontrivial circle bundle over $\Sigma$ such that the holonomy $\rho : \pi_1(\Sigma) \to \mathrm{Homeo}(S^1)$ satisfies $\rho(a) = \rho(b) = x \mapsto -x$, where we think of $S^1$ as $S^1 = \R/ \Z$.

\begin{figure}[h]
  \centering
  \includegraphics[width=0.6\linewidth]{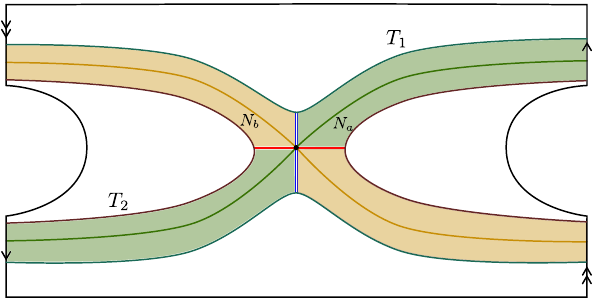}
  \caption{View of $N(X)$ from above.}
  \label{fig:N(X)}
\end{figure}

We can then get a presentation for the fundamental group of $\Sigma$. First, note that the vertical circle fiber over $x_0\in \Sigma$ is a periodic orbit of the local flow on $N(X)$. In Figure \ref{fig:N(X)}, this orbit is oriented so that in the future it goes into the page. Let $h$ denote the homotopy class of the loop given by traversing this orbit exactly once. Then, we have
\[
\pi_1(N(X)) = \langle a,b,h : a^{-1}ha =  b^{-1}h b = h^{-1} \rangle.
\]
We have $\partial N(X) = T_1 \cup T_2$, where each $T_i$ is a torus transverse to the flow. We label them so that the flow is incoming on $T_1$ and outgoing on $T_2$. It will be useful to define generators for their fundamental groups. Let $c_1 = ab$, and let $c_2 = ab^{-1}$. These are freely homotopic to loops in $T_1$ and $T_2$ respectively. Then, define $H_i$ to be the subgroup of $\pi_1(N(X))$ generated by $\{ h,c_i\}$. We can see from the relations above that $H_i \cong \Z^2$.

Now, we look at the universal cover $\widetilde{N(X)}$ of $N(X)$. Understanding the action of the deck group on $\widetilde{N(X)}$ will later allow us to understand the $\pi_1(M)$ action on the bifoliated plane corresponding to the Bonatti-Langevin flow. 

As we discussed in Section \ref{subsection:BF_trees_of_scalloped}, we can think of $\widetilde{N(X)}$ as a product $\widetilde{\Sigma} \times \R$. The surface $\widetilde{\Sigma}$ deformation retracts onto a fat graph $\tilde{X} \subset \widetilde{\Sigma}$, which is an infinite tree where each vertex has valence four. We may then identify $\tilde{X}$ with a graph embedded in $\widetilde{\Sigma} \times \{ 0\} \subset \widetilde{N(X)}$. We pick deck transformations representing $a$ and $b$ so that they preserve the surface $\widetilde{\Sigma} \times \{ 0\}$ (and flip the vertical fibers), and pick a deck transformation representing $h$ so that $h$ translates all of $\widetilde{\Sigma}\times \R$ vertically, via the map $(x,y, z) \mapsto (x,y,z-1)$.

The graph $\widetilde{X}$ is essentially the Cayley graph of the subgroup $\langle a,b \rangle < \pi_1(N(X))$: if we let $\tilde{x}_0 \in \widetilde{X}$ be one of the vertices (a lift of $x_0 \in N(X)$), we may uniquely identify each other vertex with an element $g\in \langle a, b \rangle $ via the map $\tilde{x}_0 \mapsto g \tilde{x}_0$. In the general case this will not be the case, since one might have edges in the fat graph that do not represent loops in $N(X)$.

Each of the vertices of $\widetilde{\Sigma}$ (and the vertical fiber over them, an orbit of the flow) lies in the intersection of four blocks of the form $I\times \R \times I$, for $I = [-\pi/2, \pi/2]$. These will correspond to the four lozenges meeting at this orbit in the bifoliated plane of the manifold. The action of $h$ preserves all these blocks, since it is only a vertical translation. In particular, each vertical orbit passing through a vertex of the graph is preserved by $h$.

The boundary components of $\widetilde{N(X)}$ are planes, each of them a lift of $T_1$ or $T_2$, so that the flow is transverse to these planes. Each block $I \times \R \times I$ intersects two boundary components, one of them a lift of $T_1$ (where the flow is incoming), and one a lift of $T_2$ (where the flow is outgoing). Two blocks of this form either do not intersect, intersect along an infinite strip which is a stable or unstable manifold for the flow, or intersect in a vertical fiber. We say that they are adjacent if they intersect along an (un)stable manifold. Each boundary plane of $\widetilde{N(X)}$ is a union of transverse boundaries of a countable collection of blocks, so that each of the blocks in the collection is adjacent to two others, where either any two blocks are always adjacent along stable manifolds or always adjacent along unstable manifolds. These boundary planes will later project to scalloped regions in the bifoliated plane. We can see that all of them are invariant under the action of $h$, and each of them is invariant under a unique conjugate of the subgroups $H_1, H_2$.

Before discussing the gluing of the boundary components of $N(X)$ to obtain a closed $3$-manifold $M$, we will briefly describe some useful Birkhoff annuli in $N(X)$. Let $\pi: N(X) \to \Sigma $ be the bundle projection. Consider loops $c_1', c_2'$ which are homotopic to $c_1$ and $c_2$ and whose images are embedded circles in $\Sigma$. Abusing notation slightly by using the same name for $a,b$ as elements in the fundamental group of $\Sigma$ and as loops in $\Sigma$ in their homotopy class which are embedded circles, let 
\begin{itemize}
\item $A_i =  \pi^{-1}(c_i'), i = 1,2$.
       \item $B_1 = \pi^{-1}(a) $.
    \item $ B_2 = \pi^{-1}(b)$.
\end{itemize}
\begin{figure}[h]
  \centering
  \includegraphics[width=0.7\linewidth]{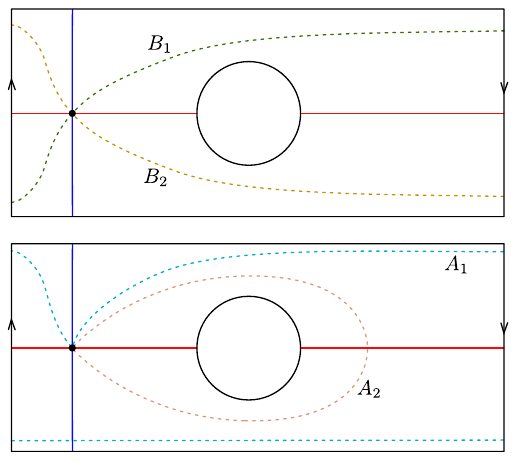}
  \caption{The Birkhoff annuli $A_i$ and $B_i$.}
  \label{fig:birkhoff_good}
\end{figure}

Then, $A_i, B_i$ are weakly embedded Birkhoff annuli in $N(X)$ (recall that this means that they are embedded except possibly at the periodic orbits contained in them). The Birkhoff annuli $A_i$ are tori while the $B_i$ are Klein bottles. All of them contain the periodic orbit $\pi^{-1}(x_0)$ which represents $h\in \pi_1(N(X))$, and we can see that by definition, each $A_i$ is $H_i-$invariant, while $B_1, B_2$ are invariant under $a,b$ respectively. Recall that by Proposition \ref{proposition:BF_background_block}, these are elementary Birkhoff annuli, that is, the foliations induced by the stable and unstable foliations of the flow on the interior of $A_i$ and $B_i$ have no closed leaves.

Now, we finally construct the manifold $M$ where the Bonatti-Langevin flow is defined. In order to do this, we introduce coordinates on the tori $T_1$ and $T_2$. We only need to determine the gluing map $A: T_1\to T_2$ up to isotopy, since Theorem D' in \cite{barbot2015classification} ensures that the resulting flow will be well defined up to orbit equivalence. Let $\mathcal{G}_1, $  $ \mathcal{G}_2$ be the horizontal and vertical foliations on $T_1$, and let $\mathcal{G}_1', \mathcal{G}_2' $ be the horizontal and vertical foliations on $T_2$. Orient the foliations using the vector fields $Z_1, Z_2$ defined on $N_a$ as $Z_1(x,y,z) = (1,0,0), Z_2(x,y,z) = (0,1,0) $. Now, let $A:T_1\to T_2$ be a homeomorphism switching the vertical and horizontal foliations, reversing orientation on each of them (so that the map $A$ preserves orientation). Then, let $M$ be the manifold obtained by identifying $T_1$ and $T_2$ using $A$.

Given our definition of the gluing map $A$, and how we defined the elements $h, c_1, $ and $c_2$, we can see that by taking a loop $t\in \pi_1(M)$ which is the union of two embedded straight line paths joining $x_0$ with $T_1$ and $T_2$ with $x_0$, we get that a presentation of $\pi_1(M,x_0) $ is given by
\begin{align*} 
\langle  a,b,c_1, c_2, h, t \mid c_1 = ab, c_2 = ab^{-1}, aha^{-1} = bhb^{-1} = h^{-1}, tc_2t^{-1} = h, tht^{-1} = c_1^{-1} \rangle.
\end{align*}

Let $(P, \F^+, \F^-)$ be the bifoliated plane associated to the Anosov flow in $M$. We will now study the action of $\pi_1(M)$ on $P$.

The universal cover $\widetilde{M}$ of $M$ consists of countably many copies of $\widetilde{N(X)}$, glued along their boundary planes, the lifts of $T_1$ and $T_2$. Let $\widetilde{N} \subset \widetilde{M}$ be the copy of $\widetilde{N(X)}$ containing $\widetilde{x}_0$, i.e. $\widetilde{N}$ is the connected component of $p^{-1}(N(X))$ that contains $\widetilde{x}_0$, where here $p: \widetilde{M} \to M$ is the universal covering map.

Let $q: \widetilde{M} \to P$ be the quotient map which collapses the orbits of the flow. We know that the image of $\widetilde{N}$ under $q$ is a tree of scalloped regions, which we will refer to as $T_0$. Let $\tilde{A_i}, \tilde{B_i} $ for $i=1,2$ be the connected components of  $\pi^{-1}(A_i), \pi^{-1}(B_i)$ that contain the orbit through $\widetilde{x}_0$, and let $\hat{A}_i = q(\tilde{A}_i), \hat{B}_i = q (\tilde{B}_i)$ be their projection to $P$.

The first statement in the following holds by our discussion of scalloped regions in Section \ref{subsection:BF_trees_of_scalloped}. The second statement is a consequence of our definition of the subgroups $H_i < \pi_1(M)$ and the element $h\in \pi_1(M)$.

\begin{proposition}
    The sets $\hat{A}_i \subset P$, for $i=1,2$ are scalloped regions, which are unions of lozenges in $\mathcal{L}(T_0)$.
    
    Moreover, $\hat{A}_i$ is preserved by the subgroup $H_i < \pi_1(M)$, and the element $h\in \pi_1(M)$ fixes all the corners of lozenges in $\mathcal{L}(T_0)$.
\end{proposition}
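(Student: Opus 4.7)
I will address the four assertions in turn, using the structural results already established for trees of scalloped regions and Birkhoff annuli.

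First, I claim $\hat{A}_i$ is a chain of lozenges contained in $\mathcal{L}(T_0)$. Since $A_i$ is a Birkhoff annulus for $\varphi$, Proposition~\ref{proposition:birkhoff_projects_lozenges} yields that $\hat{A}_i = q(\widetilde{A}_i)$ is a chain of lozenges whose interiors are the projections of the components of $\widetilde{A}_i$. By our choice of $\widetilde{A}_i$ as the connected component of $\pi^{-1}(A_i)$ containing the orbit through $\widetilde{x}_0$, together with the fact that $A_i \subset N(X)$, we have $\widetilde{A}_i \subset \widetilde{N}$. Hence each lozenge of the chain is the projection of the interior of a building block making up $\widetilde{N}$, which places it in $\mathcal{L}(T_0)$.

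Second, and this is the main obstacle, I will upgrade ``chain of lozenges'' to ``scalloped region'' by identifying $\hat{A}_i$ with $q(\widetilde{T}_i)$ for an appropriate lift $\widetilde{T}_i \subset \partial \widetilde{N}$ of $T_i$, and then invoking Corollary~\ref{corollary:BF_lifted_torus_projects_scalloped}. The key point is that $A_i$ is freely homotopic to $T_i$ in $N(X)$; both realize the subgroup $H_i$, and both meet each building block $N_e$ (for the edges $e$ appearing in the cyclic word $c_i$) along a surface crossed by the same set of orbits. More precisely, every orbit entering $N_e$ through its face contained in $T_i$ crosses the elementary Birkhoff annulus $A_i \cap N_e$ exactly once before exiting. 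Lifting, $\widetilde{A}_i$ and a specific lift $\widetilde{T}_i$ of $T_i$ cobound a region of $\widetilde{N}$ foliated by arcs of orbits of $\widetilde{\varphi}$, so the two surfaces are crossed by the same set of orbits, and therefore $q(\widetilde{A}_i) = q(\widetilde{T}_i)$, which is a scalloped region by Corollary~\ref{corollary:BF_lifted_torus_projects_scalloped}. Making this block-by-block matching rigorous is the technical heart of the proof.

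For the $H_i$-invariance of $\hat{A}_i$, I will use that $A_i$ is $H_i$-invariant and that the image of $\pi_1(A_i, x_0) \hookrightarrow \pi_1(N(X), x_0)$ is exactly $H_i$: this implies that the stabilizer of the particular component $\widetilde{A}_i$ through $\widetilde{x}_0$ is $H_i$ itself, rather than a nontrivial conjugate. Projecting to $P$ then gives that $H_i$ preserves $\hat{A}_i$. Finally, for the action of $h$ on corners: the corners of the lozenges in $\mathcal{L}(T_0)$ are precisely the projections to $P$ of the lifts to $\widetilde{N}$ of the unique periodic orbit $\pi^{-1}(x_0)$ of $\varphi$ in $N(X)$, that is, the vertical fibers over the vertices of the infinite tree $\widetilde{X} \subset \widetilde{\Sigma}$. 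Since $h$ acts on $\widetilde{N} \cong \widetilde{\Sigma} \times \R$ as the vertical translation $(x,y,z) \mapsto (x,y,z-1)$, it sends each such vertical fiber to itself, and therefore fixes its image under the orbit-collapsing quotient $q$.
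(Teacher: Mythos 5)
The paper gives essentially no proof of this proposition, only the one-line remark preceding it (``holds by our discussion of scalloped regions in Section 3.3.1'' and ``is a consequence of our definition of $H_i$ and $h$''), so your argument is a genuine fill-in. The route you take is consistent with what the paper is implicitly gesturing at, and the first, third, and fourth parts of your plan are fine. The third part in particular, using that $\pi_1(A_i,x_0)\hookrightarrow\pi_1(N(X),x_0)$ has image exactly $H_i=\langle h,c_i\rangle$ so that $\mathrm{Stab}(\widetilde{A}_i)=H_i$ rather than a conjugate, is the right thing to say and is left tacit in the paper.

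The second step is where you should be more careful. You claim that $\widetilde{A}_i$ and $\widetilde{T}_i$ ``cobound a region foliated by arcs of orbits'' and hence ``are crossed by the same set of orbits, and therefore $q(\widetilde{A}_i)=q(\widetilde{T}_i)$.'' This is not literally true: an orbit that enters $N(X)$ through $T_i$ inside the stable manifold $W^s(\alpha)$ of one of the periodic orbits $\alpha$ contained in $A_i$ limits onto $\alpha$ without ever meeting the transverse interior of $A_i$; symmetrically, the periodic orbits themselves lie in $A_i$ but never cross $T_i$. So the orbit sets differ exactly on the (un)stable saturation of the periodic orbits, and the two projections $q(\widetilde{A}_i)$ and $q(\widetilde{T}_i)$ agree only after passing to the interior (or closure) — the discrepancy is the boundary leaves and corners of the scalloped region. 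This is the same mild abuse the paper commits when calling $\hat{A}_i$ a ``scalloped region'' (by definition an open set) while $\hat{A}_i$, being the projection of a full Birkhoff torus, also contains corners, so it is not fatal; but your phrasing ``foliated by orbit arcs'' hides it rather than resolving it. A cleaner alternative that avoids the detour through $T_i$ entirely is to run the argument of Corollary~\ref{corollary:BF_lifted_torus_projects_scalloped} directly on $\widetilde{A}_i$: since $A_i=\pi^{-1}(c_i')$ and $c_i'$ is freely homotopic to an incoming (resp.\ outgoing) boundary curve of $\Sigma$, consecutive Birkhoff-annulus components of $A_i$ are glued along stable (resp.\ unstable) tangential boundary faces only, so by Proposition~\ref{proposition:birkhoff_projects_lozenges} the lozenges of $q(\widetilde{A}_i)$ form a bi-infinite chain all adjacent along a single foliation, and Proposition~\ref{proposition:infinite_nonsep_implies_scalloped_boundary} identifies this as a scalloped region.
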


Let $L_0 \in \mathcal{L}(T_0)$ be the lozenge given by $L_0 = \hat{A}_1 \cap \hat{A}_2$. It has corners $x_0$ and $a\cdot x_0$. Let $L_0'$ be the lozenge adjacent to $L_0$ on its stable side, and which also has $a\cdot x_0$ as a corner. We can see that $L_0$ is an odd lozenge, $L_0'$ is an even lozenge, and the union $L_0 \subset L_0'$ is a fundamental domain for the action of $H_1 $ on $\hat{A}_1$. Moreover, any lozenge $L\in \mathcal{L}(T_0)$ is the image of one of $L_0$ and $L_0'$ under some element of the subgroup $H$ generated by $a,b$ and $h$. Using the labels on the vertices of the graph $\widetilde{X} \subset \widetilde{\Sigma}$, we can then label all corners of lozenges in $\mathcal{L}(T_0)$.

\begin{figure}[h]
  \centering
  \includegraphics[width=0.8\linewidth]{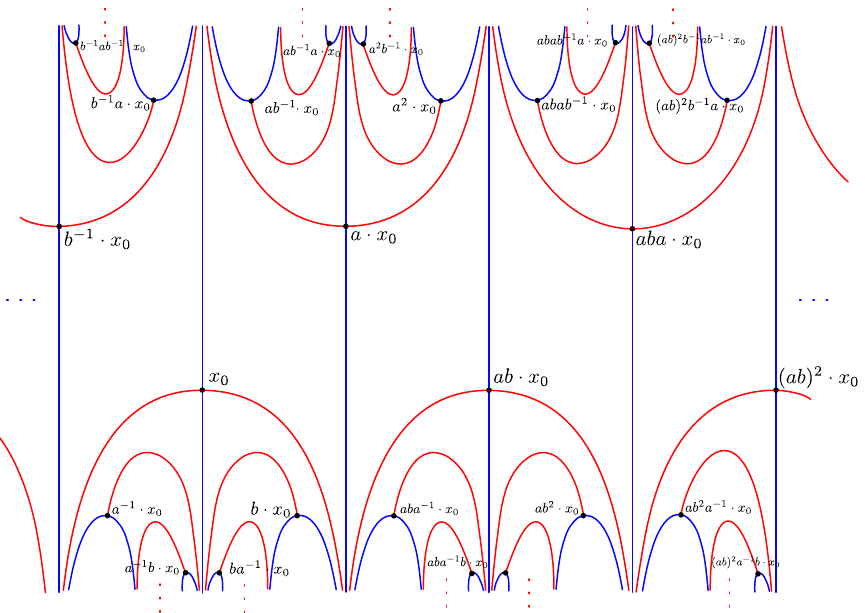}
  \caption{Labels for the corners of lozenges in $\mathcal{L}(T_0)$.}
  \label{fig:tree_labels}
\end{figure}

\begin{lemma}
We have 
    $\mathrm{Stab}(T_0) = H$.
\end{lemma}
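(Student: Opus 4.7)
The plan is to prove the two inclusions separately. The inclusion $H \subseteq \mathrm{Stab}(T_0)$ is essentially immediate from the construction: the deck transformations we selected to represent $a$, $b$, and $h$ all preserve $\widetilde{N}$. Indeed, $a$ and $b$ were chosen to preserve the horizontal slice $\widetilde{\Sigma} \times \{0\}$ (flipping the vertical fibers), while $h$ is the vertical translation $(x,y,z) \mapsto (x,y,z-1)$; each of these preserves the product $\widetilde{\Sigma} \times \R = \widetilde{N}$, and therefore preserves $q(\widetilde{N}) = T_0$.

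For the reverse inclusion, the key intermediate step is to show that the assignment $U \mapsto q(U)$ from connected components of $p^{-1}(N(X)) \subset \widetilde{M}$ to trees of scalloped regions in $P$ is a bijection. Surjectivity is Corollary \ref{corollary:BF_all_trees_all_scalloped}. For injectivity, I would invoke the explicit description in the proof of Proposition \ref{proposition:BF_periodic_projection_trees}: the interior of any lozenge $L \in \mathcal{L}(T_0)$ is the $q$-image of the interior of a single block $\widetilde{N}_j \cong I \times \R \times I$ contained in $\widetilde{N}$, and $q^{-1}(\mathrm{int}(L))$ coincides with this open set (as all orbits through the block meet the interior of the Birkhoff disk $D_j$). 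Hence $\widetilde{N}$ is recovered from $T_0$ as the unique connected component of $p^{-1}(N(X))$ meeting $q^{-1}(\mathrm{int}(L))$. With injectivity established, any $g \in \mathrm{Stab}(T_0)$ satisfies $q(g \cdot \widetilde{N}) = g \cdot T_0 = T_0$, forcing $g \cdot \widetilde{N} = \widetilde{N}$.

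It then remains to show $\mathrm{Stab}(\widetilde{N}) = H$. The presentation of $\pi_1(M)$ given above exhibits $\pi_1(M)$ as an HNN extension of $\pi_1(N(X)) = \langle a, b, h \mid aha^{-1} = bhb^{-1} = h^{-1} \rangle$ with stable letter $t$ and associated subgroups $H_1, H_2$ identified via $t$. In particular, the inclusion $N(X) \hookrightarrow M$ is $\pi_1$-injective, so $\widetilde{N}$ is (homeomorphic to) the universal cover $\widetilde{N(X)}$ of $N(X)$ realized inside $\widetilde{M}$. Standard covering space theory then identifies the deck stabilizer of $\widetilde{N}$ in $\pi_1(M)$ with the image of $\pi_1(N(X))$ under the inclusion-induced map, which is exactly $H = \langle a, b, h \rangle$. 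Combining this with the reduction from the previous paragraph gives $\mathrm{Stab}(T_0) \subseteq H$ and completes the proof.

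The only real content here is the injectivity of $U \mapsto q(U)$; everything else reduces to the HNN description of $\pi_1(M)$ (which is just Seifert--Van Kampen applied to $M = N(X) / (T_1 \sim T_2)$) together with the correspondence between lozenges in $\mathcal{L}(T_0)$ and lifted blocks that has already been set up in Section \ref{subsection:BF_trees_of_scalloped}.
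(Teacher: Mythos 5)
The paper states this lemma without a proof, so I am evaluating your argument on its own merits. Your overall strategy --- reduce to $\mathrm{Stab}(\widetilde{N}) = H$ via injectivity of $U \mapsto q(U)$ and then use the HNN-extension/covering-space picture of $\pi_1(M)$ --- is sound, and the inclusion $H \subseteq \mathrm{Stab}(T_0)$ and the final identification of $\mathrm{Stab}(\widetilde{N})$ with $H$ are both correct. The gap is in the injectivity step: you claim that $q^{-1}(\mathrm{int}(L))$ coincides with $\mathrm{int}(\widetilde{N}_j)$, but $q$ is the orbit-space projection, so $q^{-1}(\mathrm{int}(L))$ is a flow-saturated union of \emph{entire} orbits of $\widetilde{\varphi}$. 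Each such orbit exits $\widetilde{N}_j$ through its outgoing transverse boundary plane, passes into the adjacent component, and keeps going; it therefore meets infinitely many components of $p^{-1}(N(X))$, not only $\widetilde{N}$. What is actually true is $q^{-1}(\mathrm{int}(L)) \cap \widetilde{N} = \mathrm{int}(\widetilde{N}_j)$, which does not by itself give the uniqueness of $\widetilde{N}$ that you assert.

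The injectivity is nevertheless correct and can be established in the same spirit. Suppose $q(U) = q(U') = T_0$ with $U \neq U'$; by Lemma \ref{lemma:BF_no_tree_loop} the components of the $\pi^{-1}(N(X_i))$'s form a tree under boundary adjacency, so there is a unique boundary plane $\widetilde{T}$ incident to $U$ on the tree-path from $U$ to $U'$. For any $L \in \mathcal{L}(T_0)$, Proposition \ref{proposition:BF_periodic_projection_trees} furnishes blocks $\widetilde{N}_j \subset U$ and $\widetilde{N}_{j'} \subset U'$ with $q(\mathrm{int}(\widetilde{N}_j)) = \mathrm{int}(L) = q(\mathrm{int}(\widetilde{N}_{j'}))$, so every orbit through $\mathrm{int}(\widetilde{N}_j)$ also passes through $\widetilde{N}_{j'}$ and hence must cross $\widetilde{T}$. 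Ranging over all $L \in \mathcal{L}(T_0)$, every orbit through $\mathrm{int}(U)$ crosses $\widetilde{T}$, so $T_0 = q(U) \subseteq q(\widetilde{T})$. But $q(\widetilde{T})$ is a single scalloped region (Corollary \ref{corollary:BF_lifted_torus_projects_scalloped}) and hence a proper subset of the tree $T_0$, a contradiction. With this repair the rest of your proof goes through as written (note also that you do not actually need the surjectivity of $U \mapsto q(U)$ anywhere: $g \cdot \widetilde{N}$ is automatically a component of $p^{-1}(N(X))$, so injectivity alone suffices).
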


The following allows us to understand the action of the elements $a,b$ on $T_0$:

\begin{definition}
    A chain of lozenges is a \emph{string} of lozenges if no two lozenges in the chain are adjacent.
\end{definition}

\begin{proposition}
    There exists a string of lozenges preserved by $a$, and no point in $P$ is fixed by $a$. The same is true for $b$.
\end{proposition}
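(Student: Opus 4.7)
The plan is to prove both claims for $a$; the argument for $b$ is identical with $B_1=\pi^{-1}(a)$ replaced by $B_2=\pi^{-1}(b)$.

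First, I would identify an explicit $a$-invariant chain of lozenges. Let $\widetilde{B_1}\subset\widetilde{M}$ be the lift of the Klein bottle $B_1$ whose boundary contains the orbit through $\widetilde{x}_0$. Since $a\in\pi_1(B_1)=\langle a,h\mid aha^{-1}=h^{-1}\rangle=\mathrm{Stab}(\widetilde{B_1})$, the deck transformation $a$ preserves $\widetilde{B_1}$. By Proposition \ref{proposition:birkhoff_projects_lozenges} the image $\widehat{B}_1=q(\widetilde{B_1})$ is a chain of lozenges $\{L_n\}_{n\in\Z}$ whose corners are the points $a^n\cdot x_0\in P$, and $a$ acts by $L_n\mapsto L_{n+1}$; in particular no lozenge of $\widehat{B}_1$ is fixed.

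Next I would show that $\widehat{B}_1$ is a string rather than a chain of adjacent lozenges. The Klein-bottle structure of $B_1$, encoded in the relation $aha^{-1}=h^{-1}$, forces the two half-edges of $a$ at each vertex of the tree $\widetilde{X}$ to occupy diagonally opposite positions in the cyclic order of the four half-edges at that vertex. At the shared corner $a^{n+1}\cdot x_0$, the four lozenges of $\mathcal{L}(T_0)$ meeting there correspond to these four half-edges, so the two consecutive strips of $\widetilde{B_1}$ project to lozenges lying in diagonally opposite quadrants. Hence $L_n$ and $L_{n+1}$ share only the common corner. This is the main combinatorial step and the principal obstacle I anticipate: rigorously verifying it requires tracking the cyclic order of half-edges at $x_0$ under the specific figure-$8$ embedding in $\Sigma$.

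For the no-fixed-point statement I combine Anosov-like rigidity with algebraic primitivity. By the Anosov-like axioms, any fixed point of a nontrivial element is a corner of a lozenge, equivalently the projection of a lifted periodic orbit $\widetilde{\gamma}$ of $\varphi$; its stabilizer in $\pi_1(M)$ is cyclic, generated by a primitive element $g_\gamma$ representing the free homotopy class of the orbit. Thus if $a$ fixes such a corner then $a=g_\gamma^{\pm 1}$. Primitivity of $a$ in $\pi_1(M)$ follows from the projection $H=\pi_1(N(X))\to H/\langle h\rangle\cong F_2=\langle\bar a,\bar b\rangle$ sending $a$ to a free generator, together with a direct computation using $ah^n=h^{-n}a$ that rules out proper roots of $a$ inside $H$; Britton's lemma applied to the HNN structure of $\pi_1(M)$ over $H$ then upgrades this to all of $\pi_1(M)$. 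Hence $[\gamma]=[a]$ as free homotopy classes, and $\widetilde{\gamma}$ lies in a lift of $N(X)$ stabilized by $a$.

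Finally, I would eliminate this remaining possibility by a stabilizer argument. For any $g\notin H$, Bass--Serre theory for the HNN description of $\pi_1(M)$ identifies $H\cap gHg^{-1}$ with a conjugate of one of the peripheral subgroups $H_i=\langle h,c_i\rangle$; but $\bar a\in F_2$ has length $1$, while $\bar c_1^n=(\bar a\bar b)^n$ and $\bar c_2^n=(\bar a\bar b^{-1})^n$ all have even length $2|n|\geq 2$ for $n\neq 0$, so $a$ stabilizes no tree of scalloped regions other than $T_0$. Any fixed corner of $a$ must therefore be of the form $g\cdot x_0$ with $g\in H$, and the fixing condition yields $g^{-1}ag\in\mathrm{Stab}(x_0)=\langle h\rangle$; projecting to $F_2$ gives $\bar a=1$, a contradiction.
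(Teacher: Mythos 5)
Your proof is correct but takes a considerably more elaborate route than the paper, especially for the no-fixed-point claim. The key difference is worth spelling out.

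For the preserved string, you and the paper obtain essentially the same chain: you lift the Birkhoff Klein bottle $B_1=\pi^{-1}(a)$ and verify non-adjacency by tracking cyclic orders of half-edges at the vertices of $\widetilde{X}$, whereas the paper reads the non-adjacency directly off the labeled corner points of $T_0$ (Figures \ref{fig:tree_labels}, \ref{fig:invariant_a}). Both are fine; you correctly flag the combinatorial verification as the nontrivial step, and it does need to be checked against the specific figure-8 embedding.

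For the no-fixed-point claim, your route (primitivity of $a$ via the projection $H\to H/\langle h\rangle\cong F_2$, Britton's lemma for the HNN extension, and a Bass--Serre argument to restrict which trees $a$ can stabilize, finally landing on $g^{-1}ag\in\langle h\rangle$ and hence $\bar a=1$ in $F_2$) is sound, but it is substantially heavier than what is needed. The paper observes that a fixed point of $a$ in $P$ forces a periodic orbit of $\varphi$ freely homotopic to a power of $a$. Since $a$ is a loop in $N(X)$, that orbit has intersection number $0$ with the transverse torus $T$; but the only periodic orbits of this flow with intersection number $0$ with $T$ are the vertical fibers, which are freely homotopic to $h$. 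Since $a$ is not freely homotopic to any power of $h$ (again seen by projecting to $F_2$), this is a contradiction. This topological argument replaces all of your Bass--Serre machinery with a single observation about the transverse torus, and it is worth internalizing as it is the characteristic move in this setting.

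One small imprecision in your write-up: the assertion that ``any fixed point of a nontrivial element is a corner of a lozenge'' is not an Anosov-like axiom and is not true in general (e.g.\ the trivial bifoliated plane has no lozenges at all). What you actually need, and what you correctly use in the next clause, is only that a fixed point in $P$ corresponds to a lifted periodic orbit of the flow whose stabilizer is cyclic. The ``corner of a lozenge'' claim should be dropped or justified separately if you want to use it.
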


\begin{figure}[h]
  \centering
  \includegraphics[width=0.7\linewidth]{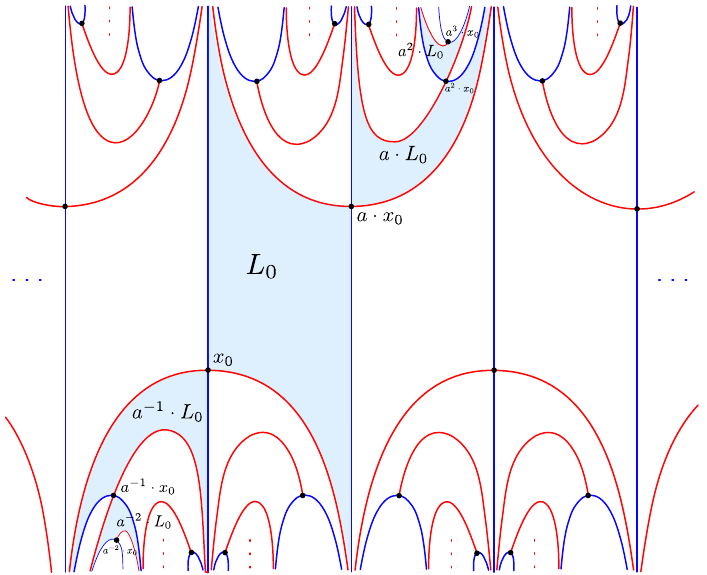}
  \caption{Invariant string of lozenges for $a$.}
  \label{fig:invariant_a}
\end{figure}

\begin{proof}
    From our identification of the corner points in the tree of scalloped regions $T_0$ (see Figure\ref{fig:invariant_a}), we can see that the lozenges $a^n \cdot L_0$ for $n\in \Z$ form a chain of non-adjacent lozenges which is by definition preserved by $a$.
    
    Suppose there existed some point $x\in P$ fixed by $a$. This would mean that the flow $\phi$ on $M$ has some periodic orbit which is freely homotopic to the loop represented by $a$. However, any such periodic orbit would have intersection number $0$ with the torus transverse to the flow, and we know that this is not the case for any periodic orbit except $h$. Since $a$ and $h$ are not freely homotopic, we conclude that $a$ cannot have any fixed points in $P$.
\end{proof}

We have then understood the action of $a,b, c_1,c_2$ and $h$ on the lozenges in $\mathcal{L}(T_0)$. The elements $a$ and $b$ preserve a string of lozenges\, $h$ preserves all lozenges (since it fixes all their corner points), and $c_1, c_2$ preserve scalloped regions $\hat{A}_1, \hat{A}_2$, in such a way that if we numbered the lozenges in $\hat{A}_i$ in an order preserving way, $c_i$ would act by translation by $2$ (recall that even lozenges can only be mapped to even lozenges, and the analogous statement holds for odd lozenges).

However, we are also interested in understanding \begin{enumerate}[(i)]
    \item How an element that preserves a lozenge or scalloped region acts in the interior of said lozenge or scalloped region.
    \item How the elements act on the \emph{closure} of $T_0$, that is, we want to understand the action on the boundary leaves of $T_0$.
\end{enumerate}

We begin by discussing the first item. We know that $h$ fixes all corners of lozenges in $\mathcal{L}(T_0)$. Additionally, we know that a loop representing $h$ is the loop formed by traversing the orbit of $\tilde{x}_0$ exactly once. Since the loop and the coincide in orientation, we can see (see Figure \ref{fig:action_h_leaf}) that $h$ must have $\hat{x}_0 = q(\tilde{x}_0)$ as an expanding fixed point on $\F^-(\hat{x}_0)$. Therefore, since the action of $\pi_1(M)$ is Anosov-like, the action of $h$ on $\F^+(\hat{x}_0)$ must have $\hat{x}_0$ as a contracting fixed point. 

\begin{figure}[h]
  \centering
  \includegraphics[width=0.6\linewidth]{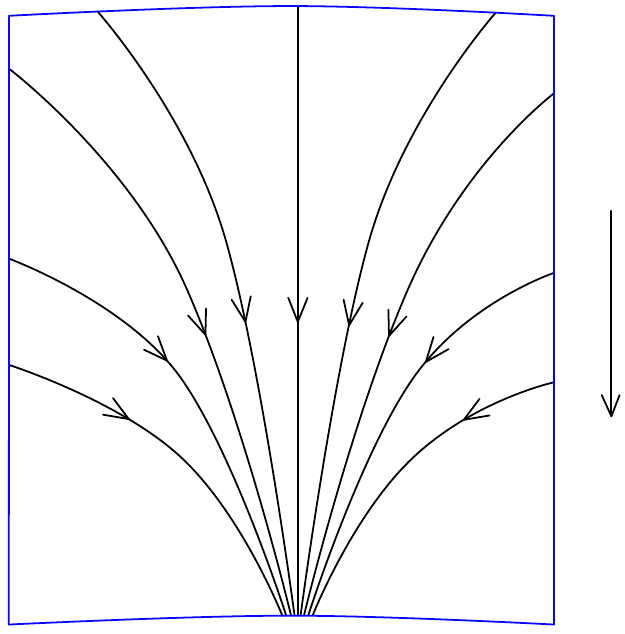}
  \caption{The action of $h$ on the stable leaf of $\widetilde{x_0}$.}
  \label{fig:action_h_leaf}
\end{figure}

Since $\hat{x}_0, a \cdot \hat{x}_0$ are corners of the lozenge $L_0$, the action of $h$ on the stable and unstable leaves of $a\cdot \hat{x}_0$ has to be contracting on the stable and expanding on the unstable leaf. We can then inductively find what the action of $h$ on the stable and unstable leaf of any corner of a lozenge in $\mathcal{L}(T_0)$ is. The result is depicted partially in Figure \ref{fig:action_h_interior}.

\begin{figure}[h]
  \centering
  \includegraphics[width=0.7\linewidth]{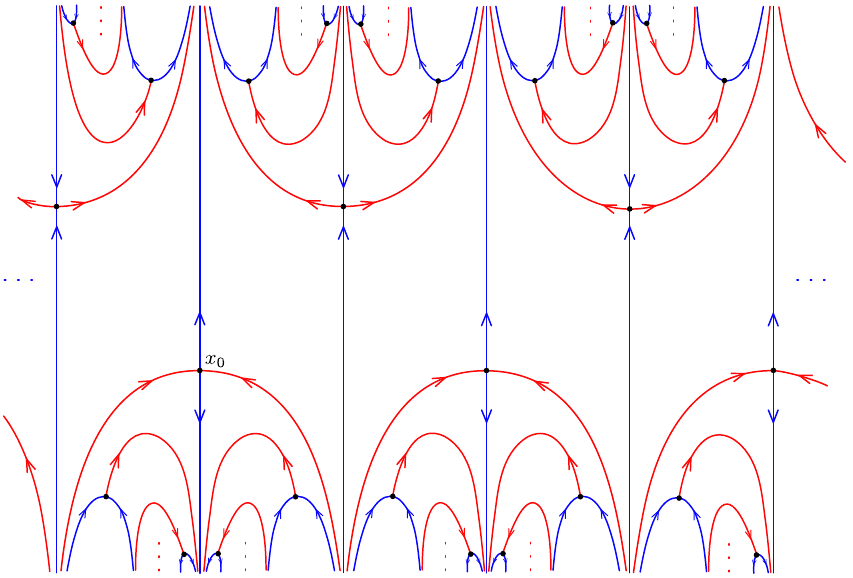}
  \caption{The action of $h$ on the interior of $T_0$.}
  \label{fig:action_h_interior}
\end{figure}

Now, we consider the elements that preserve a scalloped region which is a chain of adjacent lozenges in $\mathcal{L}(T_0)$. There are, up to the action of $H$, only two such scalloped regions: those in which the lozenges in $\mathcal{L}(T_0)$ that form them are adjacent on their stable sides, and those in which they are adjacent on their unstable sides. The first type can always be mapped to $\hat{A}_1$ via an element of $H$, and the second can be mapped to $\hat{A}_2$. Note, however, that there is no element of $H$ that maps $\hat{A}_1$ into $\hat{A}_2$.

Given the above, it is enough to understand the action of $c_1$ on the interior of $\hat{A}_1$, since the stabilizer of $\hat{A}_1$ is $H_1$, the subgroup of $H$ generated by $c_1$ and $h$. In order to do this, we will need to address point $ii)$ above. This is because the action of $c_1$ on the interior of $\hat{A}_1$ is essentially determined by its action on the $s$-boundary leaves of $\hat{A}_1$. These consist of two sets of non-separated leaves, where each set must be preserved by $c_1$, since $c_1$ preserves $\hat{A}_1$. However, a priori it is not true that $c_1$ preserves each of these boundary leaves individually (there exist elements of $H_1$ which do not, despite preserving $\hat{A}_1$). 

Before this, we show 
\begin{lemma}\label{lemma:unique_fixed_scalloped}
For $i =1,2$, the scalloped region $\hat{A}_i$ is the unique scalloped region which is preserved by the subgroup $H_i < \pi_1(M)$ generated by $h$ and $c_i$.
\end{lemma}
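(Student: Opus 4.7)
The plan is to reduce the uniqueness statement to the group-theoretic fact that $H_i$ is self-normalizing in $\pi_1(M)$.

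First, I would identify $\hat{A}_i$ with the projection $q(\tilde{T}_i)$ of the lift $\tilde{T}_i \subset \partial \tilde{N}$ of the transverse torus $T_i$: the Birkhoff annulus $A_i$ and the boundary torus $T_i$ are isotopic in $N(X)$, so the lifts $\tilde{A}_i$ and $\tilde{T}_i$ project to the same scalloped region. The description of $\tilde{N}$ as a countable union of lifted building blocks then shows that distinct lifts of $T$ in $\widetilde{M}$ always project to distinct scalloped regions, so $\mathrm{Stab}_{\pi_1(M)}(\hat{A}_i) = \mathrm{Stab}_{\pi_1(M)}(\tilde{T}_i) = \pi_1(T_i) = H_i$.

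Next, for any scalloped region $S' \subset P$ preserved by $H_i$, Corollary \ref{corollary:BF_all_trees_all_scalloped} gives $S' = q(\tilde{T}')$ for some lift $\tilde{T}'$ of a transverse torus. Since $T_1$ and $T_2$ are identified to a single JSJ torus $T$ in $M$, there is only one $\pi_1(M)$-orbit of such lifts, and I can write $S' = g \hat{A}_i$ for some $g \in \pi_1(M)$, with $\mathrm{Stab}(S') = g H_i g^{-1}$. The preservation hypothesis $H_i \subseteq gH_ig^{-1}$, combined with the fact that the $\Z^2$ subgroups of $\pi_1(M)$ coming from JSJ tori of a graph manifold are maximal abelian, forces the equality $H_i = gH_ig^{-1}$, so $g \in N_{\pi_1(M)}(H_i)$.

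The remaining step, which I expect to be the main obstacle, is to verify that $N_{\pi_1(M)}(H_i) = H_i$. I would handle this through the Bass-Serre tree $\mathbf{T}$ of the HNN decomposition $\pi_1(M) = \pi_1(N(X)) *_t$ (with edge group $\pi_1(T)$ identified with $H_1$ and $H_2$ inside $H = \pi_1(N(X))$ via the two proper inclusions, and stable letter $t$ satisfying $tH_2t^{-1} = H_1$): the critical point is that $H_1$ and $H_2$ are \emph{distinct} subgroups of $H$, as one already sees at the level of $H^{\mathrm{ab}}$ where they map to distinct rank-two subgroups. This distinctness ensures that distinct edges of $\mathbf{T}$ carry distinct stabilizers, so any element of $\pi_1(M)$ normalizing an edge stabilizer must fix that edge and therefore lie in the edge stabilizer itself. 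Combined with the previous paragraph, this gives $g \in H_i$, hence $S' = g\hat{A}_i = \hat{A}_i$, as desired.
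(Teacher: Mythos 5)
Your proof takes a genuinely different route from the paper. The paper's argument is entirely dynamical, taking place in the bifoliated plane: it shows that no boundary leaf of $\hat{A}_1$ is $H_1$-invariant (using the Anosov-like axiom that a leaf preserved by $h$ must carry a fixed point, and ruling this out by the product structure of the scalloped region), then observes that this traps any $H_1$-invariant scalloped region inside $\hat{A}_1$ itself, forcing equality. Your approach works at the level of stabilizers and the Bass--Serre tree, which has the advantage of making the statement ``$\mathrm{Stab}(\hat{A}_i) = H_i$'' explicit and of generalizing cleanly to other totally periodic examples; the paper's version is shorter because it never leaves the plane, but it leans on the unproved assertion that one scalloped region contained in another must equal it.

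Two places in your argument are stated too quickly. First, the maximal-abelian step deserves a citation or a line of verification: you need that $H_i \cong \Z^2$ is maximal abelian in $\pi_1(M)$, which holds here because $\pi_1(N(X))$ surjects onto $F_2$ with cyclic kernel $\langle h\rangle$, so any abelian subgroup properly containing $H_i$ would force a non-cyclic centralizer of $c_i$ inside $F_2$; but this is not a formality for arbitrary graph manifolds. Second, and more substantively, the concluding Bass--Serre step does not follow from ``$H_1$ and $H_2$ are distinct subgroups of $H$'' alone. What you actually need is (a) that $H_1$ and $H_2$ are not conjugate in $\pi_1(N(X))$ (your abelianization argument handles this), \emph{and} (b) that each $H_i$ is self-normalizing in $\pi_1(N(X))$, equivalently that distinct edge stabilizers at a vertex of $\mathbf{T}$ intersect in a proper subgroup (here, in the fiber group $\langle h\rangle \cong \Z$). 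Without (b), an element normalizing $H_1$ could a priori move the edge $e$ to a different edge $e'$ with the same $\Z^2$ stabilizer, and the argument would not close. Fact (b) does hold, because the image of $H_i$ in $F_2 = \pi_1(\Sigma)$ is a maximal cyclic subgroup of a free group and therefore self-normalizing; once you note this the proof goes through, but as written the deduction from ``distinctness'' to ``distinct edges have distinct stabilizers'' is a leap.
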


\begin{proof}
    Without loss of generality, we prove it for $i = 1$. We know that $c_i$ permutes the $u$-boundary leaves of $\hat{A}_1$, so for all $u$-boundary leaves $l^+$ of $\hat{A}_1$, any other scalloped region $\hat{A}_1'$ preserved by $H_i$ would need to be contained in the same connected component of $P\setminus l^+$ as $\hat{A}_1$ (it's easy to see that $\hat{A}_1'$ cannot intersect any of these leaves). 

    Now, consider the action of $h$ on the set of $s$-boundary leaves of $\hat{A}_1$. Suppose there existed one such leaf $l^{-}$ such that $l^-$ is preserved by $h$. Then, the Anosov-like axioms guarantee that there exists a fixed point for $h$ in $l^-$. Since the interior of $\hat{A}_1$ is trivially foliated and $h$ has fixed points in the $u$-boundary of $\hat{A}_1$, we must have that there exists an additional fixed point for $h$ on $l^-$, but this is not possible. Therefore, we have shown that $h$ does not preserve any leaf in the $s$-boundary of $\hat{A}_1$. Substituting $c_1$ by $h$ in the first part of the proof, we conclude that any scalloped region $\hat{A}_1'$ preserved by $h$ must lie in the same connected component $P \setminus l^-$ as $\hat{A}_1$, for all stable boundary leaves $l^-$ of $\hat{A}_1$.

    Thus, we conclude that any scalloped region invariant under $H_1$ must be contained in $\hat{A}_1$. This is only possible if it is the same as $\hat{A}_1$, so we are done.
    
\end{proof}

\begin{lemma}
    We have $t \cdot \hat{A}_2 = \hat{A}_1$.
\end{lemma}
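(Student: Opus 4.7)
The plan is to use the uniqueness statement in Lemma \ref{lemma:unique_fixed_scalloped}, together with the relations defining $\pi_1(M)$, to identify $t\cdot \hat{A}_2$ as the scalloped region $\hat{A}_1$.

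First, I would observe that since $t$ acts on $P$ preserving the bifoliation and the orientations of the two foliations, it sends scalloped regions to scalloped regions. In particular, since $\hat{A}_2$ is a scalloped region, so is $t\cdot \hat{A}_2$.

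Next, I would compute the subgroup of $\pi_1(M)$ that preserves $t\cdot \hat{A}_2$. We already know that $\hat{A}_2$ is preserved by $H_2 = \langle h, c_2\rangle$, so $t \cdot \hat{A}_2$ is preserved by the conjugate subgroup $tH_2 t^{-1}$. Using the relations $tc_2t^{-1} = h$ and $tht^{-1} = c_1^{-1}$ from the presentation of $\pi_1(M)$, we get
\[
tH_2 t^{-1} = \langle tht^{-1}, tc_2 t^{-1}\rangle = \langle c_1^{-1}, h\rangle = \langle h, c_1\rangle = H_1.
\]

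Therefore, $t \cdot \hat{A}_2$ is a scalloped region preserved by $H_1$. By Lemma \ref{lemma:unique_fixed_scalloped}, $\hat{A}_1$ is the unique scalloped region in $P$ with this property, so we conclude $t\cdot \hat{A}_2 = \hat{A}_1$, as desired.

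There is no real obstacle here: the argument is essentially a direct translation between the algebraic relations given by the gluing of the transverse tori and the dynamical picture on the orbit space. The key point is that the uniqueness provided by Lemma \ref{lemma:unique_fixed_scalloped} lets us avoid any explicit geometric tracking of how $t$ moves specific leaves or lozenges.
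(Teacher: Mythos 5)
Your argument is correct and follows exactly the same route as the paper: conjugate $H_2$ by $t$ using the two relations to get $H_1$, note that $tH_2t^{-1}$ preserves $t\cdot\hat{A}_2$, and invoke the uniqueness from Lemma \ref{lemma:unique_fixed_scalloped}. The only difference is cosmetic — the paper's last line contains a typo reading ``$t\cdot\hat{A}_1=\hat{A}_2$'' where it should read ``$t\cdot\hat{A}_2=\hat{A}_1$,'' and your version states the conclusion correctly.
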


\begin{proof}
    By Lemma \ref{lemma:unique_fixed_scalloped} above, we know that the scalloped region $\hat{A}_i$ is the unique scalloped region which is preserved by the subgroup $H_i < \pi_1(M)$.

Recall that from our presentation for $\pi_1(M)$, we have that the following relations are satisfied:
\begin{enumerate}[(1)]
    \item $ t c_2 t^{-1} = h$.
    \item $tht^{-1} = c_1^{-1} $.
\end{enumerate}
    This implies that $t H_2 t^{-1} = H_1$. Since $tH_2t^{-1}$ preserves the scalloped region $t \cdot \hat{A}_2$, we must have that , by uniqueness, $t\cdot \hat{A}_1 = \hat{A}_2$.
\end{proof}

\begin{figure}[h]
  \centering
  \includegraphics[width=1\linewidth]{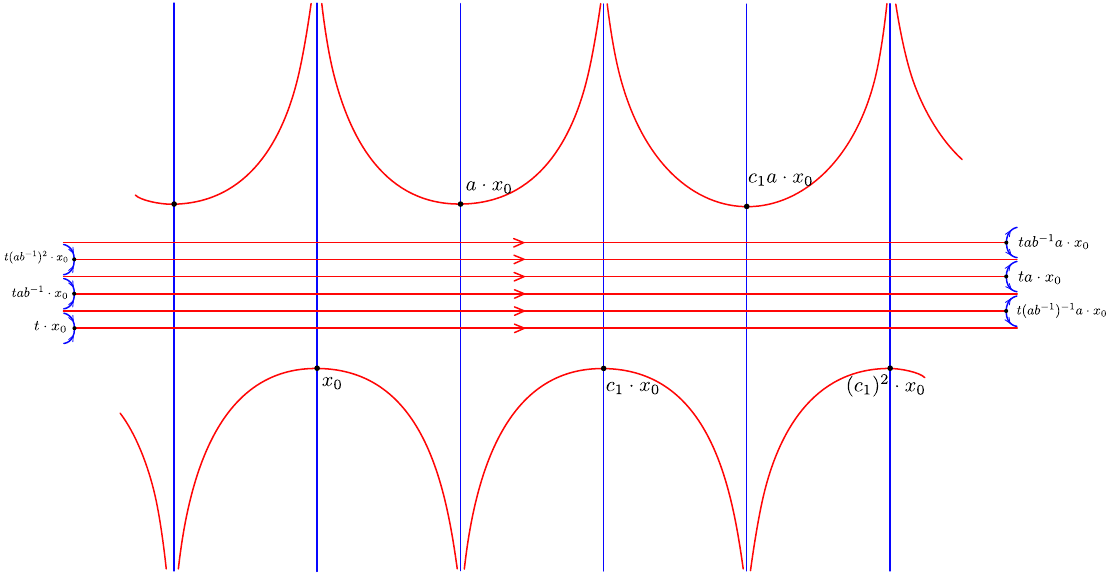}
  \caption{Boundary $s$-leaves of $\hat{A}_1$, and action of $c_1$ on them.}
  \label{fig:bf_action_c1_scalloped_boundary}
\end{figure}

\begin{remark}
    As a consequence of the above Lemma, we can now label all the points in the boundary of $\hat{A}_1$ and $\hat{A}_2$ that are in the $\pi_1(M)$-orbit of $\hat{x}_0$ (see figure \ref{fig:bf_action_c1_scalloped_boundary} for the case of $\hat{A}_1$). In fact (using that any scalloped region formed by lozenges in $\mathcal{L}(T_0)$ can be mapped into $\hat{A}_1$ or $\hat{A}_2$ by an element of $H$) we can label all the points in the $\pi_1(M)$-orbit of $\hat{x}_0$ which are in the boundary leaves of $T_0$. 
\end{remark}
Using the Lemma, we can completely describe how $h$ and $c_i$ act on the boundary leaves of $\hat{A}_i$. The following is a direct consequence of the relations $tc_2t^{-1} = h, \, tht^{-1} = c_1^{-1}$.
\begin{proposition}
The action of $h$ on the two sets of non-separated $s$-boundary leaves of $\hat{A}_1$ is transitive, and $c_1$ fixes each of these boundary leaves.

\begin{figure}[h]
  \centering
  \includegraphics[width=1\linewidth]{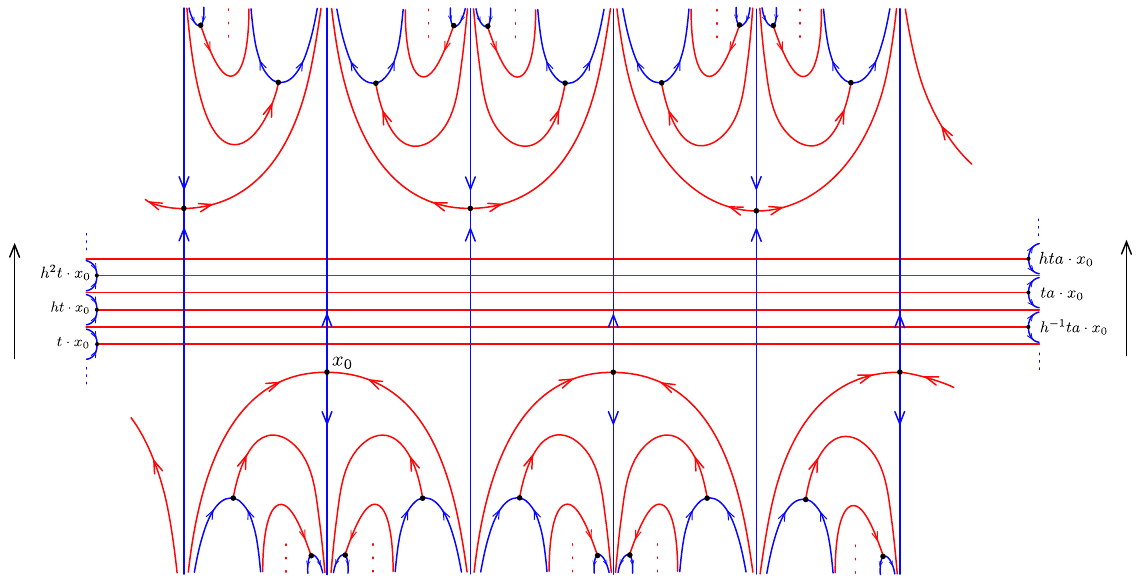}
  \caption{Action of $h$ on $s$-boundary leaves of $\hat{A}_1$}.
  \label{fig:bf_block}
\end{figure}

An analogous statement holds if we replace the $s$-boundary leaves of $\hat{A}_1$ by the $u$-boundary leaves of $\hat{A}_2$, and $c_1$ by $c_2$.
\end{proposition}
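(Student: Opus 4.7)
The plan is to deduce the statement directly from the two relations $tc_2t^{-1}=h$ and $tht^{-1}=c_1^{-1}$ in $\pi_1(M)$, combined with the fact, established in the preceding Lemma, that $t\cdot\hat{A}_2=\hat{A}_1$. Since every element of $\pi_1(M)$ acts on $P_\varphi$ by a homeomorphism preserving both foliations and their orientations, the map $t$ sends $\F^-$-leaves on $\partial\hat{A}_2$ to $\F^-$-leaves on $\partial\hat{A}_1$ and preserves non-separation of leaves. Thus $t$ restricts to a bijection between the two families of non-separated $s$-boundary leaves of $\hat{A}_2$ and the two families of non-separated $s$-boundary leaves of $\hat{A}_1$. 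The strategy is then to prove the analogous statement for the action of $H_2=\langle c_2,h\rangle$ on $\hat{A}_2$ and transfer it by conjugation by $t$.

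For the analysis on $\hat{A}_2$, the key observation is that the $s$-boundary leaves of $\hat{A}_2$ are exactly the full stable leaves through the corners of the lozenges of $\mathcal{L}(T_0)$ contained in $\hat{A}_2$. Indeed, since $T_2$ is outgoing, these lozenges are adjacent along their $u$-sides, so at each shared corner $z$ both $\F^-$ half-leaves at $z$ occur as $s$-sides of distinct lozenges in the chain, and each of these $s$-sides is shared with a lozenge of $\mathcal{L}(T_0)$ lying outside $\hat{A}_2$. Hence the full leaf $\F^-(z)$ lies on $\partial\hat{A}_2$. Alternating along the chain, the corners split into two classes, which correspond exactly to the two families of pairwise non-separated $s$-boundary leaves prescribed by the definition of a scalloped region.

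Given this identification, the two claims for $\hat{A}_2$ are immediate. Because $c_2$ translates the chain of lozenges of $\mathcal{L}(T_0)$ contained in $\hat{A}_2$ by $2$, it shifts the corners by $2$ and therefore acts as a shift by one within each of the two families, which is transitive on each family. And because $h$ fixes every corner of every lozenge of $\mathcal{L}(T_0)$, it preserves every stable leaf through such a corner, hence fixes each $s$-boundary leaf of $\hat{A}_2$ individually. Applying $h=tc_2t^{-1}$ then yields the transitive action of $h$ on each family of non-separated $s$-boundary leaves of $\hat{A}_1$, and applying $c_1^{-1}=tht^{-1}$ yields that $c_1$ fixes each such leaf. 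The analogous statement for $\hat{A}_2$ is obtained by the symmetric argument: in $\hat{A}_1$ the $u$-boundary leaves are identified with full unstable leaves through corners of the lozenges of $\mathcal{L}(T_0)$ contained in $\hat{A}_1$, on which $c_1$ acts transitively within each family and $h$ acts trivially, and these properties are transferred back to $\hat{A}_2$ via the inverse relations $t^{-1}ht=c_2$ and $t^{-1}c_1 t=h^{-1}$.

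The step I expect to be the most delicate is the identification of the two families of non-separated $s$-boundary leaves of $\hat{A}_2$ with the stable leaves through the corners of lozenges of $\mathcal{L}(T_0)$ contained in $\hat{A}_2$. Everything else is formal manipulation of the relations in $\pi_1(M)$, but this identification requires a careful local analysis at a shared corner, tracking which half-leaves bound lozenges of $\hat{A}_2$ versus neighbouring lozenges of $T_0$, and matching the resulting combinatorial picture with the four-family boundary structure of a scalloped region recalled in the background section.
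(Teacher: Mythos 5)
Your proof is correct and follows exactly the approach the paper intends: the paper states the proposition as ``a direct consequence of the relations $tc_2t^{-1}=h,\,tht^{-1}=c_1^{-1}$,'' and you have filled in what that consequence actually is --- namely, reduce to understanding the actions of $h$ (which fixes every corner of every lozenge in $\mathcal{L}(T_0)$) and $c_2$ (which translates by $2$ along $\hat{A}_2$) on the $s$-boundary of $\hat{A}_2$, and then transfer through $t\cdot\hat{A}_2=\hat{A}_1$. Your identification of the two families of non-separated $s$-boundary leaves of $\hat{A}_2$ with the full stable leaves through the alternating ``top'' and ``bottom'' corners of the $u$-adjacent chain of lozenges in $\mathcal{L}(T_0)$ is the only point the paper leaves implicit, and your local argument at a shared corner is the right justification; everything else you do is the formal bookkeeping the paper is gesturing at.
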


We have then understood the action of all elements of $H$ on $\overline{T_0}$. For any other tree of scalloped regions $T$, we know that $T$ can be sent to $T_0$ by some element of $\pi_1(M)$, so that the stabilizer of $T$ is just a conjugate of $H$. In order to complete our understanding of the action of $\pi_1(M)$ on $P$, we only need to understand elements that do not preserve any tree of scalloped regions, which means understanding the action of the element $t$.
\begin{comment}
\begin{figure}[h]
  \centering
  \includegraphics[width=0.5\linewidth]{}
  \caption{Leaves on the $u$-boundary of $\hat{A}_2$, and action of $c_2$.}
  \label{fig:bf_block}
\end{figure}
\end{comment}
We have seen above that $t^{-1} \hat{A}_1 = \hat{A}_2$. Since $\hat{A}_1$ is a chain of adjacent lozenges in $\mathcal{L}(T_0)$, this tells us what happens to $T_0$ under multiplication by $t^{-1}$:

\begin{proposition}
    The image $t^{-1} T_0$ of $T_0$ under $t^{-1}$ is a tree of scalloped regions such that $ \left(t^{-1}T_0 \right) \cap T_0 = \hat{A}_2$. 
    
    That is, $\hat{A}_2$ is a chain of lozenges in $\mathcal{L}(t^{-1}T_0)$ which are adjacent on their stable sides, so that $\Delta(t^{-1}T_0, T_0) = \- 1$ %\todo{change def so that this is -1, looks nicer}
\end{proposition}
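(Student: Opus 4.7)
The plan is to establish each of the three assertions of the proposition in turn, leveraging the description of trees of scalloped regions as projections of lifts of the JSJ piece $N(X)$ from Proposition \ref{proposition:BF_periodic_projection_trees} together with the intersection structure given by Proposition \ref{proposition:BF_tree_intersection}.

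First I would observe that $t^{-1}T_0$ is itself a tree of scalloped regions. Writing $T_0 = p(\widetilde{N})$ where $\widetilde{N} \subset \widetilde{M}$ is the connected component of $\pi^{-1}(N(X))$ containing $\tilde{x}_0$, the deck transformation $t^{-1}$ permutes the connected components of $\pi^{-1}(N(X))$, so $t^{-1}\widetilde{N}$ is again such a component, and by Proposition \ref{proposition:BF_periodic_projection_trees} its image $t^{-1}T_0 = p(t^{-1}\widetilde{N})$ is a tree of scalloped regions. The non-triviality $t^{-1}T_0 \neq T_0$ is read off the presentation of $\pi_1(M)$: $t$ lies outside the subgroup $H = \langle a, b, h\rangle = \mathrm{Stab}(T_0)$.

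Next, the equality $(t^{-1}T_0) \cap T_0 = \hat{A}_2$ splits into two inclusions. The inclusion $\supseteq$ is immediate: $\hat{A}_2 \subseteq T_0$ by construction, and the preceding lemma gives $\hat{A}_2 = t^{-1}\hat{A}_1 \subseteq t^{-1}T_0$ because $\hat{A}_1 \subseteq T_0$. For the reverse inclusion, since the two trees are distinct, the second statement of Proposition \ref{proposition:BF_tree_intersection} furnishes a scalloped region $S_0 \subset T_0$ with $(t^{-1}T_0) \cap T_0 \subseteq S_0$; since $\hat{A}_2$ is itself a scalloped region with $\hat{A}_2 \subseteq S_0$, maximality of scalloped regions as chains of adjacent lozenges through any of their lozenges forces $S_0 = \hat{A}_2$. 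This is the main subtlety of the proof: one must note that a scalloped region cannot properly contain another, since the chain structure of lozenges through any fixed lozenge is uniquely determined.

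Finally, to determine the adjacency type of $\hat{A}_2$ inside $\mathcal{L}(t^{-1}T_0)$, I would use that $\hat{A}_1$ is the projection of a lift of the incoming torus $T_1$, so the analysis in the proof of Corollary \ref{corollary:BF_lifted_torus_projects_scalloped} shows that the lozenges in $\mathcal{L}(T_0)$ whose union is $\hat{A}_1$ are adjacent along stable half-leaves. Since $t^{-1}$ is a homeomorphism of $P_\varphi$ preserving both foliations and their orientations, it induces a bijection $\mathcal{L}(T_0) \to \mathcal{L}(t^{-1}T_0)$ that preserves the type (stable versus unstable) of adjacency between lozenges. Applying $t^{-1}$ to the stable-adjacent chain representing $\hat{A}_1$ therefore yields a stable-adjacent chain of lozenges in $\mathcal{L}(t^{-1}T_0)$ whose union is $\hat{A}_2$, and the asserted value of $\Delta(t^{-1}T_0, T_0)$ follows by reading off Definition \ref{definition:BF_difference_level_1} applied to this adjacency pattern.
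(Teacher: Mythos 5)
The paper states this proposition without proof, treating it as an immediate consequence of the preceding Lemma ($t\cdot\hat{A}_2 = \hat{A}_1$, equivalently $t^{-1}\hat{A}_1 = \hat{A}_2$) together with the framework developed in the section. Your argument is the natural expansion of that reasoning and is essentially correct, combining the deck-transformation description of trees (Proposition~\ref{proposition:BF_periodic_projection_trees}), the Lemma, and the two parts of Proposition~\ref{proposition:BF_tree_intersection} in the intended way.

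One step is phrased a bit loosely: you appeal to ``maximality of scalloped regions as chains of adjacent lozenges,'' but scalloped regions are not maximal chains of adjacent lozenges (trees of scalloped regions are). The fact you actually need --- that a scalloped region cannot properly contain another --- is true, but it is cleaner to justify it via Lemma~\ref{lemma:BF_interior_scalloped_no_corner}: if $\hat{A}_2 \subsetneq S_0$, any lozenge of $\hat{A}_2$ has its corners outside the interiors of lozenges of $S_0$, so it must already be a lozenge of $S_0$; then the (say) $s$-adjacent chain of lozenges through a shared lozenge is uniquely determined and coincides for both regions, forcing $\hat{A}_2 = S_0$. Alternatively, one can invoke Corollary~\ref{corollary:BF_all_trees_all_scalloped} and observe that distinct scalloped regions inside $T_0$ correspond to distinct boundary planes of the lift $U_0$, so two of the same transversality type are disjoint. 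Finally, on the value of $\Delta(t^{-1}T_0, T_0)$: plugging the $s$-adjacency of the $\mathcal{L}(t^{-1}T_0)$-lozenges in $\hat{A}_2$ into Definition~\ref{definition:BF_difference_level_1} as written yields $+1$ rather than $-1$; the paper's own statement of the proposition, the Remark that follows it, and Proposition~\ref{prop:level_pm1}'s sign conventions are not all mutually consistent here, so this is a convention issue in the source rather than a flaw in your derivation.
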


\begin{remark}
    By a similar reasoning, $t T_0$ is a tree of scalloped regions such that $\left( t T_0\right) \cap T_0 = \hat{A}_1$, and $\Delta( tT_0, T_0 ) = 1$. 
\end{remark}

By repeatedly applying $t$ we get
\begin{proposition}
    For $n\geq 1$, $t^n T_0$ is a tree of scalloped regions such that $\Delta(t^nT_0, T_0) = n$ and $ \left( t^{n}T_0 \right) \cap T_0 \subset \left(t^{n-1}T_0\right) \cap T_0 \subset \cdots \subset \left( t T_0\right) \cap T_0 \subset \hat{A}_1$.
\end{proposition}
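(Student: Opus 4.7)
The plan is to argue by induction on $n$, with the base case $n=1$ given by the preceding remark. Suppose the three claims $(i)$ \emph{$t^k T_0$ is a tree of scalloped regions}, $(ii)$ $\Delta(t^k T_0, T_0) = k$, and $(iii)$ the nested inclusion hold for all $1 \le k \le n-1$; I will deduce them for $n$. Throughout I let $\widetilde N \subset \widetilde M$ be the connected component of $\pi^{-1}(N(X))$ with $q(\widetilde N) = T_0$, so that $q(t^k \widetilde N) = t^k T_0$, and I let $\widetilde T_1 \subset \partial \widetilde N$ be the lift of $T_1$ with $q(\widetilde T_1) = \hat A_1$.

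Claim $(i)$ is immediate: $t^n \in \pi_1(M)$ acts on $P_\varphi$ by a bifoliated homeomorphism, and being a tree of scalloped regions is an intrinsic property of the bifoliation by Corollary \ref{corollary:BF_all_trees_all_scalloped}. For claim $(ii)$, note that $\Delta$ is $\pi_1(M)$-equivariant, because its defining ingredients (adjacency type of lozenges, scalloped regions appearing as pairwise intersections of trees) depend only on the bifoliated data. Applying $t^{k-1}$ to the base-case pair $(T_0, tT_0)$ therefore gives $\Delta(t^k T_0, t^{k-1}T_0) = \Delta(tT_0, T_0) = 1$ for every $1 \le k \le n$, and consecutive trees in the sequence $T_0, tT_0, \ldots, t^n T_0$ meet in the scalloped regions $t^{k-1}\hat A_1 \ne \varnothing$. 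Thus the sequence is a valid chain in the sense of Proposition \ref{prop:level_pm1}, and summing the $n$ unit contributions yields $\Delta(t^nT_0, T_0) = n$.

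Claim $(iii)$ is the substantive step. It suffices to show $T_0 \cap t^nT_0 \subset T_0 \cap t^{n-1}T_0$; iterating (or equivalently applying $t^j$ to the case at level $n-j$) will give the full nested chain $T_0 \cap t^nT_0 \subset T_0 \cap t^{n-1}T_0 \subset \cdots \subset T_0 \cap tT_0 = \hat A_1$. The plan is to translate the problem into the universal cover: $T_0 \cap t^k T_0$ consists of those $\widetilde\varphi$-orbits (up to the $\pi_1(M)$-action) whose lifts meet both $\widetilde N$ and $t^k \widetilde N$. By $t^{k-1}$-equivariance of the base case, the plane $t^{k-1}\widetilde T_1$ is precisely the shared boundary of $t^{k-1}\widetilde N$ and $t^k \widetilde N$, so the pieces $\widetilde N, t\widetilde N, \ldots, t^n \widetilde N$ form a chain of successively adjacent lifts of $N(X)$. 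Given an orbit $\gamma$ meeting $\widetilde N$ and $t^n \widetilde N$, it must traverse some sequence of lifts $\widetilde N = U_0, U_1, \ldots, U_m = t^n \widetilde N$ with consecutive pieces adjacent along boundary planes. Lemma \ref{lemma:BF_no_tree_loop} guarantees that the dual graph (vertices = lifts of pieces $N(X_i)$, edges = shared boundary planes) is acyclic, hence a tree, so there is a unique non-backtracking chain between any two pieces. Consequently $U_k = t^k \widetilde N$ for each $k$, the orbit $\gamma$ must pass through $t^{n-1}\widetilde N$ as well, and the inclusion follows.

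The main obstacle is this last reduction, namely upgrading Lemma \ref{lemma:BF_no_tree_loop} from ``no loops'' to ``unique chain between fixed endpoints.'' The standard argument is that any two non-backtracking chains between the same two pieces would concatenate into a non-trivial closed chain of adjacent lifts, contradicting the lemma; care is needed to ensure that backtracking (an orbit could in principle leave a piece and re-enter the same one via a different boundary plane) is ruled out, which follows from the flow being transverse to each boundary plane with a fixed incoming/outgoing direction. Once this uniqueness is in hand, the three claims assemble cleanly and the induction is complete.
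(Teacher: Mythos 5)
The paper states this proposition without proof (it appears after the phrase \emph{``By repeatedly applying $t$ we get''}), and your argument fills in exactly the reasoning the paper's machinery suggests: $t$-equivariance of the bifoliated structure for claims (i) and (ii), and, for claim (iii), the orbit-chain argument in $\widetilde{M}$ already used in the proof of Proposition \ref{proposition:BF_tree_intersection} together with the tree structure guaranteed by Lemma \ref{lemma:BF_no_tree_loop}. Your proof is correct and takes essentially the same approach; your observation that transversality of the flow to the separating boundary planes (each crossed at most once) is what rules out an orbit re-entering a piece is precisely the right point to make, and combined with acyclicity of the dual graph it yields the uniqueness of the traversed chain $\widetilde N, t\widetilde N, \ldots, t^n\widetilde N$.
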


An analogous statement holds for $n \leq -1$, replacing $\hat{A}_1$ with $\hat{A}_2$. 

\begin{proposition}
    There exists a point $z$ in the interior of $L_0$ which is fixed by $t$, and $\mathrm{Fix}(t) = \{ z\}$.
\end{proposition}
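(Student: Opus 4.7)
The first step is to exploit that $t$ shifts levels by $+1$ along the axis $\{t^n T_0\}_{n\in\mathbb{Z}}$ of trees of scalloped regions, as already observed in the preceding material. From $t\hat{A}_2 = \hat{A}_1$ and $L_0 = \hat{A}_1 \cap \hat{A}_2$, one obtains immediately $L_0 \subset T_0 \cap tT_0 \cap t^{-1}T_0$. Iterating the connectivity argument behind Lemma~\ref{lemma:BF_no_tree_loop} --- any orbit of $\widetilde{\varphi}$ that meets both $t^kT_0$ and $t^mT_0$ must meet every tree on the axis segment between them --- and projecting to $P_\varphi$, one gets $T_0\cap t^nT_0 \subset \hat{A}_1$ for $n\geq 1$ and $T_0\cap t^{-n}T_0 \subset \hat{A}_2$ for $n\geq 1$, so that
\[
\bigcap_{n\in\mathbb{Z}} t^n T_0 \;\subset\; \hat{A}_1\cap\hat{A}_2 \;=\; L_0.
\]

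For existence, I would apply Brouwer's fixed point theorem to the extension of $t$ to the closed disk $P\cup\partial P$. Any fixed point $z\in P$ of $t$ must lie in every translate $t^nT_0$ (pick any initial tree containing $z$ and use that $t^n z=z$), so $z\in \overline{L_0}$ by the preceding display. A direct computation with the presentation of $\pi_1(M)$ shows that $t$ fixes neither corner $\hat{x}_0$ nor $a\cdot\hat{x}_0$ of $L_0$; by axiom~A1 this implies that no boundary leaf of $L_0$ is fixed by $t$, so any fixed point in $\overline{L_0}\cap P$ must lie in $\mathrm{int}(L_0)$. To rule out the case where the only Brouwer fixed point of $t$ on $P\cup\partial P$ lies on $\partial P$, I would analyze the dynamics of $t$ on $\partial P$: the only fixed ideal points are those associated with the two ends of the axis $\{t^nT_0\}$, and a Lefschetz index count (each such boundary fixed point has index $0$, while the total index on the disk is $1$) forces an interior fixed point $z\in \mathrm{int}(L_0)$.

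For uniqueness, suppose $z'\in P$ is a second fixed point of $t$. By the same reasoning, $z'\in \mathrm{int}(L_0)$. By axiom~A1 applied to $z$ and to $z'$, the leaves $\mathcal{F}^{\pm}(z)$ and $\mathcal{F}^{\pm}(z')$ are all fixed by $t$, each carrying a unique saddle-type fixed point. Since $L_0$ is trivially foliated, the intersection points $\mathcal{F}^+(z)\cap\mathcal{F}^-(z')$ and $\mathcal{F}^+(z')\cap\mathcal{F}^-(z)$ are two further fixed points of $t$ in $\mathrm{int}(L_0)$, producing a rectangle of four fixed points with sides on $t$-invariant leaves. But then each such side-leaf would carry two distinct saddle-type fixed points of $t$, contradicting the uniqueness part of~A1. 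Hence $z$ is unique.

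The main obstacle will be making the Lefschetz/index count on $\partial P$ rigorous, because the fixed set of $t$ on $\partial P$ may in principle contain intervals coming from infinite product regions rather than only isolated points. A cleaner route that avoids Lefschetz entirely would be to construct a nested sequence of closed topological disks inside $\overline{L_0}\subset P\cup\partial P$ that are either $t$-invariant or cyclically permuted by $t$ and whose intersection is non-empty by compactness; this forces a fixed point directly, at the cost of a more tedious combinatorial bookkeeping of the overlaps $L_0\cap tL_0\cap t^{-1}L_0$ and their iterates.
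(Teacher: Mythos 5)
Your route is genuinely different from the paper's, and it has two gaps, one of which you flag yourself.

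For existence, your Brouwer/Lefschetz argument on the compactified disk $P\cup\partial P$ must rule out a boundary fixed point, and you correctly observe that the index count is problematic because the fixed set of $t$ on $\partial P$ may contain intervals. The paper avoids the issue entirely by going one dimension down: the set $I^+$ of $\F^+$-leaves meeting $L_0$ is a closed interval in the leaf space $\Lambda^+$, and the relation $t\cdot\hat{A}_2=\hat{A}_1$ forces the induced map of $\Lambda^+$ to send $I^+$ into its own interior, hence to fix a leaf $l^+$ in $\mathrm{int}(I^+)$ (this is just the intermediate value theorem, not Brouwer). The same argument applied to $t^{-1}$ on $I^-$ produces a $t$-invariant $\F^-$-leaf $l^-$, and $z = l^+\cap l^-$ is the desired fixed point in $\mathrm{int}(L_0)$. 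This sidesteps the boundary-behavior problem completely and is worth keeping in mind as a general trick for Anosov-like actions.

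A second, unacknowledged gap appears in the step ``any fixed point $z\in P$ of $t$ must lie in every translate $t^nT_0$.'' From $z\in T'$ for some tree $T'$, invariance only gives $z\in t^nT'$ for all $n$, which says nothing about $t^nT_0$ unless $T'$ is already known to lie on the axis $\{t^nT_0\}$. The conclusion ``$z\in\overline{L_0}$'' therefore does not follow as stated, and neither does the uniqueness step ``$z'\in\mathrm{int}(L_0)$ by the same reasoning,'' on which your rectangle argument depends. The paper's uniqueness step is instead a two-line consequence of its cited machinery: the fixed point $z$ lies in the interior of a lozenge contained in the scalloped region $\hat{A}_1$, so by Lemma~\ref{lemma:BF_interior_scalloped_no_corner} it cannot be the corner of any lozenge in $P$, and Proposition~\ref{proposition:intro_fixed_points_joined_lozenges} (fixed points of the same element are joined by a chain of lozenges with those points as corners) then forbids a second fixed point. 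Your rectangle argument would be sound once $\mathrm{Fix}(t)\subset\mathrm{int}(L_0)$ is actually established, but note it also implicitly uses that the saddle fixed point on a $t$-invariant leaf furnished by axiom~A1 is unique; this is true (the expanding/contracting dynamics precludes a second fixed point on the leaf) but should be spelled out.
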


\begin{proof}
  
Since $L_0$ is a lozenge, the leaves of $\F^+$ ($\F^-$) that intersect $L_0$ form closed intervals $I^+$ (resp. $I^-$) in the leaf space $\Lambda^+$ (resp. $\Lambda^-$). Since $t\cdot \hat{A}_2 = \hat{A}_1$, the action of $t$ on $\Lambda^+$ maps $I^+$ into its interior. 

Therefore, there exists a leaf $l^+$ in the interior of $I^+$ which is preserved by $t$. This implies that $l^+$ contains a unique fixed point for $t$. Using the same argument for $t^{-1}$, we see that there exists a leaf $l^-$ in the interior of $I^-$ which is preserved by $t^{-1}$, and therefore by $t$. The intersection of $l^+$ and $l^-$ must then be a fixed point $z$ for $t$ and lie in the interior of $L_0$. 

By Lemma \ref{lemma:BF_interior_scalloped_no_corner}, the point $z$ cannot be the corner of a lozenge in $P$. Therefore, Proposition \ref{proposition:intro_fixed_points_joined_lozenges} tells us that $z$ is the unique fixed point of $t$, as we wanted.
    
\end{proof}

We have then described the action of all elements in our generating set on $T_0$ (or, in the case of the element $t$, what the image of $T_0$ under its action is). Their action on the rest of the plane can be deduced from what we have shown here: in a sense, for our choice of generators, the ``interesting'' dynamics occurs in the trees of scalloped regions $t\cdot T_0$, $T_0$ and $t^{-1}\cdot T_0$.

\chapter{Bifoliated planes of Franks-Williams flows}\label{chapter:FW}

\section{Introduction}

The first goal of this chapter is to describe the bifoliated plane $(P_A, \F^+_A, \F^-_A)$ associated to a Franks-Williams Anosov flow $\varphi_A: \R\times M_A \to M_A$, where $A \in \mathrm{SL}(2,\Z)$ is a hyperbolic matrix. Our second goal is to give a proof of Theorem \ref{introthm:FW_main_theorem} and Corollary \ref{introcor:FW_cor}.

We begin by briefly reviewing the construction of the Franks-Williams flows in Sections \ref{subsection:DA_blowup} and \ref{subsection:construction_of_FW_flow}, following the procedure outlined in \cite{FW1980anomalous} and \cite{YangYu2022classifying}. Section \ref{subsection:DA_blowup} discusses the ``derived from Anosov'' blowup applied to a linear hyperbolic torus map, and Section \ref{subsection:construction_of_FW_flow} the construction of the flow $\varphi_A$ and the $3$-manifold $M_A$ via a surgery procedure.

Then, in Sections \ref{subsection:intersections_leaves_universal_cover_DA} and \ref{subsection:cover_of_M1} we establish some facts regarding the configurations of stable and unstable leaves of the lift of $\varphi_A$ to a cover of a JSJ piece of $M_A$. The main goal of these sections is the proof of Proposition \ref{proposition:bijection_M1_integers}, which will later be a key component of the proof of Theorem \ref{theorem:FW_main_theorem}.

In Section \ref{subsection:infinite_perfect_fits} we first describe infinite perfect fits, a particular type of chain of lozenges which cover most of the bifoliated planes $(P_A, \F^+_A, \F^-_A)$. We describe the infinite perfect fits in $P_A$, and the regions of $\widetilde{M_A}$ which project to them. 

In Section \ref{subsection:crossing_and_non_crossing_IPFs} we complete our description of $(P_A, \F^+_A, \F^-_A)$ by discussing a particular type of infinite perfect fit in $P_A$. These will be the connection between the bifoliated plane and algebraic data coming from the matrix $A$. They will allow us to associate with the bifoliated plane $(P_A, \F^+_A, \F^-_A)$ an invariant consisting of an ordered set together with two distinguished subsets, where these sets are defined in terms of patterns of intersections of leaves and lozenges in the plane.

The main results in this section are Proposition \ref{proposition:equivalence_crossing_integer_points} and Corollary \ref{corollary:FW_isomorphic_implies_equivalent_patterns}. The former connects our invariant to intersection patterns of the integer lattice and lines parallel to the eigenspaces of $A$. The latter shows that our invariant is indeed an invariant of the bifoliated planes associated to Franks-Williams flows.

In Section \ref{subsection:definitions_elementary_props_continued_fractions} we review basic definitions and properties from the theory of continued fractions. Results in this section can be found in or obtained by using the same techniques as in the classical text \cite{khinchin1964continued}, so we do not provide proofs for all of them. We would like to thank Christophe Leuridan \cite{leuridan_2025} for informing us of the connection between the continued fraction approximation of an irrational number $\alpha$ and the behavior of the sequence $\alpha n \mod 1$. 

In Section \ref{subsection:parallelograms_integer_points_continued_fractions} we define a bi-infinite sequence associated with the contracting eigenspace of a matrix $A$ in terms of parallelograms in the plane which do not contain integer points. Then, we relate this sequence to the slope of this eigenspace, showing that the sequence must be periodic and have periodic part equal to the periodic part of the continued fraction expansion of the slope. This relationship can essentially be deduced, with some work, from the contents of Chapter 3 of Karpenkov's book \cite{karpenkov2022geometry}. We present our own proof, firstly for completeness, and secondly since properly showing how one deduces the results from \cite{karpenkov2022geometry} would require us to introduce unnecessary concepts and definitions from integer geometry.

Finally, in Section \ref{subsection:proofs_of_FW_theorems} we prove Theorem \ref{introthm:FW_main_theorem}, restated more precisely as Theorem \ref{theorem:FW_main_theorem}, and also prove Corollary \ref{introcor:FW_cor} using the results from the previous two sections.

\section{The Franks-Williams construction from a linear torus map}\label{section:FW_construction}

The original construction of the Anosov flow commonly known as \emph{the Franks-Williams flow} by Franks and Williams in \cite{FW1980anomalous} starts by considering the hyperbolic linear map $f_A:T^2\to T^2$ defined by the matrix $A= \begin{pmatrix}
    2&1\\1&1
\end{pmatrix} \in \mathrm{SL}(2,\Z) $, where $T^2$ is the two-dimensional torus. 

One then performs a DA (``derived from Anosov'', see \cite{williams1970DA} blowup on this map, obtaining a diffeomorphism $g_A$ of the torus which is semiconjugate to $f_A$, but (in contrast with $f_A$) has a proper hyperbolic attractor $\Lambda \subsetneq T^2$. 

However, this procedure can be carried out for any choice of hyperbolic matrix $A\in \mathrm{SL}(2,\Z)$. We begin this section by describing, in Section \ref{subsection:DA_blowup}, the relevant features of the map $g_A$ and its lift $\widetilde{g_A}$ to the universal cover of the torus.

In \ref{subsection:construction_of_FW_flow} we briefly describe how the flow $\varphi_A$ is constructed as well as some important features of the flow, following \cite{FW1980anomalous} and \cite{YangYu2022classifying}.

Then, in \ref{subsection:intersections_leaves_universal_cover_DA} we compare the intersection patterns of stable and unstable leaves for $\widetilde{f_A}$ and $\widetilde{g_A}$. We then define the notion of crossing and non-crossing points, which will be used later to define the invariant that allows us to determine certain bifoliated planes obtained from Franks-Williams flows are not isomorphic. The contents of subsection \ref{subsection:intersections_leaves_universal_cover_DA} are essential to the definition of this invariant and therefore to the proof of Theorem \ref{theorem:FW_main_theorem}, but are not a prerequisite for subsections \ref{subsection:construction_of_FW_flow} and \ref{subsection:infinite_perfect_fits}, so may be skipped initially if one wishes to first visualize a partial picture of the bifoliated plane $(P_A, \F^+_A, \F^-_A)$.

In \ref{subsection:cover_of_M1}, we define a cover of a JSJ piece of $M$ which will be useful when working with the universal cover $\widetilde{M}$ in order to understand the bifoliated plane $(P_A, \F^+_A, \F^-_A)$. We define crossing and non-crossing leaves of the foliation lifted to this cover, and relate these in Proposition \ref{proposition:bijection_M1_integers} to intersections of certain stable and unstable leaves for $\widetilde{g_A}$.

\subsection{DA blowup of a hyperbolic linear map}\label{subsection:DA_blowup}

The goal of this section is to give, without proofs, a brief description of the classical construction of the so-called ``DA'' map starting with a hyperbolic linear map on the torus. In Proposition \ref{proposition:DA_properties} we summarize some important properties of this map.

Let $A \in \mathrm{SL}(2,\Z)$ be a matrix satisfying $\mathrm{tr}(A) > 2$. Then, $A$ has two real eigenvalues $\lambda_+$ and $\lambda_-$, such that $0 < \left| \lambda_- \right| < 1< \left|\lambda_+\right|$. Since $A \in \mathrm{SL}(2,\Z)$, it induces a diffeomorphism $f_A :T^2 \to T^2$ of the torus $T^2 = \R^2/\Z^2$. 

The foliations of the plane by straight lines of irrational slope parallel to the eigenspaces $E^+, E^-$ of $A$ corresponding to $\lambda_+$ and $\lambda_-$ descend to foliations $\mathcal{G}^u, \mathcal{G}^s$ of the torus. Each foliation consists of dense leaves  in the torus which are immersed topological lines. 

The map $f_A$ is an Anosov diffeomorphism of the torus, as can be seen via a brief calculation, and the foliations $\G^s, \G^u$ are the stable and unstable foliations of $f_A$, respectively.

The first step in the construction of the flow $\varphi_A$ consists in doing a ``DA blowup'' to the map $f_A$. This procedure was first described by Smale in \cite{smale1967differentiable} and then by Williams who made the construction explicit in \cite{williams1970DA}.

The DA blowup applied to the map $f_A$ yields a diffeomorphism $g_A : T^2\to T^2$, which is semiconjugate to $f_A$ via a homeomorphism isotopic to the identity $h:T^2 \to T^2$, as a consequence of a theorem of Franks in \cite{franks1970anosov}.

A brief description of the procedure for defining the map $g_A$ is as follows: one takes a small neighborhood $U$ of the point $(0,0)\in T^2$ and modifies $f_A$ only on $U_0$. This is done in such a way that $(0,0)$ becomes a repelling hyperbolic fixed point for the modified map $g_A$, and $g_A$ has two hyperbolic fixed points of saddle type $z, z'\in \G^s(0,0)$, the stable manifold of $(0,0)$ for the original map $f_A$. 

 Considering the orbit $W = \bigcup_{n\in \Z} g_A^n(U_0)$ of a ball $U_0$ centered at $(0,0)$ that does not contain $z$ or $z'$, then with appropriate choices in the blowup procedure, one obtains a hyperbolic attractor $\Lambda = T^2 \setminus W $ for $g_A$. 
 
 The attractor $\Lambda$ is saturated by the unstable leaves of its points. For $x\in \Lambda$, we denote by $\F^s(x)$ and $\F^u(x)$ the stable and unstable leaves for $g_A$ which contain $x$. 
 
 Note that the semiconjugacy $h: T^2 \to T^2$ maps $z, z' \in T^2$ to the point $(0,0)$, and therefore maps $W$ and the unstable manifolds $\F^u(z), \F^u(z')$ to the unstable manifold $\G^u(0,0)$.

\begin{proposition}\label{proposition:DA_properties}
    The diffeomorphism $g_A$ satisfies (see for instance \cite{williams1970DA} or \cite{fisher2019hyperbolic}):

    \begin{itemize}
        \item The set $W$ is open and dense in $T^2$.
        \item The stable leaves of points in the attractor $\Lambda$ coincide with the leaves of the foliation $\mathcal{G}^s$.
        \item There exists a constant $\gamma <1$ such that any vector $v$ in the unstable distribution $F^u(z)$ for $g_A$ at each point $x$ in $\Lambda$ satisfies $v \in C_\gamma(E^u, E^s) =  \{u+w : u\in E^u, w\in E^s : \| w \| < \gamma \| u\|  \}$.

        \item Exactly one connected component of $\F^s(z)\setminus \{ z\}$ is contained in $W$ and limits onto $(0,0)$, and the same is true for $\F^s(z')$.
    \end{itemize}
\end{proposition}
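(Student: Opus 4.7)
The plan is to exploit three structural features of the DA construction: (a) the modification taking $f_A$ to $g_A$ is supported in a small neighborhood of $(0,0)$; (b) it is carried out along the $E^s$ direction, so the derivative in the $E^u$ direction is unchanged and each leaf of the linear foliation $\G^s$ is $g_A$-invariant as a set; and (c) $(0,0)$ is converted into a source whose local unstable set lies along $\G^s(0,0)$, terminating at the two new saddles $z$ and $z'$.

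Properties (2) and (3) would follow quickly from (b). Since the modification is along $E^s$, which is tangent to $\G^s$, each leaf of $\G^s$ remains $g_A$-invariant, and $g_A$ contracts on these leaves outside the support of the modification. Combined with the hyperbolicity of $g_A$ on $\Lambda$ and the fact that points of $\Lambda$ eventually avoid the modification zone under forward iteration (being in the complement of the basin of the source), this shows that the $g_A$-stable manifold through any $x \in \Lambda$ coincides with $\G^s(x)$, which gives (2). For (3), $Dg_A$ equals $Df_A$ outside the support of the perturbation, and on that support only the $E^s$-component is altered; by choosing the modification small enough relative to the expansion in $E^u$, one makes a cone $C_\gamma(E^u, E^s)$ strictly $Dg_A$-invariant for some $\gamma<1$, and iterating gives that the unstable direction of $g_A$ at every point of $\Lambda$ lies inside $C_\gamma(E^u, E^s)$.

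For (1), $W$ is open as a union of open sets. For density, I would first identify $W$ with the basin of the source $(0,0)$: if $U_0$ is chosen to be a trapping neighborhood of $(0,0)$ with $g_A^{-1}(\overline{U_0}) \subset U_0$, then $W = \bigcup_{n \geq 0} g_A^n(U_0)$ is exactly the set of points whose backward orbits converge to $(0,0)$. Then $\Lambda = T^2 \setminus W$ is closed and $g_A$-invariant. Suppose $\Lambda$ had nonempty interior $V$; by topological transitivity of $g_A$ on the DA attractor $\Lambda$ (a standard feature of hyperbolic attractors of surface diffeomorphisms), the open set $V \subset \Lambda$ would have to equal $\Lambda$, making $\Lambda$ clopen in $T^2$ and hence equal to $T^2$ by connectedness, contradicting $(0,0) \in W$.

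Finally, for (4): by (2), $\F^s(z) = \G^s(z)$, and since $z$ was placed on the line $\G^s(0,0)$ during the blowup, we have $\F^s(z) = \G^s(0,0)$, a leaf passing through $(0,0)$. Removing $z$ yields two rays; let $R$ denote the one emanating from $z$ toward $(0,0)$. On the arc of $\G^s(0,0)$ joining $z$ to $(0,0)$, $g_A$ restricts to a smooth one-dimensional dynamical system whose only fixed points are $z$ (attracting, since $\F^s(z)$ is the stable direction) and $(0,0)$ (repelling, because the DA modification expands along $E^s$ at the new source). Hence the dynamics on $R$ is monotone and $g_A^{-n}(x) \to (0,0)$ for every $x \in R$; in particular $g_A^{-n}(x) \in U_0$ for $n$ large, so $x \in W$. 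The ray $R$ visibly accumulates onto $(0,0)$ along $\G^s(0,0)$, and the same argument applies to $z'$. The main subtlety of the whole proof is the density part of (1), because it is the only step requiring a genuinely global property of $\Lambda$ (topological transitivity of the attractor), whereas the remaining items are essentially local consequences of the shape of the modification.
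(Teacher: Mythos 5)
The paper gives no proof of this proposition; it is stated as a recall of standard facts and cited to Williams and Fisher--Hasselblatt, so there is no ``paper proof'' to compare against. Your treatment of items (3) and (4) is essentially the standard argument. For (4) you should also verify that the \emph{other} component of $\F^s(z)\setminus\{z\}$ is not contained in $W$ (this is what ``exactly one'' asserts); this follows quickly because $z\in\Lambda$, $\Lambda$ is closed, and points on the other ray sufficiently near $z$ lie outside every $\widetilde W_{(m,n)}$, but as written you only address the arc toward $(0,0)$.

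There are two real problems. First, in the justification of (2) you assert that ``points of $\Lambda$ eventually avoid the modification zone under forward iteration.'' This is false: the saddle points $z,z'$ lie in the modification zone and are fixed, and $\Lambda$ accumulates on them. The correct statement is weaker --- the modification is tangent to $\G^s$, so the linear foliation $\G^s$ remains $g_A$-invariant, and one then uses that the construction is carried out so that $Dg_A$ contracts the $E^s$ direction at every point of $\Lambda$ (including inside the modification zone, where this is a design constraint, not a consequence of avoiding it). Second, and more seriously, the density argument for (1) has a gap: topological transitivity of $g_A|_\Lambda$ does \emph{not} imply that a nonempty open subset $V\subset\Lambda$ must equal $\Lambda$ --- transitivity of a map on any space never forces open sets to be the whole space. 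Transitivity only gives that $\bigcup_n g_A^n(V)$ is dense in $\Lambda$, which is strictly weaker than $\Lambda$ being open. To close the argument one needs more structure: for instance, use the local product structure of the hyperbolic set $\Lambda$ together with density of unstable leaves to show that $\mathrm{int}(\Lambda)$ being nonempty forces $W^u(p)\subset\partial\Lambda$ for every $p\in\partial\Lambda$, hence $\partial\Lambda=\Lambda$ by density, contradicting nonempty interior; or appeal to the branched-manifold model of Williams or a measure/Markov-partition argument showing the stable slices of $\Lambda$ are Cantor sets. As written, the step ``$V$ would have to equal $\Lambda$'' does not go through.
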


The following proposition will be used in a later section:

\begin{proposition}\label{proposition:no_nullhomotopic_DA}
There is no closed non-nullhomotopic loop $\gamma \subset T^2$ contained in $W$. 

Consequently, if we let $\widetilde{g_A}: \widetilde{T^2} \to \widetilde{T^2}$ be the lift of $g_A$ to the universal cover $\widetilde{T^2} \cong \R^2$ of the torus, then $\pi|_{\widetilde{W}_0} : \widetilde{W}_0\to W$ is injective for every connected component $\widetilde{W}_0$ of $\pi^{-1}(W)$. In particular, for each such component $\widetilde{W}_0$ there exists a unique integer point $(n,m) \in \Z^2$ contained in $\widetilde{W}_0$.
\end{proposition}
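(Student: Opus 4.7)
The plan is to use the semiconjugacy $h: T^2 \to T^2$ together with the structure of the unstable leaf $\G^u(0,0)$, and then deduce the covering-theoretic consequences from the first assertion. Suppose $\gamma \subset W$ is a closed loop. Since $h(W) \subset \G^u(0,0)$ (from the paragraph preceding the proposition), the loop $h \circ \gamma$ has image contained in the single immersed line $\G^u(0,0)$; moreover, since $h$ is isotopic to the identity, $h_*$ is the identity on $\pi_1(T^2) = \Z^2$, so $\gamma$ and $h \circ \gamma$ have the same translation class in the universal cover. It therefore suffices to show that any loop in $T^2$ whose image lies in $\G^u(0,0)$ is nullhomotopic.

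Lifting to $\widetilde{T^2} \cong \R^2$, the preimage $\pi^{-1}(\G^u(0,0))$ is the union of the affine lines $L_v := v + \R v^u$ for $v \in \Z^2$, where $v^u$ spans the expanding eigenspace $E^+$. These lines are parallel and pairwise disjoint, the latter because the slope of $v^u$ is irrational (the characteristic polynomial of $A$ has discriminant $\mathrm{tr}(A)^2 - 4$, which is never a square for a hyperbolic integer matrix). The key technical step is that any continuous path in $\pi^{-1}(\G^u(0,0))$ remains in a single line $L_v$: the signed distance from a point of $\pi^{-1}(\G^u(0,0))$ to a fixed reference line $L_0$, measured normally to $v^u$, takes only countably many values (one per line), so by continuity and connectedness it is constant along any path. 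Applying this to a lift of $h \circ \gamma$, its endpoints lie on a common line, so their difference --- which is the translation class $[\gamma] \in \Z^2$ --- is a real multiple of $v^u$. Irrationality of the slope of $v^u$ then forces $[\gamma] = 0$, and $\gamma$ is nullhomotopic in $T^2$.

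For the consequences, injectivity of $\pi|_{\widetilde{W}_0}$ is immediate from the first part: if distinct $x, y \in \widetilde{W}_0$ projected to the same point, a path joining them in the open connected set $\widetilde{W}_0$ would descend to a loop in $W$ whose translation class $y - x$ is a nonzero element of $\Z^2$. For the existence of a unique integer point in each component, I will observe that $W$ is connected, because every iterate $g_A^n(U_0)$ contains the fixed point $(0,0)$ and the union of connected sets sharing a common point is connected. Since $\pi: \pi^{-1}(W) \to W$ is a covering map and $W$ is connected, $\pi|_{\widetilde{W}_0}$ is a covering onto all of $W$, and by injectivity it is a homeomorphism; the preimage of $(0,0) \in W$ in $\widetilde{W}_0$ is therefore a single point, which necessarily lies in $\Z^2$.

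The main technical hurdle is the ``staying in one line'' step: the usual connected-component argument fails because each $L_v$ is dense in $\R^2$ and so is not open in the subspace topology of $\pi^{-1}(\G^u(0,0))$. The distance-function argument bypasses this issue, and ultimately relies on irrationality of the slope of $v^u$. Everything else reduces to standard covering-space manipulation.
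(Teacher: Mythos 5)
Your proof is correct, but it takes a genuinely different route from the paper's. The paper's argument is a short compactness/expansion argument using nothing about the linear model: since $W = \bigcup_{n \ge 0} g_A^n(U_0)$ is an \emph{increasing} union (because $U_0 \subset g_A(U_0)$) of homeomorphic images of the open disk $U_0$, a compact loop $\gamma \subset W$ lies in a single $g_A^N(U_0)$; applying the homeomorphism $g_A^{-N}$ moves it into the disk $U_0$, where it is nullhomotopic, and since $g_A^{-N}$ induces an isomorphism on $\pi_1(T^2)$, the original $\gamma$ was nullhomotopic too. Your argument instead pushes forward by the semiconjugacy $h$ and exploits the irrationality of the expanding eigendirection: you show the lift of $h\circ\gamma$ is trapped on a single affine line of irrational slope via the countable-image trick with the signed normal distance, which forces the translation class to vanish. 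Both are valid; the paper's route is more elementary (no appeal to $h$, no irrationality, no covering of the leaf) and exploits the dynamical structure of $W$ directly, while yours leans on the comparison with the linear model and the arithmetic of the eigendirection. One small caution on your side: the paper's phrase ``homeomorphism isotopic to the identity'' for $h$ is a slip --- the semiconjugacy in the DA construction collapses the unstable separatrices at $z,z'$ and is not injective --- but your argument only uses that $h$ is continuous and homotopic to the identity, so nothing breaks. Your handling of the covering-theoretic consequences (injectivity from absence of nontrivial loops, existence and uniqueness of the integer point from connectedness of $W$ and $(0,0) \in W$) is standard and correct.
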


\begin{proof}
    This is a straightforward consequence of the definition of $W$ as $W = \bigcup_n g_A^n(U_0)$ and the fact that $U_0 \subset g_A(U_0)$, where $U_0$ is an open disk: applying an appropriate power of $g_A$ to a closed loop $\gamma \subset W$ gives us a closed loop $\gamma' \subset U_0$, which must be nullhomotopic, so the same must be true of $\gamma$.
\end{proof}

\subsection{Construction of the flow $\varphi_A$}\label{subsection:construction_of_FW_flow}
Here we continue with the construction of the flow $\varphi_A$ from the DA map $g_A:T^2 \to T^2$, as done in \cite{FW1980anomalous}.

Consider the suspension flow $\psi_A : \R  \times N_A \to N_A$, where the 3-manifold $N_A$ is the mapping torus $T^2\times [0,1]/\sim_{g_A}$ of $g_A : T^2 \to T^2$. 

This flow has a hyperbolic attractor $\Lambda_A$, such that the properties stated above for the map $g_A$ and its attractor $\Lambda$ translate to analogous properties of $\psi_A$ and $\Lambda_A$. 

We identify $T^2$ with the set $T^2 \times \{ 0\} \subset N_A$. Then, $\Lambda$ is identified with $\Lambda \times \{0 \} \subset \Lambda_A$, and for $x\in \Lambda$ denote by $\F^s_A(x), \F^u_A(x) \subset N_A$ the weak stable and unstable leaves of each point $x\in \Lambda$. These leaves are immersed planes or annuli in $N_A$ which intersect along the orbit $\mathcal{O}(z) \subset \Lambda_A$ of $x$.

The next step in the construction is to remove a solid torus $U$ from $N_A$ centered around the orbit of $(0,0,0)$. Since the point $(0,0)$ is a repelling point for the map $g_A$, an appropriate choice of solid torus ensures that its boundary $\partial U = T$ is transverse to $\psi_A$, with the flow coming into the manifold-with-boundary $N_A \setminus U$. Note that we also have that $\Lambda_A \subset N_A \setminus U$.

Since the flow is transverse to the torus $T$, the stable leaves of points in $\Lambda_A$ intersect $T$ transversely, inducing a 1-dimensional foliation $f^s$ on $T$.
\begin{proposition}[\cite{FW1980anomalous}]
For an appropriate choice of $U$, the torus $T = \partial U$ satisfies the following properties:
    \begin{itemize}
        \item The foliation $f^s$ has exactly two compact leaves corresponding to $T\cap \F^s_A(z) $ and $T\cap \F^s_A(z')$.
        \item In the regions of the torus bounded by the two compact leaves, $f^s$ has Reeb components. These are oriented in such a way that the vertical coordinate of each leaf of $f^s$ in the interior of the components is bounded above in the universal cover of $N_A$.
    \end{itemize}
\end{proposition}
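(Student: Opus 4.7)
The proof proposal is to construct $U$ concretely from a local model near the repelling orbit and then analyze $f^s$ step by step, isolating the two compact leaves first and then describing the annular complement.

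First, I would choose $U$ by appealing to the fact that the orbit $\mathcal{O}(p_0)$, where $p_0 = (0,0,0)$, is a normally hyperbolic \emph{repelling} periodic orbit for $\psi_A$ (because $(0,0)$ is a repelling fixed point of $g_A$). Standard local linearization near a hyperbolic periodic orbit gives coordinates in a tubular neighborhood of $\mathcal{O}(p_0)$ in which the flow is smoothly conjugate to a linear radially expanding model. In these coordinates I can take $U$ to be a round solid torus neighborhood, small enough that $U \subset W \times [0,1]/\sim$; then $T = \partial U$ is transverse to $\psi_A$ with the flow entering $N_A \setminus U$ from $T$, as required.

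Second, I would identify the two compact leaves. The weak stable manifolds $\F^s_A(z)$ and $\F^s_A(z')$ are topological annuli, since $z$ and $z'$ are fixed points of $g_A$ and hence periodic orbits of period $1$ for $\psi_A$. By the last item of Proposition \ref{proposition:DA_properties}, each has a connected end that enters $W$ and limits onto $\mathcal{O}(p_0)$. In the local model near $\mathcal{O}(p_0)$ these ends appear as two embedded cylinders approaching $\mathcal{O}(p_0)$, and after possibly shrinking $U$ they meet $T$ transversely in two disjoint embedded closed curves, which are the two compact leaves of $f^s$.

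Third, I would rule out any other compact leaves. If $c \subset T$ is a compact leaf of $f^s$ contained in some $\F^s_A(x)$ with $x \in \Lambda$, then $c$ bounds an embedded annular subregion of $\F^s_A(x) \cap \overline{U}$ whose other ``end'' limits onto $\mathcal{O}(p_0)$. Projecting to $T^2$, this forces $\F^s(x)$ to contain a ray that actually converges to $(0,0)$ (not merely accumulates onto it). By the fourth item of Proposition \ref{proposition:DA_properties}, only the special rays emanating from $z$ and $z'$ have this convergence property, so $x \in \{z,z'\}$, giving only the two compact leaves already identified.

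Fourth, I would analyze the two annular complementary regions $T \setminus (c_z \cup c_{z'})$ and identify their Reeb structure. The non-compact leaves of $f^s$ in each annulus come from weak stable leaves of points $x \in \Lambda$ whose stable leaves in $T^2$ accumulate onto $(0,0)$ by entering $W$ and following the expanding local dynamics. By continuity of stable manifolds for $\psi_A$ on $\Lambda_A$, these leaves are nearly parallel to $\F^s_A(z)$ or $\F^s_A(z')$ near the compact leaves, and in the local model their intersections with $T$ spiral around $\mathcal{O}(p_0)$, with both ends asymptotic to the \emph{same} compact leaf; this is exactly a Reeb annulus. To check the orientation claim, I would pass to the universal cover of each annular region and use that the orientation on $f^s$ is inherited from the flow direction, which points \emph{into} $N_A \setminus U$ at $T$: following a leaf of $f^s$ in its positive direction corresponds to moving backward along $\psi_A$ toward the repelling orbit, and a flow-box coordinate transverse to the orbit yields the required upper bound on the ``vertical'' coordinate in the lift.

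The main technical obstacle will be step three, where one must carefully distinguish between stable leaves of points on $\G^s(0,0)$ that genuinely accumulate onto $(0,0)$ versus those that converge to it; and step four, where the qualitative statement ``the leaves spiral in the correct direction'' must be deduced rigorously from the local linear model of $\psi_A$ near $\mathcal{O}(p_0)$ together with the orientation conventions imposed on $f^s$.
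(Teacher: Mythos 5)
The paper does not prove this proposition; it is stated with the attribution \cite{FW1980anomalous} and used as a recalled fact from the original Franks--Williams construction, so there is no in-text proof for me to compare your sketch against. With that caveat, your overall plan is a faithful outline of what the FW argument needs: a linearizable repelling tubular neighborhood, identification of the two compact leaves via the converging ends of $\F^s(z)$ and $\F^s(z')$, uniqueness by the fact that no other $\G^s$-leaf has a ray \emph{converging} (rather than merely accumulating) to $(0,0)$, and then the Reeb/orientation analysis.

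One concrete detail in your step four is off. In a Reeb-foliated annulus with two compact boundary circles $c_z$ and $c_{z'}$, an interior leaf does not have both ends asymptotic to the \emph{same} compact leaf; rather, one end spirals into $c_z$ and the other into $c_{z'}$, with the two spirals in the \emph{same} angular direction (which, here, is what makes the vertical coordinate of the leaf tend to $-\infty$ at both ends and hence bounded above, as the proposition claims). If both ends of a non-compact leaf approached the same boundary circle, the leaves could not foliate the whole open annulus. The correct mechanism is: near $T$, a weak stable leaf that passes close to $\mathcal{O}(p_0)$ without converging to it must, as one traces it backward in time (downward in the suspension coordinate), come closer and closer to $(0,0)$; one backward end is eventually trapped alongside $\F^s_A(z)$ and the other alongside $\F^s_A(z')$, giving the asymmetry between the two ends and the downward spiral on both. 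So the conclusion (Reeb annuli with vertical coordinate bounded above) is right, but the asymptotics of the individual ends should be corrected.
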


\begin{figure}[h!]
  \centering
  \includegraphics[width=0.55\linewidth]{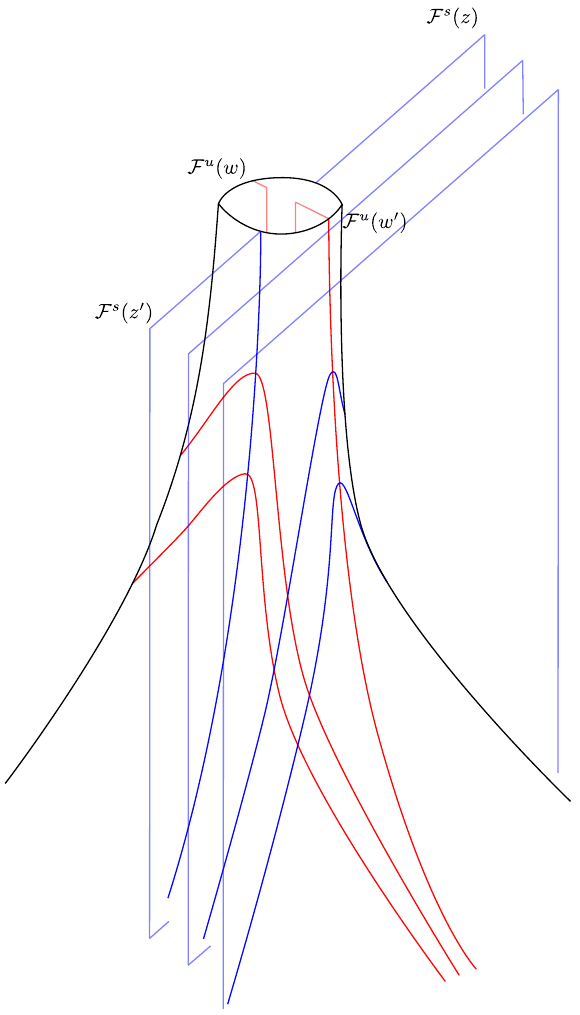}
  \caption{The transverse torus $T$ and the induced foliations.}
  \label{fig:FW_torus_foliation}
\end{figure}

The final step in the construction is to consider a new copy $\overline{N_A}$ of $N_A$ equipped with the reversed flow $\overline{\psi_A}$, which is transverse to the torus $T$, with orbits exiting the manifold $\overline{N_A} \setminus U$ in the future. The flow $\overline{\psi_A}$ has the same properties as $\psi_A$, replacing stable leaves with unstable leaves and the attractor for a repeller. In particular, the induced foliation $\overline{f^u}$ on the torus consists of a pair of Reeb components oriented in the same way as those of $f^s$.

One then glues to $N_A \setminus U$, $\overline{N_A}\setminus U$ collar neighborhoods of $\partial (N_A \setminus U), \partial(\overline{N_A}\setminus U)$ and extends the flow and the foliations $\F^s_A, \overline{\F^u}_A$ to this collar, in such a way that the flow enters or exits (in the case of $\psi_A$, enters) the collars at a right angle. 

One obtains in that way two $3-$manifolds equipped with semiflows with orbits perpendicular to their boundaries, entering the manifold in one case, and exiting in the other case. These manifolds can be glued together along their boundaries by a 90 degree rotation around the vertical axis, obtaining a closed $3$-manifold $M_A$ together with a flow $\varphi_A$ on $M_A$, which has a hyperbolic attractor and repeller. 

The foliation $\F^u_A$ induces a foliation $f^u$ of the transverse torus $T$, which also consists of a pair of Reeb components. The 90 degree rotation used as the gluing map then ensures that $f^s$ and $f^u$ are transverse Reeb foliations of the torus (see Figure \ref{fig:FW_torus_foliation}).

\begin{figure}[h]
  \centering
  \includegraphics[width=0.95\linewidth]{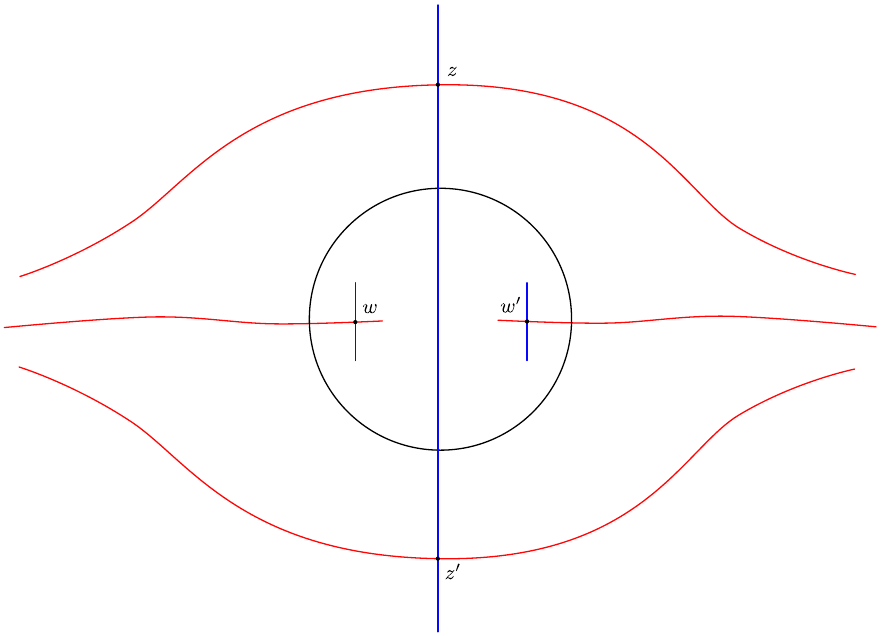}
  \caption{A schematic view of the flow $\varphi_A$ from above, with the transverse torus $T$ in black.}
  \label{fig:fw_flow_above}
\end{figure}

The transversality of the foliations $f^s$ and $f^u$ allows one to apply an argument of Mañé (originally shown in \cite{mane1977quasi} for the case of diffeomorphisms, see Section 6.2 of \cite{fisher2019hyperbolic} for a proof in the case for flows) which gives necessary and sufficient conditions for a flow to be an Anosov flow. Therefore, we get that $\varphi_A$ is an Anosov flow, such that leaves of the weak stable/unstable foliations restricted to the attractor and repeller coincide with the original leaves of $\F^s_A, \overline{\F^u_A}$ coming from $\psi_A$ and $\overline{\psi_A}$.

Note that $N_A$ and $\overline{N_A}$ are the two JSJ pieces of $M_A$. In future sections, when considering Franks-Williams flows, we will refer to these two pieces as $M_1$ and $M_2$, intersecting on the embedded torus $T$. Orbits of the flow cross the torus $T$ transversely, from $M_2$ and into $M_1$. The piece $M_1$ contains the attractor of the flow, $\Lambda_1$, and $M_2$ the repeller, $\Lambda_2$. These are the only basic pieces of the flow.

We will denote by $w$ and $w'$ the periodic points in $M_2$ whose unstable leaves $\F^u_A(w),\,\F^u_A(w')$ intersect the torus $T$ in closed leaves of the foliation $f^u$ (see Figure \ref{fig:fw_flow_above}).

\subsection{Intersections of leaves in the universal cover}\label{subsection:intersections_leaves_universal_cover_DA}

In this section, we study the way that stable and unstable leaves of the lift $\widetilde{g_A}$ to the universal cover of $T^2$ intersect. We define the notion of crossing (integer) points. In Lemma \ref{lemma:DA_crossing_points_iff}, we show that this property is equivalent to a property stated in terms of parallelograms bounded by leaves of the straight line foliations $\G^s, \G^u$. Then, in Proposition \ref{proposition:intersections_of_leaves_order_preserving_bijection} we show that there is an order preserving bijection from the intersections of certain stable and unstable leaves for $\widetilde{f_A}$ to the intersections of corresponding stable and unstable leaves for $\widetilde{g_A}$. These two results will be important in later sections.

We will denote by $\alpha$ the slope of the lines in $\G^s$, and by $\beta$ the slope of the lines in $\G^u$.

Let $\widetilde{g_A} : \R^2 \to \R^2$ of $g_A$ be the lift of $g_A$ to the universal cover $\R^2$ of $T^2$. Let $\wLambda \subset \R^2$ be the preimage of $\Lambda\subset T^2$ under this cover.

For a stable (or unstable) leaf $\F^s(x)$ (resp. $\F^u(z)$) of a point $x\in \Lambda$, and a point $\widetilde{x}\in \pi^{-1}(x) \subset \wLambda$, we denote by $\wF^s(\widetilde{x})$ (resp. $\wF^u(\widetilde{x})$) the lift of the leaf $\F^s(x)$ (resp. $\F^u(x)$) to $\R^2$ that contains $\widetilde{x}$. 

Recall that $z, z' \in T^2$ are the hyperbolic points of saddle type for $g_A$. Since $z, z'$ are at distance less than $\frac{1}{8} < \frac{1}{2}$ from $(0,0) \in T^2$, we are able to define the following:

\begin{definition}
    For $(m,n)\in \Z^2$, let $\wz_{(m,n)}, \wz'_{(m,n)}$ be the lifts of $z$ and $z'$ which are closest to $(m,n)\in \R^2$.

    For the case $(m,n) = (0,0)$, we write $\wz_{(0,0)} = \wz$, $\wz'_{(0,0)} = \wz'$.
\end{definition}

\begin{figure}[h]
  \centering
  \includegraphics[width=\textwidth]{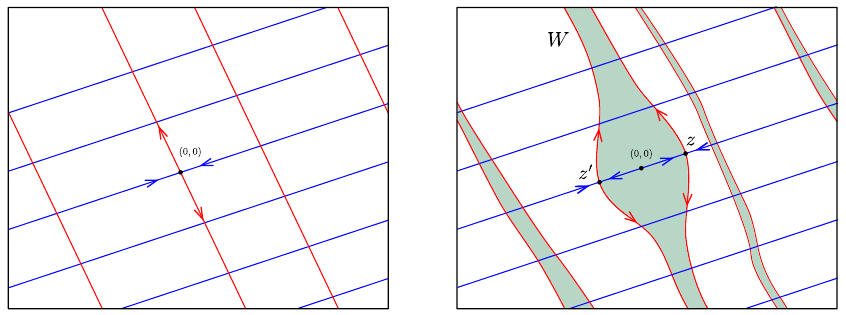}
  \caption{Stable and unstable manifolds for $f_A$ (left) and $g_A$ (right). Part of $W\subset T^2$ in green.}
  \label{fig:DA_torus}
\end{figure}

\begin{remark}
 The three points $\wz_{(m,n)}, (m,n)$ and $\wz'_{(m,n)}$ lie in a straight line which corresponds to a stable leaf of the original linear map $f_A$, and is also the union $\{ (m,n)\}\bigcup \wF^s(\wz_{(m,n)})\bigcup \wF^s(\wz'_{(m,n)})$.
\end{remark}

\begin{definition}
    For $(m,n) \in \Z^2$, define $\widetilde{W}_{(m,n)}$ to be the lift of $W\subset T^2$ to the universal cover $\R^2$, such that $\widetilde{W}_{(m,n)}$ contains $(m,n)$.
\end{definition}

\begin{remark}
    Proposition \ref{proposition:no_nullhomotopic_DA} tells us that the definition above makes sense.
\end{remark}

\begin{proposition}
For all $(m,n)\in \Z^2$, $\widetilde{W}_{(m,n)} \subset \R^2$ is an open set bounded by $\wF^u\left(\wz_{(m,n)}\right)$ and $\wF^u(\wz'_{(m,n)})$ (see Figure \ref{fig:da_crossings}).     
\end{proposition}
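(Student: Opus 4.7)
My plan would be to first dispatch the openness claim, then identify $\widetilde{W}_{(m,n)}$ with an explicit open strip between the two unstable leaves. Openness is immediate: $W = T^2 \setminus \Lambda$ is open (since $\Lambda$ is closed), so $\pi^{-1}(W)$ is open, and $\widetilde{W}_{(m,n)}$ is a connected component. For the boundary description, I would reduce to the case $(m,n) = (0,0)$ via the deck transformation $(x,y) \mapsto (x+m, y+n)$, which maps $\widetilde{W}_{(0,0)}$ to $\widetilde{W}_{(m,n)}$ and $\wz, \wz'$ to $\wz_{(m,n)}, \wz'_{(m,n)}$.

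Let $V$ be the connected component of $\R^2 \setminus (\wF^u(\wz) \cup \wF^u(\wz'))$ containing $(0,0)$. The plan is to prove $\widetilde{W}_{(0,0)} = V$. First I would establish that $\wF^u(\wz)$ and $\wF^u(\wz')$ are disjoint, properly embedded topological lines in $\R^2$: properness and the graph structure over the $E^u$-direction follow from the cone condition of Proposition \ref{proposition:DA_properties}, and disjointness is automatic since they are distinct unstable leaves. Since $\wz$ and $\wz'$ sit on opposite sides of $(0,0)$ along the stable line $\wG^s(0,0)$, the complement of these two curves has exactly three components, and $(0,0)$ lies in the open strip $V$ between them. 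The easy inclusion $\widetilde{W}_{(0,0)} \subseteq V$ follows immediately: both unstable leaves are contained in $\pi^{-1}(\Lambda)$ and hence disjoint from $\widetilde{W}_{(0,0)} \subseteq \pi^{-1}(W)$, so the connected set $\widetilde{W}_{(0,0)} \ni (0,0)$ is forced into $V$.

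The reverse inclusion $V \subseteq \widetilde{W}_{(0,0)}$ is where the dynamics enters. Choosing the canonical lift $\widetilde{g_A}$ that fixes $(0,0)$ (and hence also fixes $\wz$ and $\wz'$, since these are the integer-lattice-closest lifts of fixed points of $g_A$), the map $\widetilde{g_A}$ preserves $\wF^u(\wz)$ and $\wF^u(\wz')$, and therefore preserves the strip $V$. I would then take the neighborhood $U_0$ small enough that its lift $\widetilde{U}_0 \ni (0,0)$ is contained in $V$; this is legitimate because locally near $(0,0)$ the DA construction produces $\F^u(z), \F^u(z')$ as the boundary of $W$, so in the universal cover the boundary of $\widetilde{W}_{(0,0)}$ near $(0,0)$ sits on $\wF^u(\wz) \cup \wF^u(\wz')$. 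Since $W = \bigcup_n g_A^n(U_0)$, lifting to the connected component containing $(0,0)$ gives $\widetilde{W}_{(0,0)} = \bigcup_n \widetilde{g_A}^n(\widetilde{U}_0)$, and to obtain $V \subseteq \widetilde{W}_{(0,0)}$ it suffices to show that for every $x \in V$, eventually $\widetilde{g_A}^{-n}(x) \in \widetilde{U}_0$.

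The hard part will be this last step, which essentially amounts to identifying $V$ with the basin of attraction of the hyperbolic attracting fixed point $(0,0)$ for $\widetilde{g_A}^{-1}$. The curves $\wF^u(\wz), \wF^u(\wz')$ play the role of the stable manifolds of the saddles $\wz, \wz'$ for $\widetilde{g_A}^{-1}$ and provide the local boundary of the basin. To rule out further invariant obstructions inside $V$, I would argue that any periodic orbit of $\widetilde{g_A}$ contained in $V \cap \wLambda$ would have its unstable manifold confined to $V \cap \wLambda$ (since distinct unstable leaves cannot cross), but the only unstable leaves in $\wLambda$ which approach $(0,0)$—the repelling fixed point around which the dynamics on $V$ revolves—are precisely $\wF^u(\wz)$ and $\wF^u(\wz')$, by the local DA structure. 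Hence no additional invariant set divides $V$, backward iteration of $\widetilde{g_A}$ contracts all of $V$ onto $(0,0)$, and the equality $\widetilde{W}_{(0,0)} = V$ follows.
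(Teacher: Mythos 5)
Your approach is genuinely different from the paper's, and it contains a real gap in the key step.

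The paper's proof is a short semiconjugacy argument. Using the lift $\widetilde{h}:\R^2\to\R^2$ of the semiconjugacy between $g_A$ and $f_A$ with $\widetilde{h}(0,0)=(0,0)$, the paper observes that $\widetilde{h}$ sends $\wF^u(\wz)$, $\widetilde{W}_{(0,0)}$, and $\wF^u(\wz')$ all into $\wG^u(0,0)$, and in fact that $\widetilde{h}^{-1}\bigl(\wG^u(0,0)\bigr)=\wF^u(\wz)\cup\widetilde{W}_{(0,0)}\cup\wF^u(\wz')$. This preimage is closed by continuity, and it contains $\widetilde{W}_{(0,0)}$ as an open subset; hence $\partial\widetilde{W}_{(0,0)}\subseteq\wF^u(\wz)\cup\wF^u(\wz')$, and the conclusion follows. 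Your plan instead tries to run the dynamics directly: define the strip $V$ between the two unstable leaves and show it coincides with the basin of attraction of $(0,0)$ for $\widetilde{g_A}^{-1}$. That is a legitimate target, and your reduction to $(0,0)$, the openness of $\widetilde{W}_{(0,0)}$, and the easy inclusion $\widetilde{W}_{(0,0)}\subseteq V$ are all fine.

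The gap is precisely in the step you flag as ``the hard part.'' The argument that ``no additional invariant set divides $V$'' and therefore backward iteration carries every point of $V$ into $\widetilde{U}_0$ is not a proof. What actually needs to be shown is $V\cap\pi^{-1}(\Lambda)=\emptyset$, and your reasoning does not establish this. Two specific problems: first, $\wLambda$ is not composed solely of unstable leaves of (lifts of) periodic orbits, so ruling out periodic orbits inside $V$ does not rule out other pieces of the attractor; second, and more fundamentally, even knowing that $V\cap\wLambda$ contains no closed invariant set does not by itself imply that the backward orbit of every point of $V$ converges to $(0,0)$ --- basins of attraction of non-linear maps can have boundaries that are not locally given by stable manifolds of finitely many saddles. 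In effect you are trying to prove that $V$ is the basin by assuming that the basin boundary is $\wF^u(\wz)\cup\wF^u(\wz')$, which is equivalent to the claim you are proving. The semiconjugacy is exactly the external input that breaks this circularity: it identifies $\overline{\widetilde{W}_{(0,0)}}$ with a single preimage set $\widetilde{h}^{-1}\bigl(\wG^u(0,0)\bigr)$ whose structure is known from the linear model. If you want to salvage the purely dynamical route, you would need a separate argument, e.g.\ that the strip $V$ contains no integer lattice point other than $(0,0)$ and no point of $\wLambda$, and that argument is not easier than invoking the semiconjugacy directly.
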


\begin{proof}

It is enough to show this for $\widetilde{W}_{(0,0)}$.
    The semiconjugacy $h$ maps $z$, $z'$ and $(0,0)$ to $(0,0)$, and therefore maps their unstable manifolds $\F^u(z), \F^u(z'), W$ to the unstable manifold $\G^u(0,0)$. Therefore, the lift $\widetilde{h}:\R^2 \to \R^2$ satisfying $\widetilde{h}(0,0)= (0,0)$ maps $\wF^u(\wz), \widetilde{W}_{(0,0)}, \wF^u(\wz')$ to $\wG^u(0,0)$.

    We know that $\widetilde{W}_{(0,0)}$ is an open subset of $\R^2$. Since $\widetilde{h}$ is continuous,\linebreak $(\widetilde{h})^{-1}\left((\wG^u)(0,0)\right) = \wF^u(\wz)\cup \widetilde{W}_{(0,0)} \cup \wF^u(\wz')$ is a closed subset of $\R^2$, and it contains $\widetilde{W}_{(0,0)}$ as an open subset. Thus, the unstable manifolds $\wF^u(\wz)$ and $\wF^u(\wz')$ bound $\widetilde{W}_{(0,0)}$.
\end{proof}

Below, we will prove some facts relating to the way that stable and unstable leaves of points in $\widetilde{\Lambda}$ intersect the stable and unstable leaves of $\wz$. The same arguments apply to intersections with the stable and unstable leaves of $\wz'$. These facts then also hold for all other lifts $\wz_{(m,n)}$ and $\wz'_{(m,n)}$ of $z$ and $z'$. This can be easily seen by applying the appropriate deck transformations. 

\begin{lemma}\label{lemma:DA_all_three_intersect_iff}
     Let $\wz'_{(m,n)}$ be a lift of $z'$ such that $\wz'_{(m,n)} \neq \wz'$. Then, \linebreak$\wF^u(\wz'_{(m,n)}) \cap \wF^s(\widetilde{z}) \neq \emptyset $ if and only if $\wF^u(\widetilde{z}_{(m,n)}) \cap \wF^s(\widetilde{z}) \neq \emptyset $.

\end{lemma}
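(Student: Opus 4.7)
The plan is to fix coordinates $(u, s)$ in $\R^2$ aligned with the eigenspaces $E^u, E^s$ of $A$. Then the stable leaf $\wF^s(\wz)$ is contained in the vertical line $\{u = 0\}$, since $\wz$ lies on $\wG^s((0,0))$. The points $\wz_{(m,n)}$ and $\wz'_{(m,n)}$ both lie on another vertical line $\{u = u_0\}$ with $u_0 \neq 0$: this uses the irrational slope of $E^s$ together with $(m,n) \neq (0,0)$. By the cone condition in Proposition \ref{proposition:DA_properties}, the unstable manifolds $\wF^u(\wz_{(m,n)})$ and $\wF^u(\wz'_{(m,n)})$ are each Lipschitz graphs $s = \varphi(u)$ and $s = \varphi'(u)$ (with Lipschitz constant at most $\gamma < 1$) whose domain is all of $\R$: each branch is properly embedded in $\R^2$ with $u$-coordinate strictly monotonic, so the two branches must extend to $u = \pm\infty$. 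Together, these unstable manifolds bound the closure of $\widetilde{W}_{(m,n)}$.

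Next I would examine the vertical slice at $u = 0$. Each of the intersections $\wF^u(\wz_{(m,n)}) \cap \{u = 0\}$ and $\wF^u(\wz'_{(m,n)}) \cap \{u = 0\}$ is a single point, at heights $\varphi(0)$ and $\varphi'(0)$ (say with $\varphi(0) \geq \varphi'(0)$), and the vertical cross-section of $\overline{\widetilde{W}_{(m,n)}}$ at $u = 0$ is the segment $[\varphi'(0), \varphi(0)]$. The key structural observation is that since $(m,n) \neq (0,0)$, Proposition \ref{proposition:no_nullhomotopic_DA} ensures $\widetilde{W}_{(0,0)}$ and $\widetilde{W}_{(m,n)}$ are distinct components of $\pi^{-1}(W)$, hence disjoint; moreover, the unstable manifolds bounding $\widetilde{W}_{(0,0)}$ are disjoint from those bounding $\widetilde{W}_{(m,n)}$, as distinct leaves of the unstable lamination never meet. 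Consequently $\wz \notin \overline{\widetilde{W}_{(m,n)}}$, so the $s$-coordinate $s_\wz$ of $\wz$ satisfies $s_\wz \notin [\varphi'(0), \varphi(0)]$; in particular, $\varphi(0)$ and $\varphi'(0)$ lie on the same side of $s_\wz$ on the real line.

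From here the biconditional follows in either reasonable interpretation of $\wF^s(\wz)$: if $\wF^s(\wz)$ is the full line $\{u=0\}$ both intersections are automatically nonempty single points, while if $\wF^s(\wz)$ is a stable ray emanating from $\wz$, the intersection $\wF^u(\wz_{(m,n)}) \cap \wF^s(\wz)$ is nonempty iff $\varphi(0)$ lies on the appropriate side of $s_\wz$ (and analogously with $\varphi'(0)$ for the other unstable manifold), so the observation that $\varphi(0)$ and $\varphi'(0)$ lie on the same side of $s_\wz$ makes the two conditions equivalent. The main technical obstacle is verifying surjectivity of the $u$-projection of each full unstable manifold onto $\R$, which follows from proper embeddedness in $\R^2$ of each lift (both branches escape every compact set) combined with strict monotonicity of $u$ along each branch forced by the cone condition; once this is in place, the topological content of the lemma reduces to the disjointness statement $\wz \notin \overline{\widetilde{W}_{(m,n)}}$.
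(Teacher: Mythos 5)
Your approach genuinely differs from the paper's, and it is cleaner in one respect. The paper's proof first invokes the semiconjugacy $h$: since $\widetilde{h}$ provides a proper isotopy between $\wF^u(\wz_{(m,n)})$, $\wF^u(\wz'_{(m,n)})$, and the straight line $\wG^u(m,n)$, the paper concludes that either all three cross $\wG^s(0,0)$ or none do. You instead use the cone condition from Proposition~\ref{proposition:DA_properties} to show that each unstable leaf is a globally defined Lipschitz graph over $E^u$ (properness ruling out a bounded $u$-domain), so both leaves always cross $\{u=0\}$, and the biconditional reduces purely to which side of a basepoint the crossings fall. This is more explicit and avoids appealing to the semiconjugacy. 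Both proofs then close with the same structural ``same-side'' argument, rooted in the disjointness of the components $\widetilde{W}_{(0,0)}$ and $\widetilde{W}_{(m,n)}$: the paper argues that if the two crossings lay in different components of $\wG^s(0,0)\setminus\widetilde{W}_{(0,0)}$ then $\widetilde{W}_{(m,n)}$ would contain $(0,0)$, a contradiction; you observe that the crossings bound exactly the segment $\{u=0\}\cap\overline{\widetilde{W}_{(m,n)}}$, so they straddle a point only if that point lies in $\overline{\widetilde{W}_{(m,n)}}$.

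There is, however, a gap in your final step: you hedge between two readings of $\wF^s(\wz)$, but neither is the one the paper uses. The Remark just before the lemma records $\wG^s(0,0) = \wF^s(\wz)\cup\{(0,0)\}\cup\wF^s(\wz')$, so $\wF^s(\wz)$ is the open ray of $\wG^s(0,0)$ with endpoint at the \emph{origin} (not at $\wz$) that contains $\wz$. The relevant question is therefore whether $\varphi(0)$ and $\varphi'(0)$ lie on the same side of $0$, not of $s_{\wz}$; equivalently, you need $(0,0)\notin\overline{\widetilde{W}_{(m,n)}}$ rather than $\wz\notin\overline{\widetilde{W}_{(m,n)}}$. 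Fortunately your disjointness reasoning applies verbatim to the correct point: $(0,0)$ lies in $\widetilde{W}_{(0,0)}$, which is disjoint from $\widetilde{W}_{(m,n)}$ by Proposition~\ref{proposition:no_nullhomotopic_DA} and disjoint from the boundary leaves $\wF^u(\wz_{(m,n)})\cup\wF^u(\wz'_{(m,n)})\subset\wLambda$ since $W\cap\Lambda=\emptyset$. With $(0,0)$ substituted for $\wz$ as the basepoint, your argument closes correctly.
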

\begin{proof}

The semiconjugacy $h:T^2 \to T^2$ maps $\F^u(z)$ and $\F^u(z')$ to $\G^u(0,0)$. Since $h$ is isotopic to the identity map of $T^2$, there is a proper isotopy between the lifts $\wF^u(\wz)$, $\wF^u(\wz')$ and $\wG^u(m,n)$, so either they all intersect the straight line $\wG^s(0,0) = \wF^s \left( \wz \right) \cup \{ 0,0\}\cup \wF^s(\wz')$ or neither of them does. 

Consider the case where $\wF^u(\wz)$, $\wF^u(\wz')$ intersect $\wG^s(0,0)$. In this case, they must all intersect the same connected component of $\wG^s(0,0)\setminus(\widetilde{W}_{(0,0)})$, since otherwise the set $\widetilde{W}_{(m,n)}$ bounded by $\wF^u(\wz)$, $\wF^u(\wz')$ would contain $(0,0)$, which is not possible. 

Therefore, since one of the connected components of $\wG^s(0,0)$ is completely contained in $\wF^s \left( \wz \right)$ and the other connected component does not intersect $\wF^s \left( \wz \right)$, we conclude that either both of $\wF^u(\wz)$, $\wF^u(\wz')$ intersect $\wF^s \left( \wz \right)$ or neither of them does.

\end{proof}

Recall from Proposition \ref{proposition:DA_properties} above that exactly one connected component $r_1$ of $\F^s(z)\setminus\{z\}$ is contained in $W$. Let $r_2$ be the other connected component of $\F^s(z)$, and let $\widetilde{r}_2$ be its lift containing $\wz$. Note that the unstable leaf of a point in $\wLambda$ intersects $\wF^s \left( \wz \right)$ if and only if it intersects it in a point of $\wr_2$.

\begin{definition}

Let $(m,n) \neq (0,0)$, and let $\wz_{(m,n)}, \wz'_{(m,n)} $ be the lifts of $z, z'$ closest to $(m,n)$. By the lemma above, we know that $\widetilde{r}_2$ intersects $\wF^u(\wz_{(m,n)})$ if and only if it intersects $\wF^u(\wz'_{(m,n)})$.

 If $\wF^s \left( \wz \right)$ does not intersect $\wF^s(\wz_{(m,n)})$, let $J(\widetilde{r}_2, (m,n)) = \emptyset$.
 
    Otherwise, define $J(\widetilde{r}_2, (m,n) ) \subset \widetilde{r}_2$ to be the closed interval in $\widetilde{r}_2$ bounded by $\wF^u(\widetilde{z}_{(m,n)})\cap \widetilde{r}_2$ and $\wF^u(\widetilde{z}'_{(m,n)}) \cap \widetilde{r}_2$.

\end{definition}

\begin{remark}
    An equivalent way of defining $J(\widetilde{r}_2, (m,n))$ is as \linebreak $J(\widetilde{r}_2, (m,n)) = \overline{\widetilde{r}_2 \cap \widetilde{W}_{(m,n)}} $, where $\widetilde{W}_{(m,n)}$ is the connected component of $\pi^{-1}(W)$ which contains $(m,n)$.
\end{remark}

The following proposition follows immediately from the Remark above.
\begin{proposition}
    Let $\widetilde{z}$ be a lift of $z$, and let $\widetilde{r}_2$ be the lift of $r_2$ based at $\widetilde{z}$. Then:

    \begin{itemize}
        \item For $(m,n) \neq (m',n')$, we have $J(\wr_2, (m,n)) \cap J(\wr_2, (m',n')) = \emptyset$.

        \item 
 The subset
        $$ \bigcup_{(m,n)\in \Z^2}  J(\widetilde{r}_2, (m,n)) \subset \widetilde{r}_2 $$
        is dense in $\widetilde{r}_2$.
        
    \end{itemize}   
 \end{proposition}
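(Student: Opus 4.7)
The proof should follow directly from the Remark immediately preceding the statement, which identifies $J(\widetilde{r}_2, (m,n)) = \overline{\widetilde{r}_2 \cap \widetilde{W}_{(m,n)}}$.

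For the disjointness claim, I would first invoke Proposition \ref{proposition:no_nullhomotopic_DA}, which tells us that the sets $\widetilde{W}_{(m,n)}$ are the distinct connected components of $\pi^{-1}(W) \subset \R^2$ (one per integer point) and are hence pairwise disjoint. Their intersections with $\widetilde{r}_2$ are therefore pairwise disjoint open subsets of $\widetilde{r}_2$. Each intersection $\widetilde{r}_2 \cap \widetilde{W}_{(m,n)}$ is a single open interval when nonempty, since $\widetilde{W}_{(m,n)}$ is bounded by the two unstable leaves $\wF^u(\wz_{(m,n)})$ and $\wF^u(\wz'_{(m,n)})$, and the stable-direction leaf $\widetilde{r}_2$ meets each transverse unstable leaf at most once. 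Hence $J(\widetilde{r}_2, (m,n))$ is a closed interval whose two endpoints lie on the two bounding unstable leaves. The interiors of the $J$'s are thus pairwise disjoint; to upgrade this to disjointness of the full closed intervals I would observe that for distinct $(m,n) \neq (m',n')$, the bounding unstable leaves of $\widetilde{W}_{(m,n)}$ and $\widetilde{W}_{(m',n')}$ are distinct lifts of $\F^u(z)$ or $\F^u(z')$ through distinct lifts of $z, z'$, and are therefore pairwise disjoint in $\R^2$, so no endpoint can be shared.

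For the density claim, the Remark gives the inclusion $\bigcup_{(m,n)} J(\widetilde{r}_2, (m,n)) \supseteq \widetilde{r}_2 \cap \pi^{-1}(W)$, which reduces the problem to showing that $\widetilde{r}_2 \cap \pi^{-1}(W)$ is dense in $\widetilde{r}_2$. Projecting via $\pi$, this is equivalent to showing that $r_2 \cap \Lambda$ has empty interior in the immersed arc $r_2 \subset T^2$. My plan is to deduce this from the local product structure of the hyperbolic attractor $\Lambda$: near any point of $\Lambda$, the attractor decomposes locally as the product of a totally disconnected slice (in the stable direction) with an unstable arc. Since $r_2$ is a stable arc for $g_A$, the intersection $r_2 \cap \Lambda$ is locally totally disconnected and hence nowhere dense, yielding the claimed density.

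The disjointness half is essentially a formal consequence of the Remark together with transversality of the two foliations and the lift-counting provided by Proposition \ref{proposition:no_nullhomotopic_DA}. I expect the density half to be the main obstacle, since it rests on the nontrivial hyperbolic product structure of $\Lambda$. A purely dynamical alternative, avoiding product structure, would exploit the fact that $g_A^{-1}$ expands along $\F^s(z)$: any open interval $I \subset \widetilde{r}_2$ has backward iterates of arbitrarily large length in $\widetilde{r}_2$, and using the $g_A$-invariance and openness of $W$, together with the fact that $W$ contains the neighborhood $U_0$ of $(0,0)$, one should be able to find $W$-points arbitrarily close to any prescribed point of $\widetilde{r}_2$ by pulling back.
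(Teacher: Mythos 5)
Your proposal is correct, and since the paper offers no proof beyond the phrase "follows immediately from the Remark above," what you have done is fill in the details that the authors evidently regard as routine. The structure of your argument matches what the Remark is pointing at: both bullets reduce to statements about $\widetilde{r}_2 \cap \pi^{-1}(W)$ via the identity $J(\widetilde{r}_2,(m,n)) = \overline{\widetilde{r}_2 \cap \widetilde{W}_{(m,n)}}$.

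On disjointness, your two-step argument (disjoint open interiors by Proposition \ref{proposition:no_nullhomotopic_DA}; then no shared endpoints because the bounding unstable leaves are distinct leaves of a foliation, hence disjoint) is right. You should also note, though it is automatic, that an endpoint of $J(\widetilde{r}_2,(m,n))$ lies on $\widetilde{\Lambda}$ and therefore cannot lie in the open set $\widetilde{r}_2 \cap \widetilde{W}_{(m',n')}$; combined with distinctness of endpoints this gives full disjointness of the closed intervals. The claim that the bounding leaves are pairwise distinct uses that each lift of $\F^u(z)$ (or $\F^u(z')$) contains exactly one lift of $z$ (resp.\ $z'$), since $\F^u(z)$ is not a closed leaf; and that $z' \notin \F^u(z)$ since both are fixed points. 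These are small but real inputs, and you are right to flag them rather than asserting disjointness outright.

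On density, both of your suggested arguments work. A slight simplification of the first: if $r_2 \cap \Lambda$ contained a nondegenerate subinterval $I$, then since $\Lambda$ is an attractor and therefore saturated by local unstable manifolds, the union $\bigcup_{x\in I} W^u_{\mathrm{loc}}(x)$ would be an open subset of $T^2$ contained in $\Lambda$, contradicting that $W$ is dense (Proposition \ref{proposition:DA_properties}). This uses only facts already stated in the text and avoids appealing to the full transverse Cantor structure of the attractor. Your dynamical pull-back argument is also correct; the only point to be careful about is that equidistribution of long stable arcs through $U_0$ should be justified by minimality (or unique ergodicity) of the linear flow on $T^2$ of slope $\alpha$, which gives a uniform length $L$ so that every stable arc of length $\ge L$ meets $U_0$.

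In short: the proof is correct, takes essentially the approach the paper gestures at, and supplies the non-immediate parts (closure disjointness and the reason a 1-dimensional slice of $\Lambda$ has empty interior) that the authors leave to the reader.
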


We will be particularly interested in some of the intervals $J(\widetilde{r}_2, (m,n))$:
\begin{figure}[h]
  \centering
  \includegraphics[width=.95\textwidth]{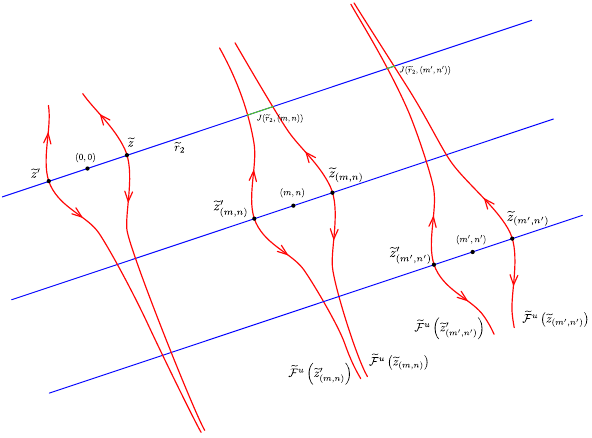}
  \caption{Lift of the map $g_A$. Here $(m,n)$ is a crossing point for $\wz$, and $(m',n')$ is not.}
  \label{fig:da_crossings}
\end{figure}
 \begin{definition}\label{definition:FW_crossing_point}

Let $(m,n)\in \Z^2$ be a point such that $J(\wr_2, (m,n)) \neq \emptyset$, and let $\wz'_{(m,n)}$ be the lift of $z'$ closest to $(m,n)$. 

We say that $(m,n)$ is a \emph{crossing point} for $\wz$ if the region of the plane bounded by the leaves $\wF^s \left( \wz \right), \wF^u(\wz), \wF^s\left(\wz'_{(m,n)}\right), \wF^u\left(\wz'_{(m,n)}\right)$ does not contain any integer points. Otherwise, we say that $(m,n)$ is a \emph{non-crossing point} for $\wz$.

We denote by $CP(\wz)$ the set of crossing points for $\wz$.
 \end{definition}

 \begin{definition}
     Choose an orientation for $\wF^s \left( \wz \right)$ and the plane. Then, a crossing point $(m,n) $ for $\wz$ is said to be a \emph{left} crossing point if $(m,n)$ is to the left of $\wF^s \left( \wz \right)$ with respect to this orientation. Otherwise, $(m,n)$ is said to be a \emph{right} crossing point.

     We denote by $CP_r(\wz)$ and $CP_l(\wz)$ the sets of right and left crossing points for $\wz$, respectively.
 \end{definition}

In the proof of the following Lemma, we will compare the intersection patterns of the leaves $\wF^s(\wz_{(m,n)}), \wF^u(\wz_{(m,n)})$ for $\wz \in \wLambda$ with the leaves of the foliations $\wG^s, \wG^u$ by straight lines parallel to the eigenspaces of the matrix $A$.

Recall that we have $\wG^s(\widetilde{x}) = \wF^s(\widetilde{x})$ for all $\widetilde{x} \in \wLambda$. 

 \begin{lemma}\label{lemma:DA_crossing_points_iff}
Let $(m,n) \in \Z^2$, and let $\wz'_{(m,n)}$ be the lift of $z'$ closest to $(m,n)$. Then, $(m,n)$ is a left (right) crossing point for $\wz$ if and only if:
\begin{enumerate}
    \item The point $(m,n)$ is to the left (resp. right) of $\wF^s \left( \wz \right)$,

    \item The line $\wG^u(m,n)$ intersects the component of $\wG^s(0,0)\setminus \{ (0,0)\}$ which contains $\wz$,

    \item There are no integer points in the interior of the parallelogram with corners $(0,0)$ and $(m,n)$ which is bounded by straight lines in $\wG^s, \wG^u$.
    
\end{enumerate}

\end{lemma}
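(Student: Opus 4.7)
The plan is to translate between the $\widetilde{g_A}$-picture and the $\widetilde{f_A}$-picture via the lifted semiconjugacy $\widetilde{h}: \R^2 \to \R^2$. The key properties of $\widetilde{h}$ I would use are: it is proper, $\Z^2$-equivariant, and isotopic to the identity, so after choosing a suitable isotopy it fixes every integer point; its preimage of each straight unstable line $\wG^u(m,n)$ equals the blown-up set $\wF^u(\wz_{(m,n)}) \cup \overline{\widetilde{W}_{(m,n)}} \cup \wF^u(\wz'_{(m,n)})$; and on the attractor $\wLambda$ it preserves the straight stable foliation, with $\wF^s(\wz) \cup \{(0,0)\} \cup \wF^s(\wz')$ equal to $\wG^s(0,0)$. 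Consequently the component of $\wG^s(0,0) \setminus \{(0,0)\}$ containing $\wz$ is literally $\wF^s(\wz) = \wr_1 \cup \{\wz\} \cup \wr_2$.

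I would then address the three conditions in turn. For condition (1): since each $\widetilde{W}_{(k,l)}$ contains the unique integer point $(k,l)$, any $(m,n) \in \Z^2 \setminus \{(0,0)\}$ lies outside $\widetilde{W}_{(0,0)}$, and near such a point $\wF^s(\wz)$ coincides with $\wG^s(0,0)$ (as $\wr_2 \subset \wG^s(0,0)$), so the two sides of $\wF^s(\wz)$ correspond to the two sides of $\wG^s(0,0)$ at $(m,n)$. For condition (2): the fact that $\wF^u(\wz_{(m,n)}) \subset \wLambda$ is disjoint from $\widetilde{W}_{(0,0)}$, together with Lemma~\ref{lemma:DA_all_three_intersect_iff}, gives that $J(\wr_2,(m,n)) \neq \emptyset$ if and only if $\wF^u(\wz_{(m,n)}) \cap \wr_2 \neq \emptyset$, and $\widetilde{h}$ translates this to an intersection of $\wG^u(m,n)$ with $\wr_2 \subset \wF^s(\wz)$. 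For condition (3): the closed quadrilateral region bounded by the four $g_A$-leaves is the $\widetilde{h}$-preimage of the closed parallelogram bounded by the four straight lines through $(0,0)$ and $(m,n)$, with the two corners blown up to $\overline{\widetilde{W}_{(0,0)}}$ and $\overline{\widetilde{W}_{(m,n)}}$ (each containing only the corresponding corner integer point); since $\widetilde{h}$ fixes every integer point, an interior integer point in the parallelogram corresponds bijectively to an interior integer point in the $g_A$-region, so the no-integer-points conditions match.

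The main obstacle I expect is the reverse direction of condition (2): ruling out the scenario where $\wG^u(m,n)$ meets the $\wz$-component of $\wG^s(0,0) \setminus \{(0,0)\}$ only at a point inside $\wr_1 \subset \widetilde{W}_{(0,0)}$, rather than on $\wr_2$. In that hypothetical the straight line $\wG^u(m,n)$ pierces $\widetilde{W}_{(0,0)}$ near $(0,0)$, and $\widetilde{h}^{-1}$ of its intersection with $\wr_1$ sits inside $\overline{\widetilde{W}_{(m,n)}}$ rather than on the boundary leaves $\wF^u(\wz_{(m,n)}) \cup \wF^u(\wz'_{(m,n)})$, so the quadrilateral of the crossing-point definition fails to close up around $\wr_2$. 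One would use condition (3) together with an entry-exit analysis of how $\wG^u(m,n)$ passes through $\widetilde{W}_{(0,0)}$ to derive a contradiction with the absence of interior integer points in the straight-line parallelogram. Once this degenerate configuration is excluded, the three conditions together become equivalent to the definition of a (left or right) crossing point.
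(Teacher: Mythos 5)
Your proposal uses exactly the same mechanism as the paper: push everything through the lifted semiconjugacy $\widetilde{h}$, using that it preserves orientation, fixes integer points, carries each blown-up stable/unstable leaf to the corresponding straight line, and therefore carries the $g_A$-quadrilateral to the straight-line parallelogram, so that interior integer points match up. The paper's written proof is terser than yours but does not differ in strategy.

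Where you go astray is in the paragraph you label the ``main obstacle.'' The scenario you describe --- $\wG^u(m,n)$ hitting the $\wz$-ray of $\wG^s(0,0)\setminus\{(0,0)\}$ at a point of $\wr_1$ rather than $\wr_2$, viewed as subsets of the same $\R^2$ --- is not in fact an obstruction, and invoking condition (3) plus an ``entry-exit analysis'' is the wrong tool. The confusion is that you are treating $\wr_1$ and $\wr_2$ as living simultaneously in the $f_A$-picture where $\wG^u(m,n)$ is a straight line, whereas they are $g_A$-objects and only enter the argument through $\widetilde{h}$. What you actually need is: if $p = \wG^u(m,n)\cap \wG^s(0,0)$ lies on the open $\wz$-ray, then some point of $\widetilde{h}^{-1}(p)$ lies on $\wr_2$. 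This is immediate once you observe two facts that you already stated: $(0,0) \notin \wG^u(m,n)$ for $(m,n)\neq(0,0)$ because the slope is irrational, so $p \neq (0,0)$; and $\widetilde{h}$ collapses the whole closed interval from $(0,0)$ to $\wz$ (that is, $\{(0,0)\}\cup\wr_1\cup\{\wz\}$) onto the single point $(0,0)$, while $\widetilde{h}|_{\wr_2}$ surjects monotonically onto the open $\wz$-ray. Hence $\widetilde{h}^{-1}(p)\cap \wF^s(\wz)$ is automatically contained in $\wr_2$, regardless of where $p$ sits in $\R^2$. Since $\wz\notin\overline{\widetilde W_{(m,n)}}$, the ray $\wr_2$ must cross $\partial\widetilde W_{(m,n)} = \wF^u(\wz_{(m,n)})\cup\wF^u(\wz'_{(m,n)})$ to reach that preimage point, and Lemma~\ref{lemma:DA_all_three_intersect_iff} then gives $J(\wr_2,(m,n))\neq\emptyset$. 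So condition (2) alone forces the nonemptiness assumption in Definition~\ref{definition:FW_crossing_point}; condition (3) is not needed for this step, and no degenerate configuration needs to be excluded.
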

\begin{proof}
This follows from the fact that the lift $\widetilde{h}:\R^2 \to \R^2$ of the semiconjugacy between $g_A$ and $f_A$ preserves orientation, maps integer points to integer points, and satisfies:
\begin{itemize}
    \item $\widetilde{h}(\wF^s \left( \wz \right))$ is the connected component of $\wG^s(0,0) \setminus \{ (0,0)\}$ which contains $\wz$.
    \item $\widetilde{h}(\wF^s(\wz'_{(m,n)})) $  is the connected component of $ \wG^s(m,n)\setminus \{ (m,n)\}$ which contains $\wz'_{(m,n)}$.
    \item $\widetilde{h}(\wF^u(\wz)) = \wG^u(0,0) $.
    \item $\widetilde{h}(\wF^u(\wz')) = \wG^u(m,n) $.
\end{itemize}

This implies that the region of the plane bounded by the leaves \linebreak $\wF^s \left( \wz \right), \wF^u(\wz), \wF^(\wz'_{(m,n)}), \wF^u \left( \wz'_{(m,n)} \right)$ contains integer points if and only if the parallelogram with corners $(0,0)$ and $(m,n)$ bounded by straight lines in $\wG^s, \wG^u$ contains integer points.

\end{proof}

\begin{comment}
    %% don't really need this stated separately
\begin{proposition}
    Let $(m_0,n_0)\in \Z^2$, let $\wz$ be the lift of $\wz_1$ closest to  $(m_0,n_0)$, and let $\widetilde{r}_2$ be the lift of $r_2$ starting at $\wz$. Then, the subset $$\bigcup_{(m,n) \text{ crossing point for } (m_0,n_0) }  \partial I(\widetilde{r}_2, (m,n)) \subset \widetilde{r}_2 $$
    has $\wz$ as its only accumulation point in $\widetilde{r}_2$.
\end{proposition}
\end{comment}

We now define a parametrization of $\wG^s(0,0)$ (and therefore of $\wF^s \left( \wz \right) \subset \wG^s(0,0)$) in order to compare intersection points of these leaves and unstable leaves.

Let the parametrization $c:[0, +\infty) \to \R^2$ be given by $c(t) = t(1,\alpha)$. This induces orientations and therefore an order on $\wG^s(0,0)$ and on $\wF^s \left( \wz \right)$.

\begin{definition}\label{definition:order_on_crossing_point_intersections}
Given points $\widetilde{x}, \widetilde{x}' \in \wG^s(0,0)$ (or in $\wF^s \left( \wz \right)$), we say $\widetilde{x} < \widetilde{x}'$ if $c^{-1}(\widetilde{x}) <  c^{-1}(\widetilde{x}')$.

Then, given $(m,n), (m',n')\in \Z^2$ such that $(m,n), (m',n')\in CP(\wz)$, we let $ \wG^u(m,n) \cap \wG^s(0,0)  = \{x\}, \, \wG^u(m',n') \cap \wG^u(0,0) = \{x'\} $ and we say $$\wG^u(m,n) \cap \wG^s(0,0) < \wG^u(m',n') \cap \wG^u(0,0) \iff  x < x' $$
\end{definition}

\begin{proposition}\label{proposition:intersections_of_leaves_order_preserving_bijection}
    The function 
\begin{align*}
     & S: \left\{ \wG^u(m,n) \cap \wG^s(0,0) : (m,n) \in CP(\wz) \right\} \\ & \,\,\,\,\,\,\,\,\,\,\,\,\,\,\,\,\,\,\,\,\,\,\,\,\,\,\,\,\,\,\, \to  \left\{ \wF^u \left( \wz'_{(m,n)} \right) \cap \wF^s \left( \wz \right) : (m,n) \in CP(\wz)  \right\}
\end{align*}
    defined by $S\left(\wG^u(m,n) \cap \wG^s(0,0) \right) = \wF^u \left( \wz'_{(m,n)} \right) \cap \wF^s \left( \wz \right) $
    is an order preserving bijection.
\end{proposition}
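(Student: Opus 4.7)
The plan is to reduce the proposition to a study of the lift $\widetilde{h}:\R^2\to\R^2$ of the semiconjugacy between $g_A$ and $f_A$, normalized so that $\widetilde{h}(0,0) = (0,0)$. The identities already used in the proof of Lemma \ref{lemma:DA_crossing_points_iff} record that $\widetilde{h}$ sends $\wF^u(\wz'_{(m,n)})$ onto $\wG^u(m,n)$ and sends $\wF^s(\wz)$ onto the connected component of $\wG^s(0,0)\setminus\{(0,0)\}$ containing $\wz$. With this in hand, I would first show that for each $(m,n)\in CP(\wz)$, both
\[
q_{(m,n)} := \wF^u(\wz'_{(m,n)})\cap \wF^s(\wz),\qquad p_{(m,n)} := \wG^u(m,n)\cap \wG^s(0,0)
\]
are singletons: each intersection is non-empty (for $p_{(m,n)}$ this is condition (2) of Lemma \ref{lemma:DA_crossing_points_iff}; for $q_{(m,n)}$, this is the condition $J(\wr_2,(m,n))\neq\emptyset$ combined with Lemma \ref{lemma:DA_all_three_intersect_iff}) and each consists of a single point by transversality of the corresponding pair of foliations. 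Moreover $\widetilde{h}(q_{(m,n)})\in \wG^u(m,n)\cap \wG^s(0,0)=\{p_{(m,n)}\}$, so $\widetilde{h}(q_{(m,n)})=p_{(m,n)}$.

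Next I would verify that $S$ is a well-defined bijection. Distinct $(m,n),(m',n')\in CP(\wz)$ give distinct parallel lines $\wG^u(m,n)\neq \wG^u(m',n')$, hence $p_{(m,n)}\neq p_{(m',n')}$; and distinct lifts $\wz'_{(m,n)}\neq \wz'_{(m',n')}$ lie on disjoint leaves of the $\widetilde{g_A}$-unstable foliation, so $q_{(m,n)}\neq q_{(m',n')}$. Thus both the domain and the target of $S$ are parametrized by $CP(\wz)$ through their indexing, and the assignment $p_{(m,n)}\mapsto q_{(m,n)}$ realizes the bijection~$S$.

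The heart of the argument is the order-preserving property. I would show that $\widetilde{h}$ carries $\wF^s(\wz)$ onto its image order-preservingly with respect to the parametrization $c$. The key ingredient is that $h$ is homotopic to the identity on $T^2$, so $\widetilde{h}$ can be chosen at bounded distance from $\mathrm{id}_{\R^2}$ and is therefore proper and properly homotopic to $\mathrm{id}_{\R^2}$. One observes that $\widetilde{h}$ maps the line $\wG^s(0,0)$ into itself: the semiconjugacy relation $h\circ g_A=f_A\circ h$ forces $h$ to send each $g_A$-stable leaf into an $f_A$-stable leaf, and $\wG^s(0,0)$ is the $\widetilde{f_A}$-stable leaf containing $\wz$. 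Restricted to $\wG^s(0,0)$, $\widetilde{h}$ is then a proper continuous self-map of a line that is properly homotopic to the identity, hence monotone and orientation-preserving. Combining this with $\widetilde{h}(q_{(m,n)})=p_{(m,n)}$, we conclude
\[
q_{(m,n)}<q_{(m',n')}\ \Longleftrightarrow\ p_{(m,n)}<p_{(m',n')},
\]
so $S$ is order-preserving.

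The main obstacle is the rigorous verification that $\widetilde{h}$ sends $\wG^s(0,0)$ to itself, and that the resulting proper self-map of a line is monotone and orientation-preserving; in particular, isolating the fact that a lift of a semiconjugacy homotopic to the identity preserves the orientation of the common linear stable direction, while morally implicit in earlier parts of the paper, has not been recorded as a separate lemma and will require a short self-contained argument.
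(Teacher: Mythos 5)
Your proposal follows the same strategy as the paper's proof: reduce the statement to properties of the lift $\widetilde{h}$ of the semiconjugacy between $g_A$ and $f_A$, use Lemma~\ref{lemma:DA_crossing_points_iff} for bijectivity, and derive order-preservation from the fact that $\widetilde{h}|_{\wF^s(\wz)}$ is monotone. The verification of well-definedness and bijectivity is fine. The paper's own proof is terser but rests on exactly the same two ingredients.

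There is, however, a genuine problem in the justification you offer for the monotonicity step. You claim that because $\widetilde{h}$ is a proper self-map of the line $\wG^s(0,0)$ which is properly homotopic to the identity, it is therefore monotone. That implication is false: $x \mapsto x + 2\sin x$ is a proper self-map of $\R$, properly homotopic to the identity, and not monotone. Degree and properness alone never give monotonicity. The actual source of monotonicity here is a structural feature of the Franks semiconjugacy for the DA map: the preimage under $h$ of each point of the torus is either a single point or a closed arc contained in a single $\G^s$-leaf (a point of $\G^u(0,0)$ is blown up to an arc crossing $W$, and everything else has a one-point preimage). This means $h$ restricted to the stable leaf through $z$ is a Moore-type quotient map with connected point-preimages, hence weakly monotone --- and this is exactly why the paper qualifies it as ``(not strictly) monotone.'' You do flag the monotonicity as the main obstacle requiring a separate argument, which is the right instinct, but the tool you reach for --- properness and homotopy to the identity --- will not close that gap; you need the interval-fibre property of the semiconjugacy (which in turn comes from the construction of $h$ via shadowing, or equivalently from the description of $W$ as the saturation of $U_0$ under $g_A$).
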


\begin{proof}
    We know $S$ is a bijection by Lemma \ref{lemma:DA_crossing_points_iff}. Let $\widetilde{h}:\R^2 \to \R^2$ be the lift of the semiconjugacy satisfying $\widetilde{h}(\wz) = (0,0)$ The fact that the function $S$ is order preserving follows from the fact that $\widetilde{h}|_{\wF^s \left( \wz \right)}: \wF^s \left( \wz \right) \to \wG^s(0,0) $ is a (not strictly) monotone map.
\end{proof} 

\subsection{A cover of $M_1$}\label{subsection:cover_of_M1}

In the previous section, we studied intersections of stable and unstable leaves for the lift $\widetilde{g_A}$ of the map $g_A$ to the universal cover. The stable and unstable leaves of the suspension flow of $g_A$, defined on the mapping torus $N_A$ of $g_A$ are simply the suspension of the stable and unstable leaves of the diffeomorphism $g_A$. Therefore, the results in the previous section have direct analogues for the lift to the universal cover $\widetilde{N_A}$ of the suspension flow of $g_A$.

Our goal in this chapter is to understand the orbit space of the Franks-Williams flow $\varphi_A$ defined on $M_A$, and thus it is necessary to understand intersections of leaves of the lifted flow $\varphi_A$ to the universal cover $\widetilde{M_A}$. The manifold $N_A$ is not a submanifold of $M_A$: recall that in our construction of $M_A$, we remove a solid torus $U$ from $N_A$, and along the resulting torus boundary we glue a new copy of $N_A\setminus U$. Then, we can't directly apply the results mentioned in the previous paragraph. 

However, since the submanifold $M_1 = N_A \setminus U \subset M_A $ is both a submanifold of $M_A$ and of $N_A$, we can still take advantage of these results. Contained in the universal cover $\widetilde{N_A}$, we have a cover $\widehat{M_1}$ of $M_1$, obtained by removing from $\widetilde{N_A}$ all the lifts of the solid torus $U$. On the other hand, in the universal cover $\widetilde{M_A}$ of $M_A$ there are submanifolds which are copies of the universal cover $\widetilde{M_1}$ of $M_1$. These copies then must cover $\widehat{M_1}$. Through the cover $\widetilde{M_1} \to \widehat{M_1}$, we are able to use results regarding intersections of stable and unstable leaves of $\widetilde{g_A}$ or equivalently, of its suspension defined in $\widetilde{N_A} \supset \widehat{M_1}$, as long as we work with leaves which do not intersect the lifts of the removed solid torus $U$.

This is the motivation for studying the cover $\widehat{M_1}$ of $M_1$ and translating the results from the previous section into results concerning intersection of stable and unstable leaves of the flow $\widehat{\varphi}$ on $\widehat{M_1}$. This is the goal of this section.

As said above, the manifold $M_1$ is obtained by removing an open solid torus $U$ from the mapping torus $N_A$ of the DA map $g_A:T^2 \to T^2$, resulting in the manifold with torus boundary $M_1$. Then, we obtain a cover of $N_A$ by considering the universal cover $\widetilde{N_A} \cong \R^2 \times \R$ of $N_A$, and removing from $\widetilde{N}_A$ all the lifts of $U$. The lifts of $U$ to $\widetilde{N_A}$ are infinite solid cylinders intersecting each horizontal plane $\R^2 \times \{ t\}$ in an open disk.

In this way, we get a manifold $\widehat{M}_1 \subset \R^2 \times \R$ with infinitely many cylindrical boundary components, which covers $M_1$. A horizontal cross-section of $\widehat{M}_1$ is a plane minus an infinite family of open disks, one for each integer point on the plane. 

The flow $\varphi_A$ restricted to $M_1$ is simply the suspension flow of the map $g_A: T^2 \to T^2$. This suspension flow lifts to $\R^2 \times \R$, and restricting the lifted flow on $\R^2 \times \R$ (obtaining then a semiflow, since orbits cross the boundary cylinders) to $\widehat{M}_1 \subset \R^2 \times \R$ we get a semiflow $\widehat{\varphi_A}$ on $\widehat{M}_1$ that projects to the restriction of the flow $\varphi_A$ on $M_1$.

Recall that for a point $x$ in the hyperbolic attractor $\Lambda \subset T^2$ of the map $g_A$ there exist one-dimensional stable and unstable manifolds $\F^s(x), \F^u(x)$ respectively, which are immersed in $T^2$. These lift to stable and unstable manifolds $\wF^s(\widetilde{x}), \wF^u(\widetilde{x}) \subset \widetilde{N_A} \subset \R^2 \times \R$ for $\widetilde{g_A}$ of points $\tilde{x} \in \widetilde{\Lambda} \subset \R^2$, where $\widetilde{\Lambda}$ is the lift of $\Lambda$ to $\R^2$.

Let $q:\R^2\times \R \to \R^2$ be the projection to the first factor. For any point $\widetilde{x} \in \R^2$, let $\widehat{x} = (\widetilde{x},0)\in \R^2 \times \R$. Then, for $\widetilde{x}\in \wLambda$, let $\hF^s(\widehat{x}) = q^{-1}(\wF^s(\widetilde{x}))$, $\hF^u(\widehat{x}) = q^{-1}(\wF^u(\widetilde{x}))$. These are topological planes in $\widetilde{N_A} \subset \R^2\times \R$, saturated by orbits of the flow $\widehat{\varphi_A}$.

By how we have defined the manifolds $\hF^s(\hx), \hF^u(\hx)$ for $\hx \in \widehat{\Lambda}$, it's clear that given points $\hx, \hz  \in \widehat{\Lambda} $ we have that $\hF^s(\hx) \cap \hF^u(\hz) \neq \emptyset$ if and only if $\wF^s(\widetilde{x}) \cap  \wF^u(\wz) \neq \emptyset $, and moreover, that everything we have shown about the patterns of intersections of the stable and unstable leaves $\wF^s \left( \wz \right), \wF^u(\wz)$ of points in $\wLambda$ holds for the leaves $\hF^s(\hx), \hF^u(\hx)$ of points in $\widehat{\Lambda}$. More explicitly, we have a version of Lemma \ref{lemma:DA_crossing_points_iff} which we state below.

Recall that $\wz, \wz' \in \wLambda$ are defined to be the lifts to $\R^2$ which are closest to $(0,0)$ of the hyperbolic fixed points $z, z' \in T^2$ of the map $g_A$. Also, $\wr_2$ is defined to be the half-leaf based at $\wz$ and contained in $\wF^s \left( \wz \right)$ that intersects infinitely many unstable leaves of points in $\wLambda$.

Let $\hz = (\wz, 0)$ and $\hz' = (\wz', 0)$.

As before, everything below applies to the other lifts of $z, z'$, which we can see by applying the necessary deck transformations.

 \begin{definition}

Let $(m,n)\in \Z^2$, and let $\hz_{(m,n)} = (\wz_{(m,n)}, 0), \hz'_{(m,n)} = (\wz'_{(m,n)}, 0) \in \widehat{M}_1$ where $\wz_{(m,n)}, \wz'_{(m,n)}\in \R^2$ are the lifts of $z, z'$ closest to $(m,n)$. Suppose that $\hF^s \left( \hz \right) \cap \hF^u \left( \hz'_{(m,n)} \right) \neq \emptyset$.

We say that $\hF^u \left( \hz'_{(m,n)} \right)$ is a \emph{crossing leaf} for $\hz$ if the region of $\widehat{M}_1$ bounded by the leaves $\hF^s \left( \hz \right), \hF^u(\hz), \hF^s \left( \hz'_{(m,n)} \right), \hF^u \left( \hz'_{(m,n)} \right)$ does not contain any points in $ \Z^2 \times  \R$. Otherwise, we say that  $\hF^u \left( \hz'_{(m,n)} \right)$ is a \emph{non-crossing leaf} for $\hz$.

We denote by $CL(\hz)$ the set of crossing leaves for $\hz$.
 \end{definition}

 \begin{definition}
    Give $\R^2\times \R$ the standard product orientation. Then, a crossing leaf $\hF^u \left( \hz'_{(m,n)} \right)$ for $\hz$ is said to be a \emph{left} crossing leaf for $\hz$ if $((m,n),0)$ is to the left of $\hF^s \left( \hz \right)$ with respect to the chosen orientation. Otherwise, $\hF^u \left( \hz'_{(m,n)} \right)$ is said to be a \emph{right} crossing leaf for $\hz$.

    We denote by $CL_r(\hz)$ and $CL_l(\hz)$ the sets of left and right crossing leaves for $\hz$, respectively.
 \end{definition}

Recall that $\wG^s, \wG^u$ are the foliations of $\R^2$ by straight lines parallel to the eigenspaces of $A$.

The following lemma and proposition are the counterpart on $\widehat{M_1}$ of Lemma \ref{lemma:DA_crossing_points_iff} and Proposition \ref{proposition:intersections_of_leaves_order_preserving_bijection}, and they follow straightforwardly from them. 

For Proposition \ref{proposition:bijection_M1_integers}, we order intersections of unstable leaves $\hF^u \left( \hz'_{(m,n)} \right)$ with the leaf $\hF^s \left( \hz \right)$ as follows: we say $$\hF^u \left( \hz'_{(m_1,n_1)} \right)\cap \hF^s \left( \hz \right) <  \hF^u \left( \hz'_{(m_2,n_2)} \right)\cap \hF^s \left( \hz \right)$$ if $\hz$ and $\hF^u(\hz'_{(m_2,n_2)}) $ lie in different connected components of $\widehat{M}_1 \setminus \hF^u \left( \hz'_{(m_1,n_1)} \right)$. This is consistent to the way we ordered intersections of $\wF^s \left( \wz \right)$ with the leaves $\wF^u\left(\wz'_{(m,n)
}\right)$.

 \begin{lemma}
Let $(m,n) \in \Z^2$, let $\wz'_{(m,n)}$ be the lift of $z'$ closest to $(m,n)$ and let $\hz'_{(m,n)} = (\wz'_{(m,n)}, 0)$. Then, $\hF^u \left( \hz'_{(m,n)} \right)$ is a left (right) crossing leaf for $\hz$ if and only if $(m,n)$ is to the left (resp. right) of the line through the origin $\G^s(0,0)\subset \R^2$ and there are no integer points in the interior of the parallelogram with corners $(0,0)$ and $(m,n)$ which is bounded by straight lines in $\G^s, \G^u$.
\end{lemma}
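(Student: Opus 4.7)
The plan is to deduce this lemma directly from Lemma \ref{lemma:DA_crossing_points_iff} via the projection $q:\R^2 \times \R \to \R^2$, using that the leaves of $\widehat{\varphi_A}$ involved are precisely the $q$-preimages of the corresponding leaves of $\widetilde{g_A}$ in $\R^2$.

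First I would record the basic dictionary between objects in $\widehat{M_1}\subset \R^2\times\R$ and objects in $\R^2$. By definition, for every $\widetilde{x}\in\widetilde{\Lambda}$, the leaf $\hF^s(\widehat{x})$ is $q^{-1}(\wF^s(\widetilde{x}))$ and $\hF^u(\widehat{x})$ is $q^{-1}(\wF^u(\widetilde{x}))$. In particular the four leaves
\[
\hF^s(\hz),\ \hF^u(\hz),\ \hF^s(\hz'_{(m,n)}),\ \hF^u(\hz'_{(m,n)})
\]
are vertical cylinders whose horizontal projections are
\[
\wF^s(\wz),\ \wF^u(\wz),\ \wF^s(\wz'_{(m,n)}),\ \wF^u(\wz'_{(m,n)})
\]
respectively. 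Consequently, the open region $\widehat{R}\subset\widehat{M_1}$ bounded by these four cylinders is exactly $\widehat{R}=q^{-1}(\widetilde{R})\cap \widehat{M_1}$, where $\widetilde{R}\subset\R^2$ is the open region bounded by the four planar leaves.

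Next I would check the key equivalence: $\widehat{R}$ contains a point of $\Z^2\times\R$ if and only if $\widetilde{R}$ contains an integer point of $\Z^2$. This is immediate from $\widehat{R}=q^{-1}(\widetilde{R})\cap\widehat{M_1}$, since $q(\Z^2\times\R)=\Z^2$ and, conversely, if $(m_0,n_0)\in\widetilde{R}\cap\Z^2$ then $((m_0,n_0),0)\in\widehat{R}$ (the point $((m_0,n_0),0)$ lies in $\widehat{M_1}$ by construction, as the removed cylinders are disjoint from the integer points in the central fibers; one has to note here that the integer points are precisely the centers of the removed lifts of $U$, but the boundary cylinder surrounding $(m_0,n_0)$ lies inside any sufficiently large neighborhood, so the region $\widehat{R}$ does contain lattice points near each $(m_0,n_0)\in\widetilde{R}$, even if the specific point $((m_0,n_0),0)$ is excised). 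This last subtle point is the main thing to be careful about: I would handle it by noting that $\widehat{R}$, being $q^{-1}(\widetilde{R})$ with countably many open cylinders removed, intersects the fiber $\{(m_0,n_0)\}\times\R$ in a nonempty set whenever $(m_0,n_0)\in\widetilde{R}$, because the fiber is a line and the removed cylinder about $(m_0,n_0)$ intersects it in a bounded interval.

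Finally I would combine these observations with Lemma \ref{lemma:DA_crossing_points_iff}. By Definition of a crossing leaf, $\hF^u(\hz'_{(m,n)})$ is a crossing leaf for $\hz$ iff $\hF^s(\hz)\cap\hF^u(\hz'_{(m,n)})\neq\emptyset$ and $\widehat{R}$ contains no point of $\Z^2\times\R$; the former translates to $\wF^s(\wz)\cap\wF^u(\wz'_{(m,n)})\neq\emptyset$ (equivalently, by Lemma \ref{lemma:DA_all_three_intersect_iff}, to $\wG^u(m,n)$ meeting the component of $\wG^s(0,0)\setminus\{(0,0)\}$ containing $\wz$), and the latter translates, by the paragraph above, to $\widetilde{R}$ containing no integer points. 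Lemma \ref{lemma:DA_crossing_points_iff} then says that these two conditions together are equivalent to the line $\wG^u(m,n)$ meeting the component of $\wG^s(0,0)\setminus\{(0,0)\}$ containing $\wz$ and to the open parallelogram with corners $(0,0)$ and $(m,n)$ (spanned by $\wG^s$- and $\wG^u$-directions) containing no integer points. The left/right assertion is handled by observing that $q$ is orientation-preserving on horizontal slices, so $((m,n),0)$ lies to the left (resp. right) of $\hF^s(\hz)$ iff $(m,n)$ lies to the left (resp. right) of $\wG^s(0,0)$, matching Definition \ref{definition:FW_crossing_point}. The main obstacle, as noted, is the bookkeeping with the excised cylinders when translating ``contains no lattice point'' across $q$; everything else is a clean pullback argument.
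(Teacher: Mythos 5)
Your strategy — pull everything back through the vertical projection $q\colon \R^2\times\R \to \R^2$, use the fact that the leaves $\hF^s, \hF^u$ are $q$-preimages of $\wF^s, \wF^u$, and then quote Lemma \ref{lemma:DA_crossing_points_iff} — is exactly the ``straightforward'' argument the paper has in mind, and it is the right one. However, there is a genuine gap in the bookkeeping step you flag as ``the main thing to be careful about.''

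You set $\widehat{R} = q^{-1}(\widetilde{R})\cap\widehat{M}_1$ and then claim that $\widehat{R}$ contains a point of $\Z^2\times\R$ if and only if $\widetilde{R}$ contains an integer point. The backward implication is false as stated. By construction, $\widehat{M}_1$ is obtained from $\R^2\times\R$ by removing open solid cylinders whose horizontal cross-sections are disks centered at the integer points (these are tubular neighborhoods of the lifts $\{(m_0,n_0)\}\times\R$ of the repelling orbit), so in fact $\widehat{M}_1 \cap (\Z^2\times\R) = \emptyset$: \emph{no} point of $\Z^2\times\R$ lies in $\widehat{M}_1$ at all, hence none lies in $\widehat{R}$. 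Your parenthetical is internally inconsistent — you first assert $((m_0,n_0),0)\in\widehat{M}_1$ ``by construction,'' then correctly note the integer points are the centers of the removed lifts of $U$, which contradicts the first claim — and the final patch (``$\widehat{R}$ does contain lattice points near each $(m_0,n_0)$'') does not work either, because every point of $\Z^2\times\R$ over $(m_0,n_0)$ lies on the single vertical fiber $\{(m_0,n_0)\}\times\R$, which is entirely inside the removed cylinder.

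The fix is simple and is clearly what the paper intends: the relevant region is $R := q^{-1}(\widetilde{R}) \subset \widetilde{N_A} = \R^2\times\R$, \emph{not} its intersection with $\widehat{M}_1$. (Compare the analogous Definition \ref{definition:FW_crossing_point}, which speaks of ``the region of the plane'' bounded by the leaves, and recall that $\hF^s(\hx), \hF^u(\hx)$ are defined as subsets of $\widetilde{N_A}$, not of $\widehat{M}_1$.) With this reading, $R \cap (\Z^2\times\R) = q^{-1}(\widetilde{R}\cap\Z^2)$, so the equivalence you want is immediate and requires no discussion of the excised cylinders at all. Once that is in place, your translation of the intersection condition and the appeal to Lemma \ref{lemma:DA_crossing_points_iff} for the left/right and the no-integer-points-in-parallelogram conditions go through as written (though the cleaner reference for the intersection condition is item (2) of Lemma \ref{lemma:DA_crossing_points_iff} and its proof, rather than Lemma \ref{lemma:DA_all_three_intersect_iff} directly).
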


\begin{proposition}\label{proposition:bijection_M1_integers}

The map
\begin{align*}
     & \widehat{S}:\left\{ \hF^u \left( \hz'_{(m,n)} \right) \cap \hF^s \left( \hz \right) : \hF^u \left( \hz'_{(m,n)} \right) \in CL(\hz) \right\} \\ & \,\,\,\,\,\,\,\,\,\,\,\,\,\,\,\,\,\,\,\,\,\,\,\,\,\,\,\,\,\,\,\to \left\{ \wG^u(m,n) \cap \wG^s(0,0) : (m,n) \in CP(\wz) \right\} 
\end{align*}

$$    $$
defined by $\widehat{S}\left(\hF^u \left( \hz'_{(m,n)} \right) \cap \hF^s \left( \hz \right)\right) =   \wG^u(m,n) \cap \wG^s(0,0)$  is an order preserving bijection.

Moreover, $\widehat{S}$ maps intersections $ \hF^u \left( \hz'_{(m,n)} \right) \cap \hF^s \left( \hz \right)$ where $\hF^u \left( \hz'_{(m,n)} \right)$ is a left (right) crossing leaf to intersections $ \wG^u(m,n) \cap \wG^s(0,0)$ where $(m,n)\in CP_l(\wz)$ (resp. $(m,n)\in CP_r(\wz)$).
    
\end{proposition}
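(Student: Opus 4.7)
The plan is to deduce this proposition directly from Proposition \ref{proposition:intersections_of_leaves_order_preserving_bijection} together with the fact that $\widehat{M}_1$ is built as a subset of $\R^2 \times \R$ on which the relevant stable and unstable leaves are cylinders of the form $q^{-1}(\ell)$ for $\ell$ a leaf in $\R^2$. Since all the nontrivial content (the order-preserving correspondence between intersections for $\widetilde{g_A}$ and intersections for $\widetilde{f_A}$) already lives downstairs in $\R^2$, the proof essentially amounts to checking that the projection $q: \R^2 \times \R \to \R^2$ transports the structure on $\widehat{M}_1$ to the structure on $\R^2$ faithfully.

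First I would unwind the definitions. By construction $\hF^s(\hz) = q^{-1}(\wF^s(\wz))$ and $\hF^u(\hz'_{(m,n)}) = q^{-1}(\wF^u(\wz'_{(m,n)}))$, so
\[
\hF^u(\hz'_{(m,n)}) \cap \hF^s(\hz) = q^{-1}\bigl(\wF^u(\wz'_{(m,n)}) \cap \wF^s(\wz)\bigr).
\]
In particular this intersection is nonempty if and only if the corresponding intersection in $\R^2$ is nonempty, and each such intersection in $\widehat{M}_1$ is a single orbit of $\widehat{\varphi_A}$ that projects bijectively onto a single point of $\R^2$. Combining this with the immediately preceding lemma (which identifies the set $CL(\hz)$ in $\widehat{M}_1$ with the set $CP(\wz)$ in $\R^2$), the map $\widehat{S}$ decomposes as the composition of (i) the projection $q$ applied to intersection orbits, yielding a bijection onto $\{\wF^u(\wz'_{(m,n)}) \cap \wF^s(\wz) : (m,n) \in CP(\wz)\}$, and (ii) the bijection $S$ of Proposition \ref{proposition:intersections_of_leaves_order_preserving_bijection}. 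Both are bijections, so $\widehat{S}$ is as well.

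Next I would verify that $\widehat{S}$ preserves the order. The order on intersections in $\hF^s(\hz)$ is defined through which connected components of $\widehat{M}_1 \setminus \hF^u(\hz'_{(m_1,n_1)})$ contain $\hz$ and $\hF^u(\hz'_{(m_2,n_2)})$. Since $\hF^u(\hz'_{(m,n)}) = q^{-1}(\wF^u(\wz'_{(m,n)}))$ and $\hF^s(\hz) = q^{-1}(\wF^s(\wz))$, pulling back along $q$ shows that this order on $\widehat{M}_1$ coincides with the pullback of the order on $\wF^s(\wz)$ already defined in Definition \ref{definition:order_on_crossing_point_intersections}. The latter is mapped by $S$ in an order-preserving way to the order on $\wG^s(0,0)$, so $\widehat{S}$ is order-preserving.

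Finally I would address the left/right statement. A leaf $\hF^u(\hz'_{(m,n)})$ is a left (resp. right) crossing leaf by definition precisely when $((m,n),0)$ is to the left (resp. right) of $\hF^s(\hz)$; applying $q$, this is equivalent to $(m,n)$ being to the left (resp. right) of $\wG^s(0,0)$ in $\R^2$, which by Lemma \ref{lemma:DA_crossing_points_iff} together with the preceding lemma is the defining condition for $(m,n) \in CP_l(\wz)$ (resp. $CP_r(\wz)$). The main potential subtlety is checking that the orientations one fixes on $\R^2 \times \R$ and $\R^2$ are consistent so that ``left'' and ``right'' really do match under $q$; this follows from taking $q$ to be orientation-compatible with the standard product orientation on $\R^2 \times \R$, which is how the left/right distinction in $\widehat{M}_1$ was introduced. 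No serious obstacle is expected beyond this bookkeeping.
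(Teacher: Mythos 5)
Your proposal is correct and matches the paper's intent exactly: the paper gives no explicit proof but remarks immediately before the statement that it "follows straightforwardly" from Lemma~\ref{lemma:DA_crossing_points_iff} and Proposition~\ref{proposition:intersections_of_leaves_order_preserving_bijection}, and you have simply unwound that claim by observing that all the relevant leaves, intersections, orders, and left/right distinctions in $\widehat{M}_1$ are pullbacks under $q$ of the corresponding objects in $\R^2$. The only trivial slip is that you compose with the bijection $S$ when you should compose with $S^{-1}$ (since $S$ as defined goes from $\wG$-intersections to $\wF$-intersections), but this does not affect the argument.
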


\section{The bifoliated plane $\left(P_A, \F^+_A, \F^-_A\right)$ associated to $\varphi_A$}

We will denote by $\F_A^+$ the projection of the unstable foliation $\wF^u$ to the orbit space $P_A$, and by $\F^-_A$ the projection of $\wF^s$.

\subsection{Infinite perfect fits in $\left(P_A, \F^+_A, \F^-_A\right)$}\label{subsection:infinite_perfect_fits}

In this section, we first define infinite perfect fits in a bifoliated plane, which are a particular type of chain of lozenges. Then, we show that an open and dense set of points in $P_A$ belong to some such chain of lozenges. For each infinite perfect fit $I \subset P_A$, we identify the region in the universal cover $\widetilde{M_A}$ that projects to $I$.

\begin{definition}
   
Let $(P, \F^+, \F^-)$ be a bifoliated plane. An infinite perfect fit $I$ in $P$ is an infinite chain of adjacent lozenges, such that:
\begin{enumerate}
    \item Suppose $L_1, L_2, L_3 \subset I$ are lozenges in the chain such that $L_1, L_2$ are adjacent and $L_2, L_3$ are adjacent. Then, if $L_1$ and $L_2$ are adjacent on a stable side, we must have that $L_2$ and $L_3$ are adjacent on an unstable side, and viceversa.
    \item Each corner point of a lozenge in $I$ is a corner of exactly two lozenges in $I$.
\end{enumerate}
\end{definition}

\begin{remark}
    The definition implies that given an infinite perfect fit $I\subset P$, there exists a point $x\in \partial P$ such that for each lozenge $L\subset I$, two half-leaves bounding $L$ have $x$ as an endpoint in $\partial P$.
\end{remark}

\begin{definition}

We say that a leaf $l\in \F^+$ (or $\F^-$) makes an infinite perfect fit if there exists an infinite perfect fit $I \subset P$ and two lozenges $L, L' \subset I$ such that $L, L'$ are both bounded by a half-leaf $s\subset l$.

In that case, we also say that the half-leaf $s$ makes an infinite perfect fit.
\end{definition}

Now, we show that an open and dense subset of $P_A$ is covered by interiors of infinite perfect fits.

 We will denote by $\pi_\mathcal{O}: \widetilde{M_A} \to P_A$ the projection to the orbit space of the lifted flow $\widetilde{\varphi_A}:\R\times \widetilde{M_A} \to \widetilde{M_A}$. The key to understanding the stable and unstable foliations in the orbit space of such a flow lies in understanding the torus transverse to the flow and the induced foliations on it. This is because an open dense subset of orbits intersects this torus transversely. The only orbits which do not pass through the torus are those contained in the attractor and the repeller, which are compact sets with empty interior, with each of them contained in one JSJ piece of the manifold $M_A$. 

Recall from the previous section that the JSJ piece $M_1$ contains two periodic points $z, z'$ such that their stable leaves $\F^s_A(z), \F^s_A(z')$ intersect $T$ in the only compact leaves of the induced foliation $f^s$ on $T$. We denote by $\alpha_1$ and $\alpha_2$ the orbits of $z$ and $z'$, respectively. Analogously, $M_2$ contains periodic points $w, w'$ whose unstable leaves intersect $T$ in the only compact leaves of $f^u$, and we denote their orbits by $\beta_1, \beta_2$. Note that $\alpha_1, \alpha_2 \subset \Lambda_1$ and $\beta_1, \beta_2 \subset \Lambda_2$, where $\Lambda_1 \subset M_1$ is the attractor and $\Lambda_2 \subset M_2$ is the repeller of $\varphi_A$.

Consider a single connected component $V$ of the lift $\pi^{-1}(M_1) \subset \widetilde{M_A}$ to the universal cover $\widetilde{M_A}$. This is a 3-dimensional manifold with boundary, whose boundary consists of an infinite disjoint union of topological planes, each of them a lift of the torus $T$. The lifted flow $\widetilde{\varphi_A}$ on $\widetilde{M_A}$ is transverse to each of these boundary components, with orbits going into $U$ in the future. %\todo{maybethis will be used later when showing we know the whole plane, for now, leave it as is}.

\begin{proposition}
    The projection of each lift $\widetilde{T}$ to the orbit space is an infinite perfect fit. 
\end{proposition}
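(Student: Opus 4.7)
The plan is to unpack the Reeb structure of the foliations induced on $\widetilde{T}$ and translate the asymptotic information it contains into the non-separation and perfect-fit data that constitute an infinite perfect fit in $P_A$.

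First I would describe $\widetilde{T}$ carefully as a topological plane transverse to $\widetilde{\varphi_A}$, carrying two transverse foliations $\widetilde{f}^s = \wF^s\cap\widetilde{T}$ and $\widetilde{f}^u = \wF^u\cap\widetilde{T}$ that lift the Reeb foliations $f^s$, $f^u$ on $T$. The two closed leaves of $f^s$ (namely $T\cap \F^s_A(\alpha_1)$ and $T\cap \F^s_A(\alpha_2)$) lift to a bi-infinite family of pairwise disjoint embedded lines in $\widetilde{T}$, which I would label horizontals $\{h_k\}_{k\in\Z}$: these alternate between lifts of the two types, and each $h_k$ is contained in $\wF^s(\widetilde{\alpha}^{(k)})$ for some specific lift $\widetilde{\alpha}^{(k)}$ of $\alpha_1$ or $\alpha_2$. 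Analogously, the closed leaves of $f^u$ produce verticals $\{v_k\}_{k\in\Z}$, each contained in $\wF^u(\widetilde{\beta}^{(k)})$ for a lift $\widetilde{\beta}^{(k)}$ of $\beta_1$ or $\beta_2$. Between two consecutive horizontals the interior $\widetilde{f}^s$-leaves are asymptotic to both bounding lines (a lifted Reeb strip), and similarly for vertical strips.

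Next I would observe that $\pi_\mathcal{O}|_{\widetilde{T}}$ is injective, since every orbit of $\varphi_A$ not contained in the attractor $\Lambda_1$ or the repeller $\Lambda_2$ crosses $T$ exactly once in $M_A$, so each orbit of $\widetilde{\varphi_A}$ meets $\widetilde{T}$ at a single point. Parametrizing along a horizontal $h_k$, the orbits that hit $h_k$ all lie on the plane $\wF^s(\widetilde{\alpha}^{(k)})$ and thus asymptote in forward time to $\widetilde{\alpha}^{(k)}$; their projections to $P_A$ therefore trace out an open half-leaf of $\F^-_A(\pi_\mathcal{O}(\widetilde{\alpha}^{(k)}))$ whose open endpoint is $\pi_\mathcal{O}(\widetilde{\alpha}^{(k)})$. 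An analogous statement holds for vertical lines $v_k$, yielding open unstable half-leaves at each $\pi_\mathcal{O}(\widetilde{\beta}^{(k)})$.

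The central geometric step is to pass from the Reeb asymptotics on $\widetilde T$ to non-separation and perfect fits in $P_A$. Inside the horizontal strip between $h_k$ and $h_{k+1}$, a sequence of interior $\widetilde{f}^s$-leaves accumulating on both boundary lines projects to a sequence of $\F^-_A$-leaves that accumulates simultaneously on $\F^-_A(\pi_\mathcal{O}(\widetilde{\alpha}^{(k)}))$ and $\F^-_A(\pi_\mathcal{O}(\widetilde{\alpha}^{(k+1)}))$, so these two stable leaves are non-separated; analogously one obtains non-separated pairs of $\F^+_A$-leaves from the $\widetilde{\beta}^{(k)}$'s. I would then assemble these pairs into a bi-infinite chain $\{L_n\}_{n\in\Z}$ of lozenges whose consecutive members share a corner, alternately of $\widetilde{\alpha}$-type and $\widetilde{\beta}$-type, with shared sides alternating between stable half-leaves (at $\widetilde{\alpha}$-corners, identified as the projections of segments of the $h_k$'s) and unstable half-leaves (at $\widetilde{\beta}$-corners, from the $v_k$'s); this is condition (1). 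Each projection $\pi_\mathcal{O}(\widetilde{\alpha}^{(k)})$ or $\pi_\mathcal{O}(\widetilde{\beta}^{(k)})$ then appears as a corner of exactly two consecutive lozenges (those coming from the strips on either side of the corresponding grid line), giving condition (2). The common ideal point in $\partial P_A$ required by the remark following the definition should be the global fixed point of the stabilizer $\pi_1(T)\cong\Z^2$ of $\widetilde{T}$ acting on $\partial P_A$.

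The main obstacle lies in the last step: rigorously matching each alleged lozenge side to the correct specific half-leaf and verifying the perfect fit at each corner. One must show that the stable half-leaf at $\pi_\mathcal{O}(\widetilde{\alpha}^{(k)})$ obtained by traversing $h_k$ in one prescribed direction and the unstable half-leaf at $\pi_\mathcal{O}(\widetilde{\beta}^{(l)})$ obtained by traversing $v_l$ in the corresponding direction in fact converge to a common ideal point of $\partial P_A$, rather than merely accumulating on arbitrary leaves. This demands careful bookkeeping of the Reeb asymptotic behaviour as one follows $h_k$ past successive verticals, combined with the $\Z^2$-invariance of $\widetilde{T}$ which makes the asymptotic behaviour rigid enough to pin the ideal point down uniquely.
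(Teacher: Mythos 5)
Your overall picture is right, and several of your intermediate observations (the lifted Reeb structure on $\widetilde{T}$, the injectivity of $\pi_\mathcal{O}|_{\widetilde{T}}$, the non-separation of $\F^-_A(\pi_\mathcal{O}(\widetilde{\alpha}^{(k)}))$ and $\F^-_A(\pi_\mathcal{O}(\widetilde{\alpha}^{(k+1)}))$ coming from interior Reeb leaves, and the alternation of stable and unstable adjacencies) correctly identify the relevant structure. But the step you flag as the ``main obstacle'' is a genuine gap, not a bookkeeping nuisance, and the route you sketch for closing it --- matching ideal points on $\partial P_A$ via Reeb asymptotics and $\Z^2$-invariance --- is not what a perfect fit actually asks for. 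A perfect fit is a statement about transversals: every stable leaf crossing a transversal near one half-leaf must cross the other half-leaf, and vice versa. Getting non-separated leaves and shared ideal points does not by itself produce this local product picture, and in particular does not by itself produce lozenges at all.

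The paper closes this gap by a completely different mechanism: it does not build lozenges from the Reeb picture, but invokes Proposition \ref{proposition:birkhoff_projects_lozenges} (Barbot's theorem that a lifted Birkhoff annulus projects to a chain of lozenges). Concretely, one isotopes $T$ to a union $T'$ of two elementary Birkhoff annuli with boundary on $\alpha_1,\alpha_2$; then Barbot's result hands you a bi-infinite chain of lozenges for free, and the Reeb structure on $T$ is only used afterwards, to read off the \emph{type} of the chain --- the alternation of stable and unstable adjacencies (condition (1)) and the fact that each corner lies in exactly two lozenges (condition (2)), the latter following because a leaf $l_2$ separating $L_2$ from $L_3$ cannot also bound $L_2$ since it does not contain the corner $\widetilde{\alpha}_1$. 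So the lozenge structure is an input, not something you derive from the Reeb asymptotics. If you want to complete your own argument without that citation, you would need to reprove (at least a special case of) the Birkhoff annulus result, i.e., verify the transversal condition defining perfect fits along each $h_k$ and $v_l$ directly from hyperbolicity of the flow; this is substantially more work than your sketch suggests and is exactly what Barbot's proposition encapsulates.
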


\begin{proof}
Applying Proposition \ref{proposition:birkhoff_projects_lozenges}, we can see that each lift $\widetilde{T} \subset \partial V$ of the transverse torus $T$ must project to an infinite chain of lozenges in the orbit space. In order to see this, it's enough to consider two Birkhoff annuli joining $z$ and $z'$, such that their union is a torus $T'$ which is isotopic to the transverse torus $T$. 

The type of the chain of adjacent lozenges is determined by the transverse foliations on the interior of the Birkhoff annuli induced by the stable and unstable foliations of the flow $\varphi_A$. By isotoping $T'$ into $T$, we can instead consider the transverse foliations $f^s$ and $f^u$ on the torus $T$.

These foliations on the torus are, as we have seen above, a pair of Reeb foliations, each with exactly two closed leaves which correspond to the stable (unstable) leaves of $\alpha_1, \alpha_2$ ($\beta_1, \beta_2$). Note that, as shown in Figures \ref{fig:fw_flow_above} and \ref{fig:FW_torus_foliation}, these closed leaves are arranged on the torus in such a way that between the closed leaves of each foliation, we find a closed leaf of the other foliation.

Therefore, given three lozenges $L_1, L_2, L_3$ such that $L_1, L_2$ are adjacent and $L_2, L_3$ are adjacent, we must have that the leaves that separate the pairs of lozenges $L_1, L_2$ and $L_2, L_3$ must belong to different foliations. 

Moreover, if we let $l_1$ be the leaf separating $L_1$ from $L_2$ and $l_2$ the leaf separating $L_2$ from $L_3$, then we can assume without loss of generality that $l_1$ is the projection of a lift of $\F^s(\alpha_1)$ and $l_2$ the projection of a lift of $\F^u(\beta_2)$. In particular, $l_2$ cannot contain the point $\walpha_1$, so $l_2$ cannot be a boundary leaf of $L_2$. 

Since the choice of adjacent lozenges $L_1, L_2, L_3$ in the chain was arbitrary, the two paragraphs above imply that the chain is an infinite perfect fit, as we wanted.
    
    \end{proof}

%We know then (for instance, see\todo{cite Barbot}, or justify more?) that this implies that each lift $\widetilde{T}$ of $T$ will project to a chain of lozenges in which each lozenge is adjacent to exactly two others: one on a stable side, and one on an unstable side. Moreover, these two sides correspond to half-leaves making a perfect fit (see Figure \ref{fig:inf_perf_fit}). Such chains of lozenges are called ``infinite perfect fits.'' 

%Since every orbit in the open dense set $M_A \setminus (\Lambda_1 \cup \Lambda_2)$ passes through $T$, we have that an open dense subset of the orbit space $\varphi_A$ consists of unions of these chains\todo{ipf?}.

Let $\widetilde{T} \subset \partial V$ be a lift of $T$. Consider lifts $\widetilde{\F}^s(\wz), \widetilde{\F}^s(\wz'), \widetilde{\F}^u(\widetilde{w}), \wF^u(\widetilde{w}')$ of the stable leaves of $z,z'$ and the unstable leaves of $w,w'$. Choose these lifts so that they intersect $\widetilde{T}$ in consecutive closed leaves of the foliations $\widetilde{f}^s, \widetilde{f}^u$ of $\widetilde{T}$, and contain lifts $\widetilde{\alpha}_i$, $\widetilde{\beta}_i$ of the orbits $\alpha_i, \beta_i$ for $i = 1,2$.

\begin{remark}
    Any two choices of lifts satisfying the properties above are the same, up to applying a deck transformation on $\widetilde{M_A}$. Therefore, the results and arguments in the rest of this section will not depend on the specific choice made.
\end{remark}

The leaves $\widetilde{\F}^s(\wz), \wF^s(\wz')$ project to leaves $l_1, l_2$ of the foliation $\F^-$ in the orbit space $P_A$, while $\widetilde{\F}^u(\widetilde{w}), \wF^u(\widetilde{w}')$ project to leaves $s_1, s_2$ of $\F^+$. These leaves are sides of lozenges in an infinite perfect fit. The sides $l_3, l_4$ shown in the figure are projections of the stable leaves $\widetilde{\F}^s(\widetilde{w}),\widetilde{\F}^s(\widetilde{w}') $ containing $\widetilde{\beta}_1, \widetilde{\beta}_2$, and $s_3, s_4$ are projections of of the unstable leaves $\widetilde{\F}^u(\wz), \widetilde{\F}^u(\wz')$ which contain $\widetilde{\alpha}_1, \widetilde{\alpha}_2$. 

\begin{figure}[h!]
  \centering
  \includegraphics[width=.85\linewidth]{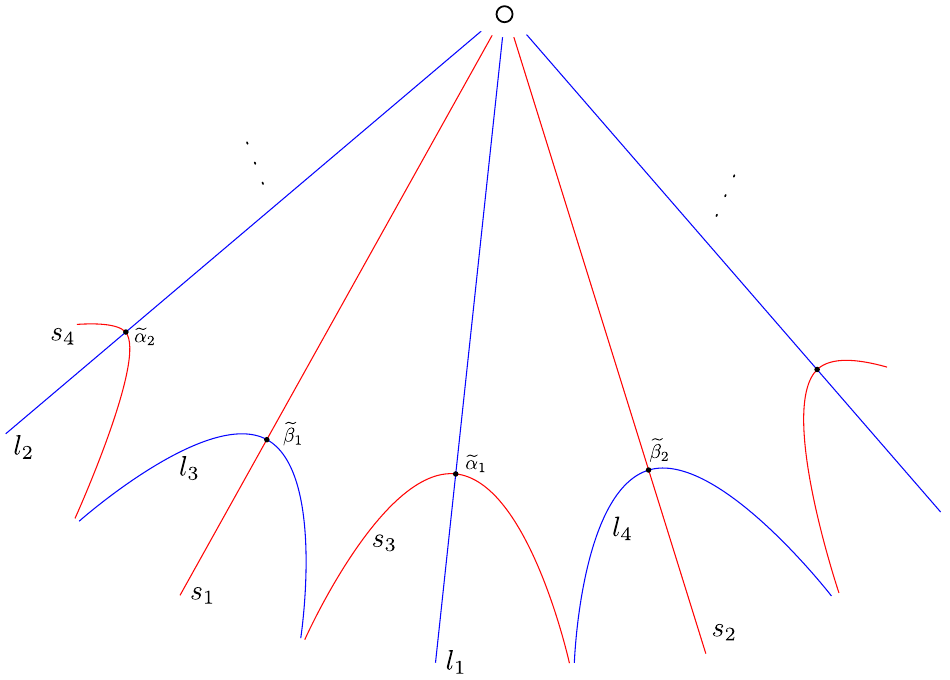}
  \caption{An infinite perfect fit}
  \label{fig:inf_perf_fit}
\end{figure}

\begin{proposition}

   If a leaf $l^- \in \F^-_A $ makes an infinite perfect fit, then $l^- = \pi_{\mathcal{O}}(\wF^s(\wz_1)) $, for some $\wz_1 \in \widetilde{M}$ which is a lift of $z$ or of $z'$. 

   If a leaf $l^+ \in \F^+_A$ makes an infinite perfect fit, then $l^+ = \pi_\mathcal{O}(\wF^u(\widetilde{w}_1))$, for some $\widetilde{w}_1 \in \widetilde{M}$ which is a lift of $w$ or $w'$.
    
\end{proposition}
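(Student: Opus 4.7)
The plan is to identify the basepoint of the shared half-leaf and use the infinite perfect fit structure to pin down the underlying periodic orbit. Let $s \subset l^-$ be the half-leaf shared between two lozenges $L, L' \subset I$, and let $c$ be the basepoint of $s$, which is a common corner of $L$ and $L'$. By the Anosov-like nature of the $\pi_1(M_A)$-action on $P_A$, together with the standard fact (a consequence of Axiom A1 applied through Proposition \ref{proposition:intro_fixed_points_joined_lozenges}) that corners of lozenges in such actions are fixed by non-trivial deck transformations, the point $c$ is fixed by a non-trivial element $g \in \pi_1(M_A)$. Hence $c = \pi_\mathcal{O}(\widetilde{\gamma})$ for a lift $\widetilde{\gamma}$ of some periodic orbit $\gamma$ of $\varphi_A$, and consequently $l^- = \pi_\mathcal{O}(\wF^s(\widetilde{\gamma}))$. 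The substantive content of the proposition is that $\gamma$ must be the orbit of $z$ or $z'$.

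To establish this, I would first show that every infinite perfect fit $I$ arises as $\pi_\mathcal{O}(\widetilde{T}_0)$ for some lift $\widetilde{T}_0$ of the transverse torus $T$. By Proposition \ref{proposition:birkhoff_projects_lozenges}, the infinite chain of adjacent lozenges in $I$ is covered by an infinite chain of components of lifted Birkhoff annuli in $\widetilde{M_A}$. The fact that this chain must be stabilized by a rank-two abelian subgroup of $\pi_1(M_A)$ (generated by an element realizing a shift along the chain together with an element fixing one of the corners) forces the union of these Birkhoff annulus components to project to an embedded torus in $M_A$ which is transverse to $\varphi_A$ except at its two boundary orbits. Since $M_A$ admits a unique essential torus up to isotopy, namely the transverse torus $T$, this torus must be isotopic to $T$, and the lifted chain must coincide with a lift of $T$.

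With the identification $I = \pi_\mathcal{O}(\widetilde{T}_0)$ in hand, Proposition \ref{proposition:birkhoff_projects_lozenges} further implies that two components of $\widetilde{T}_0$ project to lozenges adjacent along an $\F^-$ leaf if and only if they meet along a closed leaf of the induced foliation $\widetilde{f}^s$ on $\widetilde{T}_0$. Since the closed leaves of $f^s$ on $T$ are precisely $\F^s_A(z) \cap T$ and $\F^s_A(z') \cap T$, such closed leaves of $\widetilde{f}^s$ on $\widetilde{T}_0$ are intersections with lifts of $\wF^s(\wz)$ or $\wF^s(\wz')$. Therefore $l^- = \pi_\mathcal{O}(\wF^s(\wz_1))$ for $\wz_1$ a lift of $z$ or $z'$, as required. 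The argument for $\F^+_A$ leaves is entirely analogous, with the roles of stable and unstable foliations swapped and with $(w, w')$ playing the role of $(z, z')$.

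The main obstacle is the middle step: proving rigorously that every infinite perfect fit must arise from a lift of $T$. The delicate point is assembling the infinite chain of Birkhoff components into a closed embedded torus in $M_A$, which requires extracting a $\mathbb{Z}^2$ of periodicities from the lozenge chain and ruling out the possibility that the resulting immersion fails to be an embedding or is inessential. Once the torus is produced and shown to be essential and transverse to $\varphi_A$, the uniqueness of $T$ in the JSJ decomposition of $M_A$ completes the identification.
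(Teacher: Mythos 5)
Your proposal takes a genuinely different route from the paper's, and it has a substantial gap that you partly flag yourself but that I think is worse than you acknowledge. The paper's argument is short and direct: it first observes that a leaf making an infinite perfect fit is necessarily a non-separated leaf (a non-Hausdorff point in the leaf space), and then checks from the concrete structure of the suspended DA flow that every stable leaf which is \emph{not} a lift of $\F^s(z)$ or $\F^s(z')$ is separated from every other stable leaf, because any two such leaves in $\widehat{M}_1$ cross a common unstable leaf, which immediately gives disjoint saturated neighborhoods. The contrapositive then identifies the non-separated stable leaves with lifts of $\F^s(z)$, $\F^s(z')$, and the same reasoning handles $\F^+_A$ with $w$, $w'$. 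Nothing about tori, Birkhoff annuli, or JSJ uniqueness appears.

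Your route instead tries to first establish that every infinite perfect fit in $P_A$ is the projection of a lift of the transverse torus $T$, and then read off the $\F^-$-adjacency leaves from the bifoliation induced on $T$. Two problems. First, a citation issue: the claim that the corner $c$ is fixed by a nontrivial deck transformation is not a consequence of Proposition \ref{proposition:intro_fixed_points_joined_lozenges}, which only runs in the other direction (fixed points of the same element are joined by a chain of lozenges, not the converse). The fact you want — corners of lozenges in orbit spaces of Anosov flows are lifts of periodic orbits — is true and due to Fenley/Barbot, but it needs its own citation and is not derived from A1. Second, and more seriously, the middle step of your argument is not merely ``delicate'' but is missing a key idea. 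You assert that the lozenge chain ``must be stabilized by a rank-two abelian subgroup of $\pi_1(M_A)$ generated by an element realizing a shift along the chain together with an element fixing one of the corners.'' There is no reason a priori that any deck transformation shifts the chain; the corners are lifts of periodic orbits, but possibly lifts of \emph{different} periodic orbits, and the existence of a group element translating $d_0 \mapsto d_2$ along the chain would essentially already require knowing the chain is the lift of a torus invariant under a $\Z^2$ subgroup. That is circular. You also use Proposition \ref{proposition:birkhoff_projects_lozenges} as if it provided a converse (``the infinite chain of adjacent lozenges in $I$ is covered by an infinite chain of components of lifted Birkhoff annuli''), but that proposition only states that lifts of Birkhoff annuli project to chains of lozenges, not that an arbitrary chain of lozenges is realized by a Birkhoff annulus. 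The paper's non-separation argument sidesteps all of these difficulties and is the more economical approach; if you want to pursue the identification of infinite perfect fits with lifts of $T$, you would need to first prove this proposition by the paper's method (or another) and then deduce the torus identification afterwards, not the other way around.
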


\begin{proof}

The definition of an infinite perfect fit implies that if a leaf $l^- \in \F^-_A$ makes an infinite perfect fit, then it is a nonseparated leaf, i.e. a non-Hausdorff point in the leaf space of $\F^-_A$. 

From the construction of the flow, we can see that if $l^- = \pi_{\mathcal{O}}(\wF^s(\widetilde{\gamma}))$ for some orbit $\widetilde{\gamma} \subset \widetilde{M}$ and $\widetilde{\gamma}$ is not a lift of $\alpha_1$ or $\alpha_2$, then we can separate $l$ from any other leaf $l'\in \F^-_A$: it's enough to check it for the leaves in each piece $M_1$ and $M_2$, where the flow is the suspension of the DA map $g_A$. If the leaves $\wF^s(\widetilde{x}_1)\in \wF^s$ and $\wF^s(\widetilde{x}_2)\in \wF^s$ which are lifts of leaves intersecting $M_1$ do not contain a lift of $\wz$ or $\wz'$, then they both intersect some leaf $\wF^u(\widetilde{y})\in \wF^u$, and therefore they can be separated.

We conclude that the leaf $l^-$ cannot be a nonseparated leaf. 

The proof for $l^+ \in \F^+_A$ is analogous.
\end{proof}

The following is a consequence of the proposition above and the fact that each leaf of $\wF^u$ and $\wF^s$ can contain at most one lift of one of $z, z', w$ or $w'$.
\begin{corollary}\label{corollary:FW_only_one_side}
    Every leaf in $P_A$ that makes an infinite perfect fit, makes a perfect fit on only one side.
\end{corollary}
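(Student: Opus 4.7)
The plan is to argue that a leaf $l \in \F^-_A$ making an infinite perfect fit carries exactly one corner of a lozenge, and that this corner participates only in lozenges using a single half-leaf of $l$, so perfect fits with $l$ can occur on only one of its sides. Suppose $l \in \F^-_A$ makes an infinite perfect fit $I$. By the preceding proposition, $l = \pi_\mathcal{O}(\wF^s(\widetilde{p}))$ for some lift $\widetilde{p}$ of $z$ or of $z'$. Let $\widehat{p} = \pi_\mathcal{O}(\widetilde{\gamma}) \in l$ denote the projection of the orbit $\widetilde{\gamma}$ through $\widetilde{p}$; by construction $\widehat{p}$ appears as an interior corner of the IPF $I$.

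Next, I would show that $\widehat{p}$ is the unique corner of a lozenge lying on $l$. By Proposition \ref{proposition:birkhoff_projects_lozenges}, every lozenge in $P_A$ arises from a Birkhoff annulus in $M_A$, and in the Franks-Williams construction the only Birkhoff annuli are the four components of the transverse torus $T$, whose boundary orbits are precisely $\alpha_1, \alpha_2, \beta_1, \beta_2$. Thus any corner of a lozenge on $l$ is the projection of a lift of one of $z, z', w, w'$ lying on the leaf $\wF^s(\widetilde{p})$. The stated fact then forces this lift to be $\widetilde{p}$ itself, so $\widehat{p}$ is the unique such corner on $l$.

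Finally, I would argue that every perfect fit of $l$ with a leaf $l' \in \F^+_A$ must be realized by a lozenge with $l$ as a side, and hence with $\widehat{p}$ as its corner on $l$. Such a lozenge uses one of the two stable half-leaves of $l$ at $\widehat{p}$. The two lozenges of $I$ meeting at $\widehat{p}$ both use the half-leaf $s \subset l$ directed toward the ideal point of $I$. Since the orbit $\widetilde{\gamma}$ lies in a unique lift of $T$, which projects to the unique IPF $I$ containing $\widehat{p}$ as a corner, there are no further lozenges at $\widehat{p}$; in particular, the opposite half-leaf of $l$ at $\widehat{p}$ is not a side of any lozenge. Consequently, no perfect fit involves that half-leaf, and $l$ makes a perfect fit on only one side. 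The same reasoning, with $\F^+_A$ in place of $\F^-_A$ and the $\wF^u$ version of the stated fact, handles leaves of $\F^+_A$ that make an infinite perfect fit.

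The main delicate step is the assertion that every perfect fit in $P_A$ is realized by an actual lozenge with a corner in $P_A$; this is where I would need to invoke the Anosov-like structure of the $\pi_1(M_A)$-action together with the concrete description of Birkhoff annuli in the Franks-Williams construction to rule out ``ideal'' perfect fits without an associated corner on $l$.
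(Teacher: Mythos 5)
Your approach is in the same spirit as the paper's one-line justification (which appeals only to the preceding proposition together with the uniqueness of the lift of $z, z', w, w'$ on any given leaf), and you correctly identify the point on which the whole argument turns. The problem is that the crux you flag as ``delicate'' — that every perfect fit made by $l$ is realized by an actual lozenge whose corner lies on $l$ — is exactly the content that is missing, and your proof does not fill it. Without that, establishing that $\widehat{p}$ is the unique \emph{corner of a lozenge} on $l$ says nothing about whether the opposite ray of $l$ could still make a perfect fit with some leaf of $\F^+_A$ without any associated lozenge; a perfect fit is a condition on a pair of rays near infinity and does not a priori produce a corner point.

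There is also a secondary inaccuracy that feeds into Step 2. You assert that ``the only Birkhoff annuli are the four components of the transverse torus $T$, whose boundary orbits are precisely $\alpha_1, \alpha_2, \beta_1, \beta_2$.'' The torus $T$ is everywhere transverse to the flow and is therefore not a Birkhoff annulus; what is true is that $T$ is isotopic to a Birkhoff torus decomposed into elementary Birkhoff annuli with boundary on those four orbits. More seriously, Proposition~\ref{proposition:birkhoff_projects_lozenges} goes one way (Birkhoff annuli project to chains of lozenges); it does not assert that \emph{every} lozenge in $P_A$ arises this way, nor that a periodic orbit of $\varphi_A$ inside the attractor or repeller cannot be the corner of a lozenge. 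To make your Step 2 rigorous you would instead want to argue (as the paper implicitly does) that a corner of a lozenge must be fixed by a nontrivial deck transformation that also fixes the other corner, forcing a free homotopy between distinct periodic orbits, and that in $M_A$ the only such pairs are among $\alpha_1, \alpha_2, \beta_1, \beta_2$ — combined with the fact that each weak stable/unstable leaf contains at most one lift of $z, z', w, w'$. Both gaps are genuine: the proof is incomplete as written, though you have located the right pressure points.
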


Now, consider the leaves $\wF^s \left( \wz \right), \wF^u(\wz)$, which intersect in the orbit $\walpha_1$. There are two connected components of $\wF^s \left( \wz \right)\setminus \walpha_1$. Let $E_1$ be the component that intersects the transverse torus, and let $E_2$ be the other component. Then, $E_1$ projects to a half-leaf $\wr_1$ in $P_A$ that bounds a lozenge in the infinite perfect fit obtained by projecting $\widetilde{T}$. Let $\wr_2 = \pi_\mathcal{O}(E_2)$ be the projection of the other half-leaf.

For the rest of this section, fix a choice of half-leaf $\wr_2 = \pi_\mathcal{O}(E_2)$ as described in the proof of Corollary \ref{corollary:FW_only_one_side}. As remarked above, all results we prove in this section will apply for any such choice.

%\begin{proposition}
%    The half-leaf $\wr_2$ does not bound a lozenge in $P_A$.
%\end{proposition}
%\begin{proof}    
% proof?
%\end{proof}

\begin{proposition}
    Any two infinite perfect fits in $(P_A, \F^+_A, \F^-_A)$ are disjoint.
\end{proposition}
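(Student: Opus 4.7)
The proof will hinge on two structural facts established in the preceding paragraphs: each infinite perfect fit in $P_A$ is of the form $\pi_\mathcal{O}(\widetilde{T})$ for some lift $\widetilde{T}$ of the transverse torus $T$, together with certain boundary half-leaves; and each lift $\widetilde{T}$ gives rise to a unique IPF. So my plan is to show that distinct lifts $\widetilde{T}_1, \widetilde{T}_2$ always produce \emph{disjoint} IPFs, by controlling both the ``interior'' and the ``boundary'' parts of the IPF. I will first argue for the interiors and then for the boundary half-leaves and corners.

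For the interior: the key fact is that any orbit of $\widetilde{\varphi_A}$ meets at most one lift of $T$. This comes from the Franks--Williams construction itself: the torus $T$ is transverse to $\varphi_A$, and the flow crosses $T$ in only one direction (from $M_2$ into $M_1$, since in $M_1$ the flow points toward the attractor $\Lambda_1$ and does not re-exit). Hence no orbit of $\varphi_A$ can cross $T$ twice; lifting, no orbit of $\widetilde{\varphi_A}$ crosses two different lifts of $T$. Moreover the periodic orbits of $\varphi_A$ lie in the basic sets $\Lambda_1 \subset M_1$ or $\Lambda_2 \subset M_2$, which are disjoint from $T$, so periodic orbits do not meet $T$ at all. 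Consequently, if $p \in \pi_\mathcal{O}(\widetilde{T}_1) \cap \pi_\mathcal{O}(\widetilde{T}_2)$, then the corresponding orbit would meet both $\widetilde{T}_1$ and $\widetilde{T}_2$, forcing $\widetilde{T}_1 = \widetilde{T}_2$. This shows that the ``interior'' parts of distinct IPFs are disjoint.

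For the boundary: recall that the corners of the lozenges in an IPF are projections of lifts of the four periodic orbits $\alpha_1, \alpha_2, \beta_1, \beta_2$, and the boundary half-leaves are projections of half-leaves of the form $E_1 \subset \widetilde{\F}^s(\widetilde{\alpha})$ or $E_1 \subset \widetilde{\F}^u(\widetilde{\beta})$ --- the specific half that intersects a lift of $T$. By Corollary \ref{corollary:FW_only_one_side} and the paragraph defining $E_1, E_2$, for each lift $\widetilde{\alpha}$ only \emph{one} half of $\widetilde{\F}^s(\widetilde{\alpha})\setminus \widetilde{\alpha}$ intersects any lift of $T$, and by the interior argument that lift is unique. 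Therefore each $\widetilde{\alpha}$ (and each $\widetilde{\beta}$) determines a unique $\widetilde{T}$, so a given corner point or boundary half-leaf can belong to at most one IPF.

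To assemble the proof, I will take $p \in I_1 \cap I_2$ and analyze cases based on whether $p$ lies in the interior of lozenges of $I_1$ or on the boundary chain. In every case, $p$ is the projection of some orbit that (either directly, if $p$ is interior; or via its containing $E_1$-half-leaf, if $p$ is on a boundary half-leaf; or via the $E_1$ half-leaf of the corresponding periodic orbit, if $p$ is a corner) meets a single prescribed lift $\widetilde{T}_i$ for each $i=1,2$, forcing $\widetilde{T}_1=\widetilde{T}_2$ and hence $I_1=I_2$, a contradiction. The main technical obstacle will be ensuring that the association ``corner/boundary half-leaf $\mapsto$ unique $\widetilde{T}$'' is rigorously established --- this requires combining Corollary \ref{corollary:FW_only_one_side} with the transversality/one-way-crossing of $T$, and keeping careful track of which half-leaves $E_1$ of the weak stable/unstable leaves of $\widetilde{\alpha}, \widetilde{\beta}$ actually intersect a lift of $T$.
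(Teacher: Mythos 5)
Your interior argument is exactly the paper's proof: an orbit of $\varphi_A$ crosses the transverse torus $T$ at most once, so an orbit of $\widetilde{\varphi_A}$ can meet at most one lift $\widetilde{T}$, and since each infinite perfect fit is the projection $\pi_\mathcal{O}(\widetilde{T})$ of some lift, distinct lifts give disjoint images. That part is correct and matches the paper.

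The extra care you take with ``boundary half-leaves and corners'' is both unnecessary and not quite right. It is unnecessary because, under the paper's conventions, a lozenge is by definition an \emph{open} region of $P$ and a chain of lozenges is the \emph{union} of those open lozenges; an infinite perfect fit is therefore an open set that does not contain its boundary half-leaves or corners as points, so there is no ``boundary case'' to worry about. And the specific claim you make to handle it --- that ``for each lift $\widetilde{\alpha}$ only one half of $\widetilde{\F}^s(\widetilde{\alpha}) \setminus \widetilde{\alpha}$ intersects any lift of $T$'' --- is false. The half $E_1$ meets the distinguished $\widetilde{T}$, but the other half $E_2$ meets infinitely many \emph{other} lifts of $T$: its projection $\wr_2 = \pi_\mathcal{O}(E_2)$ is precisely the ray that the paper later shows passes through infinitely many infinite perfect fits. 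So a lift $\widetilde{\alpha}$ does not ``determine a unique $\widetilde{T}$'' via which lifts meet its stable leaf; what is true, and what Corollary~\ref{corollary:FW_only_one_side} actually gives, is that only one of the two half-leaves of $\F^-(\walpha)$ can bound lozenges in an infinite perfect fit, which is the statement you would need if boundary points were in play. Since they are not, you can simply drop the second half of the argument and you have the paper's proof.
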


\begin{proof}
    This is a consequence of the fact that each infinite perfect fit consists of the set of orbits that intersect a given lift $\widetilde{T}$ of the torus $T$ transverse to the flow $\varphi_A$.
    
    Since an orbit of $\varphi_A$ that intersects $T$ does so only once, it is not possible for any orbit of $\widetilde{\varphi_A}$ to intersect more than one lift of $T$ in $\widetilde{M}$. Therefore, the projection of each obit to $P_A$ must be contained in at most one infinite perfect fit, showing that infinite perfect fits in $P_A$ are disjoint.
\end{proof}

Projecting the lifts of $T$ contained in $\partial V$ yields a collection of disjoint infinite perfect fits. In order to understand how these are arranged in the orbit space, we will now consider a lift $\widetilde{T}' \subset \partial V$ such that $\widetilde{T} \neq \widetilde{T}' $, and we will look at whether it intersects the ray $r_2$ (the same argument would apply for any analogous ray based at a different lift of $z_1$ or $z_2$), and what this intersection looks like.

Recall that $V$ is a connected component of $\pi^{-1}(M_1)\subset \widetilde{M}$. Equivalently, we could think of $V$ as a connected component of $\overline{\pi}^{-1}(\widehat{\pi}^{-1}(M_1))$, where $\overline{\pi}: \widetilde{M_1} \to \widehat{M}_1$ is the universal cover of $\widehat{M}_1$.

The following is a straightforward consequence of Lemma \ref{lemma:DA_all_three_intersect_iff}.
\begin{proposition}\label{proposition:plane_all_three_intersect}
    Let $s^+ \in \F^+$ be a leaf which makes a infinite perfect fit. Let $s_1^+, s_1^+$ be the other unstable sides of the lozenges $L_1, L_2$ which have a boundary half-leaf contained in $s^+$. Then, one of $s_1^+, s_2^+, s^+$ intersects $\wr_2$ if and only if all of them do. 
\end{proposition}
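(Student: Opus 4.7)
The plan is to identify the three leaves as projections of specific unstable leaves in $\widetilde{M_A}$, and then apply Lemma \ref{lemma:DA_all_three_intersect_iff} together with a topological ``sandwich'' argument to handle the third leaf. By Corollary \ref{corollary:FW_only_one_side} and the discussion of the infinite perfect fit structure, we may write $s^+ = \pi_{\mathcal{O}}(\wF^u(\widetilde{w}_0))$ for some lift $\widetilde{w}_0$ of $w$ or $w'$, and $s_i^+ = \pi_{\mathcal{O}}(\wF^u(\widetilde{y}_i))$ ($i=1,2$) for lifts $\widetilde{y}_1, \widetilde{y}_2$ of $z$ and $z'$. These three saddle orbits all sit at corners of lozenges in the infinite perfect fit attached to a single lift $\widetilde{T} \subset \partial V$ of the transverse torus, where $V$ is the component of $\pi^{-1}(M_1)$ containing $\wz$. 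Moreover, under the cover $\widetilde{M}_1 = V \to \widehat{M}_1$ discussed in Section \ref{subsection:cover_of_M1}, the plane $\widetilde{T}$ projects to the boundary cylinder of $\widehat{M}_1$ associated to a specific integer point $(m,n) \in \Z^2$, and $\widetilde{y}_1, \widetilde{y}_2$ project to $\widehat{z}_{(m,n)}$ and $\widehat{z}'_{(m,n)}$ respectively.

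The condition $s_?^+ \cap \wr_2 \neq \emptyset$ translates, via $\pi_{\mathcal{O}}$, to the question of whether the corresponding unstable leaf in $\widetilde{M_A}$ intersects $E_2 \subset \wF^s(\wz)$. Since $E_2$ and $\wF^u(\widetilde{y}_i)$ are contained in $V$, this question for the pair $s_1^+, s_2^+$ pushes down to the cover $\widehat{M}_1$ and reduces to comparing $\widehat{\F}^u(\widehat{z}_{(m,n)}) \cap \widehat{\F}^s(\widehat{z})$ with $\widehat{\F}^u(\widehat{z}'_{(m,n)}) \cap \widehat{\F}^s(\widehat{z})$. The suspension version of Lemma \ref{lemma:DA_all_three_intersect_iff} then gives directly that these two intersections are simultaneously empty or nonempty; intersections in $\widehat{\F}^s(\widehat{z})$ lift to intersections in $E_2$ rather than $E_1$ because the latter are separated from the saddle orbit by the boundary cylinder over $(m,n)$.

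For the third leaf $\wF^u(\widetilde{w}_0)$, which is not contained in $V$ but enters it across $\widetilde{T}$, the argument is topological. The component $\wF^u(\widetilde{w}_0) \cap V$ crosses $\widetilde{T}$ along the lift of a closed boundary leaf of a Reeb component of the induced foliation $f^u$ on $T$. By the transversality of the Reeb foliations $f^s, f^u$ described in Section \ref{subsection:construction_of_FW_flow} (see also Figure \ref{fig:FW_torus_foliation}), this crossing on $\widetilde{T}$ lies strictly ``between'' the crossings of $\wF^u(\widetilde{y}_1)$ and $\wF^u(\widetilde{y}_2)$. Inside $V$, the unstable leaves $\wF^u(\widetilde{y}_1)$ and $\wF^u(\widetilde{y}_2)$, together with arcs of $\widetilde{T}$, bound a region $R \subset V$, and $\wF^u(\widetilde{w}_0) \cap V$ is trapped in $R$. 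Since the forward flow carries $\wF^u(\widetilde{w}_0) \cap V$ asymptotically into $\widetilde{\Lambda}_1 \cap V$, it accumulates onto unstable leaves of the attractor and in particular intersects $\wF^s(\wz) \cap E_2$ precisely when $R$ contains points of $E_2$ — which, by the sandwich configuration, happens exactly when $\wF^u(\widetilde{y}_1)$ and $\wF^u(\widetilde{y}_2)$ both meet $E_2$.

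The main obstacle will be making the sandwich step precise: one must carefully justify that $\wF^u(\widetilde{w}_0) \cap V$ is indeed trapped in the region $R$ bounded by the neighbouring unstable leaves of the saddles, and that its asymptotic behaviour in $V$ forces the equivalence between intersecting $E_2$ and both $\wF^u(\widetilde{y}_i) \cap E_2$ being nonempty. This requires combining the Reeb structure on $\widetilde{T}$ with the DA-suspension dynamics inside $V$ as recorded in Proposition \ref{proposition:bijection_M1_integers}.
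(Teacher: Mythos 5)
Your plan is correct and follows the same reduction the paper intends: identify the three leaves as projections of specific unstable leaves of the DA-suspension ($s_1^+$, $s_2^+$ projecting from $\wF^u(\wz_{(m,n)})$, $\wF^u(\wz'_{(m,n)})$ for a common $(m,n)$, and $s^+$ from a lift $\wF^u(\widetilde{w}_0)$ of $\F^u(w)$ or $\F^u(w')$), then push the intersection question into the cover $\widehat{M}_1$ where Lemma~\ref{lemma:DA_all_three_intersect_iff} applies. You correctly observe that the lemma as stated gives the equivalence only for the two outer leaves $s_1^+$, $s_2^+$, and that the middle leaf $s^+$ needs a separate step — something the paper's one-line citation glosses over (though the lemma's proof, which runs the isotopy argument through $\wG^u(m,n)$ as well, essentially contains it).

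One refinement to your sandwich step: the accumulation of $\wF^u(\widetilde{w}_0)\cap V$ onto $\widetilde{\Lambda}_1$ is not what forces the intersection, and invoking it is a mild red herring. What matters is purely separation. The half-plane $\wF^u(\widetilde{w}_0)\cap V$ is properly embedded in the lifted basin region $R$ (its boundary on $\widetilde{T}$ is a lift of a closed $f^u$-leaf, hence a properly embedded line separating $\widetilde{T}$), so it separates $R$ into the piece bordering $\hF^u(\hz_{(m,n)})$ and the piece bordering $\hF^u(\hz'_{(m,n)})$. A stable ray $E_2$ that meets the interior of $R$ cannot exit through the cylinder boundary or through stable leaves, and it meets each unstable leaf at most once; so it meets $R$ iff it crosses both bounding unstable leaves, in which case it must also cross the separating surface $\wF^u(\widetilde{w}_0)\cap V$. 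With that replacement your outline closes cleanly, and the identification of the precise region $R$ as the lift of $\widehat{W}_{(m,n)}$ is exactly the content recorded in Proposition~\ref{proposition:bijection_M1_integers} and the surrounding discussion, as you say.
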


After choosing an orientation for the foliations $\F^+$ and $\F^-$, we can think of the leaves making infinite perfect fits and intersecting $\wr_2$ as being divided into those that make a perfect fit to the right of $\wr_2$ and those that make it to the left of $\wr_2$. 

\begin{proposition}
Each infinite perfect fit that intersects $\wr_2$ does so along the interior of exactly two adjacent lozenges, and the set of intersections of infinite perfect fits with $\wr_2$ is an open and dense subset of $\wr_2$.
\end{proposition}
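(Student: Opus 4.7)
The plan is to handle the two assertions separately: the first from the combinatorial structure of infinite perfect fits, the second from openness together with a density argument based on the density of $W$ in $T^2$.

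For the first assertion, each lozenge $L$ in an infinite perfect fit $I$ has two corners and four bounding half-leaves. The alternation property combined with the axiom that each corner of $L$ is shared by exactly two lozenges of $I$ forces $L$ to have exactly one adjacency of each type (stable and unstable) within $I$; equivalently, one of its two unstable sides is ``internal'' (shared with the unstable-adjacent lozenge in the chain) and the other is ``external'' (on the boundary of $I$), and similarly for its stable sides. Since $\wr_2 \in \F^-$ crosses only unstable leaves, each intersection $\wr_2 \cap L$ is a sub-interval entered and exited through unstable sides of $L$. Because $\wr_2$ emanates from a corner of the home infinite perfect fit (corresponding to $\widetilde{T}$) and distinct infinite perfect fits are disjoint, $\wr_2$ meets any non-home fit $I$ by first entering through an external unstable side of some lozenge $L_1 \subset I$; it then exits $L_1$ through its internal unstable side into the adjacent $L_2 \subset I$, and exits $L_2$ through $L_2$'s external unstable side, leaving $I$. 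So the intersection is contained in the interiors of exactly the two adjacent lozenges $L_1, L_2$, consistent with Proposition \ref{proposition:plane_all_three_intersect} (the three unstable leaves $s^+$, $s_1^+$, $s_2^+$ being crossed together).

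Openness in the second assertion is immediate: each $\wr_2 \cap L$ is a relatively open sub-interval of $\wr_2$, and their union over all lozenges in all infinite perfect fits is open. For density, a point $y \in \wr_2$ fails to lie in any infinite perfect fit exactly when the orbit of $\widetilde{\varphi_A}$ through $y$ avoids every lift of $T$ in $\widetilde{M}$; since lifts of $T$ separate successive lifts of $M_1$ and $M_2$ in $\widetilde{M}$, such an orbit is then trapped in the component $V$ of $\pi^{-1}(M_1)$ containing $E_2$, so it projects to an orbit of $\varphi_A$ remaining in $M_1$ for all time, which must be contained in the attractor $\Lambda_1$. Under the suspension identification $N_A \cong T^2 \times \R / \sim$, this orbit corresponds to a point of $\F^s(z) \cap \Lambda \subset T^2$, and so it suffices to show that $\F^s(z) \cap \Lambda$ (equivalently, its restriction to the relevant half of $\F^s(z) \setminus \{z\}$) is nowhere dense in $\F^s(z)$.

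By Proposition \ref{proposition:DA_properties}, $\F^s(z)$ coincides with a leaf of the irrational straight-line foliation $\G^s$, hence is a line of irrational slope in $T^2$, which is dense in $T^2$; each of its halves is also dense. The complement $W = T^2 \setminus \Lambda$ is open and dense in $T^2$. For any relatively open $U = \F^s(z) \cap V \subset \F^s(z)$ with $V \subset T^2$ open, $V \cap W$ is open and non-empty by density of $W$, and hence $\F^s(z) \cap V \cap W \neq \emptyset$ by density of $\F^s(z)$. Thus $\F^s(z) \cap W$ is dense in $\F^s(z)$ and $\F^s(z) \cap \Lambda$ is nowhere dense, giving density of the set of intersections of infinite perfect fits with $\wr_2$. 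The main subtlety is the identification of ``$y \in \wr_2$ lies in no infinite perfect fit'' with ``the corresponding orbit lies in the lifted attractor $\widetilde{\Lambda_1}$,'' which combines the separation of lifts of $M_1$ and $M_2$ by lifts of $T$ with the trapping characterization of $\Lambda_1$.
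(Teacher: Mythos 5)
Your argument for the first assertion is essentially the paper's: both track $\wr_2$ through a lozenge via the product structure, noting it can only cross unstable sides, and use the disjointness of infinite perfect fits plus the fact that a ray cannot re-enter a region after leaving through a boundary side. The precise bookkeeping of ``internal'' vs.\ ``external'' unstable sides that you set up is a reasonable way to organize what the paper does more informally, and Proposition~\ref{proposition:plane_all_three_intersect} handles the ``at least two'' direction in both cases.

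The density argument, however, has a genuine gap. You reduce to showing that $\F^s(z)\cap W$ is dense in $\F^s(z)$, and argue: $W$ is dense and open in $T^2$, $\F^s(z)$ is a dense irrational-slope line, so for any open $V\subset T^2$ meeting $\F^s(z)$ one has $\F^s(z)\cap V\cap W\neq\emptyset$. This proves density of $\F^s(z)\cap W$ in the \emph{subspace} topology that $\F^s(z)$ inherits from $T^2$. But the topology that matters here is the intrinsic \emph{leaf} topology, because $\wr_2$ is a properly embedded half-line in the universal cover, and an arc of $\wr_2$ corresponds to a finite arc of the leaf $\F^s(z)$, not to a set of the form $\F^s(z)\cap V$. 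Since $\F^s(z)$ is dense in $T^2$, the leaf topology is strictly finer than the subspace topology, and density in the latter does not imply density in the former. Indeed the two hypotheses you use do not in general imply the leaf-topology conclusion: take $A$ any compact arc of $\F^s(z)$ and set $W'=T^2\setminus A$; then $W'$ is open, dense in $T^2$, and $\F^s(z)$ is dense in $T^2$, yet $\F^s(z)\cap W'$ omits the arc $A$ and so fails to be dense in the leaf topology. So the two abstract facts you invoke are not enough; some additional dynamical input is needed.

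What is true, and what the paper implicitly relies on via the density of $\bigcup_{(m,n)}J(\wr_2,(m,n))$ in $\wr_2$ established in Section~\ref{subsection:intersections_leaves_universal_cover_DA}, is that the DA attractor $\Lambda$ meets every stable arc in a set with empty interior. One clean way to see this: given an arc $A\subset r_2$, the images $g_A^{-n}(A)$ expand in $\F^s(z)$ as $n\to\infty$. Because the linear foliation $\G^s$ has irrational slope, the translation flow along its leaves is minimal on $T^2$, so by compactness there is a uniform length $\ell_0$ such that every arc of $\G^s$ of length at least $\ell_0$ meets the open disk $U_0$. Taking $n$ large so that $g_A^{-n}(A)$ has length at least $\ell_0$, one gets $g_A^{-n}(A)\cap U_0\neq\emptyset$, hence $A\cap g_A^n(U_0)\subset A\cap W\neq\emptyset$. (Alternatively, one may invoke the transverse Cantor structure of the one-dimensional expanding attractor directly.) With this in place your identification of ``$y\in\wr_2$ lies in no infinite perfect fit'' with ``the corresponding orbit lies in $\Lambda_1$,'' which is correct and matches the paper's setup, does give the required density.

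Your openness claim is fine.
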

\begin{proof}
First, we show that any leaf of $\F^-$ (or $\F^+$) that intersects the interior of an infinite perfect fit can intersect at most two lozenges $L_1, L_2 \subset I$: suppose without loss of generality that $l\in \F^-$ intersects the interior of an infinite perfect fit. 

Then, $l$ intersects the interior of some lozenge $L$ at some point $\widetilde{\gamma}$. Therefore, since the foliations in the interior of a lozenge are product foliations, we must have that $l$ intersects both unstable sides $s_1^+, s_2^+$ of $L$. One of the unstable sides, say $s_1^+$, is in the boundary of $I$. This implies that the connected component of $l\setminus s_1^+$ which contains $\widetilde{\gamma}$ must be contained in the complement of $I$, since a leaf that exits an infinite perfect fit cannot enter it again. The other connected component intersects $s_2^+$, which bounds lozenges $L$ and $L' $ contained in $I$. Then, by the same reasoning as before, this component must intersect the boundary leaf $s^+_3$ of $L'$ which is in the boundary of $I$. Therefore, it exits $L'$ and $I$ through this boundary leaf. We conclude that the leaf $l$ only intersects $I$ on the lozenges $L $ and $L'$.

The fact that it must intersect at least two lozenges follows from Proposition \ref{proposition:plane_all_three_intersect}.

\end{proof}

The proposition below follows from the fact that on both sides of the contracting eigenspace of the matrix $A$ we can find infinitely many integer points.
\begin{proposition}
    There are infinitely many leaves intersecting $\wr_2$ that make infinite perfect fits to the right, and infinitely many leaves intersecting $\wr_2$ that make infinite perfect fits to the left.
\end{proposition}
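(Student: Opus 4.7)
The plan is to reduce the statement to a classical counting problem about integer points via the bijection of Proposition \ref{proposition:bijection_M1_integers}. First I would observe that, by construction, a leaf $s^+ \in \F^+_A$ makes an infinite perfect fit and intersects the interior of $\wr_2$ if and only if it is the projection of an unstable leaf $\hF^u(\hz'_{(m,n)})$ in the cover $\widehat{M}_1$ whose lift to $\widetilde{M_A}$ passes through the same component of $\pi^{-1}(M_1)$ as the chosen lift $\widetilde{T}$, and which intersects $\hF^s(\hz)$ in the half-leaf projecting to $\wr_2$. By the definition of a crossing leaf, these are precisely the leaves in $CL(\hz)$, with infinite perfect fits made to the left (resp.\ right) of $\wr_2$ corresponding to elements of $CL_l(\hz)$ (resp.\ $CL_r(\hz)$). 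Then Proposition \ref{proposition:bijection_M1_integers} provides order-preserving bijections $CL_l(\hz) \leftrightarrow CP_l(\wz)$ and $CL_r(\hz) \leftrightarrow CP_r(\wz)$, so it suffices to show that $CP_l(\wz)$ and $CP_r(\wz)$ are both infinite.

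For this, I would appeal to Lemma \ref{lemma:DA_crossing_points_iff}: an integer point $(m,n)$ lying on the left (resp.\ right) of $\wG^s(0,0)$ belongs to $CP_l(\wz)$ (resp.\ $CP_r(\wz)$) provided the line $\wG^u(m,n)$ meets the correct component of $\wG^s(0,0) \setminus \{(0,0)\}$ and the parallelogram $P_{(m,n)}$ with corners $(0,0)$ and $(m,n)$ bounded by leaves of $\wG^s$ and $\wG^u$ has no integer points in its interior. Since $A \in \mathrm{SL}(2,\Z)$ is hyperbolic, its eigenvalues are irrational quadratic surds, so the slopes $\alpha$ and $\beta$ of $\wG^s$ and $\wG^u$ are both irrational. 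Using standard Diophantine approximation (or, equivalently, the continued fraction convergents of $\alpha$, which alternate sides of the line $\wG^s(0,0)$), there exists an infinite sequence $(m_k, n_k)$ of integer points lying on a fixed side of $\wG^s(0,0)$ whose distance to $\wG^s(0,0)$ strictly decreases to zero. For any such best approximation, the interior of $P_{(m_k, n_k)}$ cannot contain an integer point $(m', n')$: such a point would lie strictly between the parallel leaves $\wG^s(0,0)$ and $\wG^s(m_k, n_k)$, hence be strictly closer to $\wG^s(0,0)$ on the same side, contradicting the best-approximation property. This yields infinitely many elements of $CP_l(\wz)$; the same argument on the opposite side, or applied to $-(m_k, n_k)$, gives infinitely many elements of $CP_r(\wz)$.

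The main obstacle I anticipate is bookkeeping the two auxiliary conditions in Lemma \ref{lemma:DA_crossing_points_iff} for arbitrary best approximations: that the line $\wG^u(m_k, n_k)$ meets the correct ray of $\wG^s(0,0) \setminus \{(0,0)\}$ (so that the resulting crossing leaf actually hits $\wr_2$ and not its complementary half-leaf), and that one genuinely obtains best approximations on each side separately, not merely alternating ones. Both points are handled by passing to $-(m_k,n_k)$ when necessary and by invoking the fact that, for any irrational $\alpha$, each open half-plane determined by $\wG^s(0,0)$ contains infinitely many integer points arbitrarily close to the line; the sharper version of this observation is precisely what Section \ref{subsection:parallelograms_integer_points_continued_fractions} makes quantitative via continued fraction expansions, but for the mere infinitude needed here one does not require that full strength.
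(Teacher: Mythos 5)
Your argument reaches the right conclusion, but it routes through machinery the paper's one-line proof does not need, and it contains a misstated equivalence along the way.

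The paper's intended argument is genuinely elementary: by the correspondence already set up (each infinite perfect fit meeting $\wr_2$ is the projection of a lift $\widetilde{T}'$ of the transverse torus, governed in $\widehat{M}_1$ by a component $\widetilde{W}_{(m,n)}$, hence by an integer point $(m,n)$), the statement reduces to noting that each side of the contracting eigenspace $\wG^s(0,0)$ contains infinitely many points of $\Z^2$, and a dense subset of them has $\wG^u(m,n)$ meeting the correct ray. Since $\alpha$ is irrational, this is automatic; no Diophantine approximation or continued fractions enter.

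You instead restrict to crossing infinite perfect fits, invoke Proposition \ref{proposition:bijection_M1_integers} and Lemma \ref{lemma:DA_crossing_points_iff} to pass to $CP_l(\wz)$, $CP_r(\wz)$, and then deploy best-approximation theory to show those sets are infinite. This is logically sound (a proper subset being infinite forces the full set to be infinite), and is in fact the argument underlying Proposition \ref{proposition:order_integers_Tpm}, but for the statement at hand it is substantially more than required: the continued-fraction counting is exactly the input needed for Theorem \ref{theorem:FW_main_theorem}, not for this proposition.

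Two inaccuracies worth flagging. First, you write that the leaves making infinite perfect fits and meeting $\wr_2$ ``are precisely the leaves in $CL(\hz)$.'' This equivalence fails: $CL(\hz)$ carries the additional requirement that the region bounded by $\hF^s(\hz), \hF^u(\hz), \hF^s(\hz'_{(m,n)}), \hF^u(\hz'_{(m,n)})$ contains no points of $\Z^2 \times \R$, so it is a proper subset of the leaves under consideration. Your argument survives only because the inclusion goes the useful way (infinitely many crossing implies infinitely many total), but as stated the ``if and only if'' is wrong. Second, the leaf $s^+$ that makes the infinite perfect fit is the projection of $\wF^u(\widetilde{w}_0)$ for $\widetilde{w}_0$ a lift of $w$ or $w'$ — it is the \emph{adjacent} side $s_1^+$ of the lozenge $L_1 \subset I$ that is the projection of $\wF^u(\wz_0)$, which is what corresponds to $\hF^u(\hz'_{(m,n)})$ in $\widehat{M}_1$. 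Conflating these two leaves does not break the proof, since the bijection between them is supplied by the perfect-fit structure, but the identification you wrote down is not literally true.
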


%\begin{proposition}
%    Description of projection of $V$ + figure showing this.\todo{do this!! IMPORTANT}
%\end{proposition}

\subsection{Crossing and non-crossing infinite perfect fits}\label{subsection:crossing_and_non_crossing_IPFs}

Analogously to how we distinguished between crossing and non-crossing leaves earlier, we will distinguish between two types of infinite perfect fits intersecting $\wr_2$, also named ``crossing'' and ``non-crossing''. In fact, we will later see that crossing leaves are in one-to-one correspondence with crossing infinite perfect fits.

\begin{figure}[h!]
  \centering
  \includegraphics[width=.75\linewidth]{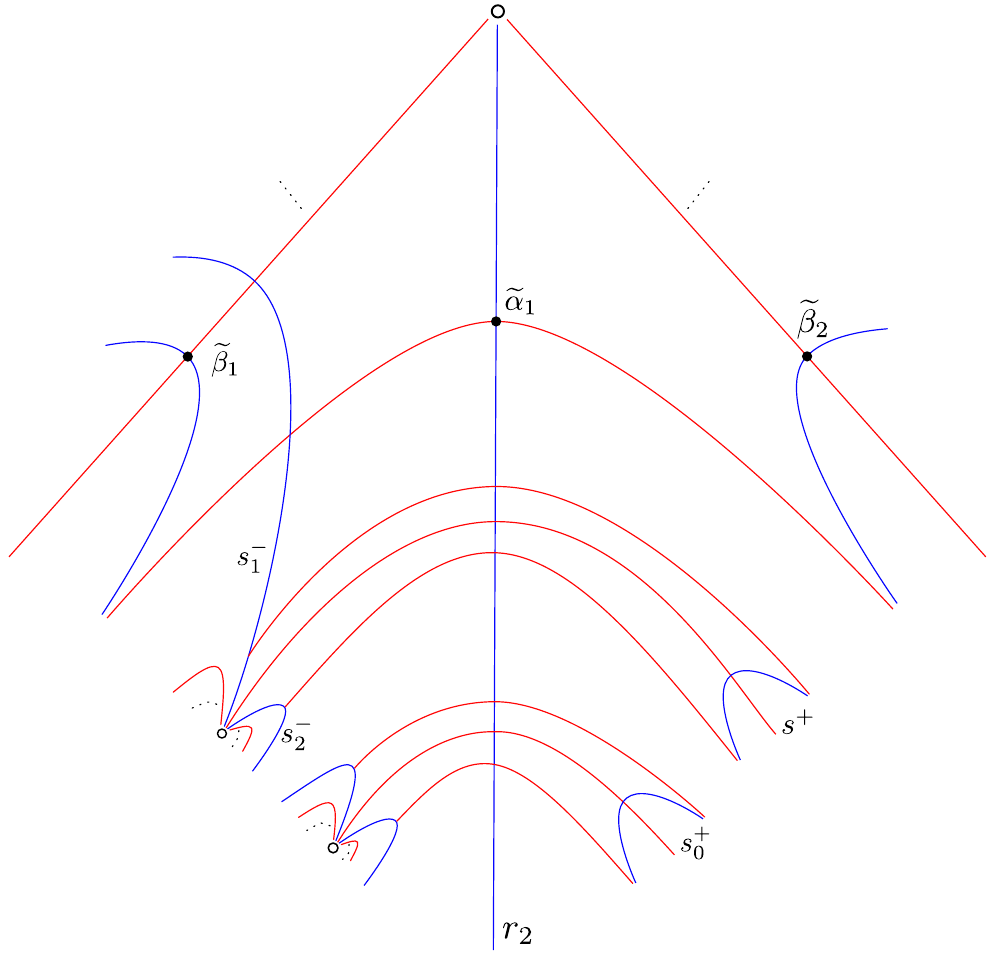}
  \caption{$s^+$ bounds a lozenge in a crossing infinite perfect fit for $\wr_2$, while $s_0^+$ does not.}
  \label{fig:crossing_non_crossing}
\end{figure}

\begin{definition}(Crossing infinite perfect fits)
    Let $s^+ \in \F^+$ be a leaf making an infinite perfect fit and intersecting $\wr_2$. Let $s_1^-, s_2^-$ be leaves making an infinite perfect fit $I$ with $s^+$, such that the pairs $s^+, s_1^-$ and $s^+, s_2^-$ bound lozenges $L_1$ and $L_2$, respectively.

    Let $L_1 \subset I$ be the lozenge closest to $\walpha_1$ along $\wr_2$, in the sense that $\wr_2\setminus L_1$ contains $\walpha_1$ in one connected component and $L_2$ in the other connected component.

    Then, if $s_1^-$ intersects the unstable leaf $\F^+(\walpha_1)$ of $\walpha_1$, we say that $I$ is a \emph{crossing} infinite perfect fit for $\wr_2$.

    Moreover, in the case $I$ is a crossing infinite perfect fit for $\wr_2$, let $c_1, c_2\subset \F^+(\widetilde{\alpha}_1)$ be the half-leaves based at $\widetilde{\alpha}_1$ to the left and right of $\wr_2$, respectively. If one of $s_1^-, s_2^-$ intersects $c_1$ ($c_2$), we say $I$ is a \emph{left} (resp. \emph{right}) crossing infinite perfect fit for $
    \wr_2$. 
    
\end{definition}

Identify $V$ with the universal cover of $\widehat{M}_1$ (in such a way that stable and unstable leaves of $\widetilde{\Lambda}$ are identified with the corresponding leaves of $\widehat{\Lambda}$), and let $\overline{\pi}: V \to M_1$ be the universal covering map. %We may assume without loss of generality that $\overline{\pi}(\wz)  \in \widehat{M}_1$ is the point $\hz_1$, 

We use the transverse orientation on $\overline{\pi}(\wF^s(\walpha_1))$ induced by the transverse orientation of the foliation $\wF^s$.

\begin{proposition}\label{proposition:crossing_fits_crossing_leaves}

Let $s^+ \in \F^+$ be a leaf intersecting $\wr_2  = \pi_\mathcal{O}(E_2) \subset \pi_\mathcal{O}(\wF^s \left( \wz \right))$ which makes an infinite perfect fit $I$, where $s^+ = \pi_\mathcal{O}(\wF^u(\widetilde{w}_0))$ for some lift $\widetilde{w}_0\in \widetilde{M}$ of $w\in M$. 

Let $s_1^+ = \pi_\mathcal{O}(\wF^u(\wz_0)) $ be the other unstable side of the lozenge $L_1 \subset I$ bounded by $s^+$, where we choose $L_1$ so that the intersection point $s_1^+ \cap \wr_2$ separates $\wr_2$ into two connected components, one containing $\widetilde{\alpha_1}$ and one containing $s^+\cap \wr_2$.

Then, $I$ is a left (right) crossing infinite perfect fit for $\wr_2$ if and only if $\overline{\pi}(\wF^u(\wz_0))$ is a left (resp. right) crossing leaf for $\overline{\pi}(\wz)$ in $\widehat{M_1}$.
\end{proposition}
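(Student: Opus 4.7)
The plan is to use the covering map $\overline{\pi}: V \to \widehat{M_1}$ to translate the lozenge-theoretic condition in $P_A$ into the leaf-intersection condition in $\widehat{M_1}$. First, I would set up a correspondence: the IPF $I \subset P_A$ equals $\pi_{\mathcal{O}}(\widetilde{T}')$ for a unique lift $\widetilde{T}' \subset \partial V$ of the transverse torus $T$, and $\widetilde{T}'$ maps under $\overline{\pi}$ to a unique boundary cylinder of $\widehat{M_1}$. By Proposition \ref{proposition:no_nullhomotopic_DA}, this cylinder is centered at a unique integer point $(m,n) \in \Z^2$. The corner $\wz_0$ of $L_1$ lying on $\widetilde{T}'$ is a lift of $z$ or $z'$; the choice of $L_1$ closer to $\walpha_1$ along $\wr_2$, together with the asymmetry between $z$ and $z'$ (namely, which half of their stable manifolds enters $U$), forces $\overline{\pi}(\wz_0) = \hz'_{(m,n)}$.

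Next, I would translate the crossing condition across the cover. Since $s_1^- = \pi_{\mathcal{O}}(\wF^s(\wz_0))$ is the stable side of $L_1$ through $\wz_0$ and $\F^+(\walpha_1) = \pi_{\mathcal{O}}(\wF^u(\wz))$, the condition ``$s_1^-$ intersects $\F^+(\walpha_1)$'' is equivalent to $\wF^s(\wz_0) \cap \wF^u(\wz) \neq \emptyset$ in $V$. The intersection orbit, if any, lies in the lifted attractor $\wLambda \subset V$ and projects under $\overline{\pi}$ to give $\hF^s(\hz'_{(m,n)}) \cap \hF^u(\hz) \neq \emptyset$ in $\widehat{M_1}$. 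Applying the analogue of Lemma \ref{lemma:DA_all_three_intersect_iff} in $\widehat{M_1}$ then yields $\hF^u(\hz'_{(m,n)}) \cap \hF^s(\hz) \neq \emptyset$, which is the intersection required to form the bounded four-leaf region in the definition of a crossing leaf.

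Third, I would verify the no-integer-points condition. If the bounded region $R$ in $\widehat{M_1}$ cut out by the four leaves contained a point $((m', n'), t) \in \Z^2 \times \R$, then a lift of the corresponding removed solid torus would be a boundary plane $\widetilde{T}''$ of $V$ lying between the lifts of $\wz$ and $\wz_0$. Projecting $\widetilde{T}''$ to $P_A$ yields an IPF $I''$ distinct from $I$ and interposed between $\walpha_1$ and the lozenge $L_1$; by disjointness of IPFs and the fact that any stable leaf crossing $I''$ cannot continue to reach $\F^+(\walpha_1)$ unobstructed, this contradicts $s_1^-$ meeting $\F^+(\walpha_1)$. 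The converse uses that in the absence of integer points, the clean four-leaf region in $\widehat{M_1}$ pulls back to an unobstructed region in $V$, whose projection to $P_A$ gives the perfect-fit configuration making $s_1^-$ reach $\F^+(\walpha_1)$. The left/right labeling matches because $\overline{\pi}$ preserves orientation and the two sides of $\wG^s(0,0)$, so $(m,n)$ sits on the left (resp.\ right) of $\wG^s(0,0)$ precisely when $\widetilde{T}'$, and hence $I$, sits on the corresponding side of $\wr_2$.

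The main obstacle is the third step: precisely dictating how integer points in the $\widehat{M_1}$ region translate into interfering IPFs in $P_A$ that obstruct the crossing of $s_1^-$ with $\F^+(\walpha_1)$. The cleanest formulation is that a single-lozenge connection between $\walpha_1$ and $\pi_{\mathcal{O}}(\wz_0)$ in $P_A$ exists if and only if the corresponding rectangular region in $\widehat{M_1}$ is free of integer lattice points, but establishing this rigorously requires carefully following how lifts of $T$ and orbits of periodic points populate $V$ and interact with the perfect-fit structure in $P_A$.
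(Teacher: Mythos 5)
Your first two steps match the paper's setup, but there is a gap and a cleaner route.

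A small point first: your step 2 derives $\hF^u(\hz'_{(m,n)}) \cap \hF^s(\hz) \neq \emptyset$ by applying an analogue of Lemma \ref{lemma:DA_all_three_intersect_iff} to the fact that $s_1^- \cap \F^+(\walpha_1) \neq \emptyset$. This is misdirected: that intersection is already forced by the \emph{precondition} that $I$ intersects $\wr_2$, since $s_1^+ \cap \wr_2 \neq \emptyset$ is exactly the statement $\wF^u(\wz_0) \cap \wF^s(\wz) \neq \emptyset$, which pushes forward under $\overline{\pi}$ to $\hF^u(\hz_0) \cap \hF^s(\hz) \neq \emptyset$. Lemma \ref{lemma:DA_all_three_intersect_iff} relates intersections of $\wF^s(\wz)$ with the two unstable leaves $\wF^u(\wz_{(m,n)})$ and $\wF^u(\wz'_{(m,n)})$, not the ``dual'' swap of stable and unstable you invoke, so it does not give what you want there in any case.

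The substantive gap is the one you flag yourself: you do not actually establish that the four-leaf region in $\widehat{M}_1$ is integer-point-free when $I$ is a crossing IPF, and your proposed contradiction argument via ``interfering IPFs obstruct $s_1^-$ from reaching $\F^+(\walpha_1)$'' is not carried out. The paper avoids this bookkeeping entirely. The crossing-IPF condition says that $s_1^-$ meets $\F^+(\walpha_1)$ while the precondition gives $s_1^+$ meeting $\F^-(\walpha_1)$, so the four leaves $\F^+(\walpha_1), \F^-(\walpha_1), s_1^+, s_1^-$ bound a compact disk-like region in $P_A$. Pulling back under the orbit-space projection, the corresponding four leaves in $V$ bound a region $U \cong \R^3$ (disk times flow line). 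Under the identification $V \cong \widehat{M}_1 \subset \R^2 \times \R$, having $U \cong \R^3$ means $U$ is simply connected, so $U$ cannot enclose any of the removed solid cylinders $D_{(m',n')} \times \R$; equivalently, the parallelogram in $\R^2$ cut out by the projections of the four leaves contains no integer points. That single topological observation replaces your entire third step, and the argument reverses cleanly to give the ``only if'' direction as well. You should adopt this ``disk in $P_A$ $\Leftrightarrow$ simply connected region in $\widehat{M}_1$ $\Leftrightarrow$ no cylinders inside'' equivalence rather than trying to track which IPFs in $P_A$ sit between $L_1$ and $\walpha_1$.
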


\begin{proof}

Suppose that $I$ is an infinite perfect fit intersecting $\wr_2\subset \F^+(\walpha) = \pi_\mathcal{O}(\wF^s \left( \wz \right))$. Then, we know that $s_1^+ \cap \wr_2 \neq \emptyset$, and therefore $\wF^u(\wz) \cap \wF^u(\wz_0) \neq \emptyset $. 

%This means that $\overline{\pi}(\wF^u(\wz)) \cap \overline{\pi}(\wF^u(\wz_0)) \neq \emptyset $.

If we also know that $I$ is a crossing infinite perfect fit for $\wr_2$, we have by definition that $s_1^-$ intersects $\F^+(\walpha)$, where $s_1^- = \pi_\mathcal{O}(\wF^s(\wz_0))$.

We can then see that the leaves $\F^+(\walpha), \F^-(\walpha), s_1^+ = \F^+(\widetilde{\beta}_0), s_1^- = \F^-(\widetilde{\beta}_0)$ bound a region in $P_A$ which is homeomorphic to a disk. Then, the corresponding leaves $\wF^u(\wz), \wF^s \left( \wz \right), \wF^u(\wz_0), \wF^s(\wz_0)$ must bound a region  $U \subset V \subset \widetilde{M}$ which is homeomorphic to $\R^3$.

In particular, our identification of $V$ with $\widehat{M}_1 \subset \R^2\times \R$ maps \linebreak $\wF^u(\wz), \wF^s \left( \wz \right), \wF^u(\wz_0), \wF^s(\wz_0)$ to leaves $\hF^u(\hz), \hF^s \left( \hz \right), \hF^u(\hz_0), \hF^s(\hz_0)$ respectively, which bound a region $U\subset \widehat{M_1}$ homeomorphic to $\R^3$. Since $\Z^2 \times \R$ is in the complement of $\widehat{M_1}$ in $\R^2 \times \R$, we conclude that the region $U$ bounded by \linebreak $\hF^u(\hz), \hF^s \left( \hz \right), \hF^u(\hz_0), \hF^s(\hz_0)$ does not contain any points in $\Z^2 \times \R$. This shows that $\overline{\pi}(\wF^u(\wz_0)) = \hF^u(\hz_0) $ is a crossing leaf for $\overline{\pi}(\wz) = \hz$.

%Again, projecting to $\widehat{M_1}$, we must have $\overline{\pi}(\wF^s(\wz_0)) \cap \overline{\pi}(\wF^u(\wz))$.

\end{proof}

\begin{definition} Let $$C(\wr_2) = \{\widetilde{x} \in I\cap \wr_2 : I \text{ is a crossing infinite perfect fit for } \wr_2 \},$$ and $C_r(\wr_2), C_l(\wr_2) \subset C(\wr_2)$ be the subsets consisting of intersections of right and left crossing infinite perfect fits with $\wr_2$. 

   We define an order in $C(\wr_2)$ as follows: $I \cap \wr_2 < I'\cap \wr_2$ if and only if $\wr_2 \setminus (I\cap \wr_2)$ contains $\walpha_1$ in one connected component, and $I' \cap \wr_2 $ in the other connected component.
\end{definition}

Recall that we denoted by $\wG^s$ and $\wG^u$ the lifts to $\R^2$ of the foliations by lines parallel to the contracting and expanding eigenspaces of $A$. 

For $(m,n)\in \Z^2$, let $R_{(m,n)} \subset \R^2$ be the parallelogram having $(0,0)$ and $(m,n)$ as vertices, and with sides parallel to the eigenspaces of $A$.

Let $r\subset \G^s(0,0)\subset \R^2$ be the ray based at $(0,0)$ that contains a lift of $z\in T^2$.

\begin{corollary}\label{proposition:equivalence_crossing_integer_points}

There is an order preserving bijective correspondence
$$  \widetilde{S}: C(\wr_2) \to  \left\{\wG^u(m,n) \cap \wG^s(0,0): (m,n)\in CP(\wz) \} \right\} $$
given by
$$ \widetilde{S}(I\cap \wr_2) = \widehat{S}\left(  \overline{\pi}\left(\wF^u(\wz'_{(m,n)})\right) \cap \overline{\pi}\left(\wF^s \left( \wz \right) \right)\right) $$

where the order on the target set is as in Definition \ref{definition:order_on_crossing_point_intersections}.

Moreover, this correspondence maps $C_r(\wr_2)$ ($C_l(\wr_2)$) to the subset $CP_r(\wz)$ (resp. $CP_l(\wz)$) of $ \left\{\wG^u(m,n) \cap \wG^s(0,0): (m,n)\in CP(\wz) \} \right\}$ consisting of intersections \linebreak $\wG^s(0,0)\cap \wG^u(m,n)$ such that $(m,n)$ is to the right (resp. left) of $\wG^s(0,0)$.

\end{corollary}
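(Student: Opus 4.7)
The plan is to construct $\widetilde{S}$ as a composition of two order-preserving bijections that have already been essentially set up in the preceding results, and then verify that the left/right decomposition is respected.

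First, I would define an intermediate map
\[
\Psi: C(\wr_2) \to \{\hF^u(\hz'_{(m,n)}) \cap \hF^s(\hz) : \hF^u(\hz'_{(m,n)}) \in CL(\hz)\}
\]
sending $I \cap \wr_2$ to the point $\overline{\pi}(\wF^u(\wz_0)) \cap \overline{\pi}(\wF^s(\wz))$, where $\wF^u(\wz_0)$ is the unstable leaf through the lift of $z$ or $z'$ corresponding to the first corner of $I$ along $\wr_2$ away from $\widetilde{\alpha}_1$, as in Proposition \ref{proposition:crossing_fits_crossing_leaves}. That proposition provides the key fact that $I$ is a crossing infinite perfect fit if and only if $\overline{\pi}(\wF^u(\wz_0))$ is a crossing leaf for $\hz$, so $\Psi$ lands in the correct target and is well-defined. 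Surjectivity of $\Psi$ follows because every crossing leaf $\hF^u(\hz_0')$ for $\hz$ lifts to $V$ as an unstable leaf bounding, together with $\wF^s(\wz)$ and suitable stable/unstable leaves of the corresponding lift of $z$ or $z'$, a rectangular region whose image in $P_A$ is precisely a crossing infinite perfect fit intersecting $\wr_2$. Injectivity comes from the fact that distinct infinite perfect fits are disjoint.

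Next, the map $\widetilde{S}$ is defined as the composition $\widehat{S} \circ \Psi$, where $\widehat{S}$ is the order-preserving bijection from Proposition \ref{proposition:bijection_M1_integers}. The formula in the statement is exactly this composition. Since $\widehat{S}$ is a bijection onto the desired target set of intersections $\wG^u(m,n) \cap \wG^s(0,0)$ with $(m,n) \in CP(\wz)$, the composition is a bijection.

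For order preservation, I would argue as follows. By construction, two intersection points $I \cap \wr_2 < I' \cap \wr_2$ in $C(\wr_2)$ are ordered according to their position along $\wr_2 \subset \F^-(\widetilde{\alpha}_1)$, starting from $\widetilde{\alpha}_1$. Lifting through $\pi_\mathcal{O}$, this order corresponds to the linear order along $\wF^s(\wz)$ starting at $\wz$, which in turn, via the identification of $V$ with the universal cover of $\widehat{M}_1$, matches the order on $\hF^s(\hz)$ used in Proposition \ref{proposition:bijection_M1_integers}. Since $\widehat{S}$ is order-preserving by that proposition, so is $\widetilde{S}$.

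Finally, for the left/right correspondence, the transverse orientation used to distinguish $C_r(\wr_2)$ from $C_l(\wr_2)$ (namely, which side of $\F^+(\widetilde{\alpha}_1)$ the non-$\wr_2$ side-leaf of the relevant lozenge exits toward) is preserved by the lift to $V \cong$ universal cover of $\widehat{M}_1$, and descends via $\overline{\pi}$ to the same left/right classification used in the definition of $CL_r(\hz), CL_l(\hz)$. By the last clause of Proposition \ref{proposition:bijection_M1_integers}, $\widehat{S}$ sends these to $CP_r(\wz)$ and $CP_l(\wz)$ respectively, so the composition $\widetilde{S}$ respects the decomposition. The main thing to check carefully here is that orientation conventions match across the three spaces $P_A$, $V$, and $\widehat{M}_1$; this is where I would expect the bookkeeping to be most delicate, although conceptually no new idea beyond the earlier propositions is required.
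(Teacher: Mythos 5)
Your proof is correct and follows the same route as the paper: the paper's own (extremely terse) proof invokes the injectivity of $\overline{\pi}$ near $\wr_2$ — which is exactly what underlies Proposition \ref{proposition:crossing_fits_crossing_leaves}, the intermediate correspondence you make explicit — and then applies Proposition \ref{proposition:bijection_M1_integers}. Your expansion of the bijectivity, order-preservation, and left/right bookkeeping fills in details the paper leaves to the reader, but the decomposition $\widetilde{S} = \widehat{S} \circ \Psi$ is precisely what the paper has in mind.
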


\begin{proof}
    This follows from the fact that $\overline{\pi}: V \to \widehat{M}_1$ is injective in a neighborhood of $\wr_2$, and then by applying Proposition \ref{proposition:bijection_M1_integers}.
    \end{proof}

    Recall that the half-leaf $\wr_2$ was defined in the previous section as $\wr_2 = \pi_\mathcal{O}(E_2)$, where $E_2 \subset \wF^s \left( \wz \right) \setminus \walpha_1 \subset \widetilde{M_A}$ is a half-leaf of the lift $\wF^s \left( \wz \right)$ of $\F^s(z) \subset M_A$ to $\widetilde{M_A}$, and where $\walpha_1$ is the lift of the orbit of $z$ which is contained in the lifted leaf $\wF^s \left( \wz \right)$.
    
    In order to define our invariant for the bifoliated plane $(P_A, \F^+_A, \F^-_A)$, we consider what happens under a different choice of lift $\wF^s \left( \wz \right)$.
    \begin{proposition}\label{prop:FW_defined_up_to_order_pres}

    Let $\wr_2' = \pi_\mathcal{O}(E_2')$, for $E_2' \subset \wF^s(\wz_2) \subset \widetilde{M_A}$ the connected component of $\wF^s(\wz_2) \setminus \walpha_2$, where $\wz_2\in \widetilde{M_A}$ is a lift of $z\in M_A$ different from $\wz$ and $\walpha_2$ is the orbit of $\wz_2 $ under $\widetilde{\varphi}_A$.

    Then, there is an order preserving isomorphism $C(\wr_2) \to C(\wr_2')$ which maps $C_r(\wr_2)$ to $(C_r(\wr_2'))$ and $C_l(\wr_2)$ to $C_l(\wr_2')$.
    \end{proposition}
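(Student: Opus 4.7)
The plan is to realize the desired isomorphism $C(\wr_2) \to C(\wr_2')$ as the restriction of the action on $P_A$ induced by a single deck transformation of $\widetilde{M_A}$. I will proceed in four steps.

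First, since $\wz$ and $\wz_2$ are two lifts of the same point $z \in M_A$, there will exist a deck transformation $g \in \pi_1(M_A)$ with $g\cdot \wz = \wz_2$. Because $g$ commutes with the lifted flow $\widetilde{\varphi}_A$ and preserves the weak stable foliation, it will send the orbit $\walpha_1$ of $\wz$ to the orbit $\walpha_2$ of $\wz_2$ and the leaf $\wF^s(\wz)$ to $\wF^s(\wz_2)$; in particular, $g$ will send the two connected components of $\wF^s(\wz) \setminus \walpha_1$ bijectively onto those of $\wF^s(\wz_2)\setminus\walpha_2$. The component $E_2$ is characterized dynamically, independently of the choice of lift $\widetilde T$ of the transverse torus, as the half-leaf whose projection $r_2 \subset M_A$ lies in the attractor piece $M_1$ and does not enter $M_2$ (equivalently, the component not contained in the ``$W$-side'' of $\F^s(z)\setminus\{z\}$). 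Since $g$ preserves the attractor $\Lambda_1$, the pieces $M_1, M_2$, and all lifts of the transverse torus, I will conclude $g(E_2) = E_2'$.

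Second, I will descend $g$ to a homeomorphism $\bar g: P_A \to P_A$ which preserves both foliations $\F^+_A$ and $\F^-_A$ and preserves the orientation of $P_A$. The orientation-preservation will follow from the orientability of $M_A$ (so that $g$ preserves the orientation of $\widetilde{M_A}$) together with the fact that $g$ commutes with $\widetilde\varphi_A$ (so that $g$ preserves the direction of the flow); these two facts together force $\bar g$ to preserve the induced orientation on the quotient plane $P_A$. By the first step, $\bar g$ will satisfy
\[
\bar g(\wr_2) = \bar g(\pi_{\mathcal O}(E_2)) = \pi_{\mathcal O}(g(E_2)) = \pi_{\mathcal O}(E_2') = \wr_2',
\]
and similarly $\bar g(\walpha_1) = \walpha_2$ when these orbits are viewed as points of $P_A$. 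Since $\bar g$ is a bifoliated isomorphism, it will send lozenges to lozenges and therefore infinite perfect fits to infinite perfect fits.

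Third, I will define the map by $\Psi(I \cap \wr_2) := \bar g(I) \cap \wr_2' = \bar g(I \cap \wr_2)$ and note that it is a bijection (with inverse induced by $g^{-1}$). The order on $C(\wr_2)$ was defined purely in terms of which connected component of $\wr_2 \setminus (I \cap \wr_2)$ contains $\walpha_1$; since $\bar g$ is a homeomorphism sending $\wr_2$ to $\wr_2'$ and $\walpha_1$ to $\walpha_2$, this ordering will be transported intact to $C(\wr_2')$. Likewise, the distinction between $C_r$ and $C_l$ was defined via the ambient orientation of $P_A$ and the oriented half-leaf $\wr_2$ based at $\walpha_1$, so the orientation-preserving $\bar g$ (which sends the outward-oriented half-leaf $\wr_2$ to the outward-oriented half-leaf $\wr_2'$) will carry $C_r(\wr_2)$ bijectively to $C_r(\wr_2')$ and $C_l(\wr_2)$ bijectively to $C_l(\wr_2')$.

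The main obstacle I anticipate is verifying the two claims above that are not purely formal: that $g(E_2) = E_2'$ (rather than the other component), and that $\bar g$ preserves the orientation of $P_A$. The first requires the intrinsic dynamical characterization of $E_2$ as the ``attractor-side'' half-leaf, which is preserved by any deck transformation because deck transformations preserve the JSJ decomposition of $\widetilde{M_A}$ and the basic sets of $\widetilde{\varphi_A}$. The second relies on the standard fact that for an Anosov flow on an orientable $3$-manifold, deck transformations act on the orbit space preserving its natural orientation, which I will invoke directly.
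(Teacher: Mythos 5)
Your proposal is correct and follows essentially the same approach as the paper: both take the deck transformation $g$ with $g \cdot \wz = \wz_2$ and use its induced action on $P_A$ to transport the crossing infinite perfect fit data from $\wr_2$ to $\wr_2'$. The one place you expand on the paper is the justification that $g(E_2) = E_2'$; you use the dynamical characterization of $E_2$ as the half-leaf avoiding all lifts of the transverse torus, whereas the paper instead identifies $\wr_1$ (the projection of $E_1$) as the half-leaf bounding lozenges of the infinite perfect fit at $\walpha_1$ and deduces $\bar g(\wr_2) = \wr_2'$ by complementarity --- both arguments are valid and amount to the same thing.
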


    \begin{proof}
        Let $T_g :\widetilde{M_A} \to \widetilde{M_A}$ denote the deck transformation mapping $\wz \in \widetilde{M_A}$ to $\wz_2\in \widetilde{M_A}$, corresponding to an element $g\in \pi_1(M)$. 
        
        Then, the induced action of $g$ on $P_A$ maps $\walpha \in P_A$ to $\walpha_2\in P_A$. In particular, it maps the lozenges $L_1, L_2$ having $\walpha$ as a corner to the lozenges $L_1', L_2'$ having $\walpha_2$ as a corner, and therefore maps the infinite perfect fit containing lozenges $L_1, L_2$ to the infinite perfect fit containing lozenges $L_1', L_2'$. This means that $\wr_2$ is mapped to $\wr'$ by the action of $g$. 
        
        Since $g$ must map crossing infinite perfect fits for $\wr_2$ to crossing infinite perfect fits for $\wr_2'$, and $g$ induces a homeomorphism $\wr_2 \to \wr_2'$, by definition of $C(\wr_2)$ there is an order preserving isomorphism $C(\wr_2) \to C(\wr_2')$. Moreover, since $g$ preserves orientation, $g$ maps points to the left of $\wr_2$ to points to the left of $\wr_2'$, and therefore this order preserving isomorphism maps $C_r(\wr_2) $ to $C_r(\wr_2')$ and $C_l(\wr_2)$ to $C_l(\wr_2')$, as we wanted.

    \end{proof}

Therefore, Proposition \ref{prop:FW_defined_up_to_order_pres} allows us to define the following:

\begin{definition}
We define the \emph{intersection pattern of crossing infinite perfect fits} of $\left( P_A, \F^+_A, \F^-_A\right)$ to be the triple $(C^A, C_r^A, C_l^A) = (C(\wr_2), C_r(\wr_2), C_l(\wr_2))$, for any choice of $\wr_2$ as described above. 
\end{definition}

\begin{definition}
Let $A, B \in \mathrm{SL}(2,\Z)$ be hyperbolic matrices.

    If there exists an order preserving isomorphism $C^A \to C^B$ which maps the subsets $C_r^A$ and $C_l^A$ to the subsets $C_r^B$ and $C_l^B$ respectively, we say that the triples $(C^A, C_r^A, C_l^A)$ and $(C^B, C_r^B, C_l^B)$ are equivalent.  
\end{definition}

Now, we prove that the triple $(C^A, C_r^A, C_l^A)$ up to equivalence is an invariant of $(P_A, \F^+_A, \F^-_A)$.

\begin{proposition}\label{lemma:crossings_to_crossings}

Let $H: (P_A, \F^+_A, \F^-_A)\to(P_B, \F^+_B, \F^-_B)$ be an isomorphism that preserves orientations of the foliations. 

Let $(C^A, C_r^A, C_l^A) = (C(\wr_2), C_r(\wr_2), C_l(\wr_2))$.

Then, for $I\cap \wr_2 \in C(\wr_2)$ we have that $H(I) \cap H(\wr_2) \in C(H(\wr_2))$, and the induced map 
$$\mathcal{P}(H) : C(\wr_2) \to C(H(\wr_2))$$ 
given by $\mathcal{P}(H)(I\cap \wr_2) = H(I) \cap H(\wr_2) $ is an order isomorphism and satisfies $\mathcal{P}(H)(C_r(\wr_2)) = C_r(H(\wr_2)), \, \mathcal{P}(H)(C_l(\wr_2)) = C_l(H(\wr_2))$.

\end{proposition}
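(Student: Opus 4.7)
The overall strategy is to verify that every piece of structure used to construct the triple $(C(\wr_2), C_r(\wr_2), C_l(\wr_2))$ is phrased purely in terms of the bifoliated plane (lozenges, perfect fits, corners, half-leaves, foliation orientations), and hence is preserved by $H$. First I would observe that $H$ carries infinite perfect fits in $P_A$ to infinite perfect fits in $P_B$: the defining properties of an infinite perfect fit --- a chain of adjacent lozenges with alternating types of adjacency and exactly two lozenges at each corner --- are intrinsic to the bifoliation and are preserved by any isomorphism of bifoliated planes. In particular, corners of lozenges lying in an infinite perfect fit map to corners of lozenges in an infinite perfect fit. The half-leaf $\wr_2$ is intrinsically characterized as follows: $\walpha_1$ is the corner of some lozenge in an infinite perfect fit $I_0$, and $\wr_2$ is the half-leaf of $\F^-_A(\walpha_1)$ based at $\walpha_1$ which does \emph{not} bound $I_0$ (by Corollary \ref{corollary:FW_only_one_side}, the other half does). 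So $H(\wr_2)$ is a valid analog of $\wr_2$ for $P_B$, with basepoint $H(\walpha_1)$ being the corner of $H(I_0)$.

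Next I would show that $H$ takes crossing infinite perfect fits for $\wr_2$ to crossing infinite perfect fits for $H(\wr_2)$. Let $I$ be an infinite perfect fit intersecting $\wr_2$, and let $L_1 \subset I$ be the lozenge closest to $\walpha_1$, with $\F^+$-sides $s^+, s_1^+$ and $\F^-$-sides $s_1^-, s_2^-$ as in the definition. Then $H(I)$ is an infinite perfect fit intersecting $H(\wr_2)$, and $H(L_1)$ is the lozenge of $H(I)$ closest to $H(\walpha_1)$ along $H(\wr_2)$ (since $H$ is a homeomorphism respecting the bifoliation, the notion of "closest lozenge" along $H(\wr_2)$ matches under $H$). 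The condition defining "crossing" is that $s_1^-$ intersects the leaf $\F^+_A(\walpha_1)$; applying $H$, the leaf $H(s_1^-) \in \F^-_B$ intersects $\F^+_B(H(\walpha_1))$ if and only if $s_1^-$ intersects $\F^+_A(\walpha_1)$. Thus $\mathcal{P}(H)$ maps $C(\wr_2)$ to $C(H(\wr_2))$.

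For the order, recall that $I \cap \wr_2 < I' \cap \wr_2$ means $\walpha_1$ and $I'\cap \wr_2$ lie in different connected components of $\wr_2 \setminus (I \cap \wr_2)$. Since $H$ restricts to a homeomorphism of $\wr_2$ onto $H(\wr_2)$ sending the endpoint $\walpha_1$ to the endpoint $H(\walpha_1)$, and sends $I\cap\wr_2$ to $H(I)\cap H(\wr_2)$, the order relation is preserved by $\mathcal{P}(H)$, so $\mathcal{P}(H)$ is an order isomorphism.

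Finally, for the left/right distinction, since $H$ preserves the orientations of both $\F^+$ and $\F^-$, it preserves the induced orientation of the plane. Near the corner $\walpha_1$, the two half-leaves $c_1, c_2 \subset \F^+_A(\walpha_1)$ based at $\walpha_1$ lie on the two sides of $\wr_2$, and the distinction between the left one $c_1$ and the right one $c_2$ is determined by the oriented bifoliation at $\walpha_1$; hence $H$ sends $c_1$ to the analogous left half-leaf for $H(\wr_2)$ and $c_2$ to the right. Since a crossing fit is classified as left or right according to which of $c_1, c_2$ is intersected by $s_1^-$ or $s_2^-$, and $H$ preserves these intersection patterns, $\mathcal{P}(H)(C_r(\wr_2)) = C_r(H(\wr_2))$ and $\mathcal{P}(H)(C_l(\wr_2)) = C_l(H(\wr_2))$. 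The only subtle point --- which I expect to be the main bookkeeping obstacle rather than a genuine mathematical difficulty --- is ensuring that the "left" and "right" labels based on planar orientation agree on both sides under $H$, but this is immediate once we know that preservation of both foliation orientations implies preservation of the orientation of $P$.
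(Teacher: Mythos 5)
Your proof is correct and follows essentially the same route as the paper's: you observe that $H$ preserves the intrinsic structures (lozenges, adjacency, infinite perfect fits, the closest-lozenge relation) that define the crossing property, that $H$ restricts to a homeomorphism $\wr_2 \to H(\wr_2)$ giving order preservation, and that orientation preservation of both foliations forces the left/right labels to match. Your brief extra discussion of the intrinsic characterization of $\wr_2$ (via Corollary \ref{corollary:FW_only_one_side}) is not present in the paper's proof but is a reasonable clarifying remark rather than a different method.
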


\begin{proof}
     Let $H: (P_A, \F^+_A, \F^-_A)\to(P_B, \F^+_B, \F^-_B)$ be an isomorphism that preserves orientations of the foliations. 

  We know that $H$ must map lozenges to lozenges and preserve adjacency of lozenges. Therefore it maps infinite perfect fits to infinite perfect fits, and leaves making infinite perfect fits to leaves making infinite perfect fits. Since $H$ preserves orientations, if a leaf $s$ makes an infinite perfect fit to the right (left) of $\wr_2$, then $H(s)$ must make an infinite perfect fit to the right (left) of $H(\wr_2)$.

We now show that $H$ maps crossing infinite perfect fits for $\wr_2$ to crossing infinite perfect fits for $H(\wr_2)$.

Let $I$ be an infinite perfect fit for $\wr_2$, and let $s^+\in \F^+_A$ be the leaf bounding a lozenge in $I$, making an infinite perfect fit and intersecting $\wr_2$. Let $s_1^-\in \F^-_A$ be the leaf bounding a lozenge $L_1$ in $I$ and making an infinite perfect fit, where $L_1$ is the lozenge closest to $\walpha_1$.

By definition, $I$ is a crossing infinite perfect fit for $\wr_2$ if and only if $s_1^- \cap \F_A^+(\walpha_1) \neq \emptyset$. This happens if and only if $H(s_1^-) \cap \F_B^+(H(\walpha)) \neq \emptyset$, where $H(s_1^-) \in \F^-_B$ is the leaf of $\F^-_B$ making the infinite perfect fit $H(I)$ and bounding the lozenge $H(L_1)$ which is closest to $H(\walpha_1)$. Therefore, $H(I)$ is a crossing infinite perfect fit for $H(\wr_2)$ if and only if $I$  is a crossing infinite perfect fit for $\wr_2$, and this shows that $H(I \cap \wr_2) = H(I) \cap H(\wr_2) \in C(H(\wr_2))$.

Since $H$ restricts to a homeomorphism from $\wr_2$ to $H(\wr_2)$, the map $\mathcal{P}(H)$ must be order preserving on $C(\wr_2)$. By the second paragraph in the proof, $\mathcal{P}(H)$ must map $C_r(\wr_2)$ to $C_r(H(\wr_2))$ and $C_l(\wr_2)$ to $C_l(H(\wr_2))$, as we wanted.

\end{proof}

\begin{corollary}\label{corollary:FW_isomorphic_implies_equivalent_patterns}
    Let $A, B \in \mathrm{SL}(2,\Z)$ be hyperbolic matrices, and let \linebreak $\left(P_A, \F^+_A, \F^+_B\right)$ and $\left(P_B, \F^+_B, \F^-_B\right)$ be the bifoliated planes associated to the corresponding Franks-Williams flows.

 Then, if these bifoliated planes are isomorphic, they have equivalent intersection patterns of crossing infinite perfect fits $(C^A, C_r^A, C_l^A), \, (C^B, C_r^B, C_l^B)$. 
\end{corollary}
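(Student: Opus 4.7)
The plan is to deduce the corollary by combining Proposition \ref{lemma:crossings_to_crossings} with Proposition \ref{prop:FW_defined_up_to_order_pres}, after reducing to an orientation-preserving isomorphism. Let $H: (P_A, \F^+_A, \F^-_A) \to (P_B, \F^+_B, \F^-_B)$ be an isomorphism of bifoliated planes. First, I would handle the fact that the orientations on $\F^{\pm}_B$ used to define $(C^B, C_r^B, C_l^B)$ are a choice (they are not intrinsic to the bifoliated plane), and verify that the equivalence class of the triple is invariant under consistent reorientation. Examining the definitions, the total order on $C(\wr_2^B)$ depends only on the intrinsic ordering on $\wr_2^B$ away from $\walpha_1^B$, while the labeled decomposition into $C_r^B, C_l^B$ depends on orientations in a transparent way (flipping the orientation of $\F^+_B$ together with that of $\F^-_B$ leaves the labels unchanged). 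Thus by replacing the orientations on $\F^\pm_B$ with the pushforwards under $H$ of the orientations on $\F^\pm_A$, we may assume $H$ preserves the orientations of both foliations.

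Next, I would fix a reference half-leaf $\wr_2^A \subset P_A$ as in the definition of the intersection pattern, so that $(C^A, C_r^A, C_l^A) = (C(\wr_2^A), C_r(\wr_2^A), C_l(\wr_2^A))$. Proposition \ref{lemma:crossings_to_crossings}, applied to $H$, then produces an order-preserving bijection
\[
\mathcal{P}(H): C(\wr_2^A) \to C(H(\wr_2^A))
\]
sending $C_r(\wr_2^A)$ to $C_r(H(\wr_2^A))$ and $C_l(\wr_2^A)$ to $C_l(H(\wr_2^A))$. The key structural observation at this point is that $H(\wr_2^A)$ is a valid reference half-leaf for $P_B$ in the sense required by Proposition \ref{prop:FW_defined_up_to_order_pres}: it is a half-leaf of a non-separated leaf of $\F^-_B$, based at a corner $H(\walpha_1^A)$ of infinitely many lozenges in infinite perfect fits, and it intersects (rather than bounds) infinitely many infinite perfect fits. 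These properties characterize $\wr_2$-type half-leaves intrinsically in terms of the bifoliated plane, and they are preserved by $H$ because $H$ maps lozenges to lozenges, adjacencies to adjacencies, and infinite perfect fits to infinite perfect fits.

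Hence Proposition \ref{prop:FW_defined_up_to_order_pres} gives an order-preserving isomorphism from $(C(H(\wr_2^A)), C_r(H(\wr_2^A)), C_l(H(\wr_2^A)))$ to $(C^B, C_r^B, C_l^B)$ respecting the labeled decomposition, and composing with $\mathcal{P}(H)$ produces the desired equivalence of triples. The main technical point to be careful with is the orientation reduction in the first step: one must verify that the natural symmetries of the triple induced by reorienting the foliations of $P_B$ (relabeling $C_r \leftrightarrow C_l$ and reversing the order on $C$) fall within the equivalence class, so that no information is lost when we impose orientation-preservation on $H$. The verification that $H(\wr_2^A)$ satisfies the intrinsic characterization of $\wr_2$-type half-leaves, while straightforward, is the other place where one has to be explicit about invoking the preservation of lozenges and infinite perfect fits under bifoliated plane isomorphisms.
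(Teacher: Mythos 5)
The paper gives no explicit argument for this corollary; it is presented as an immediate consequence of Proposition~\ref{lemma:crossings_to_crossings} and Proposition~\ref{prop:FW_defined_up_to_order_pres}, and your proposal correctly identifies that chain and fleshes out the two implicit steps: apply $\mathcal{P}(H)$ from Proposition~\ref{lemma:crossings_to_crossings} to get $C(\wr_2^A) \to C(H(\wr_2^A))$, then use Proposition~\ref{prop:FW_defined_up_to_order_pres} to carry $C(H(\wr_2^A))$ over to $C(\wr_2^B)$. Your observation that $H(\wr_2^A)$ is intrinsically characterized (a ray in a non-separated $\F^-_B$-leaf emanating from a lozenge corner into the complement of the associated infinite perfect fit) and that this characterization is preserved by any bifoliated-plane isomorphism is exactly the right way to make the second step legitimate. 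So the approach is essentially the paper's.

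Two points, however, deserve more care than your proposal gives them, though both are gaps the paper itself leaves open rather than defects you introduced. First, Proposition~\ref{prop:FW_defined_up_to_order_pres} is stated only for rays $\wr_2'$ based at lifts of $z$ (its proof produces a deck transformation from $\wz$ to $\wz_2$, which requires both to be lifts of the \emph{same} point), whereas your intrinsic characterization of $H(\wr_2^A)$ does not distinguish rays based at lifts of $z_B$ from those based at lifts of $z'_B$. If $H(\walpha_1^A)$ projects to a lift of $z'_B$, the proposition as written does not apply; one needs either a $z'$-variant (available by the symmetry of the DA construction) or an argument that the invariant is insensitive to the $z$ versus $z'$ choice. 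Second, your orientation reduction is not as benign as you suggest. When $H$ reverses the orientation of exactly one foliation it reverses the plane orientation, and then re-orienting $\F^\pm_B$ to make $H$ orientation-preserving swaps $C_r^B$ with $C_l^B$. The paper's definition of equivalence explicitly requires $C_r^A \mapsto C_r^B$ and $C_l^A \mapsto C_l^B$, ``respectively,'' so this swap is \emph{not} automatically within the equivalence class, contrary to what your parenthetical asserts. You flag the need to verify this at the end, but the verification would in fact fail unless the equivalence is broadened (or unless one argues that the swap only re-indexes the sequence $\sigma$ and so does not affect the downstream Theorem~\ref{theorem:FW_main_theorem}). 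As written, neither the paper nor your proposal closes this orientation-reversing case.
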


\begin{remark}

In a later section, we show that all the information in the invariant $(C^A, C_r^A, C_l^A)$ can be expressed by a bi-infinite sequence of natural numbers. The key to showing this will be Corollary \ref{proposition:equivalence_crossing_integer_points}.

\end{remark}

\section{Continued fractions and the integer lattice}

\subsection{Definitions and elementary properties of continued fractions}\label{subsection:definitions_elementary_props_continued_fractions}

We briefly recall some elementary definitions and facts from the theory of continued fractions, following closely the exposition in \cite{khinchin1964continued}. Although in \cite{khinchin1964continued} there is no explicit mention of lower and upper approximations, the results relating to these can be obtained by adapting the arguments presented there. 

The most important result stated here is Proposition \ref{proposition:good_approx_form_convergents}, which identifies exactly what the lower and upper good approximations of the second kind are, for $\alpha > 0$.

\begin{definition}

Given a natural number $a$, let $T_a: \R \to \R$ be the linear fractional transformation given by $T_a(x) = a + \frac{1}{x} $.

Given $a_0, a_1,\dots, a_n$ natural numbers, define $[a_0; a_1, \dots, a_n] = T_{a_0}\circ T_{a_1} \circ \cdots \circ T_{a_{n-1}}(a_n)$.
That is,
\[
[a_0; a_1, a_2, \dots, a_n] = 
a_0+ \cfrac{1}{\displaystyle a_1 +
  \cfrac{1}{\displaystyle \ddots +
  \cfrac{1}{\displaystyle a_{n-1} + \cfrac{1}{a_n}}}}
\]
    
\end{definition}

\begin{proposition}

    If $a_0, a_1, a_2, \dots$ are natural numbers, the limit \linebreak $\lim_{n\to +\infty} [a_0; a_1, \dots, a_n] $ exists.
\end{proposition}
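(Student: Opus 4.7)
The plan is to use the classical device of \emph{convergents}. Write $\frac{p_n}{q_n} = [a_0; a_1, \ldots, a_n]$ in lowest terms, and first establish by induction on $n$ the recurrences
\[
p_n = a_n p_{n-1} + p_{n-2}, \qquad q_n = a_n q_{n-1} + q_{n-2},
\]
with the convention $p_{-1} = 1$, $p_0 = a_0$, $q_{-1} = 0$, $q_0 = 1$. The key observation making this induction work is that $[a_0; a_1, \ldots, a_n] = [a_0; a_1, \ldots, a_{n-1} + 1/a_n]$, so one can treat the last entry as if it were real-valued and differentiate with respect to it, which lets the inductive hypothesis for $n-1$ feed into the statement for $n$.

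From these recurrences one immediately obtains by induction the identity
\[
p_n q_{n-1} - p_{n-1} q_n = (-1)^{n-1},
\]
and dividing by $q_n q_{n-1}$ yields
\[
\frac{p_n}{q_n} - \frac{p_{n-1}}{q_{n-1}} = \frac{(-1)^{n-1}}{q_n q_{n-1}}.
\]
Telescoping gives
\[
\frac{p_n}{q_n} = a_0 + \sum_{k=1}^{n} \frac{(-1)^{k-1}}{q_k q_{k-1}}.
\]
Since each $a_i$ (for $i \geq 1$) is a positive integer, the recurrence $q_n = a_n q_{n-1} + q_{n-2}$ forces $q_n \geq q_{n-1} + q_{n-2}$, so the $q_n$ dominate the Fibonacci sequence and in particular $q_n \to \infty$. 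The series above is therefore alternating with terms of absolute value $\frac{1}{q_k q_{k-1}}$ tending to zero, so by the alternating series test the partial sums converge, and hence $\lim_{n \to \infty} [a_0; a_1, \ldots, a_n]$ exists.

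The proof is essentially bookkeeping: the only genuine subtlety is the first induction, where one needs to be careful that the recurrence formally stated for integer $a_n$ remains valid when $a_n$ is replaced by the real number $a_{n-1} + 1/a_n$ in order to promote the induction hypothesis. Everything after that is routine: the determinant identity, the Fibonacci-style lower bound on $q_n$, and the alternating series test. An equivalent formulation, which avoids explicitly invoking the alternating series test, would observe that the even-indexed convergents form a strictly increasing sequence, the odd-indexed convergents form a strictly decreasing sequence, every even convergent is less than every odd convergent, and the gap between consecutive convergents is $\frac{1}{q_n q_{n-1}} \to 0$; the common limit of the two monotone bounded subsequences is then the desired limit.
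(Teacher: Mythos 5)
The paper itself does not prove this proposition; it is stated as a classical fact with reference to Khinchin's book, so there is no argument in the paper to compare against. Your proof is the standard textbook derivation via convergents and is correct in substance: the recurrences for $p_n, q_n$, the determinant identity $p_n q_{n-1} - p_{n-1} q_n = (-1)^{n-1}$, the telescoped alternating series, and the Fibonacci lower bound on $q_n$ are exactly the classical chain of reasoning. Two small remarks on rigor rather than correctness. First, the invocation of the alternating series test needs the terms $1/(q_k q_{k-1})$ to be monotonically decreasing, not merely tending to zero; this does hold because $q_{k+1} = a_{k+1}q_k + q_{k-1} > q_{k-1}$, so it is worth saying. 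Second, your description of the induction proving the recurrences (``differentiate with respect to it'') is not really what happens; the usual argument simply substitutes the real number $a_{n-1} + 1/a_n$ for $a_{n-1}$ in the length-$(n-1)$ recurrence and observes that the recurrence is an algebraic identity valid for arbitrary positive real last entry. Your closing alternative via the monotone interleaved even and odd subsequences is, if anything, cleaner and avoids both of these small issues.
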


\begin{definition}
    For natural numbers $a_0, a_1, \dots$, define $$[a_0; a_1,a_2, \dots] =  \lim_{n\to +\infty} [a_0; a_1, \dots, a_n]. $$
\end{definition}

\begin{proposition}
An irrational number $\alpha > 0$ can be represented by a unique continued fraction.
\end{proposition}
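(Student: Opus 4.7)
The plan is to establish existence and uniqueness separately, the former by an explicit algorithm and the latter by induction. For existence, I would set $\alpha_0 = \alpha$ and recursively define $a_n = \lfloor \alpha_n \rfloor$ and $\alpha_{n+1} = 1/(\alpha_n - a_n)$. Since $\alpha$ is irrational, the quantity $\alpha_n - a_n$ lies in $(0,1) \setminus \{0\}$ at every step and irrationality is preserved under the Gauss map, so the recursion never terminates and produces a well-defined infinite sequence $(a_n)_{n \geq 0}$ of non-negative integers with $a_n \geq 1$ for $n \geq 1$ (because $\alpha_n - a_n < 1$ forces $\alpha_{n+1} > 1$).

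To show that $\alpha = [a_0; a_1, a_2, \ldots]$, I would introduce the convergents $p_n/q_n = [a_0; a_1, \ldots, a_n]$ and prove by induction the standard recursions
\[
p_n = a_n p_{n-1} + p_{n-2}, \qquad q_n = a_n q_{n-1} + q_{n-2},
\]
with the conventional initial values $p_{-1}=1,\, p_{-2}=0,\, q_{-1}=0,\, q_{-2}=1$, from which one deduces $p_n q_{n-1} - p_{n-1} q_n = (-1)^{n-1}$. A direct calculation using the definition of $\alpha_{n+1}$ then gives the identity $\alpha = (p_n \alpha_{n+1} + p_{n-1})/(q_n \alpha_{n+1} + q_{n-1})$, whence
\[
\left| \alpha - \frac{p_n}{q_n} \right| = \frac{1}{q_n (q_n \alpha_{n+1} + q_{n-1})}.
\]
Because the recursion forces $q_n$ to grow at least like the Fibonacci sequence, the right-hand side tends to $0$, so $p_n/q_n \to \alpha$ and $\alpha = [a_0; a_1, a_2, \ldots]$.

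For uniqueness, I would proceed by induction on $n$. Assume $\alpha = [a_0; a_1, \ldots] = [b_0; b_1, \ldots]$ with $a_i, b_i \geq 1$ for $i \geq 1$. Then $[a_1; a_2, \ldots] > 1$ and $[b_1; b_2, \ldots] > 1$, so the fractional remainders $\alpha - a_0$ and $\alpha - b_0$ both lie in $(0,1)$; this forces $a_0 = b_0 = \lfloor \alpha \rfloor$, using irrationality to rule out equality with an integer. Taking reciprocals of $\alpha - a_0 = 1/[a_1; a_2, \ldots] = 1/[b_1; b_2, \ldots]$ reduces the statement to the same claim for the irrational number $[a_1; a_2, \ldots]$, so induction yields $a_n = b_n$ for every $n$.

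The main obstacle is the convergence step in the existence argument: one must carefully justify both the identity expressing $\alpha$ in terms of the convergents and $\alpha_{n+1}$, and the unbounded growth of $q_n$, since these are what force the sequence of convergents to converge precisely to $\alpha$ rather than to some other limit determined by the algorithm. Both facts are consequences of the $p_n, q_n$ recursions, but they require a bit of bookkeeping to set up before the irrationality hypothesis can be exploited, whereas the uniqueness argument is essentially formal once one observes that $a_0$ is pinned down as $\lfloor \alpha \rfloor$.
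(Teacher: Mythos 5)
Your argument is correct and is the standard textbook proof via the continued fraction algorithm and the recursion for convergents. The paper does not actually prove this proposition—it explicitly defers to Khinchin's classical text \cite{khinchin1964continued}—and your proof is essentially the one found there, so there is nothing to compare against beyond noting the match.
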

\begin{remark}
    If we have $\alpha < 0$, then we represent $\alpha$ as $\alpha = - [a_0; a_1, a_2,\dots]$, where $a_0 \geq 0$ and $a_i\geq 1$ for all $i\geq 1$. 
\end{remark}

\begin{definition}

We say a continued fraction $\alpha = [a_0; a_1,a_2,  \dots]$ is periodic if there exist natural numbers $k_0$ and $l$ such that for all $k\geq k_0$ we have $a_{k+l} = a_k$.

In that case, we denote the fraction corresponding to this sequence as 
$$[a_0; a_1, \dots, a_{k_0 - 1}, \overline{a_{k_0}, a_{k_0+1}, \dots, a_{k_0+l - 1}}].$$
\end{definition}

\begin{proposition}
    A number $\alpha \in \R \setminus \Q $ is a quadratic algebraic integer, i.e. a root of a polynomial in $\Z[x]$ of degree $2$, if and only if $\alpha$ is represented by a periodic continued fraction.
\end{proposition}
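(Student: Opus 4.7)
The plan is to prove the two directions separately. For the easy direction, I would assume $\alpha$ has a periodic continued fraction expansion. Writing $\alpha = [a_0; a_1, \dots, a_{k_0-1}, \overline{b_0, b_1, \dots, b_{l-1}}]$ and letting $\beta = [\overline{b_0, b_1, \dots, b_{l-1}}]$, the key observation is that $\beta$ is purely periodic of period $l$, so $\beta = [b_0; b_1, \dots, b_{l-1}, \beta]$. Using the matrix/convergent formalism, if $M$ is the product of the matrices $\begin{pmatrix} b_i & 1 \\ 1 & 0 \end{pmatrix}$ for $i = 0, \dots, l-1$, then this self-referential relation takes the form $\beta = \frac{P\beta + Q}{R\beta + S}$ with $P,Q,R,S\in\Z$ and $PS - QR = \pm 1$. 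This yields a quadratic equation with integer coefficients satisfied by $\beta$. Since $\alpha$ is obtained from $\beta$ by finitely many applications of the Möbius transformations $T_{a_i}$, all with integer coefficients, $\alpha$ is also a root of a degree-$2$ integer polynomial (it cannot collapse to rational since its continued fraction is infinite).

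For the harder direction (Lagrange's theorem), suppose $\alpha \in \R \setminus \Q$ satisfies $a\alpha^2 + b\alpha + c = 0$ with $a, b, c \in \Z$, $a \neq 0$ and discriminant $\Delta = b^2 - 4ac > 0$ not a perfect square. Let $\alpha = [a_0; a_1, a_2, \dots]$, and for each $n \geq 1$ let $\alpha_n = [a_n; a_{n+1}, \dots]$ denote the $n$-th complete quotient. The standard identity $\alpha = \frac{p_{n-1}\alpha_n + p_{n-2}}{q_{n-1}\alpha_n + q_{n-2}}$, where $p_k/q_k$ are the convergents to $\alpha$, allows us to substitute into the quadratic and obtain $A_n \alpha_n^2 + B_n \alpha_n + C_n = 0$ with
\begin{align*}
A_n &= a p_{n-1}^2 + b p_{n-1} q_{n-1} + c q_{n-1}^2, \\
B_n &= 2 a p_{n-1} p_{n-2} + b(p_{n-1}q_{n-2} + p_{n-2}q_{n-1}) + 2 c q_{n-1} q_{n-2}, \\
C_n &= A_{n-1}.
\end{align*}
A direct computation shows that the discriminant $B_n^2 - 4 A_n C_n$ equals $(p_{n-1}q_{n-2} - p_{n-2}q_{n-1})^2 \Delta = \Delta$.

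The main obstacle is showing that $A_n$, $B_n$, and $C_n$ are uniformly bounded in $n$. For $A_n$, one uses the classical estimate $|\alpha - p_{n-1}/q_{n-1}| < 1/(q_{n-1} q_n) < 1/q_{n-1}^2$, writing $p_{n-1} = q_{n-1}\alpha + \delta_{n-1}/q_{n-1}$ with $|\delta_{n-1}| < 1$. Expanding $A_n = a(q_{n-1}\alpha + \delta_{n-1}/q_{n-1})^2 + b q_{n-1}(q_{n-1}\alpha + \delta_{n-1}/q_{n-1}) + c q_{n-1}^2$ and using $a\alpha^2 + b\alpha + c = 0$ to cancel the leading $q_{n-1}^2$ terms, one obtains $|A_n| \leq 2|a\alpha| + |a| + |b|$, so $|A_n|$ is bounded by a constant depending only on $a, b, \alpha$. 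Since $C_n = A_{n-1}$, the same bound applies to $|C_n|$, and then $|B_n| \leq \sqrt{\Delta + 4|A_n C_n|}$ is bounded as well.

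Since there are only finitely many possible triples $(A_n, B_n, C_n)$, some triple must repeat: there exist $n < m$ with $(A_n, B_n, C_n) = (A_m, B_m, C_m)$. Then $\alpha_n$ and $\alpha_m$ are roots of the same quadratic, and since both are greater than $1$ (for $n,m$ large enough) and the quadratic has at most one root $>1$, we conclude $\alpha_n = \alpha_m$. This forces $a_{n+k} = a_{m+k}$ for all $k \geq 0$, so the continued fraction expansion of $\alpha$ is eventually periodic.
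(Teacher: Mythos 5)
The paper states this proposition without proof, citing it as a classical fact from Khinchin's book (this is the Euler--Lagrange theorem on continued fractions), so there is no in-paper proof to compare against. Your argument follows the standard textbook proof: the ``easy'' direction by exhibiting a Möbius fixed-point relation for the purely periodic tail, and Lagrange's direction via the bounded complete-quotient triples $(A_n, B_n, C_n)$ with invariant discriminant. The structure and all the computations (the formulas for $A_n, B_n, C_n$, the discriminant identity via $p_{n-1}q_{n-2}-p_{n-2}q_{n-1}=\pm1$, and the boundedness via $p_{n-1}=\alpha q_{n-1}+\delta_{n-1}/q_{n-1}$) are correct.

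There is, however, a genuine gap in your final step. You claim that once the triples repeat, $(A_n,B_n,C_n)=(A_m,B_m,C_m)$, we can conclude $\alpha_n=\alpha_m$ because ``the quadratic has at most one root $>1$.'' This is not automatic: a priori, an integer quadratic can perfectly well have two roots exceeding $1$, and you have not shown why $A_nx^2+B_nx+C_n$ cannot. The missing ingredient is that the second root of that quadratic is the Galois conjugate $\alpha_n'$, and from $\alpha_n' = -\frac{q_{n-2}}{q_{n-1}}\cdot\frac{\alpha'-p_{n-2}/q_{n-2}}{\alpha'-p_{n-1}/q_{n-1}}$ one sees that $\alpha_n'\to$ a limit of $-q_{n-2}/q_{n-1}\in(-1,0)$, hence $\alpha_n'<1$ for all large $n$. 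Alternatively, you can sidestep this entirely with the cleaner pigeonhole used in Khinchin: since there are infinitely many indices and only finitely many triples, some triple occurs for \emph{three} distinct indices $n_1<n_2<n_3$; the quadratic has at most two roots, so two of $\alpha_{n_1},\alpha_{n_2},\alpha_{n_3}$ coincide, giving periodicity without any claim about the location of the conjugate root.
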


\begin{definition}
    If $\alpha = [a_0; a_1, a_2, \dots]$, then for $n\geq 0$ the $n$\emph{-th convergent} $\frac{p_{n}}{q_n}$ of $\alpha$ is defined to be $\frac{p_n}{q_n} = [a_0; a_1, \dots, a_n] $. Here we assume that the fraction $[a_0; a_1, \dots, a_n]$ is reduced, so that $p_n$ and $q_n$ are coprime.

    We also define the (formal) $(-1)$-th convergent of $\alpha$ to be $\frac{p_{-1}}{q_{-1}} = \frac{1}{0}$.

\end{definition}

\begin{proposition}
    Let $\alpha \in \R$. For all $k\geq 0$, we have 
    \[
    \frac{p_{2k}}{q_{2k}} < \frac{p_{2k + 2}}{q_{2k+2}} \leq \alpha \leq \frac{p_{2k+3}}{q_{2k+3}} < \frac{p_{2k +1}}{q_{2k+1}}.
    \]
    That is, the even convergents form an increasing sequence with upper bound $\alpha$, and the odd convergents form a decreasing sequence with lower bound $\alpha$. Moreover, both sequences converge to $\alpha$.

\end{proposition}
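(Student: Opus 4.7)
The plan is to prove the statement using the standard recurrence relations for the numerators and denominators of the convergents, which follow directly from the definition $[a_0; a_1, \dots, a_n] = T_{a_0} \circ \cdots \circ T_{a_{n-1}}(a_n)$. Specifically, I would first establish by induction on $n$ that
\[
p_n = a_n p_{n-1} + p_{n-2}, \qquad q_n = a_n q_{n-1} + q_{n-2},
\]
with the initial conditions $p_{-1}/q_{-1} = 1/0$ and $p_0/q_0 = a_0/1$. Once these recurrences are in hand, two key identities follow by straightforward induction:
\[
p_n q_{n-1} - p_{n-1} q_n = (-1)^{n-1}, \qquad p_n q_{n-2} - p_{n-2} q_n = (-1)^n a_n.
\]
Dividing through yields
\[
\frac{p_n}{q_n} - \frac{p_{n-1}}{q_{n-1}} = \frac{(-1)^{n-1}}{q_n q_{n-1}}, \qquad \frac{p_n}{q_n} - \frac{p_{n-2}}{q_{n-2}} = \frac{(-1)^n a_n}{q_n q_{n-2}}.
\]
Since all $a_i \geq 1$ (for $i \geq 1$), the sequence $(q_n)$ is strictly increasing and positive, so the right-hand side of the second identity is positive for even $n$ and negative for odd $n$. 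This immediately gives that even-indexed convergents are strictly increasing and odd-indexed convergents are strictly decreasing.

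Next, I would show that every even convergent lies below every odd convergent. This follows from the first identity: $p_n/q_n - p_{n-1}/q_{n-1}$ has sign $(-1)^{n-1}$, so $p_{2k}/q_{2k} < p_{2k-1}/q_{2k-1}$ for all $k \geq 1$. Combining this with the monotonicity established above shows that $p_{2k}/q_{2k} < p_{2j+1}/q_{2j+1}$ for all $k, j \geq 0$. In particular, the even sequence is bounded above and the odd sequence is bounded below, so both have limits $L_-, L_+$ with $L_- \leq L_+$. Moreover, since $|p_n/q_n - p_{n-1}/q_{n-1}| = 1/(q_n q_{n-1}) \to 0$ as $n \to \infty$ (because $q_n \to \infty$), we conclude $L_- = L_+ =: L$.

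Finally, to identify $L$ with $\alpha$, I would use the \emph{complete quotient} expression: for each $n$, one can write $\alpha = [a_0; a_1, \dots, a_n, \alpha_{n+1}]$ where $\alpha_{n+1} = [a_{n+1}; a_{n+2}, \dots] > 1$. Applying the recurrence with $\alpha_{n+1}$ in place of $a_{n+1}$ yields
\[
\alpha = \frac{\alpha_{n+1} p_n + p_{n-1}}{\alpha_{n+1} q_n + q_{n-1}}.
\]
A short computation using $p_n q_{n-1} - p_{n-1} q_n = (-1)^{n-1}$ gives
\[
\alpha - \frac{p_n}{q_n} = \frac{(-1)^n}{q_n(\alpha_{n+1} q_n + q_{n-1})},
\]
which shows both that $\alpha$ lies strictly between consecutive convergents (so $p_{2k}/q_{2k} < \alpha < p_{2k+1}/q_{2k+1}$) and that $|\alpha - p_n/q_n| \to 0$, completing the proof. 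I do not expect any serious obstacle here; the entire argument is a careful bookkeeping exercise based on the two recurrence identities, which are themselves proved by routine induction on $n$ using the definition $[a_0; \dots, a_n] = T_{a_0} \circ \cdots \circ T_{a_{n-1}}(a_n)$.
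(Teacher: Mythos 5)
The paper states this proposition without proof, explicitly deferring to Khinchin's classical text on continued fractions. Your proposal reproduces the standard textbook argument from that reference essentially verbatim: the recurrences $p_n = a_n p_{n-1} + p_{n-2}$, $q_n = a_n q_{n-1} + q_{n-2}$, the two determinant identities $p_n q_{n-1} - p_{n-1}q_n = (-1)^{n-1}$ and $p_n q_{n-2} - p_{n-2}q_n = (-1)^n a_n$, the resulting alternating monotonicity, convergence via $q_n \to \infty$, and the complete-quotient computation $\alpha - p_n/q_n = (-1)^n/\bigl(q_n(\alpha_{n+1}q_n + q_{n-1})\bigr)$ to identify the limit. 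All computations check out. This is correct and is exactly the approach the paper is implicitly invoking.

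One small remark: the paper states the hypothesis as $\alpha \in \R$, but the statement (and your proof, which uses the infinite complete quotient $\alpha_{n+1}$ for all $n$) only makes sense for irrational $\alpha$, since a rational has a terminating continued fraction and the convergents $p_n/q_n$ do not exist for all $n$. This is an imprecision in the paper's statement rather than a flaw in your argument; in the paper's actual application $\alpha$ is always a quadratic irrational.
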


Not only is it the case that the sequences of convergents of a number $\alpha$ will converge to it: they also give good rational approximations to $\alpha$, in the sense defined below.

    \begin{definition}
        We say a fraction in lowest terms $\frac{p}{q}\in \Q$ is a best approximation of the second kind of $x\in \R$ if: for all $\frac{p'}{q'} \in \Q$ such that $q'\leq q$, we have $\lvert p - q\alpha \rvert \leq \lvert p' - q'\alpha\rvert $.

    \end{definition}

\begin{remark}
    The reason that we say ``of the second kind'' in the definition below is that we are measuring the approximation error using the expression $\lvert p - q\alpha \rvert$ instead of using $\lvert \frac{p}{q} - \alpha \rvert$. If we used the latter, then we would speak of best approximations of the first kind; however, we will not be needing these here. 
    
    One should think of $\lvert p - q\alpha \rvert$ as measuring the vertical distance between the integer point $(q,p)\in \Z^2$ and the line $y = \alpha x$ of slope $\alpha$, while $\lvert \frac{q}{p} - \alpha \rvert$ would instead measure the difference in slopes between this line and the line connecting $(q,p)$ to the origin.
\end{remark}
    
For our later work, we will want to compare an approximation of a number $\alpha$ from below (above) only to approximations from below (resp. above). We also slightly relax the definition by avoiding the comparison with other fractions with the same denominator; this only makes a difference in the case where the denominator equals $1$.

    \begin{definition}
        We say a fraction in lowest terms $\frac{p}{q}\in \Q$ is a good lower approximation of the second kind of $\alpha \in \R$ if:

        \begin{itemize}
            \item $p - q\alpha < 0$.
            \item For all $\frac{p'}{q'} \in \Q$ such that $q' < q$ and $p' -q' \alpha<0$, we have $\lvert p-q\alpha \rvert < \lvert p' -q'\alpha\rvert $.
        \end{itemize}    
    Given $\frac{p}{q}, \frac{p'}{q'}\in \Q$ such that $p - q\alpha < 0, $ and $ p'-q'\alpha < 0$, we say that $\frac{p}{q}$ is a better lower approximation of the second kind of $\alpha$ than $\frac{p'}{q'}$ if $\lvert p - q\alpha\rvert< \lvert p'-q'\alpha\rvert$.
    \end{definition}

\begin{proposition}\label{proposition:inequalities_convergents}
Let $\alpha = [a_0; a_1, \dots]$.
For all even $k\geq 0$ and $0 \leq r < a_{k+2}$, we have
\[
   \frac{p_k+ rp_{k+1}}{q_k + r q_{k+1}} < \frac{p_k+ (r+1)p_{k+1}}{q_k + (r+1) q_{k+1}} < \alpha.
\]
If instead $k$ is odd, we get 
\[
\frac{p_k+ rp_{k+1}}{q_k + r q_{k+1}} > \frac{p_k+ (r+1)p_{k+1}}{q_k + (r+1) q_{k+1}}>\alpha
\]
and in either case,
\[
 \frac{p_k+ a_{k+2}p_{k+1}}{q_k + a_{k+2} q_{k+1}} = \frac{p_{k+2}}{q_{k+2}}.
\]
\end{proposition}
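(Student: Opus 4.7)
The plan is to reduce everything to a one-variable monotonicity argument combined with the recurrence for convergents. Define the map
\[
f(r) = \frac{p_k + r p_{k+1}}{q_k + r q_{k+1}}
\]
as a function of a real variable $r \geq 0$. The classical identity $p_{k+1} q_k - p_k q_{k+1} = (-1)^k$ (which is straightforward to verify by induction using the recurrence $p_{j+1} = a_{j+1} p_j + p_{j-1}$, $q_{j+1} = a_{j+1} q_j + q_{j-1}$, and the formal initial conditions for $\frac{p_{-1}}{q_{-1}}$) immediately yields
\[
f'(r) = \frac{(-1)^k}{(q_k + r q_{k+1})^2}.
\]
Therefore $f$ is strictly increasing when $k$ is even and strictly decreasing when $k$ is odd. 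Evaluating $f(r)$ and $f(r+1)$ at consecutive integers then gives the first inequality in both cases.

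Next, I would obtain the final equality by directly applying the recurrence at index $k+2$: since $p_{k+2} = a_{k+2} p_{k+1} + p_k$ and $q_{k+2} = a_{k+2} q_{k+1} + q_k$, we get $f(a_{k+2}) = p_{k+2}/q_{k+2}$. This is a purely formal manipulation and requires no further input.

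For the comparison with $\alpha$, I would use the fact, already recalled in the preceding proposition of the excerpt, that the even convergents $p_{2m}/q_{2m}$ strictly increase to $\alpha$ from below while the odd convergents $p_{2m+1}/q_{2m+1}$ strictly decrease to $\alpha$ from above; in particular $p_{k+2}/q_{k+2} < \alpha$ when $k$ is even and $p_{k+2}/q_{k+2} > \alpha$ when $k$ is odd. Combined with the monotonicity of $f$ and the hypothesis $r + 1 \leq a_{k+2}$, this yields
\[
f(r+1) \leq f(a_{k+2}) = \frac{p_{k+2}}{q_{k+2}} < \alpha \qquad (k \text{ even}),
\]
and the reverse chain of inequalities in the odd case.

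No real obstacles are expected: all three ingredients (the $(-1)^k$ identity, the convergent recurrence, and the alternating bracketing of $\alpha$ by its convergents) are standard and already invoked in the surrounding text. The only mild care is to check that the monotonicity of $f$ is strict on the discrete steps $r \mapsto r+1$, which is immediate because $f'(r) \neq 0$ for all $r \geq 0$.
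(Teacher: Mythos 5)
Your proof is correct. The paper does not actually supply a proof of this proposition; it is stated as a standard fact adapted from \cite{khinchin1964continued}, so there is nothing in the text to compare against. Your argument is the natural one and closes the gap cleanly: the determinant identity $p_{k+1}q_k - p_k q_{k+1} = (-1)^k$ gives strict monotonicity of $f(r) = (p_k + r p_{k+1})/(q_k + r q_{k+1})$ in the appropriate direction depending on the parity of $k$, the recurrence identifies $f(a_{k+2})$ with the $(k+2)$-nd convergent, and the alternating bracketing of $\alpha$ by its convergents (recorded in the preceding proposition) finishes the comparison with $\alpha$. One cosmetic remark: the appeal to $f'(r)$ is unnecessary overhead; the same determinant identity gives the discrete difference
\[
f(r+1) - f(r) = \frac{(-1)^k}{\bigl(q_k + r q_{k+1}\bigr)\bigl(q_k + (r+1) q_{k+1}\bigr)}
\]
directly, which is the form more usually seen in the continued-fraction literature and avoids extending $r$ to a real variable. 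Either way the proof is valid.
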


\begin{proposition}\label{proposition:good_approx_form_convergents}
    Let $\alpha > 0$. Then, $\frac{a}{b} \in \Q$ is a good lower (upper) approximation of the second kind of $\alpha = [a_0; a_1, a_2, \dots, ]$ if and only if $\frac{a}{b}$ is of the form
    \[
    \frac{a}{b} = \frac{p_k + rp_{k+1}}{q_k + r q_{k+1}},
    \]
    where $k$ is even (resp. odd), $\frac{p_k}{q_k}$ is the $k$-th convergent of $\alpha$, and $r\in \{ 0,1, \dots, a_{k+2}\}$.
\end{proposition}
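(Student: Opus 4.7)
My plan is to base both directions on the classical determinant identity $p_{n+1}q_n - p_n q_{n+1} = (-1)^n$, which follows by induction from the recursions $p_{n+1} = a_{n+1}p_n + p_{n-1}$, $q_{n+1} = a_{n+1}q_n + q_{n-1}$. Two immediate consequences will be crucial. First, $p_n - q_n\alpha$ alternates sign with the parity of $n$ (negative for even $n$, positive for odd $n$, since $\alpha > 0$ is strictly between consecutive convergents). Second, writing $(P_r, Q_r) = (p_k + rp_{k+1}, q_k + rq_{k+1})$ for $0 \leq r \leq a_{k+2}$, we have
\[
P_r - Q_r\alpha = (p_k - q_k\alpha) + r(p_{k+1} - q_{k+1}\alpha),
\]
and since the two summands have opposite signs with $|p_{k+1} - q_{k+1}\alpha| < |p_k - q_k\alpha|$, the value $P_r - Q_r\alpha$ keeps the sign of $p_k - q_k\alpha$ throughout the range $0 \leq r \leq a_{k+2}$, while $|P_r - Q_r\alpha|$ strictly decreases in $r$. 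Combined with Proposition \ref{proposition:inequalities_convergents}, this identifies semi-convergents of even (odd) index $k$ as candidate good lower (resp. upper) approximations.

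For the ``if'' direction, fix even $k$ and $r \in \{0,\ldots,a_{k+2}\}$; the sign analysis above shows $P_r/Q_r$ lies below $\alpha$. To see it is a \emph{good} lower approximation of the second kind, I would consider any fraction $a/b$ with $b < Q_r$ and $a - b\alpha < 0$. The identity $p_{k+1}q_k - p_kq_{k+1} = \pm 1$ shows that $\{(p_k,q_k),(p_{k+1},q_{k+1})\}$ is a $\mathbb{Z}$-basis of $\mathbb{Z}^2$, so I can write $(a,b) = u(p_k,q_k) + v(p_{k+1},q_{k+1})$ with $u,v \in \mathbb{Z}$. The constraints $b < Q_r = q_k + rq_{k+1}$ and $a - b\alpha < 0$ (which has the same sign as $p_k - q_k\alpha$) together force $u$ and $v$ to have opposite signs (or one of them to vanish), so that $a - b\alpha = u(p_k - q_k\alpha) + v(p_{k+1} - q_{k+1}\alpha)$ is a sum of two terms of the same sign. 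A case analysis on $(u,v)$ then gives $|a - b\alpha| > |P_r - Q_r\alpha|$.

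For the ``only if'' direction, let $a/b$ be a good lower approximation; then $a - b\alpha < 0$, and again I write $(a,b) = u(p_k,q_k) + v(p_{k+1},q_{k+1})$ where $k$ is the largest even index with $q_k \leq b$. The sign constraint on $a - b\alpha$ forces $u \geq 1$ and $v \geq 0$, so $b = uq_k + vq_{k+1}$. If $u \geq 2$ or if $v > a_{k+2}$, then setting $r$ to be the largest integer with $q_k + rq_{k+1} \leq b$ yields a semi-convergent $P_r/Q_r$ with $Q_r < b$, lower than $\alpha$, and (by the strict monotonicity of $|P_r - Q_r\alpha|$) satisfying $|P_r - Q_r\alpha| < |a - b\alpha|$, contradicting goodness. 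Hence $u = 1$ and $0 \leq v \leq a_{k+2}$, giving the desired form.

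The main obstacle will be the lattice/sign bookkeeping in the second paragraph: precisely verifying, from the expression $a - b\alpha = u(p_k - q_k\alpha) + v(p_{k+1}-q_{k+1}\alpha)$ together with $b < Q_r$, which pairs $(u,v) \in \mathbb{Z}^2$ can occur and showing they all produce $|a - b\alpha|$ strictly larger than $|P_r - Q_r\alpha|$. Edge cases to watch are $r = 0$ (where the semi-convergent collapses to $p_k/q_k$ and one must handle the comparison with $p_{k-2}/q_{k-2}$ carefully, or appeal to the case $k = 0$ with the convention $p_{-1}/q_{-1} = 1/0$), and the boundary $r = a_{k+2}$ where $P_r/Q_r = p_{k+2}/q_{k+2}$, which requires the uniqueness of representation to avoid double-counting semi-convergents across adjacent even indices.
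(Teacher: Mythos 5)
The paper states this proposition without proof, citing Khinchin; your lattice-basis decomposition $(a,b) = u(p_k,q_k) + v(p_{k+1},q_{k+1})$ is a clean and standard alternative to the chain of elementary fraction inequalities one finds in that reference, and the overall plan is viable. However, the specific sign claim at the heart of your ``if'' direction is false. You assert that $b < Q_r$, $b > 0$, and $a - b\alpha < 0$ together force $u$ and $v$ to have opposite signs or one of them to vanish, so that $u\delta_k$ and $v\delta_{k+1}$ (where $\delta_j = p_j - q_j\alpha$) have the same sign. But taking $(u,v) = (1,s)$ for any $0 < s < r$ gives $a/b = P_s/Q_s$, a smaller semi-convergent, with $u,v$ both positive; and $u \geq 2$ with $0 \leq v < r$ is also possible. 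In those cases $u\delta_k + v\delta_{k+1}$ is a sum of terms of \emph{opposite} signs and your cancellation argument collapses. The fix is a short case split: for $(u,v) = (1,s)$ with $s < r$ appeal directly to the monotonicity of $|P_r - Q_r\alpha|$ you already established; for $u \geq 2$ note that since both $a - b\alpha$ and $P_r - Q_r\alpha$ are negative, the desired inequality $|a-b\alpha| > |P_r - Q_r\alpha|$ is equivalent to $(u-1)\delta_k < (r - v)\delta_{k+1}$, which holds because the left side is negative and the right side is nonnegative once one checks $r \geq v$ (which follows from $uq_k + vq_{k+1} < q_k + rq_{k+1}$ with $u \geq 2$). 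There is a parallel slip in the ``only if'' direction: the sign constraint and $b > 0$ force $u \geq 1$, but they do \emph{not} force $v \geq 0$ — e.g.\ $(u,v) = (|v| + 2, v)$ with $v < 0$ can give $b \geq q_k$ and $a - b\alpha < 0$. What rules out $v < 0$ is goodness itself: if $v \leq -1$ then $|a - b\alpha| = u|\delta_k| + |v||\delta_{k+1}| > |\delta_k|$, so $p_k/q_k$ is a strictly better lower approximation with $q_k < b$, contradicting the hypothesis. With these corrections, and the unwinding of the $u \geq 2$ contradiction spelled out along the lines above rather than by a bare appeal to ``monotonicity,'' the argument does close; the endpoint identifications at $r = 0$ and $r = a_{k+2}$ you flag are indeed benign since those semi-convergents coincide with the convergents $p_k/q_k$ and $p_{k+2}/q_{k+2}$.
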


\subsection{Relating continued fractions to parallelograms not containing integer points}\label{subsection:parallelograms_integer_points_continued_fractions}

Fix a hyperbolic matrix $A\in \mathrm{SL}(2,\Z)$.

We will denote by $\alpha$ and $\beta$ the slopes of the contracting and expanding eigenspaces of $A$. Then, $\alpha$ and $\beta$ are quadratic irrationals, with $\alpha = \frac{b + \sqrt{c}}{d}$ and $\beta = \overline{\alpha} = \frac{b - \sqrt{c}}{d}$, for $b,c,d \in \Z,\, d\neq 0$. 

Some of the results hold in more generality for $\alpha$ and $\beta$ arbitrary irrational numbers, but we will not make use of this.

 We assume without loss of generality that $\alpha > \beta$ and $\alpha > 0$. The other cases can be handled by applying appropriate symmetries, keeping in mind that these will possibly convert good upper approximations into good lower approximations, and viceversa.

In the first part of this section, we work towards defining a bi-infinite sequence $\sigma(\alpha)$ associated to $\alpha$. This is done in Definition \ref{definition:sigma_bi_infinite_sequence}. Then, we show in Proposition \ref{proposition:sigma_continued_fraction_equal} and Corollary \ref{corollary:periodic_sigma_sequence} that this sequence is determined by the periodic part of the continued fraction of $\alpha$.

\begin{definition}
    Let $(m,n) \in \Z^2$. Let $R_{\alpha,\beta, (m,n)}  \subset \R^2$ be the parallelogram bounded by the four lines of slopes $\alpha$ and $\beta$ which pass through $(m,n)$ or through the origin. 
    
\end{definition}

Since here $\alpha $ and $\beta$ are fixed, we will write $R_{(m,n)} = R_{\alpha, \beta, (m,n)}$.

\begin{lemma}\label{lemma:epsilon_delta}
Let $\alpha, \beta \in \R$ with $\alpha \neq \beta$. For a point $x\in \R^2$, denote by $r_\beta(x)$ the line through $x$ with slope $\beta$, and by $r_\infty(x)$ the vertical line through $x$.

For any $\varepsilon>0$ there exists $\delta > 0$ such that: for any point $x$ on the plane and any line $r_\alpha$ of slope $\alpha$ such that $d(x, r_\alpha) < \delta$ we have 
$$\max \left[ d(x, r_\alpha \cap r_\beta(x)), d(x, r_\alpha \cap r_{\infty}(x)), d(x, r_\alpha \cap r_0(x)) \right]< \varepsilon .$$
\end{lemma}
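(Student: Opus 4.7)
The plan is to reduce the estimate to the elementary observation that for two transverse lines in the plane, the distance from a point on one line to its intersection with a parallel translate of the other depends linearly on the perpendicular distance between the original point and the second line, with a proportionality constant determined only by the angle between the two lines.

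First I would check that all three intersections appearing in the statement are well defined: the line $r_\alpha$ has slope $\alpha$, which is neither $0$ nor infinite, and by assumption $\alpha \neq \beta$, so $r_\alpha$ is transverse to each of $r_0(x)$ (the horizontal through $x$), $r_\infty(x)$ (the vertical through $x$) and $r_\beta(x)$. Let $\theta_0, \theta_\infty, \theta_\beta \in (0,\pi/2]$ denote, respectively, the acute angles between $r_\alpha$ and $r_0(x), r_\infty(x), r_\beta(x)$; these depend only on $\alpha$ and $\beta$ and all three sines are strictly positive.

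Next, for any line $\ell$ through $x$ making acute angle $\theta$ with $r_\alpha$, elementary trigonometry gives
\[
d\bigl(x,\, r_\alpha \cap \ell\bigr) \;=\; \frac{d(x, r_\alpha)}{\sin \theta}.
\]
Indeed, drop the perpendicular from $x$ to $r_\alpha$, meeting $r_\alpha$ at a foot $p$; the triangle with vertices $x$, $p$, and $r_\alpha \cap \ell$ is right-angled at $p$ with angle $\theta$ at $r_\alpha \cap \ell$, so the hypotenuse $x \,(r_\alpha \cap \ell)$ has length $|xp|/\sin\theta = d(x,r_\alpha)/\sin\theta$. (Alternatively, one writes $r_\alpha$ as $\alpha t - y + c = 0$ and computes the three intersection points directly in coordinates; each computation yields a distance equal to $|\alpha x_1 - x_2 + c|$ times a constant depending only on $\alpha$ and $\beta$.)

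Applying this identity to each of the three transverse lines, we obtain
\[
\max\!\bigl[d(x,r_\alpha\cap r_\beta(x)),\, d(x,r_\alpha\cap r_\infty(x)),\, d(x,r_\alpha\cap r_0(x))\bigr]
\;\leq\; \frac{d(x,r_\alpha)}{\min(\sin\theta_0,\sin\theta_\infty,\sin\theta_\beta)}.
\]
Therefore, given $\varepsilon>0$, the choice
\[
\delta \;=\; \varepsilon \cdot \min(\sin\theta_0,\sin\theta_\infty,\sin\theta_\beta)
\]
works, since $\delta$ depends only on $\alpha$ and $\beta$. There is no substantive obstacle here; the only thing to be mindful of is ensuring transversality of the three auxiliary lines with $r_\alpha$, which is precisely what the hypotheses $\alpha \neq 0$ (implicit in the setup, since $\alpha$ is an irrational slope) and $\alpha \neq \beta$ guarantee.
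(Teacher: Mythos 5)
Your proof is correct, and it uses essentially the same observation the paper invokes: the paper's one-line argument says the three distances vary continuously with $d(x,r_\alpha)$ and tend to $0$, relying implicitly on the fact that the proportionality constants depend only on the angles (hence only on $\alpha,\beta$, by translation invariance); your trigonometric identity $d(x, r_\alpha\cap\ell) = d(x,r_\alpha)/\sin\theta$ makes that sketch quantitative and yields an explicit $\delta = \varepsilon\cdot\min(\sin\theta_0,\sin\theta_\infty,\sin\theta_\beta)$, which is cleaner and more self-contained. You are also right that the lemma as literally stated would fail when $\alpha = 0$ (since $r_\alpha$ and $r_0(x)$ would be parallel), but that this is harmless in context because $\alpha$ is an irrational eigenvalue slope; flagging that gap is a good instinct.
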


The proof of the lemma is elementary: one only needs to note that the distance between $x $ and the points $r_\alpha\cap r_\beta(x)$, $r_\alpha \cap r_\infty(x)$ and $r_\alpha \cap r_0(x)$ varies continuously with $d(x,r_\alpha)$, and converges to $0$ as $d(x,r_\alpha) \to 0$.

\begin{definition}\label{definition:t_m_n}
    Let $(m,n)\in \Z^2 $. Let $c:[0, +\infty) \to \R^2$ be the parametrization of a ray in the line $r_\alpha(0)$ with slope $\alpha$, given by $c(t) = t (1, \alpha) $. Let $t_{(m,n)} \in \R$ be defined by $t_{(m,n)} = \frac{n - \beta m}{\alpha - \beta} $, the parameter such that $c(t_{(m,n)})$ is the intersection of the ray $r_\alpha$ and the line of slope $\beta$ passing through $(m,n)$.

\end{definition}

\begin{proposition}\label{proposition:characterization_good_approx}
 Let $(m,n)\in (\Z_{\geq 0})^2 $.

There exists $t_0 \in \R^+$ (which depends on $\alpha$ and $\beta$) such that if $t_{(m,n)} \geq t_0$, then $R_{(m,n)}$ does not contain any points of $\Z^2$ in its interior if and only if $\frac{n}{m}$ is a good lower or upper approximation of the second kind for $\alpha$. 

Moreover, we may choose $t_0$ so that:
\begin{enumerate}
    \item 
$t_0 = t_{(M_0, N_0)} $ for $(M_0, N_0)\in (\Z_{\geq 0})^2$, where $\frac{N_0}{M_0} < \alpha$ is a good lower approximation for $\alpha$.

\item For $(m,n) \in (\Z_{\geq 0})^2$ such that $t_{(m,n)} \geq t_0$ and $\frac{n}{m}$ is a good lower or upper approximation for $\alpha$, we have that
\begin{itemize}
    \item $d((m,n)), r_\alpha(0) \cap r_\beta(m,n))$,
    \item $d((m,n), r_\alpha(0) \cap r_{\infty}(m,n)) $,
    \item $d((m,n), r_\alpha(0) \cap r_0(m,n))$
    
\end{itemize}
are all less than $\frac{1}{4}$.

\end{enumerate}

\end{proposition}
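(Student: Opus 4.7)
My plan is to work in oblique coordinates. Setting $v_\alpha = (1,\alpha)$ and $v_\beta = (1,\beta)$ and writing $(m,n) = A v_\alpha + B v_\beta$ with $A = (n - m\beta)/(\alpha - \beta)$ and $B = (m\alpha - n)/(\alpha - \beta)$ identifies $R_{(m,n)}$ with the set $\{a v_\alpha + b v_\beta : 0 \le a \le A,\ \min(0,B) \le b \le \max(0,B)\}$. I would treat the lower case $n < m\alpha$ (so $B > 0$) in detail; the upper case is symmetric. A lattice point $(m', n')$ distinct from $(0,0)$ and $(m,n)$ lies strictly inside $R_{(m,n)}$ if and only if the four inequalities $n' > m'\beta$, $n' < m'\alpha$, $n - n' > \beta(m - m')$ and $m\alpha - n > m'\alpha - n'$ all hold. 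Two immediate algebraic consequences are $(\alpha - \beta)(m - m') > 0$ (from the last two) and $(\alpha - \beta)m' > 0$ (from the first two), so any interior lattice point forces $0 < m' < m$ together with $(m',n')$ being a strictly better lower approximation of the second kind than $n/m$.

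This instantly gives the forward direction ``$n/m$ good $\Rightarrow R_{(m,n)}$ has no interior integer points'' (with the remark that if $n/m$ were not in lowest terms, writing $(m,n) = k(m_0, n_0)$ with $k \ge 2$ would place $(m_0, n_0)$ in the interior, so goodness implicitly includes being in lowest terms). For the converse I would let $(M', N')$ be the good lower approximation of largest denominator strictly below $m$; irrationality of $\alpha$ forces $|N' - M'\alpha| < |n - m\alpha|$ strictly, so the last two of the four inequalities hold automatically. The first two unpack to $N'/M' > \beta$ and $(m - M')(\alpha - \beta) > (m\alpha - n) - (M'\alpha - N')$; I would show both of these follow from the single quantitative bound $|n - m\alpha| < \alpha - \beta$, using the estimates $|N'/M' - \alpha| \le |n-m\alpha|/M' < \alpha - \beta$, the bound $(m\alpha - n) - (M'\alpha - N') < |n - m\alpha|$ and the facts $M' \ge 1$, $m - M' \ge 1$.

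The remaining task is to choose $t_0$ so that $t_{(m,n)} \ge t_0$ together with $R_{(m,n)}$ having no interior integer points forces $|n - m\alpha| < \alpha - \beta$. For this I plan to exploit the area identity $\mathrm{area}(R_{(m,n)}) = AB(\alpha - \beta) = t_{(m,n)} \cdot |n - m\alpha|$, combined with a lattice point counting argument: a direct horizontal slicing (or Minkowski applied to the centrally symmetric difference set $R_{(m,n)} - R_{(m,n)}$, whose area is $4 \cdot \mathrm{area}(R_{(m,n)})$) provides a universal constant $C$ such that area exceeding $C$ guarantees an interior integer point of $R_{(m,n)}$ distinct from $(0,0)$ and $(m,n)$; irrationality of $\alpha, \beta$ rules out extra lattice points on the parallelogram's irrational-slope edges. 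Hence ``no interior integer points'' forces $|n - m\alpha| \le C/t_{(m,n)}$, and taking $t_0 > C/(\alpha - \beta)$ does the job. I expect the extraction of an honest interior integer point of $R_{(m,n)}$ from the Minkowski step to be the main technical obstacle, since Minkowski most naturally produces a point in a symmetric translate or difference body that must then be transferred back to $R_{(m,n)}$ itself, and one must rule out boundary coincidences.

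For the \emph{Moreover} clauses I would set $t_0 = t_{(M_0, N_0)}$ for a good lower approximation $(M_0, N_0)$ with $t_{(M_0, N_0)}$ exceeding $C/(\alpha - \beta)$. Lemma~\ref{lemma:epsilon_delta} applied with $\varepsilon = 1/4$ produces a $\delta > 0$ such that vertical distance less than $\delta$ from $r_\alpha(0)$ forces all three listed distances to be below $1/4$. Since along good approximations $|N - M\alpha| \to 0$ while $t_{(M,N)} \to \infty$, one can select $(M_0, N_0)$ large enough that every good approximation $(M, N)$ with $t_{(M,N)} \ge t_{(M_0, N_0)}$ lies within $\delta$ of $r_\alpha(0)$, giving condition (2); condition (1) is immediate from the choice.
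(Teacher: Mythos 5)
Your oblique-coordinate reformulation is clean and equivalent in content to the paper's geometric argument: it recasts ``$(m',n')$ lies in the interior of $R_{(m,n)}$'' as four linear inequalities, from which $0 < m' < m$ and strict improvement of the lower (or upper) approximation follow immediately, which is exactly what the paper extracts by hand. Your forward direction is therefore complete. The gap, which you flag yourself, is in the step that is supposed to produce the width bound $|n - m\alpha| < \alpha - \beta$ from the two hypotheses ``$t_{(m,n)} \geq t_0$'' and ``$R_{(m,n)}$ has no interior lattice points.'' You propose to bound the area of $R_{(m,n)}$ by a constant $C$ via Minkowski applied to the difference body $R_{(m,n)} - R_{(m,n)}$, but Minkowski there only produces a nonzero lattice vector $w$ with $R_{(m,n)} \cap (R_{(m,n)} + w) \neq \varnothing$; that is not a lattice point inside $R_{(m,n)}$, and because $R_{(m,n)}$ is centrally symmetric about the non-lattice point $(m,n)/2$ there is no canonical way to translate $w$ back into $R_{(m,n)}$. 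There is also a circularity worry: how long a thin parallelogram of irrational slopes must be before it is forced to contain an interior lattice point is itself governed by the continued-fraction data of $\alpha$, which is what the proposition is meant to feed into, so a universal $C$ cannot simply be asserted.

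The paper never passes through area. It picks a good lower approximation $(M_0,N_0)$ lying within $\delta$ of $r_\alpha(0)$ (with $\delta$ from Lemma \ref{lemma:epsilon_delta} applied to $\varepsilon = 1/4$), sets $t_0 = t_{(M_0,N_0)}$, and observes: if $t_{(m,n)} \geq t_0$ and $R_{(m,n)}$ has no interior lattice points, then $(M_0,N_0) \notin \mathrm{int}\,R_{(m,n)}$, yet the oblique $A$-coordinate of $(M_0,N_0)$, namely $t_0$, already lies in $(0, t_{(m,n)}]$; so the failure of containment can only be due to the oblique $B$-coordinate of $(m,n)$ being smaller than that of $(M_0,N_0)$, i.e.\ $(m,n)$ is strictly closer to $r_\alpha(0)$. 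This yields the width bound directly, with no area estimate, and then the two congruent end-triangles of the slab between $r_\alpha(0)$ and $r_\alpha(m,n)$ have diameter $< 1/2$ and therefore contain no lattice points other than $(0,0)$ and $(m,n)$. Replacing your Minkowski step with this containment-of-$(M_0,N_0)$ argument, phrased in your oblique coordinates as a comparison of $B$-coordinates, would close the gap.
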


\begin{proof}
We prove it in the case $\alpha > \beta$: the other case can be handled in a symmetric way.

For $\varepsilon = \frac{1}{4}$, take $\delta >0$ as in Lemma \ref{lemma:epsilon_delta} above.

Let $(M_0,N_0), (M_1, N_1) \in \Z^2$ be such that: 
\begin{itemize}
    \item $\frac{N_0}{M_0} > \beta$.
    \item $\frac{N_0}{M_0}$ is a good lower approximation of the second kind for $\alpha$, and $\frac{N_1}{M_1}$ a good upper approximation of the second kind for $\alpha$.
    \item $d((M_i, N_i), r_\alpha(0)) < \min(1, \delta)$ for $i=0,1$.
    \item $t_{(M_0, N_0)} \geq t_{(M_1, N_1)}$.
\end{itemize}
The points $(M_0, N_0), (M_1, N_1)$ can be chosen, for instance, to have as coordinates the denominators and numerators of appropriate even and odd convergents of $\alpha$, respectively.

Let $t_0 =\max( t_{(M_0, N_0)}, t_{(M_1, N_1)}) = t_{(M_0, N_0)}$.
\begin{claim}
    For $(m,n)\in (\Z_{\geq 0})^2$ such that $t_{(m,n)} \geq t_0$, if the parallelogram $R_{(m,n)}$ does not contain a point of $\Z^2$ in its interior, then any point $(m',n')\in (\Z_{\geq 0})^2$ in the interior of the region bounded by the lines $r_\alpha(0)$ and $r_\alpha((m,n))$ must satisfy $m'>m$. 
\end{claim}

We now prove the claim:

Let $(m,n)\in (\Z_{\geq 0})^2$ be a point such that $t_{(m,n)} \geq t_0$.

Suppose first that $(m,n)$ is above the line $r_\alpha(0)$, that is, $n > m\alpha > 0$. Note that since $\frac{n}{m}> \alpha > \beta$, the line $r_\infty(0)$ (i.e. the vertical axis) intersects the side of $R_{(m,n)}$ which has slope $\alpha$ and contains $(m,n)$. Then, any point $(m',n')$ of $(\Z_{\geq 0})^2$ in the interior of the region bounded by the lines $r_\alpha(0)$ and $r_{\alpha}(m,n)$ that satisfies $m' \leq m$ must be in the interior of the parallelogram $R_{(m,n)}$. But we know that this parallelogram does not contain any integer points in its interior, so no such $(m',n') \in (\Z_{\geq 0})^2$ can exist.

Now suppose that $(m,n) \in (\Z_{\geq 0})^2$ is below that line $r_\alpha(0)$, i.e. $0 < n < \alpha m$, satisfies $t_{(m,n)} \geq t_0 = t_{(M_0,N_0)}$ and $R_{(m,n)}$ does not contain any point of $\Z^2$ in its interior. First, note that the fact that $t_{(m,n)} \geq t_{(M_0,N_0)}$ and $R_{(m,n)}$ does not contain $(M_0,N_0)$ implies that $d(r_\alpha(0), (m,n) ) < d(r_\alpha(0), (M_0,N_0))$. Therefore, $d((m,n), r_\alpha(0)) < \delta$.

Let $z\in \R^2$ be the corner of the parallelogram $R_{(m,n)}$ given by $\{ z\} = r_\alpha(m,n) \cap r_\beta(0)$.

We have:

\begin{align*}
   \{(m',n') \in (\Z_{\geq 0})^2 : (m',n') \text{ is between } r_{\alpha}(0), \,r_\alpha(m,n) \text{ and } m'\leq m  \} \\
   \subseteq (T_1 \cap R_{(m,n)} \cap T_2) \cap (\Z_{\geq 0})^2
\end{align*}

where:
\begin{itemize}
    \item $T_1$ is the triangle with corners $(0,0)$ and $z$ and bounded by $r_\beta(0), r_\alpha(z)$ and $r_\infty(0)$ (the $y$-axis).
    \item $T_2$ is the triangle with corner $(m,n)$ bounded by $r_\alpha(0), r_\beta(m,n)$ and $r_\infty(m,n)$.
\end{itemize}

Note that: the triangles $T_1$ and $T_2$ are congruent, and if $\beta \leq 0$, $T_1 \cap (\Z_{\geq 0})^2 = \emptyset$ (we will not use this latter fact, but will use the former).

Since $d((m,n), r_\alpha(0)) < \delta$, we have that, by Lemma \ref{lemma:epsilon_delta}, distances \linebreak $d((m,n), r_\alpha(0) \cap r_\beta(m,n))$ and $d((m,n), r_\alpha(0) \cap r_\infty(m,n))$ are less than $\frac{1}{4}$. These are the distances between $(m,n)$ and the other two corners of $T_2$. Since $(m,n) \in \Z^2$, no other point of $\Z^2$ can be contained in $T_2$. By congruence of $T_1$ and $T_2$ (with the point $(m,n)$ in $T_2$ corresponding to $(0,0)$ in $T_1$), the same is true of $T_1$.

Since we know that $R_{(m,n)} \cap (\Z_{\geq 0})^2 = \{ (0,0), (m,n)\}$, we conclude that there are no points $(m',n')\in (\Z_{\geq 0})^2$ that are in the interior of the region bounded by the lines $r_\alpha(0)$ and $r_\alpha(m,n)$ and satisfy $m'\leq m$.

This concludes the proof of the claim.

%%%%5 proof of claim ends

Suppose that $t_{(m,n)} \geq t_0$ and $R_{(m,n)}$ does not contain any points of $\Z^2$ in its interior. We show that $\frac{n}{m}$ is a good lower or upper approximation of the second kind for $\alpha$. By the claim above, any point 
$(m',n')$ in $(\Z_{\geq 0})^2$ on the same side of $r_\alpha(0)$ as $(m,n)$ which is closer than $(m,n)$ to the line $r_\alpha(0)$ must satisfy $m'>m$. This shows that $\frac{n}{m}$ is a good lower or upper approximation of the second kind for $\alpha$, since $d((m,n), r_\alpha(0)) < d((m',n'), r_\alpha(0)) $ implies that $\left| n - m \alpha  \right|< \left| n' - m\alpha' \right| $.

Suppose now that $(m,n)$ is such that $t_{(m,n)} \geq t_0$ and $\frac{n}{m}$ is a good lower approximation of the second kind for $\alpha$. We show that there are no points of $(\Z_{\geq 0})^2$ in the interior of $R_{(m,n)}$. Suppose that a point $(m', n')\in  \Z^2$ is in the interior of $R_{(m,n)}$. This implies $m'< m$ (here we use that $(m,n)$ and $(m',n')$ are below the line $r_\alpha(0)$), $n' - m'\alpha < 0$ and $d((m',n'), r_\alpha(0)) < d((m,n), r_\alpha(0))$, but then the vertical distances between the points $(m',n'), \, (m,n)$ and the line $r_\alpha(0)$ must be related in the same way, i.e. $\left| n - \alpha m \right| > \left| n' - \alpha m' \right| $. This contradicts the fact taht $\frac{n}{m}$ is a good lower approximation of the second kind for $\alpha$, so no such point $(m',n')$ can exist.

Finally, suppose that $(m,n)$ is such that $t_{(m,n)} \geq t_0$ and $\frac{n}{m}$ is a good upper approximation of the second kind for $\alpha$, and we show that there are no points of $(\Z_{\geq 0})^2$ in the interior of $R_{(m,n)}$. By the same argument as above, since $\frac{n}{m}$ is a good upper approximation of the second kind for $\alpha$, there cannot be any points $(m',n')$ in the interior of $R_{(m,n)}$ that satisfy $m' < m$. It only remains to show that there are no points $(m',n')$ in the interior of $R_{(m,n)}$ such that $m' \geq m$. Any such point would have to be contained in the triangle $T$ with corner $(m,n)$ and sides $r_{\infty}(m,n)$, $r_\alpha(0)$ and $r_\beta(m,n)$. Using the same argument as in the proof of the claim above (here we use the fact that $t_{(m,n)} \geq t_0$), we can see that the triangle $T$ does not contain any integer points other than $(m,n)$. Therefore, there are no integer points in the interior of $R_{(m,n)}$, and we are done.

\end{proof}

\begin{proposition}\label{proposition:order_integers_Tpm}

Let $r$ be the ray based at $0$ parametrized by $c: [0, +\infty)\to [0, +\infty)$, $c(t) =  t(1, \lfloor \alpha \rfloor)$, $r\subset r_\alpha$ with $r_\alpha$ the line through the origin of slope $\alpha$. %$x_0 = (1, \lfloor \alpha \rfloor) \in r$.

Let 
\begin{align*}
    T^-_A = \left\{t_{(m,n)}:  t_{(m,n)} > 0,\, n-\alpha m < 0, \, \mathrm{int}\left(R_{(m,n)}\right) \cap \Z^2 = \emptyset  \right\}\\
T^+_A = \left\{ t_{(m,n)}: t_{(m,n)} >0,\, n-\alpha m > 0, \, \mathrm{int}\left(R_{(m,n)}\right) \cap \Z^2 = \emptyset \right\}\\
\end{align*}
where $t_{(m,n)}$ is defined as in Definition \ref{definition:t_m_n}. 

Then, if we order the elements of $T^+_A$, $T^-_A$ and $T^+_A \cup T^-_A$ according to the usual order on $\R$, all of these sets are order isomorphic to the integers with the usual order.
    
\end{proposition}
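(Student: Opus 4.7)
The plan is to combine the characterization of $T^\pm_A$ at large $t$ given in Proposition \ref{proposition:characterization_good_approx} with the action of the matrix $A$ on $\Z^2$, using the $A$-action to extend from the ``tail'' to all of $T^\pm_A$.

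First I would show that $A$ induces an order-preserving bijection on each of $T^+_A$ and $T^-_A$, scaling every element by $\lambda_+$. This rests on four facts: (i) $A$ is a bijection of $\Z^2$; (ii) $A$ preserves the line $r_\alpha$ (its expanding eigenspace) and each of its two sides, since $\det A = 1 > 0$, so the sign of $n - \alpha m$ is preserved; (iii) $A$ acts on $r_\alpha$ by multiplication by $\lambda_+$, so $t_{A \cdot (m,n)} = \lambda_+\, t_{(m,n)}$; (iv) $A$ sends the parallelogram $R_{(m,n)}$ to $R_{A \cdot (m,n)}$ bijectively, so the property of containing no integer points in the interior is preserved.

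Next I would establish that $T^-_A \cap [t_0, +\infty)$ has order type $\omega$, with $t_0$ as in Proposition \ref{proposition:characterization_good_approx}. That proposition puts this set, via $(m,n) \mapsto n/m$, in bijection with the set of good lower approximations of the second kind to $\alpha$ whose $t$-value is at least $t_0$; Propositions \ref{proposition:inequalities_convergents} and \ref{proposition:good_approx_form_convergents} then enumerate these as a strictly increasing sequence of $t$-values diverging to $+\infty$, namely $t_{(q_k + r q_{k+1},\, p_k + r p_{k+1})}$ for $k$ even and $0 \le r \le a_{k+2} - 1$ listed in lexicographic order in $(k,r)$. By the symmetric argument using good upper approximations, the same conclusion holds for $T^+_A \cap [t_0, +\infty)$.

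Combining these two steps, the set $T^-_A \cap [\lambda_+^{-k} t_0, +\infty) = A^{-k}(T^-_A \cap [t_0, +\infty))$ has order type $\omega$ for every $k \geq 0$, so $T^-_A$ meets every compact subinterval of $(0, +\infty)$ in a finite set. Since $T^-_A$ is nonempty (e.g.\ $t_{(M_0, N_0)} \in T^-_A$), its $A$-orbit $\{\lambda_+^n\, t_{(M_0, N_0)} : n \in \Z\} \subset T^-_A$ shows that $T^-_A$ is unbounded above in $(0, +\infty)$ and has $0$ as an accumulation point. These three properties — unbounded above, $0$ as sole accumulation point, and finite intersection with every compact subinterval of $(0, +\infty)$ — force $T^-_A$ to have order type $\Z$, and the same reasoning gives the conclusion for $T^+_A$. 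Finally, $T^+_A \cap T^-_A = \emptyset$ because the two sets lie on opposite sides of $r_\alpha$, and their union inherits the same three properties, so is also order-isomorphic to $\Z$.

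The main subtle point is justifying that $T^\pm_A$ has no accumulation point in $(0, +\infty)$, but any such point would translate under a sufficiently high power of $A$ into an accumulation point of $T^\pm_A \cap [t_0, +\infty)$, contradicting the order type $\omega$ established in the second step.
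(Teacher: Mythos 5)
Your proof is correct in substance but takes a genuinely different route from the paper's, and has a consistent but incorrect bookkeeping of which eigenvalue is relevant. First the bookkeeping issue: the line $r_\alpha$ is the \emph{contracting} eigenspace (the paper defines $\alpha$ as the slope of the contracting eigenspace), so $A$ scales $t_{(m,n)}$ by $\lambda_-$, not $\lambda_+$; consequently it is the forward iterates $A^k(T^-_A \cap [t_0,\infty)) = T^-_A \cap [\lambda_-^k t_0, \infty)$, $k \geq 0$, that sweep out all of $T^-_A$, not the backward iterates you wrote. Since $\lambda_-^k = \lambda_+^{-k}$ for $\mathrm{SL}(2,\Z)$ the numerical conclusion is the same and your argument survives once the labels are fixed, but as written the claim $A^{-k}(T^-_A \cap [t_0, \infty)) = T^-_A \cap [\lambda_+^{-k}t_0, \infty)$ is false with the paper's conventions.

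As for the approach: the paper proves discreteness of $T^-$ by a direct geometric compactness argument --- it takes two elements $t_{(m_1,n_1)} < t_{(m_2,n_2)}$ of $T^-$ and bounds the number of admissible $(m,n)$ with $t$-value between them by observing that any such point must lie in the finite region $R$ bounded by $r_\alpha(0)$, $r_\alpha(m_1,n_1)$, $r_\beta(m_1,n_1)$, and $r_\beta(m_2,n_2)$, since otherwise $R_{(m,n)}$ would swallow $(m_1,n_1)$. This avoids invoking the continued fraction machinery at this stage (that machinery only enters in the later Proposition \ref{proposition:sigma_continued_fraction_equal}). Your route instead uses Propositions \ref{proposition:good_approx_form_convergents} and \ref{proposition:characterization_good_approx} to explicitly enumerate the tail $T^-_A \cap [t_0, \infty)$ as a divergent increasing sequence and then pushes this discreteness to the rest of $T^-_A$ via the $A$-dynamics. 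Both are valid; the paper's is more self-contained and does not presuppose that $\alpha$ admits convergents with $t$-values beyond $t_0$, while yours more transparently explains the structure that later feeds into Proposition \ref{proposition:sigma_continued_fraction_equal}. For the final claim about $T^+ \cup T^-$, the paper explicitly reduces to a fundamental domain for the $A$-action (taking $t_1 = A \cdot t_2$) to show finitely many $T^-$-elements between consecutive $T^+$-elements, whereas you note that the three properties (unbounded, $0$ the only accumulation point, locally finite) are preserved under finite unions --- a slightly cleaner observation, since it removes the need for the fundamental-domain argument. Unboundedness in both cases is drawn from Lemma \ref{lemma:action_A_Tsets}.
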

\begin{remark}
    When the matrix $A$ is implicit, as in the rest of this chapter, we will simply write $T^\pm = T^\pm_A$.
\end{remark}

Before proving Proposition \ref{proposition:order_integers_Tpm}, we state a lemma which is a consequence of the definition of $t_{(m,n)}$ and the fact that $\alpha, \beta$ are the slopes of the contracting and expanding eigenvectors of the hyperbolic matrix $A \in \mathrm{SL}(2,\Z)$.

\begin{lemma}\label{lemma:action_A_Tsets}
    For the action of the matrix $A$ on $T = T^+ \cup T^-$ defined by $A \cdot t_{(m,n)} = t_{(m,n) A^t}$, the following hold:
    \begin{enumerate}
    \item $A \cdot T^+ = T^+$, and $A\cdot T^- = T^-$.
        \item The action is order preserving on $T$.
        \item For $k\to + \infty$, we have $A^k\cdot t_{(m,n)} \to 0$.
        \item For $k \to -\infty$, we have $A^k \cdot t_{(m,n)} \to +\infty$.
    \end{enumerate}
\end{lemma}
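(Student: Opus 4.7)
The plan is to exploit the fact that $t_{(m,n)}$ admits a very clean description in the eigenbasis of $A$. First I would decompose $(m,n) \in \R^2$ as $(m,n) = s(1,\alpha) + u(1,\beta)$, so that $m = s+u$ and $n = s\alpha + u\beta$. A direct computation gives
\[
n - \beta m = s(\alpha - \beta), \qquad n - \alpha m = u(\beta - \alpha),
\]
from which it follows that $t_{(m,n)} = \frac{n-\beta m}{\alpha - \beta} = s$, i.e., $t_{(m,n)}$ is precisely the coefficient of $(m,n)$ along the contracting eigenvector $(1,\alpha)$, and the sign condition distinguishing $T^+$ from $T^-$ is the sign of the other eigencoordinate $u$ (times the fixed negative constant $\beta - \alpha$).

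Next I would unpack the action. The identity $(m,n)A^t = (A(m,n)^T)^T$ shows that the rule $A \cdot t_{(m,n)} = t_{(m,n)A^t}$ coincides with ordinary left multiplication of $A$ on the column vector $(m,n)^T$. In the eigenbasis this map is simply $(s,u) \mapsto (\lambda_- s, \lambda_+ u)$, and the assumption $\mathrm{tr}(A) > 2$ with $\det A = 1$ forces $0 < \lambda_- < 1 < \lambda_+$. Consequently both $s$ and $u$ retain their signs under $A$ and under every iterate of $A$, which immediately preserves the sign conditions defining $T^+$ and $T^-$ as well as the positivity $t_{(m,n)} = s > 0$. The remaining ``empty interior'' condition is preserved because $A$ is linear, fixes the two eigendirections, and restricts to a bijection of $\Z^2$; so $A(R_{(m,n)}) = R_{A(m,n)}$ and the interior integer points correspond bijectively. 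Running the same argument for $A^{-1}$ upgrades the inclusion $A \cdot T^\pm \subseteq T^\pm$ to the equalities in claim (1).

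The remaining three claims then collapse to a single observation: in the eigenbasis the action on $t$ is multiplication by the fixed positive scalar $\lambda_-$, i.e.\ $A \cdot t_{(m,n)} = \lambda_- t_{(m,n)}$. Since $\lambda_- > 0$, this preserves order, giving claim (2); and iterating gives $A^k \cdot t_{(m,n)} = \lambda_-^k \, t_{(m,n)}$, so because $0 < \lambda_- < 1$ and every element of $T$ is strictly positive, one has $\lambda_-^k t_{(m,n)} \to 0$ as $k \to +\infty$ (claim (3)) and $\lambda_-^k t_{(m,n)} \to +\infty$ as $k \to -\infty$ (claim (4)).

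I do not anticipate a genuine obstacle in this lemma: the whole statement is essentially the remark that $t_{(m,n)}$ is the contracting eigencoordinate of $(m,n)$ and that $A$ acts on it by the contracting eigenvalue, so each of the four items becomes a one-line check once the eigenbasis identification is made. The only point that requires a moment of care is the sign of the eigenvalues, which is why I would record explicitly at the start that $\mathrm{tr}(A) > 2$ combined with $\det A = 1$ yields $\lambda_\pm > 0$ (so that sign conditions are preserved and the multiplication-by-$\lambda_-$ description of the action is order preserving).
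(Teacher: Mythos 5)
Your proof is correct, and it supplies exactly the computation the paper leaves implicit: the paper states this lemma with only the remark that it ``is a consequence of the definition of $t_{(m,n)}$ and the fact that $\alpha, \beta$ are the slopes of the contracting and expanding eigenvectors of the hyperbolic matrix $A$,'' with no proof written out. Your identification $t_{(m,n)} = s$ (the contracting eigencoordinate), the observation that $(m,n) \mapsto (m,n)A^t$ is left multiplication by $A$ on column vectors, and the resulting description $A \cdot t_{(m,n)} = \lambda_- t_{(m,n)}$ with $0 < \lambda_- < 1$ are precisely the intended argument; your care about positivity of the eigenvalues (guaranteed by the paper's standing hypothesis $\mathrm{tr}(A) > 2$) and about $A$ carrying $R_{(m,n)}$ to $R_{A(m,n)}$ while permuting $\Z^2$ is the right thing to check, and you have done it correctly.
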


\begin{proof}[Proof of Proposition \ref{proposition:order_integers_Tpm}]
  
We show it for $T^-$, with the proof for $T^+$ being analogous.

It suffices to show two properties of $T^-$: between any two elements there exist only finitely many other elements, and $T^-$ is unbounded above and below.

Let $t_1 = t_{(m_1,n_1)}, t_2 = t_{(m_2, n_2)} \in T^-$, and assume without loss of generality that $t_2 > t_1$. Note that since $t_2 \in T^-$, we must have that $(m_1, n_1)$ is not contained in $R_{(m_2, n_2)}$.

Let then $R$ be the region bounded by the lines \linebreak $r_\alpha(0), r_\alpha(m_1, n_1), r_{\beta}(m_1, n_1), r_\beta(m_2, n_2)$. There exist only finitely many points of $\Z^2$ which are contained in $R$. 

If $(m,n)$ is in the same connected component of $\R^2 \setminus r_\alpha(0)$ as $(m_1, n_1)$ and we have $ t_1< t_{(m,n)} < t_2 $, then $R_{(m,n)}$ contains no points of $\Z^2$ in its interior only if $(m,n)$ is contained in $R$: otherwise $R_{(m,n)}$ would contain $(m_1, n_1)$. Therefore, there exist finitely many elements $t_{(m,n)} \in T_0^-$ such that $t_1 < t_{(m,n)} < t_2$.

In order to show that $T^-$ has the same order type as the integers, it only remains to show that it is unbounded above and below. This is implied by the last two items in Lemma \ref{lemma:action_A_Tsets}, so we have shown what we wanted for $T^-$ (and via the same argument, for $T^+$).

Now, in order to show that $T$ has the same order type as the integers, it is enough to show that between any two elements of $T^+$ there exist finitely many elements of $T^-$, and viceversa, since we know that $T^+$ and $T^-$ are subsets of $T$ with the order type of the integers. Let $t_1 = t_{(m_1, n_1)}, \,t_2 = t_{(m_2, n_2)}\in T^+$ such that $t_1 < t_2$. We can assume without loss of generality that $t_1 = A\cdot t_2$, since otherwise we can replace $t_1$ by $t_3 = A^k \cdot t_2$ for a large enought $k> 0$. An argument analogous to the one used above shows that there exist finitely many $t_{(m,n)}\in T^-$ such that $t_1 < t_{(m,n)} < t_2$. Therefore, since the interval $[t_1, t_2]\subset T$ is a fundamental domain for the action of $A$ on $T$, we conclude that given any two elements of $T^+$ there exist finitely many elements of $T^-$ between them. This shows what we wanted.

\end{proof}

\begin{definition}
    Using the order on $T^\pm$ described in the proposition above, we order and relabel elements of $T^\pm$, so that
    \begin{align*}
        T^- &= \{ \dots,t_{-1}^-, t_0^-, t_1^-, t_2^-,\dots\} \text{ with } \dots < t_{-1}^- < t_0^- < t_1^- < t_2^- < \dots\\
        T^+ &= \{\dots,t_{-1}^+, t_0^+, t_1^+, t_2^+, \dots\} \text{ with } \dots < t_{-1}^+ < t_0^+<t_1^+ < t_2^+ < \dots
    \end{align*}
    where $t_0^- = t_0$, for $t_0$ as in the statement of Proposition \ref{proposition:characterization_good_approx}. 
\end{definition}

\begin{figure}[h]
    \centering
    \includegraphics[width=.99\textwidth]{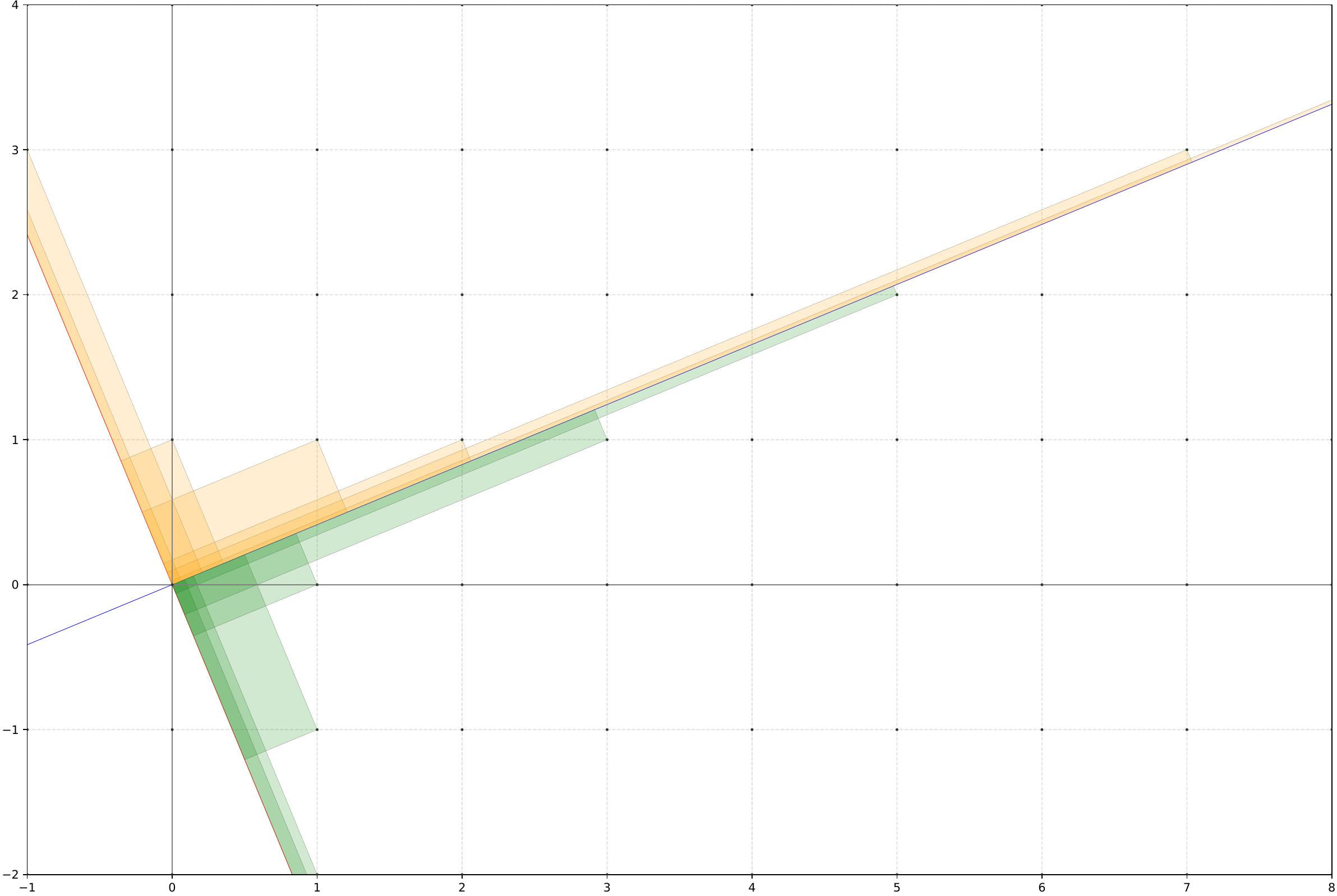}
    \caption{Eigenspaces of $\begin{pmatrix}
        1&2\\-2&5
    \end{pmatrix} = \begin{pmatrix}
        5&2\\2&1
    \end{pmatrix}^{-1}$, and the parallelograms $R_{(m,n)}$.}
    \label{fig:eigenspaces_example}
\end{figure}

We will be interested in understanding how the elements of $T^+, T^-$ are ordered inside $T$, since this is what will allow us to extract information about the number $\alpha$. However, we will not need to know the specific values of these elements: we are only interested in whether they belong to $T^-$ or to $T^+$.

For example: suppose that we know $ t_{2i}^- < t_{2i+1}^- < t_i^+ < t_{2i+2}^-$ for all $i$. Then, we can assign to $T$ a bi-infinite sequence $(\cdots, -, -, +, -, -, +, -,-, +,\cdots)$ that reflects this behavior, but does not specify what the values of the $t_i^+, t_i^-$ are.

For convenience, since we know that in such a sequence there will always be a ``$+$'' symbols after (and before) some finite number of ``$-$'' symbols and viceversa, we will instead construct a bi-infinite sequence of natural numbers $(\cdots, b_{-1}, b_0, b_1, b_2, \cdots)$, with $b_0 $ representing the size of the cluster $C_0^-$ of ``$-$'' symbols containing the one corresponding to $t_0^-$, $b_1$ representing the size of the cluster $C_1^+$ of ``$+$'' symbols adjacent to $C_0^-$ on the right, $b_2$ representing the size of the cluster $C_1^-$ of ``$-$'' symbols adjacent to $C_1^+$ on the right, and etc.

In the example given above, the bi-infinite sequence of natural numbers corresponding to $(\cdots, -, -, +, -, -, +, -,-, +,\cdots)$ would be given by \linebreak $(\cdots, 2,1,2,1,2,\cdots)$, with $b_0 = 2, b_1 = 1, b_2 = 2$, etc.

Below, we formalize this construction.

\begin{definition}
    Let $\sim^+, \sim^-$ be the equivalence relations on $T^+$ and $T^-$ given by $t^\pm_i \sim^\pm t^\pm_j$ if and only if $t^\pm_i \leq t^\pm_j$ and there does not exist $t^\mp_k$ such that $t^\pm_i \leq t_{k}^\mp \leq t_{j}^\pm$.
\end{definition}

\begin{proposition}
    If we let the set of equivalence classes $\tau^\pm = T^\pm/\sim^\pm$, then there is an induced total order on $\tau^\pm$ given by:

    For $C, C' \in \tau^\pm$, $C<C'$ if and only if $t < t'$ for all $t\in C, \, t'\in C'$.
\end{proposition}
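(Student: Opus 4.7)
The plan is to verify in order: well-definedness of the relation on $\tau^\pm$, antisymmetry, transitivity, and totality. The main content is the first step, and the key tool will be the existence of a ``separating'' element of $T^\mp$ between any two inequivalent elements of $T^\pm$.

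First I would fix the reading of $\sim^\pm$ as the symmetric relation on $T^\pm$ given by $t^\pm_i \sim^\pm t^\pm_j$ iff no element of $T^\mp$ lies between $t^\pm_i$ and $t^\pm_j$ (in either order) with respect to the usual order on $T$. With this reading, by Proposition~\ref{proposition:order_integers_Tpm} the equivalence class of $t^\pm_i$ is an order-convex subset of $T^\pm$, i.e.\ a finite block of consecutive terms in the enumeration of $T^\pm$. I would note this explicitly because it is used implicitly in every subsequent step.

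Next, I would prove the following key lemma: if $C \neq C'$ are two classes in $\tau^+$ and $t \in C$, $t' \in C'$ satisfy $t < t'$, then there exists $t^-_k \in T^-$ with $t < t^-_k < t'$. This is immediate from the definition of $\sim^+$ together with $t \not\sim^+ t'$. With this separator in hand, the well-definedness of the proposed order follows: for any other $s \in C$ one has $s \sim^+ t$, so no element of $T^-$ lies between $s$ and $t$, hence $s < t^-_k$ (otherwise $t^-_k$ would sit between $t$ and $s$); analogously, $s' > t^-_k$ for any $s' \in C'$. Therefore $s < s'$ for all $s \in C, s' \in C'$, so $C < C'$ is an unambiguous relation between classes, and the same reasoning proves it is a total relation: any two classes are comparable in one direction or the other. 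The argument for $\tau^-$ is identical with the roles of $+$ and $-$ swapped.

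Finally I would verify antisymmetry and transitivity, which are routine: $C < C$ would force $t < t$ for any $t \in C$, which is impossible; and if $C < C'$ and $C' < C''$, picking $t \in C$, $t' \in C'$, $t'' \in C''$ gives $t < t' < t''$, so $C < C''$ from the well-definedness just proved. This completes the proof that $(\tau^\pm, <)$ is a totally ordered set. I do not anticipate any substantive obstacle; the only potentially delicate point is being careful that the definition of $\sim^\pm$ is understood symmetrically and that the strict inequalities with the separator $t^-_k$ are genuinely strict (they are, since $T^+$ and $T^-$ are disjoint subsets of $T$).
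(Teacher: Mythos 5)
Your proof is correct; the paper itself offers no argument for this proposition (it is stated as routine), so there is no official proof to compare against. You correctly noticed that the definition of $\sim^\pm$ as literally written is not symmetric (it conditions on $t^\pm_i \le t^\pm_j$), and your repair --- reading it as ``no element of $T^\mp$ lies strictly between them'' --- is the intended one, and the one actually used implicitly later in the paper (e.g.\ when $|C_i^\pm|$ is treated as a finite cardinality). The separator lemma and the convexity-of-classes observation are exactly the right ingredients, and the disjointness of $T^+$ and $T^-$ that guarantees the strict inequalities is genuine: $t_{(m,n)} = t_{(m',n')}$ would force $n - n' = \beta(m - m')$ with $\beta$ irrational, hence $(m,n) = (m',n')$, so a $T^-$ value can never coincide with a $T^+$ value. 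One small stylistic note: what you call ``antisymmetry'' is really irreflexivity; for a strict order the package to verify is irreflexivity, transitivity, and trichotomy, and your argument delivers exactly those.
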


\begin{definition}
    We relabel the elements of $\tau^+, \tau^-$ so that 
    \begin{align*}
        \tau^- = \{\cdots, C_{-1}^-,  C_0^-, C_1^-, \cdots\}\\
        \tau^+ = \{\cdots, C_{-1}^+, C_0^+,  C_1^+, C_2^+, \cdots\}
    \end{align*}
    and $C_i^\pm < C_{i+1}^\pm$ for all $i$.
\end{definition}

\begin{definition}\label{definition:sigma_bi_infinite_sequence}
Let $\sigma(\alpha) = (\dots, b_{-1}, b_0, b_1, b_2,\dots) $ where
\[
b_{2i} = \left| C_i^- \right|, \, b_{2i+1} = \left| C_i^+ \right| \text{ for } i\in \Z, 
\] 
\end{definition}

From the definitions of $\tau$ and $\sigma(\alpha)$, we get the following:

\begin{lemma}\label{lemma:A_action_sigma_periodic}
    The action of $A$ on $T$ defined in Lemma \ref{lemma:action_A_Tsets} induces an order preserving action on $\tau$. As a consequence, $\sigma(\alpha)$ is periodic.
\end{lemma}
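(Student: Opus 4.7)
The plan is to deduce both claims from a single structural observation: the action of $A$ on the ordered set $T$ is a non-trivial translation once $T$ is identified with $\Z$.

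First I would check that the $A$-action descends to $\tau$ in an order-preserving way. The equivalence relations $\sim^\pm$ are expressible purely from the linear order on $T$ together with the partition $T = T^+ \sqcup T^-$, since $t,t' \in T^\pm$ are $\sim^\pm$-equivalent if and only if no element of $T^\mp$ lies between them. By items (1) and (2) of Lemma \ref{lemma:action_A_Tsets}, the map $A$ preserves the linear order on $T$ and sends $T^\pm$ to itself. Consequently $A$ maps $\sim^\pm$-classes to $\sim^\pm$-classes and the induced maps on $\tau^+$ and $\tau^-$ are order-preserving, giving the first assertion of the lemma.

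Next I would show that $A$ acts on $T$ as a shift. By Proposition \ref{proposition:order_integers_Tpm} the ordered set $(T,<)$ is order-isomorphic to $(\Z,<)$, and every order-preserving bijection of $(\Z,<)$ is a translation $i \mapsto i+k$ for some unique $k \in \Z$; indeed such a bijection is determined by its value at $0$ because immediate successors must be sent to immediate successors, and induction in both directions forces the shift form. The freeness of the $\langle A \rangle$-action (items (3) and (4) of Lemma \ref{lemma:action_A_Tsets} preclude any fixed point, since orbits escape to $0$ in one direction and to $+\infty$ in the other) then forces $k \neq 0$.

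Finally I would conclude periodicity of $\sigma(\alpha)$. Label the elements of $T$ in order by $\Z$; the sign function $T \to \{+,-\}$ sending each element to the symbol $\pm$ that records which of $T^+, T^-$ it lies in is invariant under the shift by $k$, because $A$ preserves $T^\pm$. Hence the bi-infinite $\pm$-pattern on $T$ is periodic with period dividing $|k|$, and therefore the maximal $\pm$-runs (i.e.\ the equivalence classes $C_i^\pm \in \tau^\pm$) are permuted periodically, with their sizes reproducing periodically as well. Since $\sigma(\alpha)$ is precisely the bi-infinite sequence recording these sizes, $\sigma(\alpha)$ is periodic. I do not anticipate any substantive obstacle: the only step requiring care is the identification of the $A$-action with a shift on $\Z$, which is immediate from $\mathrm{Aut}(\Z,<) \cong \Z$.
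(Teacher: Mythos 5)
The paper itself does not give a proof of this lemma; it simply asserts that it ``follows from the definitions of $\tau$ and $\sigma(\alpha)$.'' Your proof fills in exactly the argument the paper is relying on, and it is correct.

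Your three steps are all sound. The equivalence relations $\sim^\pm$ are defined from the linear order on $T$ and the decomposition $T = T^+ \sqcup T^-$, both of which are preserved by $A$ (items (1) and (2) of Lemma~\ref{lemma:action_A_Tsets}), so the action descends to an order-preserving action on $\tau$. The identification of the action with a shift by some $k\in\Z$ is correct: by Proposition~\ref{proposition:order_integers_Tpm}, $(T,<)\cong(\Z,<)$, and any order automorphism of $\Z$ preserves immediate successors, hence is a translation. Items (3)--(4) of Lemma~\ref{lemma:action_A_Tsets} rule out any fixed point, so $k\neq 0$. Once $A$ acts on $T\cong\Z$ as a nontrivial shift preserving the sign function $T\to\{+,-\}$, the $\pm$-pattern is $k$-periodic; since $T^+$ and $T^-$ are each cofinal in both directions (another consequence of Proposition~\ref{proposition:order_integers_Tpm}), the maximal $\pm$-runs are all finite, and the shift by $k$ permutes the runs by a fixed translation of indices while preserving sizes. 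This is exactly what makes $\sigma(\alpha)$ periodic. One could equivalently argue at the level of $\tau^\pm\cong\Z$ directly (a nontrivial order-preserving bijection of $\tau^\pm$ is again a shift, preserving the function $C_i^\pm\mapsto|C_i^\pm|$), but your route through the sign pattern on $T$ is just as efficient and matches the ingredients the paper sets up.
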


\begin{proposition}\label{proposition:sigma_continued_fraction_equal}
    If $\alpha =[a_0;a_1, a_2, \dots] $ and $\sigma(\alpha) = (\dots, b_{-1}, b_0, b_1, b_2, \dots) $, then there exists $M\geq 0$ even, such that $b_k = a_{k + M}$ for all $k\geq 1$.
\end{proposition}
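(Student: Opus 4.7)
The strategy is to identify, above a suitable threshold, the elements of $T^\pm$ with the good lower and upper approximations of the second kind to $\alpha$, and then read off the block sizes of $\sigma(\alpha)$ from the classical parametrization of such approximations in terms of the convergents $p_k/q_k$. Since the even convergents are good lower approximations of the second kind, I will choose the threshold $t_0$ in Proposition~\ref{proposition:characterization_good_approx} to equal $t_{(q_{k_0},p_{k_0})}$ for a sufficiently large even integer $k_0$; this is admissible, sets $t_0^- = t_{(q_{k_0},p_{k_0})}$, and identifies $C_0^-$ with the maximal $T^-$-equivalence class containing this element. I then set $M := k_0$, which is even and nonnegative.

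By Proposition~\ref{proposition:good_approx_form_convergents}, the good lower (resp.\ upper) approximations are parametrized as $(m,n) = (q_k+rq_{k+1},\,p_k+rp_{k+1})$ with $k$ even (resp.\ odd) and $0\leq r\leq a_{k+2}$. Part~(2) of Proposition~\ref{proposition:characterization_good_approx} shows that for such points with $t_{(m,n)} \geq t_0$ the quantity $|t_{(m,n)} - m|$ is less than $1/4$; consequently the $t$-ordering of these points agrees with the ordering by denominator $m$, and the combined ordering of $T^+\cup T^-$ in the relevant range reduces to an arithmetic interleaving problem. The core observation is that, using the recursion $q_{k+2} = a_{k+2}q_{k+1}+q_k$ and the basic inequalities $q_{k-1}<q_k<q_{k+1}$ (valid for $k\geq 1$), the denominators lying in $(q_k,q_{k+2}]$ appear in the combined order, for $k$ even, as
\[
\underbrace{q_{k-1}+q_k,\,q_{k-1}+2q_k,\,\ldots,\,q_{k+1}}_{a_{k+1}\text{ upper denominators}},\;\underbrace{q_k+q_{k+1},\,q_k+2q_{k+1},\,\ldots,\,q_{k+2}}_{a_{k+2}\text{ lower denominators}},
\]
with every upper denominator in this range strictly less than every new lower denominator; the analogous pattern holds for $k$ odd with the roles of upper and lower interchanged.

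It follows from this enumeration that each maximal $T^\mp$-run in the combined order terminates at some convergent $q_j$ and has exactly $a_j$ elements. Since $C_0^-$ terminates at $q_{k_0}$, one obtains $|C_0^-| = a_{k_0} = a_M$, and then $|C_0^+| = a_{M+1}$, $|C_1^-| = a_{M+2}$, and so on by stepping through the consecutive blocks. This gives $b_k = a_{k+M}$ for every $k \geq 0$, and in particular for $k \geq 1$ as required. The main obstacle lies in the arithmetic verification of the claimed denominator interleaving --- a handful of routine inequalities between expressions of the form $q_k+rq_{k+1}$ and $q_{k-1}+sq_k$ --- together with the careful choice of $k_0$ large enough that the entire block $C_0^-$ lies in the regime where good approximations and $T^-$ elements coincide; once these are in place, the identification $b_k = a_{k+M}$ is immediate.
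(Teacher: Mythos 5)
Your proof is correct and follows essentially the same route as the paper's: both choose a threshold $t_0$ via Proposition~\ref{proposition:characterization_good_approx} so that above it the elements of $T^\pm$ are exactly the good lower/upper approximations; both invoke part~(2) of that proposition to conclude that the $t$-ordering agrees with the ordering by denominator; and both use Proposition~\ref{proposition:good_approx_form_convergents} to parametrize the good approximations and then read the block sizes off the standard recursion for the $q_k$. Your slight simplification of fixing $t_0 = t_{(q_{k_0},p_{k_0})}$ (i.e.\ taking $r=0$ in the paper's $(M_0,N_0)$) makes the bookkeeping a bit cleaner than in the paper, but the underlying argument is the same.
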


\begin{proof}
    Recall that Proposition \ref{proposition:characterization_good_approx} tells us that if $t_{(m,n)} \geq t_0$, then $R_{(m,n)}$ contains no points of $(\Z_{\geq 0})^2$ in its interior if and only if $\frac{n}{m}$ is a good lower approximation of the second kind for $\alpha$.
Define subsets $T_0^\pm \subset T^\pm$ as
\begin{align*}
    T^-_0 = \{t_{(m,n)}:  t_{(m,n)}\geq t_0,\, n-\alpha m < 0, \, R_{(m,n)} \text{ does not contain}\\ \text{points of } (\Z_{\geq 0})^2 \text{ in its interior}   \}\\
T^+_0 = \{ t_{(m,n)}: t_{(m,n)} \geq t_0,\, n-\alpha m > 0, \, R_{(m,n)} \text{ does not contain}\\ \text{points of } (\Z_{\geq 0})^2 \text{ in its interior}   \}\\
\end{align*}

 By definition of the sets $T_0^\pm$, we know that these sets contain all $t_{(m,n)}$ where $t_{(m,n)} \geq t_0$ for $(m,n) \in(\Z_{\geq 0})^2$ such that $R_{(m,n)}$ does not contain points of $\Z^2$ in its interior. Conversely, for any point $(m,n)\in (\Z_{\geq 0})^2$ such that $t_{(m,n)} \geq t_0$ and $R_{(m,n)}$ does not contain points of $\\Z^2$ in its interior, we know that $t_{(m,n)} \in T_0^\pm$.

 Therefore, an alternative characterization of $T_0^\pm$ is as follows:
 \[T_0^- = \left\{ t_{(m,n)}: (m,n)\in (\Z_{\geq 0})^2,\, t_{(m,n) } \geq t_0, \frac{n}{m} \text{ good lower approximation of } \alpha \right\}  \]
 \[T_0^+ = \left\{ t_{(m,n)}: (m,n)\in (\Z_{\geq 0})^2,\, t_{(m,n) } \geq t_0, \frac{n}{m} \text{ good upper approximation of } \alpha \right\}  \]

 For $t_i^\pm \in T_0^\pm$, let $t_i^\pm = t_{(m_i^\pm, n_i^\pm)} $. 

\begin{claim}
    $t_i^\pm \geq t_j^\pm$ if and only if $m_i^\pm \geq m_j^\pm$.
\end{claim}
 
 %Note that this last property implies that if $(m_i^\pm, n_i^\pm) > (m_j^\pm, n_j^\pm)$ with respect to the lexicographic order in $\Z^2$, then $t_{i}^\pm > t_j^\pm$.

First we prove the claim: 
 % insert proof of claim here

 Recall from the statement of Proposition \ref{proposition:characterization_good_approx} that for $(m,n)\in (\Z_{\geq 0})^2$ such that $t_{(m,n)} \geq t_0$ and $\frac{n}{m}$ is a good lower or upper approximation for $\alpha$, we have that 
\begin{itemize}
    \item $d((m,n)), r_\alpha(0) \cap r_\beta(m,n))$,
    \item $d((m,n), r_\alpha(0) \cap r_{\infty}(m,n)). $

\end{itemize}
are all less than $\frac{1}{4}$, where $r_\gamma(x)$ denotes the line with slope $\gamma$ and containing $x\in \R^2$.

The claim follows from the above, since it implies that the intersections of the vertical lines $r_\infty(m,n)$ with $r_\alpha(0)$ are ordered (in $r_\alpha(0)$) in the same way as the intersections of the lines $r_\beta(m,n)$ with $r_\alpha(0)$. The latter are ordered (by definition of $t_{(m,n)}$) in the same way as the parameters $t_{(m,n)}$.

 %%%%% proof of claim ends

 Recall that Proposition \ref{proposition:good_approx_form_convergents} tells us that all good lower or upper approximations of the second kind for $\alpha$ are of the form
 $ \frac{p_{k} + r p_{k}}{q_{k} + rq_{k}}$, where $k\geq 0$ and $r\in \{ 0, 1,\dots, a_{k+2}\}$ and $\frac{p_k}{q_k}$ is the $k$-th convergent of $\alpha$.

 Moreover, we know by Proposition \ref{proposition:inequalities_convergents} that for $j<k$ we have $q_k + rq_{k+1} < q_{k+2}$ for $r \in \{ 0,1, \dots, a_{k+2} - 1\}$ and $q_k + a_{k+2}q_{k+1} = q_{k+2}$. Therefore, the denominators of the best lower and upper approximations for $\alpha$ are ordered as follows, for all $k\geq 0$:
 \begin{align*}
     q_{2k+1} &\leq q_{2k} + q_{2k+1} \leq q_{2k} + 2 q_{2k+1} \leq \dots \leq q_{2k} + a_{2k+2}q_{2k+1} = q_{2k+2}\\
     q_{2k+2} &\leq q_{2k+1} + q_{2k+2} \leq q_{2k+1} + 2 q_{2k+2} \leq \dots \leq q_{2k+1} + a_{2k+3} q_{2k+1} = q_{2k+3} 
 \end{align*}

 Thus, this and the Claim above allow us to conclude that if $k_1\geq 0$ and $r\in \{ 0,1,\dots, a_{k_1+2} - 1\}$ are such that $\frac{N_0}{M_0} =  \frac{p_{k_1} + r p_{k_1+ 1}}{q_{k_1} + r q_{k_1 + 1}}$ (this implies $k_1$ is even), we have the following: defining $m^k_r = q_{k} + rq_{k+1},\, n^k_r = p_k + rp_{k+1}$, we get for all $k\geq 1$ that
  \[
C_{k}^- = \{   t_{\left(m^{2k + k_0}_r, n^{2k + k_0}_r\right)} : r \in \{ 1,\dots, a_{2k+k_0+2}\}\}
  \]
 and \[
 C_k^+ = \{ t_{\left(m^{2k+k_0-1}_r, n^{2k+k_0-1}_r\right)} : r \in \{ 1,\dots, a_{2k + k_0 +1}\}   \},
 \]
 where we have used the claim to order the elements $t_{\left(m^k_r, n^k_r\right)}$ in $T$ by using their first coordinate. 

 Thus, we have shown that for $k\geq 1$, we have $b_{2k} = \left| C^-_{k} \right| = a_{2k+k_0+ 2} $ and $b_{2k+ 1} = \left|C^+_k  \right| = a_{2k+k_0 + 1} $. This is what we wanted to show, with $M= k_0$.

\end{proof}

\begin{corollary}\label{corollary:periodic_sigma_sequence}
    If $a_0, a_1, \dots$ are such that $$\alpha = [a_0 ; a_1, \dots, a_{k_0 - 1}, \overline{ a_{k_0}, a_{k_0 + 1}, \dots, a_{k_0 + l - 1} }],$$ then the sequence $\sigma(\alpha)$ is given by $\sigma(\alpha) = (\overline{a_{k_0}, a_{k_0 + 1}, \dots, a_{k_0 + l - 1}})$.
\end{corollary}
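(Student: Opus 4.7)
The plan is to derive the corollary as an essentially immediate consequence of the two preceding results: Proposition \ref{proposition:sigma_continued_fraction_equal}, which says that there exists an even $M \geq 0$ with $b_k = a_{k+M}$ for all $k \geq 1$, and Lemma \ref{lemma:A_action_sigma_periodic}, which guarantees that the bi-infinite sequence $\sigma(\alpha) = (\ldots, b_{-1}, b_0, b_1, \ldots)$ is periodic. The hypothesis on $\alpha$ is precisely that the one-sided sequence $(a_k)_{k\geq 0}$ is eventually periodic with period $l$ starting at index $k_0$, i.e.\ $a_{k+l} = a_k$ for all $k \geq k_0$.

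First I would combine the two results to show that the positive tail of $\sigma(\alpha)$ is purely periodic with period $l$. Indeed, for any $k \geq \max(1,\, k_0 - M)$ one has $k + M \geq k_0$, so using Proposition \ref{proposition:sigma_continued_fraction_equal} and the periodicity of $(a_k)$:
\[
b_{k+l} \;=\; a_{k+l+M} \;=\; a_{k+M} \;=\; b_k.
\]
Hence the tail $(b_N, b_{N+1}, b_{N+2}, \ldots)$ is purely periodic of period $l$ for $N$ large, and moreover that tail is a cyclic shift of the block $(a_{k_0}, a_{k_0+1}, \ldots, a_{k_0+l-1})$.

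Next I would use Lemma \ref{lemma:A_action_sigma_periodic} to propagate this back to the whole bi-infinite sequence. Since $\sigma(\alpha)$ is periodic (as a bi-infinite sequence) with some period $p$, any relation of the form $b_{k+l} = b_k$ holding for all sufficiently large $k$ automatically extends to all $k \in \Z$: two bi-infinite sequences that are $p$-periodic and agree on a tail must agree everywhere. Thus $b_{k+l} = b_k$ for every $k \in \Z$, and the block $(b_1, b_2, \ldots, b_l)$, which by step one is a cyclic permutation of $(a_{k_0}, \ldots, a_{k_0 + l - 1})$, generates $\sigma(\alpha)$ by periodic extension in both directions. This is exactly the assertion $\sigma(\alpha) = (\overline{a_{k_0}, a_{k_0+1}, \ldots, a_{k_0+l-1}})$, where the overline notation is invariant under cyclic shift of the listed block.

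The only real point requiring care is the bookkeeping of the cyclic shift: Proposition \ref{proposition:sigma_continued_fraction_equal} only tells us that $b_k$ equals $a_{k+M}$ for some unspecified even $M$, so the block $(b_1, \ldots, b_l)$ may not literally be $(a_{k_0}, \ldots, a_{k_0+l-1})$ but only a cyclic rotation of it. Since the notation $(\overline{a_{k_0}, \ldots, a_{k_0+l-1}})$ denotes the underlying periodic sequence and is manifestly unchanged under cyclic permutation of the repeating block, no further argument is needed. I do not expect any substantive obstacle here — the entire corollary is a clean packaging of the two preceding results.
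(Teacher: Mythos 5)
Your proof is correct and follows the same route as the paper's, which cites Lemma \ref{lemma:A_action_sigma_periodic} for periodicity of $\sigma(\alpha)$ and Proposition \ref{proposition:sigma_continued_fraction_equal} for the coincidence of tails, then concludes that the periodic parts agree. You have simply made explicit the two steps the paper leaves implicit — the verification $b_{k+l}=a_{k+l+M}=a_{k+M}=b_k$ for large $k$, and the propagation from eventual $l$-periodicity to $l$-periodicity on all of $\Z$ using the known periodicity of the bi-infinite sequence — and your handling of the cyclic-shift ambiguity matches the paper's intended reading of the overline notation.
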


\begin{proof}
    We know that $\sigma(\alpha)$ is periodic by Lemma \ref{lemma:A_action_sigma_periodic}. Since the tails of the sequences $\sigma(\alpha)$ and $(a_0, a_1, a_2,\dots)$ coincide, the sequences must have the same periodic part. 
\end{proof}

\section{Proof of Theorem \ref{introthm:FW_main_theorem}} \label{subsection:proofs_of_FW_theorems}

In this section, we prove Theorem \ref{introthm:FW_main_theorem} using results from the previous sections. Then, we give an example of two non-isomorphic planes corresponding to different flows $\varphi_A$ and $\varphi_B$. Finally, we prove Corollary \ref{introcor:FW_cor}.

First, we restate Theorem \ref{introthm:FW_main_theorem} as Theorem \ref{theorem:FW_main_theorem} below, making use of the fact that the continued fraction expansion of quadratic integers is periodic.

\begin{comment}
\begin{proposition}
Let $A$ be an infinite perfect fit in a generalized Franks-Williams\todo{call it this or smth else?} plane, and let $L \subset A$ be a lozenge in $A$. Let $r_1^- \subset l_1^- $ be the $\F^-$ side of $L$ that does not have the same ideal point as $A$, and let $r_2^+ \subset l_2^+$ be the $\F^+$ side of $L$ that intersects $l_1^-$. 

Then, \begin{enumerate}
    \item The set $\{ l\cap l_2^+: l \in \mathcal{I}(\Lambda^-)  \} \subset l_2^+$ is dense in $l_2^+$.

    \item The set of leaves which: 1) make perfect fits and 2) contain sides of lozenges which also have a side contained in $l_2^+$ intersects $l_2^+$\todo{actually, the half leaf which is not a side of $L$} in a set whose points are all isolated, except for $l_2^+ \cap l_1^-$.
\end{enumerate}  
\end{proposition}
\end{comment}

\begin{definition}
    Let $A\in \mathrm{SL}(2,\Z)$ be a hyperbolic linear map. Let $\alpha(A)$ be the slope of the stable eigenspace of $A$. 
\end{definition}

\begin{theorem}\label{theorem:FW_main_theorem}
    Let $A$, $B \in \mathrm{SL}(2,\Z)$ be hyperbolic linear maps, and let \linebreak $(P_A, \F^+_A, \F^-_A), (P_B, \F^+_B, \F^-_B)$ be the bifoliated planes associated to the Franks-Williams flows $\varphi_A$ and $\varphi_B$. Let $\alpha(A) = \pm \left[a_0; a_1, \dots, a_{k_0 - 1}, \overline{a_{k_0}, a_{k_0+1}, \dots, a_{k_0+M - 1}}\right], \alpha(B) = \pm \left[b_0; b_1, \dots, b_{j_0-1}, \overline{b_{j_0}, \dots, b_{j_0 + N - 1}}\right]$. 

    Then, if $(P_A, \F^+_A, \F^-_A)$ and $ (P_B, \F^+_B, \F^-_B)$ are isomorphic, we must have $N = M$ and, possibly up to cyclic reordering, $a_{k_0} = b_{j_0}, a_{k_0 + 1} = b_{j_0 + 1}, \dots, a_{k_0+M -1} = b_{j_0 +N - 1} $.
\end{theorem}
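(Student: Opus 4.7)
The plan is to follow the chain of invariants assembled in the preceding sections: a bifoliated-plane isomorphism produces an equivalence of the triples $(C, C_r, C_l)$ of crossing infinite perfect fits, which is transported through the DA identification to an equivalence of lattice-theoretic triples of crossing points in $\R^2$, which is then read off as an equivalence of the parameter triples $(T, T^+, T^-)$, and finally as equality of the sequences $\sigma(\alpha(A))$ and $\sigma(\alpha(B))$ up to a shift. By Corollary \ref{corollary:periodic_sigma_sequence}, this last statement is precisely the conclusion of the theorem.

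In more detail, I would first address orientations. Proposition \ref{lemma:crossings_to_crossings} is stated for foliation-orientation-preserving isomorphisms; if the given $H\colon (P_A,\F^+_A,\F^-_A)\to(P_B,\F^+_B,\F^-_B)$ reverses one or both orientations, then precomposing with a suitable orientation-reversing automorphism of $(P_A,\F^+_A,\F^-_A)$ (built from a $\Z^2$-symmetry of the eigenvalue picture, e.g.\ the map $(m,n)\mapsto(-m,-n)$ of the lattice) restores orientation preservation, while producing only a shift in $\sigma$ and/or a swap $b_{2i}\leftrightarrow b_{2i+1}$; both operations leave the periodic part of $\sigma$ invariant up to cyclic reordering. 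Assuming $H$ preserves orientations, Corollary \ref{corollary:FW_isomorphic_implies_equivalent_patterns} supplies an equivalence of triples $(C^A,C_r^A,C_l^A)\simeq(C^B,C_r^B,C_l^B)$.

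Next, I would apply Proposition \ref{proposition:equivalence_crossing_integer_points} on each side to replace the topological triples by lattice-theoretic ones
\[
\bigl(\{\wG^u(m,n)\cap \wG^s(0,0):(m,n)\in CP(\wz)\},\ CP_r(\wz),\ CP_l(\wz)\bigr),
\]
as ordered sets with two distinguished subsets. Parametrizing $\wG^s(0,0)$ by $t\mapsto t(1,\alpha)$ identifies these with the triples $(T,T^+,T^-)$ of Proposition \ref{proposition:order_integers_Tpm}, where right/left crossings correspond to $T^-/T^+$ (up to a fixed convention). The equivalence therefore becomes an order-preserving, sign-preserving bijection $(T_A,T_A^{\pm})\to(T_B,T_B^{\pm})$. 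Such a bijection automatically respects the cluster equivalence relations $\sim^\pm$, so it descends to an order-preserving bijection of the cluster sequences $\tau_A\to\tau_B$ that preserves both cluster sizes and cluster types. By Definition \ref{definition:sigma_bi_infinite_sequence} this is exactly the assertion that $\sigma(\alpha(A))$ and $\sigma(\alpha(B))$ agree up to a uniform shift of indices. Both sequences are periodic by Lemma \ref{lemma:A_action_sigma_periodic}, so such a shift amounts to a cyclic reordering of the periodic part, and Corollary \ref{corollary:periodic_sigma_sequence} identifies the periodic parts of $\sigma(\alpha(A))$ and $\sigma(\alpha(B))$ with $(a_{k_0},\dots,a_{k_0+M-1})$ and $(b_{j_0},\dots,b_{j_0+N-1})$, yielding $M=N$ together with the cyclic matching.

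The hard part will be the first step, the orientation bookkeeping. An orientation-reversing isomorphism could a priori produce an order-reversing bijection $T_A\to T_B$, corresponding to reading $\sigma(\alpha(A))$ backwards; for periodic sequences this still matches periodic orbits only if one is careful about how $\sim^\pm$ and the $+/-$ labels transform, and about how the reversal interacts with the swap of the two families of closed leaves on the transverse torus. Verifying this, together with checking that the dependence on the choice of $\wr_2$ used to define $(C^A,C_r^A,C_l^A)$ is absorbed correctly (as in Proposition \ref{prop:FW_defined_up_to_order_pres}), is the principal piece of technical work that the plan above leaves to be carried out.
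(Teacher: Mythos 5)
Your proposal follows exactly the chain the paper uses: isomorphism $\Rightarrow$ equivalence of the triples $(C,C_r,C_l)$ (Corollary \ref{corollary:FW_isomorphic_implies_equivalent_patterns}) $\Rightarrow$ order-isomorphism of $(T,T^+,T^-)$ (Corollary \ref{proposition:equivalence_crossing_integer_points} plus the $t_{(m,n)}$ parametrization) $\Rightarrow$ $\sigma(\alpha(A))=\sigma(\alpha(B))$ up to shift $\Rightarrow$ equality of periodic parts via Corollary \ref{corollary:periodic_sigma_sequence}. That is the paper's argument, so on the main line you are matching it.

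You are right to notice that Proposition \ref{lemma:crossings_to_crossings} is stated only for foliation-orientation-preserving isomorphisms, while the paper's notion of isomorphism of bifoliated planes does not require preservation of orientations; Corollary \ref{corollary:FW_isomorphic_implies_equivalent_patterns} as stated inherits this gap, and the paper's proof of the theorem silently invokes it for an arbitrary isomorphism. However, you substantially overestimate the difficulty of repairing this, and your proposed repair (precomposing with a $(m,n)\mapsto(-m,-n)$ symmetry) is itself suspect: the DA blowup and the Franks--Williams gluing are not obviously equivariant with respect to $-\mathrm{Id}$, so it is not clear that such an automorphism of $(P_A,\F^+_A,\F^-_A)$ exists. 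The cleaner observation is that the linear order on $C(\wr_2)$ is defined purely topologically --- $I\cap\wr_2<I'\cap\wr_2$ when $\walpha_1$ and $I'\cap\wr_2$ lie in different components of $\wr_2\setminus(I\cap\wr_2)$ --- so it is transported by \emph{any} homeomorphism of half-leaves, with no possibility of order reversal. The worry you raise, ``an orientation-reversing isomorphism could a priori produce an order-reversing bijection $T_A\to T_B$,'' therefore does not arise. The only effect an orientation-reversing $H$ can have is to exchange $C_r$ and $C_l$ (equivalently $T^-$ and $T^+$), which shifts $\sigma$ by one index and is absorbed by the ``up to cyclic reordering'' already present in the conclusion. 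So the orientation bookkeeping you flag as the hard part of the proof is in fact a one-line remark, and the rest of your outline is sound.
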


\begin{proof}[Proof of Theorem \ref{theorem:FW_main_theorem}] 

Let $H: (P_A, \F^+_A, \F^-_A)\to(P_B, \F^+_B, \F^-_B)$ be an isomorphism, and let $\wr \subset l\in \F^-_A$ be a half-leaf based at a point $\walpha \in P_A$, where the orbit $\walpha$ is a lift to the universal cover $\widetilde{M}$ of $\alpha_1 = \mathcal{O}(z) \subset M_1 $ of the point $z\in M_1$.

By Corollary \ref{proposition:equivalence_crossing_integer_points}, there is an order preserving bijective correspondence between the set $C(\wr)$ of crossing infinite perfect fits for $\wr$ and the set \linebreak $ 
\left\{\wG^u(m,n) \cap \wG^s(0,0): (m,n) \in CP(\wz) \right\},$
which, recalling the definition of the set $CP(\wz)$ of crossing points for $\wz$ (Definition \ref{definition:FW_crossing_point}), is the same as the set
$$ 
\left\{\wG^u(m,n) \cap \wG^s(0,0): \wG^u(m,n)\cap r\neq \emptyset,\, R_{(m,n)}\cap \Z^2 = \{ (0,0), (m,n)\} \right\},$$
 where $r$ is the connected component of $\wG^s(0,0)\setminus \{ (0,0)\}$ that contains $\wz\in \R^2$.

This set is order isomorphic to the set $T^+_A \cup T^-_A$, which can be seen by simply mapping $\wG^u(m,n) \cap \wG^s(0,0)$ to $t_{(m,n)}$. Additionally, this order isomorphism maps intersections $\wG^u(m,n) \cap \wG^s(0,0)$ to $t_{(m,n)}$ where $(m,n)$ is to the right (left) of $\wG^s(0,0)$ to points in $T^-$ (resp. $T^+$).

Composing the order isomorphism given by Corollary \ref{proposition:equivalence_crossing_integer_points} with the identification between its target and $T^+ \cup T^-$, we obtain an order isomorphism $C^A = C(\wr) \to T^+_A \cup T^-_A$, which maps $C_r^A = C_r(\wr)$ to $T^-_A$ and $ C_l^A = C_l(\wr)$ to $T^+_A$. The same can be done with the bifoliated plane $(P_B, \F^+_B, \F^-_B)$, obtaining an order preserving bijection $C^B \to T_B$ which restricts to order preserving bijections $C^B_r \to T^-_B$, $C^B_l \to T^+_B$.

By Corollary \ref{corollary:FW_isomorphic_implies_equivalent_patterns}, $H$ induces an equivalence between the intersection patterns of crossing infinite perfect fits $\left( C^A, C^A_r, C^A_l \right)$ and $\left( C^B, C^B_r, C^B_l \right)$.

We conclude then that there is an order isomorphism between $T^+_A\cup T^-_A$ and $T^+_B \cup T^-_B$ which maps $T^+_A$ to $T^+_B$ and $T^-_A$ to $T^-_B$. This implies that $\sigma(\alpha(A)) = \sigma(\alpha(B))$. 

Therefore, we conclude by Corollary \ref{corollary:periodic_sigma_sequence} that the continued fraction expansions of $\alpha(A)$ and $\alpha(B)$ have the same periodic part, which shows the result.

\end{proof}

\begin{figure}[h!!]
  \centering
  \includegraphics[width=.9\linewidth]{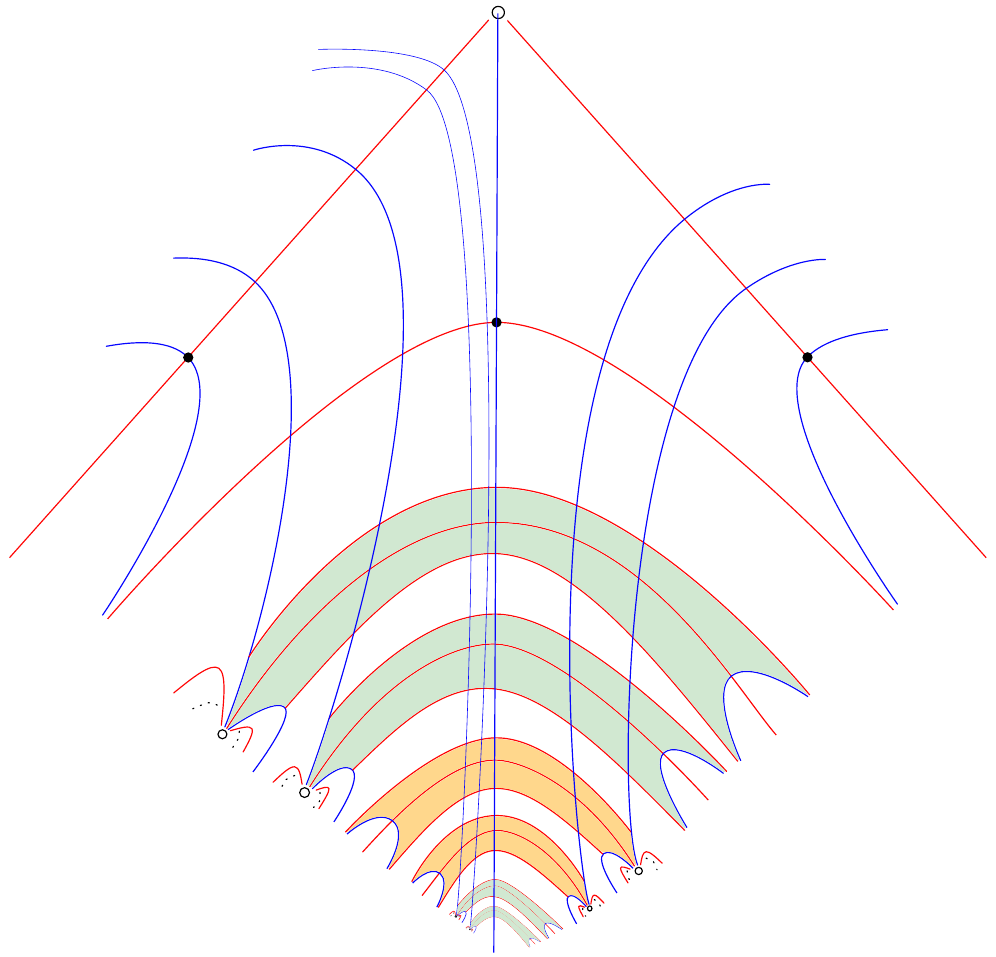}
  \caption{Left- and right-crossing infinite perfect fits for $A = \begin{pmatrix}
      1&2\\-2&5
  \end{pmatrix},$ c.f. Figure \ref{fig:eigenspaces_example}. }  
  \label{fig:crossing_pattern}
\end{figure}

\begin{example}
    Let $A  = \begin{bmatrix}
        2&1\\1&1
    \end{bmatrix}$ and $B = \begin{bmatrix}
        5&2\\2&1
    \end{bmatrix}$. 
    
    Then, $\alpha(A) = \frac{-1 - \sqrt{5}}{2}  = -[1;\overline{1}]$ and $\alpha(B) = -1 - \sqrt{2} = -[2;\overline{2}]$. Therefore  $(P_A, \F^+_A, \F^-_A) $ and $ (P_B, \F^+_B, \F^-_B)$ are not isomorphic.
\end{example}

\begin{example}
    More generally, let $R = \begin{pmatrix}
        1&1\\ 0&1
    \end{pmatrix},\, L = \begin{pmatrix}
        1&0\\1&1
    \end{pmatrix}$. 
    
    One can show via a straightforward calculation that if $x = [0; \overline{a_1, b_1, a_2, b_2,\dots, a_k, b_k}]$ then $x = \beta(A)$, where $A = R^{a_1}L^{b_1}\dots R^{a_k}L^{b_k}$. 
    
    This allows one to construct countably many matrices $A_n \in \mathrm{SL}(2,\Z)$ such that the slopes $\beta(A_n)$ have different periodic parts, and therefore countably many non-isomorphic bifoliated planes $\left( P_{A_n}, \F^+_{A_n}, \F^-_{A_n} \right)$.
\end{example}

Now, we prove Corollary \ref{introcor:FW_cor}. Recall that it states that if $(P_A, \F^+_A, \F^-_A)$ and $(P_B, \F^+_B, \F^-_B)$ are isomorphic, then there exist powers $A^k$ and $B^l$ that are conjugate in $\mathrm{GL}(2,\Z)$.

\begin{proof}[Proof of Corollary \ref{introcor:FW_cor}]

By Theorem \ref{theorem:FW_main_theorem}, we know that $\alpha(A)$ and $\alpha(B)$ have the same periodic part.

    Given $x\in \R$ and a matrix $Q\in \mathrm{SL}(2,\Z)$, we denote by $Q \cdot x$ the slope of the image of the line through the origin with slope $x$ under multiplication by $Q$. That is, if $Q = \begin{pmatrix}
        a&b\\c&d
    \end{pmatrix}$ then $Q\cdot x = \frac{c + dx}{a + bx}$. In particular, the action of $\mathrm{SL}(2,\Z)$ on the set $\R P^1$ of lines through the origin (where each line is identified with its slope) is conjugate via the matrix $\begin{pmatrix}
        0&1\\1&0
    \end{pmatrix}$ to the standard action of $\mathrm{SL}(2,\Z)$ on $\partial \H^2 = \R \cup \{\infty\} $ given by $\begin{pmatrix}
        a&b \\c&d
    \end{pmatrix} \cdot x = \frac{ax + b}{cx + d}$. It's enough then to study the latter action.

   Since $\alpha(A)$ and $\alpha(B)$ have the same periodic part, one can construct a matrix $C \in \mathrm{GL}(2,\Z)$ such that $C \cdot \alpha(A) = \pm \alpha(B)$ (see Section 10.11 of \cite{hardy75}). Therefore, 
    $$(C^{-1} B C) \cdot \alpha(A) = \alpha(A),$$
    so the matrices $A$ and $C^{-1}BC$ are both in the stabilizer of $\alpha(A)$. Since the projection of this stabilizer to $\mathrm{PSL}(2,\Z)$ is infinite cyclic (recall that a discrete subgroup of orientation preserving isometries of $\mathrm{H}^2$ that fixes a point at infinity is either trivial or cyclic), there exists $M \in \mathrm{SL}(2,\Z)$ such that $M^i = \pm A$, $M^j = \pm C^{-1}BC$. Therefore, we have proven the result.
\end{proof}

%%===========================================================================================%%
%% If you are submitting to one of the Nature Portfolio journals, using the eJP submission   %%
%% system, please include the references within the manuscript file itself. You may do this  %%
%% by copying the reference list from your .bbl file, paste it into the main manuscript .tex %%
%% file, and delete the associated \verb+\bibliography+ commands.                            %%
%%===========================================================================================%%

\printbibliography

%\bibliographystyle{sn-mathphys-ay}
%\bibliographystyle{spmpsci}
%\bibliography{reference.bib}    % common bib file
%% if required, the content of .bbl file can be included here once bbl is generated
%%\input sn-article.bbl

\end{document}